\newcommand{\enumber}[1]{%
    \begin{tikzpicture}[remember picture]
        \node[inner sep=0pt](a){#1};
    \end{tikzpicture}%
    \begin{tikzpicture}[overlay, remember picture]
        \node[draw, black, fit=(a), ellipse, inner sep=1pt, line width=0.8pt]{};
    \end{tikzpicture}%
    }
\pgfplotsset{compat=1.14}
\tikzset{
    hatch distance/.store in=\hatchdistance,
    hatch distance=10pt,
    hatch thickness/.store in=\hatchthickness,
    hatch thickness=0.3pt
}
\pgfqpoint{\hatchdistance}{\hatchdistance}}
\newcommand{\Sources}{\mathcal{S}}
\newcommand{\La}{\Lambda}
\newcommand{\tn}{\tau_n}
\newcommand{\tno}{\tau_n^-}
\newcommand{\longto}{\longrightarrow}
\newcommand{\longfrom}{\longleftarrow}
\newcommand{\To}{\Rightarrow}
\newcommand{\comp}{\circ}
\newcommand{\om}{\Omega}
\newcommand{\isom}{\cong}
\newcommand{\equivalent}{\simeq}
\newcommand{\cA}{\mathcal{A}}
\newcommand{\cC}{\mathcal{C}}
\newcommand{\cD}{\mathcal{D}}
\newcommand{\cF}{\mathcal{F}}
\newcommand{\cG}{\mathcal{G}}
\newcommand{\ccG}{\mathcal{G}}
\newcommand{\cI}{\mathcal{I}}
\newcommand{\cL}{\mathcal{L}}
\newcommand{\cM}{\mathcal{M}}
\newcommand{\cP}{\mathcal{P}}
\newcommand{\cR}{\mathcal{R}}
\newcommand{\cU}{\mathcal{U}}
\newcommand{\cV}{\mathcal{V}}
\newcommand{\NN}{\mathbb{N}}
\newcommand{\ZZ}{\mathbb{Z}}
\newcommand{\dirI}{\mathbb{I}}
\newcommand{\K}{\mathbf{k}}
\newcommand{\sinks}{\mathbf{t}}
\newcommand{\sources}{\mathbf{s}}
\newcommand{\D}{D}
\newcommand{\Cks}{\dot{\mathcal{C}}}
\newcommand{\Pks}{\dot{\Phi}}
\newcommand{\Tks}{\dot{\Theta}}
\newcommand{\Qks}{\dot{Q}}
\newcommand{\Rks}{\dot{\cR}}
\newcommand{\Mks}{\dot{\mathcal{M}}}
\newcommand\restr[2]{{
  \left.\kern-\nulldelimiterspace 
  #1 
  \vphantom{\big|} 
  \right|_{#2} 
  }}
\NewDocumentCommand\glue{ O{} O{}}{\mathbin{\raisebox{0.2ex}{${}^{\mbox{\tiny$#1$}}$\rotatebox[origin=c]{90}{$\triangleright$}$^{\mbox{\tiny$#2$}}$}}}
\NewDocumentCommand\qtwo{m O{}}{\begin{smallmatrix} #1 \\ #2 \end{smallmatrix}}
\NewDocumentCommand\qthree{m O{} O{}}{\begin{smallmatrix} #1 \\ #2 \\ #3 \end{smallmatrix}} 
\NewDocumentCommand\qfour{m O{} O{} O{}}{\begin{smallmatrix} #1 \\ #2 \\ #3 \\ #4 \end{smallmatrix}}
\NewDocumentCommand\qfive{m O{} O{} O{} O{}}{\begin{smallmatrix} #1 \\ #2 \\ #3 \\ #4 \\ #5 \end{smallmatrix}}
\NewDocumentCommand\tinyqthree{m O{} O{}}{\mbox{\tiny$\begin{smallmatrix} #1 \\ #2 \\ #3 \end{smallmatrix}$}} 
\newcommand{\nsup}[2]{{^{#1}\!{#2}}}
\newcommand{\supn}[2]{{{#1}\!^{#2}}}
\newcommand{\PD}{\nsup{P}{\triangle}}
\newcommand{\DI}{\supn{\triangle}{I}}
\newcommand{\ABP}{\cP\strut^{\text{ab}}}
\newcommand{\IAB}{\cI\strut^{\text{ab}}}
\newcommand{\MABP}{\cP\strut^{\text{mab}}}
\newcommand{\MIAB}{\cI\strut^{\text{mab}}}
\newcommand{\sABP}{\cP\rule[-.45\baselineskip]{0pt}{\baselineskip}^{\text{ab}}}
\newcommand{\sMABPind}{\mathcal{P}^{\text{mab}}_{\text{ind}}}
\newcommand{\sMABIind}{\mathcal{I}^{\text{mab}}_{\text{ind}}}
\newcommand{\nospacepunct}[1]{\makebox[0pt][l]{\,#1}}
\newcommand{\tilvert}{\prescript{\scriptscriptstyle P}{}{\sim}^{\scriptscriptstyle I}_{\scriptscriptstyle 0}}
\newcommand{\tilarr}{\prescript{\scriptscriptstyle P}{}{\sim}^{\scriptscriptstyle I}_{\scriptscriptstyle 1}}
\newcommand{\tilQ}{\tilde{Q}}
\newcommand{\tilR}{\tilde{\cR}}
\newcommand{\tilM}{\tilde{\cM}}
\newcommand{\tilLa}{\tilde{\Lambda}}
\newcommand\nct[2]{\fill ({#1},{#2})  circle (0.05cm);} 
\newcommand\mct[2]{\draw ({#1},{#2})  circle (0.05cm);} 
\DeclareMathOperator{\kAlg}{\K-\textbf{Alg}}
\DeclareMathOperator{\Cat}{\textbf{Cat}}
\DeclareMathOperator{\op}{op}
\DeclareMathOperator{\m}{mod}
\DeclareMathOperator{\rep}{rep}
\DeclareMathOperator{\M}{Mod}
\DeclareMathOperator{\Hom}{Hom}
\DeclareMathOperator{\End}{End}
\DeclareMathOperator{\Ext}{Ext}
\DeclareMathOperator{\add}{add}
\DeclareMathOperator{\Id}{id}
\DeclareMathOperator{\proj}{proj}
\DeclareMathOperator{\inj}{inj}
\DeclareMathOperator{\ind}{ind}
\DeclareMathOperator{\obj}{obj}
\DeclareMathOperator{\supp}{supp}
\DeclareMathOperator{\rad}{rad}
\DeclareMathOperator{\coker}{coker}
\DeclareMathOperator{\height}{ht}
\DeclarePairedDelimiter\abs{\lvert}{\rvert}%
\DeclareMathOperator{\Sub}{Sub}
\DeclareMathOperator{\Fac}{Fac}
\DeclareMathOperator{\gldim}{gl.dim}
\DeclareMathOperator{\domdim}{dom.dim}
\DeclareMathOperator{\pdim}{proj.dim}
\DeclareMathOperator{\idim}{inj.dim}
\DeclareMathOperator{\Aut}{Aut}
\tikzset{
  symbol/.style={
    draw=none,
    every to/.append style={
      edge node={node [sloped, allow upside down, auto=false]{$#1$}}}
  }
}
\theoremstyle{plain}
\newtheorem*{theorem*}{Theorem}
\newtheorem{theorem}{Theorem}[section] 
\theoremstyle{definition}
\newtheorem{definition}[theorem]{Definition} 
\newtheorem{definition-proposition}[theorem]{Definition-Proposition} 
\newtheorem{example}[theorem]{Example} 
\newtheorem{corollary}[theorem]{Corollary} 
\newtheorem{lemma}[theorem]{Lemma} 
\newtheorem{proposition}[theorem]{Proposition} 
\newtheorem{remark}[theorem]{Remark} 
\numberwithin{equation}{section} 
\newtheorem{question}{Question} 
\title[$n$-cluster tilting from gluing systems of representation-directed algebras]{$n$-cluster tilting subcategories from gluing systems of representation-directed algebras}
\author{Laertis Vaso}
\keywords{Auslander-Reiten theory, $n$-cluster tilting subcategory, representation-directed algebras, Nakayama algebras, orbit category}
\begin{document}
\maketitle

\begin{abstract}
We present a new way to construct $n$-cluster tilting subcategories of abelian categories. Our method takes as input a direct system of abelian categories $\cA_i$ with certain subcategories and, under reasonable conditions, outputs an $n$-cluster tilting subcategory of an admissible target $\cA$ of the direct system. We apply this general method to a direct system of module categories $\m\La_i$ of rep\-re\-sen\-ta\-tion-di\-rect\-ed algebras $\La_i$ and obtain an $n$-cluster tilting subcategory $\cM$ of a module category $\m\cC$ of a locally bounded Krull--Schmidt category $\cC$. In certain cases we also construct an admissible $\ZZ$-action of $\cC$. Using a result of Darp{\"o}--Iyama, we obtain an $n$-cluster tilting subcategory of $\m(\cC/\ZZ)$ where $\cC/\ZZ$ is the corresponding orbit category. We show that in this case $\m(\cC/\ZZ)$ is equivalent to the module category of a fi\-nite-di\-men\-sion\-al algebra. In this way we construct many new families of rep\-re\-sen\-ta\-tion-fi\-nite algebras whose module categories admit $n$-cluster tilting modules.
\end{abstract}

\tableofcontents

\section{Introduction}

\subsection{Motivation} Let $\K$ be a field and $\La$ be a fi\-nite-di\-men\-sion\-al $\K$-algebra. One of the main aims of representation theory is to describe the category $\m\La$ of fi\-nite-di\-men\-sion\-al (right) $\La$-modules. Since it is generally impossible to achieve this aim, one usually restricts either to specific classes of algebras or to specific subcategories of the module category. One such restriction on the side of algebras is to assume that there only exist finitely many indecomposable $\La$-modules up to isomorphism; in this case $\La$ is called \emph{rep\-re\-sen\-ta\-tion-fi\-nite}\index[definitions]{rep\-re\-sen\-ta\-tion-fi\-nite algebra}. In particular, if $\{M_1,\dots,M_k\}$ is a complete and irredundant collection of representatives of indecomposable $\La$-modules and $M=\bigoplus_{i=1}^k M_i$, then $\m\La$ is equivalent to the additive closure $\add(M)$ of $M$ in $\m\La$. A fundamental result in this case is the \emph{Auslander correspondence} \cite[Section III.4]{AUS} which gives a bijection between rep\-re\-sen\-ta\-tion-fi\-nite algebras up to Morita equivalence and \emph{Auslander algebras}, that is algebras $\Gamma$ with $\gldim(\Gamma)\leq 2\leq \domdim(\Gamma)$, up to Morita equivalence, where $\gldim$ is the \emph{global dimension} and $\domdim$ is the \emph{dominant dimension}. 

The bijection of the Auslander correspondence is given by $M\mapsto \End_{\La}(M)$. Hence Auslander algebras can be seen as algebras given by the additive generator of a module category. In this sense Aus\-lan\-der--Rei\-ten theory can be seen as generalizing some of the properties of Auslander algebras to general module categories. In particular in the case of rep\-re\-sen\-ta\-tion-fi\-nite algebras, Aus\-lan\-der--Rei\-ten theory gives a complete description of $\m\La$, see for example \cite{ARS, ASS}. 

One way to restrict to a subcategory of the whole module category is via Iyama's higher dimensional Aus\-lan\-der--Rei\-ten theory \cite{IYA2, IYA1}, which can be seen as a generalization of the above situation. In this theory one changes focus from $\m\La$ to a functorially finite subcategory $\cM\subseteq \m\La$ such that
\begin{align*}
    \cM &= \{ X \in \m\La \mid \Ext^{i}_{\La}\left(X, \cM\right)=0\text{ for all $1\leq i \leq n-1$}\} \\
    & = \{ X \in \m\La \mid \Ext^{i}_{\La}\left(\cM,X\right)=0\text{ for all $1\leq i \leq n-1$}\}, 
\end{align*}
called an \emph{$n$-cluster tilting subcategory}\index[definitions]{n-cluster tilting subcategory@$n$-cluster tilting subcategory}. In this setting one can construct a higher dimensional analogue of Aus\-lan\-der--Rei\-ten theory inside $\cM$. In particular, there exists a higher Auslander correspondence \cite[Theorem 0.2]{IYA2} between algebras whose module categories admit an $n$-cluster tilting subcategory with finitely many indecomposable modules and algebras $\Gamma$ with $\gldim(\Gamma) \leq n+1 \leq \domdim(\Gamma)$. Both the higher Auslander correspondence and higher dimensional Aus\-lan\-der--Rei\-ten theory restrict to their classical versions when $n=1$. Notice also that a module category may admit $n$-cluster tilting subcategories for different values of $n$; in fact $\m\La$ itself is always the unique $1$-cluster tilting subcategory.

Let $\cM\subseteq \m\La$ be an $n$-cluster tilting subcategory. If $\cM\equivalent \add(M)$ for some $M\in\m\La$ then $M$ is called an \emph{$n$-cluster tilting module}\index[definitions]{n-cluster tilting module@$n$-cluster tilting module}. Clearly if $n=1$ then a $1$-cluster tilting module exists if and only if $\La$ is rep\-re\-sen\-ta\-tion-fi\-nite. If $n\geq 2$ it is an open question whether there exists an $n$-cluster tilting subcategory with no additive generator. Furthermore, due to the homological nature of $n$-cluster tilting subcategories, an important role is played by the global dimension $d\coloneqq\gldim(\La)$. In particular if $\cM=\add(M)$ is given by a $d$-cluster tilting module $M$, then $\cM$ is unique and given by 
\[\cM=\add \{\tau_d^{-i}(\La) \mid i\geq 0\},\]
where $\tau_d^-=\tau^-\om^{-(d-1)}$ denotes the \emph{inverse $d$-Aus\-lan\-der--Rei\-ten translation}, see Section \ref{subsec:background and notation}.

Finding an algebra $\La$ whose module category admits an $n$-cluster tilting module is a challenging problem, even in the much more studied case of $n=1$ where many families of rep\-re\-sen\-ta\-tion-fi\-nite algebras have been classified (e.g. \cite{Gab1, Rid, Fis, BR}). The case $n=d$ has attracted the most attention (e.g. \cite{IO, HI, CIM, VAS}), but the cases $n\in d\ZZ$ (e.g. \cite{JK}) and $d=\infty$ (e.g. \cite{EH, DI}) have also been studied. The case $n\nmid d$ was studied in \cite{VAS2}. We briefly recall the methods of \cite{DI} and \cite{VAS2} which are particularly important for this paper.

\subsection{Previous work} By abstracting the above situation further, one defines $n$-cluster tilting subcategories for any abelian category. In particular, let $\cC$ be a locally bounded $\K$-linear Krull--Schmidt category. Then $\m\cC$ is an abelian category which in general is not equivalent to a category of modules over an algebra. By applying the classical theory of Galois covering in this setting, a method of constructing $n$-cluster tilting subcategories was introduced in \cite{DI}. This method takes as input a locally bounded $\K$-linear Krull--Schmidt category $\cC$ admitting an $n$-cluster tilting subcategory $\cM$ and an admissible group of automorphisms $\cG$ of $\cC$ and, under reasonable compatibility conditions between $\cG$ and $\cM$, produces a new $n$-cluster tilting subcategory of the module category $\m(\cC/\cG)$ of the orbit category $\cC/\cG$ \cite[Theorem 2.14]{DI}. Notice that although $\m\cC$ may not be equivalent to a module category of an algebra, it may happen that $\m(\cC/\cG)$ is equivalent to $\m\La$ for some fi\-nite-di\-men\-sion\-al algebra $\La$, thus landing back to the world of algebras.

Next, let $\La$ be a \emph{rep\-re\-sen\-ta\-tion-di\-rect\-ed}\index[definitions]{rep\-re\-sen\-ta\-tion-di\-rect\-ed algebra} algebra, that is a rep\-re\-sen\-ta\-tion-fi\-nite algebra with no oriented cycles in its Aus\-lan\-der--Rei\-ten quiver, see \cite[2.4]{RIN}. Since $\La$ is rep\-re\-sen\-ta\-tion-fi\-nite, $n$-cluster tilting modules and $n$-cluster tilting subcategories coincide. It is not difficult to see that if there exists an $n$-cluster tilting subcategory of $\La$, then it is unique and it is given by 
\[\cM=\add\{\tau_n^{-i}\left(\La\right)\mid i\geq 0\}.\] 
A characterization of $n$-cluster tilting modules of rep\-re\-sen\-ta\-tion-di\-rect\-ed algebras was proved in \cite[Theorem 1]{VAS}, based on, among other things, results in \cite{IYA4}. This characterization states that $\cM$ is an $n$-cluster tilting subcategory if and only if the following two conditions are satisfied.
\begin{enumerate}
    \item $\tn:\cM_{\cP}\to \cM_{\cI}$ and $\tno:\cM_{\cI}\to\cM_{\cP}$ induce mutually inverse bijections, where $\cM_{\cP}$ respectively $\cM_{\cI}$ denote the set of isomorphism classes of indecomposable nonprojective respectively noninjective $\La$-modules in $\cM$.
    \item For all $i\in\{1,\dots,n-1\}$ and all $M\in\cM_{\cP}$ and $N\in\cM_{\cI}$ we have that the $i$-th syzygy $\om^i (M)$ and the $i$-th cosyzygy $\om^{-i}(N)$ are indecomposable. 
\end{enumerate}

Based on this result, another method of constructing $n$-cluster tilting modules for rep\-re\-sen\-ta\-tion-di\-rect\-ed algebras was introduced in \cite{VAS2} called \emph{gluing}. First, based on the above characterization, one generalizes the notion of $n$-cluster tilting modules and $n$-cluster tilting subcategories to the notion of \emph{$n$-fractured modules}\index[definitions]{n-fractured module@$n$-fractured module} and \emph{$n$-fractured subcategories}, see \cite[Definition 3.10]{VAS2}. This notion is based on considering suitable subcategories $\cP^L,\cI^R\subseteq \m\La$ which substitute for the classes of  projective and injective $\La$-modules. More precisely, if $\cP^L=\add(T^L)$ and $\cI^R=\add(T^R)$ for some suitably chosen modules $T^L$ and $T^R$, then $\add\{\tau_n^{-i}\left(T^L\right) \mid i\geq 0\}$ is an $n$-fractured subcategory if and only if the following two conditions are satisfied.
\begin{enumerate}
    \item $\tn:\cM_{\cP^L}\to \cM_{\cI^R}$ and $\tno:\cM_{\cI^R}\to \cM_{\cP^L}$ induce mutually inverse bijections, where $\cM_{\cP^L}$ respectively $\cM_{\cI^R}$ is the set of isomorphism classes of indecomposable $\La$-modules not in $\cP^L$ respectively not in $\cI^R$.
    \item For all $i\in\{1,\dots,n-1\}$ and all $M\in\cM_{\cP^L}$ and $N\in\cM_{\cI^R}$ we have that $\om^i(M)$ and $\om^{-i}(N)$ are indecomposable. 
\end{enumerate}
Then an $n$-fractured subcategory is an $n$-cluster tilting subcategory if and only if $T^L\isom\La$ and $T^R\isom\D(\La)$, where $\D=\Hom_{\K}(-,\K)$ is the standard duality between $\m\La$ and $\m\La^{\text{op}}$.

Next one introduces a gluing procedure which takes as input two rep\-re\-sen\-ta\-tion-di\-rect\-ed algebras $A$ and $B$, together with a certain $A$-module and a certain $B$-module and produces a new rep\-re\-sen\-ta\-tion-di\-rect\-ed algebra $B\glue A$. The representation theory of $B\glue A$ can be completely described by the representation theory of $A$ and $B$. If moreover $\m A$ and $\m B$ each admit an $n$-fractured module $M_A$ and $M_B$, and some reasonable compatibility restrictions between $M_A$ and $M_B$ are satisfied, then $\m (B\glue A)$ also admits an $n$-fractured module $M_{B{\scriptscriptstyle \triangle} A}$ \cite[Theorem 3.16]{VAS2}. It is shown in \cite{VAS2} that in many cases $M_{B{\scriptscriptstyle \triangle} A}$ is an actual $n$-cluster tilting module.

\subsection{The results of this paper} 

\subsubsection{A motivating example}\label{subsubsec:a motivating example} Our aim is to combine the results of both \cite{DI} and \cite{VAS2} to present a new method of constructing $n$-cluster tilting subcategories of abelian categories. Let $Q$ be a quiver, let $\cR\subseteq \K Q$ be an admissible ideal and let $A=\K Q/\cR$. Assume moreover that $Q$ is of the form
\[\begin{tikzpicture}[scale=0.9, transform shape]

\draw[->] (8.5,0) -- (8.9,0);
\draw[-] (8.9,0) -- (9.2,0);
\draw[->] (10.8,0) -- (11.2,0);
\draw[-] (11.2,0) -- (11.5,0);

\draw[pattern=northwest, hatch distance=15pt, hatch thickness = 0.3pt] (10,0) ellipse (0.8cm and 1cm);
\node (Z) at (10,0) {$\mathbf{Q'}$};
\end{tikzpicture}\]
where $Q'$ is not empty and  $\begin{tikzpicture}[scale=0.8, transform shape, decoration={markings, mark=at position 0.6 with {\arrow{>}}}] 
    \node (A) at (0,0) {}; 
    \node (B) at (1.4,0) {}; 
    \draw[postaction={decorate}] (A) -- (B);
\end{tikzpicture}$ is a linearly oriented quiver of Dynkin type $A$. Assume also that no path in $\begin{tikzpicture}[scale=0.8, transform shape, decoration={markings, mark=at position 0.6 with {\arrow{>}}}] 
    \node (A) at (0,0) {}; 
    \node (B) at (1.4,0) {}; 
    \draw[postaction={decorate}] (A) -- (B);
\end{tikzpicture}$ belongs to $\cR$. For an integer $z\in\ZZ$, let $Q[z]$ be a copy of $Q$ with vertices and arrows labelled as follows.
\begin{align*}
\left(Q[z]\right)_0 &=\{i[z] \mid i\in Q_0\},\\
\left(Q[z]\right)_1 &= \{\alpha[z]:i[z]\to j[z] \mid \alpha\in Q_1 \text{ with $\alpha:i\to j$}\}. 
\end{align*}
Denote by $\cR[z]$ the ideal of $\K Q[z]$ corresponding to the ideal $\cR$ of $\K Q$. We set $A[z]\coloneqq \K Q[z]/\cR[z]$. Clearly $A[z]$ is canonically isomorphic to $A$.

In this case, the gluing $A[1]\glue A[0]$ is by definition equal to $\K Q_1/\cR_1$ where $Q_1$ is the quiver
\[\begin{tikzpicture}[scale=0.9, transform shape]

\draw[->] (5.3,0) -- (5.7,0);
\draw[-] (5.7,0) -- (6,0);

\draw[pattern=northwest, hatch distance=15pt, hatch thickness = 0.3pt] (6.8,0) ellipse (0.8cm and 1cm);
\node (Z02) at (6.8,0) {$\mathbf{Q'[0]}$};

\draw[->] (7.6,0) -- (8,0);
\draw[-] (8,0) -- (8.3,0);

\draw[pattern=northwest, hatch distance=15pt, hatch thickness = 0.3pt] (9.1,0) ellipse (0.8cm and 1cm);
\node (Z01) at (9.1,0) {$\mathbf{Q'[1]}$};

\draw[->] (9.9,0) -- (10.3,0);
\draw[-] (10.3,0) -- (10.6,0);
\end{tikzpicture}.\]
and $\cR_1$ is generated by all elements in $\K Q[0]$ that lie in $\cR[0]$, all elements in $\K Q[1]$ that lie in $\cR[1]$, as well as all paths from $Q'[0]$ to $Q'[1]$. By repeating this procedure infinitely many times, we may then consider the infinite quiver $\Qks$ given by
\begin{equation}\label{picture:infinite quiver}
\begin{tikzpicture}[baseline={(current bounding box.center)}, scale=0.9, transform shape]
\draw[loosely dotted] (4.2,0) -- (5.2,0);

\draw[->] (5.3,0) -- (5.7,0);
\draw[-] (5.7,0) -- (6,0);

\draw[pattern=northwest, hatch distance=15pt, hatch thickness = 0.3pt] (6.8,0) ellipse (0.8cm and 1cm);
\node (Z02) at (6.8,0) {$\mathbf{Q'[-2]}$};

\draw[->] (7.6,0) -- (8,0);
\draw[-] (8,0) -- (8.3,0);

\draw[pattern=northwest, hatch distance=15pt, hatch thickness = 0.3pt] (9.1,0) ellipse (0.8cm and 1cm);
\node (Z01) at (9.1,0) {$\mathbf{Q'[-1]}$};

\draw[->] (9.9,0) -- (10.3,0);
\draw[-] (10.3,0) -- (10.6,0);

\draw[pattern=northwest, hatch distance=15pt, hatch thickness = 0.3pt] (11.4,0) ellipse (0.8cm and 1cm);
\node (Z) at (11.4,0) {$\mathbf{Q'[0]}$};

\draw[->] (12.2,0) -- (12.6,0);
\draw[-] (12.6,0) -- (12.9,0);

\draw[pattern=northwest, hatch distance=15pt, hatch thickness = 0.3pt] (13.7,0) ellipse (0.8cm and 1cm);
\node (Z1) at (13.7,0) {$\mathbf{Q'[1]}$};

\draw[->] (14.5,0) -- (14.9,0);
\draw[-] (14.9,0) -- (15.2,0);

\draw[pattern=northwest, hatch distance=15pt, hatch thickness = 0.3pt] (16,0) ellipse (0.8cm and 1cm);
\node (Z2) at (16,0) {$\mathbf{Q'[2]}$};

\draw[->] (16.8,0) -- (17.2,0);
\draw[-] (17.2,0) -- (17.5,0);

\draw[loosely dotted] (17.6,0) -- (18.6,0);
\end{tikzpicture}
\end{equation}
Since this is now an infinite quiver, we cannot speak of a fi\-nite-di\-men\-sion\-al path algebra over $\Qks$. However, we can consider instead the path category $\K \Qks$ and the two-sided ideal $\Rks\subseteq \K\Qks$ generated by all elements in $\K Q[z]$ that lie in $\cR[z]$ and all paths $Q'[z]\to Q'[z+1]$ for $z\in \ZZ$. In some sense this path category can be thought of as the infinite gluing $\cdots\glue A \glue A \glue A \glue \cdots$. Moreover, if $\m A$ has an $n$-fractured subcategory $\cM$ satisfying certain symmetry conditions, then $\cM$ induces a subcategory $\Mks$ of $\m (\K \Qks /\Rks)$ which can be thought of as an analogue of an $n$-fractured subcategory. Under certain conditions, the subcategory $\Mks$ is an actual $n$-cluster tilting subcategory. The study of this situation is the main motivation for this paper.

\subsubsection{The general situation} As a first step, we generalize the above situation. Let $G$ be a \emph{directed tree}, that is $G$ is a directed graph where the underlying undirected graph is acyclic. We also assume that every vertex in $G$ is the source and target of finitely many arrows in $G$. We denote the set of vertices of $G$ by $V_G$ and the set of arrows of $G$ by $E_G$. We also denote by $\dirI_G$ be the directed set of finite connected subgraphs of $G$.

We start by decorating each vertex $v\in V_G$ by a rep\-re\-sen\-ta\-tion-di\-rect\-ed bound quiver algebra $\La_v=\K Q_v/\cR_v$ in such a way that if $e:u\to v$ is an arrow of $G$, then the algebra $\La_{\langle u,v\rangle}\coloneqq \La_u\glue \La_v=\K Q_{\langle u,v\rangle}/\cR_{\langle u,v\rangle}$ is well-defined. We also introduce assumptions which have the effect that for every $H\in\dirI_G$, we may perform the gluings induced by the arrows of $H$ in any order to obtain a rep\-re\-sen\-ta\-tion-di\-rect\-ed algebra $\La_H=\K Q_H/\cR_H$. This is the data of a \emph{gluing system} $(\La_v)_{v\in V_G}$; for the complete definition see Definition \ref{def:gluing system}. Such a system comes equipped with canonical algebra epimorphisms $F_{HK}:\La_K\twoheadrightarrow \La_H$ for all finite connected subgraphs $H\subseteq K$ of $G$. Furthermore, the category $\m\La_{H}$ can be described completely via the categories $\m\La_v$ for $v\in V_H$. To apply our methods to a potential limit $\La_{G}$ of this system, we would like to be able to describe $\m\La_{G}$ via the categories $\m\La_H$ for $H\in\dirI_G$.

One way to do this is the following. Consider the (potentially infinite) quiver $\Qks$ obtained after performing the (potentially infinitely many) gluings induced by the arrows of $G$. We may consider its path category $\K \Qks$. We show that the union $\Rks=\bigcup_{H\in\dirI_G}\cR_H\subseteq \K \Qks$ is a two-sided ideal of $\K \Qks$. By the definition of the ideal $\Rks$ it is not too difficult to see that $\m(\proj (\K\Qks/\Rks))$ can be described via the categories $\m\La_H$ for $H\in\dirI_G$. In particular, if $G$ is finite, we have $\m\La_G \equivalent \m(\proj(\K \Qks/\Rks))$. In this way we may consider $\proj(\K \Qks/\Rks)$ as a suitable candidate for the limit of the system $(\La_v)_{v\in V_G}$; we will soon make this more precise. 

Assume now that the module category $\m\La_v$ of each algebra $\La_v$ in our gluing system admits an $n$-fractured subcategory $\cM_v\subseteq \m\La_v$, and that again these $n$-fractured subcategories are compatible with the gluing. This is the data of an \emph{$n$-fractured system}, see Definition \ref{def:n-fractured system}. By gluing the $n$-fractured subcategories of $\m\La_v$ for $v\in V_H$, the category $\m\La_{H}$ obtains an $n$-fractured subcategory $\cM_{H}$. Furthermore, by construction, the restriction of scalars functor $F_{HK\ast}:\m\La_H\to \m\La_K$ satisfies $F_{HK\ast}(\cM_H) \subseteq \cM_K$ for $H,K\in\dirI_G$ with $H\subseteq K$. This data gives rise to a subcategory $\Mks\subseteq \m (\K\Qks/\Rks)$. In general $\Mks$ is not an $n$-cluster tilting subcategory. To remedy this we introduce the notion of \emph{complete} $n$-fractured systems, see Definition \ref{def:n-fractured system}. We have the following theorem, which is a special case of Theorem \ref{thrm:complete n-fractured gives n-cluster tilting}. 

\begin{theorem}\label{thrm:complete n-fractured Mks is n-ct}
Let $\cL=\left(\La_v\right)_{v\in V_G}$ be a gluing system and $\left(\cM_v\right)_{v\in V_G}$ be a complete $n$-fractured system of $\cL$. Then $\Mks\subseteq \m(\K\Qks/\Rks)$ is an $n$-cluster tilting subcategory.
\end{theorem}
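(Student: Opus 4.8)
The plan is to verify the two defining equalities of an $n$-cluster tilting subcategory directly for $\Mks$ inside $\m(\K\Qks/\Rks)$, bootstrapping from the fact that each $\cM_H \subseteq \m\La_H$ is an $n$-fractured subcategory and that the system is complete. The key structural input is that $\m(\proj(\K\Qks/\Rks))$ is the "limit" of the categories $\m\La_H$: every object and every morphism is supported on some finite connected subgraph $H \in \dirI_G$, and the restriction-of-scalars functors $F_{HK\ast}$ are fully faithful and exact. So the whole argument should reduce statements about $\Mks$ to statements about the $\cM_H$ that we already control, together with a compatibility check on how completeness makes the fractured data glue into genuine projectives and injectives in the limit.

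First I would set up the approximation machinery: show that $\Mks$ is functorially finite in $\m(\K\Qks/\Rks)$ by exhibiting, for any $X$ supported on $H$, left and right $\Mks$-approximations obtained from the corresponding $\cM_H$-approximations in $\m\La_H$ (using that $\cM_H$ is functorially finite there, and that approximations are preserved under $F_{HK\ast}$ because these functors are exact and fully faithful). Next I would prove the vanishing inclusion $\Mks \subseteq \{X \mid \Ext^i(X,\Mks) = 0 \text{ for } 1 \le i \le n-1\}$ and its dual: for $X, Y \in \Mks$ both supported on a common finite $H$, one has $\Ext^i_{\K\Qks/\Rks}(X,Y) \cong \Ext^i_{\La_H}(X,Y)$ — this is where I expect to need that $\Rks = \bigcup_H \cR_H$ so that projective resolutions computed in $\m\La_H$ remain projective resolutions in the limit (the syzygies stay supported on $H$, or on a slightly larger $H' \in \dirI_G$, by the completeness hypothesis controlling the "boundary" behaviour). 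Then the $n$-fractured property of $\cM_H$ gives the required $\Ext$-vanishing.

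The converse inclusions are the heart of the matter: if $X \in \m(\K\Qks/\Rks)$ satisfies $\Ext^i(X,\Mks) = 0$ for $1 \le i \le n-1$, I must show $X \in \Mks$. Here $X$ is supported on some $H$, so $X \in \m\La_H$; the vanishing against all of $\Mks$ restricts to vanishing against $\cM_H$, but an $n$-fractured subcategory only satisfies the weaker "fractured" analogue of the cluster-tilting equations — this is precisely why completeness is needed. I would use the characterization recalled in the excerpt (conditions (1) and (2) for $n$-fractured subcategories, plus the criterion that an $n$-fractured subcategory is $n$-cluster tilting iff $T^L \cong \La$ and $T^R \cong \D(\La)$): completeness of the system must be shown to force, in the limit, that the "fracture" subcategories $\cP^L$ and $\cI^R$ become exactly the projectives $\proj(\K\Qks/\Rks)$ and the injectives of $\m(\K\Qks/\Rks)$. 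Concretely, the paths $Q'[z] \to Q'[z+1]$ that get killed in $\Rks$, together with the completeness condition, should identify the glued fractured modules with the indecomposable projective–injectives across the "seams," so that no indecomposable is wrongly excluded from $\Mks$. Once $\cP^L$ and $\cI^R$ are identified with projectives and injectives, the $n$-fractured equations (1)–(2) for $\cM_H$, assembled over all $H \in \dirI_G$, become exactly the $n$-cluster tilting equations for $\Mks$.

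**The main obstacle** I anticipate is the bookkeeping at the seams: controlling syzygies, cosyzygies, and $\tau_n^{\pm}$ of modules whose support straddles the boundary between consecutive glued pieces, and checking that passing to the infinite limit does not create new indecomposables in the "$\Ext$-orthogonal" that fail to lie in $\Mks$. This is exactly what the \emph{completeness} hypothesis is designed to rule out, so the real work is translating completeness (Definition \ref{def:n-fractured system}) into the statement that $\cP^L_G \cong \proj(\K\Qks/\Rks)$ and $\cI^R_G$ equals the injectives, and then quoting the reduction to Theorem \ref{thrm:complete n-fractured gives n-cluster tilting} (of which this is the stated special case) to finish. I would organize the proof so that all finite-support reductions are lemmas proved once and for all, leaving the final assembly short.
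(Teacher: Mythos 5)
Your overall framework is in the right spirit — reduce everything to finite $H \in \dirI_G$, use fully faithful exact restriction, and invoke completeness to upgrade the fractured property to the cluster-tilting property — and you correctly recognize that the result is stated in the paper as a special case of Theorem~\ref{thrm:complete n-fractured gives n-cluster tilting}. However, there is a genuine gap in your argument for the converse inclusion, and the mechanism you propose there is not the one the paper uses.

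The issue is in the sentence ``the vanishing against all of $\Mks$ restricts to vanishing against $\cM_H$.'' This restriction is strictly weaker than the hypothesis, and $\Ext$-orthogonality to $\cM_H$ alone does \emph{not} force membership in $\cM_H$, precisely because $\cM_H$ is only $n$-fractured. The paper's abstract criterion (Definition~\ref{def:asymptotically weakly n-cluster tilting}(iii)--(iv)) has a crucial quantifier you drop: one tests $L_H(X)$ against $\cM_K$ for \emph{all} $K \supseteq H$, not just $\cM_H$. The proof that this asymptotic condition holds is Proposition~\ref{prop:n-fractured is weakly n-ct}(c): if $L_H(X) \notin \cM_H$ because it lies in $\Sub(T^L_H) \setminus \add(T^L_H)$, then completeness (condition (iv) of Definition~\ref{def:n-fractured system}) produces a \emph{specific} larger $K = \langle V_H, V(t)\rangle$ in which the offending fracture summand $T$ ceases to lie in $\cP^L_K$, and then the $\tau_n$-bijection for the $(T^L_K, T^R_K, n)$-fractured subcategory $\cM_K$ gives a nonzero $\Ext$ against $\tn(\prescript{K}{H}{\pi}_{\ast}(T)) \in \cM_K$, yielding the contradiction. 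Your proposed alternative — proving directly that ``in the limit'' the fracture subcategories $\cP^L$, $\cI^R$ become exactly $\proj(\K\Qks/\Rks)$ and the injectives — is not established anywhere in the paper, is not obviously true (the fracture summands of $T^{(W)}_H$ need not become projective $\K\Qks/\Rks$-modules; they may simply become non-fracture modules in some larger $\La_K$), and would need its own proof. The paper deliberately avoids making that identification and works asymptotically instead. You would need to replace that step with the one-$H$-at-a-time witness argument, or prove your global identification, before the converse inclusion is sound.

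Two smaller imprecisions: the $\Ext$-comparison isomorphism $\Ext^r_{\Cks}(X, \Pks_{u\ast}(Y)) \cong \Ext^r_{\cC_u}(\Pks_u^\ast(X), Y)$ (Proposition~\ref{prop:KS extensions}) holds only for $u$ above a threshold $t$ depending on $X$ and the degree range, not merely for $u$ containing the support of $X$, since the first $n$ syzygies of $X$ must remain controlled; and functorial finiteness of $\Mks$ is not a one-line consequence of functorial finiteness of the $\cM_H$ — it requires the local boundedness of $\m\Cks$ (Proposition~\ref{prop:indecomposables in firm source}(f)) together with the transfer-of-approximations argument in Corollary~\ref{cor:n-ct if locally bounded}.
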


In the above we have considered the category $\m(\K\Qks/\Rks)$ which, in general, is not the module category of a fi\-nite-di\-men\-sion\-al algebra. To deal with this situation we first prove our results in an abstract categorical setting where we consider modules over categories instead of modules over algebras. 

Starting with a gluing system $(\La_v)_{v\in V_G}$ we obtain an algebra $\La_H$ for every $H\in\dirI_G$ and an algebra epimorphism $F_{HK}:\La_K\to \La_H$ for every $H,K\in\dirI_G$ with $H\subseteq K$. Furthermore, these epimorphisms satisfy $F_{HK}\comp F_{KL}=F_{HL}$ for every $H,K,L\in\dirI_G$ with $H\subseteq K \subseteq L$. In particular, the pair $(\La_H,F_{HK})_{H,K\in\dirI_G}$ is an inverse system in the category of fi\-nite-di\-men\-sion\-al algebras. Since $\m\La_H \equivalent \m(\proj\La_H)$ for every $H\in \dirI_G$, we may consider the category $\proj\La_H$ as a categorical replacement for the algebra $\La_H$. Similarly, we may consider the extension of scalars functor $F_{HK}^{\ast}:\proj\La_K\to \proj\La_H$ as a replacement of the algebra epimorphism $F_{HK}:\La_K \to\La_H$ for $H,K\in\dirI_G$ with $H\subseteq K$. Note that we do not have a strict equality $F_{HK}^{\ast}\comp F_{KL}^{\ast}=F_{HL}^{\ast}$. However, in Section \ref{subsubsec:gluing systems of representation-directed algebras} we define natural isomorphisms $\theta_{HKL}:F_{HK}\comp F_{KL}\To F_{HL}$. Then the triple $\left(\proj\La_H,F_{HK},\theta_{HKL}\right)_{H,K,L\in\dirI_G}$ can be thought of as an inverse system over $\dirI_G$, up to natural isomorphism.

This motivates us to consider the general situation where we study an inverse system $\left(\cC_i, F_{ij},\theta_{ijk}\right)$ of Krull--Schmidt categories over a directed set $\dirI$, see Definition \ref{def:inverse system}. This situation is more technical than usual inverse systems because of the introduction of natural isomorphisms $\theta_{ijk}:F_{ij}\comp F_{jk}\To F_{ik}$ satisfying certain compatibility conditions. To avoid set theoretic issues, we consider this inverse system in the category of small categories $\Cat$. It turn out that the categorical inverse limit of such a system is too big for our purposes. However, we find a subcategory $\Cks$ of the inverse limit, suitable for our purposes, called the \emph{firm source}, see Definition \ref{def:firm objects and Cks}. The firm source $\Cks$ comes equipped with canonical functors $\Pks_i:\Cks\to \cC_i$ and natural isomorphisms $\Tks_{ij}:F_{ij}\comp \Pks_j \To \Pks_i$. In particular we have the following theorem.

\begin{theorem}\label{thrm:Cks is Krull-Schmidt}
Let $\left(\cC_i,F_{ij},\theta_{ijk}\right)$ be a $\Cat$-inverse system over a directed set $\dirI$. If all $\cC_i$ are Krull--Schmidt categories, then the firm source $\Cks$ is a Krull--Schmidt category. 
\end{theorem}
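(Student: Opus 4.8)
The plan is to verify that $\Cks$ has the three defining properties of a Krull--Schmidt category: it is additive, idempotents split in it, and the endomorphism ring of each of its objects is semiperfect. By the standard characterization of Krull--Schmidt categories as the idempotent complete additive categories all of whose endomorphism rings are semiperfect, this is enough. Additivity is the easy part: a $\Cat$-inverse system consists of additive (indeed $\K$-linear) categories and additive functors, so the categorical inverse limit is additive, with $\Hom$-groups obtained as limits of the abelian groups $\Hom_{\cC_i}(\Pks_i X,\Pks_i Y)$ along the transition maps built from the $F_{ij}$ and the $\theta_{ijk}$, with bilinear composition, and with biproducts and a zero object computed index-wise. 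One then checks that $\Cks$ is a full additive subcategory, which amounts to observing that an index-wise biproduct of firm families, equipped with the biproduct of the structure isomorphisms, is again firm (and that the index-wise zero object is firm); both follow directly from Definition \ref{def:firm objects and Cks}.

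Next I would show that idempotents split in $\Cks$. Let $e=(e_i)_i\in\End_{\Cks}(X)$ be idempotent, so each component $e_i=\Pks_i(e)$ is idempotent in $\cC_i$. Since $\cC_i$ is Krull--Schmidt it is idempotent complete, so $e_i$ splits; the real content is to choose these splittings coherently across all $i$. Here one uses firmness of $X$: it pins down, via the data recorded in Definition \ref{def:firm objects and Cks}, a single index $i_0$ and level-$i_0$ data from which the whole compatible family $(\Pks_i X)_i$ is reconstructed. Splitting $e_{i_0}$ once at level $i_0$ and transporting this splitting to every other index by means of the structure functors $F_{ij}$ and the natural isomorphisms $\Tks_{ij}$ produces families of sections and retractions; the coherence conditions on the $\theta_{ijk}$ are exactly what forces these families to satisfy the cocycle identities that make them morphisms of the inverse limit, and the resulting summand is firm because it is an index-wise direct summand of the firm family $(\Pks_i X)_i$. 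Thus $e$ splits in $\Cks$. I expect this bookkeeping with the two layers of natural isomorphisms $\theta_{ijk}$ and $\Tks_{ij}$ to be the main obstacle of the argument.

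Finally I would show that $\End_{\Cks}(X)$ is semiperfect for every object $X$. The point is that firmness reduces this ring to an endomorphism ring inside a single category $\cC_{i_0}$: the ring homomorphism $\End_{\Cks}(X)\to\End_{\cC_{i_0}}(\Pks_{i_0}X)$ given by $f\mapsto\Pks_{i_0}(f)$ is an isomorphism. Injectivity holds because a morphism of the inverse limit is determined by its components on any cofinal set of indices (each $\Pks_i(f)$ is forced by $\Pks_j(f)$ for $j\ge i$ via the functoriality of $F_{ij}$ together with the isomorphisms $\Tks_{ij}$), and firmness makes $\{j\mid j\ge i_0\}$ suffice; surjectivity holds because firmness allows a given endomorphism of $\Pks_{i_0}(X)$ to be lifted to a compatible family over all indices. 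Since $\cC_{i_0}$ is Krull--Schmidt, $\End_{\cC_{i_0}}(\Pks_{i_0}X)$ is semiperfect, hence so is $\End_{\Cks}(X)$.

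Putting the three steps together with the cited characterization, $\Cks$ is a Krull--Schmidt category; equivalently, idempotent completeness together with semiperfectness of all endomorphism rings yields that every object of $\Cks$ decomposes into a finite direct sum of objects with local endomorphism rings, uniquely up to isomorphism and reordering of the summands. As an alternative to the last step one could instead transport a Krull--Schmidt decomposition of $\Pks_{i_0}(X)$ in $\cC_{i_0}$ directly up to $\Cks$ using firmness and the $\Tks_{ij}$, but routing through the endomorphism ring keeps the coherence bookkeeping confined to a single place.
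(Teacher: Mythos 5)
Your overall strategy---establish additivity, idempotent splitting, and semiperfectness of endomorphism rings, then invoke the standard characterization of Krull--Schmidt categories---is exactly the paper's route, and the additivity step (Proposition \ref{prop:additive limit} plus Proposition \ref{prop:properties of Cks}(c)) and the semiperfectness step (Lemma \ref{lem:connecting functors eventually bijective implies limit functors eventually bijective} feeding into Proposition \ref{prop:properties of Cks}(b)) match the paper closely. The one place you diverge is the idempotent step, and there your plan is more complicated than it needs to be. You treat firmness as essential to building the splitting, suggesting one should split $e_{i_0}$ at a single index $i_0$ and transport to all other indices; but the connecting functors $F_{ij}$ run downward ($j \to i$ with $i<j$), so transporting upward forces you through the inverses of the $F_{tu}$ on Hom-sets, and it is not clear from your sketch how the split-off object $y_{i_0}$ would be lifted to higher levels. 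The paper's Lemma \ref{lem:C has split idempotents} sidesteps this entirely: it splits each $e_i$ in $\cC_i$ independently, with \emph{arbitrary} choices, and checks by a direct computation using (\ref{eq:commutativity of objects in inverse limit category}) and (\ref{eq:commutativity of morphisms in inverse limit category}) that the resulting family $\left(y_i, s_i \comp f_{ij}\comp F_{ij}(t_j)\right)$ is an object of the full concrete inverse limit $\cC$---no firmness required at this stage, and no coherence to arrange because it holds automatically. Firmness enters only afterwards, in Proposition \ref{prop:properties of Cks}(a), to show that a direct summand of a firm object is firm. If you try to push your transport idea through you will likely rediscover that independent level-wise splitting already works; otherwise your sketch would need to be supplemented with an argument that your chosen splittings at distinct levels are genuinely compatible, which your current formulation leaves vague.
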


Theorem \ref{thrm:Cks is Krull-Schmidt} follows by Corollary \ref{cor:properties of Cks}. In fact we further prove that we can describe indecomposable objects in $\Cks$ using indecomposable objects of $\cC_i$. In particular, we show in Corollary \ref{cor:equivalence of quiver and projectives} that the firm source of the system $\left(\proj\La_H,F_{HK},\theta_{HKL}\right)_{H,K,L\in\dirI_G}$ is equivalent to $\proj(\K \Qks/\Rks)$. It is in this sense that we may formally consider $\proj(\K \Qks/\Rks)$ as the limit of the system $\left(\La_v\right)_{v\in V_G}$.

Next, we want to generalize the notion of $n$-fractured subcategories in the abstract setting. In a sense, an $n$-fractured subcategory can also be thought as a subcategory which is close to being $n$-cluster tilting but it fails to have the correct extensions with a few indecomposable modules. In Definition \ref{def:asymptotically weakly n-cluster tilting} we mimic this property of $n$-fractured subcategories to define a generalization of complete $n$-fractured systems in the abstract case of abelian categories; we call such a system an \emph{asymptotically weakly $n$-cluster tilting system}. We obtain the following general result.

\begin{theorem}\label{thrm:inverse system gives n-ct}
Let $(\Cks,\Pks_i,\Tks_{ij})$ be the firm source of the $\Cat$-inverse system $\left(\cC_i,F_{ij},\theta_{ijk}\right)$ over a directed set $\dirI$ and let $\left(\cM_i\right)$ be an asymptotically weakly $n$-cluster tilting system of $\left(\m\cC_i,F_{ij\ast},\theta_{ijk\ast}\right)$. If $\m\Cks$ is locally bounded and $\cM_i$ is functorially finite for every $i\in\dirI$, then $\Mks \coloneqq \add\{ \Pks_{i\ast}\left(\cM_i\right) \mid i\in\dirI\}$ is an $n$-cluster tilting subcategory of $\m\Cks$.
\end{theorem}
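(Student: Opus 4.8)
The plan is to verify the two defining conditions of an $n$-cluster tilting subcategory for $\Mks = \add\{\Pks_{i\ast}(\cM_i) \mid i\in\dirI\}$ inside $\m\Cks$, namely functorial finiteness together with the double $\Ext$-orthogonality
\[
\Mks = \{X \mid \Ext^j_{\Cks}(X,\Mks)=0,\ 1\le j\le n-1\} = \{X \mid \Ext^j_{\Cks}(\Mks,X)=0,\ 1\le j\le n-1\}.
\]
First I would set up the transfer machinery: since $(\Cks,\Pks_i,\Tks_{ij})$ is the firm source of the $\Cat$-inverse system, the functors $\Pks_{i\ast}:\m\cC_i\to\m\Cks$ (together with their counterparts $\Pks_i^{\ast}$) should be exact, compatible with the transition functors $F_{ij\ast}$ up to the natural isomorphisms $\theta_{ijk\ast}$ and $\Tks_{ij\ast}$, and should commute with $\Ext$-computation in the appropriate range. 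The key structural input is that every indecomposable object of $\m\Cks$ is \emph{supported} on some $\cC_i$, i.e.\ is isomorphic to $\Pks_{i\ast}(Y)$ for some $i$ and some indecomposable $Y\in\m\cC_i$ — this is where I expect to invoke the description of indecomposables of $\Cks$ obtained from Theorem \ref{thrm:Cks is Krull-Schmidt} (via Corollary \ref{cor:properties of Cks}), combined with the local boundedness of $\m\Cks$. Local boundedness is what guarantees that $\m\Cks$ is a reasonable abelian category in which Auslander--Reiten-type and orthogonality arguments make sense, and that the relevant $\Hom$ and $\Ext$ groups are finite-dimensional.

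Granting this, the argument proceeds as follows. For the $\Ext$-vanishing between objects of $\Mks$: given $\Pks_{i\ast}(M_i)$ and $\Pks_{j\ast}(M_j)$ with $M_i\in\cM_i$, $M_j\in\cM_j$, choose $k\ge i,j$ in the directed set $\dirI$ and push everything forward to level $k$; the asymptotically weakly $n$-cluster tilting hypothesis on $(\cM_i)$ controls the $\Ext^j$ for $1\le j\le n-1$ at each level $\m\cC_k$, and compatibility of $\Pks_{k\ast}$ with $\Ext$ transports this vanishing to $\m\Cks$. The subtle point — and the reason the \emph{asymptotic} hypothesis is the right one rather than demanding each $\cM_i$ be honestly $n$-cluster tilting — is that the finitely many "bad" indecomposables at each finite level (the ones recording the failure of the fractured subcategory to be cluster tilting, cf.\ the fractured-module conditions $T^L\not\cong\La$, $T^R\not\cong\D\La$ recalled in the introduction) get pushed off to infinity in the limit: for any fixed indecomposable $Y$ of $\m\Cks$, supported at some level $i_0$, one can go far enough out in $\dirI$ that $Y$ lies in the "good" part, so the obstruction never actually obstructs anything in $\m\Cks$. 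I would make this precise by showing that membership "$X\in\Mks$" and the orthogonality conditions are both detected level-by-level and that the asymptotic condition exactly says the levels eventually agree.

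For the reverse inclusions — that an $X$ with $\Ext^j_{\Cks}(X,\Mks)=0$ (resp.\ $\Ext^j_{\Cks}(\Mks,X)=0$) for $1\le j\le n-1$ already lies in $\Mks$ — I would reduce to $X$ indecomposable, write $X\cong\Pks_{i\ast}(Y)$ for suitable $i$ and indecomposable $Y\in\m\cC_i$, and test the orthogonality against $\Pks_{i\ast}(\cM_i)\subseteq\Mks$; exactness and $\Ext$-compatibility of $\Pks_{i\ast}$ then force $Y$ to satisfy the corresponding orthogonality against $\cM_i$ (or $\cM_k$ after going up to a level $k$ where $\cM_k$ has "absorbed" the relevant generators), and the defining property of the asymptotically weakly $n$-cluster tilting system yields $Y\in\cM_k$, hence $X=\Pks_{i\ast}(Y)=\Pks_{k\ast}F_{ik\ast}(Y)\in\Mks$. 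Finally, functorial finiteness of $\Mks$ in $\m\Cks$: a left (resp.\ right) $\Mks$-approximation of $X\cong\Pks_{i\ast}(Y)$ should be constructed by taking a left (resp.\ right) $\cM_i$-approximation of $Y$ in $\m\cC_i$ — which exists since $\cM_i$ is functorially finite — and applying $\Pks_{i\ast}$; the only thing to check is that this remains an approximation after push-forward, which again uses that all of $\Mks$ is accounted for by the images of the $\cM_k$ and that $\Hom$-spaces are controlled by local boundedness. I expect the main obstacle to be precisely the bookkeeping of the natural isomorphisms $\theta_{ijk\ast}$, $\Tks_{ij\ast}$ while going up and down the system — ensuring the diagrams commute on the nose after the coherence substitutions — and, more seriously, the careful passage to the limit in the orthogonality argument, i.e.\ proving that a fixed indecomposable of $\m\Cks$ can always be moved into the "good" region of some $\cM_k$; this is the step where the word \emph{asymptotically} is doing all the work, and it is where I would spend most of the effort.
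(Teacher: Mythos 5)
Your proposal is correct and takes essentially the same route as the paper, which deduces the result from Corollary \ref{cor:n-ct if locally bounded}: the $\Ext$-orthogonality is transported along the adjunctions $(\Pks_u^\ast,\Pks_{u\ast})$ and $(\Pks_{u\ast},\Pks_u^!)$ using the admissible-target machinery (Definition \ref{def:admissible target}, Corollary \ref{cor:admissible target}), your ``bad objects are pushed to infinity'' intuition is formalized precisely as conditions (iii)--(iv) of Definition \ref{def:asymptotically weakly n-cluster tilting}, and functorial finiteness is obtained by pushing $\cM_u$-approximations forward to $\m\Cks$ as you describe, with local boundedness controlling which indecomposables can interfere. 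One small refinement: the paper does not directly characterize indecomposables of $\m\Cks$ as being of the form $\Pks_{i\ast}(Y)$ by appealing to Theorem \ref{thrm:Cks is Krull-Schmidt} (which concerns $\Cks$, not $\m\Cks$); instead it uses the unit/counit isomorphisms $\Psi_u L_u(x)\isom x\isom\Psi_u R_u(x)$ for large $u$, established in Lemma \ref{lem:KS modules are preserved} and encoded in Definition \ref{def:admissible target}(ii), which effects the same reduction.
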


Theorem \ref{thrm:inverse system gives n-ct} is a special case of Corollary \ref{cor:n-ct if locally bounded}. In particular, for a gluing system, the firm source $\proj(\K \Qks/\Rks)$ is always locally bounded and each $\cM_H$ is functorially finite since $\La_H$ is rep\-re\-sen\-ta\-tion-fi\-nite for every $H\in\dirI_G$. Hence Theorem \ref{thrm:complete n-fractured Mks is n-ct} is a corollary of Theorem \ref{thrm:inverse system gives n-ct}.

\subsubsection{A motivating example revisited} Although starting from a complete $n$-fractured system we obtain an $n$-cluster tilting subcategory in $\m(\K \Qks/\Rks)$, this is not, in general, an $n$-cluster tilting subcategory of the module category of a fi\-nite-di\-men\-sion\-al algebra. To remedy this, we use the results of \cite{DI} we apply an orbit construction to $\m(\K\Qks/\Rks)$. First assume that $G$ is the graph
\[\cdots \xrightarrow{\alpha_{-3}} -2 \xrightarrow{\alpha_{-2}} -1 \xrightarrow{\alpha_{-1}} 0 \xrightarrow{\hspace*{3.3pt}\alpha_{0}\hspace*{3.3pt}} 1 \xrightarrow{\hspace*{3.3pt}\alpha_{1}\hspace*{3.3pt}} 2 \xrightarrow{\hspace*{3.3pt}\alpha_{2}\hspace*{3.3pt}} \cdots. \]
Then $(A[z])_{z\in V_G}$ is a gluing system of $G$ where $A[z]$ is as in Section \ref{subsubsec:a motivating example}. Moreover, in this case the infinite quiver $\Qks$ is as in (\ref{picture:infinite quiver}). There is an admissible $\ZZ$-action on $\K\Qks/\Rks$ defined by letting $k\in\ZZ$ map a path in $k$-steps to the right in $\Qks$. This $\ZZ$-action on $\K\Qks/\Rks$ induces a $\ZZ$-action on $\proj(\K \Qks/\Rks)$, which satisfies the conditions of \cite[Theorem 2.13]{DI} and so it gives rise to an $n$-cluster tilting subcategory $\tilM$ of $\m\big(\proj(\K \Qks/\Rks)/\ZZ\big)$. We then show the following theorem.

\begin{theorem}\label{thrm:n-cluster tilting for quotient}
There is an equivalence of categories $\m\big(\proj(\K \Qks/\Rks)/\ZZ\big)\equivalent \m(\K \tilQ/\tilR)$ where $\K \tilQ/\tilR$ is a fi\-nite-di\-men\-sion\-al bound quiver algebra. In particular, $\m(\K \tilQ/\tilR)$ admits an $n$-cluster tilting subcategory $\tilM$. 
\end{theorem}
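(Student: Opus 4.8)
The plan is to reduce Theorem \ref{thrm:n-cluster tilting for quotient} to two separate tasks: first, identify the orbit category $\proj(\K\Qks/\Rks)/\ZZ$ concretely as a finite bound quiver category, and second, transport the $n$-cluster tilting subcategory through this identification using the earlier machinery. For the first task, I would start from the explicit infinite quiver $\Qks$ of \eqref{picture:infinite quiver}, which consists of $\ZZ$-many copies $Q'[z]$ linked by linearly oriented type $A$ bridges, together with the ideal $\Rks$ generated by the relations $\cR[z]$ inside each block and all paths $Q'[z]\to Q'[z+1]$. The $\ZZ$-action shifts $z\mapsto z+k$, so a fundamental domain for the action on objects is a single period, say the vertices of $Q'[0]$ together with the bridge vertices up to (but not including) $Q'[1]$; the orbit quiver $\tilQ$ is obtained by taking this fundamental domain and gluing its right end back to its left end, i.e.\ $\tilQ$ is the ``wrapped-around'' quiver where the bridge out of $Q'[0]$ now returns into $Q'[0]$. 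The relations $\tilR$ are the images of $\Rks$: the relations $\cR_v$ inside the single block, plus the relation that every path going once around through the bridge vanishes. I would verify that this $\tilQ$ is finite (since $Q'$ and the bridge are finite) and that $\K\tilQ/\tilR$ is finite-dimensional (the around-relation forces bounded path lengths), and then exhibit the equivalence $\proj(\K\Qks/\Rks)/\ZZ \equivalent \proj(\K\tilQ/\tilR)$ directly: an object of the orbit category is a $\ZZ$-orbit of indecomposable projectives, which corresponds bijectively to a vertex of $\tilQ$, and morphism spaces in the orbit category are the $\ZZ$-sums of morphism spaces in $\K\Qks/\Rks$, which one checks match the morphism spaces of $\proj(\K\tilQ/\tilR)$ computed via paths modulo $\tilR$.

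Once that identification is in hand, the second task is essentially bookkeeping. By Theorem \ref{thrm:complete n-fractured Mks is n-ct} (equivalently Theorem \ref{thrm:inverse system gives n-ct}), $\Mks\subseteq \m(\K\Qks/\Rks) \equivalent \m\Cks$ is an $n$-cluster tilting subcategory, where $\Cks = \proj(\K\Qks/\Rks)$. The $\ZZ$-action on $\Cks$ is admissible by construction (the shift acts freely on objects and, because each $Q'[z]\to Q'[z+1]$ bridge kills all composite paths, only finitely many nonzero Hom-spaces survive between an object and its orbit — so the action is locally bounded and free, the hypothesis of \cite[Theorem 2.13]{DI}), and one must check the compatibility of the $\ZZ$-action with $\Mks$ required by \cite[Theorem 2.13]{DI}, namely that $\Mks$ is stable under the action; this follows because $\Mks = \add\{\Pks_{H\ast}(\cM_H)\}$ and the shift permutes the indices $H$ among themselves, hence permutes the generating objects. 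Then \cite[Theorem 2.13]{DI} yields an $n$-cluster tilting subcategory $\tilM$ of $\m(\Cks/\ZZ)$. Composing with the equivalence $\m(\Cks/\ZZ) \equivalent \m(\K\tilQ/\tilR)$ from the first task transports $\tilM$ to an $n$-cluster tilting subcategory of $\m(\K\tilQ/\tilR)$, giving the statement.

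I expect the main obstacle to be the first task, and specifically the precise description of the orbit quiver $\tilQ$ and its relation ideal $\tilR$, together with the verification that $\K\tilQ/\tilR$ is finite-dimensional. The subtlety is that the ideal $\Rks$ is defined as an increasing union $\bigcup_{H\in\dirI_G}\cR_H$ and involves both the ``internal'' relations of each block and the ``bridge-killing'' relations; when one passes to the orbit, the bridge that was going to infinity now closes up into a cycle, and one has to be careful that the resulting algebra does not acquire an oriented cycle with no relation killing it (which would make it infinite-dimensional) — this is exactly where the hypothesis that all paths $Q'[z]\to Q'[z+1]$ lie in $\Rks$ is used, since it forces any path traversing the loop once to be zero in $\K\tilQ/\tilR$. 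A secondary technical point is checking admissibility of the $\ZZ$-action in the sense of \cite{DI}: one needs that for each object $x$ of $\Cks$, $\Hom_{\Cks}(x, g\cdot x) = 0$ for all but finitely many $g\in\ZZ$, which again reduces to the bridge-killing relations but should be spelled out. Everything else — stability of $\Mks$ under the action, functorial finiteness, local boundedness of $\m\Cks$ — is either immediate from the constructions or already recorded in the cited theorems.
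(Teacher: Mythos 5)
Your proposal follows the same route as the paper: first establish that $\Mks$ is an $n$-cluster tilting subcategory of $\m\Cks$ (Corollary \ref{cor:M infinity infinity is n-ct}), then verify the $\ZZ$-action is admissible (Lemma \ref{lem:admissible Z-action on C}) and that $\Mks$ is $\ZZ$-equivariant (Lemma \ref{lem:Mks is Z-equivariant}), invoke the Darp\"o--Iyama theorem to get an $n$-cluster tilting subcategory of $\m(\Cks/\ZZ)$ (Corollary \ref{cor:Mks/Z is n-ct}), and finally identify $\proj(\K\Qks/\Rks)/\ZZ$ with $\proj(\K\tilQ/\tilR)$ via the fundamental-domain/wrap-around description (Proposition \ref{prop:finite-dimensional algebra from infinite quiver}, Corollary \ref{cor:La infinity n-ct}). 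The only small deviation is that you verify the orbit identification by direct computation of objects and morphism spaces where the paper uses a Yoneda-embedding bijection on indecomposables, and you bundle the finiteness condition $\dim_\K\bigoplus_{k\in\ZZ}\Cks(x,kx)<\infty$ into the word "admissible," whereas the paper treats admissibility as freeness of the action on indecomposables (Lemma \ref{lem:admissible Z-action on C}) and handles local boundedness separately (Proposition \ref{prop:indecomposables in firm source}); both are needed and you supply both.
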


Theorem \ref{thrm:n-cluster tilting for quotient} follows by Corollary \ref{cor:La infinity n-ct}. In this way we obtain an $n$-cluster tilting module of a fi\-nite-di\-men\-sion\-al algebra. Moreover, this new algebra is not, in general, rep\-re\-sen\-ta\-tion-di\-rect\-ed, although it is still rep\-re\-sen\-ta\-tion-fi\-nite. 

In fact our results are quite more general and we apply them to some more classes of gluing systems, giving us many more examples of $n$-cluster tilting modules. In all cases the input should be a class of rep\-re\-sen\-ta\-tion-di\-rect\-ed algebras whose module categories admit $n$-fractured subcategories. Many such algebras where produced in \cite{VAS} and \cite{VAS2} inside the class of Nakayama algebra. In Proposition \ref{prop:starlike algebras} we classify the radical square zero path algebras where the underlying graph of the quiver is starlike, whose module categories admit an $n$-cluster tilting subcategory. These algebras give, in themselves, more examples of $n$-cluster tilting subcategories. By gluing them finitely many times appropriately we get a large class of examples of fi\-nite-di\-men\-sion\-al algebras, whose module categories admit an $n$-cluster tilting subcategory. In particular we have the following theorem.

\begin{theorem}\label{thrm:any sinks and sources}
For all $s,t\in\ZZ_{\geq 0}$ there exists a bound quiver algebra $\La=\K Q/\cR$ such that $Q$ has $s$ sources and $t$ sinks and such that $\m\La$ admits an $n$-cluster tilting subcategory.
\end{theorem}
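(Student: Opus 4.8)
The plan is to reduce the statement to a finite gluing of well-understood building blocks and then to invoke the machinery developed earlier in the paper. The building blocks will be the starlike radical square zero path algebras classified in Proposition \ref{prop:starlike algebras}; among those I want to isolate, for each fixed $n$, two particular algebras $\La^{\mathrm{src}}$ and $\La^{\mathrm{snk}}$ whose quivers are starlike and which admit an $n$-cluster tilting subcategory, where $\La^{\mathrm{src}}$ has a quiver with (say) one extra source than a linear $A$-type tail, and $\La^{\mathrm{snk}}$ dually has one extra sink. (If the classification in Proposition \ref{prop:starlike algebras} does not directly furnish a single such pair, I would instead take a starlike algebra with several branches and note that along each branch one may independently choose an orientation toward or away from the centre, which is exactly what produces a prescribed number of sources and sinks in one shot.) The key point I must extract from Proposition \ref{prop:starlike algebras} is the \emph{existence}, for every $n$, of at least one such algebra with a nonlinear quiver, since a purely linear $A_m$ quiver has exactly one source and one sink and cannot by itself realize arbitrary $(s,t)$.

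Next I would set up the gluing. Recall that the gluing operation $B \glue A$ of \cite{VAS2} concatenates the quivers of two representation-directed algebras along a linearly oriented $A$-type segment and that, provided $\m A$ and $\m B$ each admit an $n$-fractured module with compatible data, $\m(B\glue A)$ again admits an $n$-fractured module, which in favourable cases is an honest $n$-cluster tilting module. First I would verify that the chosen $\La^{\mathrm{src}}$ and $\La^{\mathrm{snk}}$, together with a linearly oriented $A_\ell$ (which trivially has an $n$-fractured subcategory for $\ell$ a multiple of $n$, being itself $n$-representation-finite), satisfy the compatibility conditions of \cite[Theorem 3.16]{VAS2} so that the relevant gluings are defined and $n$-fractured subcategories propagate through them. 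Then, given target values $s,t\in\ZZ_{\geq 0}$, I would build $\La$ by gluing together $s$ copies of $\La^{\mathrm{src}}$ and $t$ copies of $\La^{\mathrm{snk}}$ (interspersed with linear segments of the appropriate length so the gluings are legal), arranged so that the gluing operations identify the tails and leave exactly $s$ sources and $t$ sinks in the resulting quiver $Q$. One must handle the boundary cases $s=0$ or $t=0$ separately — e.g. $s=t=0$ cannot occur for a finite acyclic quiver, so the statement should be read as $s,t\in\ZZ_{\geq 1}$, or else the case $s=0$ (equivalently $t=0$) is vacuous/excluded; I would add a sentence clarifying this.

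Finally I would argue that the $n$-fractured subcategory obtained on $\m\La$ from the iterated gluing is in fact $n$-cluster tilting. By the characterization recalled in the introduction, an $n$-fractured subcategory $\add\{\tau_n^{-i}(T^L)\mid i\geq 0\}$ is $n$-cluster tilting precisely when $T^L\cong\La$ and $T^R\cong \D(\La)$; equivalently, one tracks through the gluing that the "fracturing" subcategories $\cP^L,\cI^R$ collapse to the genuine projectives and injectives. Since $G$ here is a finite directed tree, Theorem \ref{thrm:complete n-fractured Mks is n-ct} (or directly the finite-gluing results of \cite{VAS2}) applies once I check that the $n$-fractured system assembled from the pieces is \emph{complete}; completeness is the condition engineered exactly so that gluing the local pieces closes up the projective/injective data globally. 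The main obstacle, and the part requiring genuine care rather than bookkeeping, is precisely this verification: ensuring that the local $n$-fractured data on $\La^{\mathrm{src}}$, $\La^{\mathrm{snk}}$ and the linear connectors can be chosen mutually compatibly (matching the modules $T^L,T^R$ along each glued segment) so that the global system is complete, and hence that the resulting $\m\La$-subcategory is $n$-cluster tilting rather than merely $n$-fractured. Everything else is a matter of choosing segment lengths divisible by $n$ and counting sources and sinks in the glued quiver.
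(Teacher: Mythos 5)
Your sketch for the case $s,t\geq 1$ is essentially the paper's argument: Proposition \ref{prop:starlike algebras} supplies radical square zero starlike algebras with two sources or two sinks admitting $n$-cluster tilting subcategories, and Proposition \ref{prop:gluing system of n-ct} (via Remark \ref{rem:gluing system needs sources and sinks}) lets one glue these along a path-shaped tree to realize $(s,t)$ — this is exactly Proposition \ref{prop:sinks and sources exist}. One small inaccuracy: the linear connector you want is $\K\overrightarrow{A}_\ell/R^2_{\overrightarrow{A}_\ell}$ with $n\mid(\ell-1)$ (Proposition \ref{prop:starlike algebras}(a)), not the hereditary path algebra, which is only $1$-cluster tilting; and the paper's construction glues $s-1$ source-rich and $t-1$ sink-rich starlike algebras plus one linear middle piece, not $s$ and $t$ copies.

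The genuine gap is your treatment of $s=0$ or $t=0$. You dismiss these cases as vacuous or excluded, reasoning that a finite acyclic quiver cannot have zero sources or zero sinks. But Theorem \ref{thrm:any sinks and sources} asserts the statement for all $s,t\in\ZZ_{\geq 0}$ with no acyclicity assumption on $Q$: a bound quiver algebra with a cyclic quiver can perfectly well have $\m\La$ admitting an $n$-cluster tilting subcategory (e.g.\ selfinjective cyclic Nakayama algebras, which realize $s=t=0$). The paper's proof handles these cases via the self-gluing / orbit category construction of Section \ref{sec:gluings and orbit categories}: starting from a representation-directed $\La$ with $s+1$ sources and $t+1$ sinks (which exists by your step 1), Corollary \ref{cor:La n-ct implies La infinity n-ct} produces $\tilLa=\K\tilQ/\tilR$ whose quiver identifies a source with a sink, so $\sources_{\tilLa}=s$ and $\sinks_{\tilLa}=t$, and $\m\tilLa$ admits an $n$-cluster tilting subcategory by Theorem \ref{thrm:Darpo-Iyama} applied to the admissible $\ZZ$-action on the infinite gluing $\Cks$. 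This orbit-category step is the main new ingredient of the theorem beyond finite gluing, and it is entirely absent from your proposal; without it you only prove the weaker statement for $s,t\geq 1$.
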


The proof of Theorem \ref{thrm:any sinks and sources} is constructive; for examples see Example \ref{ex:4 sources 3 sinks} and Example \ref{ex:k-simultaneous gluing}. Most (but not all) of our examples are given by radical square zero algebras. This is not because of some obstacle for other classes of algebras, it is rather that radical square zero algebras are easier to compute with and the class of radical square zero algebras is closed under the operations of gluing. 

This paper is organized as follows. In Section \ref{sec:preliminaries} we establish notation and recall some basic facts about modules over categories. In Section \ref{sec:admissible targets weakly n-ct} we define asymptotically weak $n$-cluster tilting systems and show that, under reasonable conditions, they give rise to $n$-cluster tilting subcategories. Although the nature of these results is quite technical, the proofs are straightforward. We include however most of the details to avoid ambiguities. In Section \ref{sec:asymptotically weakly n-cluster tilting systems from algebras} we show how a complete $n$-fractured system of rep\-re\-sen\-ta\-tion-di\-rect\-ed algebras gives rise to an asymptotically weak $n$-cluster tilting system. In Section \ref{sec:applications} we start by considering finitely many gluings and then proceed to the infinite case. We then recall and apply the main result of \cite{DI} to obtain fi\-nite-di\-men\-sion\-al algebras with $n$-cluster tilting modules. In Section \ref{sec:concrete examples} we apply the above methods to specific examples of bound quiver algebras. Many examples are computed explicitly and we obtain new families of fi\-nite-di\-men\-sion\-al algebras whose module categories $n$-cluster tilting modules. We also include an index of definitions and notation used in this paper.

\section{Preliminaries}\label{sec:preliminaries}

\subsection{Background and notation}\label{subsec:background and notation}
Throughout this paper, $n$ denotes a positive integer and $\K$\index[symbols]{K@$\K$} denotes a field.

We start by setting up notation and recalling some background on categories and functor categories. We denote by $\M\K$ the category of $\K$-vector spaces with morphisms given by linear maps. In this paper by a ($\K$-)algebra $\La$ we mean a basic fi\-nite-di\-men\-sion\-al unital associative algebra over $\K$ and by $\La$-module we mean a right $\La$-module. We denote by $\M \La$ the category of $\La$-modules and by $\m \La$ the full subcategory of finitely generated $\La$-modules. We denote by $\proj\La$\index[symbols]{proj L, inj L@$\proj\La$, $\inj\La$} and $\inj\La$\index[symbols]{proj L, inj L@$\proj\La$, $\inj\La$} the full subcategories of $\m\La$ consisting of projective and injective modules, respectively.

Let $\cC$ be a category. We denote by $\obj(\cC)$ the collection of objects in $\cC$ and for an object $x$ in $\cC$ we simply write $x\in \cC$ instead of $x\in\obj\left(\cC\right)$. For $x,y\in\cC$ we write $\cC\left(x,y\right)$ for the set of morphisms $f:x\to y$. We say that $\cC$ is a \emph{small}\index[definitions]{small category} category if $\obj(\cC)$ is a set and we say that $\cC$ is \emph{skeletally small}\index[definitions]{skeletally small category} if the class of isomorphism classes of objects in $\cC$ is a set. For functors $F,G:\cC\to \cD$ between categories and a natural transformation $\eta$ from $F$ to $G$ we sometimes write $\eta:F\To G$\index[symbols]{e ta@$\eta:F\To G$}. If $\cC$ is isomorphic respectively equivalent to a category $\cD$, we write $\cC\isom \cD$\index[symbols]{C isomorphic D@$\cC\isom \cD$} respectively $\cC\equivalent \cD$\index[symbols]{C equivalent D@$\cC\equivalent \cD$}.

Let $\cM$ be a full subcategory of a category $\cC$ and let $x\in \cC$. A \emph{right $\cM$-approximation}\index[definitions]{right approximation} of $x$ is a morphism $f:m\to x$ with $m\in\cM$ such that all morphisms $f':m'\to x$ with $m'\in \cM$ factor through $f$. We say that $\cM$ is \emph{contravariantly finite}\index[definitions]{contravariantly finite subcategory} in $\cC$ if every $x\in \cC$ has a right $\cM$-approximation. The notions \emph{left $\cM$-approximation}\index[definitions]{left approximation} and \emph{covariantly finite}\index[definitions]{covariantly finite subcategory} are defined dually. We say that $\cM$ is \emph{functorially finite}\index[definitions]{functorially finite subcategory} in $\cC$ if $\cM$ is both contravariantly finite and covariantly finite.

We say that a category $\cC$ is \emph{$\K$-linear}\index[definitions]{k-linear category@$\K$-linear category} (respectively \emph{preadditive}\index[definitions]{preadditive category}) if for every $x,y\in\cC$ the set of morphisms $\cC(x,y)$ is a $\K$-vector space (respectively abelian group)  and composition of morphisms is $\K$-bilinear (respectively bilinear). We say that a functor $F:\cC\to \cD$ between $\K$-linear (respectively preadditive) categories is \emph{$\K$-linear}\index[definitions]{k-linear functor@$\K$-linear functor} (respectively \emph{additive}\index[definitions]{additive functor}) if the induced map on objects $F:\cC(x,y)\to \cD(F(x),F(y))$ is a $\K$-module homomorphism (respectively group homomorphism) for every $x,y\in\cC$. All functors between $\K$-linear (respectively preadditive) categories are assumed to be $\K$-linear (respectively additive). We say that $\cC$ is \emph{Hom-finite} if $\cC$ is a $\K$-linear category and the dimension of $\cC(x,y)$ is finite for every $x,y\in\cC$. We say that $\cC$ is an \emph{additive category}\index[definitions]{additive category} if $\cC$ is preadditive and admits all finite products. If $\cC$ is an additive category and $x,y\in\cC$, then the \emph{direct sum}\index[definitions]{direct sum} of $x$ and $y$ denoted by $x\oplus y$ is the product of $x$ and $y$.

Let $\cM$ be a collection of objects in an additive category $\cC$. We denote by $\add(\cM)$\index[symbols]{add(M)@$\add(\cM)$} the \emph{additive closure}\index[definitions]{additive closure of a subcategory} of $\cM$, that is the smallest full subcategory of $\cC$ containing $\cM$ and being closed under direct sums, direct summands and isomorphisms. If $\cM$ is a subcategory of $\cC$, we set $\add(\cM)\coloneqq \add(\obj(\cM))$.

Let $\cC$ be a skeletally small additive category. A nonzero object $x\in \cC$ is called \emph{indecomposable}\index[definitions]{indecomposable object} if $x\isom a\oplus b$ implies $a\isom 0$ or $b\isom 0$. We denote by $\ind\cC$\index[symbols]{ind C@$\ind\cC$} a chosen set of representatives of isomorphism classes of indecomposable objects in $\cC$. We say that $\cC$ is a \emph{Krull--Schmidt category}\index[definitions]{Krull--Schmidt category} if each object in $\cC$ is isomorphic to a finite direct sum of indecomposable objects in $\cC$ having local endomorphism rings.

There is an equivalent way to characterize a Krull--Schmidt category. We say that a category $\cC$ has \emph{split idempotents}\index[definitions]{split idempotents} if for every idempotent $e\in\cC(x,x)$ there exists an object $y\in\cC$ and morphisms $f\in\cC(x,y)$ and $g\in\cC(y,x)$ such that $g\comp f=e$ and $f\comp g=\Id_y$. We say that a ring $R$ is \emph{semiperfect}\index[definitions]{semiperfect ring} if there exists a decomposition $1_R=\sum_{i=1}^k e_i$ of identity where $e_i$ are idempotents such that $e_ie_j=0$ for $i\neq j$ and $e_i Re_i$ is a local ring. Then a skeletally small additive category $\cC$ is a Krull--Schmidt category if and only if it has split idempotents and $\cC(x,x)$ is a semiperfect ring for all $x\in\cC$.

Let $\cC$ be a skeletally small $\K$-linear category. A \emph{(right) $\cC$-module}\index[definitions]{right module} is a $\K$-linear functor $M:\cC^{\op}\to \M \K$. We denote by $\M\cC$\index[symbols]{Mod C, mod C@$\M\cC$, $\m\cC$} the category of all $\cC$-modules with morphisms given by natural transformations. For $M,N\in \M\cC$ we write $\Hom_{\cC}\left(M,N\right)$ instead of $\M\cC\left(M,N\right)$. If $\cC$ is moreover a Krull--Schmidt category, then the \emph{support of a module}\index[definitions]{support of a module} $M\in\M\cC$ is defined to be
\[\supp\left(M\right) = \left\{ x\in \ind\cC \mid M\left(x\right) \neq 0 \right\} \subseteq \ind\cC.\]\index[symbols]{supp M@$\supp(M)$}

A $\cC$-module $M:\cC^{\op} \to \M\K$ is called \emph{finitely presented}\index[definitions]{finitely presented module} if there exists an exact sequence
\[ \cC\left( - , y\right) \rightarrow \cC\left(-,x\right) \rightarrow M \rightarrow 0\]
in $\M\cC$. We denote by $\m\cC$\index[symbols]{Mod C, mod C@$\M\cC$, $\m\cC$} the full subcategory of $\M\cC$ consisting of finitely presented modules. The full subcategory of $\m\cC$ consisting of all projective objects is denoted by $\proj\cC$ and the full subcategory of $\m\cC$ consisting of all injective objects is denoted $\inj\cC$. By Yoneda's Lemma we have the fully faithful \emph{Yoneda functor}\index[definitions]{Yoneda functor} $h_{\cC}:\cC \to \m\cC$\index[symbols]{hC@$h_{\cC}$} given by $x\mapsto \cC(-,x)$ and $f\in\cC(x,y)\mapsto f\circ-: \cC(-,x)\to \cC(-,y)$ which restricts to an equivalence between $\cC$ and $\proj\cC$. In particular we have that $\m\cC$ has enough projective objects. With this setup, for an algebra $\La$ we can identify $\m\La$ with $\m(\proj\La)$. 

Let $\cC$ be a skeletally small $\K$-linear Hom-finite Krull--Schmidt category. It is easy to see that $\m\cC$ is closed under cokernels in $\M\cC$ and by the Horseshoe Lemma it follows that $\m\cC$ is closed under extensions in $\M\cC$. However, it is not true in general that $\m\cC$ is closed under kernels. Recall from \cite{AR} that $\cC$ is called a \emph{dualizing $\K$-variety}\index[definitions]{dualizing $\K$-variety} if the functors $\D:\M\cC\to\M\left(\cC^{\op}\right)$ and $\D:\M\left(\cC^{\op}\right)\to\M\cC$ given by $\D=\Hom_{\K}(-,\K)$\index[symbols]{D HomK K@$\D=\Hom_{\K}(-,\K)$} induce dualities $\D:\m\cC\to\m\left(\cC^{\op}\right)$ and $\D:\m\left(\cC^{\op}\right)\to\m\cC$. In this case $\m\cC$ is closed under kernels in $\M\cC$ since $\m\left(\cC^{\op}\right)$ is closed under cokernels in $\M\left(\cC^{\op}\right)$. Hence if $\cC$ is a dualizing $\K$-variety, then $\m\cC$ is closed under kernels, cokernels and extensions in $\M\cC$ and so it is an abelian subcategory of $\M\cC$. It follows that in this case  $\m\cC$ has enough injective objects and that complete sets of representatives of indecomposable projective and injective objects in $\m\cC$ are then given by
    \[\ind(\proj\cC) = \left\{ {\cC}\left(-,x\right) \in \m\cC \mid x\in \ind\cC\right\} \text{ and } \ind(\inj\cC) = \left\{ D{\cC}\left(x,-\right) \in \m\cC \mid x\in \ind\cC\right\}.\] 
For more details on dualizing $\K$-varieties, we refer to \cite[Section 2]{AR}.

Let $\cC$ be a a skeletally small $\K$-linear Krull--Schmidt category. We say that $\cC$ is \emph{locally bounded}\index[definitions]{locally bounded category} if for all $x\in\ind\cC$ we have
\[\sum_{y\in\ind\cC} \left(\dim_{\K}\cC\left(x,y\right)+\dim_{\K}\cC\left(y,x\right)\right) < \infty.\]
Clearly if $\cC$ is locally bounded, then $\cC$ is Hom-finite. In this case, the objects in $\m\cC$ are precisely the \emph{finite-length ones}, that is modules for which a \emph{composition series} exists, i.e. a finite filtration such that the quotient of any two consecutive terms is simple. Therefore $\supp(M)$ is a finite set for all $M\in\m\cC$ and the functor $\D: \M\cC \longleftrightarrow \M\left(\cC^{\op}\right)$ restricts to a duality $\D: \m\cC \longleftrightarrow \m\left(\cC^{\op}\right)$ satisfying $\D\circ \D \isom \Id_{\m\cC}$. Hence a locally bounded category is always a dualizing $\K$-variety.

Next we recall some background on the representation theory of fi\-nite-di\-men\-sion\-al algebras. A \emph{quiver}\index[definitions]{quiver} $Q=(Q_0,Q_1,s,t)$\index[symbols]{Q (Q1 Q2 s t)@$Q=(Q_0,Q_1,s,t)$} is a quadruple consisting of a set $Q_0$ of \emph{vertices}, a set $Q_1$ of \emph{arrows} and two maps $s,t:Q_1\to Q_0$ called \emph{source map} and \emph{target map}. We say that $Q$ is \emph{finite} if both $Q_0$ and $Q_1$ are finite sets. A \emph{subquiver} of a quiver $Q$ is a quiver $Q'$ with $Q'_0\subseteq Q_0$, $Q'_1\subseteq Q_1$ and such that $\restr{s}{Q'_0}=s'$ and $\restr{t}{Q'_1}=t'$; it is called \emph{full} if every arrow $\alpha\in Q_1$ with $s(\alpha),t(\alpha)\in Q'_0$ is in $Q'_1$. For a vertex $i\in Q_0$, the \emph{outgoing degree}\index[definitions]{outgoing degree of a vertex} of $i$, denoted by $\delta^{+}(i)$, is the number of outgoing arrows starting at $i$ and the \emph{ingoing degree}\index[definitions]{ingoing degree of a vertex} of $i$, denoted by $\delta^{-}(i)$, is the number of ingoing arrows ending at $i$. In other words we have $\delta^{+}(i) = \abs{\{s^{-1}(i)\}}$\index[symbols]{delta@$\delta^{+}(i)$, $\delta^{-}(i)$, $\delta(i)$}, and $\delta^{-}(i)=\abs{\{t^{-1}(i)\}}$\index[symbols]{delta@$\delta^{+}(i)$, $\delta^{-}(i)$, $\delta(i)$}. The \emph{degree}\index[definitions]{degree of a vertex} of $i$ is then the number $\delta (i)\coloneqq \delta^{+}(i)+\delta^{-}(i)$\index[symbols]{delta@$\delta^{+}(i)$, $\delta^{-}(i)$, $\delta(i)$}. If $\delta(i)<\infty$ for every vertex $i\in Q_0$ then we say that $Q$ is \emph{locally finite}\index[definitions]{locally finite quiver}. All quivers considered in this paper will be locally finite. For a quiver $Q$ and $l\geq 1$, a \emph{path of length $l$} in $Q$ is a sequence of arrows $p=\alpha_1\cdots\alpha_l$, such that $t(\alpha_k)=s(\alpha_{k+1})$; we define $s(p)=s(a_1)$ and $t(p)=t(\alpha_l)$. We denote a path $p$ from a vertex $i$ to a vertex $j$ in $Q$ by $i\overset{p}{\leadsto}j$\index[symbols]{i q@$i\overset{p}{\leadsto}j$}. We also assign a trivial path $\epsilon_i$ of length $0$ to each vertex $i\in Q_i$. 

Throughout, we denote by $\overrightarrow{A}_h$\index[symbols]{A h@$\overrightarrow{A}_h$} the quiver $1\overset{\alpha_1}{\longrightarrow} 2 \overset{\alpha_2}{\longrightarrow} 3 \longrightarrow\cdots\longrightarrow h-1\overset{\alpha_{h-1}}{\longrightarrow} h$.

Using quivers one can define \emph{bound quiver algebras} and their \emph{representations}\index[definitions]{representation of a bound quiver algebra}. For details on bound quiver algebras, their representations and their connection to the representation theory of fi\-nite-di\-men\-sion\-al $\K$-algebras, as well as the notation we use, we refer to \cite[Chapters II-III]{ASS}.

Let $Q$ be a quiver. We denote by $R_Q$\index[symbols]{RQ@$R_Q$} the ideal of $\K Q$ generated by $Q_1$ and call it the \emph{arrow ideal}\index[definitions]{arrow ideal} of $Q$. Contrary to the usual notation, we denote admissible ideals of the path algebra $\K Q$ by $\cR$\index[symbols]{KQ R@$\K Q/\cR$}. Throughout all ideals of quiver path algebras are assumed to be admissible. We identify $\K Q/\cR$-modules with representations of the bound quiver $(Q,\cR)$. For a representation $M$ of $\K Q/\cR$ we denote the \emph{support}\index[definitions]{support of a representation} of $M$ by $\supp(M)\coloneqq \{i\in Q_0\mid M_i\neq 0\}$\index[symbols]{supp M@$\supp(M)$}. 

Let $\La$ be a $\K$-algebra. Given a subcategory $\cA$ of $\m \La$, we denote by $\Sub(\cA)$\index[symbols]{Sub A@$\Sub(\cA)$} the subcategory of $\m A$ containing all submodules of modules in $\cA$ and by $\Fac(\cA)$\index[symbols]{Fac A@$\Fac(\cA)$} the subcategory of $\m A$ containing all factor modules of modules in $\cA$. For $M\in \m\La$ we denote by $\om (M)$\index[symbols]{omega M@$\om(M)$, $\om^{-}(M)$} the \emph{syzygy}\index[definitions]{syzygy of a module} of $M$, that is the kernel of $P\twoheadrightarrow M$, where $P$ is the projective cover of $M$ and by $\om^- (M)$\index[symbols]{omega M@$\om(M)$, $\om^{-}(M)$} the \emph{cosyzygy}\index[definitions]{cosyzygy of a module} of $M$, that is the cokernel of $M\hookrightarrow I$ where $I$ is the injective hull of $M$. Note that $\om (M)$ and $\om^- (M)$ are unique up to isomorphism. The \emph{projective dimension}\index[definitions]{projective dimension of a module} of $M$, denoted by $\pdim(M)$\index[symbols]{proj dim M@$\pdim(M)$}, is defined to be
\[\pdim(M)=\begin{cases} k, &\mbox{\text{ if there exists $k\in \ZZ$ with $\om^k(M)$ nonzero and $\om^i(M)=0$ for $i>k$,}} \\ \infty, &\mbox{ otherwise.}\end{cases}\]
The \emph{injective dimension}\index[definitions]{injective dimension of a module} $\idim(M)$\index[symbols]{inj dim M@$\idim(M)$} of $M$ is defined dually. The \emph{global dimension}\index[definitions]{global dimension of an algebra} $\gldim(\La)$\index[symbols]{gl dim Lambda@$\gldim(\La)$} of $\La$ is defined to be 
\[\gldim(\La) = \sup\{\pdim(M) \mid M\in \m\La\}.\]
We denote by $\tau$\index[symbols]{T b@$\tau$, $\tau^-$, $\tn$, $\tno$} and $\tau^-$\index[symbols]{T b@$\tau$, $\tau^-$, $\tn$, $\tno$} the \emph{Aus\-lan\-der--Rei\-ten translations}\index[definitions]{Aus\-lan\-der--Rei\-ten translations}, see \cite[Definition IV.2.3]{ASS}. Following \cite{IYA1}, we denote by $\tn$\index[symbols]{T b@$\tau$, $\tau^-$, $\tn$, $\tno$} and $\tno$\index[symbols]{T b@$\tau$, $\tau^-$, $\tn$, $\tno$} the \emph{$n$-Aus\-lan\-der--Rei\-ten translations}\index[definitions]{n-Aus\-lan\-der--Rei\-ten translations@$n$-Aus\-lan\-der--Rei\-ten translations} defined by $\tn (M) = \tau \om^{n-1} (M)$ and $\tno (M) = \tau^- \om^{-(n-1)}(M)$. If $X\subseteq\La$, we denote by $\langle X \rangle$ the two-sided ideal of $\La$ generated by $X$. We denote the Aus\-lan\-der--Rei\-ten quiver of $\La$ by $\Gamma(\La)$\index[symbols]{Gamma Lambda@$\Gamma(\La)$}. When drawing $\Gamma(\La)$ in examples, we label the vertex $[M]$\index[symbols]{M a@$[M]$} for an indecomposable $M\in\m\La$ using the composition series of $M$. For more details on classical Aus\-lan\-der--Rei\-ten theory we refer to \cite[Chapter IV]{ASS}.

A \emph{directed graph}\index[definitions]{directed graph} is a quiver with no multiple arrows between two vertices. Recall that by our conventions all quivers are locally finite and so all directed graphs are locally finite too. If $G$ is a directed graph, we write $V_G$\index[symbols]{VG@$V_G$} instead of $G_0$, we write $E_G$\index[symbols]{EG@$E_G$} instead of $G_1$, we say ``subgraph'' instead of ``subquiver'' and we say ``walk'' instead of ``path''. If $V\subseteq V_G$ is a collection of vertices in $G$, then the \emph{subgraph induced in $G$ by $V$}\index[definitions]{subgraph induced by vertices}, denoted by $\langle V \rangle$\index[symbols]{V@$\langle V \rangle$, $V\subseteq V_G$}, is the full subgraph of $G$ with vertex set $V$. We say that $G$ is a \emph{directed tree}\index[definitions]{directed tree} if $G$ is connected and the underlying undirected graph is acyclic. 

\subsection{Induced functors between module categories}
We briefly recall some facts about induced functors between module categories; for more details we refer to \cite{KRA}. For the rest of this section, let $F:\cC \to \cD$ be a functor between skeletally small $\K$-linear Krull--Schmidt categories. Then $F$ induces an exact functor $F_{\ast}:\M\cD \to \M\cC$\index[symbols]{F Q@$F_{\ast}$, $F^{"!}$, $F^{\ast}$} given by $F_{\ast}(M) = M\circ F$. This functor has a right adjoint $F^{!}:\M\cC\to\M\cD$\index[symbols]{F Q@$F_{\ast}$, $F^{"!}$, $F^{\ast}$}, given by 
\begin{align*} 
F^{!}(M)(y) &= \Hom_{\cC}\left(\cD\left(F(-),y\right),M\right),
\end{align*}
for every $M\in \M\cC$ and $y\in \cD$. Moreover $F_{\ast}$ has a left adjoint as well. To describe the left adjoint, first recall that there exists a unique, up to natural isomorphism, \emph{tensor product} bifunctor $-\otimes_{\cC}-:\M\cC\times \M\left(\cC^{\op}\right)\to\M\K$ characterized by
\begin{enumerate}[label=(\roman*)]
    \item $M\otimes_{\cC} \cC(x,-) \isom M(x)$ and $\cC(-,x)\otimes_{\cC} N \isom N(x)$, and
    \item $M\otimes_{\cC}-$ and $-\otimes_{\cC} N$ have right adjoints,
\end{enumerate}
for every $M\in \M\cC$, $N\in\M\left(\cC^{\op}\right)$ and $x\in \cC$. Then the left adjoint $F^{\ast}:\M\cC\to \M\cD$\index[symbols]{F Q@$F_{\ast}$, $F^{"!}$, $F^{\ast}$} is given by
\begin{align*} 
F^{\ast}(M)(y) &= M\otimes_{\cC}\cD\left(y,F(-)\right),
\end{align*}
for every $M\in \M\cC$ and $y\in \cD$. It follows that $F^{\ast}$ is right exact and preserves projectives and $F^{!}$ is left exact and preserves injectives. In particular, we have that $F^{\ast}\circ h_{\cC} = h_{\cD}\circ F$ and $F^{\ast}$ restricts to a functor $\m\cC\to \m\cD$. However, the functors $F_{\ast}$ and $F^{!}$ do not restrict to functors between $\m\cC$ and $\m\cD$ in general. In this paper we are interested in the cases where these functors do indeed restrict to functors between $\m\cC$ and $\m\cD$. 

\begin{definition}
Let $F:\cC \to \cD$ be a functor between skeletally small $\K$-linear Krull--Schmidt categories. We say that the functor $F$ is \emph{coherent preserving}\index[definitions]{coherent preserving functor} if the functors $F_{\ast}$ and $F^{!}$ restrict to functors between $\m\cC$ and $\m\cD$.
\end{definition}

The following proposition collects some basic properties of the above functors.

\begin{proposition}\label{prop:properties of dense and full}
Let $F:\cC\to \cD$ be a functor between skeletally small $\K$-linear Krull--Schmidt categories.
\begin{enumerate}[label=(\alph*)]
    \item If $F$ is dense, then $\left.F^{\ast}\right|_{\proj\cC}:\proj\cC\to \proj\cD$ is dense and $F_{\ast}:\M\cD\to \M\cC$ is faithful.
    \item If $F$ is full, then $\left.F^{\ast}\right|_{\proj\cC}:\proj\cC\to \proj\cD$ is full.
    \item If $F$ is dense and full, then $F^{\ast}:\m\cC\to \m\cD$ is dense and $F_{\ast}:\M\cD\to \M\cC$ is fully faithful.
    \item If $F$ is coherent preserving, then the following conditions are equivalent.
    \begin{enumerate}[label=(\roman*)]
        \item $F_{\ast}:\m\cD\to \m\cC$ is fully faithful.
        \item the counit of the adjunction $\left(F^{\ast},F_{\ast}\right)$ is an isomorphism $F^{\ast}F_{\ast}(M)\overset{\sim}{\longto} M$ for all $M\in\m\cD$.
        \item the unit of the adjunction $\left(F_{\ast},F^{!}\right)$ is an isomorphism $M\overset{\sim}{\longto}F^{!}F_{\ast}(M)$ for all $M\in\m\cD$.
    \end{enumerate}
\end{enumerate}
\end{proposition}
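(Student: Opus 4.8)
The plan is to prove the four parts in sequence, using as the main tool the identity $F^{\ast}\circ h_{\cC}=h_{\cD}\circ F$ together with the fact recalled above that the Yoneda functors restrict to equivalences $\cC\simeq\proj\cC$ and $\cD\simeq\proj\cD$. In particular I will freely use that every object of $\proj\cC$ is isomorphic to $h_{\cC}(x)$ for some $x\in\cC$, and that $\Hom_{\proj\cD}(h_{\cD}(y),h_{\cD}(y'))\cong\cD(y,y')$ by Yoneda.

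For parts (a) and (b): to get the density and fullness statements for $\left.F^{\ast}\right|_{\proj\cC}$ I would transport this functor along the equivalence $h_{\cC}$, turning it into $h_{\cD}\circ F$; thus $\left.F^{\ast}\right|_{\proj\cC}$ is dense, resp.\ full, if and only if $h_{\cD}\circ F$ is. The first reduces to density of $F$ because $h_{\cD}$ is an equivalence; the second reduces to fullness of $F$ via the Yoneda identification $\Hom_{\proj\cD}(h_{\cD}F(x),h_{\cD}F(x'))\cong\cD(F(x),F(x'))$, under which the map on Hom-sets induced by $h_{\cD}\circ F$ is exactly $F$. For faithfulness of $F_{\ast}$ in (a), I would argue directly: if $\eta\colon M\to N$ in $\M\cD$ has $\eta_{F(x)}=0$ for all $x\in\cC$, then density of $F$ furnishes for each $y\in\cD$ an isomorphism $F(x)\xrightarrow{\sim}y$, and naturality of $\eta$ forces $\eta_{y}=0$, so $\eta=0$.

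For part (c): for density of $F^{\ast}\colon\m\cC\to\m\cD$ I would start with a projective presentation $h_{\cD}(y_{1})\to h_{\cD}(y_{0})\to M\to 0$ of a given $M\in\m\cD$, use density of $F$ to replace $y_{i}$ by $F(x_{i})$, use part (b) to lift the resulting map to a morphism $\varphi\colon h_{\cC}(x_{1})\to h_{\cC}(x_{0})$ in $\proj\cC$, and then use right exactness of $F^{\ast}$ to conclude $F^{\ast}(\coker\varphi)\cong M$ with $\coker\varphi\in\m\cC$. Faithfulness of $F_{\ast}$ is part (a); for fullness of $F_{\ast}$ I would take a natural transformation $\phi\colon F_{\ast}(M)\to F_{\ast}(N)$, i.e.\ morphisms $\phi_{x}\colon M(F(x))\to N(F(x))$ natural in $x\in\cC$, and define $\psi\colon M\to N$ by choosing for each $y\in\cD$ an isomorphism $u\colon F(x)\xrightarrow{\sim}y$ and setting $\psi_{y}=N(u)^{-1}\circ\phi_{x}\circ M(u)$; then $F_{\ast}(\psi)=\phi$. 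The hard part here, and in fact the only step in the whole proposition that is not pure bookkeeping, will be checking that $\psi_{y}$ is independent of the choices and that $\psi$ is natural — this is exactly where both hypotheses are used, via the observation that any morphism $F(x)\to F(x')$ is $F$ of some morphism $x\to x'$ (fullness) combined with naturality of $\phi$.

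For part (d): this is formal, and I would simply note that, $F$ being coherent preserving, $F_{\ast}$ and $F^{!}$ restrict to functors between $\m\cC$ and $\m\cD$, and since $\m\cC$, $\m\cD$ are full subcategories of $\M\cC$, $\M\cD$ the adjunctions $F^{\ast}\dashv F_{\ast}$ and $F_{\ast}\dashv F^{!}$ restrict to adjunctions between $\m\cC$ and $\m\cD$ with the same units and counits. Then (i)$\Leftrightarrow$(ii) is the standard fact that a right adjoint is fully faithful precisely when the counit of its adjunction is an isomorphism, applied to $F^{\ast}\dashv F_{\ast}$ (counit $F^{\ast}F_{\ast}(M)\to M$), and (i)$\Leftrightarrow$(iii) is its dual for the left adjoint $F_{\ast}$ in $F_{\ast}\dashv F^{!}$ (unit $M\to F^{!}F_{\ast}(M)$).
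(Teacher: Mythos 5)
Your proposal is correct and follows essentially the same route as the paper's proof: parts (a) and (b) via the identity $F^{\ast}\circ h_{\cC}=h_{\cD}\circ F$, part (c) by lifting a projective presentation and defining $\psi_{y}=N(u)^{-1}\circ\phi_{x}\circ M(u)$ for a chosen isomorphism $u\colon F(x)\xrightarrow{\sim}y$, and part (d) by restricting the adjunctions and invoking the standard criterion for a right (resp.\ left) adjoint to be fully faithful. You correctly pinpoint the well-definedness and naturality of $\psi$ in (c) as the place where fullness is genuinely used (any $F(x)\to F(x')$ being $F$ of something), a step the paper dismisses as ``straightforward.''
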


\begin{proof}
\begin{enumerate}[label=(\alph*)]
    \item Let us first show that $F^{\ast}$ is dense. Let $P\in \proj\cD$. Then $P\isom \cD(-,y)$ for some $y\in \cD$. Since $F$ is dense, there exists an $x\in \cC$ such that $F(x)\isom y$. Then $\cC(-,x)\in \proj\cC$ and
    \[F^{\ast}\left(\cC(-,x)\right) = F^{\ast} \circ h_{\cC}(x) = h_{\cD}\circ F(x) = \cD(-,F(x)) \isom \cD(-,y) \isom P,\]
    which shows that $F^{\ast}$ is dense.
    
    To show that $F_{\ast}$ is faithful, let $\eta,\xi\in \Hom_{\cD}(M,N)$ for some $M,N\in\M\cD$ be such that $F_{\ast}(\eta)=F_{\ast}(\xi)$. We need to show that $\eta_y=\xi_y$ for all $y\in\cD$. Since $F$ is dense, there exists an $x\in\cC$ such that $F(x)\isom y$. Let $s:F(x)\to y$ be an isomorphism. Then by naturality of $\eta$ and $\xi$, and since $\eta F=F_{\ast}(\eta)=F_{\ast}(\xi)=\xi F$ we have
    \[\eta_y\comp M(s)=N(s)\comp \eta_{F(x)}=N(s)\comp \xi_{F(x)} = \xi_y\comp M(s),\]
    from which it follows that $\eta_y=\xi_y$ since $M(s)$ is an isomorphism.
    
    \item Let $P,Q\in \proj\cC$. Then we have that $P\isom \cC(-,x)$ and $Q\isom \cC(-,x')$ for some $x,x'\in\cC$ and so it is enough to show that the induced map
    \[F^{\ast}: \Hom_{\cC}\left(\cC(-,x),\cC(-,x')\right) \longto \Hom_{\cD}\left(\cD(-,F(x)),\cD(-,F(x'))\right)\]
    is surjective. By the Yoneda embedding, every $\eta\in \Hom_{\cD}\left(\cD\left(-,F(x)\right),\cD\left(-,F(x')\right)\right)$ is of the form $\eta=g\circ-$ for some $g\in \cD(F(x),F(x'))$. Since $F$ is full, it follows that $g=F(f)$ for some $f\in \cC(x,x')$. Then $f\circ-\in \Hom_{\cC}\left(\cC(-,x),\cC(-,x')\right)$ and
    \[F^{\ast}(f\circ -) = F^{\ast}\circ h_{\cC}(f) = h_{\cD}\circ F(f) = F(f)\circ-=g\circ-,\]
    which shows that $F^{\ast}$ is full.
    
    \item By (a) and (b) we have that the functor $\left.F^{\ast}\right|_{\proj\cC}:\proj\cC\to \proj\cD$ is dense and full. Let $M\in \m\cD$. Then there exists an exact sequence
    \[\cD(-,y_2)\overset{g}{\longto} \cD(-,y_1)\longto M\longto 0\]
    in $\M\cD$. In particular, we have $M\isom \coker (g)$. Since $\left.F^{\ast}\right|_{\proj\cC}$ is dense and full, there exist $x_1,x_2\in \cC$ and $f\in\cC(x_2,x_1)$ such that $F^{\ast}\left(\cC(-,x_1)\right)\isom \cD(-,y_1)$, $F^{\ast}\left(\cC(-,x_2)\right)\isom \cD(-,y_2)$ and $F^{\ast}(f)\isom g$. Consider the exact sequence 
    \[\cC(-,x_2) \overset{f}{\longto} \cC(-,x_1) \longto \coker (f) \longto 0\]
    in $\M\cC$. By applying the right exact functor $F^{\ast}$ we get the right exact sequence
    \[\cD\left(-,F(x_2)\right) \overset{F^{\ast}(f)}{\longto}\cD(\left(-,F(x_1)\right) \longto F^{\ast}(\coker (f))\longto 0\]
    where 
    \[F^{\ast}(\coker (f() \isom \coker (F^{\ast}(f)) \isom \coker (g) \isom M,\]
    which shows that $F^{\ast}:\m\cC\to\m\cD$ is dense.
    
    By (a) we have that the functor $F_{\ast}$ is faithful. To show that the functor $F_{\ast}$ is full, let $\eta\in\Hom_{\cC}\left(M\comp F, N\comp F\right)$ for some $M,N\in\M\cD$. We need to find $\overline{\eta}\in \Hom_{\cD}(M,N)$ such that $F_{\ast}\left(\overline{\eta}\right)=\eta$. Let $y\in\cD$. Since $F$ is dense, there exists an $x\in\cC$ such that $F(x)\isom y$ and an isomorphism $s:F(x)\to y$. Define $\overline{\eta}_y = N(s) \comp \eta_x \comp M(s)^{-1}$. It is straightforward to show that $\overline{\eta}_y$ is independent of the choice of $x$ and $s$ and that $\overline{\eta}\in\Hom_{\cD}(M,N)$.
    
    \item Since $F$ is coherent preserving, we have the adjoint pairs $\left(F^{\ast},F_{\ast}\right)$ and $\left(F_{\ast},F^{!}\right)$ of functors between $\m\cC$ and $\m\cD$. The equivalence of (i) and (ii) follows from \cite[Theorem IV.3.1]{CWM} and the equivalence of (i) and (iii) from the dual statement. \qedhere
\end{enumerate}
\end{proof}

In the case of dualizing $\K$-varieties we also have the following proposition.

\begin{proposition}\label{prop:injective sent to correct injective}
Let $\cC$ and $\cD$ be two dualizing $\K$-varieties.
\begin{enumerate}[label=(\alph*)]
    \item For every $M\in\m\cC$ we have an isomorphism $\Hom_{\cC}\left(M, \D\cC(x,-)\right) \isom \D M(x)$, which is natural in $x$.
    \item Let $F:\cC\to \cD$ be a functor. Then $F^!\left(\D\cC(x,-)\right) \isom \D\cD\left(F(x),-\right)$.
    \item For every $M\in\m\cC$ we have that $F^{!}(M)$ is finitely presented.
\end{enumerate}
\end{proposition}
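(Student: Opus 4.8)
The plan is to prove (a) for an arbitrary $\cC$-module $M$ (not merely for $M\in\m\cC$), to deduce (b) from it by unwinding the formula defining $F^{!}$, and to obtain (c) by dualizing a projective presentation and invoking that $\m\cD$ is closed under kernels. For (a) I would avoid projective presentations and instead use the defining adjunctions of the tensor product. For a $\cC^{\op}$-module $N$, the functor $-\otimes_{\cC}N\colon\M\cC\to\M\K$ has a right adjoint $H_{N}$; evaluating the adjunction isomorphism at $\K$ gives $\D(M\otimes_{\cC}N)=\Hom_{\K}(M\otimes_{\cC}N,\K)\isom\Hom_{\cC}(M,H_{N}(\K))$. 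By Yoneda and the characterizing isomorphism $\cC(-,z)\otimes_{\cC}N\isom N(z)$, one computes $H_{N}(\K)(z)\isom\Hom_{\cC}(\cC(-,z),H_{N}(\K))\isom\Hom_{\K}(N(z),\K)=\D N(z)$, so $H_{N}(\K)\isom\D N$. Taking $N=\cC(x,-)$ and using $M\otimes_{\cC}\cC(x,-)\isom M(x)$ then yields $\Hom_{\cC}(M,\D\cC(x,-))\isom\D M(x)$, with naturality in $x$ inherited from the naturality of the isomorphisms used. (For $M\in\m\cC$ one could instead argue directly from a projective presentation of $M$ together with Yoneda, using that $\D\cC(x,-)$ is injective and $\D$ is exact.)

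For (b) I would unwind the definition of $F^{!}$. For $y\in\cD$ we have $F^{!}(\D\cC(x,-))(y)=\Hom_{\cC}(\cD(F(-),y),\D\cC(x,-))$, and since $\cD(F(-),y)=F_{\ast}(\cD(-,y))$ is a $\cC$-module, part (a) identifies this with $\D M(x)$ for $M=\cD(F(-),y)$, that is, with $\D\cD(F(x),y)=\D\cD(F(x),-)(y)$. Verifying that this identification is natural in $y$ then gives the required isomorphism $F^{!}(\D\cC(x,-))\isom\D\cD(F(x),-)$ of $\cD$-modules.

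For (c) the dualizing hypotheses come into play. Since $\cC$ is a dualizing $\K$-variety, $\D M\in\m(\cC^{\op})$; as $\m(\cC^{\op})$ has enough finitely generated projectives (the representables $\cC(z,-)$, $z\in\cC$), there is a projective presentation $\cC(z_{1},-)\to\cC(z_{0},-)\to\D M\to 0$. Applying the duality $\D\colon\m(\cC^{\op})\to\m\cC$ produces an exact sequence $0\to M\to\D\cC(z_{0},-)\to\D\cC(z_{1},-)$ in $\m\cC$. Applying the left exact additive functor $F^{!}$ gives an exact sequence $0\to F^{!}(M)\to F^{!}(\D\cC(z_{0},-))\to F^{!}(\D\cC(z_{1},-))$, and by part (b) we have $F^{!}(\D\cC(z_{j},-))\isom\D\cD(F(z_{j}),-)$, which is injective in $\m\cD$ and in particular finitely presented. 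Hence $F^{!}(M)$ is the kernel of a morphism between modules in $\m\cD$; since $\cD$ is a dualizing $\K$-variety, $\m\cD$ is closed under kernels in $\M\cD$, so $F^{!}(M)\in\m\cD$, that is, $F^{!}(M)$ is finitely presented.

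Beyond routine bookkeeping with $F_{\ast}$, $F^{!}$, $F^{\ast}$ and the tensor product, the one step that really needs attention is the use of (a) inside (b): it is applied to the $\cC$-module $\cD(F(-),y)$, which for a general functor $F$ need not be finitely presented, so it is essential that (a) be established for all $\cC$-modules rather than only for objects of $\m\cC$. A secondary point is ensuring that the copresentation in (c) has terms of the form $\D\cC(z,-)$ so that part (b) applies termwise, which is exactly what dualizing a projective presentation of $\D M$ provides; the remaining naturality and compatibility checks are straightforward.
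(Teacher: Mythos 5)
Your overall strategy matches the paper's: prove (a) by identifying $\Hom_{\cC}(-,\D\cC(x,-))$ with $\D(-)(x)$, deduce (b) by unwinding the definition of $F^!$, and prove (c) by applying $F^!$ to an injective copresentation of $M$ with terms of the form $\D\cC(z,-)$ and using (b) plus the fact that $\m\cD$ is closed under kernels. The one genuinely different step is your proof of (a), and it addresses a real point that the paper glosses over.

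The paper proves (a) only for $M\in\m\cC$: it uses that $\D\colon\m\cC\to\m(\cC^{\op})$ is fully faithful, giving a bijection $\Hom_{\cC}(M,N)\isom\Hom_{\cC^{\op}}(\D N,\D M)$, and this argument genuinely requires $M$ to be finitely presented. But then in the proof of (b), part (a) is applied to the $\cC$-module $\cD(F(-),y)=F_{\ast}(\cD(-,y))$, which for a general functor $F$ has no reason to lie in $\m\cC$ (for instance, its support may be infinite). You have spotted exactly this, and your tensor-hom argument for (a) — $\D(M\otimes_{\cC}N)\isom\Hom_{\cC}(M,H_{N}(\K))$, then $H_{N}(\K)\isom\D N$ by Yoneda and the characterizing isomorphism of $\otimes_{\cC}$, then specialize $N=\cC(x,-)$ — establishes the isomorphism for all $M\in\M\cC$ without any dualizing hypothesis, so that the application inside (b) is legitimate. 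Your argument is correct and naturality is inherited as you say. (In fact the isomorphism in (a) for arbitrary $M$ is standard: it says $\D\cC(x,-)$ is the injective object cogenerating $x$; the paper's proof simply gives the finitely-presented special case of a general adjunction.) For (c), your dualization of a projective presentation of $\D M$ is the same device as the paper's appeal to the start of an injective resolution, and you are just more explicit that $\m\cD$ closed under kernels is what is used at the end. No gaps.
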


\begin{proof}
\begin{enumerate}[label=(\alph*)]
    \item Since $\D:\m\cC\to \m\left(\cC^{\text{op}}\right)$ is fully faithful, we have a bijection 
    \[\D_{M,N}:\Hom_{\cC}(M,N)\overset{\sim}{\longto} \Hom_{\cC^{\text{op}}}(\D N,\D M)\]
    for every $M$, $N\in\m\cC$. In particular, for $N=\D\cC(x,-)$ we have $\D N\isom \cC(x,-)$ for every $x\in \cC$. Then
    \[\Hom_{\cC^{\text{op}}}\left(\cC(x,-),\D M\right) = \Hom_{\cC^{\text{op}}}\left(\cC^{\text{op}}(-,x),\D M\right) \isom \D M(x).\]
    Since the functor $\D$ induces a natural isomorphism between the bifunctors $\Hom_{\cC}(-,-)$ and $\Hom_{\cC^{\text{op}}}(\D(-),\D(-))$, the above bijection is natural in $M$ and so we conclude that we have an isomorphism
    \[\Hom_{\cC}\left(M, \D\cC(x,-)\right) \isom \D M(x)\]
    natural in $x$.
    \item For every $y\in\cD$ we have 
    \[F^!\left(\D\cC(x,-)\right)(y) = \Hom_{\cC}\left(\cD\left(F(-),y\right),\D\cC(x,-)\right) \isom \D\cD\left(F(x),y\right)\]
    by (a). Hence the functors $F^!\left(\D\cC(x,-)\right)$ and $\D\cD\left(F(x),-\right)$ are isomorphic, as required. 
    \item Since $\cC$ is a dualizing $\K$-variety, the $\cC$-module $M$ has an injective resolution in $\m\cC$. Let
    \begin{equation}\label{eq:inj resol of M}
    0\longto M \longto \D\cC\left(x_1,-\right)\longto\D\cC\left(x_2,-\right)
    \end{equation}
    be the beginning of an injective resolution of $M$. By (b) we have that applying the functor $F^{!}$ to (\ref{eq:inj resol of M}) we get an injective resolution  
    \[0\longto F^!(M) \longto \D\cD\left(F\left(x_1\right),-\right)\longto  \D\cD\left(F\left(x_2\right),-\right),\]
    from which it follows that $F^{!}(M)\in\m\cD$.\qedhere
\end{enumerate}
\end{proof}

\section{Admissible targets of asymptotically weakly \texorpdfstring{$n$}{n}-cluster tilting systems}\label{sec:admissible targets weakly n-ct}

We begin with some motivation for the notions of this section. Recall from \cite[Definition 2.5]{HJV} that $(A,\cM_A)$ is an \emph{$n$-homological pair} if $A$ is a fi\-nite-di\-men\-sion\-al $\K$-algebra and $\cM_A\subseteq \m A$ is an $n$-cluster tilting subcategory and that a \emph{morphism of $n$-homological pairs} $f:(A,\cM_A)\to (B,\cM_B)$ is an algebra morphism $f:A\to B$ such that $f_{\ast}(\cM_B)\subseteq \cM_A$. 

Motivated from the notion of $n$-fractured subcategories for rep\-re\-sen\-ta\-tion-di\-rect\-ed algebras in \cite{VAS2}, let us consider the following generalization of an $n$-cluster tilting subcategory of $\m A$. Let $\cM'_A\subseteq \m A$ be a subcategory, not necessarily abelian. Let also $\cM_A\subseteq \cM'_A$ be such that $\cM_A$ is functorially finite in $\cM'_A$ and such that 
\begin{align*}
    \cM_A &= \{ x \in \cM'_A \mid \Ext^{i}_{A}\left(x, \cM_A\right)=0 \text{ for $1\leq i \leq n-1$}\} \\
     &= \{ x \in \cM'_A \mid \Ext^{i}_{A}\left(\cM_A, x\right)=0 \text{ for $1\leq i \leq n-1$}\}.
\end{align*}
Then we call $(A,\cM'_A,\cM_A)$ an \emph{$n$-fractured triple}. For an explicit case that is of interest to us we refer to Proposition \ref{prop:n-fractured with exts}. Note that for $\cM'_A=\m A$ we obtain that $\cM_A$ is an $n$-cluster tilting subcategory. A \emph{morphism of $n$-fractured triples} $f:(A,\cM'_A,\cM_A)\to (B,\cM'_B,\cM_B)$ is then a morphism $f:A\to B$ of algebras such that $f_{\ast}\left(\cM'_B\right)\subseteq \cM'_A$ and $f_{\ast}\left(\cM_B\right)\subseteq \cM_A$.

Consider now an infinite sequence of morphisms of $n$-fractured triples $f_i:(A_{i+1},\cM'_{i+1},\cM_{i+1}) \to (A_{i},\cM'_{i},\cM_{i})$ for $i\in \ZZ_{\geq 0}$ and assume that $f_i:A_{i+1}\to A_i$ is an algebra epimorphism. In particular, we have an inclusion of module categories $f_{i\ast}(\m A_i) \subseteq \m A_{i+1}$ which restricts to inclusions $f_{i\ast}(\cM'_i) \subseteq \cM'_{i+1}$ and $f_{i\ast}(\cM_i) \subseteq \cM_{i+1}$. Informally, we may think of all three sequences of subcategories $\left(\m A_i\right)$, $\left(\cM'_i\right)$ and $\left(\cM_i\right)$ as getting bigger as $i\to \infty$. Let us denote 
\[\mathbf{A} \coloneqq \lim_{i\to \infty}\left(\m A_i\right),\;\; \cM'\coloneqq \lim_{i\to \infty}\left(\cM'_i\right),\;\; \cM\coloneqq \lim_{i\to \infty}\left(\m M_i\right).\]
Still informally, we get a triple $\left(\mathbf{A}, \cM', \cM\right)$ with $\cM\subseteq \cM'\subseteq \mathbf{A}$ and we may ask whether this is an $n$-fractured triple in some appropriate sense. More importantly, we are interested in the situation $\cM'=\mathbf{A}$, since in this case we may get that $\cM$ is an $n$-cluster tilting subcategory.

It turns out that the informal situation described above is not true in general. The limit $\mathbf{A}$ is, in general, too big. However, in some cases we may replace $\mathbf{A}$ by an appropriate smaller subcategory $\cA\subseteq \mathbf{A}$ satisfying $\cM\subseteq \cM'\subseteq \cA$ which we call an \emph{admissible target}. Given that the infinite sequence $f_i:(A_{i+1},\cM'_{i+1},\cM_{i+1}) \to (A_{i},\cM'_{i},\cM_{i})$ is growing fast enough, the above situation gives an $n$-fractured triple (in an appropriate sense) $(\cA,\cM',\cM)$ with $\cA=\cM'$. In particular $\cM$ is an $n$-cluster tilting subcategory of $\cA$.

The above definitions comes from the world of algebras and modules over algebras, but there is no reason one cannot consider skeletally small $\K$-linear Krull--Schmidt categories and modules over skeletally small $\K$-linear Krull--Schmidt categories instead. Indeed the theory can be developed more easily in that case since limits of categories have a description better suited to our computations. To return to the world of algebras we use the equivalence $\m A \equivalent \m (\proj A)$. 

Our first aim in this section is to construct a direct system of abelian categories, called \emph{asymptotically weakly $n$-cluster tilting}, modelling the above situation. Under certain assumptions we can find an $n$-cluster tilting subcategory in an admissible target of such an asymptotically weakly $n$-cluster tilting system. Notice that a limit of the direct system is not an admissible target in general. In subsection \ref{subsection:admissible targets} we define admissible targets, in subsection \ref{subsection:construction of admissible targets} we provide a general method of constructing admissible targets and in subsection \ref{subsection:asymptotically weakly n-cluster tilting systems} we define asymptotically weakly $n$-cluster tilting systems and provide the main result for this section.

\subsection{Admissible targets}\label{subsection:admissible targets} To avoid set theoretic issues, we consider limits over systems of small categories and functors between them. We denote by $\Cat$\index[symbols]{Cat@$\Cat$} the category of small categories with morphisms given by functors. Using the usual strict definition of inverse limits in the category $\Cat$ does not produce the intended results. Instead we require that the compatibility conditions for the connecting functors hold up to a natural isomorphism.

\begin{definition}
A \emph{directed set}\index[definitions]{directed set} $(\dirI, \leq)$\index[symbols]{(I, <)@$(\dirI, \leq)$} is a set $\dirI$ with a preorder $\leq$ such that every pair of elements in $\dirI$ has an upper bound. 
\end{definition}

For the rest of this section we fix a directed set $(\dirI, \leq)$.

\begin{definition}\label{def:inverse system}
\begin{enumerate}[label=(\alph*)]
\item An \emph{inverse system of categories over $\dirI$}\index[definitions]{inverse system of categories over $\dirI$} is a triple $\left(\cC_i,F_{ij},\theta_{ijk}\right)_{\dirI}$\index[symbols]{(Ci, Fij, thetaijk)@$\left(\cC_i,F_{ij},\theta_{ijk}\right)$} where
\begin{enumerate}[label=(\roman*)]
    \item $\cC_i$ is a category for every $i\in\dirI$,
    \item $F_{ij}:\cC_j\to \cC_i$ is a functor for every $i,j\in\dirI$ with $i<j$, and
    \item $\theta_{ijk}:F_{ij}\comp F_{jk} \To F_{ik}$ is a natural isomorphism for every $i,j,k\in\dirI$ with $i< j < k$,
\end{enumerate}
such that the diagram
\[\begin{tikzcd}
F_{ij}\comp F_{jk}\comp F_{kl} \arrow[rr, Rightarrow, "\theta_{ijk} F_{kl}"] \arrow[d, Rightarrow, "F_{ij} \theta_{jkl}"] 
& & F_{ik}\comp F_{kl} \arrow[d, Rightarrow, "\theta_{ikl}"] \\
F_{ij}\comp F_{jl} \arrow[rr, Rightarrow, "\theta_{ijl}"] & & F_{il}
\end{tikzcd}\]
commutes for every $i,j,k,l\in\dirI$, with $i<j<k<l$, i.e. 
\begin{equation}\label{eq:commutativity in inverse system}
    \theta_{ijl}\comp \left(F_{ij} \theta_{jkl}\right) = \theta_{ikl}\comp \left(\theta_{ijk} F_{kl}\right).
\end{equation}
A \emph{$\Cat$-inverse system over $\dirI$}\index[definitions]{Cat-inverse system over I@$\Cat$-inverse system over $\dirI$} is an inverse system of categories $\left(\cC_i,F_{ij},\theta_{ijk}\right)_{\dirI}$ over $\dirI$ where all $\cC_i$ are small categories.    

\item A \emph{source}\index[definitions]{source of an inverse system of categories over $\dirI$} of the inverse system of categories $\left(\cC_i,F_{ij},\theta_{ijk}\right)_{\dirI}$ over $\dirI$ is a triple $\left(\cC, \Phi_i, \Theta_{ij}\right)_{\dirI}$ where
\begin{enumerate}[label=(\roman*)]
    \item $\cC$ is a category,
    \item $\Phi_{i}:\cC\to \cC_i$ is a functor for every $i\in\dirI$, and
    \item $\Theta_{ij}:F_{ij}\comp \Phi_{j} \To \Phi_{i}$ is a natural isomorphism for every $i,j\in\dirI$ with $i<j$,
\end{enumerate}
such that the diagram
\[\begin{tikzcd}
F_{ij}\comp F_{jk}\comp \Phi_{k} \arrow[rr, Rightarrow, "\theta_{ijk} \Phi_{k}"] \arrow[d, Rightarrow, "F_{ij} \Theta_{jk}"] 
& & F_{ik}\comp \Phi_{k} \arrow[d, Rightarrow, "\Theta_{ik}"] \\
F_{ij}\comp \Phi_{j} \arrow[rr, Rightarrow, "\Theta_{ij}"] & & \Phi_{i}
\end{tikzcd}\]
commutes for every $i,j,k\in\dirI$, with $i<j<k$, i.e. 
\begin{equation}\label{eq:commutativity of source}
    \Theta_{ij}\comp \left(F_{ij} \Theta_{jk}\right) = \Theta_{ik}\comp \left(\theta_{ijk} \Phi_{k}\right).
\end{equation}
A \emph{$\Cat$-source}\index[definitions]{Cat-source of an inverse system of categories over I@$\Cat$-source of an inverse system of categories over $\dirI$} is a source $(\cC,\Phi_i,\Theta_{ij})$ where $\cC$ is a small category.

\item A \emph{morphism of sources}\index[definitions]{morphism of sources of an inverse system of categories over $\dirI$} $G:\left(\cC, \Phi_i, \Theta_{ij}\right)_{\dirI}\to \left(\cD, \Psi_i, \Xi_{ij}\right)_{\dirI}$ of the inverse system $\left(\cC_i,F_{ij},\theta_{ijk}\right)_{\dirI}$ of categories over $\dirI$ is a functor $G:\cC\to \cD$ such that $\Phi_i \comp G= \Psi_i$ for all $i\in \dirI$ and $\Theta_{ij}G=\Xi_{ij}$ for all $i,j\in \dirI$ with $i<j$. 

\item The \emph{category of $\Cat$-sources}\index[definitions]{category of $\Cat$-sources of an inverse system of categories over $\dirI$} over the inverse system of categories $\left(\cC_i,F_{ij},\theta_{ijk}\right)_{\dirI}$ over $\dirI$, denoted by $\Sources\left(\cC_i,F_{ij},\theta_{ijk}\right)_{\dirI}$, is the category where objects are $\Cat$-sources of $\left(\cC_i,F_{ij},\theta_{ijk}\right)_{\dirI}$, morphisms are morphisms of sources and composition is defined as composition of functors.

\item An \emph{inverse limit}\index[definitions]{inverse limit of an inverse system of categories over $\dirI$} of the inverse system of categories $\left(\cC_i,F_{ij},\theta_{ijk}\right)_{\dirI}$ over $\dirI$, denoted by \linebreak $\varprojlim_{\dirI}\left(\cC_i,F_{ij},\theta_{ijk}\right)$, is a terminal object in $\Sources\left(\cC_i,F_{ij},\theta_{ijk}\right)_{\dirI}$. In other words, it is a $\Cat$-source $\left(\cC, \Phi_i, \Theta_{ij}\right)_{\dirI}$, such that for every $\Cat$-source $\left(\cD,\Psi_{i},\Xi_{ij}\right)_{\dirI}$, there exists a unique functor $G:\cD\to \cC$ satisfying $\Phi_i \comp G= \Psi_i$ for all $i\in \dirI$ and $\Theta_{ij}G=\Xi_{ij}$ for all $i,j\in \dirI$ with $i<j$.
\end{enumerate}
The notions of \emph{direct system of categories over $\dirI$}\index[definitions]{direct system of categories over $\dirI$}, \emph{$\Cat$-direct system over $\dirI$}\index[definitions]{Cat-direct system over I@$\Cat$-direct system over $\dirI$}, \emph{target}\index[definitions]{target of a direct system of categories over $\dirI$}, \emph{$\Cat$-target}\index[definitions]{Cat-target of a direct system of categories over I@$\Cat$-target of a direct system of categories over $\dirI$}, \emph{morphism of targets}\index[definitions]{morphism of targets of a direct system of categories over $\dirI$}, \emph{category of $\Cat$-targets}\index[definitions]{category of $\Cat$-targets of a direct system of categories over $\dirI$} and \emph{direct limit}\index[definitions]{direct limit of a direct system of categories over $\dirI$} are defined dually.
\end{definition}

When the directed set $\dirI$ is clear from context, we do not write $\dirI$ as a subscript and we simply say ``inverse system (of categories)'' instead of ``inverse system (of categories) over $\dirI$''. Note that in many references it is common to use ``inverse system'' for ``inverse system over $\NN$'' or ``inverse system over $\ZZ$''. In this paper we do not make any such assumption on $\dirI$. Since an inverse limit is a terminal object in a category, if an inverse limit exists it is unique up to a unique isomorphism. Hence we use the definite article ``the'' when referring to an inverse limit that exists. Moreover, in the following we simply say ``let $\varprojlim\left(\cC_i,F_{ij},\theta_{ijk}\right) = \left(\cC,\Phi_i,\Theta_{ij}\right)$ be an inverse limit'' instead of ``let $\left(\cC,\Phi_i,\Theta_{ij}\right)$ be an inverse system of categories with inverse limit $\varprojlim\left(\cC_i,F_{ij},\theta_{ijk}\right)$''. We use similar conventions for direct systems.

\begin{remark}
\begin{enumerate}[label=(\alph*)]
    \item Notice that a morphism of sources $G:(\cC,\Phi,\Theta_{ij})\to (\cD,\Psi_i,\Xi_{ij})$ is a functor $G:\cC\to \cD$ that satisfies some additional properties. In particular, if $\cC$ and $\cD$ are small categories, then functors from $\cC$ to $\cD$ form a set. Hence the category of $\Cat$-sources is well defined.
    
    \item We remark that condition (iii) of Definition \ref{def:inverse system}(a) is not used throughout this paper. However, notice that if $\left(\cC_i,F_{ij},\theta_{ijk}\right)$ is an inverse system of categories, and if $\left(\cC,\Phi_i,\Theta_{ij}\right)$ is a source of this inverse system, then condition (iii) of Definition \ref{def:inverse system}(b) implies that condition (iii) of Definition \ref{def:inverse system}(a) holds for every $x_l$ in the image of $\Phi_l$. Hence, if we want to find a source that is as big as possible, then condition (iii) of Definition \ref{def:inverse system}(a) is necessary.

    As an example, assume that we have an inverse system $\left(\cC_i,F_{ij},\theta_{ijk}\right)$ of categories where condition (iii) of Definition \ref{def:inverse system}(a) does not necessarily hold. Assume moreover that $\dirI$ has a maximal element $v$ and all functors $F_{ij}$ are surjective on objects and on morphisms. In this setting it makes sense that we would like the triple $\left(\cC_v, F_{iv}, \theta_{ijv}\right)$ to be a source of the inverse system. But this is true if and only if condition (iii) of Definition \ref{def:inverse system}(a) holds. 

    \item We also remark that the only notions of direct systems that are used in this paper are those of a direct system of categories over $\dirI$ and target of a direct system of categories over $\dirI$. 
\end{enumerate}
\end{remark}

Inverse limits of $\Cat$-inverse systems always exist. To show this, let us first give a definition.

\begin{definition}\label{def:concrete inverse limit category}
The \emph{concrete inverse limit}\index[definitions]{concrete inverse limit of a $\Cat$-inverse system} of a $\Cat$-inverse system $\left(\cC_i,F_{ij},\theta_{ijk}\right)$ is a triple $\left(\cC,\Phi_i,\Theta_{ij}\right)$\index[symbols]{(C, Phii, Thetaij)@$\left(\cC, \Phi_i, \Theta_{ij}\right)$} where
\begin{enumerate}
    \item[(i)] $\cC$ is the category given by the following data.
    \begin{enumerate}
        \item[$\bullet$] Objects in $\cC$ are pairs of sequences $\left(\left\{x_i\right\}_{i\in\dirI},\left\{f_{ij}\right\}_{i<j\in\dirI}\right)$, which we simply write as  $\left(x_i,f_{ij}\right)$, where $x_i\in\cC_i$ and $f_{ij}:F_{ij}(x_j) \to x_i$ is an isomorphism for all $i,j\in\dirI$ with $i<j$, such that the diagram
        \begin{equation}
            \begin{tikzcd}
                F_{ij}\comp F_{jk}\left(x_{k}\right) \arrow[r, "F_{ij}\left(f_{jk}\right)"] \arrow[d, "\left(\theta_{ijk}\right)_{x_k}"] & F_{ij}\left(x_{j}\right) \arrow[d, "f_{ij}"] \\
                F_{ik}\left(x_{k}\right) \arrow[r, "f_{ik}"] &  x_{i}
            \end{tikzcd}
        \end{equation}
        commutes for all $i,j,k\in\dirI$ with $i<j<k$, i.e.
        \begin{equation}\label{eq:commutativity of objects in inverse limit category}
            f_{ij}\comp F_{ij}\left(f_{jk}\right) = f_{ik}\comp \left(\theta_{ijk}\right)_{x_k}.
        \end{equation}
        
        \item[$\bullet$] Morphisms from $\left(x_i,f_{ij}\right)$ to $\left(y_i,g_{ij}\right)$ are sequences $\left\{s_i\right\}_{i\in\dirI}$, which we simply write as $\left(s_i\right)$, such that $s_i\in\cC_i\left(x_i,y_i\right)$ and such that the diagram
        \begin{equation}
            \begin{tikzcd}
                F_{ij}\left(x_{j}\right) \arrow[r, "F_{ij}\left(s_{j}\right)"] \arrow[d, "f_{ij}"] & F_{ij}\left(y_{j}\right) \arrow[d, "g_{ij}"] \\
                x_{i} \arrow[r, "s_{i}"] &  y_{i}
            \end{tikzcd}
        \end{equation}
        commutes for all $i,j\in\dirI$ with $i<j$, i.e.
        \begin{equation}\label{eq:commutativity of morphisms in inverse limit category}
            s_i\comp f_{ij} = g_{ij}\comp F_{ij}(s_j)
        \end{equation}

        \item[$\bullet$] Composition of morphisms is defined componentwise, that is if $(s_i)\in \cC\left(\left(x_i,f_{ij}\right),\left(y_i,g_{ij}\right)\right)$ and $(r_i)\in \cC\left(\left(y_i,g_{ij}\right), \left(z_i,h_{ij}\right)\right)$, then $\left(r_i\right)\comp\left(s_i\right)=\left(r_i\comp s_i\right)\in \cC\left(\left(x_i,f_{ij}\right),\left(z_i,h_{ij}\right)\right)$. 
    \end{enumerate}

    \item[(ii)] $\Phi_i:\cC\to \cC_i$ is the functor defined on objects by $\Phi_i\left(\left(x_i,f_{ij}\right)\right)=x_i$ and on morphisms by $\Phi_i\left(\left(s_i\right)\right)=s_i$, for all $i\in\dirI$. 

    \item[(iii)] $\Theta_{ij}:F_{ij}\comp \Phi_j \To \Phi_i$ is the natural isomorphism defined by $\left(\Theta_{ij}\right)_{\left(x_i,f_{ij}\right)} = f_{ij}$, for all $i,j\in\dirI$ with $i<j$. 
\end{enumerate}
\end{definition}

As the following proposition shows, the concrete inverse limit is the inverse limit.

\begin{proposition}\label{prop:inverse limit category}
Let $\left(\cC,\Phi_i,\Theta_{ij}\right)$ be the concrete inverse limit of a $\Cat$-inverse system $\left(\cC_i,F_{ij},\theta_{ijk}\right)$. Then $\varprojlim\left(\cC_i,F_{ij},\theta_{ijk}\right)=\left(\cC,\Phi_i,\Theta_{ij}\right)$.
\end{proposition}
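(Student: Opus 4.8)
The plan is to check directly, from Definition \ref{def:concrete inverse limit category}, that $\left(\cC,\Phi_i,\Theta_{ij}\right)$ is a $\Cat$-source of $\left(\cC_i,F_{ij},\theta_{ijk}\right)$, and then that it satisfies the universal property of a terminal object in $\Sources\left(\cC_i,F_{ij},\theta_{ijk}\right)$.

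First I would verify that $\left(\cC,\Phi_i,\Theta_{ij}\right)$ is a $\Cat$-source. Smallness of $\cC$ is immediate: its objects form a subclass of $\prod_{i\in\dirI}\obj(\cC_i)\times\prod_{i<j}\mor(\cC_i)$ and its morphisms a subclass of $\prod_{i\in\dirI}\mor(\cC_i)$, and these are sets since $\dirI$ is a set and every $\cC_i$ is small. The $\Phi_i$ are functors because composition and identities in $\cC$ are defined componentwise. For $\Theta_{ij}$ I would observe that its naturality square at a morphism of $\cC$ is precisely equation \eqref{eq:commutativity of morphisms in inverse limit category}, and that each component $(\Theta_{ij})_{(x_i,f_{ij})}=f_{ij}$ is invertible by the definition of the objects of $\cC$; and the source compatibility \eqref{eq:commutativity of source}, evaluated at an object $(x_i,f_{ij})$, is exactly equation \eqref{eq:commutativity of objects in inverse limit category}. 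So $\left(\cC,\Phi_i,\Theta_{ij}\right)$ is a $\Cat$-source, hence an object of $\Sources\left(\cC_i,F_{ij},\theta_{ijk}\right)$.

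Next I would establish the universal property. Given an arbitrary $\Cat$-source $\left(\cD,\Psi_i,\Xi_{ij}\right)$, I would define $G\colon\cD\to\cC$ by $G(d)=\left(\Psi_i(d),(\Xi_{ij})_d\right)$ on objects and $G(t)=\left(\Psi_i(t)\right)$ on morphisms, and then check that this is well defined: the components $(\Xi_{ij})_d$ are isomorphisms since $\Xi_{ij}$ is a natural isomorphism, the relation \eqref{eq:commutativity of objects in inverse limit category} required of $G(d)$ is the $d$-component of the source axiom \eqref{eq:commutativity of source} for $\left(\cD,\Psi_i,\Xi_{ij}\right)$, and the relation \eqref{eq:commutativity of morphisms in inverse limit category} required of $G(t)$ is the naturality square of $\Xi_{ij}$ at $t$. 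Functoriality of $G$ is then clear from the componentwise composition in $\cC$, and $\Phi_i\comp G=\Psi_i$ together with $\Theta_{ij}G=\Xi_{ij}$ hold by construction. For uniqueness I would note that any morphism of sources $G'\colon\cD\to\cC$ must satisfy $\Phi_i\comp G'=\Psi_i$ and $\Theta_{ij}G'=\Xi_{ij}$, which pin down both the $\cC_i$-components and the structure isomorphisms of $G'(d)$, as well as the components of $G'(t)$, to coincide with those of $G$; hence $G'=G$. This shows $\left(\cC,\Phi_i,\Theta_{ij}\right)$ is terminal, i.e.\ $\varprojlim\left(\cC_i,F_{ij},\theta_{ijk}\right)=\left(\cC,\Phi_i,\Theta_{ij}\right)$.

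The proof is essentially bookkeeping; I expect the only delicate point to be matching up the three commutativity diagrams of Definition \ref{def:concrete inverse limit category} with the axioms of Definition \ref{def:inverse system} — namely recognising that the diagram constraining the objects of $\cC$ is the source axiom \eqref{eq:commutativity of source} evaluated pointwise, and that the diagram constraining the morphisms of $\cC$ plays a double role, as the naturality of $\Theta_{ij}$ in the first step and as the naturality of $\Xi_{ij}$ in the well-definedness check for $G$ in the second. No genuine obstacle is expected.
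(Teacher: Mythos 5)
Your proposal is correct and follows essentially the same approach as the paper: verify the source axioms for $(\cC,\Phi_i,\Theta_{ij})$, define $G(d)=(\Psi_i(d),(\Xi_{ij})_d)$ and $G(t)=(\Psi_i(t))$, and check existence and uniqueness componentwise. If anything you are slightly more explicit than the paper in matching the well-definedness constraints \eqref{eq:commutativity of objects in inverse limit category} and \eqref{eq:commutativity of morphisms in inverse limit category} to the source axiom \eqref{eq:commutativity of source} and the naturality of $\Xi_{ij}$, which the paper leaves as a "straightforward" check.
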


\begin{proof}
It is straightforward to check that $\cC$ is a category, that $\Phi_i:\cC\to \cC_i$ are functors and that $\Theta_{ij}:F_{ij}\comp \Phi_j\To \Phi_i$ are natural isomorphisms. To show that $\left(\cC,\Phi_i,\Theta_{ij}\right)$ is a source of $\left(\cC_i,F_{ij},\theta_{ijk}\right)$, we have to check that (\ref{eq:commutativity of source}) is satisfied. For every $\left(x_i,f_{ij}\right)\in\cC$ we have
\begin{align*}
    \left(\Theta_{ij}\comp \left(F_{ij} \Theta_{jk}\right)\right)_{\left(x_i,f_{ij}\right)} &= \left(\Theta_{ij}\right)_{\left(x_i,f_{ij}\right)}\comp \left(F_{ij} \Theta_{jk}\right)_{\left(x_i,f_{ij}\right)} && \\
    &=f_{ij} \comp F_{ij}\left(\left(\Theta_{jk}\right)_{\left(x_i,f_{ij}\right)}\right) && \\
    &=f_{ij} \comp F_{ij}\left(f_{jk}\right) && \\
    &=f_{ik} \comp \left(\theta_{ijk}\right)_{x_k} &&\text{(by (\ref{eq:commutativity of objects in inverse limit category}))} \\
    &=f_{ik} \comp \left(\theta_{ijk}\right)_{\Phi_k\left(x_i,f_{ij}\right)} && \\
    &=\left(\Theta_{ik}\right)_{\left(x_i,f_{ij}\right)} \comp \left(\theta_{ijk} \Phi_{k}\right)_{\left(x_i,f_{ij}\right)} && \\
    &=\left(\Theta_{ik}\comp \left(\theta_{ijk} \Phi_{k}\right)\right)_{\left(x_i,f_{ij}\right)}, &&
\end{align*}
and so $\Theta_{ij}\comp \left(F_{ij} \Theta_{jk}\right)=\Theta_{ik}\comp \left(\theta_{ijk} \Phi_{k}\right)$, as required. That $\cC$ is a small category follows since 
\[\obj(\cC) \subseteq \left(\prod_{i\in\dirI}\obj(\cC_i)\right) \times \left(\prod_{i<j\in\dirI}\cC_i\left(-,-\right)\right).\]
Hence $\left(\cC,\Phi_i,\Theta_{ij}\right)$ is a $\Cat$-source of $\left(\cC_i,F_{ij},\theta_{ijk}\right)$.

It remains to show the universal property of $\left(\cC,\Phi_i,\Theta_{ij}\right)$. Let $\left(\cD,\Psi_{i},\Xi_{ij}\right)$ be another $\Cat$-source of $\left(\cC_i,F_{ij},\theta_{ijk}\right)$. Define a mapping $G:\cD\to \cC$ by $G(x)=\left(\Psi_i(x), \left(\Xi_{ij}\right)_x\right)$ for all $x\in \cD$ and $G(f)=\left(\Psi_i(f)\right)$ for all $f\in \cD(x,y)$. It is straightforward to show that $G$ is indeed a functor. Then, for all $i\in\dirI$ and for all $x\in \cD$, we have
\[\Phi_i\comp G(x) = \Phi_i\left( \Psi_i(x), \left(\Xi_{ij}\right)_x\right) = \Psi_i(x),\]
while for all $f\in\cD(x,y)$ we have
\[\Phi_i\comp G(f) = \Phi_i\left(\Psi_i(f)\right) = \Psi_i(f),\]
which shows that $\Phi_i\comp G = \Psi_i$. Moreover, for all $i,j\in\dirI$ with $i<j$ and all $x\in\cD$ we have
\[\left(\Theta_{ij}G\right)_x = \left(\Theta_{ij}\right)_{G(x)} = \left(\Theta_{ij}\right)_{\left(\Psi_i(x),\left(\Xi_{ij}\right)_x\right)}=\left(\Xi_{ij}\right)_x,\]
which shows that $\Theta_{ij}G=\Xi_{ij}$. Hence $G$ is a morphism of sources.

It remains to show that $G$ is unique. Let $G':\cD\to \cC$ be a functor satisfying $\Phi_i\comp G'=\Psi_i$ for all $i\in\dirI$ and $\Theta_{ij}G'=\Xi_{ij}$ for all $i,j\in\dirI$ with $i<j$. Let $x\in\cD$ and write $G'(x) = \left(y_i, g_{ij}\right)\in\cC$. Then
\[\Psi_i(x)=\Phi_i\comp G' (x) = \Phi_i\left(y_i, g_{ij}\right) = y_i,\]
while
\[\left(\Xi_{ij}\right)_x = \left(\Theta_{ij}G'\right)_x = \left(\Theta_{ij}\right)_{G'(x)} = \left(\Theta_{ij}\right)_{\left(y_i,g_{ij}\right)} = g_{ij},\]
from which it follows that $G'(x) = \left(\Psi_i(x),\left(\Xi_{ij}\right)_{x}\right) = G(x)$. Now let $f\in\cD(x,y)$ and write $G'(f)=\left(f_i)\right)$. Then
\[\Psi_i(f) = \Phi_i\comp G'(f) = \Phi_i\left((f_i)\right) = f_i,\]
and so $G'(f)=\left(\Psi_i(f)\right)=G(f)$. Hence $G=G'$ and so $G$ is unique.
\end{proof}

As explained, we are not interested in computing direct limits. Instead, the following class of direct systems and targets is suitable for our purposes.

\begin{definition}\label{def:admissible target}
Let $\left(\cA_i,G_{ij},\zeta_{ijk}\right)$\index[symbols]{(Ai, Gij, zetajik)@$\left(\cA_i,G_{ij},\zeta_{ijk}\right)$} be a direct system of categories where all $\cA_i$ are abelian and all $G_{ij}$ are fully faithful exact functors. An \emph{admissible target}\index[definitions]{admissible target of a direct system} of the direct system $\left(\cA_i,G_{ij},\zeta_{ijk}\right)$ is a target $\left(\cA,\Psi_i,Z_{ij}\right)$\index[symbols]{(A, Psii, Zij)@$\left(\cA,\Psi_i,Z_{ij}\right)$} together with adjunctions $\left(L_i,\Psi_i\right)$ and $\left(\Psi_i,R_i\right)$ such that the following conditions hold.
\begin{enumerate}[label=(\roman*)]
    \item The category $\cA$ is abelian and the functors $\Psi_i$ are fully faithful.
    \item For all $x\in\cA$ there exists a $t\in\dirI$ such that for every $u>t$ we have that the unit of the adjunction $(L_u,\Psi_u)$ and the counit of the adjunction $\left(\Psi_u,R_u\right)$ are isomorphisms when evaluated at $x$. In particular, we have $\Psi_u L_u(x)\isom x$ and $\Psi_u R_u(x)\isom x$. 
    \item For all $x\in\cA$ and $k>0$ there exists a $t\in\dirI$ such that for every $u>t$ and $z\in \cA_u$ we have $\Ext_{\cA}^r(x,\Psi_u(z)) \isom \Ext_{\cA_u}^r\left(L_u(x),z\right)$ for all $0<r<k$.
    \item For all $y\in\cA$ and $k>0$ there exists a $t\in\dirI$ such that for every $u>t$ and $z\in \cA_u$ we have $\Ext_{\cA}^r(\Psi_u(z),y) \isom \Ext_{\cA_u}^r\left(z,R_u(y)\right)$ for all $0<r<k$.
\end{enumerate}
\end{definition}

\begin{remark}\label{rem:homological embedding}
Note that if $\left(\cA,\Psi_i,Z_{ij}\right)$ is an admissible target, then the functors $\Psi_i$ are exact since they admit both a left and a right adjoint.
Moreover, since $\Psi_i$ is fully faithful, we have that if $z\in \cA_i$, then $L_i\Psi_i(z)\isom z$ and $R_i\Psi_i(z)\isom z$. 

Conditions (iii) and (iv) of Definition \ref{def:admissible target} are close to the notion of \emph{$k$-homological embedding} in the sense of \cite[Definition 3.6]{PSA}. More precisely, assume that for all $x\in\cA$ and $k>0$ there exists $t\in\dirI$ such that for every $u>t$ the functor $L_u$ is a $(k+1)$-homological embedding. Then condition (iii) of Definition \ref{def:admissible target} holds and similarly for condition (iv).
\end{remark} 

In the next section we develop a general way of constructing admissible targets.

\subsection{Construction of admissible targets}\label{subsection:construction of admissible targets}

To construct an admissible target of a direct system of categories we start from a $\Cat$-inverse system. Let $\left(\cC_i,F_{ij},\theta_{ijk}\right)$ be a $\Cat$-inverse system where each $\cC_i$ is a small $\K$-linear Krull--Schmidt category with concrete inverse limit $\left(\cC,\Phi_i,\Theta_{ij}\right)$. We can then consider the direct system of the module categories $\m\cC_i$ with $F_{ij\ast}$ as connecting functors. Our aim is to find an admissible target of this direct system of categories.

Since an admissible target is an abelian category, we may construct it by considering a category of modules over an appropriate dualizing $\K$-variety. It turns out that in general the concrete source $\cC$ is not even a Krull--Schmidt category. However, there is always a subcategory $\Cks\subseteq \cC$ which is Krull--Schmidt and moreover such that $(\Cks,\Pks_i,\Tks_{ij})$ is a $\Cat$-source of $(\cC_i,F_{ij},\theta_{ijk})$, where $\Pks_i$ and $\Tks_{ij}$ are the restrictions of $\Phi_i$ and $\Theta_{ij}$ on $\Cks$. We call $(\Cks,\Pks_i,\Tks_{ij})$ the \emph{firm source}. We show that the firm source satisfies many nice homological properties. In particular, we show that if $\Cks$ is a dualizing $\K$-variety, then $(\m\Cks,\Pks_{i\ast},\Tks_{ij\ast})$ is an admissible target of $(\m\cC_i,F_{ij\ast},\theta_{ijk\ast})$.

It is not difficult to see that at least some properties of the categories $\cC_i$ and the functors $F_{ij}$ are inherited by the category $\cC$ and the functors $\Phi_i$ respectively, as the following results show.

\begin{proposition}\label{prop:additive limit} Let $\left(\cC,\Phi_i,\Theta_{ij}\right)$ be the concrete inverse limit of the $\Cat$-inverse system $\left(\cC_i,F_{ij},\theta_{ijk}\right)$. If all $\cC_i$ are $\K$-linear additive categories and all $F_{ij}$ are $\K$-linear functors, then $\cC$ is a $\K$-linear additive category and all $\Phi_i$ are $\K$-linear functors.
\end{proposition}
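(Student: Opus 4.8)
The plan is to work directly with the explicit description of the concrete inverse limit in Definition~\ref{def:concrete inverse limit category}, putting the $\K$-linear structure on morphism sets by a subspace argument and building the finite products out of componentwise biproducts equipped with suitable structure isomorphisms. Recall that, by the conventions of Section~\ref{subsec:background and notation}, an additive category is a preadditive category admitting all finite products, so it suffices to prove that $\cC$ is $\K$-linear and has all finite products, and that each $\Phi_i$ is $\K$-linear.

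First I would check that $\cC$ is $\K$-linear. Given objects $\left(x_i,f_{ij}\right)$ and $\left(y_i,g_{ij}\right)$ of $\cC$, the morphism set $\cC\left(\left(x_i,f_{ij}\right),\left(y_i,g_{ij}\right)\right)$ is the subset of the $\K$-vector space $\prod_{i\in\dirI}\cC_i\left(x_i,y_i\right)$ consisting of those tuples $\left(s_i\right)$ with $s_i\comp f_{ij}=g_{ij}\comp F_{ij}(s_j)$ for all $i<j$. Since precomposition with $f_{ij}$, postcomposition with $g_{ij}$, and the functor $F_{ij}$ are all $\K$-linear on morphism spaces, each of these conditions cuts out a $\K$-subspace, so the hom-set is a $\K$-subspace and in particular an abelian group. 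Composition in $\cC$ is componentwise and composition in each $\cC_i$ is $\K$-bilinear, hence composition in $\cC$ is $\K$-bilinear, so $\cC$ is $\K$-linear. Each $\Phi_i$ is then $\K$-linear because on morphisms it is the projection $\left(s_i\right)\mapsto s_i$ onto the $i$-th factor.

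Next I would produce all finite products. For the empty product I take $x_i$ to be a zero object $0_i$ of $\cC_i$ and $f_{ij}\colon F_{ij}(0_j)\to 0_i$ the unique morphism, which is an isomorphism because the additive functor $F_{ij}$ preserves zero objects; the coherence condition~(\ref{eq:commutativity of objects in inverse limit category}) holds automatically since every morphism into or between zero objects vanishes, and $\left(0_i,f_{ij}\right)$ is a zero object of $\cC$ because the hom-spaces into and out of it are subspaces of products of zero spaces. For a binary product of $\left(x_i,f_{ij}\right)$ and $\left(y_i,g_{ij}\right)$ I take the $i$-th component to be $x_i\oplus y_i$ with structure isomorphism $h_{ij}=\left(f_{ij}\oplus g_{ij}\right)\comp c_{ij}$, where $c_{ij}\colon F_{ij}(x_j\oplus y_j)\overset{\sim}{\longto}F_{ij}(x_j)\oplus F_{ij}(y_j)$ is the canonical isomorphism coming from additivity of $F_{ij}$, and the coordinate projections are taken componentwise. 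One then checks that these projections lie in $\cC$ (the naturality squares~(\ref{eq:commutativity of morphisms in inverse limit category}) reduce to naturality of $c_{ij}$ and the definition of $h_{ij}$) and that the universal property holds: the mediating morphism out of any $\left(w_i,e_{ij}\right)$ is built componentwise using that each $x_i\oplus y_i$ is a product in $\cC_i$, and naturality of the projections in $\cC$ forces the resulting tuple to satisfy~(\ref{eq:commutativity of morphisms in inverse limit category}), so it is a genuine morphism of $\cC$. This gives all finite products, and hence the result.

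The step I expect to be the main obstacle is verifying the coherence condition~(\ref{eq:commutativity of objects in inverse limit category}) for the structure isomorphism $h_{ij}$ of the binary product, i.e. that $h_{ij}\comp F_{ij}(h_{jk})=h_{ik}\comp\left(\theta_{ijk}\right)_{x_k\oplus y_k}$. I would obtain this by a diagram chase from three inputs: the naturality of the canonical additivity isomorphisms $c_{\bullet\bullet}$; the coherence conditions~(\ref{eq:commutativity of objects in inverse limit category}) for $f$ and for $g$ separately; and the fact that $\theta_{ijk}$, being a natural isomorphism between the additive functors $F_{ij}\comp F_{jk}$ and $F_{ik}$, automatically commutes with the canonical biproduct-decomposition isomorphisms. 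In particular no hypothesis on the $\theta_{ijk}$ beyond $\K$-linearity of the $F_{ij}$ is required, and everything else reduces to componentwise statements in the $\cC_i$.
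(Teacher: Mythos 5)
Your proof is correct and follows essentially the same route as the paper: componentwise $\K$-linear structure, zero object via componentwise zero objects, binary products via componentwise biproducts with structure isomorphism built from additivity of $F_{ij}$. Your subspace argument for the hom-sets and your explicit identification of the coherence condition~(\ref{eq:commutativity of objects in inverse limit category}) as the key check are somewhat more explicit than the paper's "it is straightforward to check"/"a long but straightforward process", but the substance is the same.
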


\begin{proof}
Let $x,y\in\cC$ where $x=\left(x_i,f_{ij}\right)$ and $y=\left(y_i,g_{ij}\right)$. Let $s,r\in \cC(x,y)$ where $s=\left(s_i\right)$ and $r=\left(r_i\right)$ and let $a\in \K$. We define addition and $\K$-multiplication in $\cC(x,y)$ by
\[s+r\coloneqq \left(s_i+r_i\right) \text{ and } a\cdot s\coloneqq (a s_i). \]
It is straightforward to check that this definition turns $\cC$ into a $\K$-linear category and that the functors $\Phi_i$ are $\K$-linear. In particular $\cC$ is preadditive. To show that $\cC$ is additive, we need to show that every pair of objects in $\cC$ has a product and $\cC$ has a zero object. The pair $\left(0_i,0_{ij}\right)$ where $0_i\in \cC_i$ is a zero object and $0_{ij}:F_{ij}(0_j)\to 0_i$ is the unique isomorphism can be easily shown to be a zero object in $\cC$. To show that every pair of objects in $\cC$ has a product, let $x,y\in\cC$ where $x=\left(x_i,f_{ij}\right)$ and $y=\left(y_i,g_{ij}\right)$. Since $F_{ij}:\cC_j\to \cC_i$ is an additive functor, we have that $\left(F_{ij}\left(x_j\oplus y_j\right), F_{ij}\left(\pi_{x_j}\right), F_{ij}\left(\pi_{y_j}\right)\right)$ is a product of $F_{ij}\left(x_j\right)$ and $F_{ij}\left(y_j\right)$. Hence there exists a unique morphism $f_{ij}\oplus g_{ij}:F_{ij}\left(x_j\oplus y_j\right)\to x_i\oplus y_i$ such that both squares of the diagram
\begin{equation*}
    \begin{tikzcd}
        F_{ij}\left(x_j\right) \arrow[d, "f_{ij}"] && F_{ij}\left(x_j\oplus y_j\right) \arrow[ll, swap, "F_{ij}\left(\pi_{x_j}\right)"] \arrow[rr, "F_{ij}\left(\pi_{y_j}\right)"] \arrow[d, "f_{ij}\oplus g_{ij}"] && F_{ij}\left(y_j\right) \arrow[d, "g_{ij}"] \\
        x_i && x_i\oplus y_i \arrow[ll, swap, "\pi_{x_i}"] \arrow[rr, "\pi_{y_i}"] && y_i
    \end{tikzcd}
\end{equation*}
commute. It is a long but straightforward process to show first that $x\oplus y\coloneqq\left(x_i\oplus y_i, f_{ij}\oplus g_{ij}\right)$ is an object in $\cC$, then that $\pi_x\coloneqq\left(\pi_{x_i}\right)$ is a morphism in $\cC\left(\left(x_i\oplus y_i, f_{ij}\oplus g_{ij}\right), (x_i,f_{ij})\right)$ and $\pi_y\coloneqq\left(\pi_{y_i}\right)$ is a morphism in $\cC\left(\left(x_i\oplus y_i, f_{ij}\oplus g_{ij}\right), (y_i,g_{ij})\right)$ and finally show that $\left(x\oplus y, \pi_x, \pi_y\right)$ is a product of $x$ and $y$, the details of which are left to the reader.
\end{proof}

\begin{lemma}\label{lem:C has split idempotents}
Let $\left(\cC,\Phi_i,\Theta_{ij}\right)$ be the concrete inverse limit of the $\Cat$-inverse system $\left(\cC_i,F_{ij},\theta_{ijk}\right)$. Assume that all $\cC_i$ have split idempotents, and let $x=\left(x_i,f_{ij}\right)\in\cC$ and $e=\left(e_i\right)\in\End_{\cC}\left(x\right)$ be an idempotent. Then the following statements hold.
\begin{enumerate}[label=(\alph*)]
    \item For every $i\in\dirI$ we have that $e_i:x_i\to x_i$ is an idempotent.
    
    \item For every $i\in\dirI$, let $s_i:x_i\to y_i$ and $t_i:y_i\to x_i$ be such that $t_i\comp s_i=e_i$ and $s_i\comp t_i=\Id_{y_i}$. Then $y=\left(y_i,s_i\comp f_{ij}\comp F_{ij}\left(t_j\right)\right)$ is an object in $\cC$.
    
    \item Set $s\coloneqq \left(s_i\right)$ and $t\coloneqq \left(t_i\right)$. Then $s:x\to y$ and $t:y\to x$ are morphisms in $\cC$ such that $t\comp s=e$ and $s\comp t=\Id_y$. In particular, the category $\cC$ has split idempotents.
\end{enumerate}
\end{lemma}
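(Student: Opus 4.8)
The plan is to treat the three parts of Lemma~\ref{lem:C has split idempotents} in order: part~(a) is immediate, part~(c) is routine once~(b) is available, and the genuine work is part~(b). For part~(a), composition in the concrete inverse limit $\cC$ is componentwise, so the relation $e\comp e=e$ in $\End_{\cC}(x)$ says componentwise that $e_i\comp e_i=e_i$ for every $i\in\dirI$, which is the claim. Before proceeding I would record the one identity used throughout: applying the morphism-compatibility \eqref{eq:commutativity of morphisms in inverse limit category} to the morphism $e=(e_i)\in\cC(x,x)$ gives $e_i\comp f_{ij}=f_{ij}\comp F_{ij}(e_j)$, equivalently $f_{ij}^{-1}\comp e_i\comp f_{ij}=F_{ij}(e_j)$, for all $i<j$. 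Together with $t_i\comp s_i=e_i$, $s_i\comp t_i=\Id_{y_i}$ and the images of these under $F_{ij}$, this is the entire toolkit.

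For part~(b), write $h_{ij}\coloneqq s_i\comp f_{ij}\comp F_{ij}(t_j)\colon F_{ij}(y_j)\to y_i$. I would first check that $h_{ij}$ is an isomorphism with inverse $F_{ij}(s_j)\comp f_{ij}^{-1}\comp t_i$: composing the two in either order and repeatedly inserting $t_j\comp s_j=e_j$, the conjugation identity $f_{ij}\comp F_{ij}(e_j)\comp f_{ij}^{-1}=e_i$, and $s_i\comp t_i=\Id_{y_i}$, both composites collapse to the relevant identity morphism. Then I would verify that $(y_i,h_{ij})$ satisfies the object-compatibility \eqref{eq:commutativity of objects in inverse limit category}, i.e. $h_{ij}\comp F_{ij}(h_{jk})=h_{ik}\comp(\theta_{ijk})_{y_k}$ for $i<j<k$. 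Expanding the left-hand side, one absorbs $F_{ij}(t_j\comp s_j)=F_{ij}(e_j)$ and pushes it past $f_{ij}$ with the conjugation identity, rewrites $f_{ij}\comp F_{ij}(f_{jk})$ as $f_{ik}\comp(\theta_{ijk})_{x_k}$ via \eqref{eq:commutativity of objects in inverse limit category} applied to $x$, uses naturality of $\theta_{ijk}$ to move $(\theta_{ijk})_{x_k}$ past $(F_{ij}\comp F_{jk})(t_k)$, and finally invokes $e_k\comp t_k=t_k$ (equivalently $e_i\comp f_{ik}\comp F_{ik}(t_k)=f_{ik}\comp F_{ik}(t_k)$) to land exactly on $h_{ik}\comp(\theta_{ijk})_{y_k}$. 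This last computation --- weaving together naturality of $\theta$ with all three identities in the right order --- is the main obstacle; it is bookkeeping rather than anything conceptually deep, but it is where care is required.

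For part~(c), I would verify that $s=(s_i)\colon x\to y$ and $t=(t_i)\colon y\to x$ satisfy the morphism-compatibility \eqref{eq:commutativity of morphisms in inverse limit category} with respect to $(x_i,f_{ij})$ and $(y_i,h_{ij})$: for $s$ one has $h_{ij}\comp F_{ij}(s_j)=s_i\comp f_{ij}\comp F_{ij}(e_j)=s_i\comp e_i\comp f_{ij}=s_i\comp f_{ij}$, using $s_i\comp e_i=s_i\comp t_i\comp s_i=s_i$; for $t$ one has $t_i\comp h_{ij}=e_i\comp f_{ij}\comp F_{ij}(t_j)=f_{ij}\comp F_{ij}(e_j\comp t_j)=f_{ij}\comp F_{ij}(t_j)$, using $e_j\comp t_j=t_j$. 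Then $t\comp s=(t_i\comp s_i)=(e_i)=e$ and $s\comp t=(s_i\comp t_i)=(\Id_{y_i})=\Id_y$, since composition in $\cC$ is componentwise. Finally, since each $\cC_i$ has split idempotents, for arbitrary $x\in\cC$ and idempotent $e\in\End_{\cC}(x)$ the data $y_i,s_i,t_i$ of~(b) exists for every $i$, so (a)--(c) produce a splitting of $e$ in $\cC$; hence $\cC$ has split idempotents.
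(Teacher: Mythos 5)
Your proposal is correct and follows essentially the same path as the paper's proof: componentwise composition for (a), the same candidate $y=(y_i,s_i\comp f_{ij}\comp F_{ij}(t_j))$ for (b) with the same verification that it is an isomorphism (same inverse) and satisfies the object-compatibility via naturality of $\theta_{ijk}$ and the conjugation identity $f_{ij}\comp F_{ij}(e_j)=e_i\comp f_{ij}$, and the identical computations for (c). The only variation is a harmless reordering in the compatibility computation for (b) — you absorb the leftover idempotent at the end via $e_k\comp t_k=t_k$, whereas the paper absorbs it earlier via $s_i\comp e_i=s_i$ — but the identities used and the structure of the argument are the same.
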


\begin{proof}
\begin{enumerate}[label=(\alph*)]
    \item We have $\left(e_i\right)=\left(e_i\right)\comp\left(e_i\right) = \left(e_i\comp e_i\right)$ which implies that $e_i=e_i\comp e_i$ for all $i\in\dirI$ and so $e_i\in\End_{\cC_i}\left(x_i\right)$ is an idempotent.
    
    \item Let $i,j\in \dirI$ with $i<j$. Set $g_{ij}\coloneqq  s_i \comp f_{ij}\comp F_{ij}\left(t_j\right)$. To show that $\left(y_i,g_{ij}\right)$ is an object in $\cC$ we need to show that $g_{ij}:F_{ij}\left(y_j\right)\to y_i$ is an isomorphism and that (\ref{eq:commutativity of objects in inverse limit category}) is satisfied for $g_{ij}$. To show that $g_{ij}$ is an isomorphism, one can easily verify that $g_{ij}^{-1}=F_{ij}\left(s_j\right)\comp f_{ij}^{-1} \comp t_i$ using (\ref{eq:commutativity of morphisms in inverse limit category}). To show that (\ref{eq:commutativity of objects in inverse limit category}) is satisfied, we compute for every $i<j<k$ in $\dirI$ that
    \begin{align*}
        g_{ij}\comp F_{ij}\left(g_{jk}\right) &= s_i\comp f_{ij}\comp F_{ij}\left(t_j\right)\comp F_{ij}\left(s_j\comp f_{jk}\comp F_{jk}\left(t_k\right)\right) && \\
        &=s_i\comp f_{ij}\comp F_{ij}\left(t_j\comp s_j\right) \comp F_{ij}\left(f_{jk}\right)\comp \left(F_{ij}\comp F_{jk}\right)\left(t_k\right) &&\\
        &=s_i\comp f_{ij}\comp F_{ij}\left(e_j\right) \comp F_{ij}\left(f_{jk}\right)\comp \left(F_{ij}\comp F_{jk}\right)\left(t_k\right) &&\\
        &=s_i\comp e_i\comp f_{ij} \comp F_{ij}\left(f_{jk}\right)\comp \left(F_{ij}\comp F_{jk}\right)\left(t_k\right) &&\text{(by (\ref{eq:commutativity of morphisms in inverse limit category}))}\\
        &=s_i\comp f_{ij} \comp F_{ij}\left(f_{jk}\right)\comp \left(F_{ij}\comp F_{jk}\right)\left(t_k\right) &&\\
        &=s_i\comp f_{ik} \comp \left(\theta_{ijk}\right)_{x_k} \comp \left(F_{ij}\comp F_{jk}\right)\left(t_k\right) &&\text{(by (\ref{eq:commutativity of objects in inverse limit category}))}\\
        &=s_i\comp f_{ik} \comp F_{ik}\left(t_k\right)\comp \left(\theta_{ijk}\right)_{y_k} &&\text{(naturality of $\theta_{ijk}$)}\\
        &=g_{ik} \comp \left(\theta_{ijk}\right)_{y_k}.
    \end{align*}
    Hence indeed $y\coloneqq\left(y_i,g_{ij}\right)\in\cC$.
    
    \item To show that $s$ is a morphism we need to show that (\ref{eq:commutativity of morphisms in inverse limit category}) is satisfied. For every $i<j$ in $\dirI$ we have
    \[
    g_{ij}\comp F_{ij}\left(s_j\right) 
    = s_i\comp f_{ij}\comp F_{ij}\left(t_j\right) \comp F_{ij}\left(s_j\right) 
    =s_i\comp f_{ij}\comp F_{ij}\left(e_j\right)  
    =s_i\comp e_i \comp f_{ij} 
    =s_i\comp f_{ij},
    \]
    by using (\ref{eq:commutativity of morphisms in inverse limit category}) for $e:x\to x$. Similarly, to show that $t$ is a morphism, for every $i<j$ in $\dirI$ we have
    \begin{align*}
    t_i\comp g_{ij} 
    &= t_i \comp s_i \comp f_{ij}\comp F_{ij}\left(t_j\right)
    =e_i \comp f_{ij} \comp F_{ij}\left(t_j\right)\\ 
    &=f_{ij}\comp F_{ij}\left(e_j\right) \comp F_{ij}\left(t_j\right)
    =f_{ij}\comp F_{ij}\left(e_j\comp t_j\right) \\
    &=f_{ij}\comp F_{ij}\left(t_j\right),
    \end{align*}
    again by using (\ref{eq:commutativity of morphisms in inverse limit category}) for $e:x\to x$. Hence $s$ and $t$ are morphisms in $\cC$. Finally, we have
    \begin{align*}
        &t\comp s = \left(t_i\right) \comp \left(s_i\right) = \left(t_i\comp  s_i\right) = \left(e_i\right) = e, \text{ and} \\
        &s\comp t = \left(s_i\right) \comp \left(t_i\right) = \left(s_i\comp t_i\right) = \left(\Id_{y_i}\right) = \Id_{y},
    \end{align*}
    which proves that $\cC$ has split idempotents. \qedhere
\end{enumerate}
\end{proof}

In general, it is not true that objects in $\cC$ have semiperfect endomorphism rings even if objects in $\cC_i$ have semiperfect endomorphism rings. Hence the property of being a Krull--Schmidt category is not inherited by $\cC$. Note that Lemma \ref{lem:C has split idempotents} is a first step towards constructing a Krull--Schmidt subcategory of $\cC$. The following technical lemma is the next step.

\begin{lemma}\label{lem:connecting functors eventually bijective implies limit functors eventually bijective}
Let $\left(\cC,\Phi_i,\Theta_{ij}\right)$ be the concrete inverse limit of the $\Cat$-inverse system $\left(\cC_i,F_{ij},\theta_{ijk}\right)$. Let $x,y\in \cC$ and $t\in \dirI$. Assume that the induced map \[F_{tu}:\cC_u\left(\Phi_u(x),\Phi_u(y)\right)\to \cC_t\left(F_{tu}\comp \Phi_u(x),F_{tu}\comp\Phi_{u}(y)\right)\]
is bijective for all $u>t$. Then the following statements hold.
\begin{enumerate}[label=(\alph*)]
    \item The induced map 
    \[F_{uv}:\cC_{v}\left(\Phi_v(x),\Phi_v(y)\right)\to \cC_u\left(F_{uv}\comp\Phi_v\left(x\right),F_{uv}\comp\Phi_v\left(y\right)\right)\]
    is bijective for all $v>u>t$.
    \item The induced map 
    \[\Phi_u:\cC\left(x,y\right) \to \cC_u\left(\Phi_u(x),\Phi_{u}(y)\right)\]
    is bijective for all $u>t$.
\end{enumerate}
\end{lemma}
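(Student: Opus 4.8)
The plan is to reduce the statement to an elementary fact about inverse systems of sets. Write $x=(x_i,f_{ij})$ and $y=(y_i,g_{ij})$, so that $x_i=\Phi_i(x)$, $y_i=\Phi_i(y)$, $f_{ij}=(\Theta_{ij})_x$ and $g_{ij}=(\Theta_{ij})_y$. For $i<j$ in $\dirI$ set $A_i\coloneqq\cC_i(x_i,y_i)$ and let $\pi_{ij}\colon A_j\to A_i$ send $s$ to $g_{ij}\comp F_{ij}(s)\comp f_{ij}^{-1}$; this is just the action of $F_{ij}$ on hom-sets conjugated by the structure isomorphisms, so it is a bijection exactly when the corresponding map $F_{ij}$ appearing in the statement is. A short computation from the coherence identity (\ref{eq:commutativity of objects in inverse limit category}) together with the naturality of $\theta_{ijk}$ shows $\pi_{ij}\comp\pi_{jk}=\pi_{ik}$, so $(A_i,\pi_{ij})$ is an inverse system of sets; and by the description of morphisms in Definition \ref{def:concrete inverse limit category} a family $(s_i)$ is a morphism $x\to y$ in $\cC$ exactly when $s_i=\pi_{ij}(s_j)$ for all $i<j$. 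Hence $\cC(x,y)=\varprojlim_{i\in\dirI}A_i$ with $\Phi_u$ identified with the canonical projection to $A_u$, and the hypothesis says precisely that $\pi_{tu}\colon A_u\to A_t$ is bijective for every $u>t$.

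Part (a) is then immediate: for $v>u>t$ we have $\pi_{tu}\comp\pi_{uv}=\pi_{tv}$, and both $\pi_{tv}$ (by the hypothesis applied with $v$ in place of $u$) and $\pi_{tu}$ are bijective, so $\pi_{uv}=\pi_{tu}^{-1}\comp\pi_{tv}$ is a composite of bijections, hence bijective. For part (b), fix $u>t$ and put $J\coloneqq\{w\in\dirI\mid w\geq u\}$. Since $\dirI$ is directed, $J$ is directed and cofinal in $\dirI$ (any upper bound of $\{i,u\}$ lies in $J$ and dominates $i$), so the restriction map $\varprojlim_{i\in\dirI}A_i\to\varprojlim_{w\in J}A_w$ is a bijection. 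By part (a) every transition map of the system $(A_w,\pi_{ww'})_{w\in J}$ is bijective (for $u<w<w'$ use $\pi_{uw}\comp\pi_{ww'}=\pi_{uw'}$ with $\pi_{uw}$ and $\pi_{uw'}$ bijective), and $J$ has least element $u$; hence a compatible family over $J$ is determined by, and may be prescribed arbitrarily at, its $A_u$-component, so $\varprojlim_{w\in J}A_w\to A_u$ is a bijection. Composing the two bijections shows that $\Phi_u\colon\cC(x,y)\to A_u$ is a bijection, which is part (b).

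The only points needing any care are in the first paragraph: verifying that $(A_i,\pi_{ij})$ is genuinely an inverse system — which is exactly what the coherence axiom (\ref{eq:commutativity of objects in inverse limit category}) is there for — and the bookkeeping identifying $\cC(x,y)$ with $\varprojlim A_i$ via Definition \ref{def:concrete inverse limit category}. The two facts about inverse limits of sets used in part (b), namely that a cofinal sub-diagram computes the same limit and that over a directed set with a least element and bijective transition maps the limit is the bottom term, are standard, so I do not expect any real obstacle.
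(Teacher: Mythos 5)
Your proof is correct, and it reorganizes the paper's argument rather than duplicating it step by step. Your key move is to conjugate the hom-action of $F_{ij}$ by the structure isomorphisms $f_{ij},g_{ij}$ so that $\left(A_i,\pi_{ij}\right)$ with $A_i=\cC_i(\Phi_i(x),\Phi_i(y))$ becomes a genuine inverse system of \emph{sets} over $\dirI$, and then to observe that the morphism condition (\ref{eq:commutativity of morphisms in inverse limit category}) says exactly that $\cC(x,y)=\varprojlim A_i$ with $\Phi_u$ as the projection. Once that translation is made, part (a) drops out from the composition law $\pi_{tu}\comp\pi_{uv}=\pi_{tv}$, and part (b) from two standard set-level facts: a cofinal sub-diagram computes the same inverse limit, and over a directed set with a least element and bijective transitions the limit is evaluation at the bottom. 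The paper instead proves injectivity and surjectivity of $\Phi_u$ by direct element-chasing (constructing $s_v$ from $s_u$ via $F_{uv}^{-1}$ adjusted by $g_{uv},f_{uv}$, then defining $s_i=g_{iv}\comp F_{iv}(s_v)\comp f_{iv}^{-1}$ for general $i$ by choosing $v$ dominating both $i$ and $u$), which is precisely the unpacked version of the cofinality lemma you invoke; for part (a) the paper builds the conjugating bijection $G$ and a commutative square of hom-sets, which is again your composition identity spelled out. Your packaging is more modular and arguably cleaner, at the cost of relying on two ``standard facts'' that the paper chooses to verify in situ. The one point I would encourage you to make explicit in a final write-up is the verification that prescribing the $A_u$-component gives a \emph{compatible} family over $J$ (i.e.\ that $\pi_{ww'}(\pi_{uw'}^{-1}s_u)=\pi_{uw}^{-1}s_u$ for $u\leq w<w'$), since that is where the transitivity $\pi_{uw}\comp\pi_{ww'}=\pi_{uw'}$ is actually used; you clearly know this, but as stated it is asserted rather than checked.
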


\begin{proof}
\begin{enumerate}[label=(\alph*)]
    \item Let $v,u\in\dirI$ with $v>u>t$. Notice first that since 
    \[F_{uv}\comp \Phi_v(x) = F_{uv}(x_v)\isom x_u = \Phi_u(x) \text{ and } F_{uv}\comp \Phi_v(y)=F_{uv}(y_v)\isom y_u = \Phi_u(y),\] 
    and since the induced map 
    \[F_{tu}:\cC_u\left(\Phi_u(x),\Phi_u(y)\right)\to \cC_t\left(F_{tu}\comp \Phi_u(x),F_{tu}\comp\Phi_{u}(y)\right)\]
    is bijective by assumption, it follows that the induced map
    \[F_{tu}:\cC_u\left(F_{uv}\comp\Phi_v(x),F_{uv}\comp\Phi_v(y)\right)\to \cC_t\left(F_{tu}\comp F_{uv}\comp\Phi_v(x),F_{tu}\comp F_{uv}\comp\Phi_{v}(y)\right)\]
    is also bijective. Moreover, since $\theta_{tuv}:F_{tu}\comp F_{uv} \To F_{tv}$ is a natural isomorphism, we get a bijective map of sets
    \[G: \cC_t\left(F_{tu}\comp F_{uv}\comp\Phi_v(x),F_{tu}\comp F_{uv}\comp\Phi_v(y)\right) \to \cC_t\left(F_{tv}\comp\Phi_v(x), F_{tv}\comp\Phi_v(y)\right)\]
    defined by
    \[G(f)= \left(\theta_{tuv}\right)_{y_v}\comp f \comp \left[\left(\theta_{tuv}\right)_{x_v}\right]^{-1}.\]
    By naturality of $\theta_{tuv}$, it follows that the diagram of maps
    \begin{equation*}
        \begin{tikzcd}
            \cC_v\left(\Phi_v(x),\Phi_v(y)\right) \arrow[r, "F_{uv}"] \arrow[d, "F_{tv}"] & \cC_u\left(F_{uv}\comp\Phi_v\left(x\right), F_{uv}\comp \Phi_v\left(y\right)\right) \arrow[d, "F_{tu}"] \\
            \cC_t\left(F_{tv}\comp\Phi_v(x), F_{tv}\comp\Phi_v(y)\right) & \cC_t\left(F_{tu}\comp F_{uv}\comp\Phi_v(x),F_{tu}\comp F_{uv}\comp\Phi_{v}(y)\right) \arrow[l, swap, "G"]
        \end{tikzcd}
    \end{equation*}
    commutes. Since the maps $F_{tv}$, $F_{tu}$ and $G$ are bijective, it follows that $F_{uv}$ is also bijective.

    \item Let us write $x=\left(x_i,f_{ij}\right)$ and $y=\left(y_i,g_{ij}\right)$ and let $u\in\dirI$ with $u> t$. Let us first show that the induced map $\Phi_u:\cC\left(x,y\right) \to \cC_u\left(\Phi_u(x),\Phi_{u}(y)\right)$ is injective. Let $s,r\in\cC\left(x,y\right)$ with $s=\left(s_i\right)$ and $r=\left(r_i\right)$ and assume that $\Phi_u(s)=\Phi_u(r)$. We show that $s=r$. Since we have $\Phi_u(s)=\Phi_u(r)$, it follows that $s_u=r_u$. Then, for $v>u$, we have from (\ref{eq:commutativity of morphisms in inverse limit category}) that
    \[g_{uv} \comp F_{uv}(s_v) = s_u\comp f_{uv} = r_u\comp f_{uv}= g_{uv}\comp F_{uv}(r_v).\]
    Since $g_{uv}$ is an isomorphism, it follows that $F_{uv}(s_v)=F_{uv}(r_v)$. Since $s_v,r_v\in \cC_v\left(x_v,y_v\right) = \cC_v\left(\Phi_v(x),\Phi_v(y)\right)$ and $F_{uv}:\cC_{v}\left(\Phi_v(x),\Phi_v(y)\right)\to \cC_u\left(F_{uv}\comp\Phi_v\left(x\right),F_{uv}\comp\Phi_v\left(y\right)\right)$ is bijective by (a), we conclude that $s_v=r_v$.

    Next, let $i\in\dirI$ and let $v\in\dirI$ be such that $v>i$ and $v>u$. Then, using $s_v=r_v$ and equation (\ref{eq:commutativity of morphisms in inverse limit category}), we have 
    \[ s_i = g_{iv}\comp F_{iv}(s_v) \comp f_{uv}^{-1} = g_{iv}\comp F_{iv}(r_v)\comp f_{uv}^{-1} = r_i,\]
    and so $s=(s_i)=(r_i)=r$, which proves injectivity.
    
    Let us now show surjectivity. Let $s_u\in\cC_u\left(\Phi_u(x),\Phi_u(y)\right)$. We construct a morphism $s\in\cC(x,y)$ such that $\Phi_u(s)=s_u$. For $v>u$, consider the morphism
    \[g_{uv}^{-1}\comp s_u \comp f_{uv}\in \cC_u\left(F_{uv}\left(x_v\right), F_{uv}\left(y_v\right)\right).\]
    By (a), there exists a unique morphism $f\in\cC_v\left(x_v,y_v\right)$ such that
    \begin{equation}\label{eq:definition of sv}
        F_{uv}(f) = g_{uv}^{-1}\comp s_u \comp f_{uv}.
    \end{equation}
    We set $s_v\coloneqq f$. Let $w>v$ in $\dirI$. A straightforward computation using (\ref{eq:commutativity of objects in inverse limit category}), (\ref{eq:definition of sv}) and naturality of $\theta_{uvw}$ shows that the diagram
    \begin{equation}\label{eq:commutativity for w>v}
        \begin{tikzcd}
            F_{vw}\left(x_{w}\right) \arrow[r, "F_{vw}\left(s_{w}\right)"] \arrow[d, "f_{vw}"] & F_{vw}\left(y_{w}\right) \arrow[d, "g_{vw}"] \\
            x_{v} \arrow[r, "s_{v}"] &  y_{v}
        \end{tikzcd}
    \end{equation}
    commutes. Next, let $w,v,i\in\dirI$ be such that $w>v>i$. Another straightforward computation using commutativity of (\ref{eq:commutativity for w>v}), (\ref{eq:commutativity of objects in inverse limit category}) and naturality of $\theta_{ivw}$ shows that
    \begin{equation*}
        g_{iv}\comp F_{iv}\left(s_v\right) \comp f_{iv}^{-1} = g_{iw}\comp F_{iw}\left(s_w\right)\comp f_{iw}^{-1}.
    \end{equation*}
    We set $s_i \coloneqq g_{iv}\comp F_{iv}\left(s_v\right) \comp f_{iv}^{-1}$, and it readily follows that $s\coloneqq (s_i)$ is a morphism in $\cC(x,y)$ satisfying $\Phi_u(s)=s_u$, which proves surjectivity of $\Phi_u$. \qedhere
\end{enumerate}
\end{proof}

\begin{definition}\label{def:firm objects and Cks}
Let $\left(\cC,\Phi_i,\Theta_{ij}\right)$ be the concrete inverse limit of the $\Cat$-inverse system $\left(\cC_i,F_{ij},\theta_{ijk}\right)$. We say that an object $x\in\cC$ is \emph{firm}\index[definitions]{firm object} if there exists a $t=t(x)\in\dirI$ such that for all $u>t$ and all $a,b\in\cC_u$ the induced maps
\begin{align*}
    & F_{tu}:\cC_u\left(\Phi_u(x),b\right) \to \cC_t\left(F_{tu}\comp\Phi_u(x), F_{tu}(b)\right), \text{ and} \\
    & F_{tu}:\cC_u\left(a,\Phi_u(x)\right) \to \cC_t\left(F_{tu}(a),F_{tu}\comp\Phi_u(x)\right)
\end{align*} 
are isomorphisms. We define $\Cks$ to be the full subcategory of $\cC$ containing all firm objects in $\cC$, we denote $\Pks_i\coloneqq \restr{\Phi_i}{\Cks}$ and $\Tks_{ij}\coloneqq \restr{\Theta_{ij}}{\Cks}$ and we call $(\Cks,\Pks_i,\Tks_{ij})$\index[symbols]{(C, Phii, Thetaij)@$(\Cks,\Pks_i,\Tks_{ij})$} the \emph{firm source}\index[definitions]{firm source} of the inverse system $\left(\cC_i,F_{ij},\theta_{ijk}\right)$.
\end{definition}

Notice that while the firm source is clearly a $\Cat$-source of the $\Cat$-inverse system, in general it is not an inverse limit. The following proposition collects some basic properties of $\Cks$.

\begin{proposition}\label{prop:properties of Cks}
Let $\left(\cC,\Phi_i,\Theta_{ij}\right)$ be the concrete inverse limit of the $\Cat$-inverse system $\left(\cC_i,F_{ij},\theta_{ijk}\right)$.
\begin{enumerate}[label=(\alph*)]
    \item Let $e\in\End_{\cC}(x)$ be an idempotent and $s\in\cC(x,y)$ and $r\in\cC(y,x)$ be such that $r\comp s =e$ and $s\comp r=\Id_y$. Then $x\in\Cks$ implies $y\in\Cks$.
    
    \item Let $x\in\Cks$ and let $t=t(x)\in\dirI$ be as in Definition \ref{def:firm objects and Cks}. Then the induced map $\Pks_u:\End_{\Cks}(x)\to \End_{\cC_u}(\Pks_u(x))$ is bijective for all $u>t$.
    
    \item If all $\cC_i$ are $\K$-linear additive categories and all $F_{ij}$ are $\K$-linear functors, then $\Cks$ is a $\K$-linear additive category and all $\Pks_i$ are $\K$-linear functors.
\end{enumerate}
\end{proposition}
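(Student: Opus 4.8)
The plan is to dispatch the three parts in order, using Proposition~\ref{prop:additive limit}, Lemma~\ref{lem:connecting functors eventually bijective implies limit functors eventually bijective}, the naturality of the functors $F_{tu}$, and the elementary fact that a retract (in the arrow category of $\Set$) of a bijection is a bijection.

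\emph{Part (a).} Since $r\comp s=e$ and $s\comp r=\Id_y$, applying the functor $\Phi_u$ gives $\Phi_u(s)\comp\Phi_u(r)=\Id_{\Phi_u(y)}$, so $\Phi_u(y)$ is a retract of $\Phi_u(x)$ in $\cC_u$ for every $u\in\dirI$. I would set $t(y)\coloneqq t(x)$ and check the two conditions of Definition~\ref{def:firm objects and Cks}. For $u>t(x)$ and $b\in\cC_u$, precomposition with $\Phi_u(s)$ and with $\Phi_u(r)$ exhibits $\cC_u(\Phi_u(y),b)$ as a retract of $\cC_u(\Phi_u(x),b)$, and by functoriality of $F_{tu}$ this retraction is compatible with the maps $F_{t(x)u}$; since $F_{t(x)u}\colon\cC_u(\Phi_u(x),b)\to\cC_{t(x)}(F_{t(x)u}\Phi_u(x),F_{t(x)u}(b))$ is a bijection by firmness of $x$, and a retract of a bijection is a bijection, the corresponding map for $\Phi_u(y)$ is a bijection. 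The second family of maps in Definition~\ref{def:firm objects and Cks} is handled symmetrically using postcomposition with $\Phi_u(r)$ and $\Phi_u(s)$. Hence $y$ is firm, i.e. $y\in\Cks$.

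\emph{Part (b).} Taking $b=\Phi_u(x)$ in the first family of maps of Definition~\ref{def:firm objects and Cks}, firmness of $x$ says exactly that $F_{tu}\colon\cC_u(\Phi_u(x),\Phi_u(x))\to\cC_t(F_{tu}\Phi_u(x),F_{tu}\Phi_u(x))$ is bijective for all $u>t\coloneqq t(x)$. This is the hypothesis of Lemma~\ref{lem:connecting functors eventually bijective implies limit functors eventually bijective}(b) applied with $y=x$, so $\Phi_u\colon\cC(x,x)\to\cC_u(\Phi_u(x),\Phi_u(x))$ is bijective for all $u>t$. Since $\Cks$ is a full subcategory of $\cC$ and $\Pks_u=\restr{\Phi_u}{\Cks}$, this is precisely the claim that $\Pks_u\colon\End_{\Cks}(x)\to\End_{\cC_u}(\Pks_u(x))$ is bijective.

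\emph{Part (c).} By Proposition~\ref{prop:additive limit}, $\cC$ is a $\K$-linear additive category and each $\Phi_i$ is $\K$-linear; as $\Cks$ is a full subcategory, it is automatically $\K$-linear (preadditive) and each $\Pks_i=\restr{\Phi_i}{\Cks}$ is $\K$-linear. It remains to produce a zero object and binary products in $\Cks$. The zero object $0$ of $\cC$ is firm with any threshold: for $u>t$ the additivity of $F_{tu}$ forces $F_{tu}(\Phi_u(0))$ to be a zero object of $\cC_t$, so all relevant hom-sets are singletons and the maps of Definition~\ref{def:firm objects and Cks} are trivially bijective. For binary products, given $x,y\in\Cks$ I would show the biproduct $x\oplus y$ of Proposition~\ref{prop:additive limit} lies in $\Cks$; fullness of $\Cks$ then makes it a product there, and together with the zero object this gives all finite products. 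To see $x\oplus y$ is firm one first needs the auxiliary observation that firmness of an object $z$ propagates to any threshold $t\ge t(z)$: factoring $F_{t(z)u}$ as $F_{t(z)t}\comp F_{tu}$ up to the isomorphism $\theta_{t(z)tu}$, and using that both $F_{t(z)u}$ and $F_{t(z)t}$ are bijections on the relevant hom-spaces (the latter via the structure isomorphism $F_{tu}\Phi_u(z)\isom\Phi_t(z)$ and firmness of $z$ with $v=t$), one gets that $F_{tu}$ is a bijection there. Then choosing $t\in\dirI$ with $t\ge t(x)$ and $t\ge t(y)$, for $u>t$ the additivity of $\Phi_u$ and of $F_{tu}$ splits $\cC_u(\Phi_u(x\oplus y),b)$ and its $F_{tu}$-image as direct sums along $x$ and $y$, with $F_{tu}$ respecting the splitting; each summand map is a bijection by the propagation observation applied to $x$ and to $y$, so $F_{tu}$ is a bijection. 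The second family of maps is again symmetric, and we conclude $x\oplus y\in\Cks$.

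I expect the only nontrivial point to be the threshold bookkeeping in part (c): firmness is stated relative to a single index $t(x)$, so forming a biproduct of two firm objects requires first promoting both thresholds to a common index, which in turn needs the stability of firmness under enlarging $t(x)$. Parts (a) and (b) are essentially formal once Lemma~\ref{lem:connecting functors eventually bijective implies limit functors eventually bijective} and ``a retract of a bijection is a bijection'' are in hand.
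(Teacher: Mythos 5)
Your proof is correct and follows essentially the same route as the paper: (a) via the retract argument, (b) via Lemma~\ref{lem:connecting functors eventually bijective implies limit functors eventually bijective}(b), and (c) via Proposition~\ref{prop:additive limit} plus a block-diagonal decomposition of the comparison map. The one place where you go beyond what the paper spells out is the ``propagation'' step in (c): the paper chooses $t > t(x)$ and $t > t(y)$ and asserts that $F_{tu}^x$ and $F_{tu}^y$ are bijective for $u>t$, but Definition~\ref{def:firm objects and Cks} only supplies bijectivity of $F_{t(x)u}^x$ directly, so one really does need your observation that firmness is stable under enlarging the threshold (factoring $F_{t(x)u}$ through $F_{tu}$ up to $\theta$ and cancelling the outer bijection $F_{t(x)t}$). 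You also explicitly verify that the zero object is firm, which the paper leaves implicit. Both additions are correct and make the argument fully rigorous.
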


\begin{proof}
\begin{enumerate}[label=(\alph*)]
\item Let $x\in\Cks$ and $t=t(x)\in\dirI$ be as in Definition \ref{def:firm objects and Cks}. Let $u>t$. It is enough to show that the induced maps
    \begin{align*}
        & F_{tu}:\cC_u\left(\Phi_u(y),b\right) \to \cC_t\left(F_{tu}\comp\Phi_u(y), F_{tu}(b)\right), \text{ and} \\
        & F_{tu}:\cC_u\left(a,\Phi_u(y)\right) \to \cC_t\left(F_{tu}(a),F_{tu}\comp\Phi_t(y)\right)
    \end{align*} 
    are bijective for all $a,b\in\cC_u$. Let us only show that the first map is bijective; the other map can be similarly shown to be bijective. 
    
    To show injectivity, let $f,g:\Phi_u(y)\to b$ be such that $F_{tu}(f)=F_{tu}(g)$. Then
    \[F_{tu}\left(f\comp s_u\right) = F_{tu}(f)\comp F_{tu}\left(s_u\right) = F_{tu}(g)\comp F_{tu}\left(s_u\right) = F_{tu}\left(g\comp s_u\right),\]
    and since the map $F_{tu}:\cC_u\left(\Phi_u(x),b\right) \to \cC_t\left(F_{tu}\comp\Phi_u(x), F_{tu}(b)\right)$ is bijective by assumption, it follows that $f\comp s_u = g\comp s_u$. Hence
    \[f=f\comp \Id_{\Phi_u(y)} = f\comp s_u\comp r_u = g\comp s_u\comp r_u = g\comp \Id_{\Phi_u(y)} = g,\]
    which proves injectivity.
    
    Next, we show surjectivity. Let $h\in \cC_t\left(F_{tu}\comp \Phi_u(y),F_{tu}(b)\right)$. Then we have $h\comp F_{tu}\left(s_u\right)\in \cC_t\left(F_{tu}\comp \Phi_u(x), F_{tu}(b)\right)$. Since the map $F_{tu}:\cC_u\left(\Phi_u(x),b\right) \to \cC_t\left(F_{tu}\comp\Phi_u(x), F_{tu}(b)\right)$ is bijective by assumption, it follows that there exists a unique $p\in\cC_u\left(\Phi_u(x),b\right)$ such that $F_{tu}(p)=h\comp F_{tu}\left(s_u\right)$. Then $p\comp r_u\in \cC_u\left(\Phi_u(y),b\right)$ and
    \begin{align*}
        F_{tu}\left(p\comp r_u\right) &= F_{tu}(p)\comp F_{tu}\left(r_u\right) = h\comp F_{tu}\left(s_u\right)\comp F_{tu}\left(r_u\right) = h\comp F_{tu}\left(s_u\comp r_u\right)\\
        &= h \comp F_{tu}\left(\Id_{\Phi_u(y)}\right) = h\comp \Id_{F_{tu}\comp \Phi_u(y)} = h,
    \end{align*}
    which proves surjectivity.
    
    \item Follows immediately by Lemma \ref{lem:connecting functors eventually bijective implies limit functors eventually bijective}(b).
    
    \item By Proposition \ref{prop:additive limit}, we have that $\cC$ is a $\K$-linear additive category and all $\Phi_i$ are $\K$-linear functors. Since $\Cks$ is a full subcategory of $\cC$ and $\Pks_i=\restr{\Phi_i}{\Cks}$ it follows that $\Cks$ is a $\K$-linear category and all $\Pks_i$ are $\K$-linear functors. To show that $\Cks$ is additive, let $x,y\in\Cks$ and let $t(x),t(y)\in\dirI$ be as in Definition \ref{def:firm objects and Cks}. Let $t\in\dirI$ be such that $t>t(x)$ and $t>t(y)$. Let $u>t$. It is enough to show that the induced maps
    \begin{align*}
        & F_{tu}:\cC_u\left(a,\Phi_u(x\oplus y)\right) \to \cC_t\left(F_{tu}(a),F_{tu}\comp\Phi_t(x\oplus y)\right), \text{ and} \\
        & F\coloneqq F_{tu}:\cC_u\left(\Phi_u(x\oplus y),b\right) \to \cC_t\left(F_{tu}\comp\Phi_u(x\oplus y), F_{tu}(b)\right),
    \end{align*} 
    are bijective for all $a,b\in\cC_u$. Let us only show that $F$  is bijective; the other map can be similarly shown to be bijective. Since $F$ and $\Phi_u$ are additive functors, we may write $F$ as
    \[F=\left(\begin{matrix} F_{tu}^x & 0 \\ 0 & F_{tu}^y\end{matrix}\right),\] where
    \begin{align*}
        & F_{tu}^x:\cC_u\left(\Phi_u(x),b\right) \to \cC_t\left(F_{tu}\comp\Phi_u(x), F_{tu}(b)\right), \text{ and} \\
        & F_{tu}^y:\cC_u\left(\Phi_u(y),b\right) \to \cC_t\left(F_{tu}\comp\Phi_u(y), F_{tu}(b)\right)
    \end{align*} 
    are the corresponding induced maps for $x$ and $y$. Since $x,y\in\Cks$ and $u>t>t(x)$ and $u>t>t(y)$, we have that $F_{tu}^x$ and $F_{tu}^y$ are bijective. It follows that $F$ is bijective, as required. Hence $x\oplus y\in\Cks$ and so $\Cks$ is an additive category. Since all $\Phi_i$ are additive functors, it immediately follows that all $\Pks_i$ are additive functors.\qedhere
\end{enumerate}
\end{proof}

In particular, we have the following corollary.

\begin{corollary}\label{cor:properties of Cks}
Let $\left(\cC,\Phi_i,\Theta_{ij}\right)$ be the concrete inverse limit of the $\Cat$-inverse system $\left(\cC_i,F_{ij},\theta_{ijk}\right)$. If all $\cC_i$ are Krull--Schmidt categories, then $\Cks$ is a Krull--Schmidt category. In particular, for every $x\in\Cks$, if $t=t(x)\in\dirI$ is as in Definition \ref{def:firm objects and Cks}, then for every $u>t$ we have that $x$ is indecomposable if and only if $\Pks_u(x)$ is indecomposable.
\end{corollary}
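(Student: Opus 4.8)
The plan is to verify directly the two conditions of the characterization of Krull--Schmidt categories recalled in Section~\ref{subsec:background and notation}: that $\Cks$ is a skeletally small additive category which has split idempotents and whose endomorphism rings are all semiperfect. Smallness is immediate, since $\Cks$ is by definition a full subcategory of the concrete inverse limit $\cC$, and $\cC$ is small by Proposition~\ref{prop:inverse limit category}; additivity of $\Cks$ is Proposition~\ref{prop:properties of Cks}(c).

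For split idempotents, I would take $x\in\Cks$ and an idempotent $e\in\End_{\Cks}(x)$, view $e$ as an idempotent in $\End_{\cC}(x)$, and apply Lemma~\ref{lem:C has split idempotents} to obtain an object $y\in\cC$ together with morphisms $s\colon x\to y$ and $r\colon y\to x$ in $\cC$ satisfying $r\comp s=e$ and $s\comp r=\Id_y$. Proposition~\ref{prop:properties of Cks}(a) then guarantees $y\in\Cks$, so the whole splitting takes place inside $\Cks$ and $e$ splits there.

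For the endomorphism rings, fix $x\in\Cks$ and let $t=t(x)\in\dirI$ be as in Definition~\ref{def:firm objects and Cks}; pick any $u>t$. By Proposition~\ref{prop:properties of Cks}(b) the functor $\Pks_u$ induces a bijection $\End_{\Cks}(x)\to\End_{\cC_u}(\Pks_u(x))$, and since $\Pks_u$ is a functor this bijection respects composition and units, hence is a ring isomorphism. As $\cC_u$ is Krull--Schmidt, every endomorphism ring of an object of $\cC_u$ is semiperfect, so $\End_{\Cks}(x)\isom\End_{\cC_u}(\Pks_u(x))$ is semiperfect. Combined with split idempotents, the characterization recalled above yields that $\Cks$ is a Krull--Schmidt category. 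The ``in particular'' statement is then a direct consequence: in a Krull--Schmidt category an object is indecomposable exactly when its endomorphism ring is local, and the ring isomorphism $\End_{\Cks}(x)\isom\End_{\cC_u}(\Pks_u(x))$ (which in particular detects whether these rings are zero) transports locality in both directions, so $x$ is indecomposable if and only if $\Pks_u(x)$ is.

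I do not expect a genuine obstacle: the real content sits in Lemma~\ref{lem:C has split idempotents}, Lemma~\ref{lem:connecting functors eventually bijective implies limit functors eventually bijective} and Proposition~\ref{prop:properties of Cks}, and this corollary is essentially their assembly. The only points needing a little care are to note that the bijection of Proposition~\ref{prop:properties of Cks}(b) is compatible with composition, so that the purely ring-theoretic property of being semiperfect (and of being local) transfers along it, and to keep track of the threshold $t(x)$, which is precisely what makes $\Pks_u$ well-behaved for every sufficiently large $u$.
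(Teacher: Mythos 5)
Your proof is correct and follows essentially the same route as the paper: both establish additivity via Proposition~\ref{prop:properties of Cks}(c), split idempotents via Lemma~\ref{lem:C has split idempotents} combined with Proposition~\ref{prop:properties of Cks}(a), semiperfectness of endomorphism rings via the ring isomorphism from Proposition~\ref{prop:properties of Cks}(b) and (c), and then transport locality along that same isomorphism for the ``in particular'' claim. The only cosmetic difference is that you explicitly record skeletal smallness, which the paper leaves implicit.
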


\begin{proof}
By Proposition \ref{prop:properties of Cks}(c) it follows that $\Cks$ is additive. Since all $\cC_i$ have split idempotents, the category $\cC$ has split idempotents by Lemma \ref{lem:C has split idempotents}(c). Hence $\Cks$ has split idempotents by Proposition \ref{prop:properties of Cks}(a). Moreover, the induced map $\Pks_u:\End_{\Cks}(x)\to \End_{\cC_u}(\Pks_u(x))$ is additive by Proposition \ref{prop:properties of Cks}(c), is a ring morphism since $\Pks_u$ is a functor and is bijective by Proposition \ref{prop:properties of Cks}(b). Hence it is a ring isomorphism and in particular $\End_{\Cks}(x)\isom \End_{\cC_u}(\Pks_u(x))$ is a semiperfect ring for every $x\in \Cks$. Hence $\Cks$ is a Krull--Schmidt category.
    
In particular, since $\Cks$ is Krull--Schmidt, we have that $x$ is indecomposable if and only if $\End_{\Cks}(x)$ is a local ring. This statement is true if and only if $\End_{\cC_u}(\Pks_u(x))$ is a local ring, which is true if and only if $\Pks_u(x)$ is indecomposable, since $\cC_u$ is a Krull--Schmidt category.
\end{proof}

For the rest of this section we fix a $\Cat$-inverse system $\left(\cC_i,F_{ij},\theta_{ijk}\right)$, where all $\cC_i$ are small $\K$-linear Krull--Schmidt categories and all functors $F_{ij}$ are $\K$-linear, and we denote its firm source by $(\Cks,\Pks_i,\Tks_{ij})$. Such a system induces functors
\[\begin{tikzpicture}
\node (A) at (0,0) {$\M\cC_j$};
\node (B) at (3,0) {$\M\cC_i$\nospacepunct{,}};

\draw[->] (B) -- node[above] {$F_{ij\ast}$} (A); 
\draw[->] (A) to [out=30,in=150]  node[auto]{$F_{ij}^{\ast}$} (B);
\draw[->] (A) to [out=-30,in=-150]  node[below]{$F_{ij}^{!}$} (B);
\end{tikzpicture}
\]
for all $i<j$, where $\left(F_{ij}^{\ast},F_{ij\ast}\right)$ and $\left(F_{ij\ast},F_{ij}^{!}\right)$ are adjoint pairs. If moreover the functors $F_{ij}$ are coherent preserving, the above diagram restricts to
\[\begin{tikzpicture}
\node (A) at (0,0) {$\m\cC_j$};
\node (B) at (3,0) {$\m\cC_i$\nospacepunct{,}};

\draw[->] (B) -- node[above] {$F_{ij\ast}$} (A); 
\draw[->] (A) to [out=30,in=150]  node[auto]{$F_{ij}^{\ast}$} (B);
\draw[->] (A) to [out=-30,in=-150]  node[below]{$F_{ij}^{!}$} (B);
\end{tikzpicture}
\]
for all $i<j$ and again $\left(F_{ij}^{\ast},F_{ij\ast}\right)$ and $\left(F_{ij\ast},F_{ij}^{!}\right)$ are adjoint pairs. Moreover, for every $i<j<k$ we have $F_{ik\ast} \isom \left(F_{ij}\circ F_{jk}\right)_{\ast} =  F_{jk\ast} \circ F_{ij\ast}$, that is we have a natural isomorphism $ \theta_{ijk\ast}: F_{jk\ast} \circ F_{ij\ast} \To F_{ik\ast}$ defined via $\left( \theta_{ijk\ast}\right)_M = M\theta_{ijk}$ for all $M\in\m\cC_i$. It is easy to see that the triple $\left(\m\cC_i, F_{ij\ast}, \theta_{ijk\ast}\right)$\index[symbols]{(mod Ci, Fij, thetaijk)@$\left(\m\cC_i, F_{ij\ast}, \theta_{ijk\ast}\right)$} is a direct system of categories.

Now notice that $\Cks$ is a small $\K$-linear Krull--Schmidt category by Proposition \ref{prop:properties of Cks}(c) and Corollary \ref{cor:properties of Cks}(c). Hence we also have functors 
\[\begin{tikzpicture}
\node (A) at (0,0) {$\M\Cks$};
\node (B) at (3,0) {$\M\cC_i$\nospacepunct{,}};

\draw[->] (B) -- node[above] {$ \Pks_{i\ast}$} (A); 
\draw[->] (A) to [out=30,in=150]  node[auto]{$\Pks_i^{\ast}$} (B);
\draw[->] (A) to [out=-30,in=-150]  node[below]{$\Pks_{i}^{!}$} (B);
\end{tikzpicture}
\]
for all $i\in \dirI$, where $(\Pks_{i}^{\ast}, \Pks_{i\ast})$ and $(\Pks_{i\ast},\Pks_{i}^{!})$ are adjoint pairs. If moreover the functors $\Pks_{i}$ are coherent preserving, the above diagram restricts to
\[\begin{tikzpicture}
\node (A) at (0,0) {$\m\Cks$};
\node (B) at (3,0) {$\m\cC_i$\nospacepunct{,}};

\draw[->] (B) -- node[above] {$\Pks_{i\ast}$} (A); 
\draw[->] (A) to [out=30,in=150]  node[auto]{$\Pks_{i}^{\ast}$} (B);
\draw[->] (A) to [out=-30,in=-150]  node[below]{$\Pks_{i}^{!}$} (B);
\end{tikzpicture}
\]
for all $i\in \dirI$ and again $(\Pks_{i}^{\ast}, \Pks_{i\ast})$ and $(\Pks_{i\ast},\Pks_{i}^{!})$ are adjoint pairs. Moreover, for every $i<j$ we have $\Pks_{i\ast} \isom (F_{ij}\comp \Pks_j)_{\ast} =  \Pks_{j\ast}\comp F_{ij\ast}$, that is we have a natural isomorphism 
$ \Tks_{ij\ast}: \Pks_{j\ast}\comp F_{ij\ast} \To  \Pks_{i\ast}$ defined via $( \Tks_{ij\ast})_M = M\Tks_{ij}$ for all $M\in\m\cC_i$. It is easy to see that the triple $(\m\Cks,\Pks_{i\ast},\Tks_{ij\ast})$\index[symbols]{(mod C, Phii, Thetaij)@$(\m\Cks,\Pks_{i\ast},\Tks_{ij\ast})$} is a target of the direct system of categories $\left(\m\cC_i, F_{ij\ast}, \theta_{ijk\ast}\right)$. In this case, objects of $\m\Cks$ can be described using objects of $\m\cC_i$ in the sense of the following lemma.

\begin{lemma}\label{lem:KS modules are preserved} Let $i\in\dirI$ and $M\in\m\Cks$.
\begin{enumerate}[label=(\alph*)]
    \item Let $\eta^i$ be the unit of the adjunction $(\Pks^{\ast}_i, \Pks_{i\ast})$ where $\Pks^{\ast}_i:\M\Cks\to \M\cC_i$ and $ \Pks_{i\ast}:\M\cC_i \to \M\Cks$. Then there exists a $t\in\dirI$ such that for all $u>t$ we have that $\eta^u_M$ is an isomorphism. In particular, we have $\Pks_{u\ast}\Pks_{u}^{\ast}\left(M\right)\isom M$.
    
    \item Assume that $\Cks$ is a dualizing $\K$-variety. Let $\epsilon^i$ be the counit of the adjunction $(\Pks_{i\ast}, \Pks_{i}^{!})$ where $\Pks_{i\ast}:\M\cC_i\to \M\Cks$ and $ \Pks_{i}^{!}:\M\Cks \to \M\cC_i$. Then there exists a $t\in\dirI$ such that for all $u>t$ we have that $\epsilon^u_M$ is an isomorphism. In particular, we have $\Pks_{u\ast}\Pks_{u}^{!}\left(M\right)\isom M$.  
\end{enumerate}
\end{lemma}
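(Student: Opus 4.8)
The plan is to prove both statements by the same mechanism: reduce from an arbitrary $M\in\m\Cks$ to a representable module (in part (a)) or an indecomposable injective module (in part (b)) by a right-, respectively left-, exact dimension-shift, and in that case identify the relevant unit, respectively counit, component with the map induced by $\Pks_u$ on a Hom-space, respectively with its $\K$-dual; Definition \ref{def:firm objects and Cks} together with Lemma \ref{lem:connecting functors eventually bijective implies limit functors eventually bijective} then forces this map to be bijective for all $u$ beyond a suitable index.

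\emph{Part (a).} Since $M$ is finitely presented there is an exact sequence $\Cks(-,y)\to\Cks(-,x)\to M\to 0$ in $\M\Cks$ with $x,y\in\Cks$. Both $\Id_{\M\Cks}$ and $\Pks_{u\ast}\Pks_u^{\ast}$ are right exact (here $\Pks_u^{\ast}$ is a left adjoint and $\Pks_{u\ast}$ is exact), so applying $\eta^u$ to this presentation yields a commutative ladder with right-exact rows, and if $\eta^u$ is an isomorphism on $\Cks(-,x)$ and on $\Cks(-,y)$ then, passing to cokernels, it is an isomorphism on $M$. So it suffices to treat $M=h_\Cks(x)$ for arbitrary $x\in\Cks$ and then to take for $t$ an element of $\dirI$ above the two indices $t(x),t(y)$ supplied by Definition \ref{def:firm objects and Cks}. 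For $M=h_\Cks(x)$, the identity $\Pks_u^{\ast}\comp h_\Cks=h_{\cC_u}\comp\Pks_u$ gives $\Pks_{u\ast}\Pks_u^{\ast}(h_\Cks(x))=\cC_u(\Pks_u(-),\Pks_u(x))$, and unwinding the unit of $(\Pks_u^{\ast},\Pks_{u\ast})$ at a representable shows that the component $(\eta^u_{h_\Cks(x)})_a\colon\Cks(a,x)\to\cC_u(\Pks_u(a),\Pks_u(x))$ is the map $f\mapsto\Pks_u(f)$. Hence we are reduced to showing that $\Pks_u\colon\Cks(a,x)\to\cC_u(\Pks_u(a),\Pks_u(x))$ is bijective for every $a\in\Cks$ and every $u>t(x)$. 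But $\Pks_u=\restr{\Phi_u}{\Cks}$ and $x$ is firm, so for $u>t(x)$ the map $F_{t(x)u}\colon\cC_u(\Phi_u(a),\Phi_u(x))\to\cC_{t(x)}(F_{t(x)u}\comp\Phi_u(a),F_{t(x)u}\comp\Phi_u(x))$ is an isomorphism; Lemma \ref{lem:connecting functors eventually bijective implies limit functors eventually bijective}(b), applied with $(a,x)$ in place of $(x,y)$ and $t=t(x)$, then gives that $\Phi_u\colon\cC(a,x)\to\cC_u(\Phi_u(a),\Phi_u(x))$ is bijective, as required.

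\emph{Part (b).} The argument is formally dual, and here we use that $\Cks$ is a dualizing $\K$-variety so that $M$ has an injective copresentation $0\to M\to I_0\to I_1$ in $\m\Cks$ with $I_0,I_1$ finite direct sums of modules $\D\Cks(z,-)$, $z\in\Cks$. Since $\Pks_u^{!}$ is left exact (a right adjoint) and $\Pks_{u\ast}$ is exact, $\epsilon^u$ fits into a commutative ladder with left-exact rows, so if $\epsilon^u$ is an isomorphism on $I_0$ and $I_1$ then, passing to kernels, it is an isomorphism on $M$; this reduces (b) to $M=\D\Cks(x,-)$, with $t$ taken above the finitely many indices $t(z)$ involved. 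For $M=\D\Cks(x,-)$, unwinding the definition of $\Pks_u^{!}$ and using the tensor--Hom adjunction together with the identity $P\otimes_\Cks\Cks(x,-)\isom P(x)$ (a computation analogous to the proof of Proposition \ref{prop:injective sent to correct injective}(a),(b)) yields $\Pks_u^{!}(\D\Cks(x,-))\isom\D\cC_u(\Pks_u(x),-)$, so that $\Pks_{u\ast}\Pks_u^{!}(\D\Cks(x,-))(a)\isom\D\bigl(\cC_u(\Pks_u(x),\Pks_u(a))\bigr)$, and the counit component $(\epsilon^u_{\D\Cks(x,-)})_a$ is identified with $\D$ applied to the map $\Pks_u\colon\Cks(x,a)\to\cC_u(\Pks_u(x),\Pks_u(a))$. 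This last map is bijective for all $a\in\Cks$ and $u>t(x)$ by the firmness argument of part (a), hence so is its $\K$-dual, and (b) follows.

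The step I expect to be the crux is the explicit identification of the (co)unit at a representable, respectively corepresentable, module with $\Pks_u$ acting on a Hom-space, respectively with its $\K$-dual, since this is exactly what lets Definition \ref{def:firm objects and Cks} and Lemma \ref{lem:connecting functors eventually bijective implies limit functors eventually bijective} take over; in part (b) this additionally requires the computation of $\Pks_u^{!}(\D\Cks(x,-))$ and checking that the counit is transported through it correctly. The remaining ingredients — the right/left-exact dimension-shift through the (co)presentation and the passage from the object-dependent indices $t(\cdot)$ to a single index valid for a whole (co)presentation — are routine bookkeeping.
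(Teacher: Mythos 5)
Your proof is correct and follows essentially the same route as the paper's: treat the representable (resp. corepresentable) case by identifying the (co)unit component at $\Cks(-,x)$ (resp. $\D\Cks(x,-)$) with the map induced by $\Pks_u$ on Hom-spaces (resp. its $\K$-dual), derive its bijectivity from firmness of $x$ together with Lemma \ref{lem:connecting functors eventually bijective implies limit functors eventually bijective}(b), and then bootstrap to general $M$ via a projective (resp. injective) (co)presentation using right exactness of $\Pks_u^{\ast}$ (resp. left exactness of $\Pks_u^{!}$) and exactness of $\Pks_{u\ast}$. The only cosmetic difference is that the computation $\Pks_u^{!}(\D\Cks(x,-))\isom\D\cC_u(\Pks_u(x),-)$ which you describe as ``analogous to'' Proposition \ref{prop:injective sent to correct injective} is in fact exactly Proposition \ref{prop:injective sent to correct injective}(b) applied to $F=\Pks_u$, and the paper cites it directly.
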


\begin{proof}
\begin{enumerate}[label=(\alph*)]
    \item We first prove the result for $M=\Cks(-,y)\in\m\Cks$. For all $i\in\dirI$ we have
    \[\Pks_{i}^{\ast}(\Cks(-,y)) = \Pks_{i}^{\ast}\circ h_{\Cks}(y) = h_{\cC_i}\circ \Pks_i(y) = \cC_i(-,\Pks_i(y)),\]
    and so
    \[ \Pks_{i\ast}\Pks_{i}^\ast(\Cks(-,y)) =  \Pks_{i\ast}(\cC_i(-,\Pks_i(y))) = \cC_i(\Pks_i(-),\Pks_i(y)).\]
    Then the component of the unit $\eta^i$ of the adjunction $(\Pks^{\ast}_i, \Pks_{i\ast})$ at $\Cks(-,y)$ is given by the map 
    \[\left(\eta^i_{\Cks(-,y)}\right)_x:\Cks(x,y)\to \cC_i(\Pks_i(x),\Pks_i(y))\]
    with $\left(\eta^i_{\Cks(-,y)}\right)_x=\Pks_i^{x,y}$. Since $y\in\Cks$, for $u>t$ we have that the induced map $\Pks_u^{x,y}:\Cks(x,y)\to \cC_u(\Pks_u(x),\Pks_u(y))$ is bijective by Lemma \ref{lem:connecting functors eventually bijective implies limit functors eventually bijective}(b). It follows that for $u>t$ the map $\left(\eta^u_{\Cks(-,y)}\right)_x$ is bijective, and hence $\eta^u_{\Cks(-,y)}$ is an isomorphism.
    
    Now let $M\in\m\Cks$ be arbitrary. Then there exist $y_1,y_2\in \Cks$ such that there exists an exact sequence
    \[ \Cks(-,y_2)\to \Cks(-,y_1) \to M \to 0\]
    in $\M\Cks$. Let $t\in\dirI$ be such that $t>t(y_1)$ and $t>t(y_2)$. Then for every $u>t$ we have that $\eta^u_{\Cks(-,y_1)}$ and $\eta^u_{\Cks(-,y_2)}$ are isomorphisms. Since for every $i\in\dirI$ the functor $ \Pks_{i\ast}$ is exact and the functor $\Pks_{i}^{\ast}$ is right exact, for $u>t$ we get a commutative diagram
    \[
    \begin{tikzpicture}
        \node (B) at (2,2) {$\Cks(-,y_2)$};
        \node (C) at (6,2) {$\Cks(-,y_1)$};
        \node (D) at (9.5,2) {$M$};
        \node (E) at (11.5,2) {$0$};
        \node (F) at (13,2) {$0$};

        \node (B1) at (2,0) {$\Pks_{u\ast}\Pks_{u}^{\ast}\left(\Cks(-,y_2)\right)$};
        \node (C1) at (6,0) {$\Pks_{u\ast}\Pks_{u}^{\ast}\left(\Cks(-,y_1)\right)$};
        \node (D1) at (9.5,0) {$\Pks_{u\ast}\Pks_{u}^{\ast}\left(M\right)$};
        \node (E1) at (11.5,0) {$0$};
        \node (F1) at (13,0) {$0$\nospacepunct{,}};

        \draw[->] (B) to (C);
        \draw[->] (C) to (D);
        \draw[->] (D) to (E);
        \draw[->] (E) to (F);

        \draw[->] (B1) to (C1);
        \draw[->] (C1) to (D1);
        \draw[->] (D1) to (E1);
        \draw[->] (E1) to (F1);

        \draw[->] (B) to node[right] {$\eta^u_{\Cks(-,y_2)}$} (B1);
        \draw[->] (C) to node[right] {$\eta^u_{\Cks(-,y_1)}$} (C1);
        \draw[->] (D) to node[right] {$\eta^u_M$} (D1);
        \draw[->] (E) to (E1);
        \draw[->] (F) to (F1);
    \end{tikzpicture}
    \]
where the two leftmost downwards arrows are isomorphisms and the rows are exact. The result follows by the Five Lemma.

\item If $\Cks$ is a dualizing $\K$-variety, then every $M\in\m\Cks$ admits an injective presentation. Since $\Pks_{i}^{!}(D\Cks(x,-)) \isom D\cC_i(\Pks_i(x),-)$ by Proposition \ref{prop:injective sent to correct injective}(b), the result follows by applying the dual arguments to (a). \qedhere
\end{enumerate}
\end{proof}

Lemma \ref{lem:KS modules are preserved} states that a given finitely presented $\Cks$-module is preserved, up to isomorphism, under the functors $ \Pks_{i\ast}\Pks_i^{\ast}$ and $\Pks_{i\ast}\Pks_i^{!}$ for a sufficiently high $i$. We can use this result to compute extensions between modules in $\m\Cks$ as in the following proposition.

\begin{proposition}\label{prop:KS extensions}
Assume that all functors $\Pks_i$ are dense. Then the following statements hold.
\begin{enumerate}[label=(\alph*)]
    \item For all $M\in\m\Cks$ and $k>0$ there exists a $t\in\dirI$ such that for all $u>t$ and $L\in\m\cC_u$ we have $\Ext_{\Cks}^r(M,\Pks_{u\ast}(L)) \isom \Ext_{\cC_u}^r(\Pks_{u}^{\ast}(M), L)$ for all $0<r<k$.
    
    \item Assume that $\Cks$ is a dualizing $\K$-variety. Then for all $N\in\m\Cks$ and $k>0$ there exists a $t\in\dirI$ such that for all $u>t$ and $L\in\m\cC_u$ we have $\Ext_{\Cks}^r\left(\Pks_{u\ast}(L),N\right) \isom \Ext_{\cC_u}^r\left(L, \Pks_{u}^{!}(N)\right)$ for all $0<r<k$. 
\end{enumerate}
\end{proposition}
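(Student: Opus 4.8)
The plan is to prove both parts by transporting a finite stretch of a projective (for (a)) or injective (for (b)) (co)resolution along $\Pks_u^{\ast}$ resp. $\Pks_u^{!}$, and then identifying $\Hom$-complexes through the adjunctions $(\Pks_u^{\ast},\Pks_{u\ast})$ resp. $(\Pks_{u\ast},\Pks_u^{!})$. I spell out (a); part (b) is the formal dual.

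Fix $M\in\m\Cks$ and $k>0$ and choose a projective resolution $\cdots\to P_1\to P_0\to M\to 0$ in $\m\Cks$; its terms are automatically representable, $P_j\isom\Cks(-,y_j)$ with $y_j\in\Cks$. Write $\Omega^jM$ for the $j$-th syzygy, so that $0\to\Omega^{j+1}M\to P_j\to\Omega^jM\to 0$ is exact, and note these syzygies are again finitely presented whenever $\Cks$ is a dualizing $\K$-variety, which covers all situations in which the result is applied. Applying $\Pks_u^{\ast}$, which is right exact and sends representables to representables via $\Pks_u^{\ast}\Cks(-,y)=\cC_u(-,\Pks_u(y))$, gives a complex $\Pks_u^{\ast}(P_\bullet)$ of representable $\cC_u$-modules augmented onto $\Pks_u^{\ast}(M)$ and exact in the lowest degrees. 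The crux is to make it exact through degree $k$: only the finitely many modules $M,\Omega^1M,\dots,\Omega^kM,P_0,\dots,P_{k-1}$ occur in those degrees, so by Lemma \ref{lem:KS modules are preserved}(a) and directedness of $\dirI$ there is a single $t\in\dirI$ such that, for every $u>t$, the unit $\eta^u$ of $(\Pks_u^{\ast},\Pks_{u\ast})$ is an isomorphism at each of them. For such $u$, applying $\Pks_{u\ast}$ --- which is exact always, and faithful by Proposition \ref{prop:properties of dense and full}(a) since $\Pks_u$ is dense --- to $\Pks_u^{\ast}$ of each of the above short exact sequences recovers, by naturality of $\eta^u$, the original sequence $0\to\Omega^{j+1}M\to P_j\to\Omega^jM\to 0$; since an exact faithful functor reflects monomorphisms, $\Pks_u^{\ast}$ preserves the inclusion $\Omega^{j+1}M\hookrightarrow P_j$, and hence $\Pks_u^{\ast}(P_\bullet)$ is the beginning of a projective resolution of $\Pks_u^{\ast}(M)$ through degree $k$. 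Finally, for $0<r<k$ and $L\in\m\cC_u$ the adjunction $(\Pks_u^{\ast},\Pks_{u\ast})$ furnishes a natural isomorphism of complexes $\Hom_{\cC_u}(\Pks_u^{\ast}(P_\bullet),L)\isom\Hom_{\Cks}(P_\bullet,\Pks_{u\ast}(L))$, and taking $r$-th cohomology gives $\Ext_{\cC_u}^r(\Pks_u^{\ast}(M),L)\isom\Ext_{\Cks}^r(M,\Pks_{u\ast}(L))$, proving (a).

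For (b) one argues dually, using that $\Cks$ is a dualizing $\K$-variety. Fix an injective resolution $0\to N\to I^0\to I^1\to\cdots$ in $\m\Cks$ with $I^j\isom\D\Cks(y_j,-)$; by Proposition \ref{prop:injective sent to correct injective}(b),(c), $\Pks_u^{!}(I^j)\isom\D\cC_u(\Pks_u(y_j),-)$ lies in $\m\cC_u$ and is injective there, so $\Pks_u^{!}(I^\bullet)$ is a complex of injective $\cC_u$-modules augmented under $\Pks_u^{!}(N)$. Since $\Pks_u^{!}$ is left exact, Lemma \ref{lem:KS modules are preserved}(b) (the counit of $(\Pks_{u\ast},\Pks_u^{!})$) together with exactness and faithfulness of $\Pks_{u\ast}$ shows, exactly as above, that for $u$ past a suitable threshold $\Pks_u^{!}$ preserves the cosyzygy short exact sequences of $N$ through degree $k$, so $\Pks_u^{!}(I^\bullet)$ is an injective resolution of $\Pks_u^{!}(N)$ in that range. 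The adjunction $(\Pks_{u\ast},\Pks_u^{!})$ then identifies $\Hom_{\cC_u}(L,\Pks_u^{!}(I^\bullet))$ with $\Hom_{\Cks}(\Pks_{u\ast}(L),I^\bullet)$, and cohomology yields $\Ext_{\cC_u}^r(L,\Pks_u^{!}(N))\isom\Ext_{\Cks}^r(\Pks_{u\ast}(L),N)$ for $0<r<k$.

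The one genuinely delicate point is the middle step: $\Pks_u^{\ast}$ is in general only right exact (and $\Pks_u^{!}$ only left exact), so the fact that it nevertheless preserves the relevant short exact sequences once $u$ is large is where firmness does the work --- Lemma \ref{lem:KS modules are preserved} says the composite $\Pks_{u\ast}\Pks_u^{\ast}$ (resp. $\Pks_{u\ast}\Pks_u^{!}$) becomes isomorphic to the identity on each of the finitely many (co)syzygies in play once $u$ passes a threshold, and then exactness and faithfulness of restriction of scalars upgrade this to preservation of the monomorphisms between them. Everything else is formal: the representability of finitely generated projectives, the behaviour of $\Pks_u^{\ast}$ and $\Pks_u^{!}$ on projectives and injectives, and the adjunction isomorphisms of $\Hom$-complexes. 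The only bookkeeping is to observe that computing $\Ext^r$ for $r<k$ involves only finitely many (co)syzygies, so that directedness of $\dirI$ supplies a single index $t$ that works uniformly.
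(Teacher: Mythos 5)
Your strategy matches the paper's in outline---push a finite stretch of a projective resolution forward along $\Pks_u^{\ast}$, check it remains a resolution once $u$ is large, and apply the adjunction to identify the $\Ext$ groups---but the tactic you use for the crucial middle step differs from the paper's and introduces a hypothesis not present in part (a). You pass to the syzygies $\Omega^j M$, invoke Lemma~\ref{lem:KS modules are preserved}(a) at each of them to get $\Pks_{u\ast}\Pks_u^{\ast}(\Omega^j M)\isom\Omega^j M$, and then use exactness and faithfulness of $\Pks_{u\ast}$ to conclude that $\Pks_u^{\ast}$ preserves the syzygy short exact sequences. This is a clean argument, but it requires $\Omega^j M\in\m\Cks$, i.e.\ that $\m\Cks$ is closed under kernels. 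You acknowledge this yourself, writing that it holds ``whenever $\Cks$ is a dualizing $\K$-variety, which covers all situations in which the result is applied''---but that hypothesis belongs only to part (b) of the proposition, not part (a), so what you have proved is strictly weaker than the stated (a).

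The paper's proof avoids syzygies entirely, and therefore does not need $\m\Cks$ to be abelian. It writes $P_j\isom\Cks(-,y_j)$, applies Lemma~\ref{lem:KS modules are preserved}(a) only to these representables (always finitely presented), and then checks exactness of the pushed-forward complex $\cC_u(-,\Pks_u(y_\bullet))$ pointwise: evaluating at an arbitrary $x_u\in\cC_u$, density of $\Pks_u$ gives $x_u\isom\Pks_u(x)$, and Lemma~\ref{lem:connecting functors eventually bijective implies limit functors eventually bijective}(b) identifies $\cC_u(\Pks_u(x),\Pks_u(y_j))$ with $\Cks(x,y_j)$, so exactness follows from exactness of the original resolution of $M$ evaluated at $x$. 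To close the gap in your proof of (a), replace the syzygy argument with this pointwise check (or else add the dualizing hypothesis to (a), which would weaken the statement). Your treatment of (b), where the dualizing hypothesis is actually assumed, is fine as written.
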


\begin{proof}
\begin{enumerate}[label=(\alph*)]
    \item Let $M\in \m\Cks$ and let
    \begin{equation}\label{eq:proj res}
        P_k \to \cdots \to P_1 \to P_0 \to M \to 0
    \end{equation}
    be the start of a projective resolution of $M$ in $\m\Cks$. We first claim that there exists $t\in\dirI$ such that for all $u>t$ and $0\leq j\leq k$ we have $\Pks_{u\ast}\Pks_{u}^{\ast}\left(P_j\right)\isom P_j$ and such that
    \begin{equation}\label{eq:proj res induced}
        \Pks_{u}^{\ast}(P_k) \to \cdots \to \Pks_{u}^{\ast}(P_1) \to \Pks_{u}^{\ast}(P_0) \to \Pks_{u}^{\ast}(M) \to 0
    \end{equation} 
    is the start of a projective resolution of $\Pks_{u}^{\ast}(M)$ in $\m\cC_u$. To see this, notice that the Yoneda embedding $h_{\Cks}:\Cks\to \m\Cks$ identifies $\Cks$ with $\proj\Cks$. Hence there exists a complex $y_k\to \cdots \to y_1\to y_0$ in $\Cks$ such that
    \begin{equation}\label{eq:proj res with Cs}
        \Cks(-,y_k) \to \cdots \to \Cks(-,y_1)\to \cC(-,y_0)
    \end{equation}
    is isomorphic to $P_k\to\cdots \to P_1\to P_0$ as it appears in (\ref{eq:proj res}). By Lemma \ref{lem:KS modules are preserved}(a), for each $0\leq j \leq k$ there exists a $t_j=t(y_j)\in\dirI$ such that for all $u>t_j$ we have $ \Pks_{u\ast}\Pks_{u}^{\ast}(\Cks(-,y_j))\isom \Cks(-,y_j)$. We choose $t$ such that $t>t_j$ for all $0\leq j \leq k$. It remains to show that (\ref{eq:proj res induced}) is a projective resolution of $\Pks_{u}^{\ast}(M)$ in $\m\cC_u$ for $u>t$. Notice that since $u>t(y_j)$, by Lemma \ref{lem:connecting functors eventually bijective implies limit functors eventually bijective} we have that the induced map $\Pks_{u}^{z,y_j}:\Cks(z,y_j) \to \cC_u(\Pks_u(z),\Pks_u(y_j))$ is bijective for all $z\in\Cks$. Moreover, for all $0\leq j \leq k$ we have 
    \[\Pks_{u}^{\ast}(\Cks(-,y_j)) = \Pks_{u}^{\ast}\comp h_{\Cks}(y_j) = h_{\cC_u}\comp \Pks_u(y_j) = \cC_u(-,\Pks_u(y_j)).\]
    Hence by applying $\Pks_{u}^{\ast}$ to (\ref{eq:proj res}) we get a sequence isomorphic to
    \[ \cC_u(-,\Pks_u(y_k))\to \cdots \to \cC_u(-,\Pks_u(y_1))\to \cC_u(-,\Pks_u(y_0)) \to \Pks_{u}^{\ast}(M)\to 0.\]
    This sequence is exact at $\cC_u(-,\Pks_u(y_0))$ and $\Pks_{u}^{\ast}(M)$ since $\Pks_{u}^{\ast}$ is right exact. To show exactness at the other positions, let $x_u\in \cC_u$. We need to show that the sequence
    \[\cC_u(x_u, \Pks_u(y_{j+1})) \to \cC_u(x_u, \Pks_u(y_j)) \to \cC_u(x_u, \Pks_u(y_{j-1}))\]
    is exact for $j\in \{1,\dots,k-1\}$. Since $\Pks_u$ is dense, there exists an $x\in \Cks$ such that $\Pks_u(x)\isom x_u$. But then, since $u>t$, we have the commutative diagram
    \[\begin{tikzpicture}
        \node (A1) at (2.75,2) {$\Cks(x,y_{j+1})$};
        \node (A2) at (6.75,2) {$\Cks(x,y_j)$};
        \node (A3) at (10.75,2) {$\Cks(x,y_{j-1})$};  
    
        \node (B1) at (2.75,1) {$\cC_u(\Pks_u(x),\Pks_u(y_{j+1}))$};
        \node (B2) at (6.75,1) {$\cC_u(\Pks_u(x),\Pks_u(y_j))$};
        \node (B3) at (10.75,1) {$\cC_u(\Pks_u(x),\Pks_u(y_{j-1}))$\nospacepunct{,}};   
    
        \draw[->] (A1) -- (A2);
        \draw[->] (A2) -- (A3);
    
        \draw[->] (B1) -- (B2);
        \draw[->] (B2) -- (B3);
        
        \draw[draw=none] (A1) to node[rotate=270] {$\isom$} (B1);
        \draw[draw=none] (A2) to node[rotate=270] {$\isom$} (B2);
        \draw[draw=none] (A3) to node[rotate=270] {$\isom$} (B3);
        \end{tikzpicture}
    \]
    where the top row is exact by exactness of (\ref{eq:proj res with Cs}). Since $\Pks_u(x)\isom x_u$, it follows that (\ref{eq:proj res induced}) is the start of a projective resolution of $\Pks_{u}^{\ast }(M)$ and the claim is proved.

    Now let $L\in\m\cC_u$. By applying $\Hom_{\Cks}(-,\Pks_{u\ast}(L))$ to (\ref{eq:proj res}) and using the fact that $\Pks_{u}^{\ast}$ is left adjoint to $ \Pks_{u\ast}$ we have the following isomorphism of complexes
    \[\begin{tikzpicture}
        \node (A1) at (0,2) {$0$};
        \node (A2) at (2.75,2) {$\Hom_{\Cks}(P_0,\Pks_{u\ast}(L))$};
        \node (A3) at (6.75,2) {$\Hom_{\Cks}(P_1,\Pks_{u\ast}(L))$};
        \node (A4a) at (9,2) {$ $};
        \node (A4b) at (9.5,2) {$ $};
        \node (A5) at (11.75,2) {$\Hom_{\Cks}(P_{k},\Pks_{u\ast}(L))$};
        
        \node (B1) at (0,1) {$0$};
        \node (B2) at (2.75,1) {$\Hom_{\cC_u}(\Pks_{u}^{\ast}(P_0),L)$};
        \node (B3) at (6.75,1) {$\Hom_{\cC_u}(\Pks_{u}^{\ast}(P_1),L)$};
        \node (B4a) at (9,1) {$ $};
        \node (B4b) at (9.5,1) {$ $};
        \node (B5) at (11.75,1) {$\Hom_{\cC_u}(\Pks_{u}^{\ast}(P_{m}),L)$\nospacepunct{.}};
    
        \draw[->] (A1) -- (A2);
        \draw[->] (A2) -- (A3);
        \draw[->] (A3) -- (A4a);
        \draw[dotted] (A4a) -- (A4b);
        \draw[->] (A4b) -- (A5);
    
        \draw[->] (B1) -- (B2);
        \draw[->] (B2) -- (B3);
        \draw[->] (B3) -- (B4a);
        \draw[dotted] (B4a) -- (B4b);
        \draw[->] (B4b) -- (B5);
        
        \draw[draw=none] (A1) to node[rotate=270] {$=$} (B1);
        \draw[draw=none] (A2) to node[rotate=270] {$\isom$} (B2);
        \draw[draw=none] (A3) to node[rotate=270] {$\isom$} (B3);
        \draw[draw=none] (A5) to node[rotate=270] {$\isom$} (B5);
    \end{tikzpicture}
    \]
    Since $\Ext^r_{\Cks}(M,\Pks_{u\ast}(L))$ is the cohomology of the top row at position $r$ and $\Ext^r_{\cC_u}(\Pks_{u}^{\ast}(M),L)$ is the cohomology of the bottom row at position $r$, the result follows.
    
    \item Proved using dual arguments and Lemma \ref{lem:KS modules are preserved}(b). \qedhere
\end{enumerate}
\end{proof}

As a corollary of the above statements, we have the following.

\begin{corollary}\label{cor:admissible target}
Let $\left(\cC_i,F_{ij},\theta_{ijk}\right)$ be a $\Cat$-inverse system, where all $\cC_i$ are dualizing $\K$-varieties and all $F_{ij}$ are coherent preserving $\K$-linear functors. Assume that $\Cks$ is a dualizing $\K$-variety and all functors $\Pks_i$ are dense, full and coherent preserving. Then $(\m\Cks,\Pks_{i\ast},\Tks_{ij})$ together with the adjunctions $(\Pks_i^{\ast},\Pks_{i\ast})$ and $(\Pks_{i\ast},\Pks_i^{!})$ is an admissible target of $(\m\cC_i,F_{ij\ast},\theta_{ijk\ast})$.
\end{corollary}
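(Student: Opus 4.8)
The plan is to check directly that the quadruple $(\m\Cks,\Pks_{i\ast},\Tks_{ij\ast})$ equipped with the adjunctions $(\Pks_i^{\ast},\Pks_{i\ast})$ and $(\Pks_{i\ast},\Pks_i^{!})$ satisfies conditions (i)--(iv) of Definition \ref{def:admissible target}, feeding in the results accumulated earlier in this section. First, however, I would record that $(\m\cC_i,F_{ij\ast},\theta_{ijk\ast})$ is eligible to have an admissible target at all, i.e.\ that it is a direct system of abelian categories with fully faithful exact connecting functors: that it is a direct system of categories was observed above; each $\m\cC_i$ is abelian because $\cC_i$ is a dualizing $\K$-variety; $F_{ij\ast}$ is always exact and, since $F_{ij}$ is coherent preserving, restricts to a functor $\m\cC_i\to\m\cC_j$; and $F_{ij\ast}$ is fully faithful by Proposition \ref{prop:properties of dense and full}(c) once one knows that $F_{ij}$ is dense and full. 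The latter I would extract from the natural isomorphism $\Tks_{ij}\colon F_{ij}\comp\Pks_j\To\Pks_i$: density of $\Pks_i$ forces every object of $\cC_i$ to be isomorphic to some $F_{ij}(\Pks_j(x))$, so $F_{ij}$ is dense, and, using density of $\Pks_j$ to reduce to objects of the form $\Pks_j(x)$, fullness of $\Pks_i$ together with the naturality square for $\Tks_{ij}$ lets one lift any morphism between their $F_{ij}$-images, so $F_{ij}$ is full.

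Condition (i) is then immediate: $\m\Cks$ is abelian because $\Cks$ is a dualizing $\K$-variety by hypothesis, and $\Pks_{i\ast}$ is fully faithful by Proposition \ref{prop:properties of dense and full}(c) since $\Pks_i$ is dense and full; it restricts to a functor $\m\cC_i\to\m\Cks$ because $\Pks_i$ is coherent preserving, and the two adjunctions restrict to the finitely presented module categories for the same reason, as spelled out just before the statement. Remark \ref{rem:homological embedding} then gives exactness of $\Pks_{i\ast}$ for free.

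For condition (ii), I would fix $M\in\m\Cks$ and invoke Lemma \ref{lem:KS modules are preserved}: part (a) yields $t_1\in\dirI$ above which the unit of $(\Pks_u^{\ast},\Pks_{u\ast})$ is an isomorphism at $M$, and part (b) --- applicable since $\Cks$ is a dualizing $\K$-variety --- yields $t_2\in\dirI$ above which the counit of $(\Pks_{u\ast},\Pks_u^{!})$ is an isomorphism at $M$; any common upper bound $t$ of $t_1$ and $t_2$ works. Conditions (iii) and (iv) are nothing but Proposition \ref{prop:KS extensions}(a) and (b), whose hypotheses --- density of all $\Pks_i$ for both parts, and $\Cks$ a dualizing $\K$-variety for part (b) --- are among the assumptions of the corollary.

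Thus the proof is essentially an assembly of Lemma \ref{lem:KS modules are preserved} and Proposition \ref{prop:KS extensions} together with a verification of the standing hypotheses of Definition \ref{def:admissible target}. I expect the only step requiring genuine (though routine) care to be the descent of density and fullness from the $\Pks_i$ to the $F_{ij}$, since that is the single place where one argues through the coherence isomorphisms $\Tks_{ij}$ rather than simply quoting a prior result; everything else reduces to a direct citation.
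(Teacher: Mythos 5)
Your proposal is correct and follows essentially the same route as the paper's proof: verify that the underlying direct system is eligible (abelian $\m\cC_i$, exact fully faithful $F_{ij\ast}$, obtained via Proposition \ref{prop:properties of dense and full}(c) from density and fullness of $F_{ij}$), then check conditions (i)--(iv) of Definition \ref{def:admissible target} by citing Proposition \ref{prop:properties of dense and full}(c), Lemma \ref{lem:KS modules are preserved}, and Proposition \ref{prop:KS extensions}. The only point where you go beyond the paper is in spelling out how density and fullness of the $F_{ij}$ descend from the same properties of the $\Pks_i$ through the coherence isomorphisms $\Tks_{ij}$; the paper compresses this to the bare assertion that $\Phi_i \isom F_{ij}\comp\Phi_j$ implies $F_{ij}$ is dense and full, and your unpacking is the correct reading of that line.
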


\begin{proof}
Since all $\cC_i$ are dualizing $\K$-varieties, it follows that $\m\cC_i$ is an abelian category for all $i\in\dirI$. The functors $F_{ij\ast}$ are always exact and since $\Phi_i \isom F_{ij}\comp \Phi_j$, it follows that all functors $F_{ij}$ are dense and full. Hence by Proposition \ref{prop:properties of dense and full}(c) the functors $F_{ij\ast}:\m\cC_i\to \m\cC_j$ are fully faithful. 

By the same argument, the functors $\Pks_{i\ast}$ are also fully faithful, so condition (i) of Definition \ref{def:admissible target} is satisfied. Moreover condition (ii) is satisfied by Lemma \ref{lem:KS modules are preserved} and conditions (iii) and (iv) are satisfied by Proposition \ref{prop:KS extensions}.
\end{proof}
   
\subsection{Asymptotically weakly \texorpdfstring{$n$}{n}-cluster tilting systems}\label{subsection:asymptotically weakly n-cluster tilting systems}

Let $\cA$ be an abelian category. For $x,y\in \cA$ we set 
\begin{equation*}
    \Ext_{\cA}^{k\sim l}(x,y) \coloneqq \bigoplus_{i=k}^l\Ext^i_{\cA}(x,y).
\end{equation*}
\index[symbols]{ExtA(x,y)@$\Ext_{\cA}^{k\sim l}(x,y)$}Clearly $\Ext_{\cA}^{k\sim l}(x,y)=0$ if and only if $\Ext_{\cA}^i(x,y)=0$ for all $k\leq i \leq l$. We begin by recalling the following definition.

\begin{definition}[\cite{IYA2}]
Let $\cA$ be an abelian category. A full subcategory $\cM$ of $\cA$ is called \emph{weakly $n$-cluster tilting}\index[definitions]{weakly $n$-cluster tilting subcategory} if 
\begin{align*}
    \cM &= \{ x \in \cA \mid \Ext^{1\sim n-1}_{\cA}\left(x, \cM\right)=0\} \\
    & = \{ x \in \cA \mid \Ext^{1\sim n-1}_{\cA}\left(\cM, x\right)=0\}. 
\end{align*}
If moreover $\cM$ is functorially finite, then $\cM$ is called \emph{$n$-cluster tilting}\index[definitions]{n-cluster tilting subcategory@$n$-cluster tilting subcategory}.
\end{definition}

The following definition is the main object for this section.

\begin{definition}\label{def:asymptotically weakly n-cluster tilting}
Let $\left(\cA_i,G_{ij},\zeta_{ijk}\right)$ be a direct system of categories where all $\cA_i$ are abelian and all $G_{ij}$ are exact fully faithful functors. For every $i\in \dirI$, let $\cM_i$ be a full subcategory of $\cA_i$. We call $\left(\cM_i\right)$\index[symbols]{(Mi)@$\left(\cM_i\right)$} an \emph{asymptotically weakly $n$-cluster tilting system}\index[definitions]{asymptotically weakly $n$-cluster tilting system} if the following conditions hold.
\begin{enumerate}[label=(\roman*)]
    \item $\Ext^{1\sim n-1}_{\cA_i}(\cM_i,\cM_i)=0$.
    \item For every $j>i$ we have $G_{ij}\left({\cM_i}\right)\subseteq \cM_j$.
    \item If $x\in \cA_{i}$ and $\Ext^{1\sim n-1}_{\cA_j}\left( G_{ij}(x),\cM_j\right)=0$ for all $j>i$, then $x\in\cM_i$.
    \item If $x\in \cA_{i}$ and $\Ext^{1\sim n-1}_{\cA_j}\left(\cM_j, G_{ij}(x)\right)=0$ for all $j>i$, then $x\in\cM_i$.
\end{enumerate}
\end{definition}

An asymptotically weakly $n$-cluster tilting system of a direct system of categories $\left(\cA_i,G_{ij},\zeta_{ijk}\right)$ gives rise to a weakly $n$-cluster tilting subcategory of any admissible target of $\left(\cA_i,G_{ij},\zeta_{ijk}\right)$ as the following theorem shows.

\begin{theorem}\label{thrm:from asymptotic to n-ct}
Let $\left(\cM_i\right)$ be an asymptotically weakly $n$-cluster tilting system of a direct system of categories $\left(\cA_i,G_{ij},\zeta_{ijk}\right)$ and let $\left(\cA,\Psi_{i},Z_{ij}\right)$ together with adjunctions $\left(L_i,\Psi_i\right)$ and $\left(\Psi_i,R_i\right)$ be an admissible target of $\left(\cA_i,G_{ij},\zeta_{ijk}\right)$. Set
\[\cM \coloneqq \add\left\{ \Psi_{i}\left(\cM_i\right) \mid i\in\dirI\right\}.\]
\index[symbols]{M b@$\cM$}Then $\cM$ is a weakly $n$-cluster tilting subcategory of $\cA$. In particular, if $\cM$ is functorially finite in $\cA$, then $\cM$ is an $n$-cluster tilting subcategory of $\cA$.
\end{theorem}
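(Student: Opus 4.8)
The plan is to verify the two defining equalities of a weakly $n$-cluster tilting subcategory for $\cM$, namely
\[
\cM = \{x \in \cA \mid \Ext^{1\sim n-1}_{\cA}(x,\cM)=0\} = \{x \in \cA \mid \Ext^{1\sim n-1}_{\cA}(\cM,x)=0\},
\]
treating the two sides essentially symmetrically using the adjunctions $(L_i,\Psi_i)$ and $(\Psi_i,R_i)$ together with the four conditions in the definition of an asymptotically weakly $n$-cluster tilting system. The last sentence of the statement is then immediate: a functorially finite weakly $n$-cluster tilting subcategory is by definition $n$-cluster tilting, so there is nothing more to do once the weak version is established.

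\textbf{Self-orthogonality of $\cM$.} First I would check $\Ext^{1\sim n-1}_{\cA}(\cM,\cM)=0$. Since $\cM = \add\{\Psi_i(\cM_i)\}$ and $\Ext$ commutes with finite direct sums, it suffices to show $\Ext^r_{\cA}(\Psi_i(x),\Psi_j(y))=0$ for $x\in\cM_i$, $y\in\cM_j$ and $0<r<n$. Choosing (by directedness) some $u\geq i,j$, condition (ii) gives $G_{iu}(x)\in\cM_u$ and $G_{ju}(y)\in\cM_u$, and using $\Psi_u\comp G_{iu}\isom\Psi_i$ (from the target structure, i.e. $Z$) we may replace both arguments by modules of the form $\Psi_u(-)$ with arguments in $\cM_u$. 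Now by Definition~\ref{def:admissible target}(iii), applied to $x:=\Psi_u(x')$ and $k:=n$, there is $t$ such that for $w>t$ and $z\in\cA_w$ we have $\Ext^r_{\cA}(\Psi_u(x'),\Psi_w(z))\isom\Ext^r_{\cA_w}(L_w\Psi_u(x'),z)$; taking $w$ large enough that also $\Psi_w G_{uw}\isom \Psi_u$ and $L_w\Psi_u(x')\isom L_w\Psi_w G_{uw}(x')\isom G_{uw}(x')$ (using Remark~\ref{rem:homological embedding}), this becomes $\Ext^r_{\cA_w}(G_{uw}(x'),G_{uw}(y'))$, which vanishes because $G_{uw}$ is exact fully faithful and hence induces isomorphisms on $\Ext$, reducing us to $\Ext^r_{\cA_u}(x',y')=0$, which is condition (i).

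\textbf{The two characterizations.} I would prove $\cM \supseteq \{x\in\cA \mid \Ext^{1\sim n-1}_{\cA}(x,\cM)=0\}$ and the reverse, and symmetrically for $\Ext(\cM,x)$. The inclusion ``$\cM$ is contained in the right-hand set'' is exactly self-orthogonality, just shown. For the reverse inclusion, suppose $x\in\cA$ with $\Ext^{1\sim n-1}_{\cA}(x,\cM)=0$. By Definition~\ref{def:admissible target}(ii) choose $t$ so that for $u>t$ the unit $L_u(x)\to$ \,(resp. counit)\, evaluations are isomorphisms, in particular $\Psi_u L_u(x)\isom x$; set $x_u := L_u(x)\in\cA_u$. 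I claim $x_u\in\cM_u$ for $u$ large, via condition (iii) applied in $\cA_u$: I must show $\Ext^{1\sim n-1}_{\cA_v}(G_{uv}(x_u),\cM_v)=0$ for all $v>u$. Fix $v$ and $z\in\cM_v$; then $\Psi_v(z)\in\cM$, so $\Ext^r_{\cA}(x,\Psi_v(z))=0$ for $0<r<n$ by hypothesis. Now use Definition~\ref{def:admissible target}(iii) for $x$ and $k=n$: enlarging $u$ if necessary (and using $v>u$), $\Ext^r_{\cA}(x,\Psi_v(z))\isom\Ext^r_{\cA_v}(L_v(x),z)$, and $L_v(x)\isom L_v\Psi_v G_{uv}L_u(x)\isom G_{uv}(x_u)$ using Remark~\ref{rem:homological embedding} and $\Psi_u L_u(x)\isom x$. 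Hence $\Ext^r_{\cA_v}(G_{uv}(x_u),z)=0$, so by (iii), $x_u\in\cM_u$, whence $x\isom\Psi_u(x_u)\in\Psi_u(\cM_u)\subseteq\cM$. The argument for the other characterization is the dual, using $R_u$, Definition~\ref{def:admissible target}(iv) and condition (iv) of the asymptotic system. There is one bookkeeping point to handle carefully: in each application of (iii)/(iv) the ``$t$'' depends on $x$ (or $y$) and on $k=n$ but \emph{not} on $v$ or $z$, so a single large $u$ works uniformly over all $v>u$ and $z\in\cM_v$; this uniformity is exactly what conditions (iii) and (iv) of the admissible target are designed to provide, and it is what makes the appeal to conditions (iii)/(iv) of the asymptotic system (which quantify over \emph{all} $j>i$) legitimate.

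\textbf{Main obstacle.} The routine calculations are harmless; the real subtlety is the interplay of the three different ``$t$ exists such that for all $u>t$\dots'' clauses — one from Definition~\ref{def:admissible target}(ii) (units/counits become isomorphisms), one from (iii)/(iv) (Ext-comparison holds up to degree $k$), and the implicit ones hidden in identifying $L_u\Psi_u\isom\Id$, $L_v(x)\isom G_{uv}L_u(x)$, etc. I would isolate a short preliminary lemma, or at least a paragraph, recording that for fixed $x$ and fixed $k=n$ there is a single threshold $t_0$ past which \emph{all} of: $\Psi_uL_u(x)\isom x$, $\Psi_uR_u(x)\isom x$, $L_v(x)\isom G_{uv}L_u(x)$ for $v>u>t_0$, and the Ext-isomorphisms of (iii) and (iv) simultaneously hold; once that is in hand the proof is a clean two-line deduction for each of the two characterizations. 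The only place one could genuinely slip is forgetting that condition (iii) of the asymptotic system requires the Ext-vanishing for \emph{every} $j>i$, not just one — so one must not fix $v$ before fixing $u$; the correct order is: fix $x$, extract $u=u(x)$ from the admissible-target conditions, set $x_u=L_u(x)$, and only then let $v>u$ and $z\in\cM_v$ vary.
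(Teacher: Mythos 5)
Your proof follows essentially the same approach and structure as the paper's: establish self-orthogonality via the Ext-comparison isomorphisms from the admissible target axioms and conditions (i)--(ii) of the asymptotic system, then prove each reverse inclusion by transporting $\Ext$-vanishing of $x$ against $\cM$ down to $\Ext$-vanishing of $L_u(x)$ against $\cM_v$ for all $v>u$ and invoking conditions (iii)/(iv) of the asymptotic system. Your careful accounting of the order of quantifiers and the uniformity of the threshold $t$ is exactly the point that makes the argument go through.

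One small wrinkle: in the self-orthogonality step you arrive at $\Ext^r_{\cA_w}(G_{uw}(x'),G_{uw}(y'))$ and then assert that this equals $\Ext^r_{\cA_u}(x',y')$ because ``$G_{uw}$ is exact fully faithful and hence induces isomorphisms on $\Ext$''. That implication is false in general --- an exact fully faithful functor between abelian categories induces injections on higher $\Ext$ (via the Yoneda description) but need not induce isomorphisms; this is precisely the gap that the notion of homological embedding (Remark~\ref{rem:homological embedding}) is designed to close, and it is not assumed here. Fortunately the step is unnecessary: once you are at level $w$, condition (ii) of the asymptotic system puts $G_{uw}(x')$ and $G_{uw}(y')$ in $\cM_w$, and condition (i) applied at level $w$ directly gives $\Ext^r_{\cA_w}(G_{uw}(x'),G_{uw}(y'))=0$. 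Replacing that one sentence with this two-step appeal to (ii)+(i), which is what the paper does, makes the proof fully correct.
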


\begin{proof}
First we claim that for all $m_i\in\cM_i$ and $m_j\in \cM_j$ we have
\begin{equation}\label{eq:M is rigid}
\Ext^{1\sim n-1}_{\cA}\left( \Psi_{i}(m_i), \Psi_{j}(m_j)\right)=0.
\end{equation}
By condition (ii) in Definition \ref{def:admissible target} applied to $\Psi_i(m_i)$, there exists a $t\in\dirI$ such that for all $u>t$ we have
\begin{equation}\label{eq:t for preservence}
    \Psi_{u}L_{u}\Psi_{i}(m_i) \isom  \Psi_{i}(m_i).
\end{equation}
If moreover we let $u>i$ and $u>j$, then we also have
\begin{equation}\label{eq:j for system}
    \Psi_{i}(m_i)\isom \Psi_{u}G_{iu}(m_i) \text{ and } \Psi_{j}(m_j) \isom \Psi_{u}G_{ju}(m_j),
\end{equation}
since $\left(\cA,\Psi_{i},Z_{ij}\right)$ is a target of $\left(\cA_i,G_{ij},\zeta_{ijk}\right)$.
In particular, by (\ref{eq:t for preservence}) and (\ref{eq:j for system}) we have
\[\Psi_{u}L_{u}\Psi_{i}(m_i) \isom \Psi_{i}(m_i)\isom \Psi_{u}G_{iu}(m_i),\]
which implies that $L_{u}\Psi_{i}(m_i)\isom G_{iu}(m_i)$, since $\Psi_{u}$ is fully faithful and so it reflects isomorphisms. Hence for $u>i$ we have shown that $L_{u}\Psi_{i}(m_i)\isom G_{iu}(m_i)$. Now let $t'$ be as in condition (iii) of Definition \ref{def:admissible target} applied to $\Psi_i(m_i)$ and let $u$ be greater than $t$, $t'$, $i$ and $j$. Then we have 
\begin{align*}
    \Ext^r_{\cA}\left(\Psi_{i}(m_i),  \Psi_{j}(m_j)\right) &\isom \Ext_{\cA_{u}}^r \left(\Psi_{i}(m_i), \Psi_{u} G_{ju}(m_j)\right) \\
    &\isom \Ext_{\cA_{u}}^r\left( L_u\Psi_i(m_i), G_{ju}(m_j)\right) \\
    &\isom \Ext_{\cA_{u}}^r\left(G_{iu}(m_i), G_{ju}(m_j)\right) \\
    &=0,
\end{align*}
where the last equality follows by conditions (i) and (ii) of Definition \ref{def:asymptotically weakly n-cluster tilting}. This proves (\ref{eq:M is rigid}). But then, by (\ref{eq:M is rigid}), it follows that
\begin{align*}
    \cM &\subseteq \{ x \in \cA \mid \Ext^{1\sim n-1}_{\cA}\left(x, \cM\right)=0\}, \text{ and }\\
    \cM &\subseteq \{ x \in \cA \mid \Ext^{1\sim n-1}_{\cA}\left( \cM,x\right)=0\}.
\end{align*}
Hence it remains to show the reverse inclusions.

We first show the inclusion $\{ x \in \cA\mid \Ext^{1\sim n-1}_{\cA}\left(x, \cM\right)=0\}\subseteq \cM$. Let $x\in \cA$ be such that $\Ext^{1\sim n-1}_{\cA}\left(x, \cM\right)=0$. We need to show that $x\in \cM$. Let $t\in\dirI$ be such that condition (ii) and condition (iii) for $k=n$ of Definition \ref{def:admissible target} applied to $x$ are both satisfied. Fix $i>t$ and pick $j>i$. We claim that
\begin{equation}\label{eq:claim for supseteq}
    \Ext_{\cA_j}^{1\sim n-1}\left(G_{ij}L_{i}(x), \cM_j\right)=0.
\end{equation}
To show this claim, notice first that since $j>i>t$, we have
\begin{align*}
    \Psi_{j} G_{ij}L_i(x) &\isom \Psi_{i}L_i(x) \isom x \isom\Psi_{j}L_j(x),
\end{align*}
and so $G_{ij}L_i(x) \isom L_j(x)$ since $\Psi_{j}$ reflects isomorphisms. Hence to prove (\ref{eq:claim for supseteq}), it is enough to show that for all $r\in\{1,\dots,n-1\}$ we have 
\begin{equation}\label{eq:ext with Phi_j}
    \Ext_{\cA_j}^r\left(L_j(x),\cM_j\right)=0.
\end{equation}
Let $m_j\in \cM_j$. For all $r\in\{1,\dots,n-1\}$, we have by condition (iii) of Definition \ref{def:admissible target} that
\begin{align*}
    \Ext^r_{\cA_j}\left(L_j(x), m_j\right) \isom \Ext^r_{\cA}\left(x, \Psi_{j}(m_j)\right)= 0,
\end{align*} 
where the last equality holds since $\Ext^{1\sim n-1}_{\cA}\left(x, \cM\right)=0$. Hence we have shown (\ref{eq:ext with Phi_j}) and so (\ref{eq:claim for supseteq}) holds. Since $j>i$ was arbitrary, it follows that (\ref{eq:claim for supseteq}) holds for every $j>i$ and so by condition (iii) in Definition \ref{def:asymptotically weakly n-cluster tilting}, we have that $L_i(x)\in \cM_i$. But then $x\isom \Psi_{i}L_i(x)\in \cM$, as required. 

Finally, the remaining inclusion $\{ x \in \cA\mid \Ext^{1\sim n-1}_{\cA}\left(\cM,x\right)=0\}\subseteq \cM$ is shown using dual arguments as well as condition (iv) of Definition \ref{def:admissible target} and condition (iv) of Definition \ref{def:asymptotically weakly n-cluster tilting}. 
\end{proof}

For the rest of this section we fix an asymptotically weakly $n$-cluster tilting system $\left(\cM_i\right)$ of a direct system of categories $\left(\cA_i,G_{ij},\zeta_{ijk}\right)$ and an admissible target $\left(\cA,\Psi_{i},Z_{ij}\right)$ of $\left(\cA_i,G_{ij},\zeta_{ijk}\right)$ together with adjunctions $\left(L_i,\Psi_i\right)$ and $\left(\Psi_i,R_i\right)$. We further set $\cM \coloneqq \add\left\{ \Psi_{i}\left(\cM_i\right) \mid i\in\dirI\right\}$. We continue this section by giving a sufficient condition for the subcategory $\cM$ to be functorially finite.

\begin{proposition}\label{prop:from n-fractured to functorially finite}
\begin{enumerate}[label=(\alph*)]
    \item Assume that for all $x\in\cA$ there exists a $t\in\dirI$ such that for all $u>t$ the following condition holds.
    \begin{itemize}
        \item If $f_u:m_u\to L_u(x)$ is a right $\cM_u$-approximation, then $G_{uv}(f_u):G_{uv}(m_u)\to G_{uv}L_u(x)$ is a right $\cM_v$-approximation of $G_{uv}L_u(x)$ for every $v>u$.
    \end{itemize}
    Let $x\in\cA$. Let $u\in\cI$ be such that $u>t$ and $u>t(x)$, where $t(x)$ is as in condition (ii) of Definition \ref{def:admissible target} applied to $x$.
    If $f_u:m_u\to L_u(x)$ is a right $\cM_u$-approximation, then $\Psi_u(f_u):\Psi_u(m_u)\to \Psi_u L_u(x)$ is a right $\cM$-approximation of $\Psi_u L_u(x)$. If moreover $\cM_i$ is contravariantly finite for every $i\in \dirI$, then $\cM$ is contravariantly finite.
    
    \item Assume that for all $x\in\cA$ there exists a $t\in\dirI$ such that for all $u>t$ the following condition holds.
    \begin{itemize}
        \item If $f_u:R_u(x)\to m_u$ is a left $\cM_u$-approximation, then $G_{uv}(f_u):G_{uv}R_u(x)\to G_{uv}(m_u)$ is a left $\cM_v$-approximation of $G_{uv}R_u(x)$ for every $v>u$.
    \end{itemize}
    Let $x\in\cA$. Let $u\in\cI$ be such that $u>t$ and $u>t(x)$, where $t(x)$ is as in condition (ii) of Definition \ref{def:admissible target} applied to $x$.
    If $f_u:R_u(x)\to m_u$ is a left $\cM_u$-approximation, then $\Psi_u(f_u):\Psi_u R_u(x)\to \Psi_u(m_u)$ is a left $\cM$-approximation of $\Psi_u R_u(x)$. If moreover $\cM_i$ is covariantly finite for every $i\in\dirI$, then $\cM$ is covariantly finite.
    
    \item Assume that the conditions of (a) and (b) hold. Then $\cM$ is functorially finite.
\end{enumerate}
\end{proposition}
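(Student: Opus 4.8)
The plan is to establish (a) and (b) directly; then (c) is immediate, since a full subcategory of $\cA$ that is both contravariantly finite and covariantly finite is functorially finite by definition. I describe the argument for (a); part (b) is obtained by the formally dual reasoning, with $R_u$ in place of $L_u$, the adjunction $(\Psi_i,R_i)$ in place of $(L_i,\Psi_i)$, and left approximations in place of right ones (the isomorphism $\Psi_uR_u(x)\isom x$ that then replaces $\Psi_uL_u(x)\isom x$ is again supplied by condition (ii) of Definition \ref{def:admissible target}).

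For the first claim of (a), fix $x\in\cA$ and choose $u\in\dirI$ exceeding both the index $t$ from the displayed hypothesis of (a) and the index $t(x)$ from condition (ii) of Definition \ref{def:admissible target}; let $f_u\colon m_u\to L_u(x)$ be a right $\cM_u$-approximation. Then $\Psi_u(m_u)\in\cM$ by definition of $\cM$, and $\Psi_uL_u(x)\isom x$ by condition (ii) of Definition \ref{def:admissible target}. Since every object of $\cM$ is a direct summand of a finite direct sum of objects of the form $\Psi_v(m_v)$ with $v\in\dirI$ and $m_v\in\cM_v$, to prove that $\Psi_u(f_u)$ is a right $\cM$-approximation of $\Psi_uL_u(x)$ it suffices to show that every morphism $g\colon\Psi_v(m_v)\to\Psi_uL_u(x)$ of this shape factors through $\Psi_u(f_u)$. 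Here I would pick $w\in\dirI$ with $w>u$ and $w>v$ (possible since $\dirI$ is directed) and transport the situation to level $w$: naturality of the structure isomorphism $Z_{uw}$ evaluated at $f_u$ identifies $\Psi_u(f_u)$ with $\Psi_w(G_{uw}(f_u))$ up to the isomorphisms $(Z_{uw})_{m_u}$ and $(Z_{uw})_{L_u(x)}$, and fullness of $\Psi_w$ lets us write $(Z_{uw})_{L_u(x)}^{-1}\comp g\comp (Z_{vw})_{m_v}=\Psi_w(\widetilde{g})$ for some $\widetilde{g}\colon G_{vw}(m_v)\to G_{uw}L_u(x)$ in $\cA_w$. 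By the displayed hypothesis of (a) applied to the pair $u<w$, the morphism $G_{uw}(f_u)$ is a right $\cM_w$-approximation of $G_{uw}L_u(x)$, and $G_{vw}(m_v)\in\cM_w$ by condition (ii) of Definition \ref{def:asymptotically weakly n-cluster tilting}; hence $\widetilde{g}$ factors through $G_{uw}(f_u)$ in $\cA_w$. Applying $\Psi_w$ and unwinding the identifications via $Z_{uw}$ and $Z_{vw}$ then yields the desired factorization of $g$ through $\Psi_u(f_u)$.

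For the second claim of (a), assume in addition that $\cM_i$ is contravariantly finite for every $i\in\dirI$. Given $x\in\cA$, choose $u\in\dirI$ as above, pick a right $\cM_u$-approximation $f_u\colon m_u\to L_u(x)$, and observe that by the first claim $\Psi_u(f_u)\colon\Psi_u(m_u)\to\Psi_uL_u(x)$ is a right $\cM$-approximation of $\Psi_uL_u(x)\isom x$; composing with this isomorphism gives a right $\cM$-approximation of $x$. Hence $\cM$ is contravariantly finite. Part (b) follows dually, and then (c) is immediate: under the hypotheses of both (a) and (b), $\cM$ is contravariantly finite by (a) and covariantly finite by (b), hence functorially finite.

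I expect the only delicate point to be the bookkeeping in the transport step of the proof of (a): one must check that naturality of $Z_{uw}$ (evaluated at $f_u$) and of $Z_{vw}$ makes the two identifications — of $\Psi_u(f_u)$ with $\Psi_w(G_{uw}(f_u))$ and of $g$ with $\Psi_w(\widetilde{g})$ — simultaneously compatible, so that a factorization of $\widetilde{g}$ through $G_{uw}(f_u)$ upstairs really produces a factorization of $g$ through $\Psi_u(f_u)$ downstairs. Everything else — the reduction to morphisms out of the generators $\Psi_v(m_v)$ and the extraction of a right $\cM$-approximation of $x$ in the last step — is a routine diagram chase.
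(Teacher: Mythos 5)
Your proof is correct, and it takes a genuinely different transport step than the paper does, even though the overall skeleton (reduce to test morphisms out of $\Psi_v(m_v)$, transport to a common higher index, apply the hypothesis there, and transport back) is the same. To move the test morphism $g\colon\Psi_v(m_v)\to\Psi_uL_u(x)$ into $\cA_w$, the paper applies the left adjoint $L_w$ to $g$, builds a specific isomorphism $s\colon L_w\Psi_uL_u(x)\to G_{uw}L_u(x)$ out of $\eta^u$, $\eta^w$ and $Z_{uw}$, separately verifies $L_w(m')\isom G_{vw}(m_v)$ via the unit of the adjunction and fully-faithfulness, and must therefore also require $w>t(m')$ so that $\eta^w_{m'}$ is invertible; the resulting "unwind" at the end is a seven-line computation. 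You instead lift $(Z_{uw})_{L_u(x)}^{-1}\comp g\comp(Z_{vw})_{m_v}$ directly to $\widetilde{g}\in\cA_w$ using fullness of $\Psi_w$, which needs no auxiliary isomorphism, no constraint $w>t(m')$, and no application of $L_w$ to morphisms; the final unwind is then just a substitution of $\Psi_wG_{uw}(f_u)=(Z_{uw})_{L_u(x)}^{-1}\comp\Psi_u(f_u)\comp(Z_{uw})_{m_u}$ (naturality of $Z_{uw}$ at $f_u$) followed by cancellation of the isomorphism $(Z_{uw})_{L_u(x)}^{-1}$ on the left and pre-composition with $(Z_{vw})_{m_v}^{-1}$. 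This is shorter and conceptually cleaner. The point you flag as the "only delicate" one — that the two identifications via $Z_{uw}$ and $Z_{vw}$ are simultaneously compatible — does work out exactly as you hope; naturality of $Z_{uw}$ at $f_u$ is the only thing needed, and there is no hidden compatibility between $Z_{uw}$ and $Z_{vw}$ to check because each appears on only one side of the morphism.
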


\begin{proof}
(c) follows immediately from (a) and (b). We only prove (a); the proof of (b) is similar. Denote by $\eta^i:\Id_{\cA}\To \Psi_i L_i$ the unit of the adjunction $\left(L_i,\Psi_i\right)$. Let $x\in \cA$ and let $u\in\dirI$ be such that $u>t$ and $u>t(x)$. Then $L_u(x)\in\cA_u$ and $\eta^u_x:x\to \Psi_u L_u(x)$ is an isomorphism. Let $f_u:m_u\to L_u(x)$ be a right $\cM_u$-approximation of $L_u(x)$. We need to show that $\Psi_u(f_u):\Psi_u(m_u)\to \Psi_u L_u(x)$ is a right $\cM$-approximation of $\Psi_u L_u(x)$. 

To prove this, let $f':m'\to \Psi_u L_u(x)$ be a morphism in $\cA$ with $m'\in \cM$. We need to show that $f'$ factors through $\Psi_u(f_u)$. Without loss of generality, we may assume $m'\isom \psi_i(m_i)$ for some $m_i\in\cM_i$. Let $t(m')\in\dirI$ be as in condition (ii) of Definition \ref{def:admissible target} applied to $m'$ and let $v\in\dirI$ be such that $v$ is bigger than $t(m')$, $u$ and $i$. In particular, since $v>u>t(x)$, we have an isomorphism
\[\left(\left(Z_{uv}\right)_{L_u(x)}\right)^{-1}\comp \eta_x^u \comp \left(\eta^v_x\right)^{-1}\comp \Psi_v L_v\left(\left(\eta_x^u\right)^{-1}\right): \Psi_v L_v \Psi_u L_u(x) \to \Psi_v G_{uv} L_u(x).\]
Since $\Psi_v$ is full and faithful, we set 
\[s\coloneqq \Psi_v^{-1}\left(\left(\left(Z_{uv}\right)_{L_u(x)}\right)^{-1}\comp \eta_x^u \comp \left(\eta^v_x\right)^{-1}\comp \Psi_v L_v\left(\left(\eta_x^u\right)^{-1}\right)\right):L_v \Psi_u L_u(x)\to G_{uv}L_u(x).\]

Next, we claim that $L_v(m')\in\cM_v$. To see this, we first have that $\eta^v_{m'}:m'\to\Psi_{v}L_v(m')$ is an isomorphism since $v>t(m')$. Moreover, since $v>i$, we have $\Psi_{i}\isom \Psi_{v}\comp G_{iv}$ and so
\begin{align*}
    \Psi_v L_v(m')\isom m' \isom \Psi_{i}\left(m_{i}\right)\isom \Psi_{v}\comp G_{i v}\left(m_{i}\right).
\end{align*}
Since $\Psi_v$ is fully faithful, it reflects isomorphisms and so $L_v(m')\isom G_{i v}\left(m_{i}\right)$. But $m_{i}\in\cM_{i}$ and so $L_v(m')\isom G_{i v}\left(m_{i}\right)\in\cM_{v}$ by condition (ii) of Definition \ref{def:asymptotically weakly n-cluster tilting}, as claimed.

Next, since $v>u>t$ we have that $G_{uv}(f_u):G_{uv}(m_u)\to G_{uv}L_u(x)$ is a right $\cM_v$-approximation of $G_{uv}L_u(x)$. Since $L_v(m')\in\cM_v$ and we have a morphism $s\comp L_v(f'): L_v(m')\to G_{uv}L_u(x)$, there exists a morphism $g_v:L_v(m')\to G_{uv}(m_u)$ such that 
\begin{equation}\label{eq:def of xi}
    G_{uv}(f_u) \comp g_v = s\comp L_v(f').
\end{equation} 
We set $g':m'\to \Psi_u(f_u)$ to be the morphism $g'\coloneqq \left(Z_{uv}\right)_{m_u}\comp \Psi_v(g_v) \comp \eta^v_{m'}$. We claim that $\Psi_u(f_u)\comp g'=f'$. Indeed we compute
\begin{align*}
    \lefteqn{\Psi_u(f_u)\comp g'}\quad \\
    &= \Psi_u(f_u)\comp \left(Z_{uv}\right)_{m_u}\comp \Psi_v(g_v) \comp \eta^v_{m'} &&\\
    &= \left(Z_{uv}\right)_{L_u(x)}\comp \Psi_v G_{uv}(f_u)\comp \Psi_v(g_v) \comp \eta^v_{m'} &&\text{\small(naturality of $Z_{uv}$)} \\
    &=\left(Z_{uv}\right)_{L_u(x)}\comp \Psi_v\left(G_{uv}(f_u)\comp g_v\right) \comp \eta_{m'}^v &&\\
    &=\left(Z_{uv}\right)_{L_u(x)}\comp \Psi_v\left(s\comp L_v(f')\right) \comp \eta_{m'}^v &&\text{\small(\ref{eq:def of xi})} \\
    &=\left(Z_{uv}\right)_{L_u(x)}\comp \left(\left(Z_{uv}\right)_{L_u(x)}\right)^{-1}\comp \eta_x^u \comp \left(\eta^v_x\right)^{-1}\comp \Psi_v L_v\left(\left(\eta_x^u\right)^{-1}\right)\comp \Psi_v L_v(f')\comp \eta^v_{m'} &&\text{\small(definition of $s$)} \\
    &=\eta_x^u \comp \left(\eta^v_x\right)^{-1}\comp \Psi_v L_v\left(\left(\eta_x^u\right)^{-1}\right)\comp \Psi_v L_v(f')\comp \eta^v_{m'} && \\
    &=\eta_x^u \comp \left(\eta^v_x\right)^{-1}\comp \Psi_v L_v\left(\left(\eta_x^u\right)^{-1}\right)\comp \eta^v_{\Psi_u L_u(x)}\comp f' &&\text{\small(naturality of $\eta^v$)} \\
    &= \eta_x^u \comp \left(\eta^v_x\right)^{-1} \comp\eta_x^v \comp \left(\eta_x^u\right)^{-1}\comp f' &&\text{\small(naturality of $\eta^v$)} \\
    &= f',
\end{align*}
as required. Therefore $\Psi_u(f_u):\Psi_u(m_u)\to \Psi_u L_u(x)$ is a right $\cM$-approximation.
 
If moreover $\cM_i$ is contravariantly finite for every $i\in\dirI$, then we show that $\cM$ is contravariantly finite. Let $x\in \cA$ and let $u\in\dirI$ be such that $u>t$ and $u>t(x)$. Since $\cM_u$ is contravariantly finite, there exists a right $\cM_u$-approximation $f_u:m_u\to L_u(x)$ of $L_u(x)$. It follows that $\Psi_u(f_u):\Psi_u(m)\to \Psi_u L_u(x)$ is a right $\cM$-approximation of $\Phi_u L_u(x)$. Since $u>t(x)$, it follows that $\Psi_u L_u(x) \isom x$. Hence by composing $\Psi_u(f_u)$ with an isomorphism $\Psi_u L_u(x) \overset{\sim}{\to} x$ we obtain a right $\cM$-approximation of $x$. Since $x$ was arbitrary, it follows that $\cM$ is contravariantly finite.
\end{proof}

We finish this section with the following case which is of special interest to us.

\begin{corollary}\label{cor:n-ct if locally bounded}
If $\cA$ is a locally bounded category and $\cM_i$ is functorially finite for every $i\in\dirI$, then $\cM$ is an $n$-cluster tilting subcategory of $\cA$.
\end{corollary}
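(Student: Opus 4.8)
The plan is to deduce the corollary from Theorem~\ref{thrm:from asymptotic to n-ct} together with Proposition~\ref{prop:from n-fractured to functorially finite}. By Theorem~\ref{thrm:from asymptotic to n-ct} the subcategory $\cM=\add\{\Psi_i(\cM_i)\mid i\in\dirI\}$ is already a weakly $n$-cluster tilting subcategory of $\cA$ without any local boundedness hypothesis, and that same theorem upgrades this to ``$n$-cluster tilting'' as soon as $\cM$ is functorially finite in $\cA$. So the entire task is to prove that $\cM$ is functorially finite, and I would do this via Proposition~\ref{prop:from n-fractured to functorially finite}(c): it suffices to verify the hypotheses of parts (a) and (b) of that proposition. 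These two hypotheses are dual to one another --- one passes from (a) to (b) by replacing $L_u$ by $R_u$, right approximations by left approximations, and conditions (iii) of Definitions~\ref{def:admissible target} and~\ref{def:asymptotically weakly n-cluster tilting} by conditions (iv) --- so I would only treat (a): for each $x\in\cA$, produce $t\in\dirI$ such that for every $u>t$ and every right $\cM_u$-approximation $f_u\colon m_u\to L_u(x)$, the morphism $G_{uv}(f_u)$ is a right $\cM_v$-approximation of $G_{uv}L_u(x)$ for all $v>u$.

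Fix $x\in\cA$. The one place where local boundedness enters, and it enters cleanly, is the following finiteness. Since $\cA$ is locally bounded, $x$ is a finite direct sum of indecomposables and $\sum_{N\in\ind\cA}\dim_{\K}\cA(N,x)<\infty$, so $U_x\coloneqq\{N\in\ind\cA\mid\cA(N,x)\neq 0\}$ is a finite set; hence $U_x\cap\ind\cM$ is finite, and each of its elements is isomorphic to $\Psi_j(m_j)$ for some $j\in\dirI$ and $m_j\in\ind\cM_j$ (using that $\Psi_j$ is exact and fully faithful and that the subcategories $\cM_j$ are closed under direct summands). Choose $t\in\dirI$ to be an upper bound of this finite collection of indices $j$ which moreover lies above the index attached to $x$ by condition (ii) of Definition~\ref{def:admissible target}; then $\Psi_uL_u(x)\isom x$ for all $u>t$, and, exactly as in the proof of Proposition~\ref{prop:from n-fractured to functorially finite} (using that the fully faithful $\Psi_v$ reflects isomorphisms together with $\Psi_vG_{uv}L_u(x)\isom\Psi_uL_u(x)\isom x\isom\Psi_vL_v(x)$), one has $G_{uv}L_u(x)\isom L_v(x)$ for all $v>u>t$. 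Now fix $u>t$ and let $f_u\colon m_u\to L_u(x)$ be any right $\cM_u$-approximation. I claim $\Psi_u(f_u)\colon\Psi_u(m_u)\to\Psi_uL_u(x)\isom x$ is a right $\cM$-approximation of $x$: indeed $\Psi_u(m_u)\in\cM$, and given a nonzero morphism $h\colon m'\to x$ with $m'\in\ind\cM$ we have $m'\in U_x$, hence $m'\isom\Psi_j(m_j)\isom\Psi_uG_{ju}(m_j)$ with $j\le t<u$ and $G_{ju}(m_j)\in\cM_u$ by Definition~\ref{def:asymptotically weakly n-cluster tilting}(ii); since $\Psi_u$ is fully faithful, $h$ corresponds to a morphism $G_{ju}(m_j)\to L_u(x)$ in $\cA_u$, which factors through $f_u$ because $f_u$ is a right $\cM_u$-approximation, and applying $\Psi_u$ yields the desired factorization of $h$ (zero morphisms and arbitrary finite direct sums of indecomposables being handled at once).

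It remains to descend this. For $v>u$ consider $G_{uv}(f_u)\colon G_{uv}(m_u)\to G_{uv}L_u(x)\isom L_v(x)$. Since $\Psi_vG_{uv}(f_u)\isom\Psi_u(f_u)$ is a right $\cM$-approximation of $x\isom\Psi_vL_v(x)$ and $\Psi_v(\cM_v)\subseteq\cM$, any morphism $m_v'\to L_v(x)$ with $m_v'\in\cM_v$ transports under the fully faithful $\Psi_v$ to a morphism $\Psi_v(m_v')\to x$, which factors through $\Psi_vG_{uv}(f_u)$, and pulling the factorization back through $\Psi_v$ shows that the original morphism factors through $G_{uv}(f_u)$; hence $G_{uv}(f_u)$ is a right $\cM_v$-approximation of $G_{uv}L_u(x)$, which is precisely the hypothesis of Proposition~\ref{prop:from n-fractured to functorially finite}(a). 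The dual argument (with $R_u$, left approximations, Definition~\ref{def:admissible target}(iv) and Definition~\ref{def:asymptotically weakly n-cluster tilting}(iv)) verifies the hypothesis of Proposition~\ref{prop:from n-fractured to functorially finite}(b), so Proposition~\ref{prop:from n-fractured to functorially finite}(c) shows that $\cM$ is functorially finite and Theorem~\ref{thrm:from asymptotic to n-ct} completes the proof. I expect the only genuinely substantive point to be the finiteness reduction encoded in $U_x$: one must make sure that local boundedness of $\cA$ really does confine all morphisms from $\cM$ into a fixed object to finitely many indecomposables, each visible at a bounded stage of the direct system, since it is exactly this that lets a single $t$ work simultaneously for all higher indices $u$ and $v$; the remaining manipulations are the formal bookkeeping with the natural isomorphisms $Z_{uv}$ and the adjunction units and counits that is already carried out in the proof of Proposition~\ref{prop:from n-fractured to functorially finite}.
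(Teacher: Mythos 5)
Your proof is correct, but it takes a genuinely different route from the one in the paper, and the comparison is worth making. Both proofs reduce, via Theorem~\ref{thrm:from asymptotic to n-ct} and Proposition~\ref{prop:from n-fractured to functorially finite}(c), to verifying the hypothesis of Proposition~\ref{prop:from n-fractured to functorially finite}(a) (the case (b) being dual). From that point on the two arguments diverge. The paper's proof stays inside the categories $\cA_u$: given $m'_v\in\ind\cM_v$ with a nonzero map to $G_{uv}L_u(x)$, it observes that $\Psi_v(m'_v)$ lies in the finite set $\cA^x$, uses condition (iii) of Definition~\ref{def:admissible target} together with condition (iii) of Definition~\ref{def:asymptotically weakly n-cluster tilting} to deduce $L_u\Psi_v(m'_v)\in\cM_u$ via an $\Ext$ computation, and then pushes the factorization back and forth through $L_u\Psi_v$ and $G_{uv}$. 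Your argument instead ascends once and for all to the target $\cA$: you confine the indecomposables of $\cM$ mapping nontrivially into $x$ to the finite set $U_x\cap\ind\cM$, observe that each of them is visible at a single bounded stage $j$ of the direct system, and thereby prove directly that $\Psi_u(f_u)$ is already a right $\cM$-approximation of $x$ for any $u>t$; the required statement at level $v$ then drops out by transporting through the fully faithful $\Psi_v$ and the natural isomorphisms $Z_{uv}$. What your approach buys is that the local-boundedness hypothesis is used in a single, transparent finiteness step and the $\Ext$ machinery of Definition~\ref{def:admissible target}(iii)--(iv) is not invoked at all (at least not directly); what the paper's approach buys is that it never needs to identify, at a uniform stage $t$, the finitely many indecomposables of $\cM$ approaching $x$, since it builds the needed membership $L_u\Psi_v(m'_v)\in\cM_u$ locally for each $v$.

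One point you assert without justification and should supply: that each $\cM_j$ is closed under direct summands, which you need in order to say that every $m'\in U_x\cap\ind\cM$ is of the form $\Psi_j(m_j)$ with $m_j\in\ind\cM_j$. This is true, but it is a consequence of the axioms rather than part of the definition: if $p$ is a direct summand of $m_j\in\cM_j$, then for every $w>j$ one has $\Ext^{1\sim n-1}_{\cA_w}(G_{jw}(p),\cM_w)$ is a summand of $\Ext^{1\sim n-1}_{\cA_w}(G_{jw}(m_j),\cM_w)$, which vanishes by Definition~\ref{def:asymptotically weakly n-cluster tilting}(i) and~(ii), and then Definition~\ref{def:asymptotically weakly n-cluster tilting}(iii) forces $p\in\cM_j$. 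With that supplement your argument is complete.
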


\begin{proof}
By Theorem \ref{thrm:from asymptotic to n-ct} it is enough to show that $\cM$ is functorially finite. By Proposition \ref{prop:from n-fractured to functorially finite}(c) it is enough to show that conditions (a) and (b) of Proposition \ref{prop:from n-fractured to functorially finite} hold. Let us show that condition (a) holds; condition (b) can be shown similarly to hold.

Since $\cA$ is Krull--Schmidt it is enough to show that condition (a) of Proposition \ref{prop:from n-fractured to functorially finite} holds for indecomposable objects in (a). Let $x\in\ind\cA$. Since $\cA$ is locally bounded, the set $\cA^x\coloneqq\{y\in\ind\cA \mid \cA(x,y)\neq 0 \text{ or } \cA(y,x)\neq 0\}$ is finite. Hence by Definition \ref{def:admissible target}(ii) and (iii) there exists a $t$ such that for every $u>t$ we have $\Psi_u L_u(y)\isom y$ for every $y\in\cA^x$ and $\Ext^r_{\cA}\left(x,\Psi_u(z)\right)\isom \Ext^r_{\cA_u}\left(L_u(x),z\right)$ for every $z\in \cA_u$ and for all $0<r<n$. Note in particular that $x\in\cA^x$. 

Now let $u>t$ and assume that $f_u:m_u\to L_u(x)$ is a right $\cM_u$-approximation. Let $v>u$. It is enough to show that $G_{uv}(f_u): G_{uv}(m_u)\to G_{uv}L_u(x)$ is a right $\cM_v$-approximation of $G_{uv}L_u(x)$. Let $f'_v:m'_v\to G_{uv}L_u(x)$ be a morphism with $m_v'\in \cM_v$. We want to show that $f'_v$ factors through $G_{uv}(f_u)$. Without loss of generality, we may assume that $m'_v$ is indecomposable. Since $\Psi_v:\cA_v\to \cA$ is fully faithful, it follows that $\Psi_v(f'_v):\Psi_v(m'_v)\to \Psi_v G_{uv} L_u(x)$ is nonzero. Since $\Psi_v G_{uv} L_u(x)\isom \Psi_u L_u(x) \isom x$, we have that $\Psi_v(m'_v)\in \cA^x$. We claim that $L_u \Psi_v(m'_v)\in \cM_u$.

To show this, let $w>u$, let $0<k<n$ and let $m_{w}\in \cM_{w}$. Since $w>u>t$ and since $\Psi_v(m'_v)\in \cA^x$, we have
\[\Psi_{w}G_{uw}L_u \Psi_v(m'_v) \isom \Psi_u L_u \Psi_v(m'_v) \isom \Psi_v(m'_v) \isom \Psi_{w}L_{w}\Psi_v(m'_v),\]
which implies, since $\Psi_{w}$ is fully faithful, that
\begin{equation}\label{eq:since psiv' is fully faitfhul}
    G_{uw}L_u\Psi_v(m'_v) \isom L_{w}\Psi_v(m'_v).
\end{equation}
Then we have
\begin{align*}
    \Ext_{\cA_{w}}^k\left(G_{uw}L_u \Psi_v(m'_v), m_{w}\right) &\isom \Ext^k_{\cA_{w}}\left(L_{w}\Psi_v(m'_v), m_{w}\right) &&\text{(by (\ref{eq:since psiv' is fully faitfhul}))}\\
    &\isom \Ext^k_{\cA}\left(\Psi_v(m'_v),\Psi_{w}(m_{w})\right) &&\text{(since $w>t$ and $\Psi_v(m'_v)\in \cA_x$)} \\
    &\isom 0.
\end{align*}
Hence $\Ext^{1\sim n-1}_{\cA_{w}}\left(G_{uw}\left(L_u\Psi_v(m'_v)\right), \cM_{w}\right)=0$ for every $w>u$ and so by Definition \ref{def:asymptotically weakly n-cluster tilting}(iii) it follows that $L_u\Psi_v(m'_v)\in \cM_u$. By applying the functor $L_u\Psi_v$ to the diagram
\[
\begin{tikzcd}
    G_{uv}(m_u) \arrow[rr, "G_{uv}\left(f_u\right)"]  && G_{uv}L_u(x) \\
    & m'_v \arrow[ur, "f'_v"] &  
\end{tikzcd}
\]
in $\cA_v$, we get the diagram
\begin{equation}\label{eq:big diagram}
\begin{tikzcd}
    L_u\Psi_vG_{uv}(m_u) \arrow[rr, "L_u\Psi_vG_{uv}\left(f_u\right)"] && L_u\Psi_vG_{uv}L_u(x) \\
    & L_u\Psi_v(m'_v) \arrow[ur, "L_u\Psi_v(f'_v)"] & \\  
    L_u\Psi_u(m_u) \arrow[uu, "\isom"] &&  L_u\Psi_u L_u(x) \arrow[uu, "\isom"] \\
    m_u \arrow[u, "\isom"] \arrow[rr, "f_u"] && L_u(x) \arrow[u, "\isom"]  
\end{tikzcd}
\end{equation}
in $\cA_u$, where all isomorphisms are natural and the outer square commutes. It follows that there exists a morphism $g:L_u\Psi_v(m'_v)\to L_u\Psi_vG_{uv}(m_u)$ such that $L_u\Psi_v G_{uv}(f_u) \comp g = L_u \Psi_v(f'_v)$. Moreover, since $\Psi_v G_{uv}L_u\Psi_v(m'_v)\isom \Psi_u L_u \Psi_v(m'_v)\isom \Psi_v(m'_v)$, and since $\Psi_v$ reflects isomorphisms, it follows that $G_{uv}L_u\Psi_v(m'_v)\isom m'_v$. Applying the functor $G_{uv}$ to (\ref{eq:big diagram}), we get a diagram
\[
\begin{tikzcd}
    G_{uv}L_u\Psi_vG_{uv}(m_u) \arrow[rr, "G_{uv}L_u\Psi_vG_{uv}\left(f_u\right)"] && G_{uv}L_u\Psi_vG_{uv}L_u(x) \\
    & G_{uv}L_u\Psi_v(m'_v) \arrow[ur, "G_{uv}L_u\Psi_v(f'_v)"] \arrow[ul, "G_{uv}(g)"] & \\
    & m'_v \arrow[u, "\isom"] \arrow[rd, "f'_v"]& \\
    G_{uv}(m_u) \ar[uuu,"\isom"] \arrow[rr, "G_{uv}(f_u)"] && G_{uv}L_u(x)\nospacepunct{,} \ar[uuu,"\isom"]\\
\end{tikzcd}
\]
where the top triangle, the right square and the outer square all commute. It follows that $f'_v$ factors through $G_{uv}(f_u)$ as required, which completes the proof.
\end{proof}

\section{Asymptotically weakly \texorpdfstring{$n$}{n}-cluster tilting systems from rep\-re\-sen\-ta\-tion-di\-rect\-ed algebras}\label{sec:asymptotically weakly n-cluster tilting systems from algebras}

In this section we present a way of constructing asymptotically weakly $n$-cluster tilting systems that satisfy the requirements of Theorem \ref{thrm:from asymptotic to n-ct}. This construction is based in the notion of gluing of rep\-re\-sen\-ta\-tion-di\-rect\-ed algebras that was introduced in \cite{VAS2}. To this end we recall the basic idea of that construction; for more details and proofs we refer to \cite{VAS2}.

\subsection{Gluing systems of rep\-re\-sen\-ta\-tion-di\-rect\-ed algebras and admissible targets}

We construct a direct system of categories with an admissible target based on the theory developed on Section \ref{subsection:construction of admissible targets} and the notion of gluing of rep\-re\-sen\-ta\-tion-di\-rect\-ed algebras.

\subsubsection{Gluing of rep\-re\-sen\-ta\-tion-di\-rect\-ed algebras}\label{subsubsec:gluing of representation-directed algebras} In this section, all algebras are assumed to be rep\-re\-sen\-ta\-tion-di\-rect\-ed, unless stated otherwise. To simplify the exposition, we assume that all algebras are given by quivers with relations. The first important notion is that of an abutment.

\begin{definition}\cite[Proposition 2.6]{VAS2}\label{def:abutments}
Let $A=\K Q_A/\cR_A$ be a rep\-re\-sen\-ta\-tion-di\-rect\-ed algebra given by a quiver with relations such that $Q_A$ is of the form
\[\begin{tikzpicture}[scale=0.9, transform shape]
\node(A) at (0,0) {$1$};
\node(B) at (1.5,0) {$2$};
\node(C) at (3,0) {$3$};
\node(D) at (4.5,0) {$\cdots$};
\node(E) at (6,0) {$h-1$};
\node(F) at (7.5,0) {$h$\nospacepunct{,}};
\node(G) at (-1.5,1) {$ $};
\node(H) at (-1.5,-1) {$ $};

\draw[->] (A) -- node[above] {$\alpha_1$} (B);
\draw[->] (B) -- node[above] {$\alpha_2$} (C);
\draw[->] (C) -- node[above] {$\alpha_3$} (D);
\draw[->] (D) -- node[above] {$\alpha_{h-2}$} (E);
\draw[->] (E) -- node[above] {$\alpha_{h-1}$} (F);

\draw[->] (G) -- (A);
\draw[->] (H) -- (A);

\draw[dotted] (-0.8, 0.5) -- (-0.8, -0.5);

\draw[pattern=northwest, hatch distance=15pt, hatch thickness = 0.3pt] (-2.12,0) ellipse (1cm and 1.3cm);
\node (Y) at (-2.12,0) {$\mathbf{Q'_A}$};

\end{tikzpicture}\]
and no path of the form $\alpha_i\cdots\alpha_{j}$ with $1\leq i \leq j \leq h-1$ is in $\cR_A$. In this case, we say that the indecomposable projective module $P=P(1)$ corresponding to the vertex $1$ is a \emph{left abutment}\index[definitions]{left abutment} and we call $\height(P)=h$\index[symbols]{height(P), height(I)@$\height(P)$, $\height(I)$} the \emph{height of $P$}\index[definitions]{height of a left abutment}. We say that a collection $\{P_1,\dots,P_k\}$ of left abutments is \emph{independent}\index[definitions]{independent collection of left abutments} if $\supp(P_i)\cap \supp(P_j)=\varnothing$ for $i\neq j$. The notions of a \emph{right abutment}\index[definitions]{right abutment}, the \emph{height}\index[definitions]{height of a right abutment} of a right abutment and \emph{independent}\index[definitions]{independent collection of right abutments} collection of right abutments are defined dually.  
\end{definition}

\begin{remark} Let $A=\K Q_A/\cR_A$ be a rep\-re\-sen\-ta\-tion-di\-rect\-ed algebra. In particular $Q_A$ is acyclic and so without loss of generality we may assume that $A$ is as in Definition \ref{def:abutments}. Then we have the following observations.
\begin{enumerate}[label=(\alph*)]
    \item The indecomposable projective $A$-module corresponding to the vertex $k$ is a left abutment of height $h-k+1$ for all $k\in\{1,\dots,h\}$. In particular, left abutments of height $1$ are exactly the simple projective $A$-modules.
    
    \item We set
    \begin{align*}
        \cP_A=\cP\coloneqq\add(A),\quad \ABP_A=\ABP\coloneqq\add\left\{ P\in \cP \mid \text{$P$ is a left abutment of $A$}\right\}, \\ 
        \cI_A=\cI\coloneqq\add(D(A)),\quad \IAB_A=\IAB\coloneqq\add\left\{I\in \cI \mid \text{$I$ is a right abutment of $A$}\right\}.
    \end{align*}\index[symbols]{P b@$\cP$}\index[symbols]{I aa@$\cI$} \index[symbols]{P c@$\ABP$}\index[symbols]{I ab@$\IAB$}
    Define the preorder $\leq$ on $\ABP$ by
    \[\text{$P\leq W$\index[symbols]{P f@$P\leq W$} if and only if $\supp(P) \subseteq \supp(W)$}\]
    for left abutments $P,W\in\ABP$. We say that a left abutment $W\in\ABP$ is \emph{maximal}\index[definitions]{maximal left abutment} if $W\leq W'$ for some $W'\in\ABP$ implies $W\isom W'$. Similarly we define $\leq$ and \emph{maximal}\index[definitions]{maximal right abutment} elements on $\IAB$\index[symbols]{I p@$I\leq J$}. We set
    \begin{align*}
        \MABP_A =\MABP\coloneqq\add\left\{ P\in \ABP \mid \text{$P$ is maximal}\right\}, \quad \sMABPind = \MABP\cap \ind(A),\\
        \MIAB_A =\MIAB\coloneqq\add\left\{ I\in \IAB \mid \text{$I$ is maximal}\right\}, \quad \sMABIind = \MIAB \cap \ind(A).
    \end{align*}\index[symbols]{P e@$\MABP$}\index[symbols]{I mab@$\MIAB$}
    In particular, we have that if $P,W\in\ABP$ and $P\leq W$, then $\{P,W\}$ is not an independent collection of left abutments. It follows that if $A$ admits exactly $m$ maximal left (respectively right) abutments up to isomorphism, then any independent collection of left (respectively right) abutments has at most $m$ elements.

    \item \cite[Definition 2.11]{VAS2} There is a canonical $\K$-algebra epimorphism $\pi_P:A\to \K \overrightarrow{A}_h$\index[symbols]{P g@$\pi_P$, $\pi_I$} given by identifying the full subquiver of $Q_{A}$ containing the vertices $\{1,\dots,h\}$ with the quiver $\overrightarrow{A}_h$. In other words, if $\epsilon_1,\dots,\epsilon_h$ are the primitive idempotents of $A$ corresponding to the vertices $1,\dots,h$, and if $\epsilon=\sum_{i=1}^h\epsilon_i$, then $\pi_P$ is the quotient map $\pi_P:A\to A/\langle 1-\epsilon \rangle$. We call $\pi_P$ the \emph{footing at $P$}\index[definitions]{footing of a left abutment} and similarly we define the \emph{footing}\index[definitions]{footing of a right abutment} $\pi_I$\index[symbols]{P g@$\pi_P$, $\pi_I$} for a right abutment $I$.
\end{enumerate}
\end{remark}

Next we recall the notion of gluing along abutments. Although gluing can be defined more generally, when the algebras are given by quivers with relations the following definition is enough.

\begin{definition}\cite[Lemma 2.21]{VAS2}\label{def:gluing quivers}
Let $A=\K Q_A/\cR_A$ be a rep\-re\-sen\-ta\-tion-di\-rect\-ed algebra given by a quiver with relations of the form
\[\begin{tikzpicture}[scale=0.9, transform shape]
\node(A) at (0,0) {$a_1$};
\node(B) at (1.5,0) {$a_2$};
\node(C) at (3,0) {$a_3$};
\node(D) at (4.5,0) {$\cdots$};
\node(E) at (6,0) {$a_{h-1}$};
\node(F) at (7.5,0) {$a_h$\nospacepunct{,}};
\node(G) at (-1.5,1) {$ $};
\node(H) at (-1.5,-1) {$ $};

\draw[->] (A) -- node[above] {$\alpha_1$} (B);
\draw[->] (B) -- node[above] {$\alpha_2$} (C);
\draw[->] (C) -- node[above] {$\alpha_3$} (D);
\draw[->] (D) -- node[above] {$\alpha_{h-2}$} (E);
\draw[->] (E) -- node[above] {$\alpha_{h-1}$} (F);

\draw[->] (G) -- (A);
\draw[->] (H) -- (A);

\draw[dotted] (-0.8, 0.5) -- (-0.8, -0.5);

\draw[pattern=northwest, hatch distance=15pt, hatch thickness = 0.3pt] (-2.12,0) ellipse (1cm and 1.3cm);
\node (Y) at (-2.12,0) {$\mathbf{Q'_A}$};
\end{tikzpicture}\]
where no path of the form $\alpha_i\cdots\alpha_{j}$ with $1\leq i \leq j \leq h-1$ is in $\cR_A$, and $B=\K Q_B/\cR_B$ be a rep\-re\-sen\-ta\-tion-di\-rect\-ed algebra given by a quiver with relations of the form
\[\begin{tikzpicture}[scale=0.9, transform shape]
\node(A) at (0,0) {$b_1$};
\node(B) at (1.5,0) {$b_2$};
\node(C) at (3,0) {$b_3$};
\node(D) at (4.5,0) {$\cdots$};
\node(E) at (6,0) {$b_{h-1}$};
\node(F) at (7.5,0) {$b_h$};
\node(G) at (9,1) {$ $};
\node(H) at (9,-1) {$ $};

\draw[->] (A) -- node[above] {$\beta_1$} (B);
\draw[->] (B) -- node[above] {$\beta_2$} (C);
\draw[->] (C) -- node[above] {$\beta_3$} (D);
\draw[->] (D) -- node[above] {$\beta_{h-2}$} (E);
\draw[->] (E) -- node[above] {$\beta_{h-1}$} (F);

\draw[->] (F) -- (G);
\draw[->] (F) -- (H);

\draw[dotted] (8.3, 0.5) -- (8.3, -0.5);

\draw[pattern=northwest, hatch distance=15pt, hatch thickness = 0.3pt] (9.65,0) ellipse (1cm and 1.3cm);
\node (Z) at (9.65,0) {$\mathbf{Q'_B}$};
\end{tikzpicture},\]
where no path of the form $\beta_i\cdots\beta_{j}$ with $1\leq i \leq j \leq h-1$ is in $\cR_B$. Then $P=P_A(1)$ is a left abutment of height $h$ and $I=I_B(h)$ is a right abutment of height $h$. The \emph{gluing of $A$ and $B$ along $P$ and $I$}\index[definitions]{gluing of algebras}, denoted by $\La\coloneqq B \glue[P][I] A$,\index[symbols]{B glue A@$B \glue[P][I] A$} is defined to be the algebra $\La=\K Q_{\La}/\cR_{\La}$ given by a quiver with relations where $Q_\La$ is the quiver 
\[\begin{tikzpicture}[scale=0.9, transform shape]
\node(A) at (0,0) {$1$};
\node(B) at (1.5,0) {$2$};
\node(C) at (3,0) {$3$};
\node(D) at (4.5,0) {$\cdots$};
\node(E) at (6,0) {$h-1$};
\node(F) at (7.5,0) {$h$,};
\node(G) at (-1.5,1) {$ $};
\node(H) at (-1.5,-1) {$ $};
\node(G2) at (9,1) {$ $};
\node(H2) at (9,-1) {$ $};

\draw[->] (A) -- node[above] {$\lambda_1$} (B);
\draw[->] (B) -- node[above] {$\lambda_2$} (C);
\draw[->] (C) -- node[above] {$\lambda_3$} (D);
\draw[->] (D) -- node[above] {$\lambda_{h-2}$} (E);
\draw[->] (E) -- node[above] {$\lambda_{h-1}$} (F);

\draw[->] (G) -- (A);
\draw[->] (H) -- (A);
\draw[->] (F) -- (G2);
\draw[->] (F) -- (H2);

\draw[dotted] (-0.8, 0.5) -- (-0.8, -0.5);
\draw[dotted] (8.3, 0.5) -- (8.3, -0.5);

\draw[pattern=northwest, hatch distance=15pt, hatch thickness = 0.3pt] (9.65,0) ellipse (1cm and 1.3cm);
\draw[pattern=northwest, hatch distance=15pt, hatch thickness = 0.3pt] (-2.12,0) ellipse (1cm and 1.3cm);

\node (Y) at (-2.12,0) {$\mathbf{Q'_A}$};
\node (Z) at (9.65,0) {$\mathbf{Q'_B}$};
\end{tikzpicture},\]
and $\cR_\La$ is generated by all elements in $\cR_A$ and $\cR_B$ as well as all paths starting from $\mathbf{Q'_A}$ and ending in $\mathbf{Q'_B}$, under the identifications $a_i=b_i=i$ and $\alpha_i=\beta_i=\lambda_i$.
\end{definition}

More precisely, we can express the quiver $Q_{\La}$ using the quivers $Q_A$ and $Q_B$ as follows:
\begin{align*}
    \left(Q_{\La}\right)_0 &= \left(\left(Q_B\right)_0 \coprod \left(Q_A\right)_0 \right)/ \sim^{0}, \text{ and}\\
    \left(Q_{\La}\right)_1 &= \left(\left(Q_B\right)_1 \coprod \left(Q_A\right)_1\right) / \sim^{1},
\end{align*}
where $\sim^{0}$\index[symbols]{((0@$\sim^{0}$, $\sim^{1}$} and $\sim^{1}$\index[symbols]{((0@$\sim^{0}$, $\sim^{1}$} are the equivalence relations generated by 
\begin{align*}
    a_i \sim^{0} b_i &\text{ for $1\leq i \leq h$, and} \\
    \alpha_i \sim^{1} \beta_i &\text{ for $1\leq i \leq h-1$,}
\end{align*}
or equivalently
\begin{align}
    \label{eq:S0} 
    a\sim^{0} b &\text{ for all $a\in\supp(P)$, $b\in\supp(I)$ with $\height(P_A(a))+\height(I_B(b)) = h + 1$, and} \\
    \label{eq:S1}
    \alpha\sim^{1}\beta &\text{ for all $\alpha\in \left(Q_A\right)_1$, $\beta\in\left(Q_B\right)_1$ with $s(\alpha)\sim^{0}s(\beta)$.}
\end{align}
In particular, we have that $Q_A$ and $Q_B$ are full subquivers of $Q_{\La}$. Using this description, we set $\epsilon_A\coloneqq \sum_{i\in (Q_A)_0}\epsilon_i$ for the sum of all primitive idempotents of $\La$ corresponding to the vertices of $Q_A$, and similarly we set $\epsilon_B\coloneqq \sum_{i\in (Q_B)_0}\epsilon_i$. With this notation, it follows that
\begin{align}\label{eq:ideal of gluing}
\cR_{\La} = \langle \cR_{A}+\cR_{B}+\left(1_{\La}-\epsilon_B\right)\K Q_{\La} \left(1_{\La}-\epsilon_A\right)\rangle.
\end{align}

In the following remark we collect the basic properties of gluing that we need.

\begin{remark}\label{rem:gluing remarks} Throughout this remark let $\La=B \glue[P][I] A$.
\begin{enumerate}[label=(\alph*)]
    \item \cite[Definition 2.19]{VAS2} The algebra $\La$ is the pullback of the diagram $A\overset{\pi_P}{\longrightarrow} \K \overrightarrow{A}_h \overset{\pi_I}{\longleftarrow} B$. That is we have a pullback diagram
    \begin{equation}\label{eq:pullback gluing}
        \begin{tikzcd}
            \La \arrow[d, swap, "\prescript{\La}{A}{\pi}"] \arrow[r, "\prescript{\La}{B}{\pi}"] & B \arrow[d, "\pi_{I}"] \\
            A \arrow[r, swap, "\pi_{P}"] & \K \overrightarrow{A}_h\nospacepunct{,}
        \end{tikzcd}
    \end{equation}
    where all arrows are epimorphisms and $\prescript{\La}{A}{\pi}$ can be seen as identifying the full subquiver of $Q_{\La}$ corresponding to $Q_A$ with $Q_A$ while $\prescript{\La}{B}{\pi}$ can be seen as identifying the full subquiver of $Q_{\La}$ corresponding to $Q_B$ with $Q_B$. In other words $\prescript{\La}{A}{\pi}$ is the quotient map $\prescript{\La}{A}{\pi}:\La\to \La/\langle 1-\epsilon_A\rangle=A$\index[symbols]{pi Lambda to A, pi Lambda to B@$\prescript{\La}{A}{\pi}:\La\to A$, $\prescript{\La}{B}{\pi}:\La\to B$} and similarly for $\prescript{\La}{B}{\pi}$\index[symbols]{pi Lambda to A, pi Lambda to B@$\prescript{\La}{A}{\pi}:\La\to A$, $\prescript{\La}{B}{\pi}:\La\to B$}. 
    
    \item By the pullback diagram (\ref{eq:pullback gluing}), we have the following functors defined by $\prescript{\La}{A}{\pi}$ and $\prescript{\La}{B}{\pi}$ on the corresponding module categories.
    \[\begin{tikzpicture}[baseline={(current bounding box.center)}, scale=0.9, transform shape]
    \node (A) at (0,0) {$\m\La$};
    \node (B) at (3,0) {$\m A$};

    \draw[->] (B) -- node[above] {$\prescript{\La}{A}{\pi}_{\ast}$} (A); 
    \draw[->] (A) to [out=30,in=150]  node[auto]{$\prescript{\La}{A}{\pi}^{\ast}$} (B);
    \draw[->] (A) to [out=-30,in=-150]  node[below]{$\prescript{\La}{A}{\pi}^{!}$} (B);
    \end{tikzpicture}\;\; \text{ and } \;\; 
    \begin{tikzpicture}[baseline={(current bounding box.center)}, scale=0.9, transform shape]
    \node (A) at (0,0) {$\m\La$};
    \node (B) at (3,0) {$\m B$\nospacepunct{.}};
    
    \draw[->] (B) -- node[above] {$\prescript{\La}{B}{\pi}_{\ast}$} (A); 
    \draw[->] (A) to [out=30,in=150]  node[auto]{$\prescript{\La}{B}{\pi}^{\ast}$} (B);
    \draw[->] (A) to [out=-30,in=-150]  node[below]{$\prescript{\La}{B}{\pi}^{!}$} (B);
    \end{tikzpicture}\]
    In particular, it is easy to see that if $M$ is a representation of $A$, then $\prescript{\La}{A}{\pi}_{\ast}(M)$ is the representation of $\La$ given by extending $M$ to $Q_{\La}$ by putting $0$ on all vertices and arrows of $Q_{\La}$ which lie outside $Q_A$. Similarly for representations of $B$ viewed as representations of $\La$ through $\prescript{\La}{B}{\pi}_{\ast}$. With this fact and Definition \ref{def:gluing quivers} we can compute the image of indecomposable projective $A$-modules and $B$-modules under the functors $\prescript{\La}{A}{\pi}_{\ast}$ and $\prescript{\La}{B}{\pi}_{\ast}$. We see that $\prescript{\La}{B}{\pi}_{\ast}(\epsilon_i B)\isom \epsilon_i \La$ for all $i\in Q_B$ and $\prescript{\La}{A}{\pi}_{\ast}(\epsilon_i A)\isom \epsilon_i \La$ for all $i\in Q_A\setminus\{1,\dots,h\}$; a dual statement holds for indecomposable injective modules. Moreover, we also have 
    \[\prescript{\La}{A}{\pi}^{\ast}\left(\epsilon_i\La\right) = \epsilon_i\La\otimes_{\La} \La/\langle 1 - \epsilon_A \rangle  \isom \epsilon_i \La/ \langle 1- \epsilon_A \rangle =\prescript{\La}{A}{\pi}(\epsilon_i\La)= \begin{cases} \epsilon_i A, &\mbox{if $i\in \left(Q_A\right)_0$,} \\ 0, &\mbox{otherwise,}\end{cases}\]
    and similarly
    \[\prescript{\La}{B}{\pi}^{\ast}\left(\epsilon_i\La\right) \isom \begin{cases} \epsilon_i B, &\mbox{if $i\in \left(Q_B\right)_0$,} \\ 0, &\mbox{otherwise.}\end{cases}\] 
    
    \item \cite[Corollary 2.44]{VAS2} Using (b) and Definition \ref{def:abutments} we can easily find the (maximal) left abutments of $\La$: they are, up to isomorphism, either of the form $\prescript{\La}{A}{\pi}_{\ast}(W_A)$ for a (maximal) left abutment $W_A$ of $A$ or of the form $\prescript{\La}{B}{\pi}_{\ast}(W_B)$ for a (maximal) left abutment $W_B$ of $B$. More precisely, every (maximal) left abutment of $B$ gives rise to a unique (maximal) left abutment of $\La$ of the same height and every (maximal) left abutment of $A$, except for those that are supported on a subset of $\{1,\dots,h\}$, gives rise to a unique (maximal) left abutment of $\La$ of the same height. In particular, if $W_A$ is a (maximal) left abutment of $A$ which is independent from $P$, then $\prescript{\La}{A}{\pi}_{\ast}(W_A)$ is a (maximal) left abutment of $\La$. A dual result holds for right abutments.
    
    \item \cite[Example 2.20]{VAS2} Let $A$ and $B$ be as in Definition \ref{def:gluing quivers}. Then $Q'_A=\varnothing$ if and only if $A=\K \overrightarrow{A}_h$ and in this case it readily follows that $B\glue[P][I] \K \overrightarrow{A}_h=B$; similarly $Q'_B=\varnothing$ if and only if $B=\K \overrightarrow{A}_h$ and then $\K \overrightarrow{A}_h\glue[P][I] A=A$. We call a gluing of either of these forms a \emph{trivial gluing}\index[definitions]{trivial gluing}.
    
    \item \cite[Lemma 2.32]{VAS2} Let $i,j\in \left(Q_{\La}\right)_0$. Assume that $\epsilon_j\La\epsilon_i\neq 0$. Then it follows that there is a nonzero path from $j$ to $i$ in $\La$. By the shape of $Q_{\La}$, it follows that we cannot have $j\in \left(Q_B'\right)_0$ and $i\in\left(Q_A'\right)_0$. Moreover, by the relations $R_{\La}$ it follows that we cannot have $j\in\left(Q_A'\right)_0$ and $i\in\left(Q_B'\right)_0$. Hence we either have $i,j\in \left(Q_A\right)_0$ and $\epsilon_j \La\epsilon_i \isom \epsilon_j A \epsilon_i$ or $i,j\in\left(Q_B\right)_0$ and $\epsilon_j \La \epsilon_i \isom \epsilon_j B \epsilon_i$.
\end{enumerate}
\end{remark}

The representation theory of $\La=B\glue[P][I] A$ can be understood through the representation theory of $A$ and $B$. To see how, we first give a representation theoretic characterization of abutments.

\begin{proposition}\cite[Proposition 2.13]{VAS2}\label{prop:abutments in AR-quiver}
Let $A$ be a rep\-re\-sen\-ta\-tion-di\-rect\-ed algebra and let $P=P_1$ be an indecomposable projective $A$-module. Then $P$ is a left abutment of height $h$ if and only if there exist indecomposable projective modules $P_2,\dots,P_h$ such that
\[\begin{tikzpicture}[scale=0.8, transform shape]
\node (00) at (0,7.5) {$\PD:$};

\node (11) at (0,0) {$[P_h]$};
\node (21) at (2.5,0) {$[\tau^-\left(P_h\right)]$};
\node (31) at (5,0) {$[\tau^{-2}\left(P_h\right)]$};
\node (41) at (7.5,0) { };
\node (51) at (10,0) {$[\tau^{-(h-3)}\left(P_h\right)]$};
\node (61) at (12.5,0) {$[\tau^{-(h-2)}\left(P_h\right)]$};
\node (71) at (15,0) {$[\tau^{-(h-1)}\left(P_h\right)]$};

\node (12) at (1.25,1.25) {$[P_{h-1}]$};
\node (22) at (3.75,1.25) {$[\tau^-\left(P_{h-1}\right)]$};
\node (32) at (6.25,1.25) { };
\node (42) at (8.75,1.25) { };
\node (52) at (11.25,1.25) {$[\tau^{-(h-3)}\left(P_{h-1}\right)]$};
\node (62) at (13.75,1.25) {\;\;\;\;\;\;$[\tau^{-(h-2)}\left(P_{h-1}\right)]$};

\node (13) at (2.5,2.5) { };
\node (23) at (5,2.5) { };
\node (33) at (7.5,2.5) { };
\node (43) at (10,2.5) { };
\node (53) at (12.5,2.5) { };

\node (14) at (3.75,3.75) { };
\node (24) at (6.25,3.75) { };
\node (34) at (8.75,3.75) { };
\node (44) at (11.25,3.75) { };

\node (15) at (5,5) {$[P_3]$};
\node (25) at (7.5,5) {$[\tau^-\left(P_3\right)]$};
\node (35) at (10,5) {$[\tau^{-2}\left(P_3\right)]$};

\node (16) at (6.25,6.25) {$[P_2]$};
\node (26) at (8.75,6.25) {$[\tau^-\left(P_2\right)]$};

\node (17) at (7.5,7.5) {$[P_1]$};

\draw[->] (11) -- (12);
\draw[->] (12) -- (21);
\draw[->] (21) -- (22);
\draw[->] (22) -- (31);

\draw[->] (51) -- (52);
\draw[->] (52) -- (61);
\draw[->] (61) -- (62);
\draw[->] (62) -- (71);

\draw[->] (12) -- (13);
\draw[->] (13) -- (22);
\draw[->] (22) -- (23);

\draw[->] (14) -- (15);
\draw[->] (15) -- (16);
\draw[->] (16) -- (17);

\draw[->] (15) -- (24);
\draw[->] (24) -- (25);
\draw[->] (25) -- (34);
\draw[->] (34) -- (35);
\draw[->] (35) -- (44);

\draw[->] (16) -- (25);
\draw[->] (17) -- (26);
\draw[->] (26) -- (35);
\draw[->] (25) -- (26);

\draw[->] (31) -- (32);
\draw[->] (42) -- (51);
\draw[->] (43) -- (52);
\draw[->] (53) -- (62);

\draw[->] (52) -- (53);

\draw[loosely dotted] (44) -- (41);
\draw[loosely dotted] (14) -- (41);

\draw[loosely dotted] (34) -- (43);
\draw[loosely dotted] (44) -- (53);

\draw[loosely dotted] (24) -- (42);
\draw[loosely dotted] (13) -- (14);
\draw[loosely dotted] (22) -- (24);
\draw[loosely dotted] (31) -- (34);
\end{tikzpicture}\]
is a full subquiver of $\Gamma(A)$, there are no other arrows in $\Gamma(A)$ going into $\PD$\index[symbols]{P a@$\PD$} and, moreover, all northeast arrows are monomorphisms, all southeast arrows are epimorphisms and all modules in the same row have the same dimension. In particular, $\tau^{-i}\left(P_h\right)$ is the simple top of $P_{h-i}$ for $1\leq i \leq h-1$. We call $\PD$ \emph{the foundation of $P$}\index[definitions]{foundation of a left abutment}.
\end{proposition}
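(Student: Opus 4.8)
We outline a proof. Write $e=\epsilon_1+\cdots+\epsilon_h$ and $f=1-e$, where $\epsilon_1,\dots,\epsilon_h$ are the primitive idempotents at the vertices $1,\dots,h$ of the tail and $f$ collects the vertices of $Q'_A$. Because $Q_A$ has the shape of Definition~\ref{def:abutments}, every arrow of $Q_A$ joining $\{1,\dots,h\}$ to its complement points into vertex~$1$, so no path leaves the tail once it is on it; hence $eAf=0$ and $A/\langle f\rangle\isom eAe\isom\K\overrightarrow{A}_h=:C$, the footing being $\pi_P:A\to C$. The restriction functor $\pi_{P\ast}:\m C\to\m A$ is exact and fully faithful and identifies $\m C$ with the Serre subcategory $\mathcal S\subseteq\m A$ of modules supported on $\{1,\dots,h\}$; its indecomposables are the uniserial ``interval'' modules $M[a,b]$ with $1\le a\le b\le h$, and $M[a,h]\isom P(a)$. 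In particular $P(1),\dots,P(h)$ are uniserial, $\rad P(k)\isom P(k+1)$ for $k<h$, $P(h)\isom S(h)$ is simple projective, and $P=P(1)$ has composition series $S(1),\dots,S(h)$; this forces the modules $P_i$ in the statement to be the $P(i)$. Recall also that $\Gamma(A)$ is finite and acyclic since $A$ is representation-directed, and that the case $h=1$ (where $\mathcal S=\add S(1)$) is immediate, so assume $h\ge 2$.

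\emph{The ``only if'' direction.} The plan is to show that the full subquiver of $\Gamma(A)$ on $\ind\mathcal S$ is the Auslander--Reiten quiver of $\K\overrightarrow{A}_h$, that it is closed under predecessors in $\Gamma(A)$, and then to read off the displayed shape from the well-known structure of $\Gamma(\K\overrightarrow{A}_h)$. The crucial step — the one I expect to be the main obstacle — is a ``localization'' of almost split sequences: for $N\in\ind\mathcal S$ not $A$-projective, its almost split sequence in $\m A$ lies in $\mathcal S$ and is its almost split sequence in $\m C$. To prove it, observe that $M[a,h]\isom P(a)$ makes $\om_A(N)\isom P(b+1)$ projective, so $\pdim_A(N)\le 1$; feeding the resolution $0\to P(b+1)\to P(a)\to N\to 0$ into the Auslander transpose $\operatorname{Tr}_A$, the contributions of the $Q'_A$-vertices to $\Hom_A(P(a),A)$ and $\Hom_A(P(b+1),A)$ cancel (again because the only route from $Q'_A$ into the tail runs through vertex~$1$), so $\operatorname{Tr}_AN$ is supported on $\{1,\dots,h\}$ and hence $\tau_AN\isom\D\operatorname{Tr}_AN\in\mathcal S$; since $\mathcal S$ is closed under extensions the whole almost split sequence of $N$ lies in $\mathcal S$, so it is almost split there as well. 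Combined with $\rad_AP(k)\in\mathcal S$, this gives the first two claims, and the remaining computation in $\K\overrightarrow{A}_h$ is routine: $P(k)=M[k,h]$, $\tau^{-i}P(k)\isom M[k-i,h-i]$ for $0\le i\le k-1$, the irreducible maps are the monomorphisms $M[a,b]\hookrightarrow M[a-1,b]$ (drawn northeast) and the epimorphisms $M[a,b]\twoheadrightarrow M[a,b-1]$ (drawn southeast), $\dim M[a,b]=b-a+1$ is constant along rows, and $\tau^{-i}P(h)=M[h-i,h-i]=S(h-i)$ is the simple top of $P(h-i)$. With $P_k:=P(k)$ this is exactly $\PD$.

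\emph{The ``if'' direction.} Suppose $P_2,\dots,P_h$ are given so that $\PD$ (with $P_1:=P$) is a full subquiver of $\Gamma(A)$ with no further incoming arrows, northeast arrows monic, southeast arrows epic, and constant dimension along rows. Along the left edge, the only arrow of $\Gamma(A)$ into $P_{h-m}$ is the radical inclusion $P_{h-m+1}\hookrightarrow P_{h-m}$ (and none into $P_h$), so $\rad P_h=0$ and $\rad P_{h-m}\isom P_{h-m+1}$. Writing $P_k=P(v_k)$, this makes the $v_k$ distinct, $v_h$ a sink, and each $v_k$ ($k<h$) the source of a unique arrow $\gamma_k:v_k\to v_{k+1}$, the composite $P(v_h)\hookrightarrow\cdots\hookrightarrow P(v_1)$ being injective (so no subpath of $\gamma_1\cdots\gamma_{h-1}$ lies in $\cR_A$). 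It remains to exclude further arrows into $v_2,\dots,v_h$, their outgoing arrows being already accounted for. From the shape of $\PD$ the unique arrow of $\Gamma(A)$ into $S(v_k)=\tau^{-(h-k)}(P_h)$ comes from the two-dimensional module $\tau^{-(h-k-1)}(P_{h-1})$, whose socle, by the neighbouring meshes, is $S(v_{k+1})$; hence $\tau S(v_k)\isom S(v_{k+1})$. On the other hand the resolution $0\to P(v_{k+1})\to P(v_k)\to S(v_k)\to 0$ yields $\tau S(v_k)\isom\D\bigl(A\epsilon_{v_{k+1}}/A\gamma_k\bigr)$, which is simple precisely when $\gamma_k$ is the only arrow into $v_{k+1}$. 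So each $v_{k+1}$ has $\gamma_k$ as its only incoming arrow; relabelling $v_k\mapsto k$, the quiver $Q_A$ is of the form in Definition~\ref{def:abutments} with no tail path in $\cR_A$, i.e.\ $P$ is a left abutment of height~$h$.
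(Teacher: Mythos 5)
The paper does not prove this result: it is quoted verbatim from \cite[Proposition 2.13]{VAS2}, so there is no ``paper's own proof'' to compare against. Your proposal therefore has to be judged on its own, and as far as I can tell it is correct. The backbone is the observation that, because no arrow leaves the tail once on it, $eAf=0$, hence $eAe\isom A/\langle 1-e\rangle=\K\overrightarrow{A}_h$, so that $\pi_{P\ast}$ identifies $\m\K\overrightarrow{A}_h$ with the Serre subcategory $\mathcal S\subseteq\m A$ of modules supported on the tail. Your key lemma — that for non-projective $N\in\ind\mathcal S$ the almost split sequence of $N$ in $\m A$ already lies in $\mathcal S$ and is the almost split sequence there — is the genuine content, and your proof of it is sound: from $\omega_A N\isom P(b+1)$ one gets $\pdim_A N\le 1$, and the $Q'_A$-components of $A\epsilon_a\to A\epsilon_{b+1}$ are surjective (every path $Q'_A\leadsto b+1$ funnels through vertex $1$ and hence factors through $a$), so $\operatorname{Tr}_AN$ and then $\tau_AN$ are supported on the tail; closure of $\mathcal S$ under extensions and uniqueness of almost split sequences do the rest. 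This, plus $\rad_AP(k)\in\mathcal S$ for the projective vertices, shows $\ind\mathcal S$ is closed under predecessors in $\Gamma(A)$ and that its full subquiver is exactly $\Gamma(\K\overrightarrow{A}_h)$, which is the triangle $\PD$.

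Two small points you gloss over, both routine but worth stating. In the ``if'' direction you use $\tau^{-(h-k)}(P_h)\isom S(v_k)$ (the top of $P_k$) without deriving it from the hypotheses; it follows by composing the southeast epimorphisms along the diagonal from $P_k$ down to the bottom row, which yields a nonzero epimorphism $P_k\twoheadrightarrow\tau^{-(h-k)}(P_h)$ onto a one-dimensional, hence simple, module. Secondly, when reading $\rad P_{h-m}\isom P_{h-m+1}$ off the picture you are implicitly using that ``$\PD$ is a full subquiver with no other incoming arrows'' forces the arrow $P_{h-m+1}\to P_{h-m}$ to occur with multiplicity one in $\Gamma(A)$; that is the intended reading, but it deserves a word. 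With those two remarks filled in, the computation of $\tau S(v_k)\isom D(A\epsilon_{v_{k+1}}/A\gamma_k)$ via the minimal presentation, and the comparison with $\tau S(v_k)\isom S(v_{k+1})$ forcing $\gamma_k$ to be the unique arrow into $v_{k+1}$, is a clean way to recover the quiver shape, and the injectivity of the chain $P(v_h)\hookrightarrow\cdots\hookrightarrow P(v_1)$ gives the relation condition. I see no gap.
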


A dual statement to Proposition \ref{prop:abutments in AR-quiver} holds for right abutments. If $I$ is a right abutment, we denote by $\DI$\index[symbols]{I  D@$\DI$} the \emph{foundation of $I$}\index[definitions]{foundation of a right abutment}. Notice that we can identify the quivers $\PD$ and $\DI$ with the Aus\-lan\-der--Rei\-ten quiver $\Gamma(K\overrightarrow{A}_h)$. If $P$ is a left abutment of $A$ and $I$ is a right abutment of $A$, we also set
\[\cF_P \coloneqq \add\left(\{ X \in \m A \mid \text{$X$ indecomposable and $[X]\in \PD$}\}\right), \text{ and } \]\index[symbols]{F P@$\cF_P$}
\[\cG_I \coloneqq \add\left(\{ X \in \m A \mid \text{$X$ indecomposable and $[X]\in \DI$}\}\}\right).\phantom{ and }\]\index[symbols]{GaaI@$\cG_I$}
It readily follows that a collection of left abutments $\{P_1,\dots,P_k\}$ is independent if and only if $\cF_{P_i}\cap \cF_{P_j}=\{0\}$ for $i\neq j$ and a collection of right abutments $\{I_1,\dots,I_k\}$ is independent if and only if $\cG_{I_i}\cap \cG_{I_j}=\{0\}$ for $i\neq j$. 

Using Proposition \ref{prop:abutments in AR-quiver}  we can easily describe the Aus\-lan\-der--Rei\-ten quiver of the gluing $\La$ of two algebras $A$ and $B$.

\begin{proposition}\cite[Corollary 2.41]{VAS2}\label{prop:AR glued}
Let $\La=B\glue[P][I]A$ be the gluing of two rep\-re\-sen\-ta\-tion-di\-rect\-ed algebras $A$ and $B$ along the abutments $P$ and $I$. For the Aus\-lan\-der--Rei\-ten quiver of $\La$ we have, as quivers, $\Gamma(\La)=\Gamma(B)\coprod_{\triangle} \Gamma(A)$, where the righthand side denotes the amalgamated sum under the identification $\triangle=\PD=\DI$. Moreover, in this identification, the vertex $[M]$ in $\Gamma(A)$ corresponds to the vertex $[\prescript{\La}{A}{\pi}_{\ast}(M)]$ in $\Gamma(\La)$ and the vertex $[N]$ in $\Gamma(B)$ corresponds to the vertex $[\prescript{\La}{B}{\pi}_{\ast}(N)]$ in $\Gamma(\La)$.
\end{proposition}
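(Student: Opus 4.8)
The plan is to derive the statement from two facts: (1) every indecomposable $\La$-module is, via one of the two quotient maps, an indecomposable $A$-module or an indecomposable $B$-module, with the overlap consisting exactly of the modules supported on the shared tail $\{1,\dots,h\}$; and (2) the Auslander--Reiten structure is transported correctly by these functors. Write $\prescript{\La}{A}{\pi}\colon\La\to A$ and $\prescript{\La}{B}{\pi}\colon\La\to B$ for the quotient maps of the pullback square in Remark~\ref{rem:gluing remarks}(a), with kernels $\langle 1-\epsilon_A\rangle$ and $\langle 1-\epsilon_B\rangle$. The restriction functors $\prescript{\La}{A}{\pi}_{\ast}\colon\m A\to\m\La$ and $\prescript{\La}{B}{\pi}_{\ast}\colon\m B\to\m\La$ are fully faithful and exact, with essential images the full subcategories of $\m\La$ of modules supported on $Q_A$, respectively on $Q_B$ (namely the modules killed by the respective ideal; these are closed under subobjects and quotients in $\m\La$). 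Their intersection is the category of modules supported on the tail, a copy of $\m(\K\overrightarrow{A}_h)$, which by Proposition~\ref{prop:abutments in AR-quiver} and its dual is $\add\cF_P$ inside $\m A$ and $\add\cG_I$ inside $\m B$; the same proposition identifies $\PD\cong\Gamma(\K\overrightarrow{A}_h)\cong\DI$ as translation quivers. This supplies the amalgamation datum $\triangle=\PD=\DI$ and the labelling $[M]\mapsto[\prescript{\La}{A}{\pi}_{\ast}(M)]$, $[N]\mapsto[\prescript{\La}{B}{\pi}_{\ast}(N)]$, consistently, since $\prescript{\La}{A}{\pi}_{\ast}|_{\cF_P}$ and $\prescript{\La}{B}{\pi}_{\ast}|_{\cG_I}$ both realise the one embedding $\m(\K\overrightarrow{A}_h)\hookrightarrow\m\La$.

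The heart of the proof, and the step I expect to be hardest, is fact (1): that no indecomposable $\La$-module has support meeting both $Q'_A$ and $Q'_B$. The driving mechanism is the defining relation of the gluing: by~(\ref{eq:ideal of gluing}) every path of $Q_\La$ from $Q'_A$ to $Q'_B$ lies in $\cR_\La$. Hence, for an indecomposable $X$, the submodule $Z\leq X$ generated by the components $\epsilon_a X$ with $a\in Q'_A$ is already supported on $Q_A$ (paths out of $Q'_A$ never reach $Q'_B$), while $W=X/Z$ is supported on $Q_B$; if $X$ met both sides this would be a non-split exact sequence $0\to Z\to X\to W\to 0$ with $Z\in\im\prescript{\La}{A}{\pi}_{\ast}$ and $W\in\im\prescript{\La}{B}{\pi}_{\ast}$, and the task is to exclude it. I would do this through the triangular matrix presentation $\La\cong\left(\begin{smallmatrix}B&0\\ M&A'\end{smallmatrix}\right)$, where $B\cong\epsilon_B\La\epsilon_B$, $A'\cong(1-\epsilon_B)\La(1-\epsilon_B)$ is a quotient of $A$, and $M=(1-\epsilon_B)\La\epsilon_B$ is an $(A',B)$-bimodule whose $Q'_B$-components vanish by Remark~\ref{rem:gluing remarks}(b),(e), so that $M$ is a $\K\overrightarrow{A}_h$-module on the $B$-side. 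Writing $X$ as a triple $(X_B,X_{A'},g)$ with structure map $g$ valued in $X_B$, one has $\im(g)$ supported on the tail; and the requirement that the through-paths $Q'_A\to Q'_B$ act as zero on $X$ forces $g=0$ whenever $X_B$ is not supported on the tail, so such an $X$ decomposes. Thus for indecomposable $X$ with $X_{A'}\neq 0$ the $B$-part $X_B$ is tail-supported and $X$ lies on the $A$-side; symmetrically on the other side. (Alternatively one may present the gluing as an iterated one-point (co)extension and track indecomposables step by step, or cite~\cite[Corollary 2.41]{VAS2} for this book-keeping.) In particular $\Ext^1_\La(W,Z)$ vanishes in the relevant situation, the sequence splits, and $X$ lives on one side.

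Granting fact (1), the vertex sets of $\Gamma(\La)$ and of $\Gamma(B)\coprod_{\triangle}\Gamma(A)$ coincide with the prescribed labels, and it remains to match arrows. Since $\prescript{\La}{A}{\pi}_{\ast}$ and $\prescript{\La}{B}{\pi}_{\ast}$ are fully faithful exact functors whose essential images are closed under subobjects and quotients, they preserve and reflect irreducible morphisms inside those images, so the full subquivers of $\Gamma(\La)$ on the $A$-part and on the $B$-part are exactly $\Gamma(A)$ and $\Gamma(B)$. Moreover there is no irreducible morphism of $\m\La$ between an indecomposable in $\im\prescript{\La}{A}{\pi}_{\ast}\setminus\add\cF_P$ and one in $\im\prescript{\La}{B}{\pi}_{\ast}\setminus\add\cG_I$: any morphism between two such has image supported on $Q_A\cap Q_B$, hence factors through a nonzero object of $\add\cF_P=\add\cG_I$, and since source and target are indecomposable and not tail-supported, neither factor can be split. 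Finally, the arrows of $\Gamma(A)$ and $\Gamma(B)$ incident to $\PD$ and $\DI$ are described explicitly by Proposition~\ref{prop:abutments in AR-quiver} and its dual, and these two descriptions match along $\triangle$ (the $A$-side arrows point ``into'' the foundation from one side, the $B$-side arrows from the other). Combining these observations, every arrow of $\Gamma(\La)$ comes from $\Gamma(A)$ or from $\Gamma(B)$, and $\Gamma(\La)=\Gamma(B)\coprod_{\triangle}\Gamma(A)$ as quivers with the stated identification of vertices.
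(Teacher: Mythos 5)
The paper does not prove this proposition at all: it is quoted verbatim with the citation \cite[Corollary~2.41]{VAS2}, so there is no in-paper proof to compare your sketch against. Evaluating your argument on its own merits, the overall architecture — (1) every indecomposable $\La$-module lies in the essential image of $\prescript{\La}{A}{\pi}_{\ast}$ or of $\prescript{\La}{B}{\pi}_{\ast}$, with overlap exactly the tail-supported modules; (2) these embeddings, together with Proposition~\ref{prop:abutments in AR-quiver}, transport the arrow structure and glue along $\triangle$ — is the right one. But the crucial step, your fact~(1), is not actually established by the argument you give.

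Concretely, the triangular-matrix reasoning breaks down at the sentence asserting that ``the requirement that the through-paths $Q'_A\to Q'_B$ act as zero on $X$ forces $g=0$ whenever $X_B$ is not supported on the tail.'' What the relations do force is that $\im(g)$ lands in the tail-supported part of $X_B$; they do not force $g=0$ once $X_B$ has a nonzero $Q'_B$-component. (Already in the baby example $\La=\K\overrightarrow{A}_3/\rad^2$ glued from two copies of $\K\overrightarrow{A}_2$, the module $\begin{smallmatrix}a_0\\1\end{smallmatrix}\oplus\begin{smallmatrix}1\\b_2\end{smallmatrix}$ has $g\neq 0$ and $X_B$ not tail-supported; of course this module is decomposable, but that is precisely what needs to be argued, and your step does not supply the argument.) Your canonical exact sequence $0\to Z\to X\to W\to 0$ is well defined and the supports of $Z$ and $W$ are as you claim, but showing that it splits, or that no indecomposable touches both $Q'_A$ and $Q'_B$, requires a genuine vanishing/bookkeeping argument — and it is exactly here that the abutment hypotheses from Proposition~\ref{prop:abutments in AR-quiver} must enter, whereas your fact~(1) argument never uses them. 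Falling back to ``cite \cite[Corollary~2.41]{VAS2}'' is circular, since that is the statement being proved. A related, smaller gap: the claim that the restriction functors ``preserve and reflect irreducible morphisms'' because their images are closed under subobjects and quotients also needs justification (it requires at least that the AR translate stays in the image, which is not automatic from Serre-subcategory considerations and in fact is interwoven with fact~(1)). The remaining pieces of the proposal — the identification of $\triangle$ with $\Gamma(\K\overrightarrow{A}_h)$, the exclusion of irreducible maps between the two complements of the tail, the matching of incident arrows via Proposition~\ref{prop:abutments in AR-quiver} and its dual — are sound.
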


Hence to draw the Aus\-lan\-der--Rei\-ten quiver of $\La$, one draws the Aus\-lan\-der--Rei\-ten quiver of $B$ where the vertex $[N]$ is replaced by $[\prescript{\La}{B}{\pi}_{\ast}(N)]$, then one extends this quiver at $\DI=\PD$ by drawing the Aus\-lan\-der--Rei\-ten quiver of $A$ where the vertex $[M]$ is replaced by $[\prescript{\La}{A}{\pi}_{\ast}(M)]$. It follows immediately that $\La$ is also a rep\-re\-sen\-ta\-tion-di\-rect\-ed algebra.

We illustrate the above notions by an example.

\begin{example}
Let $B$ be given by the quiver with relations
\[\begin{tikzpicture}[scale=0.9, transform shape]
\node (1) at (0,1) {$1$};
\node (2) at (1,1) {$2$};
\node (3) at (2,1) {$3$};
\node (4) at (3,1) {$4$};
\node (5) at (4,0.5) {$5$};
\node (6) at (5,0.5) {$6$};
\node (7) at (6,0.5) {$7$\nospacepunct{.}};
\node (1') at (1,0) {$1'$};
\node (2') at (2,0) {$2'$};
\node (3') at (3,0) {$3'$};

\draw[->] (1) to (2);
\draw[->] (2) to (3);
\draw[->] (3) to (4);
\draw[->] (4) to (5);
\draw[->] (5) to (6);
\draw[->] (6) to (7);
\draw[->] (1') to (2');
\draw[->] (2') to (3');
\draw[->] (3') to (5);

\draw[dotted] (1.5,1) to [out=60,in=120] (2.5,1);
\draw[dotted] (2.5,1) to [out=-60,in=190] (3.5,0.75);
\draw[dotted] (2.5,0) to [out=60,in=170] (3.5,0.25);
\draw[dotted] (3.5,0.75) to [out=30,in=120] (5.5,0.5);
\draw[dotted] (3.5,0.25) to [out=190,in=-120] (4.5,0.5);
\end{tikzpicture}\]
Then the left abutments are $P_B(5)\geq P_B(6) \geq P_B(7)$ with heights $3$, $2$ and $1$ respectively. The right abutments are $I_B(3)\geq I_B(2) \geq I_B(1)$ with heights $3$, $2$ and $1$ respectively and $I_B(3')\geq I_B(2')\geq I_B(1')$ with heights $3$, $2$ and $1$ respectively. The only maximal left abutment is $P_B(5)$ and the maximal right abutments are $I_B(3)$ and $I_B(3')$. Hence there is no independent collection of left abutments with more than one element, while the collections $\{I_B(3),I_B(3')\}$ and $\{I_B(1),I_B(2')\}$ are examples of independent collections of right abutments. Looking at the Aus\-lan\-der--Rei\-ten  quiver $\Gamma(B)$ of $B$, we can make the same observations via Proposition \ref{prop:abutments in AR-quiver}:
\[\begin{tikzpicture}[scale=0.9, transform shape]
\tikzstyle{nct}=[shape= rectangle, minimum width=6pt, minimum height=7.5, draw, inner sep=0pt]
\tikzstyle{nct2}=[circle, minimum width=6pt, draw, inner sep=0pt]
\tikzstyle{nct3}=[circle, minimum width=6pt, draw=white, inner sep=0pt, scale=0.9]

\node[nct3] (A) at (0,0) {$\qthree{}[7][]$};
\node[nct3] (B) at (0.7,0.7) {$\qthree{6}[7]$};
\node[nct3] (C) at (1.4,1.4) {$\qthree{5}[6][7]$};
\node[nct3] (D) at (1.4,0) {$\qthree{}[6][]$};
\node[nct3] (E) at (2.1,0.7) {$\qthree{5}[6]$};
\node[nct3] (F) at (2.8,1.4) {$\qthree{4}[5][6]$};
\node[nct3] (G) at (2.8,0) {$\qthree{}[5][]$};
\node[nct3] (H) at (3.5,0.7) {$\qthree{4}[5]$};
\node[nct3] (I) at (3.5,-0.7) {$\qthree{3'}[5]$};
\node[nct3] (J) at (4.2,0) {$\begin{smallmatrix} 4 && 3' \\ & 5 &
\end{smallmatrix}$};
\node[nct3] (K) at (4.9,0.7) {$\qthree{}[3'][]$};
\node[nct3] (L) at (4.9,-0.7) {$\qthree{}[4][]$};
\node[nct3] (M) at (5.6,1.4) {$\qthree{2'}[3']$};
\node[nct3] (N) at (5.6,-1.4) {$\qthree{3}[4]$};
\node[nct3] (O) at (6.3,2.1) {$\qthree{1'}[2'][3']$};
\node[nct3] (P) at (6.3,0.7) {$\qthree{}[2'][]$};
\node[nct3] (Q) at (6.3,-0.7) {$\qthree{}[3][]$};
\node[nct3] (R) at (7,1.4) {$\qthree{1'}[2']$};
\node[nct3] (S) at (7,-1.4) {$\qthree{2}[3]$};
\node[nct3] (T) at (7.7,0.7) {$\qthree{}[1'][]$};
\node[nct3] (U) at (7.7,-0.7) {$\qthree{}[2][]$};
\node[nct3] (V) at (7.7,-2.1) {$\qthree{1}[2][3]$};
\node[nct3] (W) at (8.4,-1.4) {$\qthree{1}[2]$};
\node[nct3] (X) at (9.1,-0.7) {$\qthree{}[1][]$\nospacepunct{.}};

\draw[->] (A) to (B);
\draw[->] (B) to (C);
\draw[->] (B) to (D);
\draw[->] (C) to (E);
\draw[->] (D) to (E);
\draw[->] (E) to (F);
\draw[->] (E) to (G);
\draw[->] (F) to (H);
\draw[->] (G) to (H);
\draw[->] (G) to (I);
\draw[->] (H) to (J);
\draw[->] (I) to (J);
\draw[->] (J) to (K);
\draw[->] (J) to (L);
\draw[->] (K) to (M);
\draw[->] (L) to (N);
\draw[->] (M) to (O);
\draw[->] (M) to (P);
\draw[->] (N) to (Q);
\draw[->] (O) to (R);
\draw[->] (P) to (R);
\draw[->] (Q) to (S);
\draw[->] (R) to (T);
\draw[->] (S) to (U);
\draw[->] (S) to (V);
\draw[->] (U) to (W);
\draw[->] (V) to (W);
\draw[->] (W) to (X);

\draw[loosely dotted] (A.east) -- (D);
\draw[loosely dotted] (B.east) -- (E);
\draw[loosely dotted] (D.east) -- (G);
\draw[loosely dotted] (E.east) -- (H);
\draw[loosely dotted] (G.east) -- (J);
\draw[loosely dotted] (H.east) -- (K);
\draw[loosely dotted] (I.east) -- (L);
\draw[loosely dotted] (K.east) -- (P);
\draw[loosely dotted] (M.east) -- (R);
\draw[loosely dotted] (L.east) -- (Q);
\draw[loosely dotted] (Q.east) -- (U);
\draw[loosely dotted] (P.east) -- (T);
\draw[loosely dotted] (S.east) -- (W);
\draw[loosely dotted] (U.east) -- (X);
\end{tikzpicture}\]

Let $A$ be given by the quiver with relations 
\[\begin{tikzpicture}[scale=0.9, transform shape]
\node (0) at (1,0) {$0$};
\node (1) at (2,0) {$1$};
\node (2) at (3,0) {$2$};
\node (3) at (4,0) {$3$\nospacepunct{.}};

\draw[->] (0) to (1);
\draw[->] (1) to (2);
\draw[->] (2) to (3);

\draw[dotted] (1.5,0) to [out=60,in=120] (3.5,0);
\end{tikzpicture}\]
The Aus\-lan\-der--Rei\-ten quiver $\Gamma(A)$ of $A$ is
\[\begin{tikzpicture}[scale=0.9, transform shape]
\tikzstyle{nct3}=[circle, minimum width=6pt, draw=white, inner sep=0pt, scale=0.9]

\node[nct3] (A) at (0,0) {$\qthree{}[3][]$};
\node[nct3] (B) at (0.7,0.7) {$\qthree{2}[3]$};
\node[nct3] (C) at (1.4,1.4) {$\qthree{1}[2][3]$};
\node[nct3] (D) at (1.4,0) {$\qthree{}[2][]$};
\node[nct3] (E) at (2.1,0.7) {$\qthree{1}[2]$};
\node[nct3] (F) at (2.8,1.4) {$\qthree{0}[1][2]$};
\node[nct3] (G) at (2.8,0) {$\qthree{}[1][]$};
\node[nct3] (H) at (3.5,0.7) {$\qthree{0}[1]$};
\node[nct3] (J) at (4.2,0) {$\qthree{}[0][]$\nospacepunct{.}};

\draw[->] (A) to (B);
\draw[->] (B) to (C);
\draw[->] (B) to (D);
\draw[->] (C) to (E);
\draw[->] (D) to (E);
\draw[->] (E) to (F);
\draw[->] (E) to (G);
\draw[->] (F) to (H);
\draw[->] (G) to (H);
\draw[->] (H) to (J);

\draw[loosely dotted] (A.east) -- (D);
\draw[loosely dotted] (B.east) -- (E);
\draw[loosely dotted] (D.east) -- (G);
\draw[loosely dotted] (E.east) -- (H);
\draw[loosely dotted] (G.east) -- (J);
\end{tikzpicture}\]

Then $P=P_A(1)$ is a left abutment of height $3$. By setting $I=I_B(3)$ we have that the gluing $\La = B \glue[P][I] A$ is defined and $\La$ is given by the quiver with relations
\[\begin{tikzpicture}[scale=0.9, transform shape]
\node (1) at (0,1) {$1$};
\node (2) at (1,1) {$2$};
\node (3) at (2,1) {$3$};
\node (4) at (3,1) {$4$};
\node (5) at (4,0.5) {$5$};
\node (6) at (5,0.5) {$6$};
\node (7) at (6,0.5) {$7$\nospacepunct{.}};
\node (1') at (1,0) {$1'$};
\node (2') at (2,0) {$2'$};
\node (3') at (3,0) {$3'$};

\node (0) at (-1,1) {$0$};
\draw[->] (0) to (1);

\draw[->] (1) to (2);
\draw[->] (2) to (3);
\draw[->] (3) to (4);
\draw[->] (4) to (5);
\draw[->] (5) to (6);
\draw[->] (6) to (7);
\draw[->] (1') to (2');
\draw[->] (2') to (3');
\draw[->] (3') to (5);

\draw[dotted] (-0.5,1) to [out=60,in=120] (1.5,1);
\draw[dotted] (1.5,1) to [out=60,in=120] (2.5,1);
\draw[dotted] (2.5,1) to [out=-60,in=190] (3.5,0.75);
\draw[dotted] (2.5,0) to [out=60,in=170] (3.5,0.25);
\draw[dotted] (3.5,0.75) to [out=30,in=120] (5.5,0.5);
\draw[dotted] (3.5,0.25) to [out=190,in=-120] (4.5,0.5);
\end{tikzpicture}\]

We see that the left abutments of $B$ remain left abutments of $\La$: $\prescript{\La}{B}{\pi}_{\ast}(P_B(5))\isom P_{\La}(5)$, $\prescript{\La}{B}{\pi}_{\ast}(P_B(6))\isom P_{\La}(6)$ and $\prescript{\La}{B}{\pi}_{\ast}(P_B(7))\isom P_{\La}(7)$ are all left abutments. Left abutments of $A$, however, correspond to the vertices $\{1,2,3\}$ which are glued and so they stop being left abutments: for example $P_A(2)$ is a left abutment of $A$ but $\prescript{\La}{A}{\pi}_{\ast}(P_A(2))$ is not a left abutment of $\La$. 

On the other hand, right abutments of $A$ remain right abutments of $\La$: $\prescript{\La}{A}{\pi}_{\ast}(I_A(0))\isom I_{\La}(0)$, $\prescript{\La}{A}{\pi}_{\ast}(I_A(1))\isom I_{\La}(1)$ and $\prescript{\La}{A}{\pi}_{\ast}(I_A(2))\isom I_{\La}(2)$ are all right abutments. Similarly, the right abutments of $B$ which do not have a support intersecting $\{1,2,3\}$ remain right abutments of $\La$: $\prescript{\La}{B}{\pi}_{\ast}(I_B(1'))\isom I_{\La}(1')$, $\prescript{\La}{B}{\pi}_{\ast}(I_B(2'))\isom I_{\La}(2')$ and $\prescript{\La}{B}{\pi}_{\ast}(I_B(3'))\isom I_{\La}(3')$ are all right abutments, but the right abutments of $B$ with support a subset of $\{1,2,3\}$ are not right abutments of $\La$. 

By Proposition \ref{prop:AR glued}, the Aus\-lan\-der--Rei\-ten quiver $\Gamma(\La)$ of $\La$ is
\[\begin{tikzpicture}[scale=0.9, transform shape]
\tikzstyle{nct3}=[circle, minimum width=6pt, draw=white, inner sep=0pt, scale=0.9]
\node[nct3] (A) at (0,0) {$\qthree{}[7][]$};
\node[nct3] (B) at (0.7,0.7) {$\qthree{6}[7]$};
\node[nct3] (C) at (1.4,1.4) {$\qthree{5}[6][7]$};
\node[nct3] (D) at (1.4,0) {$\qthree{}[6][]$};
\node[nct3] (E) at (2.1,0.7) {$\qthree{5}[6]$};
\node[nct3] (F) at (2.8,1.4) {$\qthree{4}[5][6]$};
\node[nct3] (G) at (2.8,0) {$\qthree{}[5][]$};
\node[nct3] (H) at (3.5,0.7) {$\qthree{4}[5]$};
\node[nct3] (I) at (3.5,-0.7) {$\qthree{3'}[5]$};
\node[nct3] (J) at (4.2,0) {$\begin{smallmatrix} 4 && 3' \\ & 5 &
\end{smallmatrix}$};
\node[nct3] (K) at (4.9,0.7) {$\qthree{}[3'][]$};
\node[nct3] (L) at (4.9,-0.7) {$\qthree{}[4][]$};
\node[nct3] (M) at (5.6,1.4) {$\qthree{2'}[3']$};
\node[nct3] (N) at (5.6,-1.4) {$\qthree{3}[4]$};
\node[nct3] (O) at (6.3,2.1) {$\qthree{1'}[2'][3']$};
\node[nct3] (P) at (6.3,0.7) {$\qthree{}[2'][]$};
\node[nct3] (Q) at (6.3,-0.7) {$\qthree{}[3][]$};
\node[nct3] (R) at (7,1.4) {$\qthree{1'}[2']$};
\node[nct3] (S) at (7,-1.4) {$\qthree{2}[3]$};
\node[nct3] (T) at (7.7,0.7) {$\qthree{}[1'][]$};
\node[nct3] (U) at (7.7,-0.7) {$\qthree{}[2][]$};
\node[nct3] (V) at (7.7,-2.1) {$\qthree{1}[2][3]$};
\node[nct3] (W) at (8.4,-1.4) {$\qthree{1}[2]$};
\node[nct3] (X) at (9.1,-0.7) {$\qthree{}[1][]$};
\node[nct3] (Y) at (9.1,-2.1) {$\qthree{0}[1][2]$};
\node[nct3] (Z) at (9.8,-1.4) {$\qthree{0}[1]$};
\node[nct3] (AA) at (10.4,-0.7) {$\qthree{}[0][]$\nospacepunct{,}};

\draw[->] (A) to (B);
\draw[->] (B) to (C);
\draw[->] (B) to (D);
\draw[->] (C) to (E);
\draw[->] (D) to (E);
\draw[->] (E) to (F);
\draw[->] (E) to (G);
\draw[->] (F) to (H);
\draw[->] (G) to (H);
\draw[->] (G) to (I);
\draw[->] (H) to (J);
\draw[->] (I) to (J);
\draw[->] (J) to (K);
\draw[->] (J) to (L);
\draw[->] (K) to (M);
\draw[->] (L) to (N);
\draw[->] (M) to (O);
\draw[->] (M) to (P);
\draw[->] (N) to (Q);
\draw[->] (O) to (R);
\draw[->] (P) to (R);
\draw[->] (Q) to (S);
\draw[->] (R) to (T);
\draw[->] (S) to (U);
\draw[->] (S) to (V);
\draw[->] (U) to (W);
\draw[->] (V) to (W);
\draw[->] (W) to (X);
\draw[->] (X) to (Z);
\draw[->] (Z) to (AA);
\draw[->] (W) to (Y);
\draw[->] (Y) to (Z);

\draw[loosely dotted] (A.east) -- (D);
\draw[loosely dotted] (B.east) -- (E);
\draw[loosely dotted] (D.east) -- (G);
\draw[loosely dotted] (E.east) -- (H);
\draw[loosely dotted] (G.east) -- (J);
\draw[loosely dotted] (H.east) -- (K);
\draw[loosely dotted] (I.east) -- (L);
\draw[loosely dotted] (K.east) -- (P);
\draw[loosely dotted] (M.east) -- (R);
\draw[loosely dotted] (L.east) -- (Q);
\draw[loosely dotted] (Q.east) -- (U);
\draw[loosely dotted] (P.east) -- (T);
\draw[loosely dotted] (S.east) -- (W);
\draw[loosely dotted] (U.east) -- (X);
\draw[loosely dotted] (W.east) -- (Z);
\draw[loosely dotted] (X.east) -- (AA);
\end{tikzpicture}\]
where we can verify again which abutments of $A$ and $B$ give rise to abutments of $\La$.
\end{example}

\subsubsection{Gluing systems of rep\-re\-sen\-ta\-tion-di\-rect\-ed algebras}\label{subsubsec:gluing systems of representation-directed algebras}
In this section we construct a direct system of categories with an admissible target arising from a collection of rep\-re\-sen\-ta\-tion-di\-rect\-ed algebras which we iteratively glue. Let us first introduce some notation. Let $G$ be a directed tree, that is a locally finite connected directed graph where the underlying undirected graph is acyclic. If $H$ is a connected subgraph of $G$, then clearly $H$ is also a directed tree. We define $\dirI_G$\index[symbols]{I a a@$\dirI_G$} to be the set of all finite connected full subgraphs of $G$. Clearly $\subseteq$ is a partial order on $\dirI_G$ and if $H_1,H_2\in\dirI_G$ then, by connectedness of $G$, there exists a finite connected subgraph $H$ of $G$ such that $H_1\subseteq H$ and $H_2\subseteq H$. In particular, $\left(\dirI_G,\subseteq\right)$ is a directed set. 

We have the following definition. 

\begin{definition}\label{def:gluing system}
A \emph{gluing system}\index[definitions]{gluing system} on a directed tree $G$ is a triple $(\La_{v},P_{e},I_{e})_{v\in V_G,e\in E_G}$\index[symbols]{(Lav, Pe, Ie)@$(\La_{v},P_{e},I_{e})_{v\in V_G,e\in E_G}$}, where
\begin{itemize}
    \item for each vertex $v\in V_G$, we have that $\La_v=\K Q_v/\cR_v$ is a rep\-re\-sen\-ta\-tion-di\-rect\-ed algebra, and
    \item for each arrow $e:u\to v\in E_G$, we have that $P_e$ is a left abutment of $\La_v$ with footing $\pi_{P_e}$ and $I_e$ is a right abutment of $\La_u$ with footing $\pi_{I_e}$, 
\end{itemize}
    such that
\begin{enumerate}[label=(\roman*)]
    \item the collection $P_{t^{-1}(v)}$ is independent for every $v\in V_G$ and the collection $I_{s^{-1}(v)}$ is independent for every $v\in V_G$, 
    \item for each arrow $e:u\to v$ of $G$ we have that the abutments $P_{e}$ and $I_{e}$ have the same height $h_{e}$. In particular, the algebra $\La_{\langle u,v\rangle}\coloneqq \La_u\glue[P_{e}][I_{e}] \La_v=\K Q_{\langle u,v \rangle}/\cR_{\langle u,v\rangle}$\index[symbols]{Lambda u, v@$\La_{\langle u,v\rangle}$}\index[symbols]{KQuv Ruv@$\K Q_{\langle u,v \rangle}/\cR_{\langle u,v\rangle}$} is defined and we have a pullback diagram
    \begin{equation}\label{eq:pullback gluing system}
        \begin{tikzcd}
            \La_{\langle u,v \rangle} \arrow[d, swap, "\prescript{\langle u,v\rangle}{\langle v\rangle}{\pi}"] \arrow[r, "\prescript{\langle u,v\rangle}{\langle u\rangle}{\pi}"] & \La_u \arrow[d, "\pi_{I_e}"] \\
            \La_v \arrow[r, swap, "\pi_{P_e}"] & \K \overrightarrow{A}_h\nospacepunct{,}
        \end{tikzcd}
    \end{equation}
    and
    \item there exists no infinite path of the form $v_{0} \to v_{1} \to v_{2}\to \cdots$ or $\cdots \to v_{-2} \to v_{-1} \to v_{0}$ in $G$ such that all corresponding gluings are trivial.
\end{enumerate}
\end{definition}

Condition (iii) of Definition \ref{def:gluing system} is included for technical reasons; see Lemma \ref{lem:finitely many vertices}.

Let $(\La_{v},P_{e},I_{e})_{v\in V_G,e\in E_G}$ be a gluing system on a directed tree $G$. Let $e:u\to v\in E_G$ be an arrow. Then this arrow corresponds to a gluing of $\La_u$ and $\La_v$ over some abutments $P_e$ and $I_e$, both of the same height $h_e$. Following (\ref{eq:S0}) and (\ref{eq:S1}), we set
\begin{align*}
    a\sim^{0}_e b &\text{ for all $a\in\supp(P_e)$, $b\in\supp(I_e)$ with $\height(P_{\La_v}(a))+\height(I_{\La_u}(b)) = h_e + 1$, and} \\
    \alpha\sim^{1}_e\beta &\text{ for all $\alpha\in \left(Q_{\La_v}\right)_1$, $\beta\in\left(Q_{\La_u}\right)_1$ with $s(\alpha)\sim^{0}_e s(\beta)$.}
\end{align*}\index[symbols]{((e@$\sim^{0}_e$, $\sim^{1}_e$}

Conditions (i) and (ii) of a gluing system allow us to glue over two arrows in $G$ in any order. More precisely let $u,v,w\in V_G$ be three vertices such that $\langle u,v,w\rangle$ is connected. In particular, since $G$ is a directed tree, we have that $\langle u,v,w\rangle$ is one of the following graphs
\begin{enumerate}[label=(C\arabic*)]
    \item $u\overset{e_1}{\longto} v \overset{e_2}{\longto} w$, \label{case:C1} 
    \item $u\overset{e_1}{\longfrom} v \overset{e_2}{\longto} w$, \label{case:C2}
    \item $u\overset{e_1}{\longto}v\overset{e_2}{\longfrom} w$. \label{case:C3}
\end{enumerate}
Assume that we are in the case \ref{case:C1}. Consider the diagrams
\begin{align*}
    &\begin{tikzcd}[ampersand replacement=\&]
        \La_{\langle u,v\rangle} \glue \La_{w}\arrow[dd, swap, "\prescript{\langle u,v,w\rangle}{\langle w\rangle}{\pi}"] \arrow[r, "\prescript{\langle u,v,w\rangle}{\langle u,v\rangle}{\pi}"] \arrow[ddr, phantom, "(2)"] 
        \& \La_{\langle u,v\rangle} \arrow[d, swap, "\prescript{\langle u,v\rangle}{\langle v\rangle}{\pi}"] \arrow[r, "\prescript{\langle u,v\rangle}{\langle u\rangle}{\pi}"] \arrow[dr, phantom, "(1)"]
        \&\La_u \arrow[d, "\pi_{I_{e_1}}"]\\
        \& \La_v \arrow[d, "\pi_{I_{e_2}}"] \arrow[r, swap, "\pi_{P_{e_1}}"] 
        \& \K A_{h_{e_1}}\nospacepunct{,} \\
        \La_w \arrow[r, swap, "\pi_{P_{e_2}}"] 
        \& \K A_{h_{e_2}} \&
    \end{tikzcd}
    &\begin{tikzcd}[ampersand replacement=\&]
        \La_{u}\glue\La_{\langle v,w\rangle} \arrow[d, swap, "\prescript{\langle u,v,w\rangle}{\langle v,w\rangle}{\pi}"] \arrow[rr, "\prescript{\langle u,v,w\rangle}{\langle u\rangle}{\pi}"] \arrow[drr, phantom, "(4)"] 
        \&\&\La_u \arrow[d, "\pi_{I_{e_1}}"]\\
        \La_{\langle v,w\rangle} \arrow[d, swap, "\prescript{\langle v,w\rangle}{\langle w\rangle}{\pi}"] \arrow[r, "\prescript{\langle v,w\rangle}{\langle v\rangle}{\pi}"] \arrow[dr, phantom, "(3)"]
        \& \La_v \arrow[d, "\pi_{I_{e_2}}"] \arrow[r, swap, "\pi_{P_{e_1}}"] 
        \& \K A_{h_{e_1}}\nospacepunct{,} \\
        \La_w \arrow[r, swap, "\pi_{P_{e_2}}"] 
        \& \K A_{h_{e_2}} \&
    \end{tikzcd}
\end{align*}
where all squares (1),(2),(3) and (4) are pullbacks. In particular, the algebra $\La_{\langle u,v\rangle}\glue \La_{w}$ is the gluing of $\La_{w }$ and $\La_{\langle u,v,\rangle}$ along $P_{e_2}$ and $\prescript{\langle u,v\rangle}{u}{\pi}_{\ast}\left(I_{e_2}\right)$ while the algebra $\La_{u}\glue \La_{\langle v,w\rangle}$ is the gluing of $\La_{\langle v,w\rangle}$ and $\La_u$ along $\prescript{\langle v,w\rangle}{\langle w\rangle}{\pi}_{\ast}(P_{e_1})$ and $I_{e_1}$. Notice that the fact that $\prescript{\langle u,v\rangle}{\langle u\rangle}{\pi}_{\ast}(I_{e_2})$ is a right abutment of $\La_{\langle u,v\rangle}$ with footing $\pi_{I_{e_2}}\comp \prescript{\langle u,v\rangle}{\langle v\rangle}{\pi}$ and that $\prescript{\langle v,w\rangle}{\langle w\rangle}{\pi}_{\ast}(P_{e_1})$ is a left abutment of $\La_{\langle v,w\rangle}$ with footing $\pi_{P_{e_1}}\comp \prescript{\langle v,w\rangle}{\langle v\rangle}{\pi}$ follows from Remark \ref{rem:gluing remarks}(c). A straightforward computation shows that $\La_{\langle u,v\rangle} \glue \La_{w}$ and $ \La_{u}\glue\La_{\langle v,w\rangle}$ can both be thought of as the limit of the diagram
\begin{align*}
    &\begin{tikzcd}[ampersand replacement=\&]
        \&\&\La_u \arrow[d, "\pi_{I_{e_1}}"]\\
        \& \La_v \arrow[d, "\pi_{I_{e_2}}"] \arrow[r, swap, "\pi_{P_{e_1}}"] 
        \& \K A_{h_{e_1}}\nospacepunct{,} \\
        \La_w \arrow[r, swap, "\pi_{P_{e_2}}"] 
        \& \K A_{h_{e_2}} \&
    \end{tikzcd}
\end{align*}
with the obvious choice of morphisms. In particular, if $\iota:\La_{\langle u,v\rangle} \glue \La_{w} \to  \La_{u}\glue\La_{\langle v,w\rangle}$ is the unique isomorphism induced by the universal property of the limit, then we have
\begin{align}\label{eq:connecting morphisms 1}
    \prescript{\langle u,v,w \rangle}{\langle u \rangle}{\pi}\comp \iota &= \prescript{\langle u,v \rangle}{\langle u \rangle}{\pi}\comp \prescript{\langle u,v,w \rangle}{\langle u,v \rangle}{\pi}, \\
    \label{eq:connecting morphisms 2}
    \prescript{\langle v,w \rangle}{\langle v \rangle}{\pi}\comp \prescript{\langle u,v,w \rangle}{\langle v,w \rangle}{\pi} \comp \iota &= \prescript{\langle u,v \rangle}{\langle v \rangle}{\pi}\comp \prescript{\langle u,v,w \rangle}{\langle u,v \rangle}{\pi}, \\
    \label{eq:connecting morphisms 3}
    \prescript{\langle v,w \rangle}{\langle w \rangle}{\pi}\comp \prescript{\langle u,v,w \rangle}{\langle v,w \rangle}{\pi} \comp \iota &= \prescript{\langle u,v,w \rangle}{\langle w \rangle}{\pi}.
\end{align}
We denote this common limit by $\La_{\langle u,v,w\rangle}$ and we denote the connecting morphisms by
\begin{align}
    \prescript{\langle u,v,w \rangle}{\langle u \rangle}{\pi}:\La_{\langle u,v,w \rangle}\to \La_u, \\
    \prescript{\langle u,v,w \rangle}{\langle v \rangle}{\pi}:\La_{\langle u,v,w \rangle}\to \La_v, \\
    \prescript{\langle u,v,w \rangle}{\langle w \rangle}{\pi}:\La_{\langle u,v,w \rangle}\to \La_w,
\end{align}
where each of these is defined by (\ref{eq:connecting morphisms 1}), (\ref{eq:connecting morphisms 2}) and (\ref{eq:connecting morphisms 3}) respectively. In particular, we have formulas
\begin{align}\label{eq:connecting formulas 1}
    \prescript{\langle u,v,w \rangle}{\langle u \rangle}{\pi} &= \prescript{\langle u,v \rangle}{\langle u \rangle}{\pi}\comp \prescript{\langle u,v,w \rangle}{\langle u,v \rangle}{\pi}, \\
    \label{eq:connecting formulas 2}
    \prescript{\langle u,v,w \rangle}{\langle v \rangle}{\pi} &= \prescript{\langle u,v \rangle}{\langle v \rangle}{\pi}\comp \prescript{\langle u,v,w \rangle}{\langle u,v \rangle}{\pi} =
    \prescript{\langle v,w \rangle}{\langle v \rangle}{\pi}\comp \prescript{\langle u,v,w \rangle}{\langle v,w \rangle}{\pi}, \\
    \label{eq:connecting formulas 3}
    \prescript{\langle u,v,w \rangle}{\langle w \rangle}{\pi} &= \prescript{\langle v,w \rangle}{\langle w \rangle}{\pi}\comp \prescript{\langle u,v,w \rangle}{\langle v,w \rangle}{\pi},
\end{align}

In this case we can compute the quiver $Q_{\langle u,v,w\rangle}$ of $\La_{\langle u,v,w\rangle}$ in two ways, depending on whether we glue first at $e_1$ and then at $e_2$ or vice versa. It is easy to see that the quiver is the same independently of the order of the gluings. In particular for $k\in\{0,1\}$, we have that 
\begin{align}\label{eq:quiver of gluing in two edges}
    \left(Q_{\langle u,v,w\rangle}\right)_k = \left( \left(Q_u\right)_k \coprod \left(Q_v\right)_k \coprod \left(Q_w\right)_k \right)/\sim^{k}_{\langle u,v,w\rangle},
\end{align}
where $\sim^k_{\langle u,v,w\rangle}$ is the equivalence relation generated by $\sim^{k}_{e_1}$ and $\sim^{k}_{e_2}$. In particular, we have that $Q_u$, $Q_v$ and $Q_w$ are full subquivers of $Q_{\langle u,v,w\rangle}$. Using this description, we set 
\[\epsilon_u = \sum_{i\in \left(Q_u\right)_0}\epsilon_i, \;\; \epsilon_v = \sum_{i\in \left(Q_v\right)_0}\epsilon_i,\;\; \epsilon_w = \sum_{i\in \left(Q_w\right)_0}\epsilon_i,\]
and it follows by applying (\ref{eq:ideal of gluing}) twice that
\begin{equation}\label{eq:ideal of gluing in two edges C1}
    \begin{split}
        \cR_{\langle u,v,w \rangle} &= \langle \cR_u+\cR_v+\cR_w + \left(1_{\langle u,v,w \rangle}-\epsilon_u-\epsilon_w\right)\K Q_{\langle u,v,w \rangle} \left(1_{\langle u,v,w \rangle}-\epsilon_v-\epsilon_w\right) \\
        &+ \left(1_{\langle u,v,w \rangle}-\epsilon_u-\epsilon_v\right)\K Q_{\langle u,v,w \rangle} \left(1_{\langle u,v,w \rangle}-\epsilon_u-\epsilon_w\right)\rangle.
    \end{split}
\end{equation}

A similar calculation in the cases \ref{case:C2} respectively \ref{case:C3} shows that we can compute the successive gluings induced by $e_1$ and $e_2$ in any order as the limits of the diagrams
\begin{align*}
    &\begin{tikzcd}[ampersand replacement=\&]
        \La_{u} \arrow[r, "\pi_{P_{e_1}}"]  
        \& \K A_{h_{e_1}} \\
        \& \La_v \arrow[d, "\pi_{I_{e_2}}"] \arrow[u, swap, "\pi_{I_{e_1}}"] \\
        \La_w \arrow[r, swap, "\pi_{P_{e_2}}"] 
        \& \K A_{h_{e_2}}\nospacepunct{,} \&
    \end{tikzcd}
    &\begin{tikzcd}[ampersand replacement=\&]
        \La_w \arrow[d, swap, "\pi_{I_{e_2}}"] 
        \&\&\La_u \arrow[d, "\pi_{I_{e_1}}"]\\
        \K A_{h_{e_2}} 
        \& \La_v \arrow[l, "\pi_{P_{e_2}}"] \arrow[r, swap, "\pi_{P_{e_1}}"] 
        \& \K A_{h_{e_1}}\nospacepunct{,} \\
    \end{tikzcd}
\end{align*}
respectively. Notice that in this case it is important that the abutments $I_{e_1}$ and $I_{e_2}$ respectively $P_{e_1}$ and $P_{e_2}$ are independent, since this ensures that they remain abutments when viewed as modules over the glued algebra; see also Remark \ref{rem:gluing remarks}(c). By the above discussion we see that we can perform the gluings induced by two arrows in a gluing system in any order. Then $\La_{\langle u,v,w \rangle}$ and the connecting morphisms are defined similarly and satisfy the formulas (\ref{eq:connecting formulas 1}), (\ref{eq:connecting formulas 2}) and (\ref{eq:connecting formulas 3}) as well as the quiver of $\La_{\langle u,v,w \rangle}$ is given by (\ref{eq:quiver of gluing in two edges}) and the ideal $\cR_{\langle u,v,w\rangle}$ is given by
\begin{equation*}\label{eq:ideal of gluing in two edges C2}
    \begin{split}
        \cR_{\langle u,v,w \rangle} &= \langle \cR_u+\cR_v+\cR_w + \left(1_{\langle u,v,w \rangle}-\epsilon_v-\epsilon_w\right)\K Q_{\langle u,v,w \rangle} \left(1_{\langle u,v,w \rangle}-\epsilon_u-\epsilon_w\right) \\
        &+ \left(1_{\langle u,v,w \rangle}-\epsilon_u-\epsilon_v\right)\K Q_{\langle u,v,w \rangle} \left(1_{\langle u,v,w \rangle}-\epsilon_u-\epsilon_w\right)\rangle,
    \end{split}
\end{equation*}
in case \ref{case:C2} and by
\begin{equation*}\label{eq:ideal of gluing in two edges C3}
    \begin{split}
        \cR_{\langle u,v,w \rangle} &= \langle \cR_u+\cR_v+\cR_w + \left(1_{\langle u,v,w \rangle}-\epsilon_u-\epsilon_w\right)\K Q_{\langle u,v,w \rangle} \left(1_{\langle u,v,w \rangle}-\epsilon_v-\epsilon_w\right) \\
        &+ \left(1_{\langle u,v,w \rangle}-\epsilon_u-\epsilon_w\right)\K Q_{\langle u,v,w \rangle} \left(1_{\langle u,v,w \rangle}-\epsilon_u-\epsilon_v\right)\rangle.
    \end{split}
\end{equation*}
in case \ref{case:C3}. In particular, we can write the ideal $\cR_{\langle u,v,w\rangle}$ in all three cases \ref{case:C1}, \ref{case:C2} and \ref{case:C3} by the formula
\begin{equation}\label{eq:ideal of gluing in two edges}
    \cR_{\langle u,v,w \rangle} = \left\langle \cR_u+\cR_v+\cR_w+ \sum_{e\in\{e_1,e_2\}}\left(1_{\langle u,v,w \rangle}-p_e\right)\K Q_{\langle u,v,w \rangle} \left(1_{\langle u,v,w \rangle}-q_e\right)\right\rangle,
\end{equation}
where
\[p_e=\sum_{\substack{x\in\{u,v,w\} \\ x\neq t(e)}}\epsilon_x\text{ and } q_e=\sum_{\substack{x\in\{u,v,w\} \\ x\neq s(e)}}\epsilon_x.\]
Notice also that by Proposition \ref{prop:AR glued} the gluing of two rep\-re\-sen\-ta\-tion-di\-rect\-ed algebras is again a rep\-re\-sen\-ta\-tion-di\-rect\-ed  algebra. Using these facts, if $H$ is a finite connected full subgraph of $G$, then we can generalize Definition \ref{def:gluing quivers} by a straightforward induction on the number of vertices of $H$.

\begin{definition}\label{def:gluing over subgraph}
Let $(\La_{v},P_{e},I_{e})_{v\in V_G,e\in E_G}$ be a gluing system on a directed tree $G$ and let $H\in\dirI_G$ be a finite connected full subgraph of $G$. The \emph{gluing algebra over $H$}\index[definitions]{gluing over a finite connected subgraph} is defined to be the algebra $\La_H=\K Q_H/\cR_H$\index[symbols]{Lambda H@$\La_H$}\index[symbols]{KQH RH@$\K Q_H/\cR_H$} obtained by performing the gluings induced by the edges $E_H$ of $H$ in any order. For each $v\in V_H$ we define the idempotent $\epsilon_v=\sum_{i\in \left(Q_v\right)_0}\epsilon_i\in \La_H$. Then $\La_H/\langle 1- \epsilon_v \rangle=\La_v$ and we denote the projection morphism $\La_H\to \La_H/\langle 1- \epsilon_v \rangle$ by $\prescript{H}{v}{\pi}$\index[symbols]{pi H to v@$\prescript{H}{v}{\pi}:\La_H\to\La_v$}.
\end{definition}

Let $H\in \dirI_G$. By repeatedly applying formula (\ref{eq:quiver of gluing in two edges}) we can describe the quiver $Q_H$. In particular, for $k\in\{0,1\}$ we have 
\begin{equation}\label{eq:quiver of gluing subgraph}
\left(Q_H\right)_k = \left(\coprod_{v\in V_H}\left(Q_v\right)_0 \right)/\sim^k_H,
\end{equation}
where $\sim^k_H$\index[symbols]{((H@$\sim^{0}_H$, $\sim^{1}_H$} is the equivalence relation generated by the equivalence relations $\sim^k_e$ for all $e\in E_H$. In particular, for every $v\in V_H$ we have that $Q_v$ is a full subquiver of $Q_H$. It follows by repeatedly applying (\ref{eq:ideal of gluing in two edges}) that
\begin{equation}\label{eq:ideal of gluing subgraph}
    \cR_H = \left\langle\sum_{v\in V_H}\cR_v + \sum_{e\in E_H} \left(1_{H}-p_e\right)\K Q_{H} \left(1_{H}-q_e\right)\right\rangle,
\end{equation}
where
\[p_e=\sum_{\substack{v\in V_H \\ v\neq t(e)}}\epsilon_x\text{, and }q_e=\sum_{\substack{x\in V_H \\ x\neq s(e)}}\epsilon_x.\]

\begin{remark}\label{rem:gluing over subgraph remarks} 
Throughout let $(\La_{v},P_{e},I_{e})_{v\in V_G,e\in E_G}$ be a gluing system on a directed tree $G$ and $H\in\dirI_G$.
\begin{enumerate}[label=(\alph*)]
    \item The algebra $\La_H$ can be seen as the limit of a diagram in the following way. First we define the index category $J=J(H)$ as follows. For every vertex $v\in V_H$, we have one corresponding object $v\in J$ and for every arrow $e\in E_H$, we have one corresponding object $h_e\in J$ and two morphisms $g_e:s(e)\to h_e$ and $f_e:t(e)\to h_e$. Notice that there is no need to define composition in $J$ since the only compositions appearing in $J$ involve identity arrows. Next, we define a functor $F:J\to \kAlg$, the category of fi\-nite-di\-men\-sion\-al unital associative $\K$-algebras, by $F(v)=\La_v$ and $F(h_e)=\K A_{h_e}$ on objects of $J$ and $F(f_e)=\pi_{P_e}$ and $F(g_e)=\pi_{I_e}$ on non-identity morphisms of $J$. Then $\La_H$ is the limit of the diagram $F:J\to \kAlg$. 
    
    \item If $H,K\in\dirI_G$ and $H\subseteq K$, then clearly $\left(\La_K,\prescript{K}{v}{\pi}\right)$ is a cone to the diagram $F$ from (a). Then there exists a unique morphism $\iota:\La_K\to \La_H$ such that $\prescript{K}{v}{\pi}=\prescript{H}{v}{\pi}\comp \iota$. We can compute $\iota$ in the following way. First, notice that $Q_H$ is a full subquiver of $Q_K$. Then for $\epsilon_H=\sum_{i\in \left(Q_H\right)_0}\epsilon_i\in \La_K$ we have $\La_H=\La_K/\langle 1-\epsilon_K\rangle$. We set $\prescript{K}{H}{\pi}:\La_K\to \La_H$\index[symbols]{pi K to H@$\prescript{K}{H}{\pi}:\La_K\to \La_H$} to be the quotient map $\La_K\to\La_K/\langle 1-\epsilon_K\rangle$. Then by using the formulas (\ref{eq:connecting formulas 1}), (\ref{eq:connecting formulas 2}) and (\ref{eq:connecting formulas 3}) inductively, for every $v\in V_H$ we have
    \begin{equation}
        \prescript{K}{v}{\pi}=\prescript{H}{v}{\pi}\comp\prescript{K}{H}{\pi},
    \end{equation}
    showing that $\iota=\prescript{K}{H}{\pi}$. Clearly, if $H,K,L\in\dirI_G$ with $H\subseteq K \subseteq L$, then we have
    \begin{equation}\label{eq:connecting formula for subgraphs}
        \prescript{L}{H}{\pi}=\prescript{K}{H}{\pi}\comp\prescript{L}{K}{\pi}.
    \end{equation}
    
    \item If $H,K\in\dirI_G$ and $H\subseteq K$, then the algebra epimorphism $\prescript{K}{H}{\pi}:\La_K\to \La_H$ defines functors
    \[\begin{tikzpicture}[scale=0.9, transform shape]]
    \node (A) at (0,0) {$\m\La_K$};
    \node (B) at (3,0) {$\m \La_H$\nospacepunct{,}};

    \draw[->] (B) -- node[above] {$\prescript{K}{H}{\pi}_{\ast}$} (A); 
    \draw[->] (A) to [out=30,in=150]  node[auto]{$\prescript{K}{H}{\pi}^{\ast}$} (B);
    \draw[->] (A) to [out=-30,in=-150]  node[below]{$\prescript{K}{H}{\pi}^{!}$} (B);
    \end{tikzpicture}\]
    which satisfy similar properties to the functors in Remark \ref{rem:gluing remarks}(b). In particular we have that if $M$ is a representation of $\La_H$, then $\prescript{K}{H}{\pi}_{\ast}(M)$ is the representation of $\La_K$ given by extending $M$ to $Q_K$ by putting $0$ on all vertices and arrows of $Q_K$ which do not lie in $Q_H$. Moreover, we also have
    \[\prescript{K}{H}{\pi}^{\ast}\left(\epsilon_i\La_K\right) \isom \begin{cases} \epsilon_i\La_H, &\mbox{if $i\in \left(Q_H\right)_0$,} \\ 0, &\mbox{otherwise.} \end{cases}\]
    
    \item Let $i,j\in \left(Q_{H}\right)_0$ and assume that $\epsilon_j\La_H\epsilon_i\neq 0$. By inductively applying the argument of Remark \ref{rem:gluing remarks}(e), it follows that there exists a vertex $v\in V_H$ such that $i,j\in\left(Q_v\right)_0$ and $\epsilon_j\La_H \epsilon_i \isom \epsilon_j\La_v\epsilon_i$.
\end{enumerate}
\end{remark}

By Remark \ref{rem:gluing over subgraph remarks}(b) it follows that $\left(\La_H,\prescript{K}{H}{\pi}\right)$ is a usual inverse system over $\dirI_G$ in $\kAlg$. We want to use this inverse system to define a $\Cat$-inverse system over $\dirI_G$. First, for $H\in \dirI_G$ we define a small category $\cC_H$. Consider the category $\proj \La_{H}$. This is a skeletally small category. To see this let $\cC_{H}\subseteq \proj \La_{H}$ be the full subcategory with direct summands of $\La_H^n$ for some $n\in\NN$ as objects, that is 
\[\obj(\cC_{H}) = \left\{ M \subseteq \La_{H}^n \mid \text{$n\in \NN$ and there exists $N\subseteq\La^n_H$ with $M\oplus N = \La_{H}^n$} \right\}.\]
Then $\cC_H$ is a small subcategory equivalent to $\proj \La_{H}$. We denote by $\iota_H:\cC_H\to \proj \La_H$ the inclusion functor and by $q_H:\proj\La_H\to\cC_H$ a quasi-inverse to $\iota_H$. Notice that a complete set of representatives of indecomposable objects in $\cC_{H}$ is given by 
\[\ind(\cC_{H}) = \{\epsilon_i\La_H \subseteq \La_H \mid i\in\left(Q_H\right)_0\}.\]
Next, for $H\subseteq K$ in $\dirI_G$ we set
\[F_{HK} \coloneqq q_H\comp \prescript{K}{H}{\pi}^{\ast}\comp \iota_K:\cC_K\to\cC_H.\]
Notice that $F_{HK}$ is an additive functor since it is the composition of additive functors. If $i\in\left(Q_H\right)_0$, we have canonical isomorphisms
\begin{align*}
F_{HK}\left(\epsilon_i\La_H\right)&=q_H\comp\prescript{K}{H}{\pi}^{\ast}\comp\iota_K\left(\epsilon_i\La_K\right) 
= q_H\comp\prescript{K}{H}{\pi}^{\ast}\left(\epsilon_i\La_K\right)
=q_H\left(\epsilon_i\La_K\otimes_{\La_K}\La_H\right)\\
&\isom q_H\left(\epsilon_i\La_H\right)
=q_H\iota_H\left(\epsilon_i\La_H\right)
\isom \epsilon_i\La_H.
\end{align*}
In this case we set $\delta_{HK}^i:F_{HK}\left(\epsilon_i\La_K\right)\to \epsilon_i\La_H$\index[symbols]{deltaHKi@$\delta_{HK}^i$} to be the composition of these isomorphisms. If $i\in\left(Q_K\right)_{0}\setminus\left(Q_H\right)_{0}$, then we have $F_{HK}\left(\epsilon_i\La_K\right)=0$.

Finally, for $H\subseteq K \subseteq L$ we define a natural isomorphism $\theta_{HKL}:F_{HK}\comp F_{KL}\To F_{HL}$ via the sequence of natural isomorphisms
\begin{align*}
    F_{HK}\comp F_{KL} &= q_H \comp \prescript{K}{H}{\pi}^{\ast}\comp \iota_K \comp q_K \comp \prescript{L}{K}{\pi}^{\ast}\comp\iota_L && \\
    &\isom q_H\comp \prescript{K}{H}{\pi}^{\ast}\comp \Id_{\proj\La_K} \comp\prescript{L}{K}{\pi}^{\ast}\comp\iota_L && \\
    &= q_H\comp \prescript{K}{H}{\pi}^{\ast}\comp\prescript{L}{K}{\pi}^{\ast}\comp\iota_L && \\
    &\isom q_H \comp \left(\prescript{K}{H}{\pi}\comp \prescript{L}{K}{\pi}\right)^{\ast}\comp\iota_L && \\
    &=q_H\comp \prescript{L}{H}{\pi}^{\ast}\comp\iota_L &&\text{by (\ref{eq:connecting formula for subgraphs})}\\
    &=F_{HL}.
\end{align*}

\begin{proposition}\label{lem:the system is an inverse system}
The triple $\left(\cC_H, F_{HK}, \theta_{HKL}\right)$\index[symbols]{(CH, FHK, thetaHKL)@$\left(\cC_H, F_{HK}, \theta_{HKL}\right)$} is a $\Cat$-inverse system over $\dirI_G$. Moreover, the firm source $\Cks$ of the $\Cat$-inverse system is a Krull--Schmidt category.
\end{proposition}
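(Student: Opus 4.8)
The plan is to verify the three items of Definition~\ref{def:inverse system}(a) together with the coherence square~(\ref{eq:commutativity in inverse system}), and then to read off the Krull--Schmidt claim from Theorem~\ref{thrm:Cks is Krull-Schmidt}. Most of the data has already been assembled in the discussion preceding the proposition: $\dirI_G$ is a directed set; each $\cC_H$ is a \emph{small} category, being the full subcategory of $\proj\La_H$ whose objects literally form a set of subobjects of the modules $\La_H^n$; each $F_{HK}=q_H\comp\prescript{K}{H}{\pi}^{\ast}\comp\iota_K$ is an additive, hence $\K$-linear, functor $\cC_K\to\cC_H$; and each $\theta_{HKL}\colon F_{HK}\comp F_{KL}\To F_{HL}$ is a natural isomorphism by its very construction. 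So the only point that genuinely requires an argument is commutativity of the square~(\ref{eq:commutativity in inverse system}) for $H\subseteq K\subseteq L\subseteq M$ in $\dirI_G$.

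To handle that, I would first unwind $\theta_{HKL}$ into its two elementary constituents as displayed in the construction: the whiskered cancellation isomorphism coming from $\iota_K\comp q_K\isom\Id_{\proj\La_K}$, and the whiskered ``base-change composition'' isomorphism $\nu_{HKL}\colon\prescript{K}{H}{\pi}^{\ast}\comp\prescript{L}{K}{\pi}^{\ast}\To\prescript{L}{H}{\pi}^{\ast}$, which exists because $\prescript{L}{H}{\pi}=\prescript{K}{H}{\pi}\comp\prescript{L}{K}{\pi}$ holds as a \emph{strict} equality by Remark~\ref{rem:gluing over subgraph remarks}(b), and which is induced pointwise by the canonical isomorphisms $\epsilon_i\La_K\otimes_{\La_K}\La_H\isom\epsilon_i\La_H$ together with associativity of $\otimes$. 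Since $\iota_H$ is fully faithful (in particular faithful), it reflects equalities of natural transformations between functors landing in $\cC_H$, so it suffices to check~(\ref{eq:commutativity in inverse system}) after whiskering the whole equation with $\iota_H$ on the left. In the resulting equation of natural transformations into $\proj\La_H$ one uses naturality of the cancellation isomorphisms and of the $\nu$'s to slide all $\iota q$ pairs past one another until they annihilate; both sides then collapse onto the same composite of whiskered $\nu$'s, whose equality is precisely the pentagon (cocycle) identity for base change along the composable algebra epimorphisms $\La_M\to\La_L\to\La_K\to\La_H$ of Remark~\ref{rem:gluing over subgraph remarks}(b), i.e. the standard coherence of the tensor product over rings. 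I expect this bookkeeping to be the main obstacle: no single step is conceptually deep, but keeping track of the order of whiskerings and of which $\iota q$ pair cancels against which is exactly where slips occur, and the art is to isolate the genuinely nontrivial content as the tensor-product pentagon. (One could alternatively record this verification as straightforward, since, as noted in the remarks following Definition~\ref{def:inverse system}, the square~(\ref{eq:commutativity in inverse system}) is not used elsewhere.)

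For the Krull--Schmidt assertion I would simply observe that $\cC_H\equivalent\proj\La_H$ and that $\proj\La_H$ is a Krull--Schmidt category because $\La_H$ is a finite-dimensional $\K$-algebra; hence $\cC_H$, being small, $\K$-linear and equivalent to $\proj\La_H$, is itself a Krull--Schmidt category, and the functors $F_{HK}$ are $\K$-linear. Thus $(\cC_H,F_{HK},\theta_{HKL})$ is a $\Cat$-inverse system over the directed set $\dirI_G$ all of whose categories are Krull--Schmidt, and Theorem~\ref{thrm:Cks is Krull-Schmidt} (equivalently Corollary~\ref{cor:properties of Cks}) immediately yields that the firm source $\Cks$ is a Krull--Schmidt category.
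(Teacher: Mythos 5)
Your proposal is correct and takes essentially the same route as the paper. The paper's own proof is terse: it observes that~(\ref{eq:commutativity in inverse system}) ``follows since it is easy to see\dots using~(\ref{eq:connecting formula for subgraphs})'' and then cites Corollary~\ref{cor:properties of Cks} for the Krull--Schmidt claim. You supply considerably more detail for the coherence check — reducing via faithfulness of $\iota_H$ and isolating the genuine content as the associativity coherence of base change along $\La_M\to\La_L\to\La_K\to\La_H$ — which is a sensible way to make the paper's ``easy to see'' concrete, and your treatment of the Krull--Schmidt claim (each $\cC_H\equivalent\proj\La_H$ with $\La_H$ finite-dimensional, then Corollary~\ref{cor:properties of Cks}) matches the paper exactly.

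One small slip to correct: ``additive, hence $\K$-linear'' is not a valid implication in general. The functors $F_{HK}$ are $\K$-linear because they are composites of $\K$-linear functors ($q_H$, $\iota_K$, and the extension-of-scalars functor $\prescript{K}{H}{\pi}^{\ast}$), or one can appeal to the paper's blanket convention that all functors between $\K$-linear categories are taken to be $\K$-linear; either way the conclusion is right, but not for the reason you state.
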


\begin{proof}
That the triple is a $\Cat$-inverse system over $\dirI_G$ follows since it is easy to see that (\ref{eq:commutativity in inverse system}) holds using (\ref{eq:connecting formula for subgraphs}). That $\Cks$ is a Krull--Schmidt category follows by Corollary \ref{cor:properties of Cks}(c) since $\cC_H$ is a Krull--Schmidt category for each $H$ and the functors $F_{HK}$ are additive.
\end{proof}

For the rest of this section, we fix a gluing system $(\La_{v},P_{e},I_{e})_{v\in V_G,e\in E_G}$ on a directed tree $G$. Our next aim is to study the firm source $(\Cks,\Pks_H,\Tks_{HK})$\index[symbols]{(C, PhiH, ThetaHK)@$(\Cks,\Pks_H,\Tks_{HK})$} of the induced $\Cat$-inverse system $(\cC_H, F_{HK}, \theta_{HKL})$ over $\dirI_G$. Before we proceed, notice that if $H,K\in \dirI_G$ with $H\subseteq K$, then $Q_H$ is a full subquiver of $Q_K$ by (\ref{eq:quiver of gluing subgraph}), hence $\K Q_H$ is a subspace of $\K Q_K$ and $\cR_H\subseteq \cR_K$ by (\ref{eq:ideal of gluing subgraph}). We set $\Qks\coloneqq\bigcup_{H\in\dirI_G}Q_H$\index[symbols]{KQ R@$\K\Qks/\Rks$}. By the definition of a gluing and gluing system it is clear that every vertex in $\Qks$ is the source and target of at most finitely many arrows and so $\Qks$ is a quiver. Notice in particular that if $i\in \Qks$, then $i\in \left(Q_H\right)_0$ for some $H\in \dirI_G$ and by (\ref{eq:quiver of gluing subgraph}) it follows that $i\in \left(Q_v\right)_0$ for some $v\in V_H$. For a vertex $i\in \Qks$ we set $V(i)\coloneqq \{v\in V_G \mid i\in (Q_v)_0\}$\index[symbols]{V(i)@$V(i)$}.

We begin with a technical lemma which makes use of condition (iii) in Definition \ref{def:gluing system}.

\begin{lemma}\label{lem:finitely many vertices} Let $i\in \Qks$. Then $V(i)$ is a finite set and $\langle V(i)\rangle$ is a connected graph.
\end{lemma}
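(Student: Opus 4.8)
The plan is to analyze which vertices $v \in V_G$ can have $i \in (Q_v)_0$, using the structure of the quivers $Q_v$ and how vertices are identified under gluing. By the description in \eqref{eq:quiver of gluing subgraph}, a vertex of $Q_H$ is an equivalence class of vertices from the disjoint union $\coprod_{v\in V_H}(Q_v)_0$ under the relation $\sim^0_H$. The relation $\sim^0_e$ for an arrow $e: u \to v$ identifies $a \in \supp(P_e) \subseteq (Q_v)_0$ with $b \in \supp(I_e) \subseteq (Q_u)_0$ when the heights add up correctly; crucially, both $\supp(P_e)$ and $\supp(I_e)$ are \emph{finite} sets, since each abutment has finite height and is supported on a linearly oriented $\overrightarrow{A}$-type subquiver. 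So first I would fix $i \in \Qks$ and observe that $i \in (Q_H)_0$ for some $H \in \dirI_G$, and hence $i \in (Q_v)_0$ for at least one $v \in V_H$; the goal is to bound the set $V(i)$ of all such $v$ across the \emph{whole} tree $G$.

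The key step is to show that $V(i)$ is connected and finite. For connectedness: if $v, w \in V(i)$, then inside any finite connected full subgraph $H$ containing both $v$ and $w$, the vertex $i$ corresponds to a single equivalence class in $(Q_H)_0$; this class must be a "chain" of identifications along a walk in $H$ from $v$ to $w$, and each edge $e$ in that walk must actually perform an identification at $i$ (i.e. $i$ corresponds to a vertex of $\supp(P_e)$ on one side and $\supp(I_e)$ on the other). Consequently every vertex $x$ on the walk from $v$ to $w$ in $G$ (which is unique since $G$ is a tree) also satisfies $i \in (Q_x)_0$, so $x \in V(i)$; this proves $\langle V(i) \rangle$ is connected. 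For finiteness: I would argue that if $V(i)$ were infinite, then since $\langle V(i)\rangle$ is a connected subtree of the locally finite tree $G$, it would contain an infinite ray $v_0 \to v_1 \to v_2 \to \cdots$ or $\cdots \to v_{-1} \to v_0$ (by König's lemma type reasoning on the locally finite tree). Along such a ray, each edge $e_k$ between $v_k$ and $v_{k+1}$ must identify the vertex $i$; I would show this forces each of these gluings to be trivial, or more precisely forces the abutments to "eat up" $i$ in a way that, combined with the independence conditions (i) and the height-matching condition (ii), cannot persist along an infinite path unless all the gluings along it are trivial — contradicting condition (iii) of Definition \ref{def:gluing system}.

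Let me make the finiteness argument more precise as I envision it. Along an infinite ray, suppose each edge $e_k$ identifies $i$. The vertex $i$ lies in $\supp(P_{e_k})$ as a module over $\La_{v_{k+1}}$ and in $\supp(I_{e_k})$ as a module over $\La_{v_k}$. The independence condition on $I_{s^{-1}(v_k)}$ (or $P_{t^{-1}(v_k)}$) means that at each vertex $v_k$, the supports of the relevant abutments through different incident edges are disjoint. If $i$ is identified through both the edge into $v_k$ and the edge out of $v_k$, then $i$ must lie in the support of an abutment of $\La_{v_k}$ that is, on the one side, a left abutment $P_{e_{k-1}}$ and on the other a right abutment $I_{e_k}$; tracking the height (position along the linear $\overrightarrow{A}_{h}$ part), $i$ sits at some position in a linearly oriented quiver $\overrightarrow{A}_{h_{e_{k-1}}}$ at $v_k$ and also at some position in $\overrightarrow{A}_{h_{e_k}}$. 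I expect that compatibility of these identifications along the whole ray, together with the fact that abutments overlapping in this way force $Q'_{v_k} = \varnothing$ (so $\La_{v_k} = \K\overrightarrow{A}_{h}$) except possibly at finitely many vertices, yields an infinite string of trivial gluings, contradicting (iii). The main obstacle I anticipate is precisely this last bookkeeping: carefully matching up the height data and the support conditions to conclude that persistence of the identification of $i$ along an infinite ray forces triviality of (all but finitely many of) the gluings. Once that contradiction is reached, $V(i)$ must be finite, and since it is connected and $i$ lies in every $(Q_v)_0$ for $v$ on the walk between any two of its elements, $\langle V(i)\rangle$ is the required finite connected subgraph.
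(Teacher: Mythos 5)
Your connectedness argument is sound and matches the paper's approach in spirit: both observe that the identification of $i$ propagates along the unique walk between two members of $V(i)$, so every intermediate vertex also lies in $V(i)$. (The paper makes this precise via a small induction claim about sequences of vertices, which is a bit more careful than your "chain of identifications" phrasing, but the idea is the same.) Your remark that a configuration $v_{m-1}\to v_m\leftarrow v_{m+1}$ cannot occur along the relevant walk because of the independence condition (Definition \ref{def:gluing system}(i)) is also the right observation and matches the paper's reduction to a consistently oriented ray.

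The finiteness argument, however, has a real gap at exactly the spot you flag as "the main obstacle". You correctly observe that $i\in\supp(P_{e_{k-1}})\cap\supp(I_{e_k})$ for all $k\geq 1$ forces $Q_{v_k}$ to be a linearly oriented quiver $\overrightarrow{A}_{h_k}$ (a left abutment lives at the sink end, a right abutment at the source end, and if their supports overlap there is no room for anything else). But the leap from there to "forces $\La_{v_k}=\K\overrightarrow{A}_{h}$, hence trivial gluings" is wrong on two counts. First, linearity of $Q_{v_k}$ does not kill relations: the paper itself only concludes $\La_{v_k}\isom\K\overrightarrow{A}_{h_k}/\cR_k$. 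Second, and more importantly, even with $\cR_k=0$ the gluing at $e_{k-1}$ is trivial only when the abutment height $h_{e_{k-1}}$ equals $h_k$ (so the \emph{whole} of $\La_{v_k}$ is the abutment); a proper abutment of a linear quiver gives a nontrivial gluing. So "$Q_{v_k}$ linear" does not by itself produce an infinite string of trivial gluings, and condition (iii) of Definition \ref{def:gluing system} is not yet contradicted. The paper bridges this by a monotone counting argument: it sets $x_k$ to be the number of vertices $u$ of $Q_{v_k}$ connected by a path to $i$ (equivalently, the position of $i$ along the $\overrightarrow{A}_{h_k}$-chain), shows that $x_k$ is a monotone sequence of positive integers whose step is flat precisely when the corresponding gluing is trivial, and concludes that monotonicity plus boundedness forces infinitely many trivial gluings — which is what actually contradicts (iii). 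This is the idea your proposal is missing; without a quantitative invariant of this kind the finiteness claim does not follow.
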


\begin{proof}
Let us first show that $\langle V(i)\rangle$ is connected. We first prove a more general claim. Let $v_0,\dots,v_{k+1}$ be a sequence of vertices in $G$ connected by arrows $e_0,\dots,e_{k}$ as in $v_0\xleftrightarrow{e_0}v_1\xleftrightarrow{e_1}v_2\xleftrightarrow{e_2}\cdots \xleftrightarrow{e_{k-1}}v_k\xleftrightarrow{e_k}v_{k+1}$ where $\xleftrightarrow{\phantom{e_0}}$ denotes an arrow in either direction. Then we claim that if $i\in \left(Q_{v_0}\right)$ and $i\in \left(Q_{v_{k+1}}\right)_0$, it follows that $i\in \left(Q_{v_k}\right)_0$. To see this, notice that $\La_{\langle v_0,\dots,v_k,v_{k+1}\rangle} = \La_{\langle v_0,\dots,v_{k}\rangle} \glue \La_{v_{k+1}}$ or $\La_{\langle v_0,\dots,v_k,v_{k+1}\rangle} =  \La_{v_{k+1}}\glue \La_{\langle v_0,\dots,v_{k}\rangle}$, depending on the orientation of $e_k$. Since $i\in \left(Q_{v_0}\right)_0$, it follows that $i\in\left(Q_{\langle v_0,\dots,v_k\rangle}\right)_0$. Since the gluing is done over an abutment of $\La_{v_k}$ and $i\in \left(Q_{v_{k+1}}\right)_0$, it follows that $i\in\left(Q_{v_k}\right)_0$ as claimed.

Now, let $u,v\in V(i)$ and let $H\in \dirI_G$ be such that $u,v\in V_H$. Then there exists a walk $u\xleftrightarrow{e_0}w_1\xleftrightarrow{e_1}\dots\xleftrightarrow{e_{k-1}} w_k\xleftrightarrow{x_k}v$ in $H$. Since $i\in \left(Q_u\right)_0$ and $i\in \left(Q_v\right)_0$, it follows by our claim that $i\in \left(Q_{w_k}\right)_0$. In particular $w_k\in V(i)$. Continuing inductively, we have $w_1,\dots,w_k\in V(i)$ and so there is a walk between $u$ and $v$ in $\langle V(i)\rangle$. Since $u$ and $v$ were arbitrary, it follows that $\langle V(i)\rangle$ is connected.

Let us now show that $V(i)$ is finite. Since $\langle V(i) \rangle$ is connected, and since $G$ is locally finite, it is enough to show that there is no infinite sequence of vertices $\{v_k\}_{k\geq 0}$ in $G$ connected by arrows $\{e_k\}_{k\geq 0}$  as in 
\begin{equation}\label{eq:infinite chain cannot exist}
    v_0\xleftrightarrow{e_0}v_1\xleftrightarrow{e_1}v_2\xleftrightarrow{e_2}\cdots \xleftrightarrow{e_{k-1}}v_k\xleftrightarrow{e_k}v_{k+1}\xleftrightarrow{e_{k+1}}\cdots
\end{equation}
and such that $i\in \left(Q_{v_k}\right)_0$ for every $k\geq 0$. If there is a subgraph $v_{m-1}\rightarrow v_m \leftarrow v_{m+1}$ in (\ref{eq:infinite chain cannot exist}), then $\left(Q_{v_{m-1}}\right)\cap \left(Q_{v_{m+1}}\right)=\varnothing$ by Definition \ref{def:gluing system}(i) and hence we may assume that every arrow in (\ref{eq:infinite chain cannot exist}) is oriented towards the right. 

Since $i\in \left(Q_{v_{k-1}}\right)_0\cap \left(Q_{v_{k}}\right)_0$ we get $i\in\supp(P_{e_{k-1}})$ and $i\in\supp(I_{e_{k}})$ for all $k\geq 1$. Since $P_{e_{k-1}}$ is a left abutment of $\La_{v_k}$ and $I_{e_{k}}$ is a right abutment of $\La_{v_k}$ it follows that for every $k\geq 1$, the algebra $\La_{v_k}$ is isomorphic to $\K \overrightarrow{A}_{h_k}/\cR_k$ for some $h_k\geq 1$. Define
\[x_k\coloneqq \abs{\{ u\in \overrightarrow{A}_{h_k} \mid \text{ there exists a path from $u$ to $i$ in $\K\overrightarrow{A}_{h_k}$}\}}.\]
Clearly $x_k\geq 1$ for all $k\geq 1$. By Definition \ref{def:gluing quivers}, since $i\in \left(Q_{v_{k-1}}\right)_0\cap \left(Q_{v_{k}}\right)_0$, we have that $x_k\leq x_{k-1}$ with equality if and only if the gluing is trivial. Since $x_k\geq 1$, this implies that infinitely many of the gluings in (\ref{eq:infinite chain cannot exist}) are trivial, which contradicts Definition \ref{def:gluing system}(iii).
\end{proof}

Another technical lemma in a similar spirit is the following.

\begin{lemma}\label{lem:technical connecting map is bijective}
Let $H,K\in \dirI_G$ with $H\subseteq K$. Let $i\in \left(Q_{H}\right)_0$ and $j\in \left(Q_{K}\right)_0$. Let $F_{HK}$ be the induced map \[F_{HK}:\cC_K(\epsilon_i\La_K,\epsilon_j\La_K)\to \cC_H(F_{HK}\left(\epsilon_i\La_K\right),F_{HK}\left(\epsilon_j\La_K\right)).\]
\begin{enumerate}[label=(\alph*)]
    \item If $j\not\in (Q_H)_0$, then $\cC_H(F_{HK}\left(\epsilon_i\La_K\right),F_{HK}\left(\epsilon_j\La_K\right))=0$.
    \item If $j\in (Q_H)_0$, then $F_{HK}$ is bijective.
\end{enumerate}
In particular, the induced map $F_{HK}$ is always surjective.
\end{lemma}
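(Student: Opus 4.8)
The plan is to unwind the definition $F_{HK} = q_H \comp \prescript{K}{H}{\pi}^{\ast} \comp \iota_K$ and reduce everything to a statement about $\Hom$-spaces between projective $\La_K$- and $\La_H$-modules, which in turn can be read off from the idempotent-truncated algebras via the standard identification $\Hom_{\La}(\epsilon_i\La, \epsilon_j\La) \isom \epsilon_j\La\epsilon_i$. Concretely, using Remark \ref{rem:gluing over subgraph remarks}(c) we have $\prescript{K}{H}{\pi}^{\ast}(\epsilon_i\La_K) \isom \epsilon_i\La_H$ when $i \in (Q_H)_0$ and $\prescript{K}{H}{\pi}^{\ast}(\epsilon_i\La_K) = 0$ otherwise; this immediately settles part (a), since if $j\notin(Q_H)_0$ then $F_{HK}(\epsilon_j\La_K) \isom q_H(0) = 0$, so the target $\Hom$-space vanishes and both sides of the claim about surjectivity are trivially $0$ (hence the ``in particular'' holds in this case too).

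For part (b), both $i,j \in (Q_H)_0$. The plan is to identify $F_{HK}$ under the natural isomorphisms $\delta_{HK}^i : F_{HK}(\epsilon_i\La_K) \to \epsilon_i\La_H$ and $\delta_{HK}^j$ with the map $\prescript{K}{H}{\pi}^{\ast}$ acting on $\Hom$-spaces, and then with the map on truncated algebras. Explicitly, under $\Hom_{\La_K}(\epsilon_i\La_K, \epsilon_j\La_K) \isom \epsilon_j\La_K\epsilon_i$ and $\Hom_{\La_H}(\epsilon_i\La_H, \epsilon_j\La_H)\isom \epsilon_j\La_H\epsilon_i$, the induced map becomes the composite $\epsilon_j\La_K\epsilon_i \to \epsilon_j\La_H\epsilon_i$ induced by the algebra epimorphism $\prescript{K}{H}{\pi}:\La_K\to \La_H = \La_K/\langle 1-\epsilon_H\rangle$ where $\epsilon_H = \sum_{i\in(Q_H)_0}\epsilon_i$. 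Surjectivity is then clear since $\prescript{K}{H}{\pi}$ is surjective. For injectivity I would argue that $\epsilon_j\langle 1-\epsilon_H\rangle\epsilon_i = 0$ when $i,j\in(Q_H)_0$: an element of $\epsilon_j\langle 1-\epsilon_H\rangle\epsilon_i$ is a $\K$-linear combination of paths in $Q_K$ from $i$ to $j$ each passing through at least one vertex outside $Q_H$. Here is where Remark \ref{rem:gluing over subgraph remarks}(d) enters: since $\epsilon_j\La_K\epsilon_i \neq 0$ forces the existence of a vertex $v\in V_K$ with $i,j\in(Q_v)_0$ and $\epsilon_j\La_K\epsilon_i \isom \epsilon_j\La_v\epsilon_i$ — and every such path therefore lies inside the full subquiver $Q_v$ — I must show $Q_v$ is contained in $Q_H$, or at least that $i,j$ lie in the same $Q_v$ with $v\in V_H$. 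This is not automatic, so the genuine content is to argue that the support of any nonzero morphism $\epsilon_i\La_K\to\epsilon_j\La_K$ with $i,j\in(Q_H)_0$ stays within $Q_H$; for this I would invoke the structure of the quiver $Q_K$ as an iterated gluing (formula (\ref{eq:quiver of gluing subgraph})) together with Lemma \ref{lem:finitely many vertices}: the relevant paths lie in $\langle V(i)\cup V(j)\rangle$-type subgraphs, and connectedness of $H$ plus the fact that $H$ is a \emph{full} subgraph forces these intermediate vertices to belong to $V_H$.

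The main obstacle I anticipate is precisely this last point — proving that a nonzero path in $Q_K$ between two vertices of $Q_H$ cannot leave $Q_H$. The cleanest route is probably: by Remark \ref{rem:gluing over subgraph remarks}(d) applied in $\La_K$, such a path lies in some $Q_v$ with $v\in V_K$ and $i,j\in(Q_v)_0$, so $v\in V(i)\cap V(j)$; by Lemma \ref{lem:finitely many vertices} $\langle V(i)\rangle$ is connected, and since $i\in (Q_H)_0$ there is a vertex $v_H\in V_H\cap V(i)$; a walk in $H$ from $v_H$ to any $w\in V_H$ with $j\in(Q_w)_0$ together with the ``sandwich'' claim from the proof of Lemma \ref{lem:finitely many vertices} (if $i$ appears at both ends of a walk it appears throughout) pins $v$ down to lie in $V_H$, whence $Q_v$ is a subquiver of $Q_H$ and the path is inside $Q_H$. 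Therefore $\epsilon_j\langle 1-\epsilon_H\rangle\epsilon_i = 0$, the map on truncated algebras is injective, and combining (a) and (b) gives surjectivity of $F_{HK}$ in all cases. I would keep the write-up short by isolating the ``paths stay in $Q_H$'' claim as the one substantive step and treating the reductions through $\delta_{HK}^i$, $\iota_K$, $q_H$ and the $\Hom$–$\epsilon\La\epsilon$ identification as routine bookkeeping.
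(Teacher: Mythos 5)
Your reduction through $\iota_K$, $q_H$, and the $\Hom$--$\epsilon\La\epsilon$ identification, and your treatment of part (a), match the paper. You also correctly isolate the one substantive step: showing that a nonzero path from $j$ to $i$ in $Q_K$, with $i,j\in(Q_H)_0$, cannot leave $Q_H$. This is where your proposal has a genuine gap.

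You appeal to Remark \ref{rem:gluing over subgraph remarks}(d) (applied in $\La_K$) to produce $v\in V_K$ with $i,j\in(Q_v)_0$, and then argue via the ``sandwich'' observation that this $v$ is forced into $V_H$. That step is false: the $v$ supplied by the remark is not unique and need not lie in $V_H$. Already for $K=\langle a,b\rangle$ with a single edge $e\colon a\to b$, $H=\langle a\rangle$, and $i,j$ both in the overlap $\supp(P_e)=\supp(I_e)$, the conclusion of the remark is satisfied by $v=b\notin V_H$ just as well as by $v=a$, and $Q_b$ is not a subquiver of $Q_H=Q_a$. The sandwich/median reasoning does yield some vertex of $V_H\cap V(i)\cap V(j)$, but it is a different vertex $m$, and the remark does not tell you that all paths $j\leadsto i$ lie in $Q_m$; establishing that is essentially the whole problem again.

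The paper proves the claim by an orientation argument, not by pinning down a single $Q_v$. If a path $p\colon j\leadsto i$ passes through some $k\notin(Q_H)_0$, take $v_j,v_i\in V_H$ and $v_k\in V_K\setminus V_H$ with $j\in(Q_{v_j})_0$, $i\in(Q_{v_i})_0$, $k\in(Q_{v_k})_0$. A directed path in $Q_K$ that crosses a gluing edge $e\colon u\to v$ of $K$ necessarily goes from the $\La_v$-side to the $\La_u$-side, i.e.\ \emph{against} the edge $e$. Hence the existence of $j\leadsto k$ forces the unique tree-walk from $v_j$ to $v_k$ to be oriented $v_k\to\cdots\to v_j$, the existence of $k\leadsto i$ forces the walk from $v_k$ to $v_i$ to be oriented $v_i\to\cdots\to v_k$, and the walk between $v_i$ and $v_j$ lies in $H$; in a directed tree these three constraints are incompatible unless $v_k$ lies on the $v_i$--$v_j$ walk, which would put $v_k\in V_H$, a contradiction. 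This orientation argument is what should replace your sandwich step; the rest of your write-up (the reductions, part~(a), and the equality $\epsilon_j\cR_H\epsilon_i=\epsilon_j\cR_K\epsilon_i$ via equation (\ref{eq:ideal of gluing subgraph})) is on the right track.
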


\begin{proof}
Since $F_{HK}=q_H\comp\prescript{K}{H}{\pi}^{\ast}\comp\iota_K$ and $q_H$ and $\iota_K$ are fully faithful functors, it is enough to study the map 
\begin{equation}\label{eq:prescript map is surjective}
\prescript{K}{H}{\pi}^{\ast}:\cC_K\left(\epsilon_i\La_K,\epsilon_j\La_K\right)\to \cC_H\left(\epsilon_i\La_K\otimes_{\La_K}\La_H,\epsilon_j\La_K\otimes_{\La_K}\La_H\right).
\end{equation}
\begin{enumerate}[label=(\alph*)]
    \item If $j\not\in \left(Q_H\right)_0$, then $F_{HK}\left(\epsilon_j\La_K\right)=\epsilon_j\La_K\otimes_{\La_K}\La_H\isom \epsilon_j\La_H=0$.
    
    \item Assume that $j\in\left(Q_H\right)_0$. Since $H\subseteq K$, we have that $Q_H\subseteq Q_K$. We first claim that every path $p$ in $Q_K$ from $j$ to $i$ lies in $Q_H$. Assume to a contradiction that there exists a path $p$ in $Q_K$ from $j$ to $i$ such that $p$ does not lie in $Q_H$. Since $Q_H$ is a full subquiver of $Q_K$, there exists a vertex $k\in\left(Q_K\right)_0$ and a path $j\leadsto k \leadsto i$ in $Q_K$ such that $k\not\in\left(Q_H\right)_0$. Let $v_j\in V_H$ be a vertex such that $j\in (Q_{v_j})_0$ and $v_k\in V_K$ be such that $k\in (Q_{v_k})_0$. Since $k\not\in (Q_H)_0$, we have that $v_k\not\in V_H$. Since $K$ is connected, there exists a walk $w$ in $K$ from $v_j$ to $v_k$. Since $K$ is a directed tree, this walk $w$ is unique. Moreover, since there exists a path $j\leadsto k$ in $K$ the walk $w$ is oriented as $v_k\rightarrow \dots\rightarrow v_j$ by the definition of gluing. Similarly, if $v_i\in V_H$ is a vertex such that $i\in(Q_{v_i})_0$, there exists a walk $w'$ in $K$ of the form $v_i\rightarrow \dots \rightarrow v_j$. Since $H$ is connected, there exists a (potentially empty) walk $w''$ between $v_i$ and $v_j$. But then by concatenating the walks $w$, $w'$ and $w''$ it follows that $K$ is not a directed tree, which is a contradiction.

    By the above claim it follows that we have $\epsilon_j\K Q_K\epsilon_i=\epsilon_j\K Q_H\epsilon_i$. Using (\ref{eq:ideal of gluing subgraph}) one can also check that $\epsilon_j\cR_H\epsilon_i=\epsilon_j\cR_K\epsilon_i$. Next, employing the identifications
    \begin{align*}
        \cC_K\left(\epsilon_i\La_K,\epsilon_j\La_K\right)&\isom \epsilon_j\La_K\epsilon_i,\text{ and }\\ \cC_H\left(\epsilon_i\La_K\otimes_{\La_K}\La_H,\epsilon_j\La_K\otimes_{\La_K}\La_H\right)&\isom \cC_H\left(\epsilon_i\La_H,\epsilon_j\La_H\right)\isom \epsilon_j\La_H\epsilon_i,
    \end{align*}
    we find that the map $\prescript{K}{H}{\pi}^{\ast}$ is given by mapping a path in $Q_K$ from $j$ to $i$ to the corresponding path in $Q_H$. Since 
    \[\epsilon_j\La_K\epsilon_i = \epsilon_j(\K Q_K/\cR_K)\epsilon_i = \epsilon_j (\K Q_H /\cR_H) \epsilon_i = \epsilon_j \La_H\epsilon_i,\]
    the result follows.\qedhere
\end{enumerate}
\end{proof}

Now we are ready to describe the indecomposable objects of $\Cks$ as well as the morphisms between them. 

\begin{proposition}\label{prop:indecomposables in firm source}
\begin{enumerate}[label=(\alph*)]
    \item Let $i\in \Qks_0$. Then the pair $P(i)=\left(P(i)_H,p_{HK}\right)$\index[symbols]{(P(i), pHK)@$\left(P(i)_H,p_{HK}\right)$} where
    \begin{align*}
        & P(i)_H = {\begin{cases} \epsilon_i\La_H, &\mbox{if $i\in \left(Q_H\right)_0$,} \\ 0, &\mbox{otherwise,}\end{cases}} 
        && p_{HK} = {\begin{cases} \delta_{HK}^i, &\mbox{if $i\in \left(Q_H\right)_0$,} \\ 0, &\mbox{otherwise,}\end{cases}} 
    \end{align*}
    defines an indecomposable object in $\Cks$. 
    
    \item We have $P(i)\isom P(j)$ if and only if $i=j$.
    
    \item If $x\in\Cks$ is indecomposable, then $x\isom P(i)$ for some $i\in\Qks_0$.
    
    \item Let $i,j\in \Qks_0$. Then the induced map
    \[\Pks_H:\Cks(P(i),P(j)) \to \cC_H\left(\epsilon_i\La_H,\epsilon_j\La_H\right)\]
    is bijective for every $H$ such that $i,j\in \left(Q_H\right)_0$.
    
    \item $\Cks$ is locally bounded.
    
    \item $\m\Cks$ is locally bounded.
\end{enumerate}
\end{proposition}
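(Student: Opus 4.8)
The plan is to establish the six statements more or less in sequence, since each uses the earlier ones; the work for (a)--(e) is essentially to assemble Lemma \ref{lem:finitely many vertices}, Lemma \ref{lem:technical connecting map is bijective} and Lemma \ref{lem:connecting functors eventually bijective implies limit functors eventually bijective} with the explicit description of $\ind(\cC_H)$, while (f) needs a genuinely new input. For (a), I would first verify that $P(i)=(P(i)_H,p_{HK})$ is a legitimate object of the concrete inverse limit: the identity (\ref{eq:commutativity of objects in inverse limit category}) between $p_{HK}$, $p_{HL}$, $F_{HK}(p_{KL})$ and $(\theta_{HKL})_{P(i)_L}$ unwinds from the definitions of the canonical isomorphisms $\delta^i_{\bullet\bullet}$ and of $\theta_{HKL}$ (and all objects are $0$ when $i\notin(Q_H)_0$), and each $p_{HK}$ is an isomorphism for the same reason. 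To get $P(i)\in\Cks$ I produce the threshold: by Remark \ref{rem:gluing over subgraph remarks}(d), $\epsilon_j\La_H\epsilon_i\neq 0$ forces $i,j\in(Q_v)_0$ for some $v\in V(i)$, so using finiteness of $V(i)$ from Lemma \ref{lem:finitely many vertices} and $\dim_\K\La_v<\infty$, only finitely many $j$ can occur here, and symmetrically with $i,j$ swapped; I take $t$ to be the full subgraph of $G$ spanned by $V(i)$ together with all $V(j)$ for these finitely many $j$, which is a finite connected vertex set since each such $V(j)$ meets $V(i)$ and each is connected by Lemma \ref{lem:finitely many vertices}. For $H\supseteq t$, Lemma \ref{lem:technical connecting map is bijective}(b) makes $F_{tH}$ bijective on $\cC_H(a,\epsilon_i\La_H)$ and on $\cC_H(\epsilon_i\La_H,\epsilon_j\La_H)$ for $j\in(Q_t)_0$, while for $j\notin(Q_t)_0$ both $\cC_H(\epsilon_i\La_H,\epsilon_j\La_H)\isom\epsilon_j\La_H\epsilon_i$ and its $\cC_t$-counterpart vanish by the choice of $t$; hence $P(i)$ is firm, and it is indecomposable because $\Pks_H(P(i))=\epsilon_i\La_H$ is, by Corollary \ref{cor:properties of Cks}.

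For (b), pick $H\in\dirI_G$ with $i,j\in(Q_H)_0$ (possible since $\dirI_G$ is directed); applying $\Pks_H$ sends an isomorphism $P(i)\isom P(j)$ to $\epsilon_i\La_H\isom\epsilon_j\La_H$, forcing $i=j$. For (d), Lemma \ref{lem:technical connecting map is bijective}(b) shows $F_{HK}\colon\cC_K(\epsilon_i\La_K,\epsilon_j\La_K)\to\cC_H(-,-)$ is bijective for all $K\supseteq H$ once $j\in(Q_H)_0$, so Lemma \ref{lem:connecting functors eventually bijective implies limit functors eventually bijective}(b) gives that $\Pks_H$ identifies $\Cks(P(i),P(j))$ with $\cC_H(\epsilon_i\La_H,\epsilon_j\La_H)$. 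For (c), let $x=(x_H,f_{HK})\in\Cks$ be indecomposable with threshold $t=t(x)$; for $H\supseteq t$ Corollary \ref{cor:properties of Cks} makes $x_H$ indecomposable, so $x_H\isom\epsilon_{i_H}\La_H$ for a unique $i_H\in(Q_H)_0$, and the isomorphism $f_{HK}\colon F_{HK}(x_K)\to x_H$ together with $F_{HK}(\epsilon_{i_K}\La_K)\isom\epsilon_{i_K}\La_H$ or $0$ forces $i_K=i_H$ for $t\subseteq H\subseteq K$; by directedness this is a single $i\in\Qks_0$ with $x_H\isom P(i)_H$ for all $H\supseteq t$. Since both $x$ and $P(i)$ are firm, Lemma \ref{lem:connecting functors eventually bijective implies limit functors eventually bijective}(b) (applied to $(x,P(i))$, $(P(i),x)$ and the endomorphism algebras, using $x_K\isom\epsilon_i\La_K$) makes $\Pks_H$ bijective on the relevant morphism sets for $H$ large; lifting the isomorphism $x_H\to\epsilon_i\La_H$ and its inverse to $\Cks$ and using injectivity of $\Pks_H$ on endomorphisms shows the lifts are mutually inverse, so $x\isom P(i)$. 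Part (e) now follows: $\ind\Cks=\{P(i) : i\in\Qks_0\}$ by (b) and (c), and by (d) $\dim_\K\Cks(P(i),P(j))=\dim_\K\epsilon_j\La_H\epsilon_i$ for $H$ containing $i,j$, which is nonzero only if there is a nonzero path $j\leadsto i$, forcing $j\in(Q_v)_0$ for some $v\in V(i)$ by Remark \ref{rem:gluing over subgraph remarks}(d); as $\bigcup_{v\in V(i)}(Q_v)_0$ is finite with finite-dimensional terms (and symmetrically for $\Cks(P(j),P(i))$), $\Cks$ is locally bounded.

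For (f): being locally bounded, $\Cks$ is a dualizing $\K$-variety, so $\m\Cks$ is abelian and Hom-finite and every object has finite support in $\ind\Cks=\{P(i)\}$. The essential point is that \emph{every indecomposable $M\in\m\Cks$ is supported on a single block $Q_v$}. Indeed, by Lemma \ref{lem:KS modules are preserved}(a) and additivity of $\Pks_{H\ast}$, there is an $H$ and an indecomposable $N\in\m\La_H$ with $M\isom\Pks_{H\ast}(N)$; iterating Proposition \ref{prop:AR glued} over the finitely many edges of $H$ identifies $\ind\m\La_H$ with $\bigcup_{v\in V_H}\prescript{H}{v}{\pi}_{\ast}(\ind\m\La_v)$, so $N\isom\prescript{H}{v}{\pi}_{\ast}(M_v)$ for some $v\in V_H$ and $M_v\in\ind\m\La_v$, whence $\supp M=\{P(i) : i\in\supp M_v\}\subseteq\{P(i) : i\in(Q_v)_0\}$. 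Moreover the functor $\Pks_{H\ast}\comp\prescript{H}{v}{\pi}_{\ast}\colon\m\La_v\to\m\Cks$ is, up to natural isomorphism, independent of $H$ containing $v$, so $M$ is one of its finitely many values ($\La_v$ being representation-finite); thus for each $v$ only finitely many indecomposables of $\m\Cks$ are supported on $Q_v$. Now fix an indecomposable $M$ supported on $Q_{v_0}$ with $\supp M=\{P(i) : i\in S_0\}$: if $M'\in\ind\m\Cks$ satisfies $\Hom_\Cks(M,M')\neq 0$ or $\Hom_\Cks(M',M)\neq 0$, a nonzero morphism between $M$ and $M'$ has nonzero image supported in $S_0$, so $\supp M\cap\supp M'\neq\varnothing$; if $M'$ is supported on $Q_{v'}$ this puts $v'\in V(i_0)$ for some $i_0\in S_0$, and $\bigcup_{i_0\in S_0}V(i_0)$ is finite by Lemma \ref{lem:finitely many vertices}. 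Hence only finitely many $M'$ occur, each with a finite-dimensional Hom-space, and $\m\Cks$ is locally bounded.

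The bookkeeping in (a)--(e) is routine once the right thresholds are fixed, the only slightly delicate point being the isomorphism-lifting in (c). The main obstacle is (f), and precisely the single-block-support claim: this is where representation-directedness of the $\La_v$ is indispensable (it is false for a general locally bounded $\Cks$, e.g.\ for an infinite radical-square-zero category), and it rests on the amalgamation description of Auslander--Reiten quivers under gluing (Proposition \ref{prop:AR glued}) combined with the stabilization Lemma \ref{lem:KS modules are preserved}.
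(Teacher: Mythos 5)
Parts (a)--(e) follow the paper's route, assembling Lemma \ref{lem:finitely many vertices}, Lemma \ref{lem:technical connecting map is bijective}, Lemma \ref{lem:connecting functors eventually bijective implies limit functors eventually bijective}, Corollary \ref{cor:properties of Cks} and Remark \ref{rem:gluing over subgraph remarks}(d); the one cosmetic difference is that in (a) your threshold is larger than needed---the paper simply takes $T=\langle V(i)\rangle$, since Remark \ref{rem:gluing over subgraph remarks}(d) already forces any $j$ with $\epsilon_j\La_H\epsilon_i\neq 0$ or $\epsilon_i\La_H\epsilon_j\neq 0$ into $(Q_T)_0$, so enlarging $t$ by the $V(j)$'s is harmless but unnecessary.

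Part (f) is where you genuinely diverge, and your version is materially more complete. The paper's proof of (f) is a one-sentence appeal to the Yoneda isomorphism $\Hom_{\Cks}\bigl(\Cks(-,P(i)),\Cks(-,P(j))\bigr)\isom\Cks(P(i),P(j))$ together with (e); taken at face value this only shows that $\proj\Cks$ is locally bounded, and one cannot in general pass from local boundedness of $\Cks$ to local boundedness of $\m\Cks$---as your aside observes, $\proj\Lambda$ is locally bounded for any finite-dimensional algebra $\Lambda$, while $\m\Lambda$ fails to be locally bounded whenever $\Lambda$ is representation-infinite (Kronecker being the standard example). Your argument supplies the substance that the paper's wording elides: Lemma \ref{lem:KS modules are preserved}(a) places each indecomposable of $\m\Cks$ in the image of $\Pks_{H\ast}$ for some finite $H$; Proposition \ref{prop:AR glued} iterated over the edges of $H$ pushes it down to a single block $\m\La_v$; representation-finiteness of $\La_v$ then bounds the number of such indecomposables per block; and Lemma \ref{lem:finitely many vertices} bounds the number of blocks whose supports can meet that of a fixed module. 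This makes explicit that representation-directedness of the $\La_v$, and not merely local boundedness of $\Cks$, is the operative hypothesis behind (f), and it is a genuine strengthening of what the paper actually writes down.
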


\begin{proof}
\begin{enumerate}[label=(\alph*)]
    \item Let us show first that $P(i)$ is an indecomposable object in $\Cks$. First let us show that it is an object in the concrete inverse limit $\cC$, that is let us check that $P(i)$ satisfies Definition \ref{def:concrete inverse limit category}(i). Let $H,K\in\dirI_G$ with $H\subseteq K$. If $i\not\in \left(Q_H\right)_0$, then it follows by the definition of $F_{HK}$ that $F_{HK}\left(\epsilon_i\La_K\right)=0$ and so $p_{HK}=0$ is indeed an isomorphism $F_{HK}(P(i)_K)\to P(i)_H$. If $i\in \left(Q_H\right)_0$, then $i\in\left(Q_K\right)_0$ and so $p_{HK}=\delta_{HK}^i:F_{HK}\left(\epsilon_i\La_K\right)\to \epsilon_i\La_H$ is an isomorphism by definition. It remains to show that (\ref{eq:commutativity of objects in inverse limit category}) is satisfied. This follows by a straightforward computation using the fact that both the maps $\delta_{HK}^i$ and the natural isomorphism $\theta_{HKL}$ were constructed by making canonical choices.

    Next let us show that $P(i)\in \Cks$, that is that $P(i)$ is firm. By Lemma \ref{lem:finitely many vertices} it follows that $\langle V(i) \rangle\in\dirI_G$. We set $T\coloneqq \langle V(i)\rangle$. We show that $T\in\dirI_G$ satisfies the condition of Definition \ref{def:firm objects and Cks}. Since $T=\langle V(i) \rangle$, we have that $i\in \left(Q_T\right)_0$. Now let $H\in \dirI_G$ be such that $T\subseteq H$. We need to show that for all $a,b\in\cC_H$, the induced maps
    \[F_{TH}:\cC_H\left(\Phi_H(P(i)),b\right)\to \cC_T\left(F_{TH}\comp \Phi_H(P(i)),F_{TH}(b)\right),\text{ and }\]
    \[F_{TH}:\cC_H\left(a,\Phi_H(P(i))\right)\to \cC_T\left(F_{TH}(a),F_{TH}\comp \Phi_H(P(i))\right)\text{\phantom{, and }}\]
    are bijective. Let us show that the first map is bijective; the other can be similarly shown to be bijective. Without loss of generality, assume that $b$ is indecomposable. Then $b=\epsilon_j\La_H$ for some $j\in \left(Q_H\right)_0$ and $\Phi_H(P(i))=\epsilon_i\La_H$ by definition of $P(i)$. Hence $\cC_H(\Phi_H(P(i)),b)=\cC_H(\epsilon_i\La_H,\epsilon_j\La_H)$. By Lemma \ref{lem:technical connecting map is bijective}, the map $F_{TH}$ is surjective. Moreover, by the same lemma, it is enough to show that if $\cC_H(\epsilon_i\La_H,\epsilon_j\La_H)\isom\epsilon_j\La_H\epsilon_i\neq 0$, then $j\in\left(Q_T\right)_0$. Assume $\epsilon_j\La_H\epsilon_i\neq 0$. Then it follows by Remark \ref{rem:gluing over subgraph remarks}(d) that there exists a vertex $v\in V_H$ such that $i,j\in \left(Q_v\right)_0$. But then $v\in V(i)$ and so $i,j\in \left(Q_T\right)_0$, as required. This shows that $P(i)$ is firm and hence $P(i)\in\Cks$. 
    
    Finally, we need to show that $P(i)$ is indecomposable. But this follows immediately by Corollary \ref{cor:properties of Cks}(c) since for $H\supset T$ we have $\Phi_H(P(i))=\epsilon_i\La_H$ which is indecomposable. 
    
    \item Clearly $i=j$ implies that $P(i)\isom P(j)$. On the other hand, if $P(i)\isom P(j)$, then $\epsilon_i\La_H \isom \epsilon_j\La_H$ for some $H\in\dirI_G$ with $H>T(i)$ and $H>T(j)$. But then it follows that $i=j$ since $\epsilon_i$ respectively $\epsilon_j$ are the idempotents corresponding to the vertices $i$ respectively $j$ of $Q_H$. 

    \item Now, assume that $x=\left(x_H,f_{HK}\right)\in \Cks$ is indecomposable. We show that $x\isom P(i)$ for some $i\in\Qks_0$. Since $x$ is firm, there exists $T=T(x)\in \dirI_G$ as in Definition \ref{def:firm objects and Cks}. Let $T'>T$. By Corollary \ref{cor:properties of Cks}(c) we have that $\Pks_{T'}(x)$ is indecomposable and so $\Pks_{T'}(x)\isom\epsilon_i\La_{T'}$ for some $i\in\left(Q_{T'}\right)_0$. We claim that $x\isom P(i)$. Let $H\in\dirI_G$. By taking $K\in\dirI_G$ large enough, it is not difficult to show that
    \[\Pks_H(x) \isom F_{HK}\comp \Pks_K(x) =F_{HK}(\epsilon_i\La_K)\isom \begin{cases}\epsilon_i\La_H, &\mbox{if $i\in\left(Q_H\right)_0$,} \\ 0, &\mbox{otherwise.} \end{cases}\]
    Similarly we pick $s_{T'}:\epsilon_i\La_{T'}\to\epsilon_i\La_{T'}$ to be any isomorphism and we extend it to an isomorphism $s:x\to P(i)$ which shows that $x\isom P(i)$.
    
    \item Since $P(i)$ is indecomposable, there exists a $T=T(P(i))$ as in Definition \ref{def:firm objects and Cks}. Let $H\in\dirI_G$ be such that $i,j\in \left(Q_H\right)_0$ and let $K\in\dirI_G$ be such that $K\supset H$ and $K\supset T$. Then we have the isomorphism $\Pks_H \isom F_{HK}\comp\Pks_K$ and the induced map
    \[\Pks_K:\Cks(P(i),P(j)) \to \cC_K\left(\epsilon_i\La_K,\epsilon_j\La_K\right)\]
    is bijective by Lemma \ref{lem:connecting functors eventually bijective implies limit functors eventually bijective}(b). Moreover, the map 
    \[F_{HK}:\cC_K\left(\epsilon_i\La_K,\epsilon_j\La_K\right)\to\cC_H(F_{HK}\left(\epsilon_i\La_K\right),F_{HK}\left(\epsilon_j\La_K\right))\isom \cC_H\left(\epsilon_i\La_H,\epsilon_j\La_H\right)\]
    is bijective by Lemma \ref{lem:technical connecting map is bijective}(b). It follows that $\Pks_H$ is bijective, as required.

    \item Since $\{P(i)\}_{i\in \Qks_0}$ is a complete set of representatives of indecomposable objects in $\Cks$, it is enough to show that for $i\in \Qks_0$ there are only finitely many $j\in \Qks_0$ such that $\Cks\left(P(i),P(j)\right)\neq 0$ or $\Cks\left(P(j),P(i)\right)\neq 0$. Assume that $\Cks\left(P(i),P(j)\right)\neq 0$, and let $H\in\dirI_G$ be such that $i,j\in\left(Q_H\right)_0$. Then by (b) we have that
    \[0\neq \Cks\left(P(i),P(j)\right)\isom \cC_H\left(\epsilon_i\La_H,\epsilon_j\La_H\right)=\Hom_{\La_H}\left(\epsilon_i\La_H,\epsilon_j\La_H\right)\isom \epsilon_j\La_H \epsilon_i,\]
    and so by Remark \ref{rem:gluing over subgraph remarks}(d) there exists a $v_j\in V_H$ such that $i,j\in \left(Q_{v_j}\right)_0$ and $\epsilon_j\La_H\epsilon_i\isom \epsilon_j\La_v\epsilon_i$. In particular, we have $v_j\in V(i)$. Hence $\Cks(P(i),P(j))\neq 0$ implies that $j\in \bigcup_{v\in V(i)} \left(Q_v\right)_0$. Since $V(i)$ is a finite set by Lemma \ref{lem:finitely many vertices} and $\left(Q_v\right)_0$ is finite by assumption, it follows that $\{j\in \Qks_0 \mid \Cks(P(i),P(j))\neq 0\}$ is a finite set. Similarly we show that $\{j\in \Qks_0 \mid \Cks(P(j),P(i))\neq 0\}$ is also finite.
    
    \item Since the map 
    \[\Hom_{\Cks}\left(\Cks(-,P(i)),\Cks(-,P(j)\right) \to \Cks\left(P(i),P(j)\right)\] 
    defined by $\eta\mapsto \eta_x(\Id_{P(i)})$ is an isomorphism and since $\Cks$ is locally bounded, it follows that $\m\Cks$ is also locally bounded.\qedhere
\end{enumerate}
\end{proof}

Next we show that we can think of $\Cks$ as the (potentially infinite) quiver with relations obtained by performing the (potentially infinitely many) gluings induced by the edges $E_G$ of $G$. Recall that if $Q$ is a quiver, then the \emph{path category}\index[definitions]{path category of a quiver} $\K Q$ of $Q$ is the $\K$-linear category with $\obj\left(\K Q\right)=Q_0$, where the morphism space $\K Q(i,j)$ for $i,j\in \K Q$ is the $\K$-vector space spanned by all paths from $i$ to $j$ in $Q$, under the assumption that there is a trivial identity path if $i=j$ and where composition is given by concatenation. 

In particular, since $\cR_H\subseteq \cR_K$ for $H\subseteq K$, it is not difficult to see that $\Rks=\bigcup_{H\in \dirI_G} \cR_H$\index[symbols]{KQ R@$\K\Qks/\Rks$} is a two-sided ideal of the path category $\K \Qks$. Hence the $\K$-linear category $\K \Qks / \Rks$ is well-defined.

\begin{corollary}\label{cor:equivalence of quiver and projectives}
There is an equivalence of categories $\proj(\K \Qks / \Rks) \equivalent \Cks$.
\end{corollary}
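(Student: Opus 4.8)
The plan is to exhibit an explicit equivalence between $\proj(\K\Qks/\Rks)$ and $\Cks$. The natural approach is to go through $\proj$-categories on both sides: $\Cks$ is a Krull--Schmidt category by Proposition \ref{lem:the system is an inverse system}, so $\Cks\equivalent \proj(\m\Cks\text{-projectives})$ via the Yoneda functor, but it is cleaner to compare the two categories directly by specifying a functor, checking it is fully faithful and dense, and using Proposition \ref{prop:indecomposables in firm source} to control everything on indecomposables. I would define a $\K$-linear functor $\Phi:\K\Qks\to\Cks$ on objects by $i\mapsto P(i)$, using the description of $P(i)$ from Proposition \ref{prop:indecomposables in firm source}(a). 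On morphisms, a path $p$ from $i$ to $j$ in $\Qks$ lies in $\K Q_H$ for some $H\in\dirI_G$ containing both $i$ and $j$ in $(Q_H)_0$; under the identification $\cC_H(\epsilon_i\La_H,\epsilon_j\La_H)\isom \epsilon_j\La_H\epsilon_i$ the class of $p$ modulo $\cR_H$ is a morphism $\epsilon_i\La_H\to\epsilon_j\La_H$, and by Proposition \ref{prop:indecomposables in firm source}(d) the map $\Pks_H:\Cks(P(i),P(j))\to\cC_H(\epsilon_i\La_H,\epsilon_j\La_H)$ is bijective, so this determines a unique morphism $\Phi(p):P(i)\to P(j)$ in $\Cks$. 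One must check this is independent of the choice of $H$: if $H\subseteq K$ both contain $i,j$ then compatibility follows from the definition of the firm source ($\Tks_{HK\ast}$, $\Pks_H\isom F_{HK}\comp\Pks_K$) together with Lemma \ref{lem:technical connecting map is bijective}(b), which says the connecting maps are bijective on the relevant Hom-spaces; since any two choices are dominated by a common larger graph, well-definedness follows.

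**Key steps.** First I would verify $\Phi$ is a functor: composition of paths goes to composition because, working in a single large enough $H$, the identification $\cC_H(-,-)\isom\epsilon_{(-)}\La_H\epsilon_{(-)}$ turns composition of morphisms in $\cC_H$ into multiplication of paths modulo $\cR_H$, which is exactly concatenation of paths in $\Qks$ modulo $\Rks$. Next, $\Rks\subseteq\ker\Phi$: if $r\in\Rks$ from $i$ to $j$, then $r\in\cR_H\subseteq\cR_K$ for large $H$, hence its class in $\epsilon_j\La_H\epsilon_i$ is zero, hence $\Phi(r)=0$. So $\Phi$ descends to a $\K$-linear functor $\overline\Phi:\K\Qks/\Rks\to\Cks$. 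Then I would show $\overline\Phi$ is fully faithful: for $i,j\in\Qks_0$ we have
\[
(\K\Qks/\Rks)(i,j)=\epsilon_j(\K\Qks/\Rks)\epsilon_i=\epsilon_j(\K Q_H/\cR_H)\epsilon_i=\epsilon_j\La_H\epsilon_i
\]
for any $H$ containing $i,j$ with $\epsilon_j\La_H\epsilon_i\ne 0$ — here one uses that every path from $i$ to $j$ lies in $Q_H$ and that $\epsilon_j\cR_K\epsilon_i=\epsilon_j\cR_H\epsilon_i$, exactly as in the proof of Lemma \ref{lem:technical connecting map is bijective}(b) — and this is isomorphic to $\Cks(P(i),P(j))$ via $\Pks_H$ by Proposition \ref{prop:indecomposables in firm source}(d); unwinding the identifications shows this isomorphism is precisely $\overline\Phi$ on Hom-spaces. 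Since $\K\Qks/\Rks$ is a $\K$-linear category in which every object is a finite direct sum of the $i$'s once we pass to its additive/idempotent completion — more precisely $\proj(\K\Qks/\Rks)\equivalent\add\{\,h_{\K\Qks/\Rks}(i)\mid i\in\Qks_0\,\}$ — and $\overline\Phi$ extends to an additive functor on $\proj(\K\Qks/\Rks)$ which is still fully faithful on the generating objects hence everywhere.

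**Density and conclusion.** Finally I would check $\overline\Phi$ (extended additively to $\proj(\K\Qks/\Rks)$) is dense. By Proposition \ref{prop:indecomposables in firm source}(c), every indecomposable object of $\Cks$ is isomorphic to some $P(i)=\overline\Phi(i)$; since both categories are Krull--Schmidt (the right-hand side by Proposition \ref{lem:the system is an inverse system}, the left-hand side because $\proj$ of a skeletally small $\K$-linear category is always Krull--Schmidt / idempotent complete with semiperfect endomorphism rings), and $\overline\Phi$ is additive, every object of $\Cks$ — being a finite direct sum of $P(i)$'s — lies in the essential image. A fully faithful dense functor is an equivalence, so $\proj(\K\Qks/\Rks)\equivalent\Cks$, as required. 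The main obstacle, and the place that needs genuine care rather than routine bookkeeping, is the well-definedness of $\overline\Phi$ on morphisms and its compatibility with the firm-source structure: one must track the canonical isomorphisms $\delta^i_{HK}$, $\theta_{HKL}$, $\Tks_{HK}$ and verify that the identification $\cC_H(\epsilon_i\La_H,\epsilon_j\La_H)\isom\epsilon_j\La_H\epsilon_i$ is compatible with the connecting functors $F_{HK}$ in the way required, so that choosing different $H$'s gives literally the same morphism in $\Cks$. This is precisely the content (and the proof technique) of Lemma \ref{lem:technical connecting map is bijective} combined with Proposition \ref{prop:indecomposables in firm source}(d), so the pieces are all in place; it is a matter of assembling them without sign or direction errors.
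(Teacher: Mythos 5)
Your proof is correct and rests on exactly the same technical pillars as the paper's argument — Lemma~\ref{lem:technical connecting map is bijective}, Proposition~\ref{prop:indecomposables in firm source}(c) and (d), and Lemma~\ref{lem:finitely many vertices} — but it constructs the equivalence in the opposite direction. The paper defines a functor $F:\m\Cks\to\m(\K\Qks/\Rks)$ by sending a $\Cks$-module $M$ to the representation $i\mapsto M(P(i))$, $\alpha\mapsto M(\Pks_H^{-1}(\alpha))$, argues this is an equivalence of module categories, and then restricts to projectives using $\proj\Cks\equivalent\Cks$. You instead build a $\K$-linear functor $\K\Qks\to\Cks$ directly, $i\mapsto P(i)$ and $p\mapsto\Pks_H^{-1}(\bar p)$, check it kills $\Rks$, and extend additively to $\proj(\K\Qks/\Rks)$; full faithfulness and density then follow from the same propositions. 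Your route is a bit leaner since it never leaves the world of $\K$-linear categories, whereas the paper's is more in line with its module-category-centric viewpoint; both gloss over the identical bookkeeping (well-definedness of $\Pks_H^{-1}(\alpha)$ independent of $H$) in essentially the same way.

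One small caveat: your claim that ``$\proj$ of a skeletally small $\K$-linear category is always Krull--Schmidt\ldots with semiperfect endomorphism rings'' is false in general (take a one-object category with endomorphism ring $\K[x]$). It holds here because you have already proved Hom-finiteness of $\K\Qks/\Rks$ via the isomorphism $(\K\Qks/\Rks)(i,j)\isom\epsilon_j\La_H\epsilon_i$, so endomorphism rings are finite-dimensional $\K$-algebras, hence semiperfect. Just be explicit that this is the hypothesis being used rather than appealing to a general fact about skeletally small $\K$-linear categories.
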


\begin{proof}
Throughout we use the identification $\m(\K \Qks/\Rks)\equivalent \rep(\Qks,\Rks)$. We define a functor $F:\m\Cks\to \m(\K \Qks/\Rks)$. For a $\Cks$-module $M\in\m\Cks$, we define a representation $F(M)\in\rep(\Qks,\Rks)$ as follows. Let $i\in \Qks_0$. We define $(F(M))_i \coloneqq M(P(i))$. Let $\alpha:j\to i$ be an arrow in $\Qks$ and let $H\in\dirI_G$ be such that $i,j\in (Q_H)_0$. Then $\Pks_{H}^{-1}(\alpha)\in \Cks(P(i),P(j))$ is well-defined by Proposition \ref{prop:indecomposables in firm source}(d). Moreover, if $H'\in\dirI_G$ is another graph such that $i,j\in (Q_{H'})_0$, then by taking $K\in\dirI_G$ with $H\subseteq K$ and $H'\subseteq K$ and using the fact that $\Cks$ is a firm source it easily follows that $\Pks_{H'}^{-1}(\alpha)=\Pks_{H}^{-1}(\alpha)$ and so $\Pks^{-1}_H(\alpha)$ is independent of $H$. We define $(F(M))_{\alpha} \coloneqq M(\Pks^{-1}_H(\alpha))$. Next, let $\eta:M\to N$ be a morphism in $\Hom_{\Cks}(M,N)$. We define $(F(\eta))_{i}:(F(M))_i\to (F(N))_i$ by $(F(\eta))_i\coloneqq\eta_{P(i)}$. It is a straightforward verification to see that $F$ is an equivalence of categories that restricts to an equivalence $\proj(\Cks)\equivalent \proj(\K\Qks/\Rks)$. Since we have an equivalence $\proj(\Cks)\equivalent \Cks$, the result follows. 
\end{proof}

Our next aim is to prove that $(\m\Cks,\Pks_{H\ast},\Tks_{HK\ast})$\index[symbols]{(mod C, PhiH, ThetaHK)@$(\m\Cks,\Pks_{H\ast},\Tks_{HK\ast})$} together with the adjunctions $(\Pks_H^{\ast},\Pks_{H\ast})$ and $(\Pks_{H\ast},\Pks_{H}^{!})$ is an admissible target of $\left(\m\cC_H,F_{HK\ast},\theta_{HKL\ast}\right)$\index[symbols]{(mod CH, FHK, thetaHKL)@$\left(\m\cC_H,F_{HK\ast},\theta_{HKL\ast}\right)$}. To show this, we need to first show that the functors $\Pks_H$ are coherent preserving for all $H\in\dirI_G$. For this we need to study the functors $\Pks_{H\ast}$ and $\Pks_{H}^{!}$. That the functor $\Pks^!_H$ preserves finitely presented modules follows from Proposition \ref{prop:injective sent to correct injective}(c). For the functor $\Pks_{H\ast}$ we have to work a bit more. First we have the following two technical lemmas dealing with indecomposable projective modules.

\begin{lemma}\label{lem:properties of restriction of scalars}
Let $i\in \Qks_0$ and assume that $\langle V(i) \rangle \subseteq H \subseteq K$ for some $H,K\in \dirI_G$. Then the following statements hold.
\begin{enumerate}[label=(\alph*)]
    \item If $j\in \left(Q_K\right)_0$, then $\epsilon_i\La_K\epsilon_j = \begin{cases} \epsilon_i\La_H\epsilon_j, &\mbox{if $j\in \left(Q_H\right)_0$,} \\ 0, &\mbox{otherwise.} \end{cases}$
    
    \item $F_{HK\ast}\left(\epsilon_i\La_H\right) = \epsilon_i\La_K$.
    
    \item $\Pks_{H\ast}(\epsilon_i\La_H) = \Cks\left(-,P(i)\right)$.
\end{enumerate}
\end{lemma}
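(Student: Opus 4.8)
The three statements are tightly linked and (a) is the foundation, so the plan is to prove (a) first, then deduce (b) and (c) from it together with earlier results. The key point is the hypothesis $\langle V(i)\rangle \subseteq H$: it says that the graph $T = \langle V(i)\rangle$ of all vertices whose quiver contains $i$ (finite and connected by Lemma \ref{lem:finitely many vertices}) is already contained in $H$. This is exactly the condition under which $i$ has ``stabilized'', so that further gluing cannot add new arrows into or out of $i$.

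For (a), I would argue as follows. Fix $j\in (Q_K)_0$. If $j\notin (Q_H)_0$, I want $\epsilon_i\La_K\epsilon_j = 0$, i.e.\ there is no nonzero path from $j$ to $i$ in $\La_K$. Suppose otherwise; by the argument already used in the proof of Lemma \ref{lem:technical connecting map is bijective}(b), the existence of a path $j\leadsto i$ in $Q_K$ together with $i\in (Q_v)_0$ for some $v\in V_H$ and $j\in(Q_{v'})_0$ for some $v'\in V_K\setminus V_H$ forces, via Remark \ref{rem:gluing over subgraph remarks}(d) applied to $\epsilon_i\La_K\epsilon_j \neq 0$, a vertex $w\in V_K$ with $i,j\in (Q_w)_0$. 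But $i\in(Q_w)_0$ means $w\in V(i)\subseteq V_T\subseteq V_H$, hence $j\in (Q_w)_0\subseteq (Q_H)_0$, a contradiction. If instead $j\in (Q_H)_0$, then exactly as in Lemma \ref{lem:technical connecting map is bijective}(b) one has $\epsilon_j\K Q_K\epsilon_i = \epsilon_j\K Q_H\epsilon_i$ and $\epsilon_j\cR_K\epsilon_i = \epsilon_j\cR_H\epsilon_i$ using the description (\ref{eq:quiver of gluing subgraph}) and (\ref{eq:ideal of gluing subgraph}) of $Q_K$, $\cR_K$; taking the quotient gives $\epsilon_i\La_K\epsilon_j = \epsilon_i\La_H\epsilon_j$. (Here one should be slightly careful about the left/right convention: the statement is phrased with $\epsilon_i\La_K\epsilon_j$, i.e.\ paths from $j$ to $i$, matching the convention in Remark \ref{rem:gluing over subgraph remarks}(d), so the same reasoning applies verbatim.)

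For (b), I would compute $F_{HK\ast}(\epsilon_i\La_H) = \epsilon_i\La_H\circ F_{HK}$ evaluated on the indecomposable objects $\epsilon_j\La_K$ of $\cC_K$. Using that $F_{HK\ast}$ restricted to representable functors is $\Hom_{\La_H}(-, \epsilon_i\La_H)$ precomposed appropriately, or more directly the identification $\cC_H(\epsilon_j\La_K\otimes_{\La_K}\La_H\,,\,\epsilon_i\La_H) \isom \epsilon_i\La_H\epsilon_j$ when $j\in(Q_H)_0$ and $0$ otherwise, part (a) shows this agrees with $\epsilon_i\La_K\epsilon_j \isom \cC_K(\epsilon_j\La_K, \epsilon_i\La_K)$ on objects; naturality in $j$ (i.e.\ compatibility with the maps $F_{HK}$ on morphism spaces, which is Lemma \ref{lem:technical connecting map is bijective}) upgrades this to a natural isomorphism of functors $\cC_K^{\op}\to \M\K$, giving $F_{HK\ast}(\epsilon_i\La_H) = \epsilon_i\La_K$. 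For (c), I would similarly evaluate $\Pks_{H\ast}(\epsilon_i\La_H) = \epsilon_i\La_H\circ \Pks_H$ on $P(j)$ for $j\in\Qks_0$: by Proposition \ref{prop:indecomposables in firm source}(d), $\Pks_H$ identifies $\Cks(P(j),P(i))$ with $\cC_H(\epsilon_j\La_H,\epsilon_i\La_H)\isom \epsilon_i\La_H\epsilon_j$ when $i,j\in(Q_H)_0$; by (a) this is $\epsilon_i\La_{H'}\epsilon_j$ for any $H'\supseteq\langle V(i)\rangle\cup\langle V(j)\rangle$ and vanishes when $j$ lies in no $Q_v$ with $v\in V(i)$, which is precisely the value of $\Cks(-,P(i))$ on $P(j)$ by Proposition \ref{prop:indecomposables in firm source}(d),(e). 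Again naturality in $j$, via compatibility of the $\Pks_H$ with composition, promotes the pointwise bijection to a natural isomorphism $\Pks_{H\ast}(\epsilon_i\La_H)\isom \Cks(-,P(i))$.

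The main obstacle I anticipate is not the object-level identities, which follow quickly from the path-counting argument in (a), but making the naturality statements in (b) and (c) genuinely airtight: one must check that the pointwise isomorphisms $\epsilon_i\La_K\epsilon_j \isom \epsilon_i\La_H\epsilon_j$ from (a) assemble compatibly with the induced maps $F_{HK}$ (resp.\ the structure maps of $\Cks$) as $j$ varies, which amounts to tracing through the canonical choices $\delta^i_{HK}$ and $\theta_{HKL}$ made in Section \ref{subsubsec:gluing systems of representation-directed algebras} and confirming everything is the ``identity on paths'' map under the standard identifications. This is routine but bookkeeping-heavy, and is exactly the kind of verification the paper tends to leave to the reader; I would state it as such after giving the object-level computation in full.
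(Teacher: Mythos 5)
Your proposal is correct and follows essentially the same route as the paper: part (a) is proved by combining Remark \ref{rem:gluing over subgraph remarks}(d) (forcing $j$ into $Q_H$ via the hypothesis $\langle V(i)\rangle\subseteq H$) with the paths-stay-in-$Q_H$ argument from Lemma \ref{lem:technical connecting map is bijective}(b), and parts (b) and (c) are obtained by evaluating the functors in question on indecomposable projectives respectively on $P(j)$ and invoking (a). The only cosmetic divergence is that for (b) the paper cites \cite[Lemma 2.4]{ASS} together with Remark \ref{rem:gluing over subgraph remarks}(c) rather than carrying out the pointwise evaluation directly, and the naturality verifications you flag as potentially heavy are dispatched in the paper by exactly the kind of ``similarly on morphisms'' remark you anticipate; your index swap $\epsilon_j\K Q_K\epsilon_i$ versus $\epsilon_i\K Q_K\epsilon_j$ should be fixed but does not affect the argument since the key claims are symmetric in $i,j$.
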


\begin{proof}
\begin{enumerate}[label=(\alph*)]
    \item Assume that $\epsilon_i\La_K\epsilon_j\neq 0$. Then by Remark \ref{rem:gluing over subgraph remarks}(d) there exists a vertex $v\in V_G$ such that $i,j\in \left(Q_v\right)_0$. In particular, $v\in V(i) \subseteq H$ and so $j\in \left(Q_H\right)_0$. The result follows by Proposition \ref{prop:indecomposables in firm source}(d).
    
    \item Follows immediately by \cite[Lemma 2.4]{ASS} and Remark \ref{rem:gluing over subgraph remarks}(c).
    
    \item Notice that we can consider $\epsilon_i\La_H=M_i$ as a module in $\m\cC_H$ via the equivalence $\m\cC_H \equivalent\m(\proj\La_H) \equivalent\m\La_H$. In particular, we have $M_i\left(\epsilon_j\La_H\right) = \epsilon_i\La_H\epsilon_j$.
    Then let us evaluate both sides of (c) on an indecomposable object $P(j)\in \Cks$. By picking $K$ big enough so that $j\in \left(Q_K\right)_0$ and $H\subseteq K$ we have by Proposition \ref{prop:indecomposables in firm source}(d) that $\Cks(P(j),P(i))\isom \epsilon_i\La_K\epsilon_j$. Then we compute
    \[\Pks_{H\ast}(\epsilon_i\La_H)\left(P(j)\right) = M_i\left(\Pks_H(P(j)\right)= \begin{cases} \epsilon_i\La_H\epsilon_j, &\mbox{if $j\in\left(Q_H\right)_0$,} \\ 0, &\mbox{otherwise,}\end{cases}\overset{(a)}{=}\epsilon_i\La_K\epsilon_j\]
    and we see that both sides of (c) agree on objects. Similarly it is easy to see that they agree on morphisms.\qedhere
\end{enumerate}
\end{proof}

\begin{lemma}\label{lem:proj covers}
Let $H\in \dirI_G$ and $M\in \m\cC_H$. Let $\bigoplus_{i\in I}\left(\epsilon_i\La_H\right)^{s_i}$ be a projective cover of $M$ for some $I\subseteq \left(Q_H\right)_0$. If $K\in\dirI_G$ is such that $H\subseteq K$ and $\langle V(I)\rangle \subseteq K$, then $\bigoplus_{i\in I}\left(\epsilon_i\La_K\right)^{s_i}$ is a projective cover of $F_{HK\ast}(M)$.
\end{lemma}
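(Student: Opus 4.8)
The plan is to reduce everything to a computation of the semisimple top of $F_{HK\ast}(M)$ as a $\La_K$-module and then invoke the standard description of projective covers over a finite-dimensional algebra. First I would make $F_{HK\ast}(M)$ explicit: under the identifications $\m\cC_H\equivalent\m\La_H$ and $\m\cC_K\equivalent\m\La_K$, the functor $F_{HK\ast}$ is restriction of scalars $\prescript{K}{H}{\pi}_{\ast}$ along the algebra epimorphism $\prescript{K}{H}{\pi}\colon\La_K\to\La_H$, which by Remark~\ref{rem:gluing over subgraph remarks}(c) sends a representation $M$ of $Q_H$ to the representation of $Q_K$ obtained by extending $M$ by zero on the vertices and arrows of $Q_K$ lying outside $Q_H$. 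In particular $F_{HK\ast}$ is computed vertexwise, hence exact (this is also the general fact recalled in Section~\ref{sec:preliminaries} that $F_{\ast}$ is exact).

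Next I would compute $\rad_{\La_K}\!\bigl(F_{HK\ast}(M)\bigr)=F_{HK\ast}(M)\cdot\rad\La_K$. Since $\rad\La_K$ is the image of the arrow ideal of $Q_K$, this radical is, at each vertex, the sum of the images of the arrows of $Q_K$ ending at that vertex. At a vertex outside $(Q_H)_0$ the module $F_{HK\ast}(M)$ already vanishes. At a vertex $i\in(Q_H)_0$, every arrow of $Q_K$ whose source lies outside $(Q_H)_0$ acts as $0$ because its source space is zero, while every arrow with source in $(Q_H)_0$ is in fact an arrow of $Q_H$ --- here one uses that $Q_H$ is a \emph{full} subquiver of $Q_K$ (Remark~\ref{rem:gluing over subgraph remarks}(b)) --- and acts exactly as the corresponding arrow of $M$. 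Hence $\rad_{\La_K}\!\bigl(F_{HK\ast}(M)\bigr)=F_{HK\ast}\!\bigl(\rad_{\La_H}(M)\bigr)$, and using that $F_{HK\ast}$ preserves cokernels,
\[
F_{HK\ast}(M)\big/\rad_{\La_K}\!\bigl(F_{HK\ast}(M)\bigr)\;\isom\;F_{HK\ast}\!\bigl(M/\rad_{\La_H}(M)\bigr).
\]

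Finally I would translate the hypothesis. Writing $S_i^H$ for the simple $\La_H$-module supported at the vertex $i$, the assumption that $\bigoplus_{i\in I}(\epsilon_i\La_H)^{s_i}$ is a projective cover of $M$ is equivalent to $M/\rad_{\La_H}(M)\isom\bigoplus_{i\in I}(S_i^H)^{s_i}$. Applying $F_{HK\ast}$ and noting that the extension by zero of the one-dimensional representation $S_i^H$ to $Q_K$ is precisely the simple $\La_K$-module $S_i^K$ supported at $i$ (which makes sense since $I\subseteq(Q_H)_0\subseteq(Q_K)_0$), the previous display gives $F_{HK\ast}(M)/\rad_{\La_K}\!\bigl(F_{HK\ast}(M)\bigr)\isom\bigoplus_{i\in I}(S_i^K)^{s_i}$. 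Since $\epsilon_i\La_K$ is the indecomposable projective $\La_K$-module with top $S_i^K$, the standard fact that the projective cover of a finitely generated module $N$ over a finite-dimensional algebra is $\bigoplus_j P(j)^{m_j}$ whenever $N/\rad(N)\isom\bigoplus_j S_j^{m_j}$ (see, e.g., \cite{ASS}) now yields that $\bigoplus_{i\in I}(\epsilon_i\La_K)^{s_i}$ is a projective cover of $F_{HK\ast}(M)$. I do not expect a genuine obstacle: the one step needing care is the radical computation, where fullness of $Q_H$ in $Q_K$ is essential so that no new arrows into the support of $F_{HK\ast}(M)$ are created. The hypothesis $\langle V(I)\rangle\subseteq K$ is not itself used in this argument; it is the condition under which the resulting projective cover can subsequently be transported further along the inverse system, via Lemma~\ref{lem:properties of restriction of scalars}.
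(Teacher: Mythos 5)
Your proposal matches the paper's proof: both compute the semisimple top of $F_{HK\ast}(M)$ by showing $\rad\bigl(F_{HK\ast}(M)\bigr)\isom F_{HK\ast}\bigl(\rad(M)\bigr)$ (you do this vertexwise using that $Q_H$ is a full subquiver of $Q_K$, the paper cites \cite[Lemma III.2.2]{ASS} and a direct computation) and then conclude from $F_{HK\ast}(M)/\rad\isom\bigoplus_{i\in I}(S_K(i))^{s_i}$. Your remark that the hypothesis $\langle V(I)\rangle\subseteq K$ is not used in the argument itself is correct and consistent with the paper's proof, which likewise never invokes it; it is carried along so that Lemma \ref{lem:properties of restriction of scalars}(c) applies downstream in Lemma \ref{lem:restriction is finitely presented}.
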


\begin{proof}
Let $\rad(M)$ be the radical of $M$. Then $M/\rad(M) \isom \bigoplus_{i\in I}(S_H(i))^{s_i}$ where $S_H(i)$ is the simple representation of $\La_H$ corresponding to the vertex $i\in\left(Q_H\right)_0$. By \cite[Lemma III.2.2]{ASS} and a direct computation it follows that $\rad\left(F_{HK\ast}(M)\right)\isom F_{HK\ast}\left(\rad(M)\right)$. Since moreover $F_{HK\ast}$ is exact and additive, we have
\begin{align*} 
F_{HK\ast}(M)/\rad\left(F_{HK\ast}(M)\right) &\isom F_{HK\ast}(M)/F_{HK\ast}\left(\rad(M)\right) \\
&\isom F_{HK\ast}\left(M/\rad(M)\right) \\
&\isom F_{HK\ast}\left(\bigoplus_{i\in I} (S_H(i))^{s_i}\right) \\
&\isom \bigoplus_{i\in I}\left(F_{HK\ast}(S_H(i))\right)^{s_i} \\
&\isom \bigoplus_{i\in I}(S_K(i))^{s_i},
\end{align*}
from which it follows that $P_K(M)=\bigoplus_{i\in I}\left(\epsilon_i\La_K\right)^{s_i}$ is a projective cover of $F_{HK\ast}(M)$.
\end{proof}

With this we are ready to prove the main result about the functor $\Pks_{H\ast}$.

\begin{lemma}\label{lem:restriction is finitely presented}
If $M\in\m\cC_H$, then $\Pks_{H\ast}(M)$ is finitely presented.
\end{lemma}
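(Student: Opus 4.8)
The plan is to reduce the statement to a computation of the support of $\Pks_{H\ast}(M)$ and then invoke that $\Cks$ is locally bounded. First I would evaluate $\Pks_{H\ast}(M)=M\comp\Pks_H$ on the indecomposable objects of $\Cks$. By Proposition~\ref{prop:indecomposables in firm source}(c) a complete set of representatives of $\ind\Cks$ is $\{P(i)\}_{i\in\Qks_0}$, and $\Pks_H=\restr{\Phi_H}{\Cks}$ sends $P(i)$ to $P(i)_H$, which by Proposition~\ref{prop:indecomposables in firm source}(a) equals $\epsilon_i\La_H$ if $i\in(Q_H)_0$ and $0$ otherwise. Hence $\Pks_{H\ast}(M)(P(i))=M(\epsilon_i\La_H)$ for $i\in(Q_H)_0$ and $\Pks_{H\ast}(M)(P(i))=0$ for $i\notin(Q_H)_0$. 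Identifying $\m\cC_H$ with $\m\La_H$, the space $M(\epsilon_i\La_H)$ is the $i$-th component of the finite-dimensional $\La_H$-module corresponding to $M$; it is therefore finite-dimensional for every $i$ and nonzero for only finitely many $i\in(Q_H)_0$. Thus $\Pks_{H\ast}(M)$ is a $\Cks$-module with finite support and finite-dimensional values.

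The main step is then to show that every $\Cks$-module $N$ with finite support $S$ and finite-dimensional values is finitely presented; applied to $N=\Pks_{H\ast}(M)$ this completes the proof. Choosing for each $x\in S$ a $\K$-basis of $N(x)\isom\Hom_{\Cks}(\Cks(-,x),N)$ produces a morphism $\varphi\colon\bigoplus_{x\in S}\Cks(-,x)^{\dim_{\K}N(x)}\to N$; evaluated at an object $y$, it is surjective onto $N(y)$ --- via the identity morphism of $y$ when $y\in S$, and trivially when $y\notin S$ since then $N(y)=0$ --- so $\varphi$ is surjective and $N$ is finitely generated. The kernel of $\varphi$ is a submodule of $\bigoplus_{x\in S}\Cks(-,x)^{\dim_{\K}N(x)}$, which has finite support because $\Cks$ is locally bounded and finite-dimensional values because $\Cks$ is Hom-finite; hence $\ker\varphi$ again has finite support and finite-dimensional values, and the same argument shows it is finitely generated. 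Splicing a finite projective cover of $N$ with one of $\ker\varphi$, and using that a finite direct sum of representable functors is representable, yields an exact sequence $\Cks(-,y)\to\Cks(-,x)\to N\to 0$, i.e.\ $N\in\m\Cks$.

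I expect the only delicate points to be bookkeeping ones: verifying that $\varphi$ is surjective at \emph{every} object of $\Cks$ (which is where $\supp(N)$ being exactly $S$ is used) and that $\ker\varphi$ genuinely inherits finiteness of support from local boundedness of $\Cks$ via Proposition~\ref{prop:indecomposables in firm source}(e). An alternative, more computational route would start from a minimal projective presentation $P_1\to P_0\to M\to 0$ in $\m\La_H$, pass to a sufficiently large $K\in\dirI_G$ absorbing $\langle V(i)\rangle$ for the finitely many vertices $i$ appearing in it, and use $\Pks_{H\ast}\isom\Pks_{K\ast}\comp F_{HK\ast}$ together with Lemma~\ref{lem:properties of restriction of scalars}(b),(c) and Lemma~\ref{lem:proj covers} to realize $\Pks_{H\ast}(M)$ as the cokernel of a map between the finitely generated projectives $\Cks(-,P(i))$; the subtle point there is that vertices occurring in the presentation but outside $\supp(M)$ must also be absorbed into such a $K$, so one would have to choose $K$ relative to the tops of $M$ and of its first syzygy.
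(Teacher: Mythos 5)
Your primary argument is correct and takes a genuinely different route from the paper. You reduce the claim to showing that $\Pks_{H\ast}(M)$ has finite support and finite-dimensional values, which follows immediately from Proposition \ref{prop:indecomposables in firm source}(a),(c) and the finiteness of $(Q_H)_0$, and then observe that any such $\Cks$-module is finitely presented because $\Cks$ is locally bounded (Proposition \ref{prop:indecomposables in firm source}(e)). This last step is in fact already asserted in the paper's preliminaries, where it is stated that for a locally bounded category $\cC$ the objects of $\m\cC$ are precisely the finite-length ones; you could have cited that directly instead of re-deriving it via the surjection $\varphi$ and its kernel, though your derivation is sound (the key points --- surjectivity of $\varphi$ at each $P(i)$ via the Yoneda identity, and finiteness of support of $\bigoplus_{x\in S}\Cks(-,x)^{\dim N(x)}$ from local boundedness --- are both handled correctly). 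The paper's proof is instead the ``alternative, more computational route'' you sketch at the end: it lifts a projective presentation of $M$ from $\m\La_H$ to $\m\La_L$ for a sufficiently large $L\in\dirI_G$, using Lemma \ref{lem:proj covers} and Lemma \ref{lem:properties of restriction of scalars}(c), and then applies the exact functor $\Pks_{L\ast}$. The subtlety you flag --- that vertices occurring in the projective cover of the syzygy, not just in $\supp(M)$, must be absorbed into $L$ --- is exactly what the paper does by choosing $K\supseteq H$ large enough for the cover of $M$, computing the kernel $N$ over $\La_K$, and then choosing $L\supseteq K$ large enough for the cover of $N$. Your approach is shorter and more conceptual; the paper's is constructive and produces the presentation explicitly, which fits the surrounding computational style of the section, but both are correct.
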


\begin{proof}
Let $P(M)=\bigoplus_{i\in I}\left(\epsilon_i\La_H\right)^{s_i}$ be a projective cover of $M$ for some $I\subseteq \left(Q_H\right)_0$. Let $K\in\dirI_G$ be such that $H\subseteq K$ and $\langle V(I)\rangle \subseteq K$. Then by Lemma \ref{lem:proj covers} we have that $F_{HK\ast}(P(M))\isom \bigoplus_{i\in I}\left(\epsilon_i\La_H\right)^{s_i}$ is a projective cover of $F_{HK\ast}(M)$. Let $p:F_{HK\ast}(P(M))\to F_{HK\ast}(M)$ be a minimal epimorphism and let $N=\ker(p)$. Let $P(N)=\bigoplus_{j\in J}\left(\epsilon_j\La_K\right)^{t_j}$ be a projective cover of $N$ for some $J\subseteq \left(Q_K\right)_0$. Let $L\in\dirI_G$ be such that $K\subseteq L$ and $\langle V(J)\rangle \subseteq L$. Then by Lemma \ref{lem:proj covers} we have that $F_{KL\ast}(P(N))\isom\bigoplus_{j\in J}\left(\epsilon_j\La_L\right)^{t_j}$ is a projective cover of $F_{KL\ast}(N)$ and that $F_{KL\ast}\left(F_{HL\ast}(P(M)\right) \isom \bigoplus_{i\in I}\left(\epsilon_i \La_L\right)^{s_i}$ is a projective cover of $F_{KL\ast}\comp F_{HK\ast}(M)$. It follows that
\begin{equation}\label{eq:proj resol in C_L}
\bigoplus_{j\in J}\left(\epsilon_j\La_L\right)^{t_j} \longto \bigoplus_{i\in I}\left(\epsilon_i\La_L\right)^{s_i} \longto F_{KL\ast}\comp F_{HK\ast}(M) \longto 0
\end{equation}
is the start of a projective resolution of $F_{KL\ast}\comp F_{HK\ast}(M)$. Moreover, for every $k\in I\cup J$ we have $\langle V(k) \rangle \subseteq L$ and hence $\Pks_{L\ast}\left(\epsilon_k\La_L\right) = \Cks\left(-,P(k)\right)$ by Lemma \ref{lem:properties of restriction of scalars}(c). Hence by applying the exact functor $\Pks_{L\ast}$ to (\ref{eq:proj resol in C_L}) we get the exact sequence
\[\bigoplus_{j\in J}(\Cks(-,P(j)))^{t_j}\longto \bigoplus_{i\in I}(\Cks(-,P(i))^{s_i}\longto \Pks_{L\ast}\comp F_{KL\ast}\comp F_{HK\ast}(M)\longto 0,\]
which shows that $\Pks_{L\ast}\comp F_{KL\ast}\comp F_{HK\ast}(M)$ is finitely presented. But $\Pks_{L\ast}\comp F_{KL\ast}\comp F_{HK\ast}(M)\isom \Pks_{L\ast}\comp F_{HL\ast}(M) \isom \Pks_{H\ast}(M)$, and so $\Pks_{H}(M)$ is finitely presented.
\end{proof}

We are now ready to prove that a gluing system gives rise to an admissible target.

\begin{proposition}
Let $(\La_{v},P_{e},I_{e})_{v\in V_G, e\in E_G}$ be a gluing system on a directed tree $G$ and let \linebreak $(\Cks,\Pks_H,\Tks_{HK})$ be the firm source of the induced $\Cat$-inverse system $(\cC_H, F_{HK}, \theta_{HKL})$ over $\dirI_G$. Then $(\m\Cks,\Pks_K,\Tks_{HK})$ together with the adjunctions $(\Pks_H^{\ast},\Pks_{H\ast})$ and $(\Pks_{H\ast},\Pks_{H}^{!})$ is an admissible target of $(\m\cC_H,F_{HK\ast},\theta_{HKL\ast})$.
\end{proposition}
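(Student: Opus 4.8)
The plan is to deduce the statement from Corollary \ref{cor:admissible target}. It suffices to verify that the $\Cat$-inverse system $(\cC_H, F_{HK}, \theta_{HKL})$ over $\dirI_G$ satisfies the hypotheses of that corollary, namely: (i) each $\cC_H$ is a dualizing $\K$-variety; (ii) each $F_{HK}$ is a coherent preserving $\K$-linear functor; (iii) $\Cks$ is a dualizing $\K$-variety; and (iv) each functor $\Pks_H$ is dense, full and coherent preserving. The remaining standing requirements — that each $\cC_H$ is a small $\K$-linear Krull--Schmidt category and each $F_{HK}$ is $\K$-linear, that the triple is a $\Cat$-inverse system, and that $\Cks$ is a small $\K$-linear Krull--Schmidt category — were established when the system was constructed in Section \ref{subsubsec:gluing systems of representation-directed algebras}, in Proposition \ref{lem:the system is an inverse system}, and in Proposition \ref{prop:properties of Cks}(c) together with Corollary \ref{cor:properties of Cks}.

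For (i) and (iii) I would invoke the fact recalled in Section \ref{subsec:background and notation} that a locally bounded category is a dualizing $\K$-variety: each $\cC_H$ has only the finitely many indecomposables $\epsilon_i\La_H$ for $i\in(Q_H)_0$ and is Hom-finite because $\La_H$ is finite-dimensional, hence is locally bounded; and $\Cks$ is locally bounded by Proposition \ref{prop:indecomposables in firm source}(e). For (ii), the functor $F_{HK} = q_H\comp \prescript{K}{H}{\pi}^{\ast}\comp\iota_K$ is $\K$-linear as a composite of $\K$-linear functors, and it is coherent preserving because, on the one hand, under the identifications $\m\cC_H\equivalent\m\La_H$ and $\m\cC_K\equivalent\m\La_K$ the functor $F_{HK\ast}$ becomes the restriction of scalars functor along the algebra epimorphism $\prescript{K}{H}{\pi}$, which preserves finite length and hence maps $\m\cC_H$ into $\m\cC_K$, and, on the other hand, $F_{HK}^{!}$ preserves finitely presented modules by Proposition \ref{prop:injective sent to correct injective}(c), using that $\cC_H$ and $\cC_K$ are dualizing $\K$-varieties.

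It then remains to verify (iv). Density of $\Pks_H$ follows since every indecomposable object of $\cC_H$ is $\epsilon_i\La_H$ for some $i\in(Q_H)_0\subseteq\Qks_0$ and $\Pks_H(P(i)) = \epsilon_i\La_H$ by Proposition \ref{prop:indecomposables in firm source}(a); additivity of $\Pks_H$ and the Krull--Schmidt property of $\cC_H$ then handle arbitrary objects. Fullness follows from Proposition \ref{prop:indecomposables in firm source}(d): on indecomposables $P(i), P(j)$ the map induced by $\Pks_H$ is bijective when $i,j\in(Q_H)_0$ and has zero target otherwise, and passing to finite direct sums (via Proposition \ref{prop:indecomposables in firm source}(b),(c)) gives surjectivity in general. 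That $\Pks_H$ is coherent preserving is exactly the content of Lemma \ref{lem:restriction is finitely presented} (for $\Pks_{H\ast}$) together with Proposition \ref{prop:injective sent to correct injective}(c) (for $\Pks_H^{!}$). With (i)--(iv) in hand, Corollary \ref{cor:admissible target} gives that $(\m\Cks, \Pks_{H\ast}, \Tks_{HK\ast})$ together with the adjunctions $(\Pks_H^{\ast},\Pks_{H\ast})$ and $(\Pks_{H\ast},\Pks_H^{!})$ is an admissible target of $(\m\cC_H, F_{HK\ast}, \theta_{HKL\ast})$. Since all the substantive analysis — local boundedness of $\Cks$ and preservation of finite presentation under $\Pks_{H\ast}$ — has already been carried out in the preceding lemmas, I do not expect a genuine obstacle here; the only delicate points are the routine reductions to indecomposable objects for the density and fullness of $\Pks_H$, and keeping the equivalences $\m\cC_H\equivalent\m\La_H$ straight when identifying $F_{HK\ast}$ with restriction of scalars.
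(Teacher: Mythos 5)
Your proposal is correct and takes the same overall route as the paper: reduce to Corollary~\ref{cor:admissible target} and verify its hypotheses. The one substantive difference is worth noting. Corollary~\ref{cor:admissible target} requires, among other things, that \emph{each $\Pks_H$} be dense, full and coherent preserving. The paper's own proof verifies that $\cC_H$ and $\Cks$ are dualizing $\K$-varieties, that $F_{HK}$ is coherent preserving, that $\Pks_H$ is coherent preserving, and then verifies that $F_{HK}$ is \emph{dense and full} — a fact the corollary's proof would re-derive from the $\Pks_H$ hypotheses, but which is not itself one of the corollary's hypotheses; the density and fullness of $\Pks_H$ are left implicit (they do follow from Proposition~\ref{prop:indecomposables in firm source}(a),(c),(d), but this is not said). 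Your proof instead verifies $\Pks_H$ dense and full directly from Proposition~\ref{prop:indecomposables in firm source}, which is exactly what the corollary demands, and so is slightly tighter in this step. The reduction-to-indecomposables argument for fullness (covering the case $\Pks_H(P(i))=0$ by noting the target Hom-space vanishes) is correct, as is the observation that additivity of $\Pks_H$ (Proposition~\ref{prop:properties of Cks}(c)) extends density from indecomposables to all objects. All the remaining verifications — dualizing $\K$-variety via local boundedness, $F_{HK}$ coherent preserving via the identification with restriction/extension of scalars along $\prescript{K}{H}{\pi}$, and $\Pks_H$ coherent preserving via Lemma~\ref{lem:restriction is finitely presented} and Proposition~\ref{prop:injective sent to correct injective}(c) — match the paper. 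There is no gap.
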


\begin{proof}
It is enough to check that the conditions of Corollary \ref{cor:admissible target} are satisfied. By the equivalences
\[\m \cC_H \equivalent \m (\proj\La_{H}) \equivalent \m\La_{H}.\]
we have that $\cC_H$ is a dualizing $\K$-variety and by the commutativity of the diagrams
\[\begin{tikzcd}
    \m\La_H \arrow[r, "f_{HK\ast}"] \arrow[d, "\equivalent"] & \m\La_K \arrow[d, "\equivalent"] \\
    \m\cC_H \arrow[r, "F_{HK\ast}"] & \m\cC_K
\end{tikzcd}
    \text{ and }
\begin{tikzcd}
    \m\La_H \arrow[d, "\equivalent"] & \m\La_K \arrow[d, "\equivalent"] \arrow[l, swap, "f_{HK}^!"]\\
    \m\cC_H & \m\cC_K \arrow[l, swap, "F_{HK}^{!}"]
\end{tikzcd}\]
it follows that $F_{HK}$ is a coherent preserving $\K$-linear functor. Since $\Cks$ is locally bounded by Proposition \ref{prop:indecomposables in firm source}(e), it follows that $\Cks$ is a dualizing $\K$-variety. Let $H,K\in \dirI_G$ with $H\subseteq K$. Then $F_{HK}:\cC_L\to \cC_H$ is dense because if $\epsilon_i\La_H$ is an indecomposable object in $\cC_H$, then $i\in \left(Q_H\right)_0\subseteq \left(Q_K\right)_0$ and so $F_{HK}(\epsilon_i\La_K)=\epsilon_i\La_H$. Moreover, the functor $F_{HK}$ is full by Lemma \ref{lem:technical connecting map is bijective}. Finally the functor $\Pks_H$ is coherent preserving by Proposition \ref{prop:injective sent to correct injective}(c) and Lemma \ref{lem:restriction is finitely presented}.
\end{proof}

\subsection{\texorpdfstring{$n$}{n}-fractured systems of rep\-re\-sen\-ta\-tion-di\-rect\-ed algebras}

In this section we extend the definition of gluing systems in a way compatible with the notion of fracturings developed in \cite{VAS2} to construct an asymptotically weakly $n$-cluster tilting system. Using Theorem \ref{thrm:from asymptotic to n-ct} we show that this extension of the definition induces an $n$-cluster tilting subcategory in the admissible target determined by the gluing system.

\subsubsection{Gluing of \texorpdfstring{$n$}{n}-fractured subcategories of rep\-re\-sen\-ta\-tion-di\-rect\-ed algebras}

Throughout this section let $A$ and $B$ be rep\-re\-sen\-ta\-tion-di\-rect\-ed algebras.

We start by recalling the notions of fracturings and $n$-fractured subcategories introduced in \cite{VAS2}. Recall that if $P$ is a left abutment of $A$, then the quiver $\PD$ is the same as the Aus\-lan\-der--Rei\-ten quiver $\Gamma(\K \overrightarrow{A}_h)$ of $\K \overrightarrow{A}_h$. Hence we can formally view direct sums of modules appearing in $\PD$ as $\K \overrightarrow{A}_h$-modules. With this observation, we have the following definition.

\begin{definition}\cite[Definition 3.4]{VAS2}
Let $W$ be a maximal left abutment of $A$ of height $h$. Let $T^{(W)}$\index[symbols]{TW, TJ@$T^{(W)}$, $T^{(J)}$} be an $A$-module such that all its indecomposable summands are isomorphic to modules appearing in $\nsup{W}{\triangle}$. Then $T^{(W)}$ is called a \emph{fracture}\index[definitions]{fracture of a maximal left abutment} if it is a tilting module when viewed as a $\K \overrightarrow{A}_h$-module.
\end{definition}

Dually we define a \emph{fracture}\index[definitions]{fracture of a maximal right abutment} $T^{(J)}$\index[symbols]{TW, TJ@$T^{(W)}$, $T^{(J)}$} for a maximal right abutment $J$. Before we proceed, let us introduce one more piece of notation. Let $\cU$ and $\cV$ be subcategories of a Krull--Schmidt category $\cA$. We set $\cU_{\setminus \cV}$\index[symbols]{U minus V@$\cU_{\setminus \cV}$} to be the additive closure of all indecomposable objects $x\in \cU$ such that $x\not\in \cV$. 

Next, we need the notion of a fracturing. A left fracturing of $A$ is an $A$-module constructed in the following way. First, we assign a fracture $T^{(W)}$ to each maximal left abutment $W$ of $A$. Then the direct sum of all these fractures is a left fracturing of $A$. More formally we have the following definition.

\begin{definition}\cite[Definition 3.7]{VAS2}
A \emph{left fracturing}\index[definitions]{left fracturing of an algebra} $T^L$ of $A$ is an $A$-module of the form
\[T^L=\hspace{-0.5em}\bigoplus\limits_{W \in \cP_{\text{ind}}^{\text{mab}}}\hspace{-0.5em}T^{(W)}\]
where $T^{(W)}$ is a fracture of $W$. We set  $\cP^L \coloneqq \add \left\{\cP_{\setminus\sABP}, T^L\right\}$\index[symbols]{P d@$\cP^L$}. We define a \emph{right fracturing}\index[definitions]{right fracturing of an algebra} and the subcategory $\cI^R$\index[symbols]{I N@$\cI^R$} similarly. A \emph{fracturing}\index[definitions]{fracturing of an algebra} of $A$ is a pair $(T^L,T^R)$\index[symbols]{(TL, TR)@($T^L$, $T^R$)} where $T^L$ is a left fracturing and $T^R$ is a right fracturing.
\end{definition}

Let $T^L$ be a left fracturing of $A$. For a maximal left abutment $W$ of $A$ and a left abutment $P$ with $P\leq W$, we choose a submodule $T^{(P)}$ of $T^{(W)}$ such that $T^{(P)}\in\cF_P$\index[symbols]{TP, TI@$T^{(P)}$, $T^{(I)}$} and $T^{(W)}=T^{(P)}\oplus Q$ where no indecomposable direct summand of $Q$ is in $\cF_P$. In particular $T^{(P)}$ is isomorphic to the direct sum of all indecomposable summands of $T^{(W)}$ that appear in $\PD$. Notice that $T^{(P)}$ is unique up to isomorphism. Similarly we define $T^{(I)}$\index[symbols]{TP, TI@$T^{(P)}$, $T^{(I)}$} for a right abutment $I\leq J$. 

Now let $T^L_A$ and $T^L_B$ be left fracturings of $A$ and $B$ respectively. Let $P$ be a left abutment of $A$ and $I$ be a right abutment of $B$ such that $\height(P)=\height(I)$ and set $\La=B\glue[P][I] A$. By Remark \ref{rem:gluing remarks}(c) the maximal left abutments of $\La$ are either of the form $\prescript{\La}{B}{\pi}_{\ast}(W_B)$ for some maximal left abutment $W_B$ of $B$ or, if not, they are of the form
$\prescript{\La}{A}{\pi}_{\ast}(W_A)$ for some maximal left abutment $W_A$ of $A$. Moreover, in both cases, it follows by Proposition \ref{prop:AR glued} that $\prescript{\La}{B}{\pi}_{\ast}\left(T^{(B)}\right)$ is a fracture of $\prescript{\La}{B}{\pi}_{\ast}(W_B)$ and that $\prescript{\La}{A}{\pi}_{\ast} \left(T^{(A)}\right)$ is a fracture of $\prescript{\La}{A}{\pi}_{\ast}(W_A)$. Hence if $W_{\La}$ is a maximal left abutment, then we can define a fracture $T_{\ast}^{\left(W_{\La}\right)}$ of $\La$ by
\[T^{\left(W_{\La}\right)}_{\ast} \coloneqq \begin{cases}  \prescript{\La}{B}{\pi}_\ast\left(T^{(W_B)}\right), &\mbox{if $W_{\La}\isom \prescript{\La}{B}{\pi}_{\ast}(W_B)$,}\\  \prescript{\La}{A}{\pi}_\ast\left(T^{(W_A)}\right), &\mbox{otherwise.}\end{cases}\]\index[symbols]{TWLambda, TJLambda@$T^{\left(W_{\La}\right)}_{\ast}$, $T_{\ast}^{(J_{\La})}$}
Notice that $T^{\left(W_{\La}\right)}$ is well-defined by Remark \ref{rem:gluing remarks}(c). With this definition we define a left fracturing of $\La$ by setting
\begin{equation}\label{eq:glued fracturing definition} 
T_{\La}^L \coloneqq \hspace{-0.5em}\bigoplus\limits_{W_{\La} \in \cP_{\text{ind}}^{\text{mab}}}\hspace{-0.5em}T^{(W_{\La})}_{\ast}.
\end{equation}
Dually, given right fracturings $T^R_A$ and $T^R_B$ of $A$ and $B$ we can define a fracture $T_{\ast}^{(J_{\La})}$\index[symbols]{TWLambda, TJLambda@$T^{\left(W_{\La}\right)}_{\ast}$, $T_{\ast}^{(J_{\La})}$} for a maximal right abutment $J_{\La}$ of $\La$ and then define a right fracturing $T_{\La}^R$ of $\La$. Then, by the above considerations it follows that $(T_\La^L, T_\La^R)$ is a fracturing of $\La$, which we call the \emph{gluing of the fracturings $(T_A^L, T_A^R)$ and $(T_B^L, T_B^R)$ at $P$ and $I$}\index[definitions]{gluing of fracturings} and we denote it by $(T_\La^L, T_\La^R)=(T_B^L, T_B^R) \glue[P][I] (T_A^L, T_A^R)$.   

Before we give our next definition, we need the following generalization of \cite[Theorem 1]{VAS}.

\begin{proposition}\label{prop:n-fractured is self-orthogonal}
Let $\cM$ be a subcategory of $\m A$ and $(T^L,T^R)$ be a fracturing of $A$. Then the conditions (a) and (b) are equivalent. 
\begin{enumerate}[label=(\alph*)]
    \item
    \begin{enumerate}
        \item[(a1)] $\cP^L\subseteq \cM$.
        \item[(a2)] $\tn$ and $\tno$ induce mutually inverse bijections
        \[\begin{tikzpicture}
            \node (0) at (0,0) {$\ind\left(\cM_{\setminus\cP^L}\right)$};
            \node (1) at (3,0) {$\ind\left(\cM_{\setminus\cI^R}\right)$.};

            \draw[-latex] (0) to [bend left=20] node [above] {$\tn$} (1);
            \draw[-latex] (1) to [bend left=20] node [below] {$\tno$} (0);
        \end{tikzpicture}\]
        \item[(a3)] $\om^i (M)$ is indecomposable for all indecomposable $M\in \cM_{\setminus\cP^L}$ and $0<i<n$.
        \item[(a4)] $\om^{-i} (N)$ is indecomposable for all indecomposable $N\in \cM_{\setminus\cI^R}$ and $0<i<n$.
    \end{enumerate}
    \item 
    \begin{enumerate}
        \item[(b1)] $\cP^L \subseteq \cM$.
        \item[(b2)] Let $M\in \ind\left(\cM_{\setminus\cP^L}\right)$. Then $\tn(M)\in \cM$. Moreover, for every $X\in\m A$ we have $\Ext^{1\sim n-1}(M,X)\neq 0$ if and only if $\Ext^{1\sim n-1}_A(X,\tn(M))\neq 0$.
        \item[(b3)] Let $N\in \ind\left(\cM_{\setminus\cI^R}\right)$. Then $\tno(N)\in \cM$. Moreover, for every $Y\in\m A$ we have $\Ext^{1\sim n-1}_A(Y,N)\neq 0$ if and only if $\Ext^{1\sim n-1}_A(\tno(N),Y)\neq 0$.
    \end{enumerate}
\end{enumerate}
\end{proposition}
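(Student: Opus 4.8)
The plan is to prove the equivalence (a)$\Leftrightarrow$(b) by reducing to the already-established characterization of $n$-cluster tilting modules for representation-directed algebras, namely \cite[Theorem 1]{VAS} together with the homological results of \cite{IYA4} that underlie it. The key observation is that conditions (a1) and (b1) are literally identical, so the work is entirely in showing that, given $\cP^L\subseteq\cM$, the combinatorial conditions (a2)--(a4) are equivalent to the $\Ext$-theoretic conditions (b2)--(b3). The natural strategy is to proceed in two implications, treating the ``left'' and ``right'' halves symmetrically so that only one side needs detailed argument.

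First I would establish (a)$\Rightarrow$(b). Assume (a2)--(a4). For $M\in\ind(\cM_{\setminus\cP^L})$, condition (a2) gives that $\tn(M)\in\ind(\cM_{\setminus\cI^R})\subseteq\cM$, so the first part of (b2) holds. For the $\Ext$-statement, the point is that since $A$ is representation-directed, $M$ has finite projective dimension and, because of (a3), the syzygies $\om^i(M)$ for $0<i<n$ are indecomposable; this means $M$ behaves ``as if'' it were a module over an algebra of global dimension $n$ in the relevant homological range. Here I would invoke the key homological lemma from \cite{IYA4} (the one cited as the basis of \cite[Theorem 1]{VAS}): for a representation-directed algebra, when the syzygies $\om^1(M),\dots,\om^{n-1}(M)$ are indecomposable, one has a functorial duality $\Ext^{1\sim n-1}_A(M,X)\neq 0\iff \Ext^{1\sim n-1}_A(X,\tn M)\neq 0$, where $\tn M=\tau\om^{n-1}(M)$. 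The ``independence of the fracturing'' subtlety is that $\cP^L$ and $\cI^R$ play the role that $\proj A$ and $\inj A$ play in the classical case; I would need to check that the relevant vanishing statements only see the modules outside these subcategories, which follows from the fact that $\cP^L$ contains the projectives-minus-abutments plus the fracture pieces, all of which are ``homologically trivial'' in the needed range by the tilting property of the fractures over $\K\overrightarrow{A}_h$. The statement (b3) is obtained by the dual argument applied to $\m A^{\op}$, using the duality $\D$ and the dual versions of (a2) and (a4).

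Next, (b)$\Rightarrow$(a). Assuming (b2)--(b3), I would first recover (a2): for $M\in\ind(\cM_{\setminus\cP^L})$, (b2) says $\tn(M)\in\cM$; I must show $\tn(M)\notin\cP^L$, i.e. $\tn(M)\in\cM_{\setminus\cI^R}$ — wait, the target is $\cM_{\setminus\cI^R}$, so I must show $\tn(M)\notin\cI^R$. This would follow from the $\Ext$-equivalence in (b2): picking $X$ witnessing $\Ext^{1\sim n-1}(M,X)\neq 0$ (which exists because $M\notin\cP^L$, hence $\pdim M\geq 1$ with nonvanishing extensions, by the structure theory of representation-directed algebras and the characterization of $\cP^L$), we get $\Ext^{1\sim n-1}_A(X,\tn M)\neq 0$, forcing $\tn M\notin\cI^R$ since modules in $\cI^R$ have no such incoming extensions. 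That $\tn$ and $\tno$ are mutually inverse on these sets is the standard fact $\tno\tn\cong\Id$ on modules with indecomposable syzygies, which I get once (a3) and (a4) are in place. For (a3): if some $\om^i(M)$ decomposed, then the $\Ext$-duality of (b2) would fail for an appropriate test module — this is precisely the contrapositive direction in \cite{IYA4}/\cite[Theorem 1]{VAS}, so I would cite that the $\Ext^{1\sim n-1}$-duality characterizes indecomposability of the intermediate syzygies. Condition (a4) is dual.

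The main obstacle I anticipate is the bookkeeping around $\cP^L$ and $\cI^R$: in the classical statement \cite[Theorem 1]{VAS} these are $\add(A)$ and $\add(\D A)$, whereas here they are the ``fractured'' analogues $\add\{\cP_{\setminus\sABP},T^L\}$ and the dual. I will need to verify carefully that the homological input lemma from \cite{IYA4} applies verbatim with these substituted subcategories — essentially that a fracture $T^{(W)}$, being a tilting $\K\overrightarrow{A}_h$-module sitting inside the foundation $\PD$, contributes nothing to $\Ext^{1\sim n-1}$ in the directions that matter, so that the ``$\setminus\cP^L$'' and ``$\setminus\cI^R$'' conditions behave exactly like ``nonprojective'' and ``noninjective'' did classically. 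Once that reduction is justified (likely it is already implicit in the setup of \cite[Section 3]{VAS2}, in particular in the relationship between $\cF_P$, $\cG_I$ and the footings $\pi_P,\pi_I$), the rest is a routine transcription of the proof of \cite[Theorem 1]{VAS}, and I would present it as such rather than reproducing every step.
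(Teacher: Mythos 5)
Your proposal matches the paper's proof: the paper also reduces the statement to \cite[Theorem~1]{VAS}, treating it as a straightforward generalization once one observes that the fractures are homologically inert in the relevant range (the paper's phrasing is $\Ext^{1\sim n-1}_A(T^{(W)},T^{(W)})=0$ for all maximal left abutments $W$, which is exactly the ``contributes nothing to $\Ext^{1\sim n-1}$'' observation you isolate). Your more detailed unwinding of the two implications via the \cite{IYA4} duality is a sound elaboration of what the paper compresses into one sentence, and correctly identifies the one genuinely new ingredient needed when replacing $\cP$, $\cI$ by $\cP^L$, $\cI^R$.
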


\begin{proof} 
This is a straightforward generalization of the equivalence of (a) and (b) in \cite[Theorem 1]{VAS} using the fact that $\Ext^{1\sim n-1}_A\left(T^{(W)},T^{(W)}\right)=0$ for every $W\in \sMABPind$.
\end{proof}

\begin{definition}\cite[Definition 3.10]{VAS2}\label{def:nctfract}
Assume that $(T^L, T^R)$ is a fracturing of $A$ and let $\cM$ be a full subcategory of $\m A$. Then $\cM$ is called a \emph{$(T^L,T^R,n)$-fractured subcategory}\index[definitions]{n-fractured subcategory@$n$-fractured subcategory} if any of the equivalent conditions of Proposition \ref{prop:n-fractured is self-orthogonal} hold.
\end{definition}

\begin{remark}\label{rem:generalization of n-ct}
This definition generalizes the notion of an $n$-cluster tilting subcategory for rep\-re\-sen\-ta\-tion-di\-rect\-ed algebras. In particular $\cM$ is an $n$-cluster tilting subcategory if and only if it is $(T^L,T^R,n)$-fractured and $T^L\isom \La$ and $T^R\isom D(\La)$ hold or equivalently if and only if $\cP^L=\cP$ and $\cI^R=\cI$. For more details on this, we refer to \cite[Theorem 1]{VAS} and \cite[Proposition 3.11]{VAS2}.
\end{remark}

We also have the following proposition.

\begin{proposition}\label{prop:n-fractured with exts}
Let $(T^L, T^R)$ be a fracturing of $A$ and let $\cM\subseteq \m A$ be a $(T^L,T^R,n)$-fractured subcategory. Then
\begin{align*}
        \cM &= \left\{ X \in \add\{\m A_{\setminus \Sub(T^L)},T^L\} \mid \Ext^{1\sim n-1}_{A}\left(X, \cM\right)=0\right\} \\
        & = \left\{ X \in \add\{\m A_{\setminus \Fac(T^R)}, T^R\} \mid \Ext^{1\sim n-1}_{A}\left(\cM, X\right)=0\right\}. 
    \end{align*}
\end{proposition}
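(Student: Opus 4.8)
The plan is to characterize the two claimed sets using the equivalent condition (b) of Proposition \ref{prop:n-fractured is self-orthogonal} together with the Auslander--Reiten theory of representation-directed algebras. Write $\cM'\coloneqq\add\{\m A_{\setminus\Sub(T^L)},T^L\}$ for the ambient category on the left-hand side; by symmetry (passing to $A^{\mathrm{op}}$ and using $\D$) it suffices to prove the first equality, so I will focus on showing
\[
\cM = \{X\in\cM' \mid \Ext^{1\sim n-1}_A(X,\cM)=0\}.
\]
The inclusion ``$\subseteq$'' of the right-hand side into $\cM'$ is the statement that every indecomposable of $\cM$ lies in $\m A_{\setminus\Sub(T^L)}$ or is a summand of $T^L$; this should follow from (b1), which gives $\cP^L=\add\{\cP_{\setminus\sABP},T^L\}\subseteq\cM$, together with the description of $\cM_{\setminus\cP^L}$ via the bijection $\tn\colon\ind(\cM_{\setminus\cP^L})\to\ind(\cM_{\setminus\cI^R})$: an indecomposable $M\in\cM_{\setminus\cP^L}$ is nonprojective (and not an abutment summand), hence not a submodule of $T^L$ because submodules of $T^L$ all lie in $\Sub(\cP^L)$ and by the foundation picture of Proposition \ref{prop:abutments in AR-quiver} a module in $\cF_P$ that is a proper submodule of some $T^{(W)}$ would have to be in $\cP^L$ or fail indecomposability of its syzygies. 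Then $\cM\subseteq\cM'$, and self-orthogonality $\Ext^{1\sim n-1}_A(\cM,\cM)=0$ is exactly (a1)--(a4) repackaged (it is part of the content of Proposition \ref{prop:n-fractured is self-orthogonal}, being a special case of \cite[Theorem 1]{VAS}); this gives ``$\subseteq$''.

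For the reverse inclusion, take $X\in\cM'$ indecomposable with $\Ext^{1\sim n-1}_A(X,\cM)=0$; I must show $X\in\cM$. If $X$ is a summand of $T^L$ then $X\in\cP^L\subseteq\cM$ by (b1) and we are done, so assume $X\in\m A_{\setminus\Sub(T^L)}$, i.e. $X$ is indecomposable and not a submodule of $T^L$. The goal is to produce $X$ in the image of the iterated $\tn$-process that generates $\cM$, or more precisely to verify the defining self-orthogonality condition characterizing $\cM$ inside $\m A$. Here the natural route is to invoke the original characterization of $n$-cluster-tilting-like subcategories from \cite{VAS, IYA4}: for a representation-directed algebra, $\cM$ being $(T^L,T^R,n)$-fractured means (by Remark \ref{rem:generalization of n-ct} and the role of $\cP^L,\cI^R$ as substitutes for $\proj$ and $\inj$) that $\cM$ is built as $\add\{\tau_n^{-i}(T^L)\mid i\ge0\}$ and simultaneously as $\add\{\tau_n^{i}(T^R)\mid i\ge0\}$. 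Then $X\notin\cM$ would mean $X$ sits ``strictly between'' two consecutive $\tau_n$-orbits; using the key biconditional in (b2), namely $\Ext^{1\sim n-1}_A(M,Y)\neq0 \iff \Ext^{1\sim n-1}_A(Y,\tn(M))\neq0$ for $M\in\ind(\cM_{\setminus\cP^L})$, one finds an $M\in\cM$ with $\Ext^{1\sim n-1}_A(X,M)\neq0$ (taking $M=\tn(M')$ where $M'$ is the appropriate ``predecessor'' of $X$ along the orbit, which exists precisely because $X\notin\Sub(T^L)$ forces $X$ to have a nontrivial $n$-th syzygy landing in $\cM_{\setminus\cP^L}$). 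This contradicts $\Ext^{1\sim n-1}_A(X,\cM)=0$, so $X\in\cM$.

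Concretely, the inductive mechanism I would use is: since $A$ is representation-directed there is no infinite $\tau_n$-orbit issue, and every indecomposable $X$ with $X\notin\Sub(T^L)$ has $\om^i(X)$ indecomposable for $0<i<n$ is \emph{not} automatic for arbitrary $X$ — but the condition $\Ext^{1\sim n-1}_A(X,\cM)=0$ should force exactly the indecomposability of the relevant (co)syzygies via a standard argument: if some $\om^i(X)$ decomposes, one of its summands $U$ admits a map to (a shift of) a projective in $\cP^L\subseteq\cM$ producing nonzero $\Ext^j_A(X,\cM)$ for some $0<j<n$. So the real work is to set up, for $X\in\m A_{\setminus\Sub(T^L)}$, the short exact sequence $0\to\om(X)\to P_0\to X\to0$ with $P_0$ the projective cover, observe $P_0\in\cP\subseteq\add\{\cP_{\setminus\sABP},\sABP\}$, and track how $\cP^L$ interacts with $\cP$: the point is that $\Ext^{1\sim n-1}_A(X,T^{(W)})=0$ plus $T^{(W)}$ being a tilting $\K\overrightarrow{A}_h$-module forces $X$, restricted to the foundation $\PD$, to lie in $\Fac(T^{(W)})$ or have the correct Ext-vanishing, which together with \cite[Proposition 3.11]{VAS2} pins down membership in $\cM$.

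The main obstacle I anticipate is the ``$\supseteq$'' direction, specifically bridging the gap between the \emph{local} Ext-vanishing condition $\Ext^{1\sim n-1}_A(X,\cM)=0$ on the one hand and the \emph{global} orbit-description of $\cM$ on the other; this is where one genuinely needs the full strength of \cite[Theorem 1]{VAS} and \cite{IYA4} rather than formal manipulation, and where the fracturing bookkeeping (which indecomposables near an abutment belong to $\cP^L$ versus $\cM_{\setminus\cP^L}$, and the precise meaning of $\m A_{\setminus\Sub(T^L)}$ versus $\m A_{\setminus\cP^L}$) must be handled carefully. A secondary subtlety is making sure the ambient category $\cM'=\add\{\m A_{\setminus\Sub(T^L)},T^L\}$ genuinely contains $\cM$: one has to rule out that some indecomposable of $\cM_{\setminus\cP^L}$ is a submodule of $T^L$, which requires knowing that the fracture $T^{(W)}$, as a $\K\overrightarrow{A}_h$-tilting module, has all its submodules already accounted for in $\cP^L$ — this uses that $T^{(W)}$ is a tilting module over the linearly oriented $A_h$ and the explicit shape of its submodule lattice inside the foundation $\PD$.
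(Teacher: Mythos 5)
The paper disposes of this proposition with a single citation: it is ``a straightforward generalization of the implication (a) and (b) implies (c) in \cite[Theorem 1]{VAS}''. In other words, \cite[Theorem 1]{VAS} is a three-part equivalence (a)$\Leftrightarrow$(b)$\Leftrightarrow$(c); the paper has already recorded the fractured analogue of (a)$\Leftrightarrow$(b) as Proposition~\ref{prop:n-fractured is self-orthogonal}, and the present proposition is the fractured analogue of (c), obtained by carrying the same argument over with $\cP^L,\cI^R$ in the roles of $\proj A,\inj A$. No independent argument is given.

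Your sketch points at exactly the right sources (\cite[Theorem 1]{VAS}, \cite{IYA4}) but then tries to re-derive the implication rather than recognize that it is meant to be quoted, and the re-derivation has two genuine holes, both of which you yourself flag. First, in the ``$\supseteq$'' direction you assert that if some $\om^i(X)$ decomposes then ``one of its summands $U$ admits a map to (a shift of) a projective in $\cP^L\subseteq\cM$ producing nonzero $\Ext^j_A(X,\cM)$''. This is not a standard argument and is not correct as stated: a nonzero map from a summand of a syzygy to a module does not by itself yield a nonzero Ext class, since $\Ext^j_A(X,-)$ is computed modulo morphisms factoring through projectives, and you have not produced a class that survives that quotient. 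Converting the hypothesis $\Ext^{1\sim n-1}_A(X,\cM)=0$ into the indecomposability of $\om^i(X)$ and membership of $X$ in the $\tau_n^{-}$-orbit is precisely the nontrivial content of \cite[Theorem 1]{VAS}, which rests on the Auslander--Reiten combinatorics of representation-directed algebras; your proposal acknowledges not carrying this out. Second, your justification of the containment $\cM\subseteq\cM'$ is circular: ``submodules of $T^L$ all lie in $\Sub(\cP^L)$'' is a tautology and says nothing about whether an indecomposable $M\in\cM_{\setminus\cP^L}$ can embed into a fracture $T^{(W)}$. What is actually needed is that such an $M$ would have support inside $\supp(W)$, hence lie in the foundation $\cF_W$, and then the tilting-module structure of $T^{(W)}$ over $\K\overrightarrow{A}_h$ yields a nonzero $\Ext^1$ between $T^{(W)}$ and $M$ unless $M\in\add T^{(W)}$ (the argument carried out explicitly in the proof of Proposition~\ref{prop:n-fractured is weakly n-ct}(c) elsewhere in the paper). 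In short, the plan names the correct ingredients but leaves open exactly the steps the paper outsources to \cite[Theorem 1]{VAS}, so it does not constitute a proof.
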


\begin{proof}
This is a straightforward generalization of the implication (a) and (b) implies (c) in \cite[Theorem 1]{VAS}.
\end{proof}

Before giving the next result, let us introduce one more piece of notation. For a module $M$ we write $M\isom P\oplus\underline{M}$ where $\underline{M}$\index[symbols]{M e@$\underline{M}$, $\overline{M}$} has no projective indecomposable direct summand and $M\isom \overline{M}\oplus I$ where $\overline{M}$\index[symbols]{M e@$\underline{M}$, $\overline{M}$} has no injective indecomposable direct summand.

For technical reasons, we focus on the case $n\geq 2$ which imposes no restriction since $1$-cluster tilting subcategories are trivially classified.

We are interested in knowing when gluing two algebras whose module categories admit $n$-fractured subcategories gives an algebra whose module category also admits an $n$-fractured subcategory for the glued fracturings. The following theorem gives the answer to this question. 

\begin{theorem}\cite[Theorem 3.16]{VAS2}\label{thrm:fractsubcat}
Let $n\geq 2$. Let $A$ be a rep\-re\-sen\-ta\-tion-di\-rect\-ed algebra with a fracturing $(T^L_A, T^R_A)$ and let $\cM_A\subseteq \m A$ be a $(T^L_A, T^R_A,n)$-fractured subcategory. Let $W\in\m A$ be a maximal left abutment and let $P \in \cF_W$ be a left abutment of height $h$. Moreover, let $B$ be a rep\-re\-sen\-ta\-tion-di\-rect\-ed algebra with a fracturing $(T^L_B, T^R_B)$ and let $\cM_B\subseteq \m B$ be a $(T^L_B, T^R_B,n)$-fractured subcategory. Let $J\in\m B$ be a maximal right abutment and let $I \in \cG_J$ be a right abutment of height $h$. Set $\La\coloneqq B\glue[P][I] A$. If $\underline{T}_A^{(W)}\in \cF_P$ and $\overline{T}_B^{(J)}\in \cG_I$ and furthermore
\begin{equation} \label{eq:compatibility}
\prescript{\La}{A}{\pi}_{\ast}\left(T_A^{(P)}\right) \isom \prescript{\La}{B}{\pi}_{\ast}\left(T_B^{(I)}\right),
\end{equation}
then $(T_\La^L, T_\La^R)=(T_B^L, T_B^R) \glue[P][I] (T_A^L, T_A^R)$ is a fracturing of $\La$ and  $\cM_{\La}=\add\left\{\prescript{\La}{A}{\pi}_\ast(\cM_A), \prescript{\La}{B}{\pi}_\ast(\cM_B)\right\}\subseteq \m\La$ is a $(T_\La^L, T_\La^R,n)$-fractured subcategory.
\end{theorem}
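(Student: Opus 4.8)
The plan is to prove this by invoking the characterization of $(T^L,T^R,n)$-fractured subcategories via condition (b) of Proposition \ref{prop:n-fractured is self-orthogonal}, and to transport this condition across the gluing using the description of the Auslander--Reiten quiver of $\La$ from Proposition \ref{prop:AR glued}. Since $\Gamma(\La)=\Gamma(B)\amalg_{\triangle}\Gamma(A)$ with the identification $\triangle = \PD = \DI$, every indecomposable $\La$-module is either of the form $\prescript{\La}{A}{\pi}_{\ast}(M)$ for $M\in\ind A$ or $\prescript{\La}{B}{\pi}_{\ast}(N)$ for $N\in\ind B$, and the two descriptions overlap precisely on the indecomposables whose class lies in $\triangle$. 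The key point is that syzygies, cosyzygies, and the $n$-Auslander--Reiten translations $\tn,\tno$ are all computable from $\Gamma(\La)$ together with the knowledge of which modules are projective/injective, and that (by Remark \ref{rem:gluing remarks}(b),(c)) the functors $\prescript{\La}{A}{\pi}_{\ast}$ and $\prescript{\La}{B}{\pi}_{\ast}$ send non-glued projectives to projectives and non-glued injectives to injectives.

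First I would verify condition (b1): $\cP^L_{\La}\subseteq \cM_{\La}$. The indecomposable summands of $\La$ split according to whether the corresponding vertex lies in $Q_A$, in $Q_B$, or in the glued part $\{1,\dots,h\}$; using Remark \ref{rem:gluing remarks}(b) one identifies $\epsilon_i\La$ with $\prescript{\La}{B}{\pi}_{\ast}(\epsilon_iB)$ or $\prescript{\La}{A}{\pi}_{\ast}(\epsilon_iA)$ appropriately, and by Remark \ref{rem:gluing remarks}(c) the non-abutment part $\cP_{\setminus\sABP}$ of $\La$ together with the glued fracture $T^L_{\La}$ (defined in (\ref{eq:glued fracturing definition})) is the image under $\prescript{\La}{A}{\pi}_{\ast}$ and $\prescript{\La}{B}{\pi}_{\ast}$ of the corresponding subcategories of $\m A$ and $\m B$. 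The compatibility hypothesis (\ref{eq:compatibility}), namely $\prescript{\La}{A}{\pi}_{\ast}(T^{(P)}_A)\isom\prescript{\La}{B}{\pi}_{\ast}(T^{(I)}_B)$, is exactly what is needed to make the two contributions at the glued abutment $\{1,\dots,h\}$ agree, so that $T^L_{\La}$ is well-defined and $\cP^L_{\La}\subseteq\add\{\prescript{\La}{A}{\pi}_{\ast}(\cP^L_A),\prescript{\La}{B}{\pi}_{\ast}(\cP^L_B)\}\subseteq\cM_{\La}$ using (b1) for $A$ and $B$. Dually $\cI^R_{\La}\subseteq\cM_{\La}$, and that $(T^L_{\La},T^R_{\La})$ is a fracturing follows from the fact (already recorded before the theorem statement) that each $T^{(W_{\La})}_{\ast}$ is a fracture.

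Next I would verify (b2) and (b3). Let $M\in\ind(\cM_{\La,\setminus\cP^L_{\La}})$. I claim $M$ is (the image under one of the $\prescript{\La}{\bullet}{\pi}_{\ast}$ of) an indecomposable in $\cM_{A,\setminus\cP^L_A}$ or $\cM_{B,\setminus\cP^L_B}$: indeed $M\in\cM_{\La}=\add\{\prescript{\La}{A}{\pi}_{\ast}(\cM_A),\prescript{\La}{B}{\pi}_{\ast}(\cM_B)\}$ and the hypotheses $\underline{T}^{(W)}_A\in\cF_P$, $\overline{T}^{(J)}_B\in\cG_I$ guarantee that an indecomposable lying "across" the gluing interface which is not in $\cP^L_{\La}$ corresponds to one not in $\cP^L_A$ (resp. $\cP^L_B$) on the relevant side — this is the technical heart and I expect the main obstacle to be here: one must carefully track, using the shape of $\PD$ in Proposition \ref{prop:abutments in AR-quiver}, that the $n$-step syzygy walk starting at $M$ within $\Gamma(\La)$ stays inside the copy of $\Gamma(A)$ or the copy of $\Gamma(B)$ (using that the only way to cross is through a projective, which the fracture hypotheses control), so that $\om^i_{\La}(M)\isom\prescript{\La}{\bullet}{\pi}_{\ast}(\om^i_{\bullet}(M'))$ and $\tn^{\La}(M)\isom\prescript{\La}{\bullet}{\pi}_{\ast}(\tn^{\bullet}(M'))$. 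Granting this, the $\Ext$-vanishing statement in (b2) transfers from $A$ (resp. $B$) to $\La$ because $\Ext$-groups are computed via projective resolutions and $\prescript{\La}{\bullet}{\pi}_{\ast}$ of a projective resolution, while not literally a projective resolution over $\La$, has cohomology controlled by the combinatorics just described; more precisely one uses Proposition \ref{prop:n-fractured with exts} to rephrase $\Ext^{1\sim n-1}$-vanishing in terms of membership in $\add\{\m A_{\setminus\Sub(T^L_A)},T^L_A\}$ and its dual, which again transfers cleanly under $\prescript{\La}{\bullet}{\pi}_{\ast}$ by Proposition \ref{prop:AR glued}. The dual argument gives (b3), and so $\cM_{\La}$ satisfies condition (b) of Proposition \ref{prop:n-fractured is self-orthogonal}, hence is a $(T^L_{\La},T^R_{\La},n)$-fractured subcategory. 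Throughout I would rely on the detailed bookkeeping already carried out in \cite{VAS2}, citing Theorem \ref{thrm:fractsubcat} itself as the statement being reproved here — in fact the cleanest exposition simply records that this is \cite[Theorem 3.16]{VAS2} and sketches the reduction above.
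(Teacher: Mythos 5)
The paper does not contain a proof of this theorem: it is stated as a citation to \cite[Theorem 3.16]{VAS2}, where the argument is carried out in full. Consequently there is no proof in this paper to compare your sketch against. Your outline — verifying condition~(b) of Proposition~\ref{prop:n-fractured is self-orthogonal} and transporting syzygies, cosyzygies, and the $n$-Auslander--Reiten translations across the gluing via Proposition~\ref{prop:AR glued} — is a plausible reconstruction of the strategy in the cited reference, and you were right to flag the syzygy-tracking across the interface as the technical heart. One point worth being more careful about even at the sketch level: the identification of $\cP^L_{\La}$ with the images $\prescript{\La}{A}{\pi}_\ast(\cP^L_A)$ and $\prescript{\La}{B}{\pi}_\ast(\cP^L_B)$ is not as direct as your (b1) step suggests. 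By Remark~\ref{rem:gluing remarks}(b), for $i\in\{1,\dots,h\}$ the projective $\epsilon_i\La$ is isomorphic to $\prescript{\La}{B}{\pi}_\ast(\epsilon_i B)$ but \emph{not} to $\prescript{\La}{A}{\pi}_\ast(\epsilon_i A)$ (the latter is not even projective over $\La$), so the contribution of the glued vertices to $\cP^L_{\La}$ comes from the $B$-side, while the injective contribution there comes from the $A$-side; the hypotheses $\underline{T}^{(W)}_A\in\cF_P$ and $\overline{T}^{(J)}_B\in\cG_I$, together with the compatibility (\ref{eq:compatibility}), are what reconcile these two descriptions, and so they are already needed in step~(b1), not only in (b2)/(b3).
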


The easiest case of applying Theorem \ref{thrm:fractsubcat} is when both $\cM_A$ and $\cM_B$ are actual $n$-cluster tilting subcategories and $P$ and $I$ have height $1$, that is they are simple projective and simple injective modules respectively. In this case, it follows that the assumptions of Theorem \ref{thrm:fractsubcat} are trivially satisfied. Furthermore, in this case, it is shown in \cite[Corollary 3.17]{VAS2} that $\cM_{\La}$ is an actual $n$-cluster tilting subcategory. For a generalization, see Proposition \ref{prop:gluing system of n-ct}.

Let $(\La_{v},P_{e},I_{e})_{v\in V_G,e\in E_G}$ be a gluing system. Our next step is to consider for each $v\in V_G$ an $n$-fractured subcategory $\cM_v\subseteq \m\La_v$ such that that for every $e:u\to v$ in $E_G$, the conditions of Theorem \ref{thrm:fractsubcat} are satisfied. Before give the precise definition, let us introduce one more piece of notation. Let $e:u\to v$ be an arrow in $G$. We denote by $W_e$ the unique (up to isomorphism) maximal left abutment of $\La_v$ such that $P_e\leq W_e$. Similarly we denote by $J_e$ the unique (up to isomorphism) maximal right abutment of $\La_u$ such that $I_e\leq J_e$. 

\begin{definition}\label{def:n-fractured system}
Let $n\geq 2$. Let $\cL=(\La_{v},P_{e},I_{e})_{v\in V_G,e\in E_G}$ be a gluing system on a directed tree $G$. For every $v\in V_G$, let $\left(T^L_v, T^R_v\right)$ be a fracturing of $\La_v$ and $\cM_v$ be a $(T^L_v,T^R_v,n)$-fractured subcategory of $\m\La_v$. We call $(\cM_v)_{v\in V_G}$\index[symbols]{(Mv)@$(\cM_v)_{v\in V_G}$} an \emph{$n$-fractured system}\index[definitions]{n-fractured system@$n$-fractured system} of $\cL$ if for every arrow $e:u\to v$ in $E_G$ we have that 
\begin{enumerate}[label=(\roman*)]
    \item $\underline{T}_A^{(W_{e})}\in\cF_{P_e}$ and $\overline{T}_B^{(J_{e})}\in \cG_{I_e}$, and
    \item $\prescript{\langle u,v\rangle}{\langle u\rangle}{\pi}_{\ast}\left(T_A^{(P_{e})}\right) \isom \prescript{\langle u,v\rangle}{\langle v \rangle}{\pi}_{\ast}\left(T_B^{(I_{e})}\right)$.
\end{enumerate}
In particular, we have that $(T^L_{\langle u,v\rangle}, T^{R}_{\langle u,v\rangle})\coloneqq\left(T^L_u, T^R_u\right)\glue \left(T^L_v, T^R_v\right)$ is a fracturing of $\La_{\langle u,v\rangle}$ and 
\[\cM_{\langle u,v\rangle}\coloneqq\add\left\{\prescript{\langle u,v\rangle}{\langle v\rangle}{\pi}_{\ast}\left(\cM_v\right), \prescript{\langle u,v \rangle}{\langle u \rangle}{\pi}_{\ast}\left(\cM_u\right)\right\}\subseteq \m\La_{\langle u,v\rangle}\]
is a $(T^L_{\langle u,v\rangle}, T^{R}_{\langle u,v\rangle}, n)$-fractured subcategory of $\m \La_{\langle u,v\rangle}$. If moreover for every $v\in V_G$ we have that
\begin{enumerate}
    \item[(iii)] for each maximal right abutment $J$ of $\La_u$ such that $T_u^{(J)}$ is not injective, there exists one arrow $e\in E_G$ with source $u$ such that $J_e\isom J$, and
    \item[(iv)] for each maximal left abutment $W$ of $\La_v$ such that $T_v^{(W)}$ is not projective, there exists one arrow $e\in E_G$ with target $v$ such that $W_e\isom W$, 
\end{enumerate}
then we say that the $n$-fractured system $(\cM_v)_{v\in V_G}$ is \emph{complete}\index[definitions]{complete $n$-fractured system}.
\end{definition}

Conditions (i) and (ii) of an $n$-fractured system allow us to glue $n$-fractured subcategories over two arrows in $G$ in any order. More precisely, let $u,v,w\in V_G$ be three vertices such that $\langle u,v,w\rangle$ is connected. A case by case analysis of the graphs \ref{case:C1}, \ref{case:C2} and \ref{case:C3} shows that the two different fracturings obtained depending on the order of the gluing coincide. More generally, if $H\in \dirI_G$ we denote by $\left(T_H^L,T_H^R\right)$ the fracturing of $\La_H$ obtained by repeatedly performing the gluings of fracturings induced by the edges $E_H$ of $H$ in any order. Similarly we denote by $\cM_H$ the $\left(T_H^L,T_H^R,n\right)$-fractured subcategory of $\m\La_H$ defined by $\cM_{H} = \add\left\{\prescript{H}{v}{\pi}_{\ast}\left(\cM_v\right) \mid v\in V_H\right\}$.

If $(\cM_v)_{v\in V_G}$ is an $n$-fractured system of a gluing system $\cL=(\La_{v},P_{e},I_{e})_{v\in V_G,e\in E_G}$, we set \[\Mks \coloneqq \add \left\{ \Pks_{v\ast}(\cM_v) \mid v\in V_G\right\} \subseteq \m\Cks.\]\index[symbols]{M c@$\Mks$}
Our aim is to show that if the $n$-fractured system $(\cM_v)_{v\in V_G}$ is complete, then $\Mks$ is an $n$-cluster tilting subcategory of $\m\Cks$. We start with the following proposition.

\begin{proposition}\label{prop:n-fractured is weakly n-ct}
Let $n\geq 2$, let $\cL=(\La_{v},P_{e},I_{e})_{v\in V_G,e\in E_G}$ be a gluing system on a directed tree $G$ and let $(\cM_v)_{v\in V_G}$ be an $n$-fractured system of $\cL$.
\begin{enumerate}[label=(\alph*)]
    \item For every $H\in\dirI_G$, we have $\Ext_{\La_H}^{1\sim n-1}\left(\cM_H,\cM_H\right)=0$.
    
    \item Let $H,K\in \dirI_G$ with $H\subseteq K$. Then $\prescript{K}{H}{\pi}_{\ast}(\cM_H)\subseteq \cM_K$.
\end{enumerate}
If moreover the system $(\cM_v)_{v\in V_G}$ is complete, then the following also hold.
\begin{enumerate}
    \item[(c)] If $M\in \m\La_H$ and $\Ext^{1\sim n-1}_{\La_K}\left(\prescript{K}{H}{\pi}_{\ast}(M),\cM_K\right)=0$ for all $K\in \dirI_G$ with $H\subseteq K$, then $M\in \cM_H$.
    
    \item[(d)] If $M\in \m\La_H$ and $\Ext^{1\sim n-1}_{\La_K}\left(\cM_K,\prescript{K}{H}{\pi}_{\ast}(M)\right)=0$ for all $K\in \dirI_G$ with $H\subseteq K$, then $M\in \cM_H$.
\end{enumerate}
\end{proposition}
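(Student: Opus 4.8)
The plan is to prove each of the four statements (a)--(d) by reducing everything to a statement about a single finite algebra $\La_K$ and then invoking the structural results of \cite{VAS2}, in particular Theorem \ref{thrm:fractsubcat} and Proposition \ref{prop:n-fractured with exts}. The starting observation is that for every $H\in\dirI_G$ the subcategory $\cM_H=\add\{\prescript{H}{v}{\pi}_{\ast}(\cM_v)\mid v\in V_H\}$ is a $(T^L_H,T^R_H,n)$-fractured subcategory of $\m\La_H$; this is exactly what the discussion preceding the proposition records, obtained by iterating Theorem \ref{thrm:fractsubcat} along the edges of $H$, the order-independence being guaranteed by conditions (i) and (ii) of Definition \ref{def:n-fractured system}. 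So in all four parts I may freely use that $\cM_H$ is $n$-fractured.

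For part (a): since $\cM_H$ is a $(T^L_H,T^R_H,n)$-fractured subcategory, Proposition \ref{prop:n-fractured with exts} (or directly Proposition \ref{prop:n-fractured is self-orthogonal}, since $\Ext^{1\sim n-1}_{\La_H}(T^{(W)}_H,T^{(W)}_H)=0$ for each maximal left abutment $W$) gives immediately $\Ext^{1\sim n-1}_{\La_H}(\cM_H,\cM_H)=0$; this is essentially the content of the description of $\cM_H$ in Proposition \ref{prop:n-fractured with exts} restricted to $X\in\cM_H$. For part (b): writing $\cM_H=\add\{\prescript{H}{v}{\pi}_{\ast}(\cM_v)\mid v\in V_H\}$ and using the cocycle formula $\prescript{K}{v}{\pi}=\prescript{H}{v}{\pi}\comp\prescript{K}{H}{\pi}$ from Remark \ref{rem:gluing over subgraph remarks}(b), we get $\prescript{K}{H}{\pi}_{\ast}(\prescript{H}{v}{\pi}_{\ast}(\cM_v))=\prescript{K}{v}{\pi}_{\ast}(\cM_v)\subseteq \cM_K$ for each $v\in V_H\subseteq V_K$, so $\prescript{K}{H}{\pi}_{\ast}(\cM_H)\subseteq\cM_K$ because $\cM_K$ is additive.

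The substance is in parts (c) and (d), which I will treat dually, so I focus on (c). Suppose $M\in\m\La_H$ satisfies $\Ext^{1\sim n-1}_{\La_K}(\prescript{K}{H}{\pi}_{\ast}(M),\cM_K)=0$ for all $K\supseteq H$; I want $M\in\cM_H$. By Proposition \ref{prop:n-fractured with exts} applied in $\m\La_H$, it suffices to show $M$ lies in $\add\{\m\La_{H\setminus\Sub(T^L_H)},T^L_H\}$ and then $\Ext^{1\sim n-1}_{\La_H}(M,\cM_H)=0$; the latter is the case $K=H$. So the real task is: if some indecomposable summand $X$ of $M$ is of the form $X\in\Sub(T^L_H)$ but $X\not\cong$ a summand of $T^L_H$, derive a contradiction by choosing $K$ large enough. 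Here completeness enters: condition (iv) says that for each maximal left abutment $W$ of some $\La_v$ with $T_v^{(W)}$ not projective there is an edge $e$ into $v$ with $W_e\cong W$; gluing along that edge replaces the ``non-free'' part of the fracture by something closer to a genuine projective. The idea is that such a bad summand $X$ of $M$, pushed forward via $\prescript{K}{H}{\pi}_{\ast}$, eventually acquires a nonzero extension with a module in $\cM_K$ arising from the newly glued $\La_u$: concretely, if $X$ is a submodule of $T^{(W)}$ but not a summand, and $W$ is ``non-projective'' witnessing incompleteness locally, then after the gluing at the edge $e$ with $W_e\cong W$ the module $\prescript{K}{H}{\pi}_{\ast}(X)$ fails the self-orthogonality test in $\m\La_K$, i.e. $\Ext^{1\sim n-1}_{\La_K}(\prescript{K}{H}{\pi}_{\ast}(X),\prescript{K}{u}{\pi}_{\ast}(\cM_u))\neq 0$, contradicting the hypothesis. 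Since $G$ is a directed tree and $H$ is finite, only finitely many such edges are needed, so a single sufficiently large $K\in\dirI_G$ detects all bad summands simultaneously. The main obstacle, and the step I expect to require the most care, is precisely this extension-detection argument: making rigorous that a summand of $M$ which is ``too small'' relative to $T^L_H$ becomes genuinely obstructed after finitely many gluings dictated by completeness — this needs a careful bookkeeping of which fractures $T^{(W)}$ are non-projective, how gluing at an edge $e$ with $W_e\cong W$ modifies the relevant $\cP^L$, $\cI^R$ and foundation $\PD$, and an explicit short exact sequence realizing the nonzero $\Ext$. Once that is in place, (d) follows by the dual argument using condition (iii) of completeness and the $\Fac(T^R)$-side description in Proposition \ref{prop:n-fractured with exts}.
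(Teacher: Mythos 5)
Your parts (a) and (b) are exactly the paper's argument, and your outer skeleton for (c) is also correct: reduce to an indecomposable $M$, use the case $K=H$ to get $\Ext^{1\sim n-1}_{\La_H}(M,\cM_H)=0$, invoke Proposition \ref{prop:n-fractured with exts} so that the only obstruction is $M\in\Sub(T^L_H)\setminus\add(T^L_H)$, and then use completeness to pick a larger $K$ that detects this. Where you leave a gap — and where you yourself flag one — is precisely the detection mechanism, and the sketch you give for it is slightly off in a way that matters.

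You describe the obstruction as $\prescript{K}{H}{\pi}_{\ast}(M)$ ``acquiring'' a nonzero extension with some $\prescript{K}{u}{\pi}_{\ast}(\cM_u)$ coming from the newly glued vertex. The paper's mechanism is different and cleaner. First, one shows entirely inside $\m\La_H$, with no reference to gluing or completeness, that $\Ext^1_{\La_H}(T^{(W_H)},M)\neq 0$: viewing $T^{(W_H)}$ as a tilting $\K\overrightarrow{A}_h$-module and $M$ as a proper indecomposable submodule of it that is not a summand, if $\Ext^1$ vanished then $M\in\Fac(T^{(W_H)})\cap\Sub(T^{(W_H)})$, so there are $T_0\twoheadrightarrow M\hookrightarrow T_1$ with $T_0,T_1$ indecomposable summands of $T^{(W_H)}$ and $M\not\isom T_0,T_1$; the shape of $\Gamma(\K\overrightarrow{A}_h)$ from Proposition \ref{prop:abutments in AR-quiver} then produces a nonzero map $\tau^-(T_0)\to T_1$ not factoring through a projective, and AR duality gives $\Ext^1_{\K\overrightarrow{A}_h}(T_1,T_0)\neq 0$, contradicting tilting. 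This yields a nonprojective summand $T$ of $\underline{T}^{(W_H)}$ with $\Ext^1_{\La_H}(T,M)\neq 0$. The extension is therefore there all along; nothing is created by the gluing.

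What completeness buys is not a new extension but the fact that $\prescript{K}{H}{\pi}_{\ast}(T)$ falls out of $\cP^L_K$. Concretely, if $t$ is the top of $T$ and $K=\langle V_H,V(t)\rangle$, then $\prescript{K}{H}{\pi}_{\ast}(T)\in\cP^L_K$ would force $T$ to come from some non-projective fracture $T^{(W_v)}$; completeness (iv) supplies an arrow $e$ into $v$ with $W_e\isom W_v$ and a nontrivial gluing, so $\underline{T}^{(W_K)}\in\cF_{P_e}$ forces $s(e)\in V(t)\subseteq V_K$, contradicting nontriviality. Once $\prescript{K}{H}{\pi}_{\ast}(T)\in\ind((\cM_K)_{\setminus\cP^L_K})$, the transferred extension $\Ext^1_{\La_K}(\prescript{K}{H}{\pi}_{\ast}(T),\prescript{K}{H}{\pi}_{\ast}(M))\neq 0$ feeds into Proposition \ref{prop:n-fractured is self-orthogonal}(b2), which produces $\Ext^{1\sim n-1}_{\La_K}(\prescript{K}{H}{\pi}_{\ast}(M),\tn\prescript{K}{H}{\pi}_{\ast}(T))\neq 0$ with $\tn\prescript{K}{H}{\pi}_{\ast}(T)\in\cM_K$ by (a2) — and \emph{this} is the contradiction with the hypothesis. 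So the target of the nonzero $\Ext$ is $\tn$ of a module in $\cM_K\setminus\cP^L_K$, not a pushforward of $\cM_u$ from the new vertex. You should also note that only one $K$ per bad (indecomposable) $M$ is needed, not a $K$ handling all summands at once, since one first reduces to $M$ indecomposable by additivity of $\Ext$ and closure of $\cM_H$ under summands.
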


\begin{proof}
\begin{enumerate}[label=(\alph*)]
    \item Follows from Proposition \ref{prop:n-fractured with exts} since $\cM_H$ is a $\left(T^L_H, T^R_H,n\right)$-fractured subcategory of $\m\La_H$.
    
    \item Using (\ref{eq:connecting formula for subgraphs}) we have
    \begin{align*}
        \prescript{K}{H}{\pi}_{\ast}(\cM_H) &= \prescript{K}{H}{\pi}_{\ast}\left(\add\left\{\prescript{H}{v}{\pi}_{\ast}(\cM_v) \mid v\in V_H\right\}\right) \\ 
        &=\add\left\{\prescript{K}{H}{\pi}_{\ast}\prescript{H}{v}{\pi}_{\ast}(\cM_v)\mid v\in V_H\right\} \\
        &=\add\left\{\prescript{K}{v}{\pi}_{\ast}(\cM_v)\mid v\in V_H\right\} \\
        &\subseteq \add\left\{\prescript{K}{v}{\pi}_{\ast}(\cM_v)\mid v\in V_K\right\} \\
        &=\cM_K.
    \end{align*}
    
    \item Let $M\in\m\La_H$ and assume that for all $K\in\dirI_G$ with $H\subseteq K$, we have \[\Ext^{1\sim n-1}_{\La_K}\left(\prescript{K}{H}{\pi}_{\ast}(M),\cM_K\right)=0.\] 
    We show that $M\in\cM_H$. By additivity of $\Ext$ and since $\cM_H$ is closed under direct sums and summands, we may assume that $M$ is indecomposable. Let $K\in\dirI_G$ with $H\subseteq K$. Since $\prescript{K}{H}{\pi}_{\ast}$ is exact, for all $r\in\{1,\dots,n-1\}$ we have
    \[\Ext_{\La_H}^{r}\left(M,\cM_H\right) \isom \Ext_{\La_K}^{r}\left(\prescript{K}{H}{\pi}_{\ast}(M),\prescript{K}{H}{\pi}_{\ast}(\cM_H)\right).\]
    Since $\prescript{K}{H}{\pi}_{\ast}(\cM_H)\subseteq \cM_K$ by (b), we conclude that $\Ext_{\La_H}^{1\sim n-1}\left(M,\cM_H\right) = 0$. Hence by Proposition \ref{prop:n-fractured with exts} it is enough to show that $M\in\add\left\{{\m \La_H}_{\setminus \Sub(T_H^L)}, T_H^L\right\}$. 
    
    Assume towards a contradiction that $M\in \Sub(T^L_H)$ but $M\not\in\add(T^L_H)$. 
    It follows that $M$ is isomorphic to an indecomposable submodule of a fracture $T^{(W_H)}$ of a maximal left abutment $W_H$ of $\La_H$. We claim that $\Ext^1_{\La_H}\left(T^{(W_H)},M\right)\neq 0$. 
    
    To show this set $h\coloneqq\height(W_H)$. We view $M$ as a $\K \overrightarrow{A}_{h}$- module and $T^{(W_H)}$ as a tilting $\K \overrightarrow{A}_{h}$-module. Under this identification, it is enough to show that $\Ext^1_{\K \overrightarrow{A}_{h}}\left(T^{(W_H)},M\right)\neq 0$. Assume to a contradiction that $\Ext^1_{\K \overrightarrow{A}_{h}}\left(T^{(W_H)},M\right)= 0$. Then we have that $M\in\Fac\left(T^{(W_H)}\right)$ by general tilting theory, see for example \cite[Theorem VI.2.5]{ASS}. Since we also have $M\in \Sub\left(T^{(W_H)}\right)$, we have nonzero maps $T_0\twoheadrightarrow M \hookrightarrow T_1$ where $T_0,T_1\in\add\left(T^{(W_H)}\right)$. Without loss of generality, we assume that $T_0$ and $T_1$ are indecomposable. Indeed, the Aus\-lan\-der--Rei\-ten quiver $\Gamma(\K\overrightarrow{A}_h)$ is described in Proposition \ref{prop:abutments in AR-quiver}. It follows by this description that if no indecomposable summand of $T_0$ has an epimorphism onto $M$ then all morphisms from $T_0$ to $M$ factor through $\rad(M)$; similarly for $T_1$. Then, since $M\not\in\add(T^L_H)$, we have $M\not\isom T_0$ and $M\not\isom T_1$. Again by the description of $\Gamma(\K \overrightarrow{A}_h)$ in Proposition \ref{prop:abutments in AR-quiver} it follows that there exists a nonzero map $\tau^-(T_0)\to T_1$ which does not factor through a projective $\K\overrightarrow{A}_h$-module. Hence by the Aus\-lan\-der--Rei\-ten duality \cite[Theorem IV.2.13]{ASS} we have
    \[\Ext^1_{\K \overrightarrow{A}_{h}}(T_1,T_0)\isom \D \underline{\Hom}_{\K\overrightarrow{A}_h}(\tau^{-}(T_0),T_1)\neq 0,\] 
    which contradicts the fact that $T^{(W_H)}$ is a tilting $\K \overrightarrow{A}_{h}$-module.
    
    Hence indeed $\Ext^1_{\La_H}(T^{(W_H)},M)\neq 0$. In particular there exists a nonprojective indecomposable summand $T$ of $\underline{T}^{(W_H)}$ such that $\Ext^1_{\La_H}(T,M)\neq 0$. Let $\epsilon_t\La_H$ be the projective cover of $T$ for some $t\in \left(Q_H\right)_0$ and set $K\coloneqq\langle V_H, V(t)\rangle$. We now claim that $\prescript{K}{H}{\pi}_{\ast}(T)\not\in \cP^L_K$. 
    
    We again show this claim by assuming instead towards a contradiction that $\prescript{K}{H}{\pi}_{\ast}(T)\in \cP^L_K$. Then since $\prescript{K}{H}{\pi}_{\ast}(T)$ is not projective, it follows that it appears in a fracture $T^{(W_K)}$ as a nonprojective indecomposable summand. Notice that by construction the maximal left abutment $W_K$ and the fracture $T^{(W_K)}$ are of the form $W_K\isom \prescript{K}{v}{\pi}_{\ast}(W_v)$ and $T^{(W_K)}\isom \prescript{K}{v}{\pi}_{\ast}\left(T^{(W_v)}\right)$ for some vertex $v\in V_K$ and some maximal left abutment $W_v$ of $\La_v$. 
    Since $T^{(W_v)}$ is not projective and since the system is complete, it follows by Definition \ref{def:n-fractured system}(iv) that there exists an arrow $e\in E_G$ with target $v$ such that $W_e\isom W_v$. Without loss of generality and by Definition \ref{def:gluing system}(iii) we can assume that the gluing induced by $e$ is not trivial. Moreover, by Definition \ref{def:n-fractured system}(i) we have that $\underline{T}^{(W_K)}\in\cF_{P_e}$. In particular we have $t\in\supp(T)\subseteq\supp(P_e)$ and so $s(e)\in V(t)$. But this contradicts the gluing being nontrivial since $V(t)\subseteq V_K$.
    
    Hence we have that $\prescript{K}{H}{\pi}_{\ast}(T)\not\in\cP^L_K$. Since $H\subseteq K$ we have
    \begin{equation}\label{eq:zero ext later}
    \Ext^{1\sim n-1}_{\La_K}\left(\prescript{K}{H}{\pi}_{\ast}(M),\cM_K\right)=0.
    \end{equation}
    Moreover, by exactness of $\prescript{K}{H}{\pi}_{\ast}$, we also have
    \[\Ext^{1}_{\La_K}\left(\prescript{K}{H}{\pi}_{\ast}(T),\prescript{K}{H}{\pi}_{\ast}(M)\right) \isom \Ext^1_{\La_H}(T,M) \neq 0.\]
    In particular, since $\prescript{K}{H}{\pi}_{\ast}(T)\in\ind\left(\left(\cM_K\right)_{\setminus \cP^L_K}\right)$, we have by Proposition \ref{prop:n-fractured is self-orthogonal}(b2) that
    \[\Ext^{1\sim n-1}_{\La_K}\left(\prescript{K}{H}{\pi}_{\ast}(M), \tn\prescript{K}{H}{\pi}_{\ast}(T)\right)\neq 0.\]
    But by Proposition \ref{prop:n-fractured is self-orthogonal}(a2) we have that $\tn\prescript{K}{H}{\pi}_{\ast}(T)\in \cM_K$, which contradicts (\ref{eq:zero ext later}). It follows that $M\in \add\left\{\m A_{\setminus \Sub(T^L)}, T^L\right\}$, as required.
    
    \item Similar to (c). \qedhere
\end{enumerate}
\end{proof}

The main result of this section is the following theorem.

\begin{theorem}\label{thrm:complete n-fractured gives n-cluster tilting}
Let $n\geq 2$, let $\cL=(\La_{v},P_{e},I_{e})_{v\in V_G,e\in E_G}$ be a gluing system on a directed tree $G$ and let $(\Cks,\Pks_H,\Tks_{HK})$ be the firm source of the induced $\Cat$-inverse system $\left(\cC_H, F_{HK}, \theta_{HKL}\right)$ over $\dirI_G$. Let $(\cM_v)_{v\in V_G}$ be a complete $n$-fractured system of $\cL$ and identify $\cM_v$ with its equivalent subcategory in $\m\cC_H \equivalent \m\La_H$. Then $\left(\cM_H\right)$ is an asymptotically weakly $n$-cluster tilting system of $\left(\m\cC_H, F_{HK\ast}, \theta_{HKL\ast}\right)$ and $\Mks =\add \left\{ \Pks_{v\ast}(\cM_v) \mid v\in V_G\right\} \subseteq \m\Cks$ is an $n$-cluster tilting subcategory. If moreover $G$ is finite, then $\cM_G$ is an $n$-cluster tilting subcategory of $\m\La_G$.
\end{theorem}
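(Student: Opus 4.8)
The plan is to deduce the theorem from the general results of Section \ref{sec:admissible targets weakly n-ct} together with Proposition \ref{prop:n-fractured is weakly n-ct}. First I would fix the equivalences $\m\cC_H\equivalent\m(\proj\La_H)\equivalent\m\La_H$ under which $F_{HK\ast}$ corresponds to $\prescript{K}{H}{\pi}_{\ast}$, and transport each $\left(T^L_H,T^R_H,n\right)$-fractured subcategory $\cM_H=\add\{\prescript{H}{v}{\pi}_{\ast}(\cM_v)\mid v\in V_H\}\subseteq\m\La_H$ to $\m\cC_H$. Then I would verify the four conditions of Definition \ref{def:asymptotically weakly n-cluster tilting} for the family $\left(\cM_H\right)_{H\in\dirI_G}$ inside the direct system $\left(\m\cC_H,F_{HK\ast},\theta_{HKL\ast}\right)$: condition (i) is Proposition \ref{prop:n-fractured is weakly n-ct}(a); condition (ii) is Proposition \ref{prop:n-fractured is weakly n-ct}(b) read through the equivalences; and conditions (iii) and (iv) are Proposition \ref{prop:n-fractured is weakly n-ct}(c) and (d), where completeness of the $n$-fractured system enters. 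This shows that $\left(\cM_H\right)$ is an asymptotically weakly $n$-cluster tilting system.

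Next I would recall that $(\m\Cks,\Pks_{H\ast},\Tks_{HK\ast})$, together with the adjunctions $(\Pks_H^{\ast},\Pks_{H\ast})$ and $(\Pks_{H\ast},\Pks_{H}^{!})$, has just been shown to be an admissible target of $\left(\m\cC_H,F_{HK\ast},\theta_{HKL\ast}\right)$, that $\m\Cks$ is locally bounded by Proposition \ref{prop:indecomposables in firm source}(f), and that each $\cM_H$ is functorially finite in $\m\cC_H\equivalent\m\La_H$ because $\La_H$ is representation-finite and so $\cM_H$ has only finitely many indecomposable objects. Corollary \ref{cor:n-ct if locally bounded} then yields that $\add\{\Pks_{H\ast}(\cM_H)\mid H\in\dirI_G\}$ is an $n$-cluster tilting subcategory of $\m\Cks$. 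It remains to identify this with $\Mks$: for $v\in V_H$ the natural isomorphism $\Pks_{v\ast}\isom\Pks_{H\ast}\comp F_{vH\ast}$ together with $F_{vH\ast}(\cM_v)=\prescript{H}{v}{\pi}_{\ast}(\cM_v)$ and $\cM_H=\add\{\prescript{H}{v}{\pi}_{\ast}(\cM_v)\mid v\in V_H\}$ give $\Pks_{H\ast}(\cM_H)=\add\{\Pks_{v\ast}(\cM_v)\mid v\in V_H\}$, so taking additive closures over $\dirI_G$ respectively over $V_G$ yields $\add\{\Pks_{H\ast}(\cM_H)\mid H\in\dirI_G\}=\Mks$.

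For the final assertion, if $G$ is finite then it is the maximal element of $\dirI_G$, so $\Qks=Q_G$ and $\Rks=\cR_G$; Corollary \ref{cor:equivalence of quiver and projectives} then gives $\proj(\K Q_G/\cR_G)\equivalent\Cks$, hence an equivalence $\m\La_G\equivalent\m\Cks$ under which $\Pks_{v\ast}$ corresponds to $\prescript{G}{v}{\pi}_{\ast}$. Consequently $\Mks$ corresponds to $\cM_G=\add\{\prescript{G}{v}{\pi}_{\ast}(\cM_v)\mid v\in V_G\}$, which is therefore an $n$-cluster tilting subcategory of $\m\La_G$.

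Since the substantive homological content is already packaged in Proposition \ref{prop:n-fractured is weakly n-ct} and in the admissible-target construction, the proof is mostly an assembly argument. I expect the only points requiring care to be the faithful translation of Proposition \ref{prop:n-fractured is weakly n-ct}(c)--(d) into conditions (iii)--(iv) of Definition \ref{def:asymptotically weakly n-cluster tilting} (matching the quantifier over subgraphs $K\supseteq H$ with the one over $j>i$ in the directed set $\dirI_G$, using that $\prescript{K}{H}{\pi}_{\ast}$ is exact and fully faithful and that $\cM_H$ maps into $\cM_K$), and the bookkeeping identifying $\add\{\Pks_{H\ast}(\cM_H)\}$ with $\Mks$.
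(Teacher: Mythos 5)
Your proposal is correct and follows the same route as the paper: check the four conditions of Definition \ref{def:asymptotically weakly n-cluster tilting} via Proposition \ref{prop:n-fractured is weakly n-ct}, then invoke Corollary \ref{cor:n-ct if locally bounded} using that $\m\Cks$ is locally bounded (Proposition \ref{prop:indecomposables in firm source}(f)) and each $\cM_H$ is functorially finite (representation-finiteness of $\La_H$), and finally pass to $\m\La_G\equivalent\m\Cks$ when $G$ is finite via Corollary \ref{cor:equivalence of quiver and projectives}. The one place you are more careful than the paper is the identification $\add\{\Pks_{H\ast}(\cM_H)\mid H\in\dirI_G\}=\Mks$ using $\Pks_{\langle v\rangle\ast}\isom\Pks_{H\ast}\comp F_{\langle v\rangle H\ast}$ and $F_{\langle v\rangle H\ast}(\cM_v)=\prescript{H}{v}{\pi}_{\ast}(\cM_v)$; the paper leaves this implicit, and spelling it out is a genuine improvement in rigour rather than a deviation.
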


\begin{proof}
That $\left(\cM_H\right)$ is an asymptotically weakly $n$-cluster tilting system of  $\left(\m\cC_H, F_{HK\ast}, \theta_{HKL\ast}\right)$ follows immediately by Proposition \ref{prop:n-fractured is weakly n-ct}. To show that $\Mks$ is an $n$-cluster tilting subcategory it is enough to show that the conditions of Corollary \ref{cor:n-ct if locally bounded} are satisfied. We have that $\m\Cks$ is locally bounded by Proposition \ref{prop:indecomposables in firm source}(f). Finally, for every $H\in\dirI_G$ the subcategory $\cM_H\subseteq \m\La_H$ is functorially finite as it is a subcategory of the module category of a rep\-re\-sen\-ta\-tion-fi\-nite algebra.

If $G$ is finite, then notice that we have $\Qks=Q_G$ and $\Rks=\cR_G$. Then, using Corollary \ref{cor:equivalence of quiver and projectives} we have
    \begin{align*}
        \m\Cks &\equivalent \m(\proj(\K \Qks/\Rks)) =\m(\proj(\K Q_G/\cR_G) \equivalent \m(\K Q_G/\cR_G) =\m \La_G,
    \end{align*}
    and the above equivalence restricts to an equivalence $\Mks\equivalent \cM_G$. It follows that $\cM_G$ is an $n$-cluster tilting subcategory of $\m\La_G$.
\end{proof}

\section{Applications}\label{sec:applications}

In this section we apply the results of Section \ref{sec:asymptotically weakly n-cluster tilting systems from algebras} to produce abelian categories that admit $n$-cluster tilting subcategories. In many cases we show how we can obtain actual fi\-nite-di\-men\-sion\-al algebras such that their module category admits an $n$-cluster tilting subcategories.

\subsection{Complete \texorpdfstring{$n$}{n}-fractured systems from \texorpdfstring{$n$}{n}-cluster tilting subcategories}

In this short section we show how we can construct a complete $n$-fractured system starting from rep\-re\-sen\-ta\-tion-di\-rect\-ed algebras whose module categories admit $n$-cluster tilting subcategories.

For a bound quiver algebra $\La=\K Q/\cR$, we denote by $\sources_{\La}=\sources$\index[symbols]{sLambda@$\sources_{\La}$} the number of sources in the quiver $Q$ and by $\sinks_{\La}=\sinks$\index[symbols]{T a@$\sinks_{\La}$} the number of sinks in the quiver $Q$. 

Using Theorem \ref{thrm:complete n-fractured gives n-cluster tilting}, we have the following proposition which gives a way of constructing $n$-cluster tilting subcategories which generalizes \cite[Corollary 3.17]{VAS2}.

\begin{proposition}\label{prop:gluing system of n-ct}
Let $G$ be a directed tree and $n\geq 2$. Let $\cL=(\La_{v},P_{e},I_{e})_{v\in V_G,e\in E_G}$ be a gluing system on $G$ such that $P_e$ and $I_e$ are simple for every $e\in E_G$. Let also $\cM_v\subseteq \m\La_v$ be an $n$-cluster tilting subcategory. Then the following statements hold.
\begin{enumerate}[label=(\alph*)]
    \item The collection $\left(\cM_{v}\right)_{v\in V_G}$ is a complete $n$-fractured system of $\cL$.
    
    \item $\Mks$ is an $n$-cluster tilting subcategory of $\m\Cks$.
    
    \item If moreover $G$ is finite, then $\cM_G$ is an $n$-cluster tilting subcategory of $\m\La_G$.
\end{enumerate}
\end{proposition}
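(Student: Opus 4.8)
The plan is to deduce this proposition directly from Theorem \ref{thrm:complete n-fractured gives n-cluster tilting}, so the only real work is verifying part (a): that $\left(\cM_v\right)_{v\in V_G}$, with the canonical fracturings coming from $\cM_v$ being $n$-cluster tilting, actually forms a \emph{complete} $n$-fractured system of $\cL$. Once (a) is established, parts (b) and (c) are immediate: (b) is the assertion that $\Mks$ is $n$-cluster tilting, which is exactly the conclusion of Theorem \ref{thrm:complete n-fractured gives n-cluster tilting} applied to the complete $n$-fractured system from (a), and (c) is the final sentence of that same theorem in the case $G$ finite.

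For part (a), first I would recall (Remark \ref{rem:generalization of n-ct}) that since each $\cM_v\subseteq\m\La_v$ is $n$-cluster tilting, it is $(T^L_v,T^R_v,n)$-fractured for the choice $T^L_v\isom\La_v$, $T^R_v\isom D(\La_v)$, equivalently $\cP^L_v=\cP_v$ and $\cI^R_v=\cI_v$. So each $\La_v$ comes equipped with a canonical fracturing, and I need to check conditions (i)--(iv) of Definition \ref{def:n-fractured system} for every arrow $e:u\to v$ in $E_G$. The key simplifying input is the hypothesis that $P_e$ and $I_e$ are simple, i.e. of height $1$; then $P_e$ is a simple projective and $I_e$ a simple injective, so the maximal left abutment $W_e$ containing $P_e$ has height $1$ as well and equals $P_e$ itself, and dually $J_e=I_e$. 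For condition (i): $T^{(W_e)}_v$ is a summand of $T^L_v\isom\La_v$ supported on the foundation of $W_e$; since that foundation is a single vertex, $T^{(W_e)}_v\isom W_e=P_e$ is projective, hence $\underline{T}^{(W_e)}_v=0\in\cF_{P_e}$ trivially, and dually $\overline{T}^{(J_e)}_u=0\in\cG_{I_e}$. Condition (ii) is the compatibility equation $\prescript{\langle u,v\rangle}{\langle u\rangle}{\pi}_{\ast}(T^{(P_e)}_u)\isom\prescript{\langle u,v\rangle}{\langle v\rangle}{\pi}_{\ast}(T^{(I_e)}_v)$; because $P_e$ and $I_e$ have height $1$, both $T^{(P_e)}_u$ and $T^{(I_e)}_v$ are the relevant simple module over $\K\overrightarrow{A}_1=\K$, so both sides are (isomorphic to) the simple $\La_{\langle u,v\rangle}$-module at the glued vertex — this is exactly the content of \cite[Corollary 3.17]{VAS2}, whose argument I would cite or reproduce in one line. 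For conditions (iii) and (iv): since every $T^{(J)}_u$ with $J$ a maximal right abutment is a summand of $D(\La_u)$ supported on $\DI$, and we are in the $n$-cluster tilting case where $T^R_u\isom D(\La_u)$, every such $T^{(J)}_u$ is in fact injective (it is the injective module $J$ itself when $J$ has height $1$, and more generally $\Fac$-generated... but here one should be careful). The cleanest route is to observe that when $T^R_u\isom D(\La_u)$ we have $\cI^R_u=\cI_u$, so there are \emph{no} maximal right abutments $J$ with $T^{(J)}_u$ non-injective, making condition (iii) vacuously satisfied, and dually condition (iv).

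The step I expect to be the main obstacle is precisely pinning down condition (ii), the compatibility isomorphism: one must check carefully that when gluing along height-$1$ abutments the relevant restricted modules $\prescript{\langle u,v\rangle}{\langle u\rangle}{\pi}_{\ast}(T^{(P_e)}_u)$ and $\prescript{\langle u,v\rangle}{\langle v\rangle}{\pi}_{\ast}(T^{(I_e)}_v)$ coincide, using Remark \ref{rem:gluing remarks}(b) to identify each as the simple module supported at the single glued vertex $a_1=b_1$. This is where the height-$1$ hypothesis on $P_e$ and $I_e$ is genuinely used and where one must invoke the identification $\PD=\DI=\Gamma(\K\overrightarrow{A}_h)$ with $h=1$. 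All remaining conditions reduce either to triviality (as in (i)) or to the observation $\cP^L_v=\cP_v$, $\cI^R_v=\cI_v$ (as in (iii), (iv)), and I would only need to spell out the short verification for these. Having checked (a), I would write: \emph{Part (b) now follows by applying Theorem \ref{thrm:complete n-fractured gives n-cluster tilting} to the complete $n$-fractured system of part (a), and part (c) follows from the last assertion of the same theorem.}
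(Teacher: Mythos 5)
Your proposal follows the same route as the paper's proof: reduce everything to part (a) and appeal to Theorem~\ref{thrm:complete n-fractured gives n-cluster tilting} for (b) and (c), then check conditions (i)--(iv) of Definition~\ref{def:n-fractured system} using the fact that in the $n$-cluster tilting case $T^L_v\isom\La_v$ is projective and $T^R_v\isom D(\La_v)$ is injective. Your treatment is more detailed than the paper's one-line justification, and the handling of conditions (iii) and (iv) (vacuous because $\cP^L_v=\cP_v$ and $\cI^R_v=\cI_v$) is exactly right.

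There is however one false intermediate claim that should be removed: you assert that because $P_e$ has height $1$, the maximal left abutment $W_e$ containing $P_e$ also has height $1$ and equals $P_e$. This is wrong. If, say, $\La_v=\K\overrightarrow{A}_h$ with $h>1$, the simple projective $P(h)$ has height $1$ while the maximal left abutment $P(1)$ has height $h$; in general $P_e\leq W_e$ allows $\height(W_e)$ to be arbitrarily large. Your conclusion $\underline{T}^{(W_e)}_v=0$ is nevertheless correct, but the reason is simply that $T^{(W_e)}_v$ is a direct summand of $T^L_v\isom\La_v$ and hence projective — no claim about the height of $W_e$ is needed or true. The verification of condition~(ii) does use only the height-$1$ hypothesis on $P_e$ and $I_e$ themselves (not on $W_e$, $J_e$), so it survives unchanged once the erroneous sentence about $W_e$ is deleted.
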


\begin{proof}
We only need to prove (a) since then (b) and (c) follow immediately by Theorem \ref{thrm:complete n-fractured gives n-cluster tilting}. We show that the conditions of Definition \ref{def:n-fractured system} are satisfied for $\left(\cM_v\right)_{v\in V_G}$. Since $\cM_v$ is an $n$-cluster tilting subcategory, it follows that $\cM_v$ is a $\left(\La_v, D(\La_v), n\right)$-fractured subcategory by Remark \ref{rem:generalization of n-ct}. In particular, every left fracture is either projective or injective and hence conditions (i)-(iv) are trivially satisfied.
\end{proof}

\begin{remark}\label{rem:gluing system needs sources and sinks}
We explain how to find a gluing system that satisfies the assumptions of Proposition \ref{prop:gluing system of n-ct}. Let $G$ be a directed tree and $n\geq 2$. For each vertex $v\in V_G$ let $\La_v=\K Q_v/\cR_v$ be a rep\-re\-sen\-ta\-tion-di\-rect\-ed algebra whose module category admits an $n$-cluster tilting subcategory $\cM_v$. Assume that $\La_v\not\isom \K\overrightarrow{A}_1$. Assume moreover that $\sinks_{\La_v}\geq \delta^{-}(v)$ and $\sources_{\La_v}\geq \delta^{+}(v)$. Notice that if $s\in\left(Q_v\right)_0$ is a sink, then the simple module $S(s)$ is a left abutment of $\La_v$ of height $1$. Moreover, since $\supp(S(s))=\{s\}$, different sinks of $Q_v$ correspond to independent left abutments of $\La_v$. Let $\{e_1,\dots,e_{\delta^{-}(v)}\}$ be the collection of arrows terming at $v$. Since $\sinks_{\La_v} \geq \delta^{-}(v)$, it follows there exists a set $\{s_1,\dots,s_{\delta^-(v)}\}$ of sinks in $Q_v$ with $s_i\neq s_j$ for $i\neq j$. Set $P_{e_i}\coloneqq S(s_i)$ for $1\leq i \leq \delta^-(v)$. By construction we have that $\{P_{e_i}\}_{i=1}^{\delta^{-}(v)}$ is an independent collection of simple left abutments of $\La_v$. Similarly if $\{e_1',\dots,e_{\delta^+(v)}'\}$ is the collection of arrows starting at $v$, we can construct an independent collection $\{I_{e_i}\}_{i=1}^{\delta^{+}(v)}$ of simple right abutments of $\La_v$.

We claim that the triple $\cL=(\La_{v},P_{e},I_{e})_{v\in V_G,e\in E_G}$ is a gluing system on $G$. We check that the conditions of Definition \ref{def:gluing system} are satisfied. Condition (i) is satisfied by construction. Condition (ii) is satisfied since each abutments is simple and hence of height $1$. Finally condition (iii) is satisfied since $\La_v\not\isom \K\overrightarrow{A}_1$, and hence no gluing is trivial. Clearly $\cL$ satisfies the assumptions of Proposition \ref{prop:gluing system of n-ct}. 
\end{remark}

Notice that to apply Remark \ref{rem:gluing system needs sources and sinks} for any directed tree $G$, we need as an input bound quiver algebras whose module categories admit $n$-cluster tilting subcategories and such that their quiver has an arbitrary number of sinks and sources. We construct such families of examples in section \ref{subsection:quivers with any number of sinks and sources}.

\subsection{Gluings and orbit categories}\label{sec:gluings and orbit categories}

Let $n\geq 2$. Let $(\La_{v},P_{e},I_{e})_{v\in V_G,e\in E_G}$ be a gluing system on a directed tree $G$ and let $(\cM_v)_{v\in V_G}$ be a complete $n$-fractured system of $\cL$. By Corollary \ref{cor:equivalence of quiver and projectives} we have the equivalence $\Cks \equivalent \proj(\K \Qks /\Rks)$ and by Theorem \ref{thrm:complete n-fractured gives n-cluster tilting} we have that $\Mks\subseteq\m\Cks$ is an $n$-cluster tilting subcategory. In general, the quiver $\Qks$ is infinite and hence $\Mks$ cannot be realized as the $n$-cluster tilting subcategory of the module category of a fi\-nite-di\-men\-sion\-al algebra. However, by taking our gluing system to have sufficient symmetry, we construct an orbit category $\Cks/\ZZ$ such that $\m(\Cks/\ZZ)\equivalent \m\tilLa$, where $\tilLa$ is a fi\-nite-di\-men\-sion\-al algebra which is not necessarily rep\-re\-sen\-ta\-tion-di\-rect\-ed. Furthermore, if the $n$-fractured system also has sufficient symmetry, we end up with an $n$-cluster tilting subcategory of $\m\tilLa$ as well. In this section we make these notions precise. We start by recalling some background on orbit categories from \cite{DI}.

Let $\cC$ be a skeletally small $\K$-linear category and let $\ccG$\index[symbols]{G@$\ccG$} be a group. A \emph{($\K$-linear) $\ccG$-action}\index[definitions]{k-linear G-action@$\K$-linear $\ccG$-action} on $\cC$ is a collection $\left\{F_g\in \Aut(\cC)\mid g\in \ccG\right\}$ of ($\K$-linear) automorphisms $F_g:\cC\to\cC$ such that $F_g\comp F_h=F_{gh}$ for all $g,h\in G$. Notice that a $\ccG$-action on $\cC$ induces a $\ccG$-action on $\m\cC$ via the collection $\left\{F_{g\ast}:\m\cC\to\m\cC \mid g\in \ccG\right\}$. For simplicity we write $g(x)$ instead of $F_g(x)$ for $x\in\cC$ and we write $g_{\ast}(M)$ instead of $F_{g\ast}(M)$ for $M\in\m\cC$. If $\cC$ is also a Krull--Schmidt category, a $\ccG$-action on $\cC$ is called \emph{admissible}\index[definitions]{admissible $\ccG$-action} if $g(x)\not\isom x$ for every $x\in\ind\cC$ and $g\in \ccG\setminus\{1\}$.  A subcategory $\cU$ of $\m\cC$ is called \emph{$\ccG$-equivariant}\index[definitions]{G-equivariant subcategory@$\ccG$-equivariant subcategory} if $g_{\ast}(\cU)=\cU$ for all $g\in \ccG$. 

Let $\cC$ be a locally bounded $\K$-linear Krull--Schmidt category and $\ccG$ a group acting admissibly on $\cC$. The \emph{orbit category}\index[definitions]{orbit category} $\cC/\ccG$ is the category given by the following data.
\begin{enumerate}
    \item[$\bullet$] The objects of $\cC/\ccG$\index[symbols]{C nG@$\cC/\ccG$} are the objects of $\cC$, that is $\obj(\cC/\ccG)\coloneqq \obj(\cC)$.
    \item[$\bullet$] Morphisms in $\cC/\ccG$ between $x\in\cC/\ccG$ and $y\in\cC/\ccG$ are collections of morphisms between $x\in\cC$ and $g(y)\in\cC$, that is $\cC/\ccG(x,y)\coloneqq \bigoplus_{g\in \ccG}\cC(x,g(y))$.
    \item[$\bullet$] Composition of morphisms for $a=(a_g)_{g\in \ccG}\in\cC/\ccG(x,y)$ and $b=(b_g)_{g\in \ccG}\in\cC/\ccG(y,z)$ is defined in the following way. First, for every $h\in \ccG$ we have a morphism $a_h:x\to h(y)$. Next for every $g\in \ccG$ we have a morphism $b_{h^{-1}g}:y\to h^{-1}g(z)$. Acting by $h$ on the morphism $b_{h^{-1}g}$ we get a morphism $h\left(b_{h^{-1}}\right):h(y)\to h(h^{-1}g)(z)=g(z)$. Then we set
    \[(ba)_g \coloneqq \sum_{h\in \ccG}h\left(b_{h^{-1}g}\right)a_h : x\to g(z),\]
    and so $ba\in \cC/\ccG(x,z)$.
\end{enumerate}
The natural functor $F:\cC\to \cC/\ccG$\index[symbols]{F C to G@$F:\cC\to \cC/\ccG$} induces the usual adjoint pair $\left(F^{\ast},F_{\ast}\right)$ with $F^{\ast}:\M (\cC/\ccG)\to \M \cC$ and $F_{\ast}:\M\cC\to \M(\cC/\ccG)$. As usual, the functor $F^{\ast}$ restricts to a functor between the categories of finitely presented modules and moreover, in this case, the functor $F^{\ast}$ is also exact. We warn the reader that we use the opposite notation of \cite{DI}, where the left adjoint is denoted by $F_{\ast}$ and the right adjoint by $F^{\ast}$.

We now recall one of the main results of \cite{DI}.

\begin{theorem}\cite[Corollary 2.15]{DI}\label{thrm:Darpo-Iyama} Let $\cC$ be a locally bounded $\K$-linear Krull--Schmidt category and $\ccG$ a free abelian group of finite rank acting admissibly on $\cC$. If $\cM\subseteq \m\cC$ is a locally bounded $\ccG$-equivariant $n$-cluster tilting subcategory, then $F^{\ast}(\cM)\subseteq \m(\cC/\ccG)$ is a locally bounded $n$-cluster tilting subcategory.
\end{theorem}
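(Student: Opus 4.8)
The statement to prove is Theorem~\ref{thrm:Darpo-Iyama}, which is attributed to \cite[Corollary 2.15]{DI}, so the proof is essentially a matter of citing and assembling that reference together with the setup we have already established. The plan is to verify that all hypotheses of the cited result hold in our situation and then quote it verbatim.

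First I would recall the precise statement of \cite[Corollary 2.15]{DI} and check that our notational conventions match up; in particular I would note the warning already present in the excerpt that our $F^{\ast}$ plays the role of \cite{DI}'s left adjoint, so that no sign or direction error creeps in. Then I would observe that the three standing hypotheses are in place: $\cC$ is a locally bounded $\K$-linear Krull--Schmidt category (this is exactly our hypothesis), $\ccG$ is a free abelian group of finite rank (our hypothesis), and the $\ccG$-action is admissible (our hypothesis). The input subcategory $\cM$ is assumed to be a locally bounded $\ccG$-equivariant $n$-cluster tilting subcategory of $\m\cC$, which is again precisely the hypothesis of the theorem we are stating. Since every hypothesis of \cite[Corollary 2.15]{DI} is satisfied, the conclusion follows: $F^{\ast}(\cM)$ is a locally bounded $n$-cluster tilting subcategory of $\m(\cC/\ccG)$.

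The only point worth spelling out is why the natural functor $F\colon\cC\to\cC/\ccG$ is the correct one to apply the cited result to, and that $F^{\ast}$ restricts to a functor $\m(\cC/\ccG)\to\m\cC$ and is exact; but both of these facts have already been recorded in the paragraph preceding the theorem statement. So the proof reduces to: the hypotheses of \cite[Corollary 2.15]{DI} are met, hence its conclusion holds.

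I expect no real obstacle here, since the statement is quoted from the literature and our job is only to confirm the hypotheses transfer. If one wanted to be self-contained one would have to reproduce the Galois covering machinery of \cite{DI}, but that is outside the scope of this paper and unnecessary given that we cite the result directly. The main (very minor) care point is bookkeeping the opposite adjoint-naming convention relative to \cite{DI}, which I would flag explicitly in one sentence so the reader is not confused when comparing with the source.
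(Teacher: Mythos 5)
Your proposal is correct and matches the paper exactly: Theorem~\ref{thrm:Darpo-Iyama} is recalled from \cite[Corollary 2.15]{DI} without proof, so the only work is confirming the hypotheses carry over and flagging the opposite adjoint-naming convention, which the paper does in the preceding paragraph just as you do. Nothing more is required.
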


To apply Theorem \ref{thrm:Darpo-Iyama} we introduce the following notion.

\begin{definition}\label{def:self-gluable}
Let $n\geq 2$ and let $\La=\K Q/\cR$ be a rep\-re\-sen\-ta\-tion-di\-rect\-ed algebra with a fracturing $(T^L,T^R)$ and a $\left(T^L,T^R,n\right)$-fractured subcategory $\cM$. 
\begin{enumerate}[label=(\alph*)]
    \item A pair $(W,J)$\index[symbols]{(W, J)@$(W,J)$} of $\La$-modules is called a \emph{fractured pair (with respect to the fracturing $(T^L,T^R)$)}\index[definitions]{fractured pair} if the following conditions hold.
    \begin{enumerate}[label=(\roman*)]
        \item $W$ is a maximal left abutment of $\La$.
        \item If $W'$ is a maximal left abutment of $\La$ with $W\not\isom W'$, then $T^{(W')}$ is projective.
        \item $J$ is a maximal right abutment of $\La$.
        \item If $J'$ is a maximal right abutment of $\La$ with $J\not\isom J'$, then $T^{(J')}$ is injective. 
    \end{enumerate}
    
    \item Let $(W,J)$ be a fractured pair of $\La$. A pair $(P,I)$\index[symbols]{(P, I)@$(P,I)$} of $\La$-modules is called a \emph{compatible pair}\index[definitions]{compatible pair for a fractured pair} for the fractured pair $(W,J)$ if the following conditions hold.
    \begin{enumerate}[label=(\roman*)]
        \item $P$ is a left abutment of $\La$ such that $P\leq W$. 
        \item $\underline{T}^{(W)}\in \cF_{P}$.
        \item $I$ is a right abutment of $\La$ such that $I\leq J$
        \item $\overline{T}^{(J)}\in\cG_{I}$.
        \item We have $\height(P)=\height(I)$ and, furthermore, $\pi_{P}^{\ast}\left(T^{(P)}\right) \isom \pi_{I}^{!}\left(T^{(I)}\right)$.
    \end{enumerate}
    
    \item $\La$ is called \emph{$n$-self-gluable}\index[definitions]{n-self-gluable algebra@$n$-self-gluable algebra} if there exists a fractured pair $(W,J)$ and a compatible pair $(P,I)$ of $(W,J)$.
\end{enumerate}
\end{definition}

\begin{example}\label{ex:n-self-gluable easy}
Any rep\-re\-sen\-ta\-tion-di\-rect\-ed algebra such that its module category admits an $n$-cluster tilting subcategory is $n$-self-gluable. Indeed, let $\La$ be a rep\-re\-sen\-ta\-tion-di\-rect\-ed algebra such that $\m\La$ admits an $n$-cluster tilting subcategory $\cM$. In particular, the subcategory $\cM$ is a $\left(\La,D(\La),n\right)$-fractured subcategory. Let $W$ be a maximal left abutment of $\La$ and let $J$ be a maximal right abutment of $\La$. It then follows trivially that $(W,J)$ is a fractured pair. Moreover, let $P\leq W$ be a simple left abutment of $\La$ and $I\leq J$ be a simple right abutment of $\La$. Again it follows trivially that $(P,I)$ is a compatible pair. In particular, the algebra $\La$ is $n$-self-gluable.
\end{example}

Let $\La=\K Q/\cR$ be an $n$-self-gluable algebra. For each integer $z\in\ZZ$ let $Q[z]$ be a copy of $Q$ with vertices and arrows labelled as follows.
\begin{align*}
\left(Q[z]\right)_0 &=\{i[z] \mid i\in Q_0\},\\
\left(Q[z]\right)_1 &= \{\alpha[z]:i[z]\to j[z] \mid \alpha\in Q_1 \text{ with $\alpha:i\to j$}\}. 
\end{align*}
Denote by $\cR[z]$ the ideal of $\K Q[z]$ corresponding to the ideal $\cR$ of $\K Q$. We set $\La[z]\coloneqq \K Q[z]/\cR[z]$\index[symbols]{Lambda z@$\La[z]$}\index[symbols]{KQz Rz@$\K Q[z]/\cR[z]$}. For a module $M\in \m\La$ denote the corresponding module in $\m\La[z]$ by $M[z]$. In particular, for the $(T^L,T^R,n)$-fractured subcategory $\cM\subseteq \m\La$ we have that the corresponding subcategory $\cM[z]\subseteq \m\La[z]$\index[symbols]{M d@$\cM[z]$} is a $(T^L[z],T^R[z],n)$-fractured subcategory.

\begin{proposition}\label{prop:self-gluing system}
Let $\La=\K Q/\cR$ be an $n$-self gluable algebra and $(W,J)$ a fractured pair of $\La$ with a compatible pair $(P,I)$ and assume that $\La\not\isom \K \overrightarrow{A}_{h'}$ for any $h'$. Let $G=\overrightarrow{A}_{\infty}^{\infty}$\index[symbols]{A infinity@$\overrightarrow{A}_{\infty}^{\infty}$} be the graph
\[\cdots \xrightarrow{\alpha_{-3}} -2 \xrightarrow{\alpha_{-2}} -1 \xrightarrow{\alpha_{-1}} 0 \xrightarrow{\hspace*{3.3pt}\alpha_{0}\hspace*{3.3pt}} 1 \xrightarrow{\hspace*{3.3pt}\alpha_{1}\hspace*{3.3pt}} 2 \xrightarrow{\hspace*{3.3pt}\alpha_{2}\hspace*{3.3pt}} \cdots. \]
Then $V_G=\ZZ$ and for every $z\in\ZZ$ set $\La_z\coloneqq\La[z]$, $I_{\alpha_z}\coloneqq I[z]$, $P_{\alpha_z}\coloneqq P[z+1]$ and $\cM_z\coloneqq\cM[z]$. Then $\La^{\infty}_{\infty}\coloneqq \left(\La_{z},I_{\alpha_{z}},P_{\alpha_{z}}\right)_{z\in\ZZ}$\index[symbols]{Lambda aaa@$\La^{\infty}_{\infty}$} is a gluing system on $G$ and $(\cM_{z})_{z\in\ZZ}$ is a complete $n$-fractured system of $\La^{\infty}_{\infty}$.
\end{proposition}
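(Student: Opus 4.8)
The plan is to check in turn the conditions of Definition \ref{def:gluing system} for $\La^{\infty}_{\infty}$ and then the conditions of Definition \ref{def:n-fractured system} for $(\cM_z)_{z\in\ZZ}$, equipped with the fracturings $(T^L_z,T^R_z)\coloneqq(T^L[z],T^R[z])$. Throughout I would use the canonical isomorphisms $\La[z]\isom\La$, which transport to $\La[z]$ all the relevant data of $\La$: its Auslander--Reiten quiver, its (maximal) left and right abutments, the fractures $T^{(\,\cdot\,)}$ and footings $\pi_{(\,\cdot\,)}$, the operators $\om^{\pm i}$, $\tn$, $\tno$, and the $(T^L,T^R,n)$-fractured subcategory $\cM$; in particular $\cM[z]$ is a $(T^L[z],T^R[z],n)$-fractured subcategory of $\m\La[z]$, the module $P[z+1]$ is a left abutment of $\La[z+1]$ with maximal left abutment $W[z+1]$ above it, and $I[z]$ is a right abutment of $\La[z]$ with maximal right abutment $J[z]$ above it. The key feature of $G=\overrightarrow{A}_{\infty}^{\infty}$ is its rigidity: each vertex $z$ is the target of exactly one arrow $\alpha_{z-1}\colon z-1\to z$ and the source of exactly one arrow $\alpha_z\colon z\to z+1$.

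For the gluing-system axioms: condition \ref{def:gluing system}(i) is immediate, since $t^{-1}(z)=\{\alpha_{z-1}\}$ and $s^{-1}(z)=\{\alpha_z\}$ are singletons and a one-element family of abutments is vacuously independent. Condition \ref{def:gluing system}(ii) holds because $\height(P[z+1])=\height(P)=\height(I)=\height(I[z])$ by the compatible-pair condition \ref{def:self-gluable}(b)(v), so each gluing $\La_{\langle z,z+1\rangle}=\La_z\glue\La_{z+1}$ along $P[z+1]$ and $I[z]$ is defined and sits in a pullback square \eqref{eq:pullback gluing system}. Condition \ref{def:gluing system}(iii) is where the hypothesis $\La\not\isom\K\overrightarrow{A}_{h'}$ is used: it forces $Q'\neq\varnothing$ in the presentations of $\La_z$ and $\La_{z+1}$ relevant to each edge, so by Remark \ref{rem:gluing remarks}(d) no gluing induced by an edge of $G$ is trivial, and (iii) holds vacuously.

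For the $n$-fractured-system axioms I would fix an edge $e=\alpha_z\colon z\to z+1$, so that in the notation of Theorem \ref{thrm:fractsubcat} we have $A=\La[z+1]$, $B=\La[z]$, $P_e=P[z+1]$, $I_e=I[z]$, and, by the uniqueness statement preceding Definition \ref{def:n-fractured system} together with $P\leq W$ and $I\leq J$, $W_e=W[z+1]$ and $J_e=J[z]$. Condition \ref{def:n-fractured system}(i) then reads $\underline{T}^{(W)}[z+1]\in\cF_{P[z+1]}$ and $\overline{T}^{(J)}[z]\in\cG_{I[z]}$, which are the transports of \ref{def:self-gluable}(b)(ii) and \ref{def:self-gluable}(b)(iv). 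Condition \ref{def:n-fractured system}(ii) is precisely the compatibility \eqref{eq:compatibility} required by Theorem \ref{thrm:fractsubcat} for this gluing; to deduce it I would observe that, since $T^{(P)}\in\cF_{P}$ and $T^{(I)}\in\cG_{I}$, both are restrictions of scalars of $\K\overrightarrow{A}_h$-modules along their footings, and that, after pushing forward along the gluing projections and using commutativity of \eqref{eq:pullback gluing system}, both sides of \eqref{eq:compatibility} become the restriction of scalars along the common composite footing $\La_{\langle z,z+1\rangle}\to\K\overrightarrow{A}_h$ applied to $\pi_P^{\ast}(T^{(P)})$ and to $\pi_I^{!}(T^{(I)})$ respectively; since restriction of scalars along an algebra epimorphism is fully faithful by Proposition \ref{prop:properties of dense and full}(c), and so reflects isomorphisms, \eqref{eq:compatibility} is equivalent to $\pi_P^{\ast}(T^{(P)})\isom\pi_I^{!}(T^{(I)})$, i.e.\ to \ref{def:self-gluable}(b)(v). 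This last step, reconciling the single-algebra compatibility built into an $n$-self-gluable algebra with the two-algebra compatibility internal to a gluing system, is the only part requiring genuine care; the rest is transport of structure along $\La\isom\La[z]$.

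Finally, for completeness I would check conditions \ref{def:n-fractured system}(iii) and (iv). By the fractured-pair conditions \ref{def:self-gluable}(a)(iv) and (ii), $J$ is, up to isomorphism, the only maximal right abutment of $\La$ with non-injective fracture and $W$ the only maximal left abutment with non-projective fracture; transporting, $J[z]$ (resp.\ $W[z]$) is the unique maximal right (resp.\ left) abutment of $\La_z$ with non-injective (resp.\ non-projective) fracture. Since $z$ is the source of the single edge $\alpha_z$, for which $J_{\alpha_z}=J[z]$, and the target of the single edge $\alpha_{z-1}$, for which $W_{\alpha_{z-1}}=W[z]$, both conditions hold, and $(\cM_z)_{z\in\ZZ}$ is a complete $n$-fractured system of $\La^{\infty}_{\infty}$.
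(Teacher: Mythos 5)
Your proof is correct and follows essentially the same structure as the paper's: verify Definition~\ref{def:gluing system}(i)--(iii) using the singleton in/out-degrees of $\overrightarrow{A}_{\infty}^{\infty}$, the height equality from Definition~\ref{def:self-gluable}(b)(v), and the hypothesis $\La\not\isom\K\overrightarrow{A}_{h'}$, then verify Definition~\ref{def:n-fractured system}(i)--(iv). The only substantive difference is that you actually supply the reduction of the gluing compatibility~\eqref{eq:compatibility} to the single-algebra condition in Definition~\ref{def:self-gluable}(b)(v), rewriting both sides as restrictions along the common composite footing $\La_{\langle z,z+1\rangle}\to\K\overrightarrow{A}_h$ and invoking full faithfulness of restriction along an epimorphism, whereas the paper merely asserts that this step ``follows immediately''.
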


\begin{proof}
To show that $\La^{\infty}_{\infty}$ is a gluing system, we show that the conditions of Definition \ref{def:gluing system} are satisfied. Condition (i) is satisfied trivially since every vertex of $v\in V_{G}$ has $\delta^{+}(v)=\delta^{-}(v)=1$. Condition (ii) is satisfied since by Definition \ref{def:self-gluable}(ii) we have
\[\height\left(I_{\alpha_z}\right)=\height(I[z]) = \height(I) = \height(P) = \height(P[z+1])=\height\left(P_{\alpha_z}\right).\]
Condition (iii) is satisfied since $\La\not\isom \K\overrightarrow{A}_{h'}$. 

To show that $(\cM_{z})_{z\in\ZZ}$ is a complete $n$-fractured system, we show that the conditions of Definition \ref{def:n-fractured system} are satisfied. Conditions (i) and (ii) follow immediately by Definition \ref{def:self-gluable}(ii). Moreover, by Definition \ref{def:self-gluable}(i), we have that $(W,J)$ is a fractured pair of $\La$. It follows that $(W[z],J[z])$ is a fractured pair of $\La[z]$ for every $z\in\ZZ$. In particular there is at most one maximal left abutment of $\La[z]$ with a nonprojective fracture, namely $W[z]$, and there is at most one maximal right abutment of $\La[z]$ with a noninjective fracture, namely $J[z]$. Since $W_{\alpha_z}\isom W[z+1]$ and $J_{\alpha_z}\isom J[z]$, conditions (iii) and (iv) are satisfied.
\end{proof}

For the rest of this section we fix an $n$-self gluable algebra $\La\not\isom\K\overrightarrow{A}_h$ and let $G$, $\La^{\infty}_{\infty}$ and $(\cM_{z})_{z\in\ZZ}$ be as in Proposition \ref{prop:self-gluing system}. We write $(\Cks, \Pks_{H}, \Tks_{HK})$ for the firm source of the induced $\Cat$-inverse system $(\cC_H,F_{HK},\theta_{HKL})$ over $\dirI_G$ and we have
\[\Mks =\add\left\{\Pks_{z\ast}(\cM[z])\mid z\in\ZZ\right\}\subseteq \m\Cks.\]
We have the following immediate result.

\begin{corollary}\label{cor:M infinity infinity is n-ct}
$\Mks$ is an $n$-cluster tilting subcategory of $\m\Cks$.
\end{corollary}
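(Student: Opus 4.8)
The plan is to deduce Corollary \ref{cor:M infinity infinity is n-ct} directly from Theorem \ref{thrm:complete n-fractured gives n-cluster tilting}. By Proposition \ref{prop:self-gluing system}, the triple $\La^{\infty}_{\infty} = (\La_z, I_{\alpha_z}, P_{\alpha_z})_{z\in\ZZ}$ is a gluing system on the directed tree $G = \overrightarrow{A}_{\infty}^{\infty}$, and $(\cM_z)_{z\in\ZZ} = (\cM[z])_{z\in\ZZ}$ is a complete $n$-fractured system of $\La^{\infty}_{\infty}$. Since $\La$ is $n$-self-gluable we have $n\geq 2$ by Definition \ref{def:self-gluable}, so the hypotheses of Theorem \ref{thrm:complete n-fractured gives n-cluster tilting} are met verbatim for this gluing system and this complete $n$-fractured system.

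First I would recall that $(\Cks,\Pks_H,\Tks_{HK})$ is by our fixed notation precisely the firm source of the induced $\Cat$-inverse system $(\cC_H,F_{HK},\theta_{HKL})$ over $\dirI_G$ associated to $\La^{\infty}_{\infty}$. Then I would simply invoke Theorem \ref{thrm:complete n-fractured gives n-cluster tilting} applied to $\cL = \La^{\infty}_{\infty}$ and the complete $n$-fractured system $(\cM_z)_{z\in\ZZ}$: the theorem asserts that
\[\Mks = \add\{\Pks_{v\ast}(\cM_v)\mid v\in V_G\} = \add\{\Pks_{z\ast}(\cM[z])\mid z\in\ZZ\}\subseteq \m\Cks\]
is an $n$-cluster tilting subcategory of $\m\Cks$. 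This matches the definition of $\Mks$ fixed just before the statement of the corollary, so there is nothing further to check.

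The only point that merits a sentence of justification is the identification of the index set: $V_G = \ZZ$ since $G = \overrightarrow{A}_{\infty}^{\infty}$ has vertex set $\ZZ$, and under the assignments of Proposition \ref{prop:self-gluing system} the vertex $z$ carries the algebra $\La_z = \La[z]$ and the subcategory $\cM_z = \cM[z]$, so the two expressions for $\Mks$ coincide. There is no real obstacle here; the work has all been done in Proposition \ref{prop:self-gluing system} (verifying the gluing-system and complete-$n$-fractured-system axioms) and in Theorem \ref{thrm:complete n-fractured gives n-cluster tilting} (which in turn rests on Corollary \ref{cor:n-ct if locally bounded} and the local boundedness of $\m\Cks$ from Proposition \ref{prop:indecomposables in firm source}(f)). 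Thus the proof is a one-line application of the preceding theorem.

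\begin{proof}
By Proposition \ref{prop:self-gluing system}, $\La^{\infty}_{\infty} = (\La_z, I_{\alpha_z}, P_{\alpha_z})_{z\in\ZZ}$ is a gluing system on the directed tree $G = \overrightarrow{A}_{\infty}^{\infty}$ and $(\cM_z)_{z\in\ZZ}$ is a complete $n$-fractured system of it; moreover $n\geq 2$ since $\La$ is $n$-self-gluable. Since $(\Cks,\Pks_H,\Tks_{HK})$ is the firm source of the induced $\Cat$-inverse system $(\cC_H,F_{HK},\theta_{HKL})$ over $\dirI_G$, Theorem \ref{thrm:complete n-fractured gives n-cluster tilting} applies and shows that $\add\{\Pks_{v\ast}(\cM_v)\mid v\in V_G\}$ is an $n$-cluster tilting subcategory of $\m\Cks$. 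As $V_G = \ZZ$ and $\cM_z = \cM[z]$ for every $z\in\ZZ$, this subcategory is exactly $\Mks = \add\{\Pks_{z\ast}(\cM[z])\mid z\in\ZZ\}$, which completes the proof.
\end{proof}
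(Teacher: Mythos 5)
Your proof is correct and takes the same route as the paper, which also invokes Proposition \ref{prop:self-gluing system} and Theorem \ref{thrm:complete n-fractured gives n-cluster tilting} directly. The additional remarks on $n\geq 2$ and the identification $V_G=\ZZ$, $\cM_z=\cM[z]$ are sound and merely make the paper's one-line proof more explicit.
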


\begin{proof}
Follows immediately by Proposition \ref{prop:self-gluing system} and Theorem \ref{thrm:complete n-fractured gives n-cluster tilting}.
\end{proof}

Let us now study the quiver $\Qks$ and the ideal $\Rks$. The infinite quiver $\Qks$ is obtained by performing all the gluings induced by the arrows in $G$. Since $P$ and $I$ are abutments of $\La$ of height $h$, we have that $Q$ has the form
\begin{equation}\label{eq:Q with support of projective}
\begin{tikzpicture}[baseline={(current bounding box.center)},scale=0.9, transform shape]
\node(A) at (0,0) {$p_1$};
\node(B) at (1.5,0) {$p_2$};
\node(C) at (3,0) {$p_3$};
\node(D) at (4.5,0) {$\cdots$};
\node(E) at (6,0) {$p_{h-1}$};
\node(F) at (7.5,0) {$p_h$\nospacepunct{,}};
\node(G) at (-1.5,1) {$ $};
\node(H) at (-1.5,-1) {$ $};

\draw[->] (A) -- node[above] {$\alpha_1$} (B);
\draw[->] (B) -- node[above] {$\alpha_2$} (C);
\draw[->] (C) -- node[above] {$\alpha_3$} (D);
\draw[->] (D) -- node[above] {$\alpha_{h-2}$} (E);
\draw[->] (E) -- node[above] {$\alpha_{h-1}$} (F);

\draw[->] (G) -- (A);
\draw[->] (H) -- (A);

\draw[dotted] (-0.8, 0.5) -- (-0.8, -0.5);

\draw[pattern=northwest, hatch distance=15pt, hatch thickness = 0.3pt] (-2.12,0) ellipse (1cm and 1.3cm);
\node (Y) at (-2.12,0) {$\mathbf{Q_L'}$};
\end{tikzpicture}
\end{equation}
and the form
\begin{equation}\label{eq:Q with support of injective}
\begin{tikzpicture}[baseline={(current bounding box.center)},scale=0.9, transform shape]
\node(A) at (0,0) {$i_1$};
\node(B) at (1.5,0) {$i_2$};
\node(C) at (3,0) {$i_3$};
\node(D) at (4.5,0) {$\cdots$};
\node(E) at (6,0) {$i_{h-1}$};
\node(F) at (7.5,0) {$i_h$};
\node(G) at (9,1) {$ $};
\node(H) at (9,-1) {$ $};

\draw[->] (A) -- node[above] {$\beta_1$} (B);
\draw[->] (B) -- node[above] {$\beta_2$} (C);
\draw[->] (C) -- node[above] {$\beta_3$} (D);
\draw[->] (D) -- node[above] {$\beta_{h-2}$} (E);
\draw[->] (E) -- node[above] {$\beta_{h-1}$} (F);

\draw[->] (F) -- (G);
\draw[->] (F) -- (H);

\draw[dotted] (8.3, 0.5) -- (8.3, -0.5);

\draw[pattern=northwest, hatch distance=15pt, hatch thickness = 0.3pt] (9.65,0) ellipse (1cm and 1.3cm);
\node (Z) at (9.65,0) {$\mathbf{Q_R'}$};
\end{tikzpicture},
\end{equation}
where $P=P(p_1)$ and $I=I(i_h)$. It follows that $\supp(P)=\{p_1,\dots,p_h\}$ and $\supp(I)=\{i_1,\dots,i_h\}$.  In particular, the only case where $\supp(I)\cap \supp(P) \neq \varnothing$ is if $Q$ is $\overrightarrow{A}_{h'}$ for some $h'\geq h$. For simplicity let us assume that $Q\neq \overrightarrow{A}_{h'}$; the case $Q=\overrightarrow{A}_{h'}$ can be considered in the same way. Since $\supp(I)\cap \supp(P)=\varnothing$, we can draw the quiver $Q$ more abstractly as 
\[\begin{tikzpicture}[scale=0.9, transform shape]

\draw[->] (8.5,0) -- (8.9,0);
\draw[-] (8.9,0) -- (9.2,0);
\draw[->] (10.8,0) -- (11.2,0);
\draw[-] (11.2,0) -- (11.5,0);

\draw[pattern=northwest, hatch distance=15pt, hatch thickness = 0.3pt] (10,0) ellipse (0.8cm and 1cm);
\node (Z) at (10,0) {$\mathbf{Q'}$};
\end{tikzpicture}\]
where $Q'$ is not empty and the left $\begin{tikzpicture}[scale=0.8, transform shape, decoration={markings, mark=at position 0.6 with {\arrow{>}}}] 
    \node (A) at (0,0) {}; 
    \node (B) at (1.4,0) {}; 
    \draw[postaction={decorate}] (A) -- (B);
\end{tikzpicture}$ corresponds to the full subquiver of $Q$ with vertex set $\{i_1,\dots,i_h\}$ and the right $\begin{tikzpicture}[scale=0.8, transform shape, decoration={markings, mark=at position 0.6 with {\arrow{>}}}] 
    \node (A) at (0,0) {}; 
    \node (B) at (1.4,0) {}; 
    \draw[postaction={decorate}] (A) -- (B);
\end{tikzpicture}$ corresponds to the full subquiver of $Q$ with vertex set $\{p_1,\dots,p_h\}$. Then the quiver $\Qks$ has the form
\[\begin{tikzpicture}[scale=0.9, transform shape]
\draw[loosely dotted] (4.2,0) -- (5.2,0);

\draw[->] (5.3,0) -- (5.7,0);
\draw[-] (5.7,0) -- (6,0);

\draw[pattern=northwest, hatch distance=15pt, hatch thickness = 0.3pt] (6.8,0) ellipse (0.8cm and 1cm);
\node (Z02) at (6.8,0) {$\mathbf{Q'[-2]}$};

\draw[->] (7.6,0) -- (8,0);
\draw[-] (8,0) -- (8.3,0);

\draw[pattern=northwest, hatch distance=15pt, hatch thickness = 0.3pt] (9.1,0) ellipse (0.8cm and 1cm);
\node (Z01) at (9.1,0) {$\mathbf{Q'[-1]}$};

\draw[->] (9.9,0) -- (10.3,0);
\draw[-] (10.3,0) -- (10.6,0);

\draw[pattern=northwest, hatch distance=15pt, hatch thickness = 0.3pt] (11.4,0) ellipse (0.8cm and 1cm);
\node (Z) at (11.4,0) {$\mathbf{Q'[0]}$};

\draw[->] (12.2,0) -- (12.6,0);
\draw[-] (12.6,0) -- (12.9,0);

\draw[pattern=northwest, hatch distance=15pt, hatch thickness = 0.3pt] (13.7,0) ellipse (0.8cm and 1cm);
\node (Z1) at (13.7,0) {$\mathbf{Q'[1]}$};

\draw[->] (14.5,0) -- (14.9,0);
\draw[-] (14.9,0) -- (15.2,0);

\draw[pattern=northwest, hatch distance=15pt, hatch thickness = 0.3pt] (16,0) ellipse (0.8cm and 1cm);
\node (Z2) at (16,0) {$\mathbf{Q'[2]}$};

\draw[->] (16.8,0) -- (17.2,0);
\draw[-] (17.2,0) -- (17.5,0);

\draw[loosely dotted] (17.6,0) -- (18.6,0);
\end{tikzpicture}.\]

For two integers $a,b\in \ZZ$ with $a\leq b$ we denote by $[a,b]$ the integer interval $\{z\in \ZZ \mid a\leq z \leq b\}$. In particular, we have $\dirI_G=\{\langle [a,b]\rangle \mid a\leq b\}$. We set $Q[a,b]\coloneqq Q_{\langle [a,b]\rangle}$, $\cR[a,b]\coloneqq \cR_{\langle [a,b]\rangle}$ and $\La[a,b]\coloneqq \La_{\langle [a,b]\rangle}$. Notice that this notation agrees with our previous notation, since $\La[a,a]=\La[a]$. By definition the ideal $R[a,b]$ is generated by all elements in $\cR[z]$ for $z\in[a,b]$ as well as all paths from $Q'[z]$ to $Q'[z+1]$ for $z\in[a,b-1]$. The ideal $\Rks$ is then equal to $\Rks=\bigcup_{a\leq b}\cR[a,b]$. It follows that $\Rks$ is generated by all elements in $\cR[z]$ for $z\in \ZZ$ as well as all paths from $Q'[z]$ to $Q'[z+1]$ for all $z\in \ZZ$. In particular there is a canonical isomorphism $[z]:\La[a,b]\to \La[a+z,b+z]$ for all $z\in\ZZ$. 

Moreover, for $k\in\ZZ$ we have an isomorphism of categories $[k]:\K \Qks/\Rks \to \K \Qks/\Rks$ given by moving a path $k$-steps to the right in $\Qks$. In particular this isomorphism induces an isomorphism $[k]:\proj(\K \Qks/\Rks) \to \proj(\K \Qks/\Rks)$. 

\begin{lemma}\label{lem:admissible Z-action on C}
The collection $\{[k]\in \Aut(\proj(\K\Qks/\Rks))\}$ is an admissible $\ZZ$-action on $\proj(\K\Qks/\Rks)$. 
\end{lemma}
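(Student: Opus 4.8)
The plan is to verify directly the two conditions in the definition of an admissible $\ccG$-action (for $\ccG = \ZZ$) on the Krull--Schmidt category $\cC \coloneqq \proj(\K\Qks/\Rks)$: first, that $\{[k]\}_{k\in\ZZ}$ is genuinely a $\ZZ$-action, i.e. each $[k]$ is a $\K$-linear automorphism and $[k]\comp[l] = [k+l]$ for all $k,l\in\ZZ$; and second, that $[k](x)\not\isom x$ for every indecomposable $x\in\cC$ and every $k\neq 0$. Throughout I would work with the equivalent description of $\cC$ as $\add$ of the indecomposable projectives, which correspond bijectively to the vertices of $\Qks$, since we already know from the discussion preceding the lemma that $\Qks = \bigcup_{a\le b} Q[a,b]$ and that $\ind(\Cks) = \{P(i)\}_{i\in \Qks_0}$ by Proposition \ref{prop:indecomposables in firm source}.

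First I would make precise the isomorphism $[k]:\K\Qks/\Rks \to \K\Qks/\Rks$. By construction $\Qks$ carries the translation $i[z] \mapsto i[z+k]$ on vertices and $\alpha[z]\mapsto \alpha[z+k]$ on arrows; since $\Rks$ is generated by the elements of $\cR[z]$ and the paths from $Q'[z]$ to $Q'[z+1]$ for $z\in\ZZ$ — a generating set that is visibly invariant under the translation — the translation descends to a well-defined $\K$-linear isomorphism of path categories $\K\Qks/\Rks$, which I will also denote $[k]$. This isomorphism then induces an isomorphism on $\proj$, compatible with the Yoneda identification, giving the $\K$-linear automorphism $[k]\in\Aut(\cC)$. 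The relation $[k]\comp[l] = [k+l]$ is immediate because translating $l$ steps and then $k$ steps is translating $k+l$ steps on the level of vertices and arrows, and $[0] = \Id$. This establishes that we have a $\K$-linear $\ZZ$-action.

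For admissibility I need: if $k\neq 0$ then $[k](x) \not\isom x$ for all $x\in\ind\cC$. Since the indecomposable objects of $\cC$ are (up to isomorphism) the indecomposable projectives $\epsilon_{i}(\K\Qks/\Rks)$ indexed by vertices $i\in\Qks_0$, and since $[k]$ sends the indecomposable attached to the vertex $i[z]$ to the one attached to $i[z+k]$, it suffices to show that the vertices $i[z]$ and $i[z+k]$ are distinct in $\Qks_0$ for $k\ne 0$. This is exactly the content of how $\Qks$ was built: by (\ref{eq:quiver of gluing subgraph}) applied along the graph $\overrightarrow{A}_\infty^\infty$, the vertex set $\Qks_0$ is the disjoint union $\coprod_{z\in\ZZ}(Q'[z])_0$ together with the finitely many glued vertices $\supp(P[z])=\supp(I[z-1])$ identified along each edge $\alpha_{z-1}$; because the abutments $P$ and $I$ have disjoint supports (we are in the case $Q\neq \overrightarrow{A}_{h'}$, and the degenerate case is handled identically as noted in the text), the only identifications are between consecutive copies $Q[z]$ and $Q[z+1]$, so a vertex of $Q[z]$ can only be identified with a vertex of $Q[z\pm 1]$, never with a vertex of $Q[z+k]$ for $|k|\geq 2$; and for $|k|=1$ the identification glues $\supp(I[z])$ with $\supp(P[z+1])$, which are the support of the \emph{injective} $I$ versus the \emph{projective} $P$ in the original $Q$ — distinct vertex-labels since $\supp(I)\cap\supp(P)=\varnothing$. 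Hence $i[z]\neq i[z+k]$ in $\Qks_0$ whenever $k\neq 0$, which gives $[k](P(i[z]))\not\isom P(i[z+k-k]) $, i.e. $[k](x)\not\isom x$.

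The main obstacle, and the only genuinely delicate point, is the bookkeeping in the last paragraph: one must be careful to track exactly which vertices of the copies $Q[z]$ get identified under the successive gluings, and to confirm that no ``long-range'' identification arises that would make $i[z]$ and $i[z+k]$ the same vertex for some $k\neq 0$. This is controlled precisely by Lemma \ref{lem:finitely many vertices} (which says $V(i)$ is finite and $\langle V(i)\rangle$ connected) together with condition (i) of Definition \ref{def:gluing system} on independence of abutments; I would invoke these to conclude that $V(i[z])\subseteq\{z-1,z,z+1\}$ for vertices in $\supp$ of an abutment and $V(i[z])=\{z\}$ otherwise, which immediately rules out $i[z]=i[z+k]$ for $k\neq 0$. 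Once this is in place, admissibility follows and the lemma is proved.
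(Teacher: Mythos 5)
Your proof is correct and follows the same route as the paper's, which in the body simply writes $[k](P(i[z]))=P(i[z+k])\isom P(i[z])\Rightarrow i[z+k]=i[z]\Rightarrow k=0$ ``since $\La\not\isom\K\overrightarrow{A}_{h'}$,'' leaving the gluing combinatorics implicit; you supply exactly those details, using Proposition~\ref{prop:indecomposables in firm source}(b) for the reduction to distinctness of vertices and the fact that identifications occur only between $\supp(I[z])$ and $\supp(P[z+1])$, disjoint vertex-labels in $Q$, for the distinctness itself. One small caveat on your closing paragraph: $V(i[z])\subseteq\{z-1,z,z+1\}$ by itself does not ``immediately'' exclude $i[z]=i[z+1]$, and condition (i) of Definition~\ref{def:gluing system} is vacuous here since every vertex of $\overrightarrow{A}^{\infty}_{\infty}$ has $\delta^{\pm}=1$; what actually settles $|k|=1$ is the observation you already make earlier, namely that the identified pair consists of a vertex of $\supp(I)$ and one of $\supp(P)$, and these sets are disjoint (also, $P(i[z+k-k])$ is a typo for $P(i[z])$).
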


\begin{proof}
It is easy to see that the collection $\{[k]\in \Aut(\proj(\K\Qks/\Rks))\}$ is a $\ZZ$-action on $\proj(\K\Qks/\Rks)$. Moreover this action is admissible since
\[[k](P(i[z])) = P(i[z+k]) \isom P(i[z])\]
implies that $i[z+k]=i[z]$, which is true if and only if $k=0$ since $\La\not\isom \K \overrightarrow{A}_{h'}$. 
\end{proof}

Let $\{F_k\in \Aut(\Cks)\mid k\in \ZZ\}$ be the admissible $\ZZ$-action on $\Cks$ induced by $\{[k]\in \Aut(\proj(\K\Qks/\Rks)\mid k\in\ZZ\}$ that is compatible with the equivalence $\Cks\equivalent\proj(\K\Qks/\Rks)$ in Corollary \ref{cor:equivalence of quiver and projectives}.

\begin{lemma}\label{lem:Mks is Z-equivariant}
The subcategory $\Mks\subseteq \m\Cks$ is $\ZZ$-equivariant.
\end{lemma}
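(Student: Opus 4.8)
The plan is to show that $\Mks$ is preserved by each automorphism $F_k$, which amounts to checking it on indecomposable objects and on generators of $\Mks$. Recall that $\Mks = \add\{\Pks_{z\ast}(\cM_z) \mid z\in\ZZ\}$ and that $\Mks$ is closed under direct sums and summands; since $F_{k\ast}$ is an additive automorphism of $\m\Cks$, it suffices to prove that $F_{k\ast}(\Pks_{z\ast}(m_z)) \in \Mks$ for every $z\in\ZZ$ and every indecomposable $m_z \in \cM_z = \cM[z]$, and symmetrically for $F_{-k}$. The natural guess is that $F_{k\ast}$ sends $\Pks_{z\ast}(\cM[z])$ onto $\Pks_{(z+k)\ast}(\cM[z+k])$, up to isomorphism.

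First I would make precise how the $\ZZ$-action interacts with the structure maps $\Pks_z$. The isomorphism $[k]:\K\Qks/\Rks \to \K\Qks/\Rks$ shifts a path $k$ steps to the right, so on the level of vertices it sends $i[z]$ to $i[z+k]$; under the equivalence of Corollary \ref{cor:equivalence of quiver and projectives} this means $F_k(P(i[z])) = P(i[z+k])$ for all indecomposables of $\Cks$. On the other hand, the canonical isomorphism $[k]:\La[z]\to\La[z+k]$ of algebras (noted just before Lemma \ref{lem:admissible Z-action on C}) intertwines the projection functors, in the sense that shifting then projecting to $\La_{z+k}$ agrees with projecting to $\La_z$ then applying $[k]$; concretely there is a natural isomorphism relating $\Pks_{(z+k)}\comp F_k$ and $[k]_{\ast}\comp \Pks_z$ as functors $\Cks\to\m\La_{z+k}$. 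Passing to the induced functors on module categories and using that $\Pks_{z\ast}$ is left adjoint to $\Pks_z^{\ast}$ (or directly, since all these functors are restriction/extension of scalars along the compatible isomorphisms), one obtains a natural isomorphism $F_{k\ast}\comp \Pks_{z\ast} \isom \Pks_{(z+k)\ast}\comp [k]_{\ast}$ of functors $\m\cC_z\to\m\Cks$.

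With this identity in hand, the computation is short. For an indecomposable $m_z\in\cM[z]$, we have $F_{k\ast}(\Pks_{z\ast}(m_z)) \isom \Pks_{(z+k)\ast}([k]_{\ast}(m_z))$. Now $[k]_{\ast}$ is precisely the equivalence $\m\La[z]\to\m\La[z+k]$ coming from the canonical algebra isomorphism, and by construction of the self-gluing system in Proposition \ref{prop:self-gluing system} we have $\cM_{z+k} = \cM[z+k]$, which is the image of $\cM[z] = \cM_z$ under this very equivalence; that is, $[k]_{\ast}(\cM[z]) = \cM[z+k]$. Hence $[k]_{\ast}(m_z)\in\cM[z+k] = \cM_{z+k}$ and therefore $F_{k\ast}(\Pks_{z\ast}(m_z)) \in \add\{\Pks_{(z+k)\ast}(\cM_{z+k})\}\subseteq\Mks$. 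The same argument applied to $-k$ gives $F_{-k\ast}(\Mks)\subseteq\Mks$, and combining the two yields $F_{k\ast}(\Mks) = \Mks$ for every $k\in\ZZ$, so $\Mks$ is $\ZZ$-equivariant.

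The main obstacle I anticipate is not the final bookkeeping but establishing the natural isomorphism $F_{k\ast}\comp \Pks_{z\ast} \isom \Pks_{(z+k)\ast}\comp [k]_{\ast}$ cleanly. The functors $\Pks_z$ are defined as restrictions of the projection $\Phi_z$ from the concrete inverse limit to the firm source $\Cks$, and one has to track carefully that the shift automorphism $F_k$ of $\Cks$ — which is induced via the equivalence with $\proj(\K\Qks/\Rks)$ — does correspond, under the identifications $\Cks\equivalent\proj(\K\Qks/\Rks)$ and $\cC_z\equivalent\proj\La[z]$, to the honest algebra isomorphism $[k]$. This is essentially a compatibility statement about how the data $(\Qks,\Rks)$ and its finite approximations $(Q[z],\cR[z])$ transform under the shift; once one observes that the shift on $\Qks$ restricts on each $Q_H$ to the expected isomorphism $Q_{\langle[a,b]\rangle}\to Q_{\langle[a+k,b+k]\rangle}$ (which is immediate from $(Q[z])_0 = \{i[z]\mid i\in Q_0\}$ and the generation of $\Rks$ by the $\cR[z]$ together with the connecting paths), the naturality is routine and the rest follows as above.
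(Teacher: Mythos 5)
Your approach is the same as the paper's: compute the effect of $F_{k\ast}$ on the generators $\Pks_{z\ast}(\cM[z])$, show the result lands in $\Pks_{w\ast}(\cM[w])$ for the appropriate shift $w$, and conclude by additivity; your extra paragraph on how the shift intertwines with the equivalences $\Cks\equivalent\proj(\K\Qks/\Rks)$ and $\cC_z\equivalent\proj\La[z]$ fills in the details the paper leaves as ``evaluating both sides.''

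There is, however, a sign slip. Since $F_{k\ast}(M)=M\comp F_k$ and $F_k$ sends $P(i[w])$ to $P(i[w+k])$, the support of $F_{k\ast}\bigl(\Pks_{z\ast}(M[z])\bigr)$ sits at the vertices where $w+k=z$, i.e.\ at position $z-k$; the correct identity is $F_{k\ast}\bigl(\Pks_{z\ast}(M[z])\bigr)\isom \Pks_{(z-k)\ast}(M[z-k])$, not the $z+k$ version you state. This does not affect your conclusion (since $k$ ranges over all of $\ZZ$, the conclusion $F_{k\ast}(\Mks)\subseteq\Mks$ follows either way), but the formula as you wrote it would not survive a careful pointwise check of the two sides.
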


\begin{proof}
Viewing $\Cks$-modules as representations of $\Qks$ bound by $\Rks$, we have that a $\Cks$-module is of the form $M[z]$ for some $M\in\m\La$ and some $z\in\ZZ$. Let $M[z]\in\m\Cks$. It is easy to show that for every $k\in\ZZ$ we have $F_{k\ast}(\Pks_{z\ast}(M[z])) \isom \Pks_{(z-k)\ast}(M[z-k])$ by evaluating both sides. Since $\Mks = \add\left\{\Pks_{z\ast}(\cM[z])\mid z\in\ZZ\right\}$, it immediately follows that $\Mks$ is $\ZZ$-equivariant.
\end{proof}

\begin{corollary}\label{cor:Mks/Z is n-ct}
The subcategory $F^{\ast}(\Mks)\equivalent \Mks/\ZZ\subseteq \m(\Cks/\ZZ)$ is a locally bounded $n$-cluster tilting subcategory.
\end{corollary}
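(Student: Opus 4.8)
The plan is to deduce the statement directly from the machinery already assembled in this section, invoking Darp\"o--Iyama's Theorem \ref{thrm:Darpo-Iyama} with $\cC = \Cks$ and $\ccG = \ZZ$. The three hypotheses of that theorem must be checked: that $\Cks$ is a locally bounded $\K$-linear Krull--Schmidt category, that $\ZZ$ acts admissibly on it, and that $\Mks\subseteq\m\Cks$ is a locally bounded $\ZZ$-equivariant $n$-cluster tilting subcategory. First I would recall that $\Cks$ is Krull--Schmidt by Proposition \ref{lem:the system is an inverse system} (or Corollary \ref{cor:properties of Cks}) and locally bounded by Proposition \ref{prop:indecomposables in firm source}(e); it is $\K$-linear by Proposition \ref{prop:properties of Cks}(c). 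The admissibility of the $\ZZ$-action $\{F_k\}$ is exactly Lemma \ref{lem:admissible Z-action on C}, transported along the equivalence $\Cks\equivalent\proj(\K\Qks/\Rks)$ of Corollary \ref{cor:equivalence of quiver and projectives}; here I use that the rank of $\ZZ$ is finite, namely $1$.

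Next I would verify the hypotheses on $\Mks$. That $\Mks$ is an $n$-cluster tilting subcategory of $\m\Cks$ is Corollary \ref{cor:M infinity infinity is n-ct}, and that it is $\ZZ$-equivariant is Lemma \ref{lem:Mks is Z-equivariant}. The one point that needs a sentence of justification is that $\Mks$ is \emph{locally bounded}: since $\m\Cks$ is locally bounded by Proposition \ref{prop:indecomposables in firm source}(f) and $\Mks$ is a full subcategory of $\m\Cks$, the defining sum $\sum_{Y}(\dim_{\K}\Mks(X,Y)+\dim_{\K}\Mks(Y,X))$ over indecomposables $Y\in\Mks$ is bounded above by the corresponding sum over all indecomposables of $\m\Cks$, which is finite. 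Hence $\Mks$ inherits local boundedness.

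With all hypotheses in place, Theorem \ref{thrm:Darpo-Iyama} applies with $F:\Cks\to\Cks/\ZZ$ the natural functor and yields that $F^{\ast}(\Mks)\subseteq\m(\Cks/\ZZ)$ is a locally bounded $n$-cluster tilting subcategory. Finally I would note that $F^{\ast}(\Mks)$ is the subcategory customarily denoted $\Mks/\ZZ$, so the two descriptions $F^{\ast}(\Mks)\equivalent\Mks/\ZZ$ appearing in the statement refer to the same subcategory, completing the proof. I do not anticipate a genuine obstacle here: the corollary is a bookkeeping assembly of Corollary \ref{cor:M infinity infinity is n-ct}, Lemma \ref{lem:admissible Z-action on C}, Lemma \ref{lem:Mks is Z-equivariant}, and Theorem \ref{thrm:Darpo-Iyama}, and the only mild subtlety is the hereditary local-boundedness remark for $\Mks$.
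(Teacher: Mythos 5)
Your proposal is correct and follows essentially the same approach as the paper: verifying the hypotheses of Theorem \ref{thrm:Darpo-Iyama} one by one via Corollary \ref{cor:properties of Cks}, Proposition \ref{prop:indecomposables in firm source}, Lemma \ref{lem:admissible Z-action on C}, Corollary \ref{cor:M infinity infinity is n-ct}, and Lemma \ref{lem:Mks is Z-equivariant}, with the local boundedness of $\Mks$ inherited from that of $\m\Cks$. The one small difference is that the paper grounds the identification $F^{\ast}(\Mks)\equivalent\Mks/\ZZ$ in a specific cited result (Lemma 3.5(c) of Darp\"o--Iyama) rather than treating it as a matter of notation.
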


\begin{proof}
We show that the conditions of Theorem \ref{thrm:Darpo-Iyama} are satisfied for $\cC=\Cks$, $\cG=\ZZ$ and $\cM=\Mks$. The category $\Cks$ is a locally bounded $\K$-linear Krull--Schmidt category by Corollary \ref{cor:properties of Cks}. The free abelian group $\ZZ$ acts admissibly on $\Cks$ by Lemma \ref{lem:admissible Z-action on C}. The subcategory $\Mks\subseteq \m\Cks$ is locally bounded since $\m\Cks$ is locally bounded by Proposition \ref{prop:indecomposables in firm source}(f). The subcategory $\Mks$ is an $n$-cluster tilting subcategory of $\m\Cks$ by Corollary \ref{cor:M infinity infinity is n-ct}. The subcategory $\Mks$ is $\ZZ$-equivariant by Lemma \ref{lem:Mks is Z-equivariant}. Hence $F^{\ast}(\Mks)$ is a locally bounded $n$-cluster tilting subcategory. Finally, the equivalence $F^{\ast}(\Mks)\equivalent \Mks/\ZZ$ follows by \cite[Lemma 3.5(c)]{DI}.
\end{proof}

We can realize the category $\m(\Cks/\ZZ)$ as the module category of a fi\-nite-di\-men\-sion\-al algebra. Recall the form of $Q$ from (\ref{eq:Q with support of projective}) and (\ref{eq:Q with support of injective}). Consider the equivalence relation $\tilvert$ on $Q_0$ generated by $i_k\tilvert \;p_k$ for $1\leq k\leq h$ and the equivalence relation $\tilarr$ on $Q_1$ generated by $\alpha_k\tilarr \;\beta_k$ for $1\leq k\leq h-1$. Let $\tilQ$ be the quiver with $\tilQ_0=Q_0/\tilvert$ and $\tilQ_1=Q_1/\tilarr$, that is $\tilQ$ has the form
\begin{equation}\label{picture:self-gluing quiver}
\begin{tikzpicture}[baseline={(current bounding box.center)},scale=0.9, transform shape]
\draw[pattern=northwest, hatch distance=15pt, hatch thickness = 0.3pt] (0,0) ellipse (1 and 1.3);
\node (Q) at (0,0) {$\mathbf{Q'}$};

\node(1) at (2.3,0) {$p_1=i_1$};
\node(2) at (3.81986841536,-1.3) {$p_2=i_2$};
\node(3) at (2.3,-2.6) {$p_3=i_3$};
\node(dotsR) at (0.3,-2.6) {$ $};
\node(dotsL) at (-0.3,-2.6) {$ $};
\node(h-2) at (-2.3,-2.6) {$p_{h-2}=i_{h-2}$};
\node(h-1) at (-3.81986841536,-1.3) {$p_{h-1}=i_{h-1}$};
\node(h) at (-2.3,0) {$p_h=i_h$};

\draw[->] (0.47475302236,1.14415478389) -- (1);
\draw[->] (0.47475302236,-1.14415478389) -- (1);
\draw[dotted] (1.325, 0.611175075) -- (1.325, -0.611175075);
\draw[->] (1) to [out=0, in=90] node[draw=none, midway, right] {$\alpha_{1}=\beta_1$} (2);
\draw[->] (2) to [out=-90, in=0] node[draw=none, midway, right] {$\alpha_{2}=\beta_2$} (3);
\draw[->] (3) -- (dotsR);
\draw[dotted] (dotsR) -- (dotsL);
\draw[->] (dotsL) -- (h-2);
\draw[->] (h-2) to [out=180, in=-90] node[draw=none, midway, left] {$\alpha_{h-2}=\beta_{h-1}$} (h-1);
\draw[->] (h-1) to [out=90, in=180] node[draw=none, midway, left] {$\alpha_{h-1}=\beta_{h-1}$} (h);
\draw[->] (h) -- (-0.47475302236,1.14415478389);
\draw[->] (h) -- (-0.47475302236,-1.14415478389);
\draw[dotted] (-1.325, 0.611175075) -- (-1.325, -0.611175075);
\end{tikzpicture}
\end{equation}
In particular for a path in $Q$, we can consider the corresponding path in $\tilQ$. Let $\tilR$ be the ideal of $\K \tilQ$ generated by all elements in $\cR$, viewed as elements in $\K \tilQ$, together with all paths of the form $\gamma\alpha_1\dots\alpha_{h-1}\gamma'$ with $\gamma$ and $\gamma'$ arrows in $\tilQ$. In other words, the ideal $\tilR$ is generated by all elements in $\cR$ and all paths that start at $Q'$, go through the vertices $p_1,\dots,p_h$ and end up at $Q'$ again. Set $\tilLa=\K\tilQ/\tilR$\index[symbols]{Lambdaatilde@$\tilLa$}\index[symbols]{KQ R@$\K\tilQ/\tilR$}. 

\begin{proposition}\label{prop:finite-dimensional algebra from infinite quiver}
Let $\cD\coloneqq \K \Qks/\Rks$. There is an equivalence of categories $\proj(\cD/\ZZ)\equivalent \Cks/\ZZ$.
\end{proposition}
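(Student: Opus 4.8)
The plan is to identify both $\proj(\cD/\ZZ)$ and $\Cks/\ZZ$ with an additive idempotent completion of the $\K$-linear category $\cD/\ZZ$, the only genuinely nontrivial input being that the orbit category $\Cks/\ZZ$ is itself idempotent complete. By Corollary \ref{cor:equivalence of quiver and projectives} there is an equivalence $\Cks \equivalent \proj\cD$, and by the construction of the $\ZZ$-action $\{F_k\}$ on $\Cks$ this equivalence intertwines $F_k$ with the automorphism $[k]$ of $\proj\cD$ induced from Lemma \ref{lem:admissible Z-action on C}. Composing the Yoneda embedding $\cD\to\proj\cD$ with this equivalence yields a $\K$-linear functor $\cD\to\Cks$ carrying the translation $[k]$ on $\cD$ to $F_k$, so on orbit categories it induces a $\K$-linear functor $\iota\colon\cD/\ZZ\to\Cks/\ZZ$. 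For objects $x,y$ of $\cD$ one has
\[
(\cD/\ZZ)(x,y)=\bigoplus_{k\in\ZZ}\cD(x,[k]y)\isom\bigoplus_{k\in\ZZ}\Cks(F_0x,F_ky)=(\Cks/\ZZ)(\iota x,\iota y),
\]
the middle isomorphism coming from the fully faithful functor $\cD\to\Cks$; hence $\iota$ is fully faithful, and it is clearly additive.

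Next I would recall that, via the Yoneda embedding, $\proj(\cD/\ZZ)$ — the closure in $\m(\cD/\ZZ)$ under finite direct sums and direct summands of the representables $(\cD/\ZZ)(-,x)$ — is an additive idempotent completion of $\cD/\ZZ$, that is, an idempotent-complete additive $\K$-linear category containing $\cD/\ZZ$ as a full subcategory and in which every object is a direct summand of a finite direct sum of objects of $\cD/\ZZ$; such a completion is unique up to equivalence. On the other side, $\Cks$ is additive since it is Krull--Schmidt (Proposition \ref{lem:the system is an inverse system}), so the orbit functor $\Cks\to\Cks/\ZZ$ makes $\Cks/\ZZ$ additive as well; and since every object of $\Cks\equivalent\proj\cD$ is a direct summand of a finite direct sum of representables $\cD(-,x)$, every object of $\Cks/\ZZ$ is a direct summand of a finite direct sum of objects in the image of $\iota$. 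Therefore, once $\Cks/\ZZ$ is known to be idempotent complete, $\iota$ exhibits $\Cks/\ZZ$ as an additive idempotent completion of $\cD/\ZZ$, and uniqueness gives $\Cks/\ZZ\equivalent\proj(\cD/\ZZ)$, which is the claim.

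The main obstacle is thus to show that $\Cks/\ZZ$ is idempotent complete. Here I would appeal to the general theory of orbit categories in \cite{DI}: $\Cks$ is a locally bounded $\K$-linear Krull--Schmidt category (Proposition \ref{prop:indecomposables in firm source}(e)) on which $\ZZ$ acts admissibly (Lemma \ref{lem:admissible Z-action on C}, transported along $\Cks\equivalent\proj\cD$), and under these hypotheses the orbit category $\Cks/\ZZ$ is again a locally bounded Krull--Schmidt category, in particular it has split idempotents. If a self-contained argument is wanted, it runs as follows: $\Cks/\ZZ$ is Hom-finite because $\Cks$ is locally bounded, so every endomorphism ring in $\Cks/\ZZ$ is a finite-dimensional, hence semiperfect, $\K$-algebra, and for an indecomposable $x\in\ind\Cks$ the subspace of $\End_{\Cks/\ZZ}(x)=\bigoplus_{k\in\ZZ}\Cks(x,F_kx)$ spanned by $\rad\End_{\Cks}(x)$ together with all components with $k\neq0$ is a two-sided ideal, which is nilpotent because local boundedness of $\Cks$ bounds both the Loewy length of $\Cks$ and the set of $k$ with $\Cks(x,F_kx)\neq0$, so a composite of sufficiently many such maps necessarily vanishes; consequently $\End_{\Cks/\ZZ}(x)$ is local, $\Cks/\ZZ$ is Krull--Schmidt, and idempotents split. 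The remaining points — that $\iota$ and the orbit functors are additive, that $\m(\cD/\ZZ)$ is closed under direct summands so that $\proj(\cD/\ZZ)$ is indeed idempotent complete, and that an additive idempotent completion is unique up to equivalence — are routine, and I would only indicate them.
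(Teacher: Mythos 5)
Your proposal is correct and follows essentially the same route as the paper: transport the $\ZZ$-action along the equivalence $\Cks\equivalent\proj\cD$ of Corollary~\ref{cor:equivalence of quiver and projectives}, exhibit a fully faithful $\cD/\ZZ\hookrightarrow\Cks/\ZZ$ via Yoneda, and identify both $\Cks/\ZZ$ and $\proj(\cD/\ZZ)$ as the additive idempotent completion of $\cD/\ZZ$. The only difference is one of explicitness: the paper's proof ends with the terse ``bijection on indecomposables\dots which induces an equivalence,'' implicitly leaning on the Darp\"o--Iyama framework for the fact that the orbit category of a locally bounded Krull--Schmidt category under an admissible free action is again Krull--Schmidt, whereas you spell out that idempotent-completeness argument (locality of $\End_{\Cks/\ZZ}(x)$ via the nilpotent two-sided ideal generated by the nonzero-degree components and $\rad\End_{\Cks}(x)$), which the paper omits.
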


\begin{proof}
First observe that by Corollary \ref{cor:equivalence of quiver and projectives} we have an equivalence of categories $\proj(\cD)\equivalent \Cks$ which is compatible with the action of $\ZZ$ on both sides by construction. Hence it is enough to show that $\left(\proj\cD\right)/\ZZ \equivalent \proj\left(\cD/\ZZ\right)$. The Yoneda embedding $h_{\cD}:\cD\to \proj(\cD)$ induces a bijection $\obj(\cD) \equivalent \ind(\proj(\cD))$, which in turn induces a bijection $\obj(\cD/\ZZ)\equivalent \ind(\proj(\cD)/\ZZ)=\ind(\proj(\cD))/\ZZ$. On the other hand the Yoneda embedding $h_{\cD/\ZZ}$ induces a bijection $\obj(\cD/\ZZ)\equivalent\ind(\proj(\cD/\ZZ))$. It follows that there is a bijection $\ind(\proj(\cD)/\ZZ)\equivalent \ind(\proj(\cD/\ZZ))$ which induces an equivalence of categories $\left(\proj\cD\right)/\ZZ \equivalent \proj\left(\cD/\ZZ\right)$.
\end{proof}

\begin{corollary}\label{cor:La infinity n-ct}
There is an equivalence of categories $\m\tilLa\equivalent \m(\Cks/\ZZ)$. In particular $\m\tilLa$ admits an $n$-cluster tilting subcategory $\tilM$.
\end{corollary}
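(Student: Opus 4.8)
The plan is to deduce the statement from the results already established, the only genuinely new ingredient being the identification of the orbit category $\cD/\ZZ$, where $\cD=\K\Qks/\Rks$, with the path category of the finite bound quiver $(\tilQ,\tilR)$. Once this is in hand, the equivalence $\m\tilLa\equivalent\m(\Cks/\ZZ)$ follows from the chain
\[
\m\tilLa \equivalent \m(\cD/\ZZ) \equivalent \m(\proj(\cD/\ZZ)) \equivalent \m(\Cks/\ZZ),
\]
in which the first equivalence combines the identification $\cD/\ZZ\equivalent\K\tilQ/\tilR$ with the standard identifications of the module category of the finite bound quiver algebra $\tilLa=\K\tilQ/\tilR$ with $\rep(\tilQ,\tilR)$ and with modules over its path category, the second is the general fact $\m\cV\equivalent\m(\proj\cV)$ for a skeletally small $\K$-linear category $\cV$, and the third is exactly Proposition~\ref{prop:finite-dimensional algebra from infinite quiver} (applying $\m(-)$ to $\proj(\cD/\ZZ)\equivalent\Cks/\ZZ$).

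The heart of the argument is therefore the equivalence $\cD/\ZZ\equivalent\K\tilQ/\tilR$, which I would obtain by exhibiting the shift action of $\ZZ$ on $\Qks$ as a Galois covering $\Qks\to\tilQ$ with group $\ZZ$ whose pushed-down relations are $\tilR$. Since $\La\not\isom\K\overrightarrow{A}_{h'}$, the middle part $Q'$ of $Q$ is nonempty, so the copies $Q'[z]$, $z\in\ZZ$, occurring in $\Qks$ are pairwise disjoint and the automorphisms $[k]\colon i[z]\mapsto i[z+k]$ of $\Qks$ act freely on vertices and on arrows; a single period — one copy $Q'[0]$ together with the spine $p_1[1]=i_1[0]\to\cdots\to p_h[1]=i_h[0]$ joining two consecutive copies of $Q'$ — is a fundamental domain whose $\ZZ$-translates tile $\Qks$, and its image under the quotient is precisely the quiver $\tilQ$ of (\ref{picture:self-gluing quiver}). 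Passing to path categories, the orbit category of $\K\Qks$ under the induced $\ZZ$-action is $\K\tilQ$; and because the ideal $\Rks$ is generated by the ideals $\cR[z]$, $z\in\ZZ$, together with all paths $Q'[z]\to Q'[z+1]$, and this generating set is $\ZZ$-stable, its image in $\K\tilQ$ is the ideal generated by $\cR$ together with all paths $\gamma\alpha_1\cdots\alpha_{h-1}\gamma'$ running through the spine — that is, exactly $\tilR$. Hence $\cD/\ZZ=(\K\Qks/\Rks)/\ZZ\equivalent\K\tilQ/\tilR$. The case $Q=\overrightarrow{A}_{h'}$ flagged earlier in the text is handled in the same way; only the fundamental domain is described slightly differently.

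For the final assertion, Corollary~\ref{cor:Mks/Z is n-ct} gives that $F^{\ast}(\Mks)\equivalent\Mks/\ZZ$ is a (locally bounded) $n$-cluster tilting subcategory of $\m(\Cks/\ZZ)$. Since being an $n$-cluster tilting subcategory — that is, functorially finite and weakly $n$-cluster tilting — is preserved under an equivalence of abelian categories, transporting $F^{\ast}(\Mks)$ along the equivalence $\m(\Cks/\ZZ)\equivalent\m\tilLa$ produced above yields an $n$-cluster tilting subcategory $\tilM\subseteq\m\tilLa$.

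The step I expect to be the main obstacle is the covering identification in the second paragraph: one must carefully match the combinatorics of the infinite iterated self-gluing describing $\Qks$ with the folded finite quiver $\tilQ$, and check that the generators of $\Rks$ push down onto those of $\tilR$ with no relations spuriously created or lost. Freeness of the $\ZZ$-action — which is exactly where the hypothesis $\La\not\isom\K\overrightarrow{A}_{h'}$ is used — is what guarantees both that the orbit category of the path category is again a path category and that this bookkeeping goes through cleanly.
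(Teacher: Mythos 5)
Your proposal is correct and follows essentially the same route as the paper: both chain $\m\tilLa\equivalent\m(\proj(\cD/\ZZ))\equivalent\m((\proj\cD)/\ZZ)\equivalent\m(\Cks/\ZZ)$ via Proposition \ref{prop:finite-dimensional algebra from infinite quiver} and then invoke Corollary \ref{cor:Mks/Z is n-ct}. The paper leaves the identification $\cD/\ZZ\equivalent\K\tilQ/\tilR$ implicit, whereas you spell it out via the Galois covering picture, but this is a matter of supplied detail rather than a different argument.
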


\begin{proof}
This equivalence can be obtained by Proposition \ref{prop:finite-dimensional algebra from infinite quiver} via
\[\m\tilLa \equivalent \m \left(\proj\left(\cD/\ZZ\right)\right) \equivalent \m\left(\left(\proj\cD\right)/\ZZ\right) \equivalent \m(\Cks/\ZZ).\]
Then $\m\tilLa$ admits an $n$-cluster tilting subcategory by Corollary \ref{cor:Mks/Z is n-ct}.
\end{proof}

\begin{corollary}\label{cor:La n-ct implies La infinity n-ct}
Let $\La$ be a rep\-re\-sen\-ta\-tion-di\-rect\-ed algebra whose module category admits an $n$-cluster tilting subcategory $\cM$. Let $W$ be any maximal left abutment of $\La$ and let $J$ be any maximal right abutment of $\La$. Let $P\leq W$ be a simple left abutment and $I\leq J$ be a simple right abutment. Then $\La$ is $n$-self-gluable and $\Mks/\ZZ$ is an $n$-cluster tilting subcategory of $\m\tilLa$.
\end{corollary}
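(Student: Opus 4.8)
The plan is to verify that $\La$ together with the stated choices of $W$, $J$, $P$ and $I$ satisfies the hypotheses of Proposition~\ref{prop:self-gluing system}, and then invoke Corollary~\ref{cor:Mks/Z is n-ct}. First I would observe that $\La \not\isom \K\overrightarrow{A}_{h'}$ for any $h'$: indeed a path algebra of type $\overrightarrow{A}_{h'}$ is hereditary, and it is standard (see Remark~\ref{rem:generalization of n-ct} and \cite{VAS}) that for $n \geq 2$ the only linearly oriented $\overrightarrow{A}_{h'}$ whose module category admits an $n$-cluster tilting subcategory forces $\m\La$ to have very few indecomposables; more to the point, the construction we are about to run requires a nontrivial gluing, so I would simply include ``$\La \not\isom \K\overrightarrow{A}_{h'}$'' as a standing hypothesis implicit in the statement (as is done implicitly in Corollary~\ref{cor:La infinity n-ct}), or, if it must be argued, note that an $n$-cluster tilting subcategory of $\m(\K\overrightarrow{A}_{h'})$ with $n \geq 2$ exists only when $\K\overrightarrow{A}_{h'}$ itself is $n$-representation-finite in the sense that forces $h'$ small and then those cases are excluded or handled directly. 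In any case the bulk of the work is elsewhere.

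Next I would recall from Example~\ref{ex:n-self-gluable easy} that a representation-directed algebra whose module category admits an $n$-cluster tilting subcategory $\cM$ is automatically $n$-self-gluable: $\cM$ is a $(\La, D(\La), n)$-fractured subcategory by Remark~\ref{rem:generalization of n-ct}, every maximal left abutment $W$ together with every maximal right abutment $J$ forms a fractured pair $(W,J)$ since all fractures are projective or injective (as $T^L \isom \La$ and $T^R \isom D(\La)$), and any simple left abutment $P \leq W$ together with any simple right abutment $I \leq J$ forms a compatible pair $(P,I)$ — conditions (i)--(v) of Definition~\ref{def:self-gluable}(b) are checked as in Example~\ref{ex:n-self-gluable easy}, using that $\underline{T}^{(W)} = 0 \in \cF_P$, $\overline{T}^{(J)} = 0 \in \cG_I$, $\height(P) = 1 = \height(I)$, and $\pi_P^{\ast}(T^{(P)}) \isom 0 \isom \pi_I^{!}(T^{(I)})$. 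Thus the chosen $(W,J)$ and $(P,I)$ are exactly of the form produced in Example~\ref{ex:n-self-gluable easy}, so $\La$ is $n$-self-gluable with respect to these choices.

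Then I would apply Proposition~\ref{prop:self-gluing system}: with $G = \overrightarrow{A}_\infty^\infty$, $\La_z = \La[z]$, $I_{\alpha_z} = I[z]$, $P_{\alpha_z} = P[z+1]$ and $\cM_z = \cM[z]$, the proposition gives that $\La_\infty^\infty$ is a gluing system on $G$ and $(\cM_z)_{z\in\ZZ}$ is a complete $n$-fractured system of it. Since $\cM$ is an $n$-cluster tilting subcategory and $P$, $I$ are simple, this is precisely the setup of Corollary~\ref{cor:Mks/Z is n-ct} (via Proposition~\ref{prop:gluing system of n-ct} one even sees the completeness directly). Finally, Corollary~\ref{cor:Mks/Z is n-ct} yields that $F^{\ast}(\Mks) \equivalent \Mks/\ZZ \subseteq \m(\Cks/\ZZ)$ is a locally bounded $n$-cluster tilting subcategory, and Corollary~\ref{cor:La infinity n-ct} identifies $\m(\Cks/\ZZ)$ with $\m\tilLa$; transporting $\Mks/\ZZ$ along this equivalence gives the $n$-cluster tilting subcategory of $\m\tilLa$.

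I do not expect a serious obstacle: this corollary is a bookkeeping assembly of Example~\ref{ex:n-self-gluable easy}, Proposition~\ref{prop:self-gluing system} and Corollaries~\ref{cor:Mks/Z is n-ct} and~\ref{cor:La infinity n-ct}. The one point requiring a word of care is the hypothesis $\La \not\isom \K\overrightarrow{A}_{h'}$, which is needed so that all gluings in $\La_\infty^\infty$ are nontrivial (Definition~\ref{def:gluing system}(iii)) and so that the $\ZZ$-action of Lemma~\ref{lem:admissible Z-action on C} is admissible; I would either assume it outright (matching Proposition~\ref{prop:self-gluing system} and Corollary~\ref{cor:La infinity n-ct}) or dispose of the finitely many excluded hereditary cases separately. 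Everything else is a direct citation.
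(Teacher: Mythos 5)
Your proof follows the same route as the paper's (which is simply ``Follows immediately by Example~\ref{ex:n-self-gluable easy} and Corollary~\ref{cor:La infinity n-ct}''), with the intermediate steps through Proposition~\ref{prop:self-gluing system} and Corollary~\ref{cor:Mks/Z is n-ct} unrolled explicitly. The observation about $\La\not\isom\K\overrightarrow{A}_{h'}$ is a reasonable one to flag; the paper handles it via the standing assumption fixed in Section~\ref{sec:gluings and orbit categories}.

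One small computational slip in your verification of Definition~\ref{def:self-gluable}(b)(v): since $T^L\isom\La$, the fracture $T^{(W)}$ is the full projective tilting module $\bigoplus_{i=1}^h P_{\La}(i)$ over $\K\overrightarrow{A}_h$, so the summand appearing in $\PD$ when $\height(P)=1$ is $T^{(P)}=P$ itself, not zero. Consequently $\pi_P^{\ast}(T^{(P)})$ is the (nonzero) unique simple $\K\overrightarrow{A}_1$-module, and likewise $\pi_I^{!}(T^{(I)})$; the compatibility condition holds because both are isomorphic to this simple module, not because both vanish. The claims $\underline{T}^{(W)}=0$ and $\overline{T}^{(J)}=0$ are fine, but $T^{(P)}$ and $T^{(I)}$ are not the underlined/overlined parts, so they do not vanish. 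This does not affect the conclusion.
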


\begin{proof}
Follows immediately by Example \ref{ex:n-self-gluable easy} and Corollary \ref{cor:La infinity n-ct}.
\end{proof}

\subsection{Simultaneous double gluings}\label{sec:double gluing}

In this section we describe a special case of a gluing coming from an orbit category. As a motivation, recall that our gluing operation takes us input two bound quiver algebras $A$ and $B$ where the quivers are of the form
\[\begin{tikzpicture}[baseline={(current bounding box.center)}, scale=0.9, transform shape]

\draw[->] (10.8,0) -- (11.4,0);
\draw[-] (11.4,0) -- (11.9,0);

\draw[pattern=northwest, hatch distance=15pt, hatch thickness = 0.3pt] (10,0) ellipse (0.8cm and 1cm);
\node (Z) at (10,0) {$\mathbf{Q^L_A}$};

\node (X) at (11.35,0.25) {$\mathbf{L}$};
\end{tikzpicture}\;\; \text{ and } \;\;
\begin{tikzpicture}[baseline={(current bounding box.center)}, scale=0.9, transform shape]

\draw[->] (8.1,0) -- (8.7,0);
\draw[-] (8.7,0) -- (9.2,0);

\draw[pattern=northwest, hatch distance=15pt, hatch thickness = 0.3pt] (10,0) ellipse (0.8cm and 1cm);
\node (Z) at (10,0) {$\mathbf{Q^R_B}$};

\node (X) at (8.65,0.25) {$\mathbf{L'}$};
\end{tikzpicture}\]
where $\mathbf{L}=\mathbf{L'}=\overrightarrow{A}_h$ and there are no relations in the subquivers $\mathbf{L}$ and $\mathbf{L'}$ and returns a new bound given quiver algebra $\La=\K Q_{\La} /\cR_{\La}$ where the quiver is
\[\begin{tikzpicture}[baseline={(current bounding box.center)}, scale=0.9, transform shape]

\draw[pattern=northwest, hatch distance=15pt, hatch thickness = 0.3pt] (7.3,0) ellipse (0.8cm and 1cm);
\node (Z) at (7.3,0) {$\mathbf{Q^L_A}$};

\draw[->] (8.1,0) -- (8.7,0);
\draw[-] (8.7,0) -- (9.2,0);

\draw[pattern=northwest, hatch distance=15pt, hatch thickness = 0.3pt] (10,0) ellipse (0.8cm and 1cm);
\node (ZZ) at (10,0) {$\mathbf{Q^R_B}$};
\node (X) at (8.65,0.25) {$\scriptstyle \mathbf{L}=\mathbf{L'}$};
\end{tikzpicture}\]
and $\cR_{\La}$ is generated by $\cR_A\cup \cR_B$ together with all paths from $\mathbf{Q^L_A}$ to $\mathbf{Q^R_B}$. In this section we generalize the above situation to the situation of taking as inputs two bound quiver algebras $A$ and $B$ where the quivers are of the form
\[\begin{tikzpicture}[baseline={(current bounding box.center)}, scale=0.9, transform shape]

\draw[->] (10.76,0.3) -- (11.36,0.3);
\draw[-] (11.36,0.3) -- (11.86,0.3);

\draw[->] (10.76,-0.3) -- (11.36,-0.3);
\draw[-] (11.36,-0.3) -- (11.86,-0.3);

\draw[pattern=northwest, hatch distance=15pt, hatch thickness = 0.3pt] (10,0) ellipse (0.8cm and 1cm);
\node (Z) at (10,0) {$\mathbf{Q^L_A}$};

\node (X) at (11.31,0.55) {$\mathbf{L_1}$};
\node (Y) at (11.31,-0.55) {$\mathbf{L_2}$};

\end{tikzpicture}\;\; \text{ and } \;\;
\begin{tikzpicture}[baseline={(current bounding box.center)}, scale=0.9, transform shape]

\draw[->] (8.14,0.3) -- (8.74,0.3);
\draw[-] (8.74,0.3) -- (9.24,0.3);

\draw[->] (8.14,-0.3) -- (8.74,-0.3);
\draw[-] (8.74,-0.3) -- (9.24,-0.3);

\draw[pattern=northwest, hatch distance=15pt, hatch thickness = 0.3pt] (10,0) ellipse (0.8cm and 1cm);
\node (Z) at (10,0) {$\mathbf{Q^R_B}$};

\node (X) at (8.69,0.55) {$\mathbf{L_1'}$};
\node (Y) at (8.69,-0.55) {$\mathbf{L_2'}$};
\end{tikzpicture}\;\; \text{ or } \;\;
\begin{tikzpicture}[baseline={(current bounding box.center)}, scale=0.9, transform shape]

\draw[->] (10.76,0.3) -- (11.36,0.3);
\draw[-] (11.36,0.3) -- (11.86,0.3);

\draw[-<] (10.76,-0.3) -- (11.36,-0.3);
\draw[-] (11.36,-0.3) -- (11.86,-0.3);

\draw[pattern=northwest, hatch distance=15pt, hatch thickness = 0.3pt] (10,0) ellipse (0.8cm and 1cm);
\node (Z) at (10,0) {$\mathbf{Q^L_A}$};

\node (X) at (11.31,0.55) {$\mathbf{L_1}$};
\node (Y) at (11.31,-0.55) {$\mathbf{L_2}$};

\end{tikzpicture}\;\; \text{ and } \;\;
\begin{tikzpicture}[baseline={(current bounding box.center)}, scale=0.9, transform shape]

\draw[->] (8.14,0.3) -- (8.74,0.3);
\draw[-] (8.74,0.3) -- (9.24,0.3);

\draw[-<] (8.14,-0.3) -- (8.74,-0.3);
\draw[-] (8.74,-0.3) -- (9.24,-0.3);

\draw[pattern=northwest, hatch distance=15pt, hatch thickness = 0.3pt] (10,0) ellipse (0.8cm and 1cm);
\node (Z) at (10,0) {$\mathbf{Q^R_B}$};

\node (X) at (8.69,0.55) {$\mathbf{L_1'}$};
\node (Y) at (8.69,-0.55) {$\mathbf{L_2'}$};
\end{tikzpicture}\]
where $\mathbf{L_1}=\mathbf{L_1'}=\overrightarrow{A}_{h_1}$, $\mathbf{L_2}=\mathbf{L_2'}=\overrightarrow{A}_{h_2}$ and there are no relations in the subquivers $\mathbf{L_1}$, $\mathbf{L_1'}$, $\mathbf{L_2}$ and $\mathbf{L_2'}$ and returning a new bound quiver algebra $\tilde{\La}=\K \tilQ/\tilR$ where $\tilQ$ is
\[\begin{tikzpicture}[baseline={(current bounding box.center)}, scale=0.9, transform shape]

\draw[->] (8.14,0.3) -- (8.74,0.3);
\draw[-] (8.74,0.3) -- (9.24,0.3);

\draw[->] (8.14,-0.3) -- (8.74,-0.3);
\draw[-] (8.74,-0.3) -- (9.24,-0.3);

\draw[pattern=northwest, hatch distance=15pt, hatch thickness = 0.3pt] (10,0) ellipse (0.8cm and 1cm);
\draw[pattern=northwest, hatch distance=15pt, hatch thickness = 0.3pt] (7.38,0) ellipse (0.8cm and 1cm);
\node (Z) at (10,0) {$\mathbf{Q^R_B}$};
\node (ZZ) at (7.38,0) {$\mathbf{Q^L_A}$};

\node (X) at (8.69,0.55) {$\scriptstyle \mathbf{L_1}=\mathbf{L_1'}$};
\node (Y) at (8.69,-0.55) {$\scriptstyle \mathbf{L_2}=\mathbf{L_2'}$};
\end{tikzpicture}\;\; \text{ or, respectively, }
\begin{tikzpicture}[baseline={(current bounding box.center)}, scale=0.9, transform shape]

\draw[->] (8.14,0.3) -- (8.74,0.3);
\draw[-] (8.74,0.3) -- (9.24,0.3);

\draw[-<] (8.14,-0.3) -- (8.74,-0.3);
\draw[-] (8.74,-0.3) -- (9.24,-0.3);

\draw[pattern=northwest, hatch distance=15pt, hatch thickness = 0.3pt] (10,0) ellipse (0.8cm and 1cm);
\draw[pattern=northwest, hatch distance=15pt, hatch thickness = 0.3pt] (7.38,0) ellipse (0.8cm and 1cm);
\node (Z) at (10,0) {$\mathbf{Q'_R}$};
\node (ZZ) at (7.38,0) {$\mathbf{Q_L}$};

\node (X) at (8.69,0.55) {$\scriptstyle \mathbf{L_1}=\mathbf{L_1'}$};
\node (Y) at (8.69,-0.55) {$\scriptstyle \mathbf{L_2}=\mathbf{L_2'}$};
\end{tikzpicture}\]
and $\tilR$ is generated by $\cR_A\cup \cR_B$ together with all paths from $\mathbf{Q^L_A}$ to $\mathbf{Q^R_B}$ and all paths from $\mathbf{Q^R_B}$ to $\mathbf{Q^L_A}$. Our aim is to show that if $\m A$ and $\m B$ admit $n$-fractured subcategories that are sufficiently compatible with the above gluings, then $\m\tilLa$ admits an $n$-cluster tilting subcategory.

To make the above notions more precise, let $A=\K Q_A/\cR_A$ and $B=\K Q_B/\cR_B$ be two bound quiver algebras as above in the case where $\mathbf{L_2}$ has a source and $\mathbf{L_2'}$ has a sink; the other case can be dealt with similarly. Let us call $P_1$ and $P_2$ the left abutments of $A$ with support $\left(\mathbf{L_1}\right)_0$ and $\left(\mathbf{L_2}\right)_0$ respectively and $I_1$ and $I_2$ the right abutments of $B$ with support $\left(\mathbf{L_1'}\right)_0$ and $\left(\mathbf{L_2'}\right)_0$. Notice that these abutments are in general not maximal; let us call $W_1$ and $W_2$ the maximal left abutments of $A$ with $P_1\leq W_1$ and $P_2\leq W_2$ and let us call $J_1$ and $J_2$ the maximal right abutments of $B$ with $I_1\leq J_1$ and $I_2\leq J_2$. By the assumption $\mathbf{L_1}=\mathbf{L_1'}$, we can perform the gluing $\La_1=B\glue[P_1][I_1] A$ where $\La_1=\K Q_{\La_1}/\cR_{\La_1}$ with $Q_{\La_1}$ the quiver
\[\begin{tikzpicture}[baseline={(current bounding box.center)}, scale=0.9, transform shape]

\draw[->] (8.14,0.3) -- (8.74,0.3);
\draw[-] (8.74,0.3) -- (9.24,0.3);

\draw[->] (8.06,-0.5) -- (8.67,-0.5);
\draw[-] (8.67,-0.5) -- (9.17,-0.5);

\draw[->] (8.30,-0.1) -- (8.90,-0.1);
\draw[-] (8.90,-0.1) -- (9.20,-0.1);

\draw[pattern=northwest, hatch distance=15pt, hatch thickness = 0.3pt] (10,0) ellipse (0.8cm and 1cm);
\draw[pattern=northwest, hatch distance=15pt, hatch thickness = 0.3pt] (7.38,0) ellipse (0.8cm and 1cm);
\node (Z) at (10,0) {$\mathbf{Q^R_B}$};
\node (ZZ) at (7.38,0) {$\mathbf{Q^L_A}$};

\node (X) at (8.69,0.55) {$\scriptstyle \mathbf{L_1}=\mathbf{L_1'}$};
\node (Y) at (8.68,-0.77) {$\scriptstyle \mathbf{L_2}$};
\node (Y) at (8.65,0.05) {$\scriptstyle \mathbf{L_2'}$};
\end{tikzpicture}\]
and where $\cR_{\La_1}$ is generated by $\cR_A\cup \cR_B$ together with all paths from $\mathbf{Q_A^L}$ to $\mathbf{Q_B^R}$.

Assume now that $A$ has a fracturing $(T^L_A,T^R_A)$ and that $\m A$ admits a $(T^L_A,T^R_A,n)$-fractured subcategory $\cM_A$ such that $T^R=D(A)$ and the only nonprojective indecomposable summands of $T^L_A$ appear in $\cF_{P_1}$ or $\cF_{P_2}$. Dually assume that $B$ has a fracturing $(T^L_B,T^R_B)$ and that $\m B$ admits a $(T^L_B,T^R_B,n)$-fractured subcategory $\cM_B$ such that $T^L_B=B$ and the only noninjective indecomposable summands of $T^R_B$ appear in $\cG_{I_1}$ or $\cG_{I_2}$. Moreover assume that the two fracturings are compatible in the sense of Theorem \ref{thrm:fractsubcat}, that is we have
\begin{equation}
\prescript{\La_1}{A}{\pi}_{\ast}\left(T_A^{(P_1)}\right) \isom \prescript{\La_1}{B}{\pi}_{\ast} \left(T_B^{(I_1)}\right), \text{ and}
\end{equation} 
\begin{equation}\label{eq:fractures in double gluing} 
\prescript{\La_1}{A}{\pi}_{\ast}\left(T_A^{(P_2)}\right) \isom \prescript{\La_1}{B}{\pi}_{\ast} \left(T_B^{(I_2)}\right). \hphantom{\text{ and}}
\end{equation}
In particular, it follows by Theorem \ref{thrm:fractsubcat} that $\left(T_{\La_1}^L,T_{\La_1}^R\right)=\left(T_B^L,T_B^R\right)\glue[P_1][I_1]\left(T_A^L,T_A^R\right)$ is a fracturing of $\La_1$ and $\cM_{\La_1}=\add\left\{\prescript{\La_1}{A}{\pi}_{\ast}\left(\cM_A\right),\prescript{\La_1}{B}{\pi}_{\ast}\left(\cM_2\right)\right\}\subseteq \m\La_1$ is a $\left(T_{\La_1}^L,T_{\La_2}^R,n\right)$-fractured subcategory. 

With these assumption we have the following.

\begin{proposition}\label{prop:double gluing}
The algebra $\La_1$ is $n$-self-gluable and $\m\tilde{\La}_1$ admits an $n$-cluster tilting subcategory.
\end{proposition}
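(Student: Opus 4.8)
The plan is to show that $\La_1$ fits Definition~\ref{def:self-gluable} with the obvious data, and then quote Corollary~\ref{cor:La infinity n-ct}. Concretely, I would take the fractured pair to be $(W,J)=\left(\prescript{\La_1}{A}{\pi}_{\ast}(W_2),\prescript{\La_1}{B}{\pi}_{\ast}(J_2)\right)$ and the compatible pair to be $(P,I)=\left(\prescript{\La_1}{A}{\pi}_{\ast}(P_2),\prescript{\La_1}{B}{\pi}_{\ast}(I_2)\right)$. The heights match because $\height(P_2)=\height(I_2)$ by construction of the gluing $\La_1=B\glue[P_1][I_1]A$ over the $\mathbf{L_1}=\mathbf{L_1'}$ part, and $\mathbf{L_2},\mathbf{L_2'}$ are untouched by that first gluing, so they remain abutments of $\La_1$ of the same heights by Remark~\ref{rem:gluing remarks}(c).

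First I would verify that $(W,J)$ is a fractured pair for the fracturing $\left(T_{\La_1}^L,T_{\La_1}^R\right)$. The maximal left abutments of $\La_1$ are, by Remark~\ref{rem:gluing remarks}(c), images under $\prescript{\La_1}{A}{\pi}_{\ast}$ of maximal left abutments of $A$ (except those supported inside $\{1,\dots,h_1\}$, which get glued) together with images under $\prescript{\La_1}{B}{\pi}_{\ast}$ of maximal left abutments of $B$. Since $T^L_B=B$, every left fracture coming from $B$ is projective; and on the $A$ side the only nonprojective indecomposable summands of $T^L_A$ lie in $\cF_{P_1}$ or $\cF_{P_2}$, and those in $\cF_{P_1}$ are absorbed into the gluing (their fracture becomes part of the glued projective region after identifying $\mathbf{L_1}=\mathbf{L_1'}$). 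Hence the unique maximal left abutment of $\La_1$ carrying a nonprojective fracture is $W=\prescript{\La_1}{A}{\pi}_{\ast}(W_2)$, giving conditions (i) and (ii) of Definition~\ref{def:self-gluable}(a); conditions (iii) and (iv) follow dually using $T^R_A=D(A)$ and the hypothesis on the noninjective summands of $T^R_B$.

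Next I would check that $(P,I)$ is a compatible pair for $(W,J)$. Conditions (i) and (iii) hold since $P_2\leq W_2$ and $I_2\leq J_2$, preserved by the (fully faithful on the relevant subcategories) restriction-of-scalars functors by Proposition~\ref{prop:AR glued}. Conditions (ii) and (iv), namely $\underline{T}^{(W)}\in\cF_P$ and $\overline{T}^{(J)}\in\cG_I$, translate back under $\prescript{\La_1}{A}{\pi}_{\ast}$ and $\prescript{\La_1}{B}{\pi}_{\ast}$ to the corresponding statements for $W_2,P_2$ over $A$ and $J_2,I_2$ over $B$, which I would need to extract from the standing assumption that the nonprojective summands of $T^L_A$ sit in $\cF_{P_1}\cup\cF_{P_2}$ (so the part in the foundation of $W_2$ lies in $\cF_{P_2}$) and dually for $B$. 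Finally, the key equality $\pi_P^{\ast}\left(T^{(P)}\right)\isom\pi_I^{!}\left(T^{(I)}\right)$ of Definition~\ref{def:self-gluable}(b)(v) is exactly the compatibility hypothesis \eqref{eq:fractures in double gluing}, pushed through the footings; here one uses that $\pi_P$ for the abutment $P$ of $\La_1$ factors as $\pi_{P_2}\comp\prescript{\La_1}{A}{\pi}$ and similarly for $I$, together with Remark~\ref{rem:gluing over subgraph remarks}(c). Once $\La_1$ is shown to be $n$-self-gluable with this data, since also $\La_1\not\isom\K\overrightarrow{A}_{h'}$ (the quiver $\tilQ$ genuinely contains $\mathbf{Q^L_A}$ and $\mathbf{Q^R_B}$), Corollary~\ref{cor:La infinity n-ct} applied to $\La=\La_1$ yields a finite-dimensional algebra $\tilLa_1$ whose module category admits an $n$-cluster tilting subcategory, and one identifies this $\tilLa_1$ with the $\tilde\La_1=\K\tilQ/\tilR$ described above by comparing the quiver in \eqref{picture:self-gluing quiver} with the double-glued quiver: the identification $p_k=i_k$ coming from the orbit construction is precisely the identification $\mathbf{L_2}=\mathbf{L_2'}$, and the generating relations $\gamma\alpha_1\cdots\alpha_{h-1}\gamma'$ of $\tilR$ are the paths through the glued $\mathbf{L_2}$-region, matching the prescribed relations of $\tilR$.

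The main obstacle I anticipate is the bookkeeping in the second and third paragraphs: one must carefully track, across the first gluing $\La_1=B\glue[P_1][I_1]A$, which abutments of $A$ and $B$ survive as abutments of $\La_1$ and which fractures remain (non)projective or (non)injective, and then verify that the residual fracture data $\underline{T}^{(W)},\overline{T}^{(J)}$ land in the correct foundations $\cF_P,\cG_I$. This is essentially a repeated application of Remark~\ref{rem:gluing remarks}(c) and Proposition~\ref{prop:AR glued}, but keeping the two abutment systems $\{P_1,P_2\}$ and $\{I_1,I_2\}$ (which are independent by hypothesis, so remain abutments after gluing along the other one) disentangled is where care is needed. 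Identifying the resulting $\tilLa$ with the explicitly described $\K\tilQ/\tilR$ is then a direct comparison of quivers and relations and should be routine.
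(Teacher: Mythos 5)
The proposal follows the same route as the paper's proof: choose the fractured pair $(W,J)=(\prescript{\La_1}{A}{\pi}_{\ast}(W_2),\prescript{\La_1}{B}{\pi}_{\ast}(J_2))$ and the compatible pair $(P,I)=(\prescript{\La_1}{A}{\pi}_{\ast}(P_2),\prescript{\La_1}{B}{\pi}_{\ast}(I_2))$, verify the conditions of Definition~\ref{def:self-gluable} from the hypotheses on $T^L_A$, $T^R_B$ and equation~\eqref{eq:fractures in double gluing}, and apply Corollary~\ref{cor:La infinity n-ct}. This is exactly the argument in the paper (which denotes the compatible pair $(P_{2\ast},I_{2\ast})$), and the student's write-up is somewhat more detailed in tracing the abutments of $\La_1$ via Remark~\ref{rem:gluing remarks}(c) and in observing explicitly that $\La_1\not\isom\K\overrightarrow{A}_{h'}$ and that the resulting $\tilLa_1$ agrees with the quiver $\tilQ$ and ideal $\tilR$ of~\eqref{picture:self-gluing quiver}. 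The one informal step (``the $\cF_{P_1}$-fractures are absorbed into the glued projective region'') is the same assertion the paper makes without elaboration; it rests on the fact that the maximal left abutment of $\La_1$ living over the identified $\mathbf{L_1}=\mathbf{L_1'}$ is produced by $B$ rather than $A$, so its fracture is projective since $T^L_B=B$.
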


\begin{proof}
For clarity, let us redraw the quiver $Q_{\La_1}$ as
\[\begin{tikzpicture}[baseline={(current bounding box.center)}, scale=0.9, transform shape]

\draw[pattern=northwest, hatch distance=15pt, hatch thickness = 0.3pt] (7.3,0) ellipse (0.8cm and 1cm);
\node (Z) at (7.3,0) {$\mathbf{Q^R_B}$};

\draw[->] (8.1,0) -- (8.7,0);
\draw[-] (8.7,0) -- (9.2,0);

\draw[->] (5.4,0) -- (6,0);
\draw[-] (6,0) -- (6.5,0);

\draw[->] (10.8,0) -- (11.4,0);
\draw[-] (11.4,0) -- (11.9,0);

\draw[pattern=northwest, hatch distance=15pt, hatch thickness = 0.3pt] (10,0) ellipse (0.8cm and 1cm);
\node (ZZ) at (10,0) {$\mathbf{Q^L_A}$};
\node (X) at (8.65,0.25) {$\scriptstyle \mathbf{L_1}=\mathbf{L_1'}$};
\node (Y) at (5.95,0.25) {$\scriptstyle \mathbf{L_2'}$};
\node (W) at (11.35,0.25) {$\scriptstyle \mathbf{L_2}$};
\end{tikzpicture}.\]
Set $P_{2\ast}\coloneqq\prescript{\La_1}{A}{\pi}_{\ast}(P_2)$ and $I_{2\ast}\coloneqq\prescript{\La_1}{B}{\pi}_{\ast}(I_2)$. Then by the assumptions on the fractures $(T^L_A,D(A))$ and $(B, T^R_B)$ it follows that the only nonprojective indecomposable summands of $T^L_{\La_1}$ appear in $\cF_{P_2\ast}$ and the only noninjective indecomposable summands of $T^R_{\La_1}$ appear in $\cG_{I_2\ast}$. This shows that $(W_2,J_2)$ is a fractured pair where $W_2$ is a maximal left abutment with $P_{2\ast}\leq W_2$ and $J_2$ is a maximal right abutment with $I_{2\ast}\leq J_2$. Moreover, since $\mathbf{L_2}=\mathbf{L_2'}$ we have $\height\left(P_{2\ast}\right)=\height\left(I_{2\ast}\right)$ and together with equation (\ref{eq:fractures in double gluing}) this shows that $(P_{2\ast},I_{2\ast})$ is a compatible pair. Hence $\La_1$ is $n$-self-gluable. The result follows by Corollary \ref{cor:La infinity n-ct}.
\end{proof}

\begin{remark}\label{rem:k-simultaneous gluing}
We note here that many of the results of ordinary gluing hold for this type of gluing. In particular the Aus\-lan\-der--Rei\-ten quiver of $\tilde{\La}$ can be computed by identifying the subquivers of the Aus\-lan\-der--Rei\-ten quivers of $A$ and $B$ corresponding to the abutments $P_1$ and $I_1$ and $P_2$ and $I_2$. However, the resulting algebra is not always rep\-re\-sen\-ta\-tion-di\-rect\-ed. If it turns out that the resulting algebra is rep\-re\-sen\-ta\-tion-di\-rect\-ed, then we can iterate this result to simultaneously glue along more than two linearly oriented subquivers of Dynkin type $A$. That is, let $A=\K Q_A/\cR_A$ and $B=\K Q_B/\cR_B$ be two bound quiver algebras where the quivers are of the form
\[\begin{tikzpicture}[baseline={(current bounding box.center)}, scale=0.9, transform shape]

\draw[-] (10.73,0.4) -- (11.36,0.4);
\draw[-] (11.36,0.4) -- (11.86,0.4);

\draw[-] (10.7,-0.5) -- (11.36,-0.5);
\draw[-] (11.36,-0.5) -- (11.86,-0.5);

\draw[-] (10.79,0) -- (11.36,0);
\draw[-] (11.36,0) -- (11.86,0);

\draw[pattern=northwest, hatch distance=15pt, hatch thickness = 0.3pt] (10,0) ellipse (0.8cm and 1cm);
\node (Z) at (10,0) {$\mathbf{Q^L_A}$};

\node (X) at (11.31,0.55) {$\scriptscriptstyle \mathbf{L_1}$};
\node (XX) at (11.31,0.15) {$\scriptscriptstyle \mathbf{L_2}$};
\node (Y) at (11.31,-0.65) {$\scriptscriptstyle \mathbf{L_k}$};

\draw[dotted] (11.31,-0.15) -- (11.31, -0.43);
\end{tikzpicture}\text{ and } \;\;
\begin{tikzpicture}[baseline={(current bounding box.center)}, scale=0.9, transform shape]

\draw[-] (8.14,0.4) -- (8.74,0.4);
\draw[-] (8.74,0.4) -- (9.27,0.4);

\draw[-] (8.14,0) -- (8.74,0);
\draw[-] (8.74,0) -- (9.24,0);

\draw[-] (8.14,-0.5) -- (8.74,-0.5);
\draw[-] (8.74,-0.5) -- (9.3,-0.5);

\draw[pattern=northwest, hatch distance=15pt, hatch thickness = 0.3pt] (10,0) ellipse (0.8cm and 1cm);
\node (Z) at (10,0) {$\mathbf{Q^R_B}$};

\node (X) at (8.69,0.55) {$\scriptscriptstyle \mathbf{L_1'}$};
\node (XX) at (8.69,0.15) {$\scriptscriptstyle \mathbf{L_2'}$};
\node (Y) at (8.69,-0.65) {$\scriptscriptstyle \mathbf{L_k'}$};

\draw[dotted] (8.69,-0.15) -- (8.69, -0.43);
\end{tikzpicture}\]
where $\mathbf{L_i}=\mathbf{L_i'}=\overrightarrow{A}_{h_i}$ and there are no relations in the subquivers $\mathbf{L_i}$, $\mathbf{L_i'}$. For $i\in\{1,\dots,k\}$ define $\La_i=\K Q_{\La_i} /\cR_{\La_i}$ to be the bound quiver algebra where $Q_{\La_i}$ is
\[\begin{tikzpicture}[baseline={(current bounding box.center)}, scale=0.9, transform shape]

\draw[-] (8.11,0.4) -- (8.74,0.4);
\draw[-] (8.74,0.4) -- (9.27,0.4);

\draw[-] (6.65,0.4) -- (5.49,0.4);
\draw[-] (10.73,0.4) -- (11.89,0.4);

\draw[-] (8.17,0) -- (8.74,0);
\draw[-] (8.74,0) -- (9.21,0);

\draw[-] (8.08,-0.5) -- (8.74,-0.5);
\draw[-] (8.74,-0.5) -- (9.3,-0.5);

\draw[-] (6.68,-0.5) -- (5.52,-0.5);
\draw[-] (10.7,-0.5) -- (11.86,-0.5);

\node (X) at (8.69,0.55) {$\scriptscriptstyle \mathbf{L_1}=\mathbf{L_1'}$};
\node (XX) at (8.69,0.15) {$\scriptscriptstyle \mathbf{L_2}=\mathbf{L_2'}$};
\node (Y) at (8.69,-0.65) {$\scriptscriptstyle \mathbf{L_i}=\mathbf{L_{i-1}'}$};

\node (Z) at (6.07,0.55) {$\scriptscriptstyle \mathbf{L_{i+1}}$};
\node (YY) at (6.07,-0.65) {$\scriptscriptstyle \mathbf{L_k}$};
\node (W) at (11.31,0.55) {$\scriptscriptstyle \mathbf{L_{i+1}'}$};
\node (U) at (11.31,-0.65) {$\scriptscriptstyle \mathbf{L_k'}$};

\draw[dotted] (8.69,-0.15) -- (8.69, -0.43);
\draw[loosely dotted] (6.07,0.3) -- (6.07, -0.43);
\draw[loosely dotted] (11.31,0.3) -- (11.31, -0.43);

\draw[pattern=northwest, hatch distance=15pt, hatch thickness = 0.3pt] (10,0) ellipse (0.8cm and 1cm);
\draw[pattern=northwest, hatch distance=15pt, hatch thickness = 0.3pt] (7.38,0) ellipse (0.8cm and 1cm);
\node (Z) at (10,0) {$\mathbf{Q^R_B}$};
\node (ZZ) at (7.38,0) {$\mathbf{Q^L_A}$};
\end{tikzpicture}\]
and $\cR_{\La_i}$ is generated by $\cR_A\cup \cR_B$ together with all paths from $\mathbf{Q^L_A}$ to $\mathbf{Q^R_B}$ and all paths from $\mathbf{Q^R_B}$ to $\mathbf{Q^L_A}$. Assume moreover that $\m A$ and $\m B$ admit $n$-fractured subcategories that are compatible with the above gluings and such that all nonprojective left fractures are supported in $\mathbf{L_i}$ or $\mathbf{L_{k+1}'}$ and all noninjective right fractures are supported in $\mathbf{L_i'}$ or $\mathbf{L_{k+1}}$. Then if the algebra $\La_i$ is rep\-re\-sen\-ta\-tion-di\-rect\-ed, it follows by Proposition \ref{prop:double gluing} that the module category $\m\La_{i+1}$ admits an $n$-fractured subcategory. If in particular $\La_{k-1}$ is rep\-re\-sen\-ta\-tion-di\-rect\-ed, then $\m\La_k$ admits an $n$-cluster tilting subcategory. For an example of this type see Example \ref{ex:k-simultaneous gluing}.
\end{remark}

\section{Concrete examples}\label{sec:concrete examples}

In this section we provide some families of algebras whose module categories admit $n$-cluster tilting subcategories which we can obtain using the methods of Section \ref{sec:applications}. 

\subsection{Quivers with any number of sinks and sources}\label{subsection:quivers with any number of sinks and sources}

To apply Proposition \ref{prop:gluing system of n-ct} we need to provide rep\-re\-sen\-ta\-tion-di\-rect\-ed algebras whose module categories admit an $n$-cluster tilting subcategory and such that their quivers have an arbitrary number of sinks and sources. From this point of view, the following question is natural.

\begin{question}\label{question:sinks and sources}
Let $\left(s,t,n\right)$ be a triple of positive integers. Does there exist a rep\-re\-sen\-ta\-tion-di\-rect\-ed (connected) algebra $\La$ such that $\sources_{\La}=s$, $\sinks_{\La}=t$ and $\m\La$ admits an $n$-cluster tilting subcategory? 
\end{question}

For $n=1$ it is clear that Question \ref{question:sinks and sources} has a positive answer. For $n=2$ we answer Question \ref{question:sinks and sources} affirmatively in \cite[Remark 3.19]{VAS2}. In this section show that the same result holds for $n\geq 3$. The construction will be given by the gluing algebra of a gluing system over a finite graph $G$. We first describe the algebras that we need for the gluing.

\begin{definition}\label{def:starlike algebra}
Let $k\in\ZZ_{\geq 1}$ with $k\neq 2$ and let $m_1,\dots,m_{k}\in\ZZ_{\geq 2}$ such that $m_1\geq m_2 \geq \cdots \geq m_k$. The \emph{starlike tree}\index[definitions]{starlike tree} $T(k,m_1,m_2,\dots,m_{k})$\index[symbols]{T(k,m1,m2)@$T(k,m_1,m_2,\dots,m_{k})$} is the graph
\[\begin{tikzpicture}
    \node (R) at (0,0) {$r$};

    \node (1M1) at (7,2) {$r_{m_1}^{(1)}$};
    \node (1M1-) at (5.5,2) {};
    \node (13+) at (4,2) {};
    \node (13) at (2.5,2) {$r_3^{(1)}$};
    \node (12) at (1,2) {$r_2^{(1)}$};
    
    \draw[-] (1M1) -- (1M1-);
    \draw[loosely dotted] (1M1-) -- (13+);
    \draw[-] (13+) -- (13);
    \draw[-] (13) -- (12);
    \draw[-] (12) -- (R);
    
    \node (2M1) at (7,1) {$r_{m_2}^{(2)}$};
    \node (2M1-) at (5.5,1) {};
    \node (23+) at (4,1) {};
    \node (23) at (2.5,1) {$r_3^{(2)}$};
    \node (22) at (1,1) {$r_2^{(2)}$};
    
    \draw[-] (2M1) -- (2M1-);
    \draw[loosely dotted] (2M1-) -- (23+);
    \draw[-] (23+) -- (23);
    \draw[-] (23) -- (22);
    \draw[-] (22) -- (R);

    \node (kM1) at (7,-1) {$r_{m_k}^{(k)}$.};
    \node (kM1-) at (5.5,-1) {};
    \node (k3+) at (4,-1) {};
    \node (k3) at (2.5,-1) {$r_3^{(k)}$};
    \node (k2) at (1,-1) {$r_2^{(k)}$};
    
    \draw[-] (kM1) -- (kM1-);
    \draw[loosely dotted] (kM1-) -- (k3+);
    \draw[-] (k3+) -- (k3);
    \draw[-] (k3) -- (k2);
    \draw[-] (k2) -- (R);
    
    \draw[loosely dotted] (3.2,0.8) -- (3.2,-0.8);
\end{tikzpicture}\]
We say that an algebra $\La=\K Q/\cR$ is a \emph{starlike algebra}\index[definitions]{starlike algebra} if the underlying graph of $Q$ is a starlike tree. In this case we denote by $A_{m_i}$ the full subquiver of $Q$ with vertices $\{r,r_2^{(i)},\dots,r_{m_i}^{(i)}\}$.
\end{definition}

The condition $k\neq 2$ is included so that each tuple $(k,m_1,\dots,m_k)$ gives rise to a unique starlike tree. 

\begin{proposition}\label{prop:starlike algebras}
Let $\La=\K Q/R_Q^2$ be a starlike algebra where the underlying graph of $Q$ is a starlike tree $T(k,m_1,m_2,\dots,m_{k})$ and let $n\geq 2$. Then $\m\La$ admits an $n$-cluster tilting subcategory $\cC$ if and only if $A_{m_i}$ is linearly oriented for every $i\in\{1,\dots,k\}$ and one of the following conditions holds.
\begin{enumerate}[label=(\alph*)]
    \item $k=1$ and there exists an $x\in\ZZ_{\geq 0}$ such that $m_1=n+1+xn$.\label{case:k=1}
    \item $k=3$, and there exist $x_i\in\ZZ_{\geq 0}$ for $i\in\{1,2,3\}$ such that one of the following conditions holds.
    \begin{enumerate}
        \item[(b1)] Two of $\{r_{m_1}^{(1)},r_{m_2}^{(2)},r_{m_3}^{(3)}\}$ are sinks, in which case $m_i=n+1+x_i n$ and one of them is a source, in which case $m_i=n+x_i n$.
        \item[(b2)] Two of $\{r_{m_1}^{(1)},r_{m_2}^{(2)},r_{m_3}^{(3)}\}$ are sources, in which case $m_i=n+1+x_i n$ and one of them is a sink, in which case $m_i=n+x_i n$.
    \end{enumerate}\label{case:k=3}
    \item $k=4$ and $n=2$, there exist $x_i\in\ZZ_{\geq 0}$ for $i\in\{1,2,3,4\}$ such that $m_{i}=3+2x_i$ and exactly two of $\{r_{m_1}^{(1)},r_{m_2}^{(2)},r_{m_3}^{(3)},r_{m_4}^{(4)}\}$ are sources.\label{case:k=4} 
\end{enumerate}
In this case the algebra $\La$ is rep\-re\-sen\-ta\-tion-di\-rect\-ed, the $n$-cluster tilting subcategory $\cC$ is unique and the global dimension $\gldim(\La)=d$ of $\La$ is $d=(x+1)n$ in case \ref{case:k=1}, it is $d=\left(2+x_3+\max\{x_1,x_2\}\right)n-1$ in case \ref{case:k=3}, and it is $d=4+2\left(\max\{x_1,x_2\} + \max\{x_3,x_4\}\right)$ in case \ref{case:k=4}.
\end{proposition}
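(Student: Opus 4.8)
The plan is to classify the radical square zero starlike algebras whose module categories admit an $n$-cluster tilting subcategory by translating the problem into a combinatorial question about the orientation of the branches, and then invoking the known classification of $n$-cluster tilting modules for Nakayama algebras together with the gluing machinery. First I would observe that since $\La=\K Q/R_Q^2$ is radical square zero, the indecomposable $\La$-modules are easy to describe and, more importantly, the representation-directedness forces each branch $A_{m_i}$ to be linearly oriented: if some branch had a change of orientation, $\La$ would contain a subquiver whose radical square zero algebra is not representation-directed (indeed not representation-finite once two such ``zigzags'' interact, or it fails the homological conditions of Proposition~\ref{prop:abutments in AR-quiver}). This gives the ``only if'' part of the linear orientation statement, after which we may assume every $A_{m_i}$ is linearly oriented, so each branch is either an outgoing ray from $r$ or an incoming ray to $r$. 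The center vertex $r$ then has some outgoing branches and some incoming branches, and the leaves $r_{m_i}^{(i)}$ are sinks or sources accordingly (a branch oriented away from $r$ ends in a sink; one oriented towards $r$ ends in a source, with the branch-subquiver $A_{m_i}$ being $\overrightarrow{A}_{m_i}$ in both cases up to relabelling).

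Next I would reduce to the three shapes of the underlying graph that can possibly work. A radical square zero linearly oriented $\overrightarrow{A}_m$ algebra is Nakayama, and the classification of $n$-cluster tilting modules for such algebras (from \cite{VAS} and the global dimension computations there) gives case~\ref{case:k=1}: $m_1 = n+1+xn$ with $\gldim = (x+1)n$. For $k\geq 2$ the center $r$ has degree $k$; I would use the characterization via the $n$-Auslander--Reiten translation and the syzygy/cosyzygy conditions of the introduction (conditions (1) and (2) there, equivalently Proposition~\ref{prop:n-fractured is self-orthogonal} with $\cM$ the candidate $n$-cluster tilting subcategory). The key local computation is at $r$: the projective $P(r)$ and the module structure around $r$ control whether the relevant syzygies stay indecomposable. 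This forces $k\in\{1,3,4\}$: $k=2$ is excluded by the hypothesis (it is a path), and $k\geq 5$, or $k=4$ with $n\geq 3$, makes the syzygy of the simple at $r$ decomposable in too many places to be repaired. For $k=3$ the count of sinks versus sources among the three leaves must be $2$--$1$ in one of the two ways, and the arithmetic conditions $m_i\in n+1+n\ZZ_{\geq 0}$ (for the ``majority'' orientation) and $m_i\in n+n\ZZ_{\geq 0}$ (for the ``minority'' one) come out of matching the Auslander--Reiten orbits of the branches so that $\tau_n$ and $\tau_n^-$ restrict to mutually inverse bijections; the global dimension formula $d=(2+x_3+\max\{x_1,x_2\})n-1$ then follows by tracking the longest projective resolution, which runs through the two ``long'' branches. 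For $k=4$, $n=2$ the D-type-like situation only closes up when $n=2$, all $m_i=3+2x_i$, and exactly two leaves are sources; again $d = 4+2(\max\{x_1,x_2\}+\max\{x_3,x_4\})$ by computing the projective dimension of the simple at the center.

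To organize the ``if'' direction I would exhibit the $n$-cluster tilting subcategory explicitly as $\cM = \add\{\tau_n^{-i}(\La)\mid i\geq 0\}$ and verify conditions (1) and (2) from the introduction directly; alternatively, and more in the spirit of this paper, realize $\La$ as a gluing: the linearly oriented branches are exactly $\overrightarrow{A}_{m_i}$-shaped abutments glued at the center, so $\La = \La_{v_1}\glue\cdots$ along the simple projective/injective modules at the leaves, and then Proposition~\ref{prop:gluing system of n-ct} (for a star-shaped directed tree $G$ with one internal vertex) gives the $n$-cluster tilting subcategory provided each branch piece, a Nakayama algebra $\K\overrightarrow{A}_{m_i}/R^2$, admits one — which is precisely case~\ref{case:k=1}. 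One subtlety is that the naive gluing only produces branches of ``reset'' length, so I would need the arithmetic conditions to ensure the central vertex sits at the correct spot in every branch's Auslander--Reiten orbit; this is where the $+1$ versus $+0$ shift between majority and minority orientations enters. Representation-directedness of $\La$ in all these cases follows from Proposition~\ref{prop:AR glued} since a gluing of representation-directed algebras is representation-directed, and uniqueness of $\cM$ follows from Remark~\ref{rem:generalization of n-ct} together with the fact that for representation-finite algebras an $n$-cluster tilting subcategory, if it exists, equals $\add\{\tau_n^{-i}(\La)\mid i\geq 0\}$.

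The main obstacle I expect is the ``only if'' direction for $k\geq 3$: ruling out all orientations and all residues of the $m_i$ except the listed ones. This requires a careful, essentially case-by-case, analysis of the local structure at the center vertex $r$ — computing syzygies and cosyzygies of the modules supported near $r$ and checking indecomposability, and in parallel checking that $\tau_n,\tau_n^-$ genuinely induce a bijection on the non-projective/non-injective indecomposables of the candidate $\cM$. The degree-$4$, $n=2$ exceptional case is the most delicate, since it is the boundary case where the argument barely closes; I would handle it by a direct Auslander--Reiten quiver computation (the relevant quiver is a ``cylinder'' built from four $\overrightarrow{A}$-type slices), checking conditions (1) and (2) explicitly, and then reading off $\gldim$ from the longest path of irreducible maps through two adjacent long branches.
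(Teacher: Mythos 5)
Your high-level plan (classify orientations, bound $k$ by a local obstruction at the center, derive arithmetic conditions from $\tau_n$-orbits, prove sufficiency and compute $\gldim$) follows the paper's outline, but three steps as you describe them do not go through.

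First, the necessity of linear orientation. You argue that a non-linearly-oriented branch makes $\La$ fail representation-directedness or representation-finiteness. That is false: for a radical square zero algebra on a tree (in particular a path), the separated quiver is a disjoint union of paths/trees, so a zigzag branch still gives a representation-finite, representation-directed algebra. Your appeal to Proposition~\ref{prop:abutments in AR-quiver} also does not yield a contradiction. The paper's argument is different and sharper: if some $A_{m_i}$ is not linearly oriented it contains an \emph{interior} vertex $p$ that is a sink or source, and removing $p$ disconnects $Q$; this contradicts \cite[Proposition~4.1]{VAS}, which forbids a disconnecting sink or source when $\m\La$ has an $n$-cluster tilting subcategory. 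Without that proposition (or an equivalent homological argument) you have no valid proof of this step.

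Second, the sufficiency direction via gluing is both circular and structurally mistaken. Circular, because within this paper Proposition~\ref{prop:starlike algebras} is exactly the \emph{input} that supplies building blocks for Proposition~\ref{prop:gluing system of n-ct}; proving it by gluing would presuppose what it is meant to provide. Structurally mistaken, because the gluing $B\glue[P][I]A$ identifies a tail $\overrightarrow{A}_h$ at the boundary of each quiver; even when $h=1$ (simple abutments), identifying a sink of $A$ with a source of $B$ yields a vertex whose in-degree comes from $A$ and out-degree from $B$ — so a star with $k\geq 3$ cannot be realized as an iterated gluing of linearly oriented $\overrightarrow{A}_{m_i}$'s with radical square relations. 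The paper instead proves sufficiency by a direct computation with \cite[Theorem~1]{VAS}, and the explicit syzygy/cosyzygy formulas it writes down (e.g., $\om^-(S(r))\isom S(r_2^{(1)})\oplus S(r_2^{(2)})$, the chain $\om^{-j}(S(r_l^{(3)}))\isom S(r_{l-j}^{(3)})$, and the extension $\Ext^2_\La(I(r),P(r))\neq 0$ for $k=4$) are exactly where the bound $k\leq 4$, the exclusion of $k=4,\,n\geq 3$, and the arithmetic conditions $m_i\equiv 1$ or $0 \pmod n$ come from. You gesture at these but do not pin down the decisive identities; in particular the key computation $\om(I(r))=S(r)^{\oplus(f-1)}$, $\om^-(P(r))=S(r)^{\oplus(g-1)}$ together with \cite[Corollary~3.3]{VAS} is what gives $f,g\leq 2$ and hence $k\leq 4$.

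Third, your claim that representation-directedness of $\La$ in the allowed cases follows from Proposition~\ref{prop:AR glued} again assumes a gluing decomposition that does not exist here; the paper argues via the separated quiver (which is a disjoint union of Dynkin diagrams in cases (a)--(c)) and the separation condition. The uniqueness statement and the identification $\cC=\add\{\tau_n^{-i}(\La)\mid i\geq 0\}$ are fine and match the paper, as does the general idea for computing $\gldim$ from an injective/projective resolution of a well-chosen simple.
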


\begin{proof}
First notice that in all cases \ref{case:k=1}, \ref{case:k=3}, and \ref{case:k=4} the algebra $\La$ is rep\-re\-sen\-ta\-tion-fi\-nite since it is a radical square zero algebra and its separated quiver is a disjoint union of Dynkin diagrams, see \cite{Gab1}. Moreover, it satisfies the separation condition and so it is rep\-re\-sen\-ta\-tion-di\-rect\-ed, see \cite[Section IX.4]{ASS}.

Let us first show that the conditions are necessary for the existence of an $n$-cluster tilting subcategory. Let $\cC$ be an $n$-cluster tilting subcategory of $\m\La$. Assume to a contradiction that $A_{m_i}$ is not linearly oriented for some $i\in\{1,\dots,k\}$. Then there exists a vertex $p\in V_{A_{m_i}}$ such that $p$ is a sink or a source and such that the full subquiver of $Q$ containing the vertices $Q_0\setminus\{p\}$ is disconnected. But this contradicts \cite[Proposition 4.1]{VAS}. 

Since $\cC$ is an $n$-cluster tilting subcategory, we have $\La\in\cC$ and $\D(\La)\in \cC$. Let $f$ be the number of arrows in $Q$ with $r$ as a target and let $g$ be the number of arrows in $Q$ with $r$ as a source. A direct computation shows that $\om(I(r))=S(r)^{\oplus(f-1)}$ and $\om^{-}(P(r))=S(r)^{\oplus(g-1)}$ where $I(r)$, $P(r)$ and $S(r)$ are the indecomposable injective, projective and simple modules corresponding to the vertex $r$ respectively. If $f>2$, then $I(r)\in\cC$ is not projective but has decomposable syzygy, which contradicts \cite[Corollary 3.3]{VAS}. It follows $f\leq 2$ and similarly $g\leq 2$. In particular, since $f+g=k$, we have that $k\leq 4$. We proceed by showing that in each of the cases $k=1,3,4$ the conditions of the proposition are necessary and sufficient.

If $k=1$, then $Q=\overrightarrow{A}_{m_1}$ and it follows by \cite[Proposition 6.2]{JAS} that $\m\left(\K \overrightarrow{A}_{m_1}/R^2_{\overrightarrow{A}_{m_1}}\right)$ admits an $n$-cluster tilting subcategory if and only if $n \mid (m_1-1)$. That $\gldim(\La)=m_1-1=(x+1)n$ can be shown by an easy computation; for the proof of a more general case see \cite[Proposition 5.2]{VAS}.

If $k=3$, and since $f,g\leq 2$, it follows that $r$ is the target of two arrows and the source of one or vice versa. Since $A_{m_1}$, $A_{m_2}$ and $A_{m_3}$ are linearly oriented, it follows that either exactly two of $\{r_{m_1}^{(1)},r_{m_2}^{(2)},r_{m_3}^{(3)}\}$ are sinks or exactly two of them are sources. Let us consider the case where $r_{m_1}^{(1)}$ and $r_{m_2}^{(2)}$ are both sources and $r_{m_3}^{(3)}$ is a sink; the other cases are similar. In this case we can directly compute that
\begin{align}
    \label{eq:decomposable cosyzygy for starlike tree}
    \om^{-}\left(S(r)\right) &\isom S\left(r_2^{(1)}\right)\oplus S\left(r_2^{(2)}\right), \\
    \label{eq:cosyzygy for starlike tree}
    \om^{-j}\left(S\left(r_l^{(3)}\right)\right) &\isom S\left(r^{(3)}_{l-j}\right), \text{ for $2\leq l\leq m_3$ and $0\leq j\leq l-1$, and} \\ 
    \label{eq:inverse tau for starlike tree}
    \tau^{-}\left(S\left(r^{(3)}_l\right)\right) &\isom S\left(r^{(3)}_{l-1}\right), \text{ for $2\leq l\leq m_3$,}
\end{align}
under the convention $r^{(3)}_1=r$. Since $r_{m_3}^{(3)}$ is a sink, we have $P\left(r_{m_3}^{(3)}\right)\isom S\left(r_{m_3}^{(3)}\right)\in \cC$ and $S\left(r_{m_3}^{(3)}\right)$ is not injective. By \cite[Theorem 1]{VAS} we have that $\tn^{-q}\left(S\left(r_{m_3}^{(3)}\right)\right)\in\cC$. By (\ref{eq:cosyzygy for starlike tree}) and (\ref{eq:inverse tau for starlike tree}) we have $\tn^{-q}\left(S\left(r_{m_3}^{(3)}\right)\right)\isom S\left(r_{qn}^{(3)}\right)$, as long as $qn\leq 1$. In particular, if $x_3$ is maximal such that $x_3 n\leq 1$, then there exists a $0\leq p\leq n-1$ such that $\om^{-p}\left(S\left(r_{x_3 n}^{(3)}\right)\right)\isom S\left(r_1^{(3)}\right)=S(r)$. If $p<n-1$, then $\om^{-(p+1)}\left(S\left(r_{x_3 n}^{(3)}\right)\right)\isom S\left(r_2^{(1)}\right)\oplus S\left(r_2^{(2)}\right)$ by (\ref{eq:decomposable cosyzygy for starlike tree}), contradicting the fact that $\cC$ is an $n$-cluster tilting subcategory since $\tn^{-x_3}\left(S\left(r_{m_3}^{(3)}\right)\right)\in \cC$ has a decomposable $(p+1)$-th cosyzygy. Therefore $p=n-1$ and so 
\[m_3 -x_3 n-(n-1) = 1\]
or $m_3=n+x_3 n$, as claimed. By similar arguments applied to $I\left(r_{m_1}^{(1)}\right)$ and $I\left(r_{m_2}^{(3)}\right)$ we get that $m_1=n+1+x_1n$ and $m_2=n+1+x_2n$. That these conditions are sufficient follows by a direct computation and using \cite[Theorem 1]{VAS}. Finally, for the global dimension we have for $i=1,2$ that
\[\om^{-j}\left(S\left(r_2^{(i)}\right)\right) \isom S\left(r_{2+j}^{(i)}\right), \text{for $j\leq m_i-2$,}\]
and since moreover
\[\om^{-(m_3-2)}\left(S\left(r_{m_3}^{(3)}\right)\right) \isom \om^{-}\left(S\left(r_{m_3-(m_3-1)}^{(3)}\right)\right) \isom \om^{-}\left(S\left(r_{1}^{(3)}\right)\right)=\om^{-}\left(S(r)\right)\isom S\left(r_2^{(1)}\right)\oplus S\left(r_2^{(2)}\right),\]  
every simple module appears as a summand of the $j$-th cosyzygy of $S\left(r_{m_3}^{(3)}\right)$. Hence
\begin{align*}
    \gldim(\La) &= \idim \left(S\left(r_{m_3}^{(3)}\right)\right) \\
    &=m_3 + \idim \left(S\left(r_2^{(1)}\right)\oplus S\left(r_2^{(2)}\right)\right) \\
    &=m_3 + \max\left\{\idim \left(S\left(r_2^{(1)}\right)\right), \idim\left(S\left(r_2^{(2)}\right)\right)\right\} \\
    &=m_3 + \max\{m_1-2,m_2-2\} \\
    &=n+x_3 n + \max\{n+x_1n-1,n+x_2n-1\} \\
    &=\left(2+x_3+\max\{x_1,x_2\}\right)n-1
\end{align*}
as claimed.

Finally, for the case $k=4$ we have that $g\leq 2$ and $f\leq 2$ and $g+f=4$ which implies $g=f=2$ and so two of $\{r_{m_1}^{(1)},r_{m_2}^{(2)},r_{m_3}^{(3)},r_{m_4}^{(4)}\}$ are sources and the other two are sinks. Let us assume that $r_{m_1}^{(1)}$ and $r_{m_2}^{(2)}$ are both sinks and $r_{m_3}^{(3)}$ and $r_{m_4}^{(4)}$ are both sources; the other cases are similar. It is easy to check that there is an exact sequence 
\[0 \to P(r) \to I\left(r_2^{(3)}\right)\oplus I\left(r_2^{(4)}\right) \to P\left(r_2^{(1)}\right)\oplus P\left(r_2^{(2)}\right) \to I(r) \to 0,\]
and hence $\Ext^2_{\La}\left(I(r),P(r)\right)\neq 0$, showing that $n\leq 2$. Moreover, if $n=2$, very similar computations to the case $k=3$ show that there exists a $2$-cluster tilting subcategory $\cC$ if and only if $m_i=3+2x_i$ for some $x_1,x_2,x_3,x_4\in\ZZ_{\geq 0}$. The global dimension is then again computed similarly to be
\begin{align*}
    \gldim(\La) &= \max\left\{\idim(S\left(r_{m_1}^{(1)}\right),\idim\left(S\left(r_{m_2}^{(2)}\right)\right)\right\}\\
    &=\max\left\{m_1+\max\{m_3-2,m_4-2\}, m_2+\max\{m_3-2,m_4-2\}\right\} \\ 
    &=\max\left\{m_1,m_2\right\} + \max\{m_3-2,m_4-2\} \\
    &=\max\{3+2x_1,3+2x_2\} + \max\{1+2x_3,1+2x_4\} \\
    &=4+2\left(\max\{x_1,x_2\} + \max\{x_3,x_4\}\right).\qedhere
\end{align*}
\end{proof}

For an example coming from Proposition \ref{prop:starlike algebras} see Figure \ref{fig:starlike example}. Using the algebras of Proposition \ref{prop:starlike algebras} we can now answer Question \ref{question:sinks and sources}.

\begin{figure}[htb]
    \centering
    \begin{tikzpicture}[baseline={(current bounding box.center)}]
        \tikzstyle{nct3}=[circle, minimum width=6pt, draw=white, inner sep=0pt, scale=0.9]
        \node (name) at (-0.3,0) {$Q:$};
        
        \node[nct3] (1) at (0,0) {$1$};
        
        \node[nct3] (21) at (0.7,0.7) {$2_1$};
        \node[nct3] (31) at (1.4,0.7) {$3_1$};
        \node[nct3] (41) at (2.1,0.7) {$4_1$};
        \node[nct3] (51) at (2.8,0.7) {$5_1$};
        
        \node[nct3] (22) at (0.7,0) {$2_2$};
        \node[nct3] (32) at (1.4,0) {$3_2$};
        \node[nct3] (42) at (2.1,0) {$4_2$};
        \node[nct3] (52) at (2.8,0) {$5_2$\nospacepunct{,}};

        \node[nct3] (23) at (0.7,-0.7) {$2_3$};
        \node[nct3] (33) at (1.4,-0.7) {$3_3$};
        \node[nct3] (43) at (2.1,-0.7) {$4_3$};
        
        \draw[->] (1) -- (21);
        \draw[->] (21) -- (31);
        \draw[->] (31) -- (41);
        \draw[->] (41) -- (51);
        
        \draw[->] (1) -- (22);
        \draw[->] (22) -- (32);
        \draw[->] (32) -- (42);
        \draw[->] (42) -- (52);

        \draw[<-] (1) -- (23);
        \draw[<-] (23) -- (33);
        \draw[<-] (33) -- (43);
    \end{tikzpicture}\;\;\;\;
    \begin{tikzpicture}[scale=0.9, transform shape, baseline={(current bounding box.center)}]
        \tikzstyle{nct2}=[circle, minimum width=0.6cm, draw, inner sep=0pt, text centered, scale=0.9]
        \tikzstyle{nct22}=[circle, minimum width=0.6cm, draw, inner sep=0pt, text centered, scale=0.8]
        \tikzstyle{nct3}=[circle, minimum width=6pt, draw=white, inner sep=0pt, scale=0.9]
        
        \node[scale=0.8] (name) at (4.9,-0.3) {$\Gamma\left(\K Q/R_Q^2\right)$};
        
        \node[nct2] (12) at (0,0.7) {$\qthree{}[5_2][]$};
        \node[nct2] (21) at (0.7,0) {$\qthree{4_2}[5_2]$};
        \node[nct3] (32) at (1.4,0.7) {$\qthree{}[4_2][]$};
        \node[nct2] (41) at (2.1,0) {$\qthree{3_2}[4_2]$};
        \node[nct3] (52) at (2.8,0.7) {$\qthree{}[3_2][]$};
        \node[nct2] (61) at (3.5,0) {$\qthree{2_2}[3_2]$};
        \node[nct3] (72) at (4.2,0.7) {$\qthree{}[2_2][]$};
        
        \draw[->] (12) -- (21);
        \draw[->] (21) -- (32);
        \draw[->] (32) -- (41);
        \draw[->] (41) -- (52);
        \draw[->] (52) -- (61);
        \draw[->] (61) -- (72);
        
        \draw[loosely dotted] (12.east) -- (32);
        \draw[loosely dotted] (32.east) -- (52);
        \draw[loosely dotted] (52.east) -- (72);
        
        \node[nct2] (14) at (0,2.1) {$\qthree{}[5_1][]$};
        \node[nct2] (25) at (0.7,2.8) {$\qthree{4_1}[5_1]$};
        \node[nct3] (34) at (1.4,2.1) {$\qthree{}[4_1][]$};
        \node[nct2] (45) at (2.1,2.8) {$\qthree{3_1}[4_1]$};
        \node[nct3] (54) at (2.8,2.1) {$\qthree{}[3_1][]$};
        \node[nct2] (65) at (3.5,2.8) {$\qthree{2_1}[3_1]$};
        \node[nct3] (74) at (4.2,2.1) {$\qthree{}[2_1][]$};
        
        \draw[->] (14) -- (25);
        \draw[->] (25) -- (34);
        \draw[->] (34) -- (45);
        \draw[->] (45) -- (54);
        \draw[->] (54) -- (65);
        \draw[->] (65) -- (74);
        
        \draw[loosely dotted] (14.east) -- (34);
        \draw[loosely dotted] (34.east) -- (54);
        \draw[loosely dotted] (54.east) -- (74);
        
        \node[nct2] (83) at (4.9,1.4) {$\begin{smallmatrix}  & 1 & \\ & 2_1 2_2 &\end{smallmatrix} $};
        
        \draw[->] (72) -- (83);
        \draw[->] (74) -- (83);
        
        \node[nct2] (94) at (5.6,2.1) {$\qthree{1}[2_2]$};
        \node[nct2] (92) at (5.6,0.7) {$\qthree{1}[2_1]$};
        \node[nct3] (103) at (6.3,1.4) {$\qthree{}[1][]$};
        \node[nct2] (114) at (7,2.1) {$\qthree{2_3}[1]$};
        \node[nct3] (123) at (7.7,1.4) {$\qthree{}[2_3][]$};
        \node[nct2] (134) at (8.4,2.1) {$\qthree{3_3}[2_3]$};
        \node[nct3] (143) at (9.1,1.4) {$\qthree{}[3_3][]$};
        \node[nct2] (154) at (9.8,2.1) {$\qthree{4_3}[3_3]$};
        \node[nct2] (163) at (10.5,1.4) {$\qthree{}[4_3][]$};
        
        \draw[->] (83) -- (94);
        \draw[->] (83) -- (92);
        \draw[->] (94) -- (103);
        \draw[->] (92) -- (103);
        \draw[->] (103) -- (114);
        \draw[->] (114) -- (123);
        \draw[->] (123) -- (134);
        \draw[->] (134) -- (143);
        \draw[->] (143) -- (154);
        \draw[->] (154) -- (163);
        
        \draw[loosely dotted] (83.east) -- (103);
        \draw[loosely dotted] (103.east) -- (123);
        \draw[loosely dotted] (123.east) -- (143);
        \draw[loosely dotted] (143.east) -- (163);
        \draw[loosely dotted] (74.east) -- (94);
        \draw[loosely dotted] (72.east) -- (92);
    \end{tikzpicture}
    \caption{The Aus\-lan\-der--Rei\-ten quiver of the algebra $\La=\K Q/R_Q^2$ where $Q$ is the quiver on the left and where the additive closure of the encircled modules is a $4$-cluster tilting subcategory.} 
\label{fig:starlike example}
\end{figure}

\begin{proposition}\label{prop:sinks and sources exist}
Let $\left(s,t,n\right)$ be a triple of positive integers. Then there exists a rep\-re\-sen\-ta\-tion-di\-rect\-ed algebra $\La$ with $\sources_{\La}=s$ and $\sinks_{\La}=t$ such that $\m\La$ admits an $n$-cluster tilting subcategory.
\end{proposition}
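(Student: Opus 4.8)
The case $n=1$ is clear, as already noted after Question~\ref{question:sinks and sources}. If $s=t=1$, then Proposition~\ref{prop:starlike algebras} (case $k=1$, with $x=0$) shows that $\La=\K\overrightarrow{A}_{n+1}/R^2_{\overrightarrow{A}_{n+1}}$ is a representation-directed algebra with one source and one sink whose module category admits an $n$-cluster tilting subcategory. So I would assume $n\geq 2$ and $s+t\geq 3$, and realize $\La$ as the gluing algebra $\La_G$ of an appropriate gluing system on the finite directed tree $G=\overrightarrow{A}_N$ with $N=s+t-2$, decorated by starlike algebras from Proposition~\ref{prop:starlike algebras}, and then apply Proposition~\ref{prop:gluing system of n-ct}.

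The building blocks come from Proposition~\ref{prop:starlike algebras} in the case $k=3$ with radical square zero relations and all $x_i=0$. Part (b1) produces a representation-directed algebra $\La_{12}$ whose quiver has underlying graph the starlike tree $T(3,n+1,n+1,n)$ with two of the three leaves sinks and the third a source; since $n\geq 2$ the tuple $(n+1,n+1,n)$ is admissible. Thus $\sources_{\La_{12}}=1$, $\sinks_{\La_{12}}=2$, and $\m\La_{12}$ admits an $n$-cluster tilting subcategory $\cM_{12}$. Symmetrically part (b2) gives $\La_{21}$ with $\sources_{\La_{21}}=2$, $\sinks_{\La_{21}}=1$ and an $n$-cluster tilting subcategory $\cM_{21}$. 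Both algebras are radical square zero, are not isomorphic to $\K\overrightarrow{A}_h$ for any $h$, and of course have at least one source and one sink.

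Next I would record the effect of gluing along simple abutments. When every abutment used has height $1$, forming $\La_H$ for $H\in\dirI_G$ amounts, by repeated use of Definition~\ref{def:gluing quivers}, to identifying for each arrow $e\colon u\to v$ of $H$ a chosen sink of $Q_v$ (the support of the simple left abutment $P_e$) with a chosen source of $Q_u$ (the support of the simple right abutment $I_e$); the identified vertex becomes internal and every other vertex keeps its in- and out-degree. Hence, for such a gluing system,
\begin{equation*}
    \sources_{\La_H}=\sum_{v\in V_H}\sources_{\La_v}-\abs{E_H}\qquad\text{and}\qquad\sinks_{\La_H}=\sum_{v\in V_H}\sinks_{\La_v}-\abs{E_H},
\end{equation*}
and $\abs{E_H}=\abs{V_H}-1$ because $H$ is a tree. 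Now take $G=\overrightarrow{A}_N$ with $N=s+t-2\geq 1$, decorate $t-1$ of its vertices with a copy of $\La_{12}$ and the other $s-1$ with a copy of $\La_{21}$, and at each arrow choose $P_e$, $I_e$ among the simple projective and simple injective modules of the relevant algebras as in Remark~\ref{rem:gluing system needs sources and sinks}; this is possible since every vertex of $\overrightarrow{A}_N$ has in-degree and out-degree at most $1$, while each block has at least one sink and one source. The resulting triple $\cL=(\La_v,P_e,I_e)$ satisfies Definition~\ref{def:gluing system}: the independence conditions (i) are vacuous (each vertex of $\overrightarrow{A}_N$ bounds at most one incoming and one outgoing arrow), condition (ii) holds since all abutments are simple of height $1$, and condition (iii) holds since $G$ is finite. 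Thus $\cL$ is a gluing system with all $P_e,I_e$ simple and each $\cM_v$ an $n$-cluster tilting subcategory, so Proposition~\ref{prop:gluing system of n-ct}(c) gives that $\cM_G$ is an $n$-cluster tilting subcategory of $\m\La_G$; and by the displayed formulas $\La_G=\K Q_G/\cR_G$ is a representation-directed bound quiver algebra with $\sources_{\La_G}=(s-1)\cdot 2+(t-1)\cdot 1-(N-1)=s$ and $\sinks_{\La_G}=(s-1)\cdot 1+(t-1)\cdot 2-(N-1)=t$. (When $s+t=3$ we have $N=1$, there are no arrows, and $\La_G$ is a single copy of $\La_{12}$ or $\La_{21}$.)

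I do not expect a serious obstacle here: the two preparatory steps -- checking that Proposition~\ref{prop:starlike algebras} genuinely yields blocks with exactly $(1,2)$ and $(2,1)$ sources/sinks together with honest $n$-cluster tilting subcategories for every $n\geq 2$, and checking that gluing along a simple abutment pair lowers both the total number of sources and the total number of sinks by exactly one -- are routine. The one conceptual point worth isolating is that although Proposition~\ref{prop:starlike algebras} only produces algebras with at most two sources and at most two sinks, iterating the gluing lets their surplus sources and sinks accumulate, so a chain of $s-1$ copies of $\La_{21}$ and $t-1$ copies of $\La_{12}$ reaches the prescribed counts $(s,t)$.
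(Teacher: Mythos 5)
Your proof is correct and takes essentially the same route as the paper: glue starlike algebras from Proposition \ref{prop:starlike algebras} along simple abutments over a linearly oriented tree, then apply Proposition \ref{prop:gluing system of n-ct}. The only real difference is bookkeeping: the paper places a central copy of a $k=1$ starlike block (case \ref{case:k=1}) at the vertex $0$ of $G = \langle -(s-1),\dots,0,\dots,t-1\rangle$, so it uses $s+t-1$ blocks and covers $s=t=1$ without a separate base case. You instead omit the central block, using only $s-1$ copies of the $(2,1)$-block and $t-1$ copies of the $(1,2)$-block over $\overrightarrow{A}_{s+t-2}$, which is one block fewer but forces you to handle $s=t=1$ separately (since then $N=0$ and the directed tree is empty); your separate treatment via $\K\overrightarrow{A}_{n+1}/R^2_{\overrightarrow{A}_{n+1}}$ is exactly the paper's central block specialized to a singleton $G$. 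Your explicit formulas $\sources_{\La_H}=\sum_v \sources_{\La_v}-\abs{E_H}$ and $\sinks_{\La_H}=\sum_v\sinks_{\La_v}-\abs{E_H}$ for gluings over height-one abutments make the count transparent and are a clean packaging of what the paper states as an inductive observation.
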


\begin{proof}
If $n=1$ the result is clear. Otherwise, consider the graph $G$ given by
\[-(s-1)\overset{f_{-(s-1)}}{\longto} \cdots \overset{f_{-3}}{\longto} -2 \overset{f_{-2}}{\longto}-1\overset{f_{-1}}{\longto}0\overset{g_1}{\longto}1\overset{g_2}{\longto}2\overset{g_3}{\longto}\cdots \overset{g_{t-1}}{\longto}t-1.\]
We set $\La_0$ to be any starlike algebra satisfying the conditions of Proposition \ref{prop:starlike algebras}\ref{case:k=1} and set $P_{f_{-1}}$ to be the unique simple left abutment of $\La_0$ and $I_{e_1}$ to be the unique simple right abutment of $\La_0$. Let $v\in V_G\setminus\{0\}$. We set $\La_{v}$ to be any starlike algebra satisfying the conditions of Proposition \ref{prop:starlike algebras}\ref{case:k=3} such that $\La_v$ has two sources if $v<0$ and $\La_v$ has two sinks if $v>0$. For an arrow $f_{i}\in E_G$ we set $I_{f_i}$ to be the unique simple right abutment of $\La_i$ and $P_{f_i}$ to be any of the two simple left abutments of $\La_i$; dually we define $I_{e_i}$ and $P_{e_i}$. It follows by Remark \ref{rem:gluing system needs sources and sinks} that $(\La_{v},P_{e},I_{e})_{v\in V_G,e\in E_G}$ is a gluing system on $G$. Moreover, by Proposition \ref{prop:starlike algebras} we have that for all $v\in V_G$ there exists an $n$-cluster tilting subcategory $\cM_v\subseteq \m\La_v$ and so by Proposition \ref{prop:gluing system of n-ct}(c) we have that $\cM_G$ is an $n$-cluster tilting subcategory of $\m\La_G$. Now notice that by the definition of gluing and a simple induction, the algebra $\La_{\langle -i,\dots,0,\dots,j\rangle}$ has $\sources_{\La_{\langle -i,\dots,0,\dots,j\rangle}}=1+i$ and $\sinks_{\La_{\langle -i,\dots,0,\dots,j\rangle}}=j+i$. It follows that $\La_G$ has $\sources_{\La_G}=s$ and $\sinks_{\La_G}=t$. 
\end{proof}

\begin{example}\label{ex:4 sources 3 sinks}
The algebra $\La=\K Q/R_Q^2$ where $Q$ is the quiver
\[\resizebox {\columnwidth} {!} {\begin{tikzpicture}
\node (1) at (1,1) {$\bullet$};
\node (1') at (1,2) {$\bullet$};
\node (2) at (1.7,1) {$\bullet$};
\node (2') at (1.7,2) {$\bullet$};
\node (3) at (2.4,1) {$\bullet$};
\node (3') at (2.4,2) {$\bullet$};
\node (4) at (3.1,1.5) {$\bullet$};
\node (5) at (3.8,1.5) {$\bullet$};
\node (6) at (4.5,1.5) {$\bullet$};

\draw[->] (1) -- (2);
\draw[->] (2) -- (3);
\draw[->] (3) -- (4);
\draw[->] (1') -- (2');
\draw[->] (2') -- (3');
\draw[->] (3') -- (4);
\draw[->] (4) -- (5);
\draw[->] (5) -- (6);

\draw[line width=0.05mm] (0.8,0.8) -- (0.8, 2.2) -- (4.7,2.2) -- (4.7, 0.8) -- (0.8,0.8);

\node (12) at (4.5,1.5) {$\bullet$};
\node (12') at (4.5,2.5) {$\bullet$};
\node (22) at (5.2,1.5) {$\bullet$};
\node (22') at (5.2,2.5) {$\bullet$};
\node (32) at (5.9,1.5) {$\bullet$};
\node (32') at (5.9,2.5) {$\bullet$};
\node (42) at (6.6,2) {$\bullet$};
\node (52) at (7.3,2) {$\bullet$};
\node (62) at (8,2) {$\bullet$};

\draw[->] (12) -- (22);
\draw[->] (22) -- (32);
\draw[->] (32) -- (42);
\draw[->] (12') -- (22');
\draw[->] (22') -- (32');
\draw[->] (32') -- (42);
\draw[->] (42) -- (52);
\draw[->] (52) -- (62);

\draw[line width=0.05mm] (4.3,1.3) -- (4.3, 2.7) -- (8.2,2.7) -- (8.2, 1.3) -- (4.3,1.3);

\node (13) at (8,2) {$\bullet$};
\node (13') at (8,1) {$\bullet$};
\node (23) at (8.7,2) {$\bullet$};
\node (23') at (8.7,1) {$\bullet$};
\node (33) at (9.4,2) {$\bullet$};
\node (33') at (9.4,1) {$\bullet$};
\node (43) at (10.1,1.5) {$\bullet$};
\node (53) at (10.8,1.5) {$\bullet$};
\node (63) at (11.5,1.5) {$\bullet$};

\draw[->] (13) -- (23);
\draw[->] (23) -- (33);
\draw[->] (33) -- (43);
\draw[->] (13') -- (23');
\draw[->] (23') -- (33');
\draw[->] (33') -- (43);
\draw[->] (43) -- (53);
\draw[->] (53) -- (63);

\draw[line width=0.05mm] (7.8,0.8) -- (7.8, 2.2) -- (11.7,2.2) -- (11.7, 0.8) -- (7.8,0.8);

\node (14) at (11.5,1.5) {$\bullet$};
\node (24) at (12.2,1.5) {$\bullet$};
\node (34) at (12.9,1.5) {$\bullet$};
\node (44) at (13.6,1.5) {$\bullet$};

\draw[->] (14) -- (24);
\draw[->] (24) -- (34);
\draw[->] (34) -- (44);

\draw[line width=0.05mm] (11.3,1.3) -- (11.3, 1.7) -- (13.8,1.7) -- (13.8, 1.3) -- (11.3,1.3);

\node (15) at (13.6,1.5) {$\bullet$};
\node (25) at (14.3,1.5) {$\bullet$};
\node (35) at (15,1.5) {$\bullet$};
\node (45) at (15.7,2) {$\bullet$};
\node (45') at (15.7,1) {$\bullet$};
\node (55) at (16.4,2) {$\bullet$};
\node (55') at (16.4,1) {$\bullet$};
\node (65) at (17.1,2) {$\bullet$};
\node (65') at (17.1,1) {$\bullet$};

\draw[->] (15) -- (25);
\draw[->] (25) -- (35);
\draw[->] (35) -- (45);
\draw[->] (35) -- (45');
\draw[->] (45) -- (55);
\draw[->] (45') -- (55');
\draw[->] (55) -- (65);
\draw[->] (55') -- (65');

\draw[line width=0.05mm] (13.4,0.8) -- (13.4, 2.2) -- (17.3,2.2) -- (17.3, 0.8) -- (13.4,0.8);

\node (16) at (17.1,1) {$\bullet$};
\node (26) at (17.8,1) {$\bullet$};
\node (36) at (18.5,1) {$\bullet$};
\node (46) at (19.2,1.5) {$\bullet$};
\node (46') at (19.2,0.5) {$\bullet$};
\node (56) at (19.9,1.5) {$\bullet$};
\node (56') at (19.9,0.5) {$\bullet$};
\node (66) at (20.6,1.5) {$\bullet$};
\node (66') at (20.6,0.5) {$\bullet$};

\draw[->] (16) -- (26);
\draw[->] (26) -- (36);
\draw[->] (36) -- (46);
\draw[->] (36) -- (46');
\draw[->] (46) -- (56);
\draw[->] (46') -- (56');
\draw[->] (56) -- (66);
\draw[->] (56') -- (66');

\draw[line width=0.05mm] (16.9,0.3) -- (16.9, 1.7) -- (20.8,1.7) -- (20.8, 0.3) -- (16.9,0.3);
\end{tikzpicture}}\]
has $\sources_{\La}=4$, $\sinks_{\La}=3$. The quiver $Q$ was obtained by gluing the path algebras of the quivers inside the rectangles modulo the ideal generated by all paths of length $2$, and where the gluing is done along the simple modules corresponding to the vertices in the intersection of the boxes. Since the module categories of all the algebras inside the rectangles admit a $3$-cluster tilting subcategory by Proposition \ref{prop:starlike algebras}, it follows that $\m\La$ also admits a $3$-cluster tilting subcategory. Indeed, $\cM=\add\left\{ \tau_3^{-k}(\La) \mid k\geq 0\right\} \subseteq \m\La$ is a $3$-cluster tilting subcategory. Notice also that we could have omitted the rectangle containing a quiver of type $\overrightarrow{A}_4$ and get another algebra with the same properties.
\end{example}

As a corollary of Proposition \ref{prop:sinks and sources exist}, we have that for every directed tree $G$ we can find a gluing system $\cL$ on $G$ and an $n$-fractured system of $\cL$ which gives rise to an $n$-cluster tilting subcategory.

\begin{corollary}
Let $G$ be a directed tree and $n\geq 1$. Then there exists a gluing system $\cL=(\La_{v},P_{e},I_{e})_{v\in V_G,e\in E_G}$ on $G$ and an $n$-fractured system $(\cM_v)_{v\in V_G}$ of $\cL$ such that if $(\Cks,\Pks_H,\Tks_{HK})$ is the firm source of the induced $\Cat$-inverse system $\left(\cC_H,F_{HK},\theta_{HKL}\right)$, then $\Mks$ is an $n$-cluster tilting subcategory of $\m\Cks$.
\end{corollary}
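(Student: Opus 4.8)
The plan is to assemble this corollary directly from the machinery already developed, with the main new ingredient being a choice of local algebras at each vertex that supplies enough sinks and sources. First I would invoke Proposition \ref{prop:starlike algebras} to produce, for each $n\geq 2$, a supply of representation-directed starlike algebras whose module categories admit $n$-cluster tilting subcategories and whose quivers have a prescribed, arbitrarily large number of sinks and sources: the case \ref{case:k=1} gives algebras with one sink and one source, while the cases \ref{case:k=3} (and \ref{case:k=4} when $n=2$) give algebras with two sinks and one source, or one sink and two sources. More uniformly, Proposition \ref{prop:sinks and sources exist} already guarantees that for any triple $(s,t,n)$ there is a representation-directed algebra with exactly $s$ sources and $t$ sinks whose module category admits an $n$-cluster tilting subcategory; this is exactly the building block needed.

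Next, given the directed tree $G$, I would set up the gluing system via the recipe in Remark \ref{rem:gluing system needs sources and sinks}. For each vertex $v\in V_G$ I choose $\La_v$ to be a representation-directed algebra (not isomorphic to $\K\overrightarrow{A}_1$) whose module category admits an $n$-cluster tilting subcategory $\cM_v$ and such that $\sinks_{\La_v}\geq \delta^-(v)$ and $\sources_{\La_v}\geq \delta^+(v)$; this is possible by Proposition \ref{prop:sinks and sources exist} since $G$ is locally finite, so the in- and out-degrees are finite. Then, as in Remark \ref{rem:gluing system needs sources and sinks}, I assign to each arrow $e$ ending at $v$ a distinct simple left abutment $P_e$ coming from a sink of $Q_v$, and to each arrow $e$ starting at $v$ a distinct simple right abutment $I_e$ coming from a source of $Q_v$; distinctness of the chosen sinks/sources makes the collections $P_{t^{-1}(v)}$ and $I_{s^{-1}(v)}$ independent. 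The conditions of Definition \ref{def:gluing system} then hold: (i) by construction, (ii) since all abutments are simple hence of height $1$, and (iii) since no $\La_v$ is $\K\overrightarrow{A}_1$ so no gluing is trivial. Thus $\cL=(\La_v,P_e,I_e)_{v\in V_G,e\in E_G}$ is a gluing system on $G$ satisfying the hypotheses of Proposition \ref{prop:gluing system of n-ct}.

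Finally, by Proposition \ref{prop:gluing system of n-ct}(a) the collection $(\cM_v)_{v\in V_G}$ is a complete $n$-fractured system of $\cL$, and then Proposition \ref{prop:gluing system of n-ct}(b) (equivalently Theorem \ref{thrm:complete n-fractured gives n-cluster tilting}) gives that $\Mks=\add\{\Pks_{v\ast}(\cM_v)\mid v\in V_G\}\subseteq\m\Cks$ is an $n$-cluster tilting subcategory, where $(\Cks,\Pks_H,\Tks_{HK})$ is the firm source of the induced $\Cat$-inverse system $(\cC_H,F_{HK},\theta_{HKL})$ over $\dirI_G$. This handles all $n\geq 2$. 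For $n=1$ the statement is trivial: one may take $\La_v=\K\overrightarrow{A}_2$ at each vertex with the unique simple projective and simple injective as abutments (or any convenient choice), since every module category is its own unique $1$-cluster tilting subcategory and the conclusion then follows from Theorem \ref{thrm:complete n-fractured gives n-cluster tilting} as before, or simply by citing that $\m\Cks$ itself is $1$-cluster tilting.

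The only genuine obstacle is bookkeeping rather than mathematics: one must make sure the chosen local algebras really can be fitted with the correct number of independent simple abutments matching the degrees of $G$, which is precisely what Proposition \ref{prop:sinks and sources exist} delivers, and that the resulting $\cL$ verifies condition (iii) of Definition \ref{def:gluing system} — this is why we must exclude $\La_v\isom\K\overrightarrow{A}_1$ and why it is convenient to use the starlike algebras of Proposition \ref{prop:starlike algebras}, which are never of that form. Everything else is a direct citation of Remark \ref{rem:gluing system needs sources and sinks}, Proposition \ref{prop:gluing system of n-ct} and Theorem \ref{thrm:complete n-fractured gives n-cluster tilting}.
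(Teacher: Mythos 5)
Your proof is correct and takes essentially the same route as the paper: it builds the local algebras $\La_v$ via Proposition \ref{prop:sinks and sources exist}, applies the recipe of Remark \ref{rem:gluing system needs sources and sinks} to get a gluing system with simple abutments, and then cites Proposition \ref{prop:gluing system of n-ct}. The paper's proof is a two-line version of this; your version spells out the intermediate checks (degree bounds, independence of abutments, non-triviality via $\La_v\not\isom\K\overrightarrow{A}_1$) and separately addresses $n=1$, which the paper leaves implicit — note that for $n=1$ the notion of an $n$-fractured system in Definition \ref{def:n-fractured system} is stated only for $n\geq 2$, so one must interpret the corollary there with the convention that $\m\Cks$ is its own $1$-cluster tilting subcategory, as you do.
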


\begin{proof}
Let $\La_v$ be a rep\-re\-sen\-ta\-tion-di\-rect\-ed algebra such that $\sources_{\La_v}\geq\delta^{+}(v)$ and $\sinks_{\La_v}\geq\delta^{-}(v)$ and $\m\La_v$ admits an $n$-cluster tilting subcategory; such an algebra exists by Proposition \ref{prop:sinks and sources exist}. The result follows by Proposition \ref{prop:gluing system of n-ct} and Remark \ref{rem:gluing system needs sources and sinks}.
\end{proof}

Notice that Question \ref{question:sinks and sources} has a negative answer if $s=0$ or $t=0$. Indeed, if $\sources_{\La}=0$ or $\sinks_{\La}=0$, then the quiver of $\La$ is not acyclic and in particular $\La$ is not rep\-re\-sen\-ta\-tion-di\-rect\-ed. Hence we pose the following more general question.

\begin{question}\label{question:sinks and sources 0}
Let $\left(s,t,n\right)$ be a triple of nonnegative integers with $n\geq 1$. Does there exist a connected algebra $\La$ such that $\sources_{\La}=s$, $\sinks_{\La}=t$ and $\m\La$ admits an $n$-cluster tilting subcategory? 
\end{question}

To construct such an algebra we use $n$-self-gluable algebras as described in Section \ref{sec:gluings and orbit categories}. We have the following result which answers Question \ref{question:sinks and sources 0} affirmatively.

\begin{theorem}
Let $\left(s,t,n\right)$ be a triple of nonnegative integers and $n\geq 1$. Then there exists a bound quiver algebra $\tilLa$ with $\sources_{\tilLa}=s$ and $\sinks_{\tilLa}=t$ such that $\m\tilLa$ admits an $n$-cluster tilting subcategory.
\end{theorem}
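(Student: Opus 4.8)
The plan is to reduce everything to Proposition \ref{prop:sinks and sources exist} by one application of the self-gluing construction of Section \ref{sec:gluings and orbit categories}, with the degenerate cases $s=0$ or $t=0$ treated by ``closing up'' a source and a sink into a cycle. First I would dispose of $n=1$: for every bound quiver algebra $\La$ the category $\m\La$ is its own (functorially finite) $1$-cluster tilting subcategory, so it is enough to produce a quiver with $s$ sources and $t$ sinks, which is elementary — take a suitable tree when $s,t\geq 1$, and when $s=0$ or $t=0$ attach the required number of in-tails or out-tails to a cyclic quiver and impose $R_Q^2$ as relations. For $n\geq 2$ and $s,t\geq 1$ the statement is precisely Proposition \ref{prop:sinks and sources exist}. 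Hence the only genuine work is the case $n\geq 2$ with $s=0$ or $t=0$.

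For $n\geq 2$ I would start from a representation-directed algebra $\La$ with $\sources_{\La}=s+1$ and $\sinks_{\La}=t+1$ whose module category admits an $n$-cluster tilting subcategory; such a $\La$ is supplied by Proposition \ref{prop:sinks and sources exist} (note $s+1,t+1\geq 1$), except for the value $(1,1,n)$, where I would instead take $\La$ to be the radical-square-zero path algebra of $\overrightarrow{A}_{n+1}$, which has exactly one source and one sink and admits an $n$-cluster tilting subcategory by Proposition \ref{prop:starlike algebras}\ref{case:k=1}. In either case $\La\not\cong\K\overrightarrow{A}_{h'}$: the algebras of Proposition \ref{prop:sinks and sources exist} are glued from radical-square-zero starlike algebras of rank $\geq 3$, and the exceptional algebra is a radical-square-zero algebra on $n+1\geq 3$ vertices. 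Since $\La$ has at least one sink $p$, the simple module $S(p)$ is simple projective, hence a left abutment of height $1$; dually, for a source $i$ the simple module $S(i)$ is simple injective, hence a right abutment of height $1$. Choosing a maximal left abutment $W$ with $S(p)\leq W$ and a maximal right abutment $J$ with $S(i)\leq J$, Corollary \ref{cor:La n-ct implies La infinity n-ct} (via Example \ref{ex:n-self-gluable easy} and Corollary \ref{cor:La infinity n-ct}) applies: $\La$ is $n$-self-gluable, and the associated bound quiver algebra $\tilLa=\K\tilQ/\tilR$ of Corollary \ref{cor:La infinity n-ct} has $\m\tilLa$ admitting an $n$-cluster tilting subcategory.

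It then remains to count the sources and sinks of $\tilLa$, and this is the only step requiring care. Using the explicit description of $\tilQ$ in (\ref{picture:self-gluing quiver}), and the fact that $P=S(p)$ and $I=S(i)$ have height $1$, forming $\tilLa$ amounts to identifying in $Q$ the vertex $p$ — which is a sink of $Q$ because $S(p)$ is simple projective — with the vertex $i$ — which is a source of $Q$ because $S(i)$ is simple injective — where $p\neq i$ since $Q$ is connected with at least two vertices. The merged vertex inherits the incoming arrows of $p$ and the outgoing arrows of $i$, so it is neither a source nor a sink of $\tilQ$, whereas any other vertex is a source (resp.\ sink) of $\tilQ$ if and only if it is one of $Q$. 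Therefore $\sources_{\tilLa}=\sources_{\La}-1=s$ and $\sinks_{\tilLa}=\sinks_{\La}-1=t$, which completes the proof. The main obstacle is thus not conceptual but a matter of bookkeeping: one must make sure that the starting algebra can always be chosen with $\La\not\cong\K\overrightarrow{A}_{h'}$ and with the distinguished simple abutments sitting at a genuine source and a genuine sink, so that the self-gluing removes exactly one source and one sink.
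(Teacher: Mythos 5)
Your proof is correct and follows essentially the same route as the paper's: start from a representation-directed algebra $\La$ with $\sources_{\La}=s+1$, $\sinks_{\La}=t+1$ supplied by Proposition~\ref{prop:sinks and sources exist}, and self-glue along a simple projective and a simple injective module to drop each count by one. Two small remarks: the paper applies the self-gluing construction uniformly to all $(s,t)$, so your initial case split for $s,t\geq 1$ is not needed; and the caveat at $(1,1,n)$ is moot, since the graph $G$ in the proof of Proposition~\ref{prop:sinks and sources exist} degenerates to a single vertex there and produces exactly $\K\overrightarrow{A}_{n+1}/R^2_{\overrightarrow{A}_{n+1}}$ (or more generally $\K\overrightarrow{A}_{n+1+xn}/R^2$), which is what you propose anyway. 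Your explicit check that $\La\not\cong\K\overrightarrow{A}_{h'}$ and the careful count of sources and sinks after the identification are points the paper leaves implicit, so including them is welcome.
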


\begin{proof}
If $n=1$ the result is clear. Otherwise, let $\La$ be a rep\-re\-sen\-ta\-tion-di\-rect\-ed algebra with $\sources_{\La}=s+1$ and $\sinks_{\La}=t+1$ such that $\m\La$ admits an $n$-cluster tilting subcategory; such an algebra exists by Proposition \ref{prop:sinks and sources exist} since $s,t\geq 0$. Let $P$ be a simple projective $\La$-module and let $I$ be a simple injective $\La$-module. Let $W$ be a maximal right abutment of $\La$ with $P\leq W$ and $J$ be a maximal right abutment of $\La$ with $I\leq J$. Then $(W,J)$ is a fractured pair and $(P,J)$ is a compatible pair for the fractured pair $(W,J)$. Hence $\La$ is $n$-self-gluable and $\m\tilLa$ admits an $n$-cluster tilting subcategory by Corollary \ref{cor:La n-ct implies La infinity n-ct}. Moreover, by (\ref{picture:self-gluing quiver}) we have $\sources_{\tilLa}=\sources_{\La}-1=s$ and $\sinks_{\tilLa}=\sinks_{\La}-1=t$ as required.
\end{proof}

\begin{example}\label{ex:0 sources 3 sinks}
The algebra $\La=\K Q/R_Q^2$ where $Q$ is the quiver
\[\begin{tikzpicture}
\node (1) at (1,1) {$\bullet_1$};
\node (2) at (1.7,1) {$\bullet$};
\node (3) at (2.4,1) {$\bullet$};
\node (4) at (3.1,0.5) {$\bullet$};
\node (4') at (3.1,1.5) {$\bullet$};
\node (5) at (3.8,0.5) {$\bullet$};
\node (6) at (4.5,0.5) {$\bullet_2$};
\node (6') at (4.5,1.5) {$\bullet$};
\node (7) at (5.2,1) {$\bullet$};
\node (7') at (5.2,2) {$\bullet$};
\node (8') at (5.9,2) {$\bullet$};
\node (9) at (6.6,1) {$\bullet$};
\node (9') at (6.6,2) {$\bullet$};
\node (10) at (7.3,0.5) {$\bullet$};
\node (10') at (7.3,1.5) {$\bullet$};
\node (11) at (8,0.5) {$\bullet$};
\node (11') at (8,1.5) {$\bullet$};
\node (12) at (8.7,0.5) {$\bullet$};
\node (12') at (8.7,1.5) {$\bullet$};

\draw[->] (1) -- (2);
\draw[->] (2) -- (3);
\draw[->] (3) -- (4);
\draw[->] (3) -- (4');
\draw[->] (4) -- (5);
\draw[->] (4') -- (6');
\draw[->] (5) -- (6);
\draw[->] (6') -- (7);
\draw[->] (6') -- (7');
\draw[->] (7) -- (9);
\draw[->] (7') -- (8');
\draw[->] (8') -- (9');
\draw[->] (9) -- (10);
\draw[->] (9) -- (10');
\draw[->] (10) -- (11);
\draw[->] (10') -- (11');
\draw[->] (11) -- (12);
\draw[->] (11') -- (12');
\end{tikzpicture}\]
has $\sources_{\La}=1$, $\sinks_{\La}=4$ and $\cM=\add\left\{ \tau_3^{-k}(\La)\mid k\geq 0\right\}\subseteq \m\La$ is a $3$-cluster tilting subcategory. Notice that this algebra is not obtained by gluing starlike algebras as in the proof of Proposition \ref{prop:sinks and sources exist}, but a direct computation using \cite[Theorem 1]{VAS} shows that $\cM$ is indeed a $3$-cluster tilting subcategory. By self-gluing at the simple injective module $I(\bullet_1)$ and the simple projective module $P(\bullet_2)$ we obtain the algebra $\tilde{\La}=\K \tilQ/R_{\tilQ}^2$ where $\tilQ$ is the quiver
\[\begin{tikzpicture}
\node (1) at (1,1) {$\bullet$};
\node (2) at (1.7,1.5) {$\bullet$};
\node (3) at (2.4,1) {$\bullet$};
\node (4) at (2.05,0.4) {$\bullet$};
\node (4') at (3.1,1.5) {$\bullet$};
\node (5) at (1.35,0.4) {$\bullet$};
\node (6') at (4.5,1.5) {$\bullet$};
\node (7) at (5.2,1) {$\bullet$};
\node (7') at (5.2,2) {$\bullet$};
\node (8') at (5.9,2) {$\bullet$};
\node (9) at (6.6,1) {$\bullet$};
\node (9') at (6.6,2) {$\bullet$};
\node (10) at (7.3,0.5) {$\bullet$};
\node (10') at (7.3,1.5) {$\bullet$};
\node (11) at (8,0.5) {$\bullet$};
\node (11') at (8,1.5) {$\bullet$};
\node (12) at (8.7,0.5) {$\bullet$};
\node (12') at (8.7,1.5) {$\bullet$};

\draw[->] (1) -- (2);
\draw[->] (2) -- (3);
\draw[->] (3) -- (4);
\draw[->] (3) -- (4');
\draw[->] (4) -- (5);
\draw[->] (4') -- (6');
\draw[->] (5) -- (1);
\draw[->] (6') -- (7);
\draw[->] (6') -- (7');
\draw[->] (7) -- (9);
\draw[->] (7') -- (8');
\draw[->] (8') -- (9');
\draw[->] (9) -- (10);
\draw[->] (9) -- (10');
\draw[->] (10) -- (11);
\draw[->] (10') -- (11');
\draw[->] (11) -- (12);
\draw[->] (11') -- (12');
\end{tikzpicture}\]
with $\sources_{\tilde{\La}}=0$ and $\sinks_{\tilde{\La}}=3$ and $\Mks/\ZZ$ is a $3$-cluster tilting subcategory of $\m\tilde{\La}$.
\end{example}

\begin{remark}
If $Q$ is an acyclic connected quiver such that $\abs{Q_0}<\infty$, then $Q$ has at least one sink and at least one source. However, if we allow $\abs{Q_0}=\infty$, this is no longer true. By letting $G=\overrightarrow{A}^{\infty}_{\infty}$ we can adapt the proof of Proposition \ref{prop:sinks and sources exist} to show that if $(s,t,n)$ is a triple of nonnegative integers with $n\geq 1$, then there exists an infinite connected acyclic quiver $Q$ with $s$ sources and $t$ sinks and a two-sided ideal $\cR$ of the path category $\K Q$ such that $\m\left(\K Q/\cR\right)$ admits an $n$-cluster tilting subcategory. 
\end{remark}

\subsection{Examples from algebras with \texorpdfstring{$n$}{n}-cluster tilting subcategories} Given a rep\-re\-sen\-ta\-tion-di\-rect\-ed algebra whose module category admits an $n$-cluster tilting subcategory, Corollary \ref{cor:La n-ct implies La infinity n-ct} produces a new algebra whose module category admits an $n$-cluster tilting subcategory. Proposition \ref{prop:starlike algebras} gives a list of rep\-re\-sen\-ta\-tion-di\-rect\-ed algebras whose module categories admit $n$-cluster tilting subcategories. We may hence apply Corollary \ref{cor:La n-ct implies La infinity n-ct} to these algebras.

The algebras of Proposition \ref{prop:starlike algebras}\ref{case:k=1} are special cases of \emph{Nakayama algebras}\index[definitions]{Nakayama algebra}. Many more examples of Nakayama algebras whose module categories admit $n$-cluster tilting subcategories are already known, so we may apply Corollary \ref{cor:La n-ct implies La infinity n-ct} to an even wider collection of examples.

Let us start by recalling some facts about Nakayama algebras. Throughout we denote by $\tilde{A}_h$ the quiver
\[
\begin{tikzpicture}[scale=0.7, transform shape]
\node (1) at (1,1.732050808) {$1$};
\node (0) at (-1,1.732050808) {$0$};
\node (m-1) at (-2,0) {$h-1$};
\node (m-2) at (-1,-1.732050808) {$ $};
\node (dots) at (1,-1.732050808) {$ $};
\node (2) at (2,0) {$2$\nospacepunct{.}};
\draw[->] (0) to [out=30, in=150]  node[draw=none, above] {$a_{0}$} (1);
\draw[->] (1) to [out=-30, in=90] node[draw=none, midway, right] {$a_{1}$} (2);
\draw[->] (m-1) to [out=90, in=-150] node[draw=none, midway, left] {$a_{h-1}$} (0);
\draw[->] (m-2) to [out=150, in=-90] node[draw=none, midway, left] {$a_{h-2}$} (m-1);
\draw[->] (2) to [out=-90, in=30] node[draw=none, midway, right] {$a_{2}$} (dots);
\draw[dotted] (dots) to [out=-150,in=-30] (m-2);
\end{tikzpicture}.
\]

Recall that bound quiver algebras where the quiver is $\overrightarrow{A}_h$ (respectively $\tilde{A}_h$) are called \emph{acyclic}\index[definitions]{acyclic Nakayama algebra} (respectively \emph{cyclic}\index[definitions]{cyclic Nakayama algebra}) \emph{Nakayama algebras}\index[definitions]{Nakayama algebra}. Recall also that every indecomposable projective module over a Nakayama algebra is \emph{uniserial}, that is its radical series is its unique composition series. In particular the
dimensions of the indecomposable projective modules over a Nakayama algebra define the Nakayama algebra uniquely. By encoding this information we can combinatorially describe a Nakayama algebra. The \emph{Kupisch series}\index[definitions]{Kupisch series of Nakayama algebra} \cite{KUP} of an acyclic Nakayama algebra $\K \overrightarrow{A}_h/\cR$ is the vector
\[\left(\dim(P(1)),\dim(P(2)),\dots,\dim(P(h))\right),\]
and the Kupisch series of a cyclic Nakayama algebra $\K\tilde{A}_h/\cR$ is the vector
\[\left(\dim(P(1)),\dim(P(2)),\dots,\dim(P(h-1),\dim(P(0))\right),\]
Moreover, every vector $(d_1,d_2,\dots,d_h)\in\ZZ_{\geq 1}^h$ with $d_h=1$ and $d_i\geq 2$ for $1\leq i\leq h-1$ as well as $d_{i-1}-1\leq d_i$ defines uniquely an acyclic Nakayama algebra and every vector $(d_1,d_2,\dots,d_{h-1},d_0)\in\ZZ_{\geq 2}^h$ with $d_{i-1}-1\leq d_i$ defines uniquely, up to cyclic permutation, a cyclic Nakayama algebra. 

\subsubsection{Self-gluing of Nakayama algebras} We start by obtaining some examples of cyclic Nakayama algebras whose module categories admit $n$-cluster tilting subcategories coming from acyclic Nakayama algebras whose module categories admit $n$-cluster tilting subcategories. Notice in particular that acyclic Nakayama algebras are always rep\-re\-sen\-ta\-tion-di\-rect\-ed while cyclic Nakayama algebras are never rep\-re\-sen\-ta\-tion-di\-rect\-ed. Recall also that selfinjective Nakayama algebras whose module categories admit $n$-cluster tilting subcategories have been classified in \cite{DI}.

\begin{corollary}\label{cor:Nakayama self-glue at simple}
Let $\La=\K\overrightarrow{A}_h/\cR$ be an acyclic Nakayama algebra with Kupisch series $(d_1,\dots,d_h)$ and assume that $\m\La$ admits an $n$-cluster tilting subcategory. Then the module category of the cyclic Nakayama algebra $\tilde{\La}$ with Kupisch series $(d_1,\dots,d_{h-1})$ admits an $n$-cluster tilting subcategory
\end{corollary}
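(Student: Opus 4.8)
The plan is to apply the self-gluing machinery of Section~\ref{sec:gluings and orbit categories}, specialized to the case where the algebra is an acyclic Nakayama algebra. First I would observe that an acyclic Nakayama algebra $\La=\K\overrightarrow{A}_h/\cR$ with Kupisch series $(d_1,\dots,d_h)$ has a simple projective module $P=P(h)=S(h)$ (since $d_h=1$) and a simple injective module $I=S(1)$ (the vertex $1$ is a source of $\overrightarrow{A}_h$, so $P(1)$ has $S(1)$ as its top and $S(1)$ is injective as $1$ has no incoming arrows). Since $\m\La$ admits an $n$-cluster tilting subcategory $\cM$, Example~\ref{ex:n-self-gluable easy} (or more directly Corollary~\ref{cor:La n-ct implies La infinity n-ct}) shows that $\La$ is $n$-self-gluable: taking $W$ to be the unique maximal left abutment containing $P$, $J$ the unique maximal right abutment containing $I$, the pair $(W,J)$ is a fractured pair and $(P,I)$ is a compatible pair because $P$ and $I$ are simple and all fractures of the $n$-cluster tilting subcategory are projective or injective.

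Next I would apply Corollary~\ref{cor:La n-ct implies La infinity n-ct} (assuming $\La\not\isom\K\overrightarrow{A}_{h'}$, i.e. $n\geq 2$; the case $n=1$ is trivial since every module category is a $1$-cluster tilting subcategory of itself) to obtain a finite-dimensional algebra $\tilLa=\K\tilQ/\tilR$ whose module category admits an $n$-cluster tilting subcategory $\tilM=\Mks/\ZZ$. The key remaining task is to identify $\tilLa$ explicitly as the cyclic Nakayama algebra with Kupisch series $(d_1,\dots,d_{h-1})$. For this I would trace through the construction of $\tilQ$ and $\tilR$ in~(\ref{picture:self-gluing quiver}): since $P=S(h)$ has height $1$, we have $\height(P)=\height(I)=1$, so the abutments being glued are the single vertices $p_1=h$ and $i_1=1$, and the relation set $\tilR$ is generated by $\cR$ together with all paths $\gamma\,\gamma'$ where $\gamma,\gamma'$ are arrows with $\gamma$ ending at the glued vertex $h=1$ and $\gamma'$ starting there. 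Identifying vertex $h$ with vertex $1$ turns the linear quiver $\overrightarrow{A}_h = (1\to 2\to\cdots\to h)$ into the cyclic quiver $\tilde{A}_{h-1}$ on vertices $\{1,\dots,h-1\}$, and the extra relation (path of length $h-1$ through the identified vertex, i.e. the composite $a_{h-1}a_{h-2}\cdots a_1$) together with $\cR$ gives precisely $\tilR$.

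The main obstacle will be the explicit Kupisch-series computation: I need to verify that the indecomposable projective $\tilLa$-modules are uniserial with the claimed dimensions $(d_1,\dots,d_{h-1})$. The cleanest route is to use Remark~\ref{rem:gluing remarks}(b) and Remark~\ref{rem:gluing over subgraph remarks}(c): the functor $\prescript{K}{H}{\pi}^\ast$ sends $\epsilon_i\La_K$ to $\epsilon_i\La_H$ when $i$ lies in $Q_H$, so in the orbit category the projective at vertex $i\in\{1,\dots,h-1\}$ restricts, on a sufficiently large interval $[a,b]$, to $\epsilon_i\La[a,b]$; one checks that for $i\leq h-1$ the projective cover $\epsilon_i\La[a,b]$ already has its full radical series contained in a single copy of $Q$ (since the relations kill any path passing through the glued vertex back into the next copy), so $\dim_{\K}(\epsilon_i\tilLa) = \dim_{\K}(\epsilon_i\La) = d_i$. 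A subtle point to check carefully is that no new composition factors appear ``wrapping around'' the cycle, which is exactly guaranteed by the extra relations in $\tilR$ (the paths of length $h-1$ through the glued vertex); this also confirms $\tilLa$ is finite-dimensional. Finally I would note that $\tilR\supseteq R_{\tilQ}^h$ so $\tilLa$ is a genuine (cyclic) Nakayama algebra, and its Kupisch series $(d_1,\dots,d_{h-1})$ satisfies $d_{i-1}-1\leq d_i$ since the original one did, completing the identification.
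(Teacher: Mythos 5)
Your proposal takes essentially the same route as the paper's own proof: observe that $P=P(h)$ is simple projective and $I=I(1)$ is simple injective, apply Corollary~\ref{cor:La n-ct implies La infinity n-ct} (via Example~\ref{ex:n-self-gluable easy}) to conclude $\m\tilLa$ has an $n$-cluster tilting subcategory, and then identify $\tilLa$ with the cyclic Nakayama algebra with Kupisch series $(d_1,\dots,d_{h-1})$. You elaborate on the Kupisch-series verification, which the paper compresses into ``a direct computation shows\ldots''.

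One recurring error should be corrected. You twice describe the extra relation in $\tilR$ as a ``path of length $h-1$ through the glued vertex'', and in a parenthetical you identify it with the full cycle $a_{h-1}a_{h-2}\cdots a_1$. This is wrong. In the self-gluing construction the added relations are the paths $\gamma\alpha_1\cdots\alpha_{\height(P)-1}\gamma'$, of length $\height(P)+1$; here $\height(P)=\height(I)=1$, so the single new relation is the length-$2$ path $a_{h-1}a_1$ (from vertex $h-1$ through the glued vertex $h\equiv 1$ into vertex $2$), exactly as the paper states. You seem to be conflating the $h$ of $\overrightarrow{A}_h$ with the abutment height $\height(P)$. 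The slip is internally inconsistent with your own correct first description of $\tilR$ as generated by $\cR$ and the paths $\gamma\gamma'$, and it matters: if the only new relation were the length-$(h-1)$ cycle, it would \emph{not} kill the length-$2$ wrap-around $a_{h-1}a_1$, and your claim ``no new composition factors appear wrapping around the cycle'' (which is what pins down the Kupisch series and finite-dimensionality) would need a separate argument. As written, your conclusion relies on the correct length-$2$ relation rather than on the misstated one, so the proof stands once the two ``length $h-1$'' remarks are replaced by ``length $2$''.
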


\begin{proof}
Notice that $P(h)=P$ is a simple projective $\La$-module and $I(1)=I$ is a simple injective $\La$-module. Hence the algebra $\tilLa=\K\tilQ/\tilR$ is well defined and $\m\tilLa$ admits an $n$-cluster tilting subcategory by Corollary \ref{cor:La infinity n-ct}. By the description of $\tilQ$ and $\tilR$ it readily follows that $\tilQ\isom\tilde{A}_{h-2}$ via the identification $i\mapsto i$ for $i\in\{1,\dots,h-2\}$ and $h-1\mapsto 0$. Moreover, with this identification, the ideal $\tilR$ is generated by all paths in $\cR$ together with the additional path of length two from $0$ to $2$. A direct computation shows that the Kupisch series of $\tilLa$ is then $(d_1,\dots,d_{h-1})$.
\end{proof}

In Sections \ref{subsubsec:exactly one fracture} and \ref{subsubsec:exactly two fractures} we provide examples of self-gluing of Nakayama algebras which is not over a simple module.

\begin{example}
Acyclic Nakayama algebras with homogeneous relations whose module categories admit $n$-cluster tilting subcategories where classified in \cite{VAS}. More examples of acyclic Nakayama algebras whose module categories admit $n$-cluster tilting subcategories were produced in \cite[Part IV]{VAS2}. In both cases Corollary \ref{cor:Nakayama self-glue at simple} applies.

For example by \cite[Proposition 4.9(c)]{VAS2} the module category of the acyclic Nakayama algebra with Kupisch series $(2,2,3,3,3,3,2,1)$ admits a $3$-cluster tilting subcategory, see Figure \ref{fig:acyclic to cyclic example 1}. Then by Corollary \ref{cor:Nakayama self-glue at simple} the module category of the cyclic Nakayama algebra with Kupisch series $(2,2,3,3,3,3,2)$ admits a $3$-cluster tilting subcategory, see Figure \ref{fig:acyclic to cyclic example 2}.

\begin{figure}[htb]
    \centering
    \begin{tikzpicture}[scale=0.9, transform shape, baseline={(current bounding box.center)}]
        \tikzstyle{nct2}=[circle, minimum width=0.5cm, draw, inner sep=0pt, text centered]
        
        \node[nct2] (21) at (2,1) {\tiny $8$};
        \node (31) at (3,1) {\tiny $7$};
        \node (41) at (4,1) {\tiny $6$};
        \node (51) at (5,1) {\tiny $5$};
        \node[nct2] (61) at (6,1) {\tiny $4$};
        \node (71) at (7,1) {\tiny $3$};
        \node (81) at (8,1) {\tiny $2$};
        \node[nct2] (91) at (9,1) {\tiny $1$\nospacepunct{,}};
        
        \node[nct2] (22) at (2.5,2) {\tiny $\begin{matrix} 7 \\ 8 \end{matrix}$};
        \node (32) at (3.5,2) {\tiny $\begin{matrix} 6 \\ 7 \end{matrix}$};
        \node (42) at (4.5,2) {\tiny $\begin{matrix} 5 \\ 6 \end{matrix}$};
        \node (52) at (5.5,2) {\tiny $\begin{matrix} 4 \\ 5 \end{matrix}$};
        \node[nct2] (62) at (6.5,2) {\tiny $\begin{matrix} 3 \\ 4 \end{matrix}$};
        \node[nct2] (72) at (7.5,2) {\tiny $\begin{matrix} 2 \\ 3 \end{matrix}$};
        \node[nct2] (82) at (8.5,2) {\tiny $\begin{matrix} 1 \\ 2 \end{matrix}$};
            
        \node[nct2] (23) at (3,3) {\tiny $\begin{matrix} 6 \\ 7 \\ 8 \end{matrix}$};
        \node[nct2] (33) at (4,3) {\tiny $\begin{matrix} 5 \\ 6 \\ 7 \end{matrix}$};
        \node[nct2] (43) at (5,3) {\tiny $\begin{matrix} 4 \\ 5 \\ 6 \end{matrix}$};
        \node[nct2] (53) at (6,3) {\tiny $\begin{matrix} 3 \\ 4 \\ 5 \end{matrix}$};
            
        \draw[->] (21) -- (22);
        \draw[->] (31) -- (32);
        \draw[->] (41) -- (42);
        \draw[->] (51) -- (52);
        \draw[->] (61) -- (62);
        \draw[->] (71) -- (72);
        \draw[->] (81) -- (82);
            
        \draw[->] (22) -- (23);
        \draw[->] (32) -- (33);
        \draw[->] (42) -- (43);
        \draw[->] (52) -- (53);

        \draw[->] (22) -- (31);
        \draw[->] (32) -- (41);
        \draw[->] (42) -- (51);
        \draw[->] (52) -- (61);
        \draw[->] (62) -- (71);
        \draw[->] (72) -- (81);
        \draw[->] (82) -- (91);
            
        \draw[->] (23) -- (32);
        \draw[->] (33) -- (42);
        \draw[->] (43) -- (52);
        \draw[->] (53) -- (62);

        \draw[loosely dotted] (21.east) -- (31);
        \draw[loosely dotted] (31.east) -- (41);
        \draw[loosely dotted] (41.east) -- (51);
        \draw[loosely dotted] (51.east) -- (61);
        \draw[loosely dotted] (61.east) -- (71);
        \draw[loosely dotted] (71.east) -- (81);
        \draw[loosely dotted] (81.east) -- (91);
            
        \draw[loosely dotted] (22.east) -- (32);
        \draw[loosely dotted] (32.east) -- (42);
        \draw[loosely dotted] (42.east) -- (52);
        \draw[loosely dotted] (52.east) -- (62);
        \draw[loosely dotted] (62.east) -- (72);
        \draw[loosely dotted] (72.east) -- (82);
            
    \end{tikzpicture}
    \caption{The Aus\-lan\-der--Rei\-ten quiver of the acyclic Nakayama algebra with Kupisch series $(2,2,3,3,3,3,2,1)$ where the additive closure of the encircled modules is a $3$-cluster tilting subcategory $\cM$.}
    \label{fig:acyclic to cyclic example 1}
\end{figure}

\begin{figure}[htb]
    \centering
    \begin{tikzpicture}[scale=0.9, transform shape, baseline={(current bounding box.center)}]
        \tikzstyle{nct2}=[circle, minimum width=0.5cm, draw, inner sep=0pt, text centered]
        
        \node[nct2] (21) at (2,1) {\tiny $1$};
        \node (31) at (3,1) {\tiny $0$};
        \node (41) at (4,1) {\tiny $6$};
        \node (51) at (5,1) {\tiny $5$};
        \node[nct2] (61) at (6,1) {\tiny $4$};
        \node (71) at (7,1) {\tiny $3$};
        \node (81) at (8,1) {\tiny $2$};
        \node[nct2] (91) at (9,1) {\tiny $1$\nospacepunct{,}};
        
        \node[nct2] (22) at (2.5,2) {\tiny $\begin{matrix} 0 \\ 1 \end{matrix}$};
        \node (32) at (3.5,2) {\tiny $\begin{matrix} 6 \\ 0 \end{matrix}$};
        \node (42) at (4.5,2) {\tiny $\begin{matrix} 5 \\ 6 \end{matrix}$};
        \node (52) at (5.5,2) {\tiny $\begin{matrix} 4 \\ 5 \end{matrix}$};
        \node[nct2] (62) at (6.5,2) {\tiny $\begin{matrix} 3 \\ 4 \end{matrix}$};
        \node[nct2] (72) at (7.5,2) {\tiny $\begin{matrix} 2 \\ 3 \end{matrix}$};
        \node[nct2] (82) at (8.5,2) {\tiny $\begin{matrix} 1 \\ 2 \end{matrix}$};
            
        \node[nct2] (23) at (3,3) {\tiny $\begin{matrix} 6 \\ 0 \\ 1 \end{matrix}$};
        \node[nct2] (33) at (4,3) {\tiny $\begin{matrix} 5 \\ 6 \\ 0 \end{matrix}$};
        \node[nct2] (43) at (5,3) {\tiny $\begin{matrix} 4 \\ 5 \\ 6 \end{matrix}$};
        \node[nct2] (53) at (6,3) {\tiny $\begin{matrix} 3 \\ 4 \\ 5 \end{matrix}$};
            
        \draw[->] (21) -- (22);
        \draw[->] (31) -- (32);
        \draw[->] (41) -- (42);
        \draw[->] (51) -- (52);
        \draw[->] (61) -- (62);
        \draw[->] (71) -- (72);
        \draw[->] (81) -- (82);
            
        \draw[->] (22) -- (23);
        \draw[->] (32) -- (33);
        \draw[->] (42) -- (43);
        \draw[->] (52) -- (53);

        \draw[->] (22) -- (31);
        \draw[->] (32) -- (41);
        \draw[->] (42) -- (51);
        \draw[->] (52) -- (61);
        \draw[->] (62) -- (71);
        \draw[->] (72) -- (81);
        \draw[->] (82) -- (91);
            
        \draw[->] (23) -- (32);
        \draw[->] (33) -- (42);
        \draw[->] (43) -- (52);
        \draw[->] (53) -- (62);

        \draw[loosely dotted] (21.east) -- (31);
        \draw[loosely dotted] (31.east) -- (41);
        \draw[loosely dotted] (41.east) -- (51);
        \draw[loosely dotted] (51.east) -- (61);
        \draw[loosely dotted] (61.east) -- (71);
        \draw[loosely dotted] (71.east) -- (81);
        \draw[loosely dotted] (81.east) -- (91);
            
        \draw[loosely dotted] (22.east) -- (32);
        \draw[loosely dotted] (32.east) -- (42);
        \draw[loosely dotted] (42.east) -- (52);
        \draw[loosely dotted] (52.east) -- (62);
        \draw[loosely dotted] (62.east) -- (72);
        \draw[loosely dotted] (72.east) -- (82);
        
        \draw[loosely dotted] (2,3.5) -- (2,0.5);    
        \draw[loosely dotted] (9,3.5) -- (9,0.5);    
    \end{tikzpicture}
    \caption{The Aus\-lan\-der--Rei\-ten quiver of the cyclic Nakayama algebra with Kupisch series $(2,2,3,3,3,3,2)$ where the additive closure of the encircled modules is a $3$-cluster tilting subcategory equivalent to $\Mks/\ZZ$.}
    \label{fig:acyclic to cyclic example 2}
\end{figure}
\end{example}

\subsubsection{Self-gluing of starlike algebras with three rays}\label{subsubsec:starlike algebras with three rays} We continue by applying Corollary \ref{cor:La infinity n-ct} to algebras as in Proposition \ref{prop:starlike algebras}\ref{case:k=3}.

\begin{corollary}\label{cor:cyclic starlike with 3 rays}
Let $Q$ be one of the quivers
\[\begin{tikzpicture}
    \node (Q) at (-0.5,0) {$Q_1:$};

    \node (R) at (0,0) {$r$};

    \node (1M1) at (7,1) {$r_{m_1}^{(1)}$};
    \node (1M1-) at (5.5,1) {$r_{m_1-1}^{(1)}$};
    \node (13+) at (4,1) {};
    \node (13) at (2.5,1) {};
    \node (12) at (1,1) {$r_2^{(1)}$};
    
    \draw[->] (1M1) -- (1M1-);
    \draw[->] (1M1-) -- (13+);
    \draw[loosely dotted] (13+) -- (13);
    \draw[->] (13) -- (12);
    \draw[->] (12) -- (R);
    
    \node (2M1-) at (5.5,0) {$r_{m_2-1}^{(2)}$};
    \node (23+) at (4,0) {};
    \node (23) at (2.5,0) {};
    \node (22) at (1,0) {$r_2^{(2)}$};
    
    \draw[<-] (2M1-) -- (23+);
    \draw[loosely dotted] (23+) -- (23);
    \draw[<-] (23) -- (22);
    \draw[<-] (22) -- (R);

    \node (3M1) at (7.5,-0.5) {$r_{m_2}^{(2)}=r_{m_3}^{(3)}$\nospacepunct{,}};
    \node (3M1-) at (5.5,-1) {$r_{m_3-1}^{(3)}$};
    \node (33+) at (4,-1) {};
    \node (33) at (2.5,-1) {};
    \node (32) at (1,-1) {$r_2^{(3)}$};
    
    \draw[<-] (3M1) -- (2M1-);
    \draw[->] (3M1) -- (3M1-);
    \draw[->] (3M1-) -- (33+);
    \draw[loosely dotted] (33+) -- (33);
    \draw[->] (33) -- (32);
    \draw[->] (32) -- (R);
\end{tikzpicture}
\]
\[\begin{tikzpicture}
    \node (Q) at (-0.5,0) {$Q_2:$};

    \node (R) at (0,0) {$r$};

    \node (1M1) at (7,1) {$r_{m_1}^{(1)}$};
    \node (1M1-) at (5.5,1) {$r_{m_1-1}^{(1)}$};
    \node (13+) at (4,1) {};
    \node (13) at (2.5,1) {};
    \node (12) at (1,1) {$r_2^{(1)}$};
    
    \draw[<-] (1M1) -- (1M1-);
    \draw[<-] (1M1-) -- (13+);
    \draw[loosely dotted] (13+) -- (13);
    \draw[<-] (13) -- (12);
    \draw[<-] (12) -- (R);
    
    \node (2M1-) at (5.5,0) {$r_{m_2-1}^{(2)}$};
    \node (23+) at (4,0) {};
    \node (23) at (2.5,0) {};
    \node (22) at (1,0) {$r_2^{(2)}$};
    
    \draw[<-] (2M1-) -- (23+);
    \draw[loosely dotted] (23+) -- (23);
    \draw[<-] (23) -- (22);
    \draw[<-] (22) -- (R);

    \node (3M1) at (7.5,-0.5) {$r_{m_2}^{(2)}=r_{m_3}^{(3)}$\nospacepunct{.}};
    \node (3M1-) at (5.5,-1) {$r_{m_3-1}^{(3)}$};
    \node (33+) at (4,-1) {};
    \node (33) at (2.5,-1) {};
    \node (32) at (1,-1) {$r_2^{(3)}$};
    
    \draw[<-] (3M1) -- (2M1-);
    \draw[->] (3M1) -- (3M1-);
    \draw[->] (3M1-) -- (33+);
    \draw[loosely dotted] (33+) -- (33);
    \draw[->] (33) -- (32);
    \draw[->] (32) -- (R);
\end{tikzpicture}\]
If $m_1=n+1+x_1n,m_2=n+1+x_2n$ and $m_3=n+x_3 n$ for some $x_i\in\ZZ_{\geq 0}$ and some $n\geq 2$, then $\m(\K Q/R_Q^2)$ admits an $n$-cluster tilting subcategory.
\end{corollary}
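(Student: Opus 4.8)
The strategy is to realize $\K Q/R_Q^2$ as the self-gluing $\tilLa$ of a suitable starlike algebra $\La$ of the type appearing in Proposition \ref{prop:starlike algebras}\ref{case:k=3}, and then invoke Corollary \ref{cor:La n-ct implies La infinity n-ct}. First I would check that in each of the two cases $Q_1$ and $Q_2$, the quiver $Q$ is precisely the quiver $\tilQ$ of the form \eqref{picture:self-gluing quiver} obtained from self-gluing an appropriate acyclic starlike algebra along a simple abutment at each end of one of its linearly oriented rays. Concretely, the ray containing the glued vertex $r_{m_2}^{(2)}=r_{m_3}^{(3)}$ in $Q$ corresponds to the ray $A_{m_3}$ (of length $m_3=n+x_3n$) in $\La$; cutting $\tilQ$ along this ray recovers the starlike tree $T(3,m_1,m_2,m_3)$ with $r_{m_3}^{(3)}$ a sink and $r_{m_1}^{(1)}$, $r_{m_2}^{(2)}$ both sources (for $Q_1$; for $Q_2$ the orientations on the two source-rays differ, but the underlying starlike algebra and its abutment structure are the same up to relabelling). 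One must verify that $R_{\tilQ}^2 = \tilR$, i.e. that the defining relations of the self-gluing — all paths of $\cR=R_Q^2$ together with all paths starting at $Q'$, passing through $p_1,\dots,p_h$ and returning to $Q'$ — coincide with simply ``all paths of length $2$'' in $\tilQ$; this holds because in the radical square zero case every path of length $2$ is already a relation, so the extra long relations are automatically subsumed.

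Next I would confirm the hypotheses of Corollary \ref{cor:La n-ct implies La infinity n-ct} for $\La$. By Proposition \ref{prop:starlike algebras}\ref{case:k=3}, since $m_i=n+1+x_in$ for $i=1,2$ and $m_3=n+x_3n$, the algebra $\La$ is representation-directed and $\m\La$ admits a (unique) $n$-cluster tilting subcategory $\cM$. Then one picks $W$ to be the maximal left abutment and $J$ to be a maximal right abutment, with $P\leq W$ a simple left abutment and $I\leq J$ a simple right abutment: these are the simple projective/injective modules sitting at the relevant leaves, which exist since $n\ge 2$ forces $Q'\ne\varnothing$ (equivalently $\La\not\isom\K\overrightarrow{A}_{h'}$). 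By Example \ref{ex:n-self-gluable easy} the algebra $\La$ is then $n$-self-gluable, and by Corollary \ref{cor:La infinity n-ct} (together with Corollary \ref{cor:La n-ct implies La infinity n-ct}) the algebra $\tilLa=\K\tilQ/\tilR$ has a module category admitting an $n$-cluster tilting subcategory $\Mks/\ZZ$. Identifying $\tilLa$ with $\K Q/R_Q^2$ via the first step completes the argument.

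The one subtlety, and the main place where care is needed, is the bookkeeping of \emph{which} simple projective and simple injective of $\La$ to glue along, and matching the two orientation cases $Q_1,Q_2$ to the orientation conventions of \eqref{picture:self-gluing quiver}. In Proposition \ref{prop:starlike algebras}\ref{case:k=3} the sink/source pattern forces a specific height-$1$ abutment at each leaf; one must check that self-gluing along the simple projective at the source-leaf of the ray $A_{m_3}$ and the simple injective at the corresponding leaf produces exactly the identification $r_{m_2}^{(2)}=r_{m_3}^{(3)}$ shown, and that the residual leaves become the sinks/sources visible in $Q_1$ resp.\ $Q_2$. For $Q_2$ one of the source-rays is reversed relative to $Q_1$; this corresponds to choosing the other maximal right abutment $J$ (there are two, both with simple injective $I\le J$ available), and the verification that $(W,J)$ is still a fractured pair and $(P,I)$ a compatible pair goes through verbatim as in Example \ref{ex:n-self-gluable easy} because all fractures are projective or injective. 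Beyond this identification the proof is a direct appeal to the cited corollaries, so I would keep the written proof short: establish the quiver/relation identification, note $\La\not\isom\K\overrightarrow{A}_{h'}$, and cite Corollary \ref{cor:La n-ct implies La infinity n-ct}.
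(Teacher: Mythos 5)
The basic strategy — cut the cyclic quiver at the identification vertex to recover an acyclic starlike algebra $\La$, check via Proposition~\ref{prop:starlike algebras}\ref{case:k=3} that $\m\La$ admits an $n$-cluster tilting subcategory, and then invoke Corollary~\ref{cor:La n-ct implies La infinity n-ct} — is exactly the paper's own proof, and your treatment of $Q_1$ is essentially right. But your treatment of $Q_2$ has a genuine error.

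You claim that for $Q_2$ ``the underlying starlike algebra and its abutment structure are the same up to relabelling'' and that the difference corresponds merely to ``choosing the other maximal right abutment $J$.'' Neither claim is correct. If you actually cut $Q_2$ at the joint $r_{m_2}^{(2)}=r_{m_3}^{(3)}$ you get a starlike tree $T(3,m_1,m_2,m_3)$ whose three leaves are: a sink on the $m_1$-ray (since in $Q_2$ that ray points away from $r$), a new sink on the $m_2$-ray (the cut end), and a new source on the $m_3$-ray. That is two sinks and one source, which is case~(b1) of Proposition~\ref{prop:starlike algebras} with $m_1,m_2$ of the form $n+1+x_in$ (sinks) and $m_3$ of the form $n+x_3n$ (source). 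For $Q_1$, cutting instead gives a source on the $m_1$-ray, hence two sources and one sink, i.e.\ case~(b2). These two starlike algebras are not isomorphic (nor opposite to each other) unless $m_2=m_3$, so they are certainly not ``the same up to relabelling.'' Nor does the difference arise from choosing the other maximal right abutment $J$: if you kept the two-sources-one-sink algebra from the $Q_1$ case and self-glued at $I(r_{m_1}^{(1)})$ instead of $I(r_{m_2}^{(2)})$, the residual tail would still point \emph{into} $r$ with a source at its far end, which does not match $Q_2$ (whose tail points \emph{out} of $r$ with a sink at its end). The paper's own proof handles this correctly and explicitly: it switches to a starlike algebra with $r_{m_1}^{(1)}$ and $r_{m_2}^{(2)}$ both sinks and $r_{m_3}^{(3)}$ a source for the $Q_2$ case. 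There is also a small internal slip in your write-up where you refer to ``the simple projective at the source-leaf''; a simple projective sits at a sink-leaf, while a simple injective sits at a source-leaf. Apart from these points, the remaining ingredients you cite (the identification $\tilR=R_{\tilQ}^2$ in the radical square zero case, $\La\not\isom\K\overrightarrow{A}_{h'}$, Example~\ref{ex:n-self-gluable easy}, and Corollary~\ref{cor:La infinity n-ct}) are all used correctly.
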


\begin{proof}
Assume that $m_1\geq m_2\geq m_3$; the other cases are similar. Let $\La=\K T^2/\rad(\K T)$ where $T=T(3,m_1,m_2,m_3)$ for some $A_{m_1},A_{m_2}$ and $A_{m_3}$ linearly oriented. Assume first that $r^{(1)}_{m_1}$ and $r^{(2)}_{m_2}$ are both sources and $r_{m_3}^{(3)}$ is a sink. Then $\m\La$ admits an $n$-cluster tilting subcategory by Proposition \ref{prop:starlike algebras}\ref{case:k=3}. Let $P=P\left(r_{m_3}^{(3)}\right)$ and $I=I\left(r^{(2)}_{m_2}\right)$. Then $\La$ is $n$-self-gluable and so $\m\tilde{\La}$ admits an $n$-cluster tilting subcategory by Corollary \ref{cor:La infinity n-ct}. Moreover, it readily follows by (\ref{picture:self-gluing quiver}) that $\tilde{\La}\isom \K Q_1/R_{Q_1}^2$, which proves that $\m(\K Q_1/R_{Q_1}^2)$ admits an $n$-cluster tilting subcategory. Assuming that $r^{(1)}_{m_1}$ and $r^{(2)}_{m_2}$ are both sinks and $r_{m_3}^{(3)}$ is a source we can similarly prove that $\m(\K Q_2/R_{Q_2}^2)$ admits an $n$-cluster tilting subcategory.
\end{proof}

For an example coming from Corollary \ref{cor:cyclic starlike with 3 rays} see Figure \ref{fig:cyclic starlike with 3 rays example}.

To the same class of algebras, we can also apply Proposition \ref{prop:double gluing}. In this case we have the following results.

\begin{corollary}\label{cor:double gluing of starlike three} Let $Q$ be one of the quivers
    \[\begin{tikzpicture}
    \node (R) at (0,-0.5) {$r$};
    \node (Q) at (-5,-0.5) {$Q_1:$};

    \node (1M1-) at (-4,-0.5) {$r_{m_3}^{(3)}$};
    \node (13+) at (-3,-0.5) {};
    \node (13) at (-2,-0.5) {};
    \node (12) at (-1,-0.5) {$r_2^{(3)}$};
    
    \draw[->] (1M1-) -- (13+);
    \draw[loosely dotted] (13+) -- (13);
    \draw[->] (13) -- (12);
    \draw[->] (12) -- (R);
    
    \node (2M1-) at (4,0) {$r_{M_2-1}^{(2)}$};
    \node (23+) at (3,0) {};
    \node (23) at (2,0) {};
    \node (22) at (1,0) {$r_2^{(2)}$};
    
    \draw[<-] (2M1-) -- (23+);
    \draw[loosely dotted] (23+) -- (23);
    \draw[<-] (23) -- (22);
    \draw[<-] (22) -- (R);

    \node (3M1) at (5.5,-0.5) {$r'$};
    \node (3M1-) at (4,-1) {$r_{M_1-1}^{(1)}$};
    \node (33+) at (3,-1) {};
    \node (33) at (2,-1) {};
    \node (32) at (1,-1) {$r_2^{(1)}$};
    
    \draw[<-] (3M1) -- (2M1-);
    \draw[<-] (3M1) -- (3M1-);
    \draw[<-] (3M1-) -- (33+);
    \draw[loosely dotted] (33+) -- (33);
    \draw[<-] (33) -- (32);
    \draw[<-] (32) -- (R);
    
    \node (1M1-') at (9.5,-0.5) {$r_{m_3'}^{(3')}$\nospacepunct{,}};
    \node (13+') at (8.5,-0.5) {};
    \node (13') at (7.5,-0.5) {};
    \node (12') at (6.5,-0.5) {$r_2^{(3')}$};

    \draw[<-] (1M1-') -- (13+');
    \draw[loosely dotted] (13+') -- (13');
    \draw[<-] (13') -- (12');
    \draw[<-] (12') -- (3M1);
\end{tikzpicture}
\]
\[\begin{tikzpicture}
    \node (R) at (0,-0.5) {$r$};
    \node (Q) at (-5,-0.5) {$Q_2:$};

    \node (1M1-) at (-4,-0.5) {$r_{m_3}^{(3)}$};
    \node (13+) at (-3,-0.5) {};
    \node (13) at (-2,-0.5) {};
    \node (12) at (-1,-0.5) {$r_2^{(3)}$};
    
    \draw[->] (1M1-) -- (13+);
    \draw[loosely dotted] (13+) -- (13);
    \draw[->] (13) -- (12);
    \draw[->] (12) -- (R);
    
    \node (2M1-) at (4,0) {$r_{M_2-1}^{(2)}$};
    \node (23+) at (3,0) {};
    \node (23) at (2,0) {};
    \node (22) at (1,0) {$r_2^{(2)}$};
    
    \draw[<-] (2M1-) -- (23+);
    \draw[loosely dotted] (23+) -- (23);
    \draw[<-] (23) -- (22);
    \draw[<-] (22) -- (R);

    \node (3M1) at (5.5,-0.5) {$r'$};
    \node (3M1-) at (4,-1) {$r_{M_1-1}^{(1)}$};
    \node (33+) at (3,-1) {};
    \node (33) at (2,-1) {};
    \node (32) at (1,-1) {$r_2^{(1)}$};
    
    \draw[<-] (3M1) -- (2M1-);
    \draw[->] (3M1) -- (3M1-);
    \draw[->] (3M1-) -- (33+);
    \draw[loosely dotted] (33+) -- (33);
    \draw[->] (33) -- (32);
    \draw[->] (32) -- (R);
    
    \node (1M1-') at (9.5,-0.5) {$r_{m_3'}^{(3')}$\nospacepunct{.}};
    \node (13+') at (8.5,-0.5) {};
    \node (13') at (7.5,-0.5) {};
    \node (12') at (6.5,-0.5) {$r_2^{(3')}$};

    \draw[<-] (1M1-') -- (13+');
    \draw[loosely dotted] (13+') -- (13');
    \draw[<-] (13') -- (12');
    \draw[<-] (12') -- (3M1);
\end{tikzpicture}
\]
If $M_1=2n+1+x_1n$, $M_2=2n+1+x_2n$, $m_3=n+x_3n$ and $m_3'=n+x_3'n$ for some $n\geq 2$ and some $x_1,x_2,x_3,x_3'\in\ZZ_{\geq 0}$, then $\m(\K Q/R^2_{Q})$ admits an $n$-cluster tilting subcategory.
\end{corollary}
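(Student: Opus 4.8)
The plan is to deduce this from Proposition~\ref{prop:double gluing}, exactly as Corollary~\ref{cor:cyclic starlike with 3 rays} was deduced from Corollary~\ref{cor:La infinity n-ct}, taking for $A$ and $B$ radical-square-zero starlike algebras with three rays. First I would choose a quiver $Q_A$ whose underlying graph is a starlike tree $T(3,\cdot,\cdot,\cdot)$ and set $A=\K Q_A/R_{Q_A}^2$, with the ray attached to the central vertex $r$ along the unglued direction of length $m_3$ and the two remaining rays carrying the $A$-side parts of rays $1$ and $2$; similarly choose $B=\K Q_B/R_{Q_B}^2$ starlike around a central vertex $r'$, with an unglued ray of length $m_3'$ and two further rays carrying the $B$-side parts of rays $1$ and $2$. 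The orientations of these rays, together with the numerical conditions $M_1=2n+1+x_1n$, $M_2=2n+1+x_2n$, $m_3=n+x_3n$, $m_3'=n+x_3'n$, are to be arranged so that Proposition~\ref{prop:starlike algebras}\ref{case:k=3} applies to both $A$ and $B$, yielding $n$-cluster tilting subcategories $\cM_A\subseteq\m A$ and $\cM_B\subseteq\m B$. Since $\cM_A$ and $\cM_B$ are genuine $n$-cluster tilting subcategories, Remark~\ref{rem:generalization of n-ct} gives that they are $(A,D(A),n)$- and $(B,D(B),n)$-fractured, so $T_A^R=D(A)$, $T_B^L=B$, and all hypotheses on fracture supports in the set-up of Section~\ref{sec:double gluing} and in Proposition~\ref{prop:double gluing} hold vacuously.

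Next I would pin down the abutments for the two gluings. Let $P_1,P_2$ be the simple left abutments of $A$ at the far ends of the two $A$-side half-rays and $I_1,I_2$ the simple right abutments of $B$ at the far ends of the corresponding $B$-side half-rays; these have height $1$, so $\mathbf{L_i}=\mathbf{L_i'}=\overrightarrow{A}_1$ and $\height(P_i)=\height(I_i)$, and the first gluing $\La_1=B\glue[P_1][I_1] A$ is defined. The compatibility isomorphisms (the analogues of (\ref{eq:fractures in double gluing})) hold automatically: for a height-$1$ gluing the modules $\prescript{\La_1}{A}{\pi}_{\ast}(T_A^{(P_i)})$ and $\prescript{\La_1}{B}{\pi}_{\ast}(T_B^{(I_i)})$ are both the simple module at the single identified vertex. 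Hence, by Theorem~\ref{thrm:fractsubcat}, $\cM_{\La_1}=\add\{\prescript{\La_1}{A}{\pi}_{\ast}(\cM_A),\prescript{\La_1}{B}{\pi}_{\ast}(\cM_B)\}$ is a $(T_{\La_1}^L,T_{\La_1}^R,n)$-fractured subcategory of $\m\La_1$, the nonprojective part of $T_{\La_1}^L$ being supported in $\cF_{P_{2\ast}}$ and the noninjective part of $T_{\La_1}^R$ in $\cG_{I_{2\ast}}$, where $P_{2\ast}=\prescript{\La_1}{A}{\pi}_{\ast}(P_2)$ and $I_{2\ast}=\prescript{\La_1}{B}{\pi}_{\ast}(I_2)$.

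Now Proposition~\ref{prop:double gluing} applies directly: taking $W_2$ the maximal left abutment of $\La_1$ above $P_{2\ast}$ and $J_2$ the maximal right abutment above $I_{2\ast}$, the pair $(W_2,J_2)$ is a fractured pair and $(P_{2\ast},I_{2\ast})$ is a compatible pair for it, so $\La_1$ is $n$-self-gluable and, by Corollary~\ref{cor:La infinity n-ct}, the module category of the self-gluing $\tilLa=\K\tilQ/\tilR$ admits an $n$-cluster tilting subcategory. It then remains to identify $\tilLa$ with $\K Q/R_Q^2$ for $Q=Q_1$ (respectively $Q=Q_2$). This I would do by a direct computation with the description of $\tilQ$ in (\ref{picture:self-gluing quiver}), Definition~\ref{def:gluing quivers} and (\ref{eq:ideal of gluing subgraph}): gluing the $A$- and $B$-side halves of ray $1$ (the first gluing) and of ray $2$ (the self-gluing), each identification merging a single vertex, produces exactly the quiver $Q_1$; and $\tilR$ is generated by $\cR_A\cup\cR_B$ together with the connecting paths of length two, which are precisely the paths of length two in $Q_1$, so $\tilR=R_{Q_1}^2$. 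The case $Q_2$ is treated in parallel, starting from the second orientation considered in Section~\ref{sec:double gluing} (in which the linear subquiver $\mathbf{L_1}$ is reversed) and with the ray lengths reassigned accordingly.

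The step I expect to be the main obstacle is the purely combinatorial verification that the stated numerical conditions are exactly the right ones. Concretely, if the $A$-side and $B$-side halves of ray $i$ have lengths $n+1+a_in$ and $n+1+b_in$ (the source/sink conditions forced on $A$ and on $B$ by Proposition~\ref{prop:starlike algebras}\ref{case:k=3}), then after merging one vertex the full ray $i$ has $2n+1+(a_i+b_i)n$ vertices, which is why $M_i=2n+1+x_in$ with $x_i=a_i+b_i$, while the two unglued rays at $r$ and $r'$ retain lengths $n+x_3n$ and $n+x_3'n$. Making this bookkeeping — the source/sink configuration of the two starlike algebras, the choice of abutment heights, and the resulting ray lengths — agree with the statement in both cases $Q_1$ and $Q_2$ is the delicate part; everything else is a routine assembly of results already established.
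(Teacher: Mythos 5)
You identify the strategy of the paper's own proof correctly: realize $Q$ as a double gluing of two radical-square-zero starlike algebras $A,B$ with three rays, each falling under Proposition~\ref{prop:starlike algebras}\ref{case:k=3}; observe that the fracture-support hypotheses of Section~\ref{sec:double gluing} and of Proposition~\ref{prop:double gluing} hold vacuously because $\cM_A,\cM_B$ are honest $n$-cluster tilting subcategories (so $T_A^L=A$ is projective and $T_B^R=D(B)$ is injective) and the gluings are along height-$1$ abutments; then apply Proposition~\ref{prop:double gluing} to $\La_1=B\glue[P_1][I_1]A$ and conclude via Corollary~\ref{cor:La infinity n-ct}. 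For $Q_1$ your bookkeeping is correct and slightly more general than the paper's, which fixes $m_1'=m_2'=n+1$ (i.e.\ $b_1=b_2=0$ in your notation); both choices work.

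The step you call ``the delicate part'' does not, however, close for $Q_2$, and this should be flagged rather than assumed. With the orientations shown in $Q_2$ (only ray $1$ is reversed to $r'\to r$), the algebra $A$ around $r$ has two incoming rays (ray $3$ and the $A$-half of ray $1$) and one outgoing (the $A$-half of ray $2$). So $A$ is in case \ref{case:k=3}(b2): the two source rays have length $n+1+\cdot\,n$, but the sink ray (the $A$-half of ray $2$) has length $n+\cdot\,n$ — not $n+1+\cdot\,n$ as your formula assumes for every $A$-side half. Symmetrically for $B$ in case \ref{case:k=3}(b1). Merging one vertex on each of rays $1$ and $2$ then gives $M_1=2n+1+\cdot\,n$ (fine), but $M_2=2n-1+\cdot\,n$, $m_3=n+1+\cdot\,n$ and $m_3'=n+1+\cdot\,n$, none of which match the stated constraints $M_2=2n+1+x_2n$, $m_3=n+x_3n$, $m_3'=n+x_3'n$. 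No nonnegative $a_i,b_i$ and no change of abutment height (which is forced to be $1$ since $T_A^L$ and $T_B^R$ are trivial fracturings) can fix this. To be fair, the paper itself dismisses $Q_2$ with a one-line ``similarly'' and does not verify the arithmetic either; the discrepancy almost certainly reflects a typo in the orientations of $Q_2$ or in the stated parameter constraints. Nonetheless, as written your proposal asserts that the bookkeeping can be made to agree for both $Q_1$ and $Q_2$, and for $Q_2$ this is false with the given data — you need either to correct the parameters/orientations or to note explicitly that they are inconsistent.
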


\begin{proof}
Assume $m_1\geq m_2\geq m_3$ and $m_1'\geq m_2'\geq m_3'$; the other cases are similar. Let $m_1=n+1+x_1n$, $m_2=n+1+x_2n$, $m_1'=n+1$ and $m_2'=n+1$. Let $A=\K T/R_T^2$ where $T=T(3,m_1,m_2,m_3)$ for some $A_{m_1},A_{m_2}$ and $A_{m_3}$ linearly oriented and let $B=\K T'/R^2_{T'}$ where $T'=T(3,m_1',m_2',m_3')$ for some $A_{m_1},A_{m_2}$ and $A_{m_3}$ linearly oriented. Assume first that $r_{m_1}^{(1)}$, $r_{m_2}^{(2)}$ and $r_{m_3'}^{(3')}$ are sinks, while $r_{m_3}^{(3)}$, $r_{m_1'}^{(1')}$  and $r_{m_2'}^{(2')}$ are sources. Then $\m A$ and $\m B$ admit an $n$-cluster tilting subcategory by Proposition \ref{prop:starlike algebras}\ref{case:k=3}. In this case the quivers of $A$ and $B$ are of the form
\[\begin{tikzpicture}[baseline={(current bounding box.center)}, scale=0.8, transform shape]
    \node (R) at (0,-0.5) {$r$};
    
    \node (1M1-) at (-4,-0.5) {$r_{m_3}^{(3)}$};
    \node (13+) at (-3,-0.5) {};
    \node (13) at (-2,-0.5) {};
    \node (12) at (-1,-0.5) {$r_2^{(3)}$};
    
    \draw[->] (1M1-) -- (13+);
    \draw[loosely dotted] (13+) -- (13);
    \draw[->] (13) -- (12);
    \draw[->] (12) -- (R);
    
    \node (2M1-) at (4,0) {$r_{m_2}^{(2)}$};
    \node (23+) at (3,0) {};
    \node (23) at (2,0) {};
    \node (22) at (1,0) {$r_2^{(2)}$};
    
    \draw[<-] (2M1-) -- (23+);
    \draw[loosely dotted] (23+) -- (23);
    \draw[<-] (23) -- (22);
    \draw[<-] (22) -- (R);
    
    \node (3M1-) at (4,-1) {$r_{m_1}^{(1)}$};
    \node (33+) at (3,-1) {};
    \node (33) at (2,-1) {};
    \node (32) at (1,-1) {$r_2^{(1)}$};
    
    \draw[<-] (3M1-) -- (33+);
    \draw[loosely dotted] (33+) -- (33);
    \draw[<-] (33) -- (32);
    \draw[<-] (32) -- (R);
    
    \node (Q) at (0,-1.5) {$T$};
\end{tikzpicture}
\text{ and }
\begin{tikzpicture}[baseline={(current bounding box.center)}, scale=0.8, transform shape]
    \node (2M1-) at (4,0) {$r_{2}^{(2')}$};
    \node (23+) at (3,0) {};
    \node (23) at (2,0) {};
    \node (22) at (1,0) {$r_{n+1}^{(2')}$};
    
    \draw[<-] (2M1-) -- (23+);
    \draw[loosely dotted] (23+) -- (23);
    \draw[<-] (23) -- (22);
    
    \node (3M1-) at (4,-1) {$r_{2}^{(1')}$};
    \node (33+) at (3,-1) {};
    \node (33) at (2,-1) {};
    \node (32) at (1,-1) {$r_{n+1}^{(1')}$};
    \node (3M1) at (5.5,-0.5) {$r'$};
    
    \draw[<-] (3M1) -- (2M1-);
    \draw[<-] (3M1) -- (3M1-);
    \draw[<-] (3M1-) -- (33+);
    \draw[loosely dotted] (33+) -- (33);
    \draw[<-] (33) -- (32);
    
    \node (1M1-') at (9.5,-0.5) {$r_{m_3'}^{(3')}$};
    \node (13+') at (8.5,-0.5) {};
    \node (13') at (7.5,-0.5) {};
    \node (12') at (6.5,-0.5) {$r_2^{(3')}$};
    
    \draw[<-] (1M1-') -- (13+');
    \draw[loosely dotted] (13+') -- (13');
    \draw[<-] (13') -- (12');
    \draw[<-] (12') -- (3M1);
        
    \node (Q) at (5.5,-1.5) {$T'$};
\end{tikzpicture}\]
and so by performing a double gluing as described in Section \ref{sec:double gluing} we get the quiver $Q_1$. Then $\m(\K Q_1/R_{Q_1}^2)$ admits an $n$-cluster tilting subcategory by Proposition \ref{prop:double gluing}. Assuming that $r_{m_2}^{(2)}$, $r_{m_1'}^{(1')}$ and $r_{m_3'}^{(3')}$ are sinks, while $r_{m_1}^{(1)}$, $r_{m_3}^{(3)}$ and $r_{m_2'}^{(2')}$ are sources we can similarly prove that $\m(\K Q_2/R_{Q_2}^2)$ admits an $n$-cluster tilting subcategory.
\end{proof}

\begin{figure}[htb]
    \centering
    \begin{tikzpicture}[baseline={(current bounding box.center)}]
        \tikzstyle{nct3}=[circle, minimum width=6pt, draw=white, inner sep=0pt, scale=0.9]
        \node (name) at (-0.3,0) {$Q:$};
        
        \node[nct3] (1) at (0,0) {$1$};
        
        \node[nct3] (21) at (0.7,0.7) {$2_1$};
        \node[nct3] (31) at (1.4,0.7) {$3_1$};
        \node[nct3] (41) at (2.1,0.7) {$4_1$};
        \node[nct3] (51) at (2.8,0.7) {$5_1$};
        
        \node[nct3] (22) at (0.7,0) {$2_2$};
        \node[nct3] (32) at (1.4,0) {$3_2$};
        \node[nct3] (42) at (2.1,0) {$4_2$};
        
        \node[nct3] (52) at (3.2,-0.35) {$5_2=4_3$\nospacepunct{,}};

        \node[nct3] (23) at (0.7,-0.7) {$2_3$};
        \node[nct3] (33) at (1.4,-0.7) {$3_3$};

        \draw[->] (1) -- (21);
        \draw[->] (21) -- (31);
        \draw[->] (31) -- (41);
        \draw[->] (41) -- (51);
        
        \draw[->] (1) -- (22);
        \draw[->] (22) -- (32);
        \draw[->] (32) -- (42);
        \draw[->] (42) -- (52);

        \draw[<-] (1) -- (23);
        \draw[<-] (23) -- (33);
        \draw[<-] (33) -- (52);
    \end{tikzpicture}\;\;\;\;
    \begin{tikzpicture}[scale=0.9, transform shape, baseline={(current bounding box.center)}]
        \tikzstyle{nct2}=[circle, minimum width=0.6cm, draw, inner sep=0pt, text centered, scale=0.9]
        \tikzstyle{nct22}=[circle, minimum width=0.6cm, draw, inner sep=0pt, text centered, scale=0.8]
        \tikzstyle{nct3}=[circle, minimum width=6pt, draw=white, inner sep=0pt, scale=0.9]
        
        \node[scale=0.8] (name) at (4.9,-0.3) {$\Gamma\left(\K Q/\rad(\K Q)^2\right)$};
        
        \node[nct2] (12) at (0,0.7) {$\qthree{}[4_3][]$};
        \node[nct2] (21) at (0.7,0) {$\qthree{4_2}[4_3]$};
        \node[nct3] (32) at (1.4,0.7) {$\qthree{}[4_2][]$};
        \node[nct2] (41) at (2.1,0) {$\qthree{3_2}[4_2]$};
        \node[nct3] (52) at (2.8,0.7) {$\qthree{}[3_2][]$};
        \node[nct2] (61) at (3.5,0) {$\qthree{2_2}[3_2]$};
        \node[nct3] (72) at (4.2,0.7) {$\qthree{}[2_2][]$};
        
        \draw[->] (12) -- (21);
        \draw[->] (21) -- (32);
        \draw[->] (32) -- (41);
        \draw[->] (41) -- (52);
        \draw[->] (52) -- (61);
        \draw[->] (61) -- (72);
        
        \draw[loosely dotted] (12.east) -- (32);
        \draw[loosely dotted] (32.east) -- (52);
        \draw[loosely dotted] (52.east) -- (72);
        
        \node[nct2] (14) at (0,2.1) {$\qthree{}[5_1][]$};
        \node[nct2] (25) at (0.7,2.8) {$\qthree{4_1}[5_1]$};
        \node[nct3] (34) at (1.4,2.1) {$\qthree{}[4_1][]$};
        \node[nct2] (45) at (2.1,2.8) {$\qthree{3_1}[4_1]$};
        \node[nct3] (54) at (2.8,2.1) {$\qthree{}[3_1][]$};
        \node[nct2] (65) at (3.5,2.8) {$\qthree{2_1}[3_1]$};
        \node[nct3] (74) at (4.2,2.1) {$\qthree{}[2_1][]$};
        
        \draw[->] (14) -- (25);
        \draw[->] (25) -- (34);
        \draw[->] (34) -- (45);
        \draw[->] (45) -- (54);
        \draw[->] (54) -- (65);
        \draw[->] (65) -- (74);
        
        \draw[loosely dotted] (14.east) -- (34);
        \draw[loosely dotted] (34.east) -- (54);
        \draw[loosely dotted] (54.east) -- (74);
        
        \node[nct2] (83) at (4.9,1.4) {$\begin{smallmatrix}  & 1 & \\ & 2_1 2_2 &\end{smallmatrix} $};
        
        \draw[->] (72) -- (83);
        \draw[->] (74) -- (83);
        
        \node[nct2] (94) at (5.6,2.1) {$\qthree{1}[2_2]$};
        \node[nct2] (92) at (5.6,0.7) {$\qthree{1}[2_1]$};
        \node[nct3] (103) at (6.3,1.4) {$\qthree{}[1][]$};
        \node[nct2] (114) at (7,2.1) {$\qthree{2_3}[1]$};
        \node[nct3] (123) at (7.7,1.4) {$\qthree{}[2_3][]$};
        \node[nct2] (134) at (8.4,2.1) {$\qthree{3_3}[2_3]$};
        \node[nct3] (143) at (9.1,1.4) {$\qthree{}[3_3][]$};
        \node[nct2] (154) at (9.8,2.1) {$\qthree{4_3}[3_3]$};
        \node[nct2] (163) at (10.5,1.4) {$\qthree{}[4_3][]$};
        
        \draw[->] (83) -- (94);
        \draw[->] (83) -- (92);
        \draw[->] (94) -- (103);
        \draw[->] (92) -- (103);
        \draw[->] (103) -- (114);
        \draw[->] (114) -- (123);
        \draw[->] (123) -- (134);
        \draw[->] (134) -- (143);
        \draw[->] (143) -- (154);
        \draw[->] (154) -- (163);
        
        \draw[loosely dotted] (83.east) -- (103);
        \draw[loosely dotted] (103.east) -- (123);
        \draw[loosely dotted] (123.east) -- (143);
        \draw[loosely dotted] (143.east) -- (163);
        \draw[loosely dotted] (74.east) -- (94);
        \draw[loosely dotted] (72.east) -- (92);
    \end{tikzpicture}
    \caption{The Aus\-lan\-der--Rei\-ten quiver of the algebra $\tilLa=\K \tilQ/R^2_{\tilQ}$ where $\tilQ$ is the quiver on top and where the additive closure of the encircled modules is a $4$-cluster tilting subcategory. Notice that the module $4_3$ appears twice and so $\tilLa$ is not rep\-re\-sen\-ta\-tion-di\-rect\-ed. Compare with Figure \ref{fig:starlike example}.} 
\label{fig:cyclic starlike with 3 rays example}
\end{figure}

\begin{corollary}\label{cor:triple gluing of starlike} Let $Q$ be the quiver
\[\begin{tikzpicture}
    \node (R) at (0,-0.5) {$r$};

    \node (1M1-) at (4,-0.5) {$r_{M_3-1}^{(3)}$};
    \node (13+) at (3,-0.5) {};
    \node (13) at (2,-0.5) {};
    \node (12) at (1,-0.5) {$r_2^{(3)}$};
    
    \draw[->] (1M1-) -- (13+);
    \draw[loosely dotted] (13+) -- (13);
    \draw[->] (13) -- (12);
    \draw[->] (12) -- (R);
    
    \node (2M1-) at (4,0.2) {$r_{M_2-1}^{(2)}$};
    \node (23+) at (3,0.2) {};
    \node (23) at (2,0.2) {};
    \node (22) at (1,0.2) {$r_2^{(2)}$};
    
    \draw[<-] (2M1-) -- (23+);
    \draw[loosely dotted] (23+) -- (23);
    \draw[<-] (23) -- (22);
    \draw[<-] (22) -- (R);

    \node (3M1) at (5.5,-0.5) {$r'$\nospacepunct{.}};
    \node (3M1-) at (4,-1.2) {$r_{M_1-1}^{(1)}$};
    \node (33+) at (3,-1.2) {};
    \node (33) at (2,-1.2) {};
    \node (32) at (1,-1.2) {$r_2^{(1)}$};
    
    \draw[->] (3M1) -- (1M1-);
    \draw[<-] (3M1) -- (2M1-);
    \draw[<-] (3M1) -- (3M1-);
    \draw[<-] (3M1-) -- (33+);
    \draw[loosely dotted] (33+) -- (33);
    \draw[<-] (33) -- (32);
    \draw[<-] (32) -- (R);
\end{tikzpicture}\]
If $M_1=2n+1+x_1n, M_2=2n+1+x_2n$ and $M_3=2n-1+x_3n$ for some $n\geq 2$ and some $x_i\in \ZZ_{\geq 0}$, then $\m(\K Q/R_Q^2)$ admits an $n$-cluster tilting subcategory.
\end{corollary}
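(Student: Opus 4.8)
The plan is to follow the same template as the proofs of Corollary \ref{cor:cyclic starlike with 3 rays} and Corollary \ref{cor:double gluing of starlike three}: realize $\K Q/R_Q^2$ as the algebra obtained from a short sequence of gluings of radical square zero starlike algebras whose module categories admit $n$-cluster tilting subcategories, and then transport the $n$-cluster tilting property through these gluings. First I would fix starlike algebras $A=\K T_A/R_{T_A}^2$ and $B=\K T_B/R_{T_B}^2$ on starlike trees of the shape $T(3,-,-,-)$ with all arms linearly oriented and with arm lengths chosen so that the conditions of Proposition \ref{prop:starlike algebras}\ref{case:k=3} hold; then $\m A$ and $\m B$ admit unique $n$-cluster tilting subcategories $\cM_A$ and $\cM_B$. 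Here sink-rooted arms must have $n+1+x_in$ vertices and source-rooted arms must have $n+x_in$ vertices, so the first task is to solve these length constraints backwards: a direct count of vertices shows that each of the three arms of $Q$ joining $r$ to $r'$ arises by concatenating one source-rooted arm of one algebra with one sink-rooted arm of another along their simple tips, and one checks this forces the prescribed lengths $M_1-1=2n+x_1n$, $M_2-1=2n+x_2n$, $M_3-1=2n-2+x_3n$.

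Next I would carry out the gluings. Taking each $P_e$ and $I_e$ to be the simple projective, resp. simple injective, module at the relevant arm tip (which is automatically a left, resp. right, abutment of height $1$), the compatibility conditions of a complete $n$-fractured system and the hypotheses of the self-gluing setup of Section \ref{sec:gluings and orbit categories} (Definition \ref{def:self-gluable}, Proposition \ref{prop:double gluing}) are satisfied trivially, exactly as in Example \ref{ex:n-self-gluable easy} and in the proof of Corollary \ref{cor:double gluing of starlike three}. I would first glue $A$ and $B$ along one pair of arms to produce $\La_1$, check by Proposition \ref{prop:AR glued} that $\La_1$ is again representation-directed, and then apply the double-gluing construction of Proposition \ref{prop:double gluing}, iterated once more as in Remark \ref{rem:k-simultaneous gluing} with $k=3$, to identify the two remaining pairs of arms. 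At the last step the algebra is no longer representation-directed (its quiver exhibits the directed cycles present in $Q$), but the penultimate algebra is, so the hypotheses of Remark \ref{rem:k-simultaneous gluing} — equivalently, $n$-self-gluability via Corollary \ref{cor:La n-ct implies La infinity n-ct} together with Corollary \ref{cor:La infinity n-ct} — apply and produce an $n$-cluster tilting subcategory of the module category of the final algebra.

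Finally I would identify the final algebra with $\K Q/R_Q^2$: the quiver is computed from the gluing-of-quivers formula (\ref{eq:quiver of gluing subgraph}) and the self-gluing picture (\ref{picture:self-gluing quiver}), and the ideal is $R_Q^2$ since the class of radical square zero algebras is closed under all gluings used (as noted after Theorem \ref{thrm:any sinks and sources}). Combined with the $n$-cluster tilting subcategory constructed in the previous step, this proves that $\m(\K Q/R_Q^2)$ admits an $n$-cluster tilting subcategory.

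The main obstacle I expect is the combinatorial bookkeeping: verifying that the intermediate algebra obtained just before the last gluing is representation-directed — this is what legitimizes the iterated construction, and it is sensitive to the orientations of the three arms of $Q$, since two of them form directed cycles with the third, so the cycle-creating gluing must be performed last — and simultaneously solving the arm-length equations so that the three arms of $Q$ come out with lengths $2n+x_1n$, $2n+x_2n$ and $2n-2+x_3n$. Once the correct starlike building blocks are pinned down, everything else is a routine application of Proposition \ref{prop:double gluing}, Remark \ref{rem:k-simultaneous gluing} and Proposition \ref{prop:starlike algebras}, exactly in the spirit of the preceding corollaries.
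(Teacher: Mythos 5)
Your proposal follows the same route as the paper: realize $\K Q/R_Q^2$ via a sequence of (self-)gluings of radical square zero starlike building blocks from Proposition~\ref{prop:starlike algebras}\ref{case:k=3} — first an ordinary gluing, then two self-gluings arranged so that the cycle-creating last gluing is performed only once the penultimate algebra has been checked to be representation-directed — which is exactly how the paper's proof factors through Corollary~\ref{cor:double gluing of starlike three} and then applies Corollary~\ref{cor:La infinity n-ct}. The only wrinkle is cosmetic: the blanket phrasing ``each arm arises by concatenating one source-rooted arm with one sink-rooted arm, with source-rooted of length $n+x_in$ and sink-rooted of length $n+1+x_in$'' is not quite how Proposition~\ref{prop:starlike algebras}\ref{case:k=3} assigns lengths (the $n+x_in$/$n+1+x_in$ split depends on whether the arm is in the majority or minority of its star, not on the orientation of the tip alone), but your final arm-length formulas $M_1-1=2n+x_1n$, $M_2-1=2n+x_2n$, $M_3-1=2n-2+x_3n$ are correct and match the statement.
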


\begin{proof}
Let $m_3=n+x_3n$ and $m_3'=n$. Let $\La=\K Q_1/R_{Q_1}^2$, where $Q_1$ is as in Corollary \ref{cor:double gluing of starlike three} with $M_1,M_2,m_3,m_3'$ as given. Then $\m\La$ admits an $n$-cluster tilting subcategory and a straightforward computation of the Aus\-lan\-der--Rei\-ten quiver of $\La$ shows that $\La$ is a rep\-re\-sen\-ta\-tion-di\-rect\-ed algebra. Then performing the self-gluing along the unique simple projective module $P$ and the unique simple injective module $I$ we get that the module category of the algebra $\tilLa\isom \K Q/R_Q^2$ admits an $n$-cluster tilting subcategory by Corollary \ref{cor:La infinity n-ct}.
\end{proof}

For an example of Corollary \ref{cor:double gluing of starlike three} see Figure \ref{fig:double gluing example}; for an example of Corollary \ref{cor:triple gluing of starlike} see Figure \ref{fig:triple gluing example}.

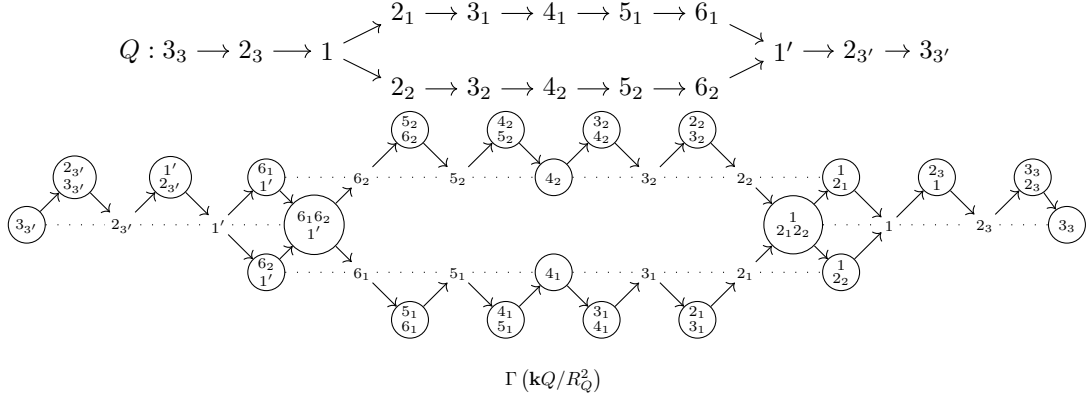
\begin{figure}[htb]
    \centering
    \begin{tikzpicture}
        \node (R) at (-1,-0.5) {$1$};
        \node (Q) at (-3.5,-0.5) {$Q:$};

        \node (13) at (-3,-0.5) {$3_3$};
        \node (12) at (-2,-0.5) {$2_3$};
        
        \draw[->] (13) -- (12);
        \draw[->] (12) -- (R);
        
        \node (2M1-) at (4,0) {$6_1$};
        \node (23+) at (3,0) {$5_1$};
        \node (23) at (2,0) {$4_1$};
        \node (22) at (1,0) {$3_1$};
        \node (21) at (0,0) {$2_1$};
        
        \draw[<-] (2M1-) -- (23+);
        \draw[<-] (23+) -- (23);
        \draw[<-] (23) -- (22);
        \draw[<-] (22) -- (21);
        \draw[<-] (21) -- (R);
    
        \node (3M1) at (5,-0.5) {$1'$};
        \node (3M1-) at (4,-1) {$6_2$};
        \node (33+) at (3,-1) {$5_2$};
        \node (33) at (2,-1) {$4_2$};
        \node (32) at (1,-1) {$3_2$};
        \node (31) at (0,-1) {$2_2$};
        
        \draw[<-] (3M1) -- (2M1-);
        \draw[<-] (3M1) -- (3M1-);
        \draw[<-] (3M1-) -- (33+);
        \draw[<-] (33+) -- (33);
        \draw[<-] (33) -- (32);
        \draw[<-] (32) -- (31);
        \draw[<-] (31) -- (R);
        
        \node (13') at (7,-0.5) {$3_{3'}$};
        \node (12') at (6,-0.5) {$2_{3'}$};
    
        \draw[<-] (13') -- (12');
        \draw[<-] (12') -- (3M1);
    \end{tikzpicture}\;\;\;\;
    \begin{tikzpicture}[scale=0.9, transform shape, baseline={(current bounding box.center)}]
        \tikzstyle{nct2}=[circle, minimum width=0.6cm, draw, inner sep=0pt, text centered, scale=0.9]
        \tikzstyle{nct22}=[circle, minimum width=0.6cm, draw, inner sep=0pt, text centered, scale=0.8]
        \tikzstyle{nct3}=[circle, minimum width=6pt, draw=white, inner sep=0pt, scale=0.9]
        
        \node[scale=0.8] (name) at (7.7,-2.3) {$\Gamma\left(\K Q/R_Q^2\right)$};
        
        \node[nct2] (13) at (0,0) {$\qthree{}[3_{3'}][]$};
        \node[nct2] (24) at (0.7,0.7) {$\qthree{2_{3'}}[3_{3'}]$};
        \node[nct3] (33) at (1.4,0) {$\qthree{}[2_{3'}][]$};
        \node[nct2] (44) at (2.1,0.7) {$\qthree{1'}[2_{3'}]$};
        \node[nct3] (53) at (2.8,0) {$\qthree{}[1'][]$};
        \node[nct2] (64) at (3.5,0.7) {$\qthree{6_1}[1']$};
        \node[nct2] (62) at (3.5,-0.7) {$\qthree{6_2}[1'][]$};
        \node[nct2] (73) at (4.2,0) {$\begin{smallmatrix} & 6_1  6_2 & \\ & 1' &\end{smallmatrix} $};
        
        \draw[->] (13) -- (24);
        \draw[->] (24) -- (33);
        \draw[->] (33) -- (44);
        \draw[->] (44) -- (53);
        \draw[->] (53) -- (64);
        \draw[->] (53) -- (62);
        \draw[->] (64) -- (73);
        \draw[->] (62) -- (73);
        
        \draw[loosely dotted] (13.east) -- (33);
        \draw[loosely dotted] (33.east) -- (53);
        \draw[loosely dotted] (53.east) -- (73);
        
        \node[nct3] (84) at (4.9,0.7) {$\qthree{}[6_2][]$};
        \node[nct2] (95) at (5.6,1.4) {$\qthree{5_2}[6_2][]$};
        \node[nct3] (104) at (6.3,0.7) {$\qthree{}[5_2][]$};
        \node[nct2] (115) at (7,1.4) {$\qthree{4_2}[5_2][]$};
        \node[nct2] (124) at (7.7,0.7) {$\qthree{}[4_2][]$};
        \node[nct2] (135) at (8.4,1.4) {$\qthree{3_2}[4_2][]$};
        \node[nct3] (144) at (9.1,0.7) {$\qthree{}[3_2][]$};
        \node[nct2] (155) at (9.8,1.4) {$\qthree{2_2}[3_2][]$};
        \node[nct3] (164) at (10.5,0.7) {$\qthree{}[2_2][]$};
        
        \draw[->] (73) -- (84);
        \draw[->] (84) -- (95);
        \draw[->] (95) -- (104);
        \draw[->] (104) -- (115);
        \draw[->] (115) -- (124);
        \draw[->] (124) -- (135);
        \draw[->] (135) -- (144);
        \draw[->] (144) -- (155);
        \draw[->] (155) -- (164);
        
        \draw[loosely dotted] (84.east) -- (104);
        \draw[loosely dotted] (104.east) -- (124);
        \draw[loosely dotted] (124.east) -- (144);
        \draw[loosely dotted] (144.east) -- (164);
        
        \node[nct3] (82) at (4.9,-0.7) {$\qthree{}[6_1][]$};
        \node[nct2] (91) at (5.6,-1.4) {$\qthree{5_1}[6_1][]$};
        \node[nct3] (102) at (6.3,-0.7) {$\qthree{}[5_1][]$};
        \node[nct2] (111) at (7,-1.4) {$\qthree{4_1}[5_1][]$};
        \node[nct2] (122) at (7.7,-0.7) {$\qthree{}[4_1][]$};
        \node[nct2] (131) at (8.4,-1.4) {$\qthree{3_1}[4_1][]$};
        \node[nct3] (142) at (9.1,-0.7) {$\qthree{}[3_1][]$};
        \node[nct2] (151) at (9.8,-1.4) {$\qthree{2_1}[3_1][]$};
        \node[nct3] (162) at (10.5,-0.7) {$\qthree{}[2_1][]$};
        
        \draw[->] (73) -- (82);
        \draw[->] (82) -- (91);
        \draw[->] (91) -- (102);
        \draw[->] (102) -- (111);
        \draw[->] (111) -- (122);
        \draw[->] (122) -- (131);
        \draw[->] (131) -- (142);
        \draw[->] (142) -- (151);
        \draw[->] (151) -- (162);
        
        \draw[loosely dotted] (82.east) -- (102);
        \draw[loosely dotted] (102.east) -- (122);
        \draw[loosely dotted] (122.east) -- (142);
        \draw[loosely dotted] (142.east) -- (162);
        
        \node[nct2] (233) at (15.2,0) {$\qthree{}[3_3][]$};
        \node[nct2] (224) at (14.7,0.7) {$\qthree{3_3}[2_3]$};
        \node[nct3] (213) at (14,0) {$\qthree{}[2_3][]$};
        \node[nct2] (204) at (13.3,0.7) {$\qthree{2_3}[1]$};
        \node[nct3] (193) at (12.6,0) {$\qthree{}[1][]$};
        \node[nct2] (184) at (11.9,0.7) {$\qthree{1}[2_1]$};
        \node[nct2] (182) at (11.9,-0.7) {$\qthree{1}[2_2][]$};
        \node[nct2] (173) at (11.2,0) {$\begin{smallmatrix} & 1 & \\ & 2_1 2_2 &\end{smallmatrix} $};
        
        \draw[->] (224) -- (233);
        \draw[->] (213) -- (224);
        \draw[->] (204) -- (213);
        \draw[->] (193) -- (204);
        \draw[->] (182) -- (193);
        \draw[->] (184) -- (193);
        \draw[->] (173) -- (182);
        \draw[->] (173) -- (184);
        \draw[->] (162) -- (173);
        \draw[->] (164) -- (173);
        
        \draw[loosely dotted] (173.east) -- (193);
        \draw[loosely dotted] (193.east) -- (213);
        \draw[loosely dotted] (213.east) -- (233);
        \draw[loosely dotted] (64.east) -- (84);
        \draw[loosely dotted] (62.east) -- (82);
        \draw[loosely dotted] (164.east) -- (184);
        \draw[loosely dotted] (162.east) -- (182);
    \end{tikzpicture}
    \caption{The Aus\-lan\-der--Rei\-ten quiver of the algebra $\La=\K Q/R_Q^2$ where $Q$ is the quiver on top and where the additive closure of the encircled modules is a $3$-cluster tilting subcategory.}
\label{fig:double gluing example}
\end{figure}

\begin{figure}[htb]
    \centering
    \begin{tikzpicture}
        \node (R) at (-1,-0.5) {$1$};
        \node (Q) at (-1.5,-0.5) {$\tilQ:$};

        \node (14) at (3.5,-0.5) {$4_3$};
        \node (13) at (2,-0.5) {$3_3$};
        \node (12) at (0.5,-0.5) {$2_3$};
        
        \draw[->] (14) -- (13);
        \draw[->] (13) -- (12);
        \draw[->] (12) -- (R);
        
        \node (2M1-) at (4,0) {$6_1$};
        \node (23+) at (3,0) {$5_1$};
        \node (23) at (2,0) {$4_1$};
        \node (22) at (1,0) {$3_1$};
        \node (21) at (0,0) {$2_1$};
        
        \draw[<-] (2M1-) -- (23+);
        \draw[<-] (23+) -- (23);
        \draw[<-] (23) -- (22);
        \draw[<-] (22) -- (21);
        \draw[<-] (21) -- (R);
    
        \node (3M1) at (5,-0.5) {$1'$};
        \node (3M1-) at (4,-1) {$6_2$};
        \node (33+) at (3,-1) {$5_2$};
        \node (33) at (2,-1) {$4_2$};
        \node (32) at (1,-1) {$3_2$};
        \node (31) at (0,-1) {$2_2$};
        
        \draw[<-] (3M1) -- (2M1-);
        \draw[<-] (3M1) -- (3M1-);
        \draw[<-] (3M1-) -- (33+);
        \draw[<-] (33+) -- (33);
        \draw[<-] (33) -- (32);
        \draw[<-] (32) -- (31);
        \draw[<-] (31) -- (R);
        
        \draw[->] (3M1) -- (14);
    \end{tikzpicture}\;\;\;\;
    \begin{tikzpicture}[scale=0.9, transform shape, baseline={(current bounding box.center)}]
        \tikzstyle{nct2}=[circle, minimum width=0.6cm, draw, inner sep=0pt, text centered, scale=0.9]
        \tikzstyle{nct22}=[circle, minimum width=0.6cm, draw, inner sep=0pt, text centered, scale=0.8]
        \tikzstyle{nct3}=[circle, minimum width=6pt, draw=white, inner sep=0pt, scale=0.9]
        
        \node[scale=0.8] (name) at (7.7,-2.3) {$\Gamma\left(\K \tilQ/R_{\tilQ}^2\right)$};
        
        \node[nct2] (13) at (0,0) {$\qthree{}[3_{3}][]$};
        \node[nct2] (24) at (0.7,0.7) {$\qthree{4_3}[3_{3}]$};
        \node[nct3] (33) at (1.4,0) {$\qthree{}[4_3][]$};
        \node[nct2] (44) at (2.1,0.7) {$\qthree{1'}[4_3]$};
        \node[nct3] (53) at (2.8,0) {$\qthree{}[1'][]$};
        \node[nct2] (64) at (3.5,0.7) {$\qthree{6_1}[1']$};
        \node[nct2] (62) at (3.5,-0.7) {$\qthree{6_2}[1'][]$};
        \node[nct2] (73) at (4.2,0) {$\begin{smallmatrix} & 6_1  6_2 & \\ & 1' &\end{smallmatrix} $};
        
        \draw[->] (13) -- (24);
        \draw[->] (24) -- (33);
        \draw[->] (33) -- (44);
        \draw[->] (44) -- (53);
        \draw[->] (53) -- (64);
        \draw[->] (53) -- (62);
        \draw[->] (64) -- (73);
        \draw[->] (62) -- (73);
        
        \draw[loosely dotted] (13.east) -- (33);
        \draw[loosely dotted] (33.east) -- (53);
        \draw[loosely dotted] (53.east) -- (73);
        
        \node[nct3] (84) at (4.9,0.7) {$\qthree{}[6_2][]$};
        \node[nct2] (95) at (5.6,1.4) {$\qthree{5_2}[6_2][]$};
        \node[nct3] (104) at (6.3,0.7) {$\qthree{}[5_2][]$};
        \node[nct2] (115) at (7,1.4) {$\qthree{4_2}[5_2][]$};
        \node[nct2] (124) at (7.7,0.7) {$\qthree{}[4_2][]$};
        \node[nct2] (135) at (8.4,1.4) {$\qthree{3_2}[4_2][]$};
        \node[nct3] (144) at (9.1,0.7) {$\qthree{}[3_2][]$};
        \node[nct2] (155) at (9.8,1.4) {$\qthree{2_2}[3_2][]$};
        \node[nct3] (164) at (10.5,0.7) {$\qthree{}[2_2][]$};
        
        \draw[->] (73) -- (84);
        \draw[->] (84) -- (95);
        \draw[->] (95) -- (104);
        \draw[->] (104) -- (115);
        \draw[->] (115) -- (124);
        \draw[->] (124) -- (135);
        \draw[->] (135) -- (144);
        \draw[->] (144) -- (155);
        \draw[->] (155) -- (164);
        
        \draw[loosely dotted] (84.east) -- (104);
        \draw[loosely dotted] (104.east) -- (124);
        \draw[loosely dotted] (124.east) -- (144);
        \draw[loosely dotted] (144.east) -- (164);
        
        \node[nct3] (82) at (4.9,-0.7) {$\qthree{}[6_1][]$};
        \node[nct2] (91) at (5.6,-1.4) {$\qthree{5_1}[6_1][]$};
        \node[nct3] (102) at (6.3,-0.7) {$\qthree{}[5_1][]$};
        \node[nct2] (111) at (7,-1.4) {$\qthree{4_1}[5_1][]$};
        \node[nct2] (122) at (7.7,-0.7) {$\qthree{}[4_1][]$};
        \node[nct2] (131) at (8.4,-1.4) {$\qthree{3_1}[4_1][]$};
        \node[nct3] (142) at (9.1,-0.7) {$\qthree{}[3_1][]$};
        \node[nct2] (151) at (9.8,-1.4) {$\qthree{2_1}[3_1][]$};
        \node[nct3] (162) at (10.5,-0.7) {$\qthree{}[2_1][]$};
        
        \draw[->] (73) -- (82);
        \draw[->] (82) -- (91);
        \draw[->] (91) -- (102);
        \draw[->] (102) -- (111);
        \draw[->] (111) -- (122);
        \draw[->] (122) -- (131);
        \draw[->] (131) -- (142);
        \draw[->] (142) -- (151);
        \draw[->] (151) -- (162);
        
        \draw[loosely dotted] (82.east) -- (102);
        \draw[loosely dotted] (102.east) -- (122);
        \draw[loosely dotted] (122.east) -- (142);
        \draw[loosely dotted] (142.east) -- (162);
        
        \node[nct2] (233) at (15.2,0) {$\qthree{}[3_3][]$};
        \node[nct2] (224) at (14.7,0.7) {$\qthree{3_3}[2_3]$};
        \node[nct3] (213) at (14,0) {$\qthree{}[2_3][]$};
        \node[nct2] (204) at (13.3,0.7) {$\qthree{2_3}[1]$};
        \node[nct3] (193) at (12.6,0) {$\qthree{}[1][]$};
        \node[nct2] (184) at (11.9,0.7) {$\qthree{1}[2_1]$};
        \node[nct2] (182) at (11.9,-0.7) {$\qthree{1}[2_2][]$};
        \node[nct2] (173) at (11.2,0) {$\begin{smallmatrix} & 1 & \\ & 2_1 2_2 &\end{smallmatrix} $};
        
        \draw[->] (224) -- (233);
        \draw[->] (213) -- (224);
        \draw[->] (204) -- (213);
        \draw[->] (193) -- (204);
        \draw[->] (182) -- (193);
        \draw[->] (184) -- (193);
        \draw[->] (173) -- (182);
        \draw[->] (173) -- (184);
        \draw[->] (162) -- (173);
        \draw[->] (164) -- (173);
        
        \draw[loosely dotted] (173.east) -- (193);
        \draw[loosely dotted] (193.east) -- (213);
        \draw[loosely dotted] (213.east) -- (233);
        
        \draw[loosely dotted] (0,1.6) -- (0,-1.6);
        \draw[loosely dotted] (15.2,1.6) -- (15.2,-1.6);
        \draw[loosely dotted] (64.east) -- (84);
        \draw[loosely dotted] (62.east) -- (82);
        \draw[loosely dotted] (164.east) -- (184);
        \draw[loosely dotted] (162.east) -- (182);
    \end{tikzpicture}
    \caption{The Aus\-lan\-der--Rei\-ten quiver of the algebra $\La=\K \tilQ/R_{\tilQ}^2$ where $\tilQ$ is the quiver on top and where the additive closure of the encircled modules is a $3$-cluster tilting subcategory. Compare with Figure \ref{fig:double gluing example}.}
\label{fig:triple gluing example}
\end{figure}

\subsubsection{Self-gluing of starlike algebras with four rays} We continue by applying Corollary \ref{cor:La infinity n-ct} to algebras as in Proposition \ref{prop:starlike algebras}\ref{case:k=4}. 

\begin{corollary}\label{cor:cyclic starlike with 4 rays}
Let $Q$ be the quiver
\[\begin{tikzpicture}
    \node (R) at (0,0) {$r$};

    \node (1M1-) at (5.5,0.5) {$r_{m_3-1}^{(3)}$};
    \node (13+) at (4,0.5) {};
    \node (13) at (2.5,0.5) {};
    \node (12) at (1,0.5) {$r_2^{(3)}$};
    
    \draw[<-] (1M1-) -- (13+);
    \draw[loosely dotted] (13+) -- (13);
    \draw[<-] (13) -- (12);
    \draw[<-] (12) -- (R);
    
    \node (2M1-) at (-5.5,0.5) {$r_{m_2}^{(2)}$};
    \node (23+) at (-4,0.5) {};
    \node (23) at (-2.5,0.5) {};
    \node (22) at (-1,0.5) {$r_2^{(2)}$};
    
    \draw[->] (2M1-) -- (23+);
    \draw[loosely dotted] (23+) -- (23);
    \draw[->] (23) -- (22);
    \draw[->] (22) -- (R);

    \node (4M1-) at (-5.5,-0.5) {$r_{m_4}^{(4)}$};
    \node (43+) at (-4,-0.5) {};
    \node (43) at (-2.5,-0.5) {};
    \node (42) at (-1,-0.5) {$r_4^{(4)}$};
    
    \draw[<-] (4M1-) -- (43+);
    \draw[loosely dotted] (43+) -- (43);
    \draw[<-] (43) -- (42);
    \draw[<-] (42) -- (R);
    
    \node (3M1) at (7.5,0) {$r_{m_3}^{(3)}=r_{m_1}^{(1)}$\nospacepunct{.}};
    \node (3M1-) at (5.5,-0.5) {$r_{m_1-1}^{(1)}$};
    \node (33+) at (4,-0.5) {};
    \node (33) at (2.5,-0.5) {};
    \node (32) at (1,-0.5) {$r_2^{(1)}$};
    
    \draw[<-] (3M1) -- (1M1-);
    \draw[->] (3M1) -- (3M1-);
    \draw[->] (3M1-) -- (33+);
    \draw[loosely dotted] (33+) -- (33);
    \draw[->] (33) -- (32);
    \draw[->] (32) -- (R);
\end{tikzpicture}\]
If $m_i=3+2x_i$ for $i\in\{1,2,3,4\}$ and $x_i\in\ZZ_{\geq 0}$, then $\m(\K Q/R_Q^2)$ admits a $2$-cluster tilting subcategory.
\end{corollary}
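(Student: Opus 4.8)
The plan is to reproduce the argument used in the proofs of Corollary \ref{cor:cyclic starlike with 3 rays} and Corollary \ref{cor:triple gluing of starlike}: exhibit $\K Q/R_Q^2$ as a self-gluing of a radical square zero starlike algebra with four rays, the gluing being along a simple projective module and a simple injective module, and then conclude by Corollary \ref{cor:La n-ct implies La infinity n-ct}.

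First I would assume $m_1\geq m_2\geq m_3\geq m_4$ (the other orderings being symmetric) and set $\La\coloneqq\K T/R_T^2$, where $T=T(4,m_1,m_2,m_3,m_4)$ is oriented so that the endpoints $r_{m_1}^{(1)}$ and $r_{m_2}^{(2)}$ are sources while the endpoints $r_{m_3}^{(3)}$ and $r_{m_4}^{(4)}$ are sinks; the remaining choices of which two endpoints are sources are handled identically. Since $m_i=3+2x_i$ for every $i$ and exactly two endpoints are sources, Proposition \ref{prop:starlike algebras}\ref{case:k=4} applies, so $\La$ is representation-directed and $\m\La$ admits a (unique) $2$-cluster tilting subcategory $\cM$. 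Note also that $T$ has a vertex of degree $4$, hence $\La\not\isom\K\overrightarrow{A}_{h'}$ for any $h'$, as required to enter the setting of Section \ref{sec:gluings and orbit categories}.

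Next I would take $P\coloneqq P(r_{m_3}^{(3)})$, which is a simple projective $\La$-module because $r_{m_3}^{(3)}$ is a sink, and $I\coloneqq I(r_{m_1}^{(1)})$, which is a simple injective $\La$-module because $r_{m_1}^{(1)}$ is a source, together with a maximal left abutment $W$ with $P\leq W$ and a maximal right abutment $J$ with $I\leq J$. Corollary \ref{cor:La n-ct implies La infinity n-ct} then yields that $\La$ is $2$-self-gluable and that $\Mks/\ZZ$ is a $2$-cluster tilting subcategory of $\m\tilLa$, where $\tilLa=\K\tilQ/\tilR$ is the associated finite-dimensional algebra of Corollary \ref{cor:La infinity n-ct}. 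It then remains to identify $\tilLa$ with $\K Q/R_Q^2$. Because $P$ and $I$ are simple, we are in the case $h=1$ of Section \ref{sec:gluings and orbit categories}, so the quiver $\tilQ$ produced by (\ref{picture:self-gluing quiver}) is obtained from the quiver of $T$ by identifying the unique vertex of $\supp(I)$, namely $r_{m_1}^{(1)}$, with the unique vertex of $\supp(P)$, namely $r_{m_3}^{(3)}$; inspecting the orientations shows $\tilQ\isom Q$. Since moreover, for $h=1$, the additional generators of $\tilR$ described in that section are paths of length two through the glued vertex and hence already lie in $R_{\tilQ}^2$, we obtain $\tilR=R_Q^2$, so $\tilLa\isom\K Q/R_Q^2$, and $\m(\K Q/R_Q^2)$ admits a $2$-cluster tilting subcategory by Corollary \ref{cor:La infinity n-ct}.

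The main obstacle I expect is purely diagram bookkeeping: one must check that, under the chosen orientation of $T$ and the chosen modules $P$ and $I$, the self-gluing construction reproduces precisely the cyclic quiver $Q$ drawn in the statement, with $r_{m_3}^{(3)}=r_{m_1}^{(1)}$ and the two ``source rays'' and two ``sink rays'' arranged as pictured, and that the remaining assignments of sources among the four endpoints lead to the same algebra up to relabelling. This is exactly the step abbreviated as ``it readily follows by (\ref{picture:self-gluing quiver})'' in the proofs of Corollaries \ref{cor:cyclic starlike with 3 rays} and \ref{cor:triple gluing of starlike}, and I expect it to require only careful checking of arrow directions rather than any new idea.
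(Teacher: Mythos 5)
Your proposal is correct and follows essentially the same route as the paper's own proof: orient $T(4,m_1,m_2,m_3,m_4)$ so that rays $1,2$ point inward and rays $3,4$ point outward, obtain a $2$-cluster tilting subcategory of $\m\La$ from Proposition \ref{prop:starlike algebras}\ref{case:k=4}, self-glue the simple projective $P(r_{m_3}^{(3)})$ to the simple injective $I(r_{m_1}^{(1)})$, and identify $\tilLa$ with $\K Q/R_Q^2$. The only cosmetic difference is that the paper spells out the fractured pair $(W,J)=\bigl(P(r_{m_3-1}^{(3)}),\,I(r_{m_1-1}^{(1)})\bigr)$ and verifies Definition \ref{def:self-gluable} directly, whereas you invoke the already-packaged Corollary \ref{cor:La n-ct implies La infinity n-ct}, which is exactly that verification in the case where the compatible pair is simple; both are valid, and you in fact go slightly further than the paper by explicitly noting that $\La\not\isom\K\overrightarrow{A}_{h'}$, a hypothesis Corollary \ref{cor:La infinity n-ct} does need.
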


\begin{proof}
Assume that $m_1\geq m_2\geq m_3\geq m_4$; the other cases are similar. Let $\La=\K T/R_T^2$ where $T=T(4,m_1,m_2,m_3,m_4)$ for some $A_{m_1}$, $A_{m_2}$, $A_{m_3}$ and $A_{m_4}$ linearly oriented and such that $r_{m_1}^{(1)}$ and $r_{m_2}^{(2)}$ are sources and $r_{m_3}^{(3)}$ and $r_{m_4}^{(4)}$ are sinks. Then $\m\La$ admits a $2$-cluster tilting subcategory by Proposition \ref{prop:starlike algebras}(d). Moreover, the pair $(W,J)$ where $W=P\left(r_{m_3-1}^{(3)}\right)$ and $J=I\left(r_{m_1-1}^{(1)}\right)$ is a fractured pair and the pair $(P,I)$ where $P=P\left(r_{m_3}^{(3)}\right)$ and $I=I\left(r_{m_1}^{(1)}\right)$ is a compatible pair for $(W,J)$. Hence $\La$ is $n$-self-gluable and by a direct computation we get that $\tilLa\isom \K Q/R_Q^2$. Finally, the module category $\m\tilLa$ admits a $2$-cluster tilting subcategory by Corollary \ref{cor:La infinity n-ct}. 
\end{proof}

We can also perform double gluings with algebras of this type. To avoid cumbersome notation and since the reader should be familiar with the methods described in this section by now, let us denote by
\[\begin{tikzpicture}[baseline={(current bounding box.center)}, scale=0.8, transform shape, decoration={
    markings,
    mark=at position 0.6 with {\arrow{>}}}]
  \node (r) at (0,0) {$\bullet$};
  
  \node (s1) at (-0.7,0.7) {$\bullet$};
  \node (s2) at (-0.7,-0.7) {$\bullet$};
  \node (s3) at (0.7,0.7) {$\bullet$};
  \node (s4) at (0.7,-0.7) {$\bullet$};
  
  \draw[postaction={decorate}] (s1) -- (r);
  \draw[postaction={decorate}] (s2) -- (r);
  \draw[postaction={decorate}] (r) -- (s3);
  \draw[postaction={decorate}] (r) -- (s4);
  
  \node (Q) at (0,-1) {$T_1$};
\end{tikzpicture}
\]
the quiver of a starlike algebra $\La_1$ satisfying the conditions of Proposition \ref{prop:starlike algebras}\ref{case:k=4}. That is, each
$\begin{tikzpicture}[scale=0.8, transform shape, decoration={markings, mark=at position 0.7 with {\arrow{>}}}] 
    \node (A) at (0,0) {$\bullet$}; 
    \node (B) at (0.7,0) {$\bullet$}; 
    \draw[postaction={decorate}] (A) -- (B);
\end{tikzpicture}$ 
is a subquiver of type $\overrightarrow{A}_{3+2x}$ for some $x\in\ZZ_{\geq 0}$, each of them for a possibly different value of $x$. Then by performing a double gluing with another starlike algebra $\La_2$ with a quiver $T_2$ denoted in the same way, we get a new algebra with a quiver
\[\begin{tikzpicture}[baseline={(current bounding box.center)}, scale=0.8, transform shape, decoration={
    markings,
    mark=at position 0.6 with {\arrow{>}}}]
  \node (r) at (0,0) {$\bullet$};
  
  \node (s1) at (-0.7,0.7) {$\bullet$};
  \node (s2) at (-0.7,-0.7) {$\bullet$};
  \node (t3) at (0.7,0.7) {$\bullet$};
  \node (t4) at (0.7,-0.7) {$\bullet$};
  
  \draw[postaction={decorate}] (s1) -- (r);
  \draw[postaction={decorate}] (s2) -- (r);
  \draw[postaction={decorate}] (r) -- (t3);
  \draw[postaction={decorate}] (r) -- (t4);
  
  \node (r') at (1.4,0) {$\bullet$};
  
  \node (t5) at (2.1,0.7) {$\bullet$};
  \node (t6) at (2.1,-0.7) {$\bullet$};
  
  \draw[postaction={decorate}] (t3) -- (r');
  \draw[postaction={decorate}] (t4) -- (r');
  \draw[postaction={decorate}] (r') -- (t5);
  \draw[postaction={decorate}] (r') -- (t6);
  
  \node (Q) at (0,-1) {$T_1$};
  \node (Q2) at (1.4,-1) {$T_2$};
\end{tikzpicture}
\]
and radical square zero relations whose module category admits a $2$-cluster tilting subcategory by Proposition \ref{prop:double gluing}. This algebra is also rep\-re\-sen\-ta\-tion-di\-rect\-ed and so continuing inductively we get that the module category of the algebra $\La=\K T/R_T^2$ where $T$ is a quiver of the form
\[\begin{tikzpicture}[baseline={(current bounding box.center)}, scale=0.8, transform shape, decoration={
    markings,
    mark=at position 0.6 with {\arrow{>}}}]
  \node (r) at (0,0) {$\bullet$};
  
  \node (s1) at (-0.7,0.7) {$\bullet$};
  \node (s2) at (-0.7,-0.7) {$\bullet$};
  \node (t3) at (0.7,0.7) {$\bullet$};
  \node (t4) at (0.7,-0.7) {$\bullet$};
  
  \draw[postaction={decorate}] (s1) -- (r);
  \draw[postaction={decorate}] (s2) -- (r);
  \draw[postaction={decorate}] (r) -- (t3);
  \draw[postaction={decorate}] (r) -- (t4);
  
  \node (r') at (1.4,0) {$\bullet$};
  
  \node (t5) at (2.1,0.7) {$\bullet$};
  \node (t6) at (2.1,-0.7) {$\bullet$};
  
  \draw[postaction={decorate}] (t3) -- (r');
  \draw[postaction={decorate}] (t4) -- (r');
  \draw[postaction={decorate}] (r') -- (t5);
  \draw[postaction={decorate}] (r') -- (t6);
  
  \node (s7) at (3.5,0.7) {$\bullet$};
  \node (s8) at (3.5,-0.7) {$\bullet$}; 
  \node (t9) at (4.9,0.7) {$\bullet$};  
  \node (t10) at (4.9,-0.7) {$\bullet$};
  
  \node (r'') at (4.2,0) {$\bullet$};
  
  \draw[postaction={decorate}] (s7) -- (r'');
  \draw[postaction={decorate}] (s8) -- (r'');
  \draw[postaction={decorate}] (r'') -- (t9);
  \draw[postaction={decorate}] (r'') -- (t10);
  
  \draw[loosely dotted] (2.2,0) -- (3.4,0);
\end{tikzpicture}
\]
admits a $2$-cluster tilting subcategory. Going one step further, by taking $T_1=T_2=\cdots=T_i$ we can apply a similar orbit construction as in Section \ref{sec:gluings and orbit categories} to get that the module category of the algebra $\tilde{\La}=\K\tilde{T}/R_{\tilde{T}}^2$ where $\tilde{T}$ is a quiver of the form
\[\begin{tikzpicture}[baseline={(current bounding box.center)}, scale=0.8, transform shape, decoration={
    markings,
    mark=at position 0.6 with {\arrow{>}}}]
  \node (r) at (0,0) {$\bullet$};
  
  \node (s1) at (0,1) {$\bullet$};
  \node (s2) at (0,-1) {$\bullet$};
  
  \draw[postaction={decorate}] (s1) to [bend right] (r);
  \draw[postaction={decorate}] (r) to [bend right] (s1);
  \draw[postaction={decorate}] (r) to [bend right] (s2);
  \draw[postaction={decorate}] (s2) to [bend right] (r);
  
\end{tikzpicture}
\]
admits a $2$-cluster tilting subcategory. For an example, see Figure \ref{fig:double gluing orbit four}.

\begin{figure}[htb]
    \centering
    \begin{tikzpicture}[baseline={(current bounding box.center)}, scale=0.9]
        \node (Q) at (-2,0) {$\tilQ:$};

        \node (1) at (0,0) {$1$};
        \node (2) at (0.7,0.7) {$5$};
        \node (3) at (1.4,0) {$3$};
        \node (4) at (0.7,-0.7) {$4$};
        \node (5) at (-0.7,-0.7) {$5$};
        \node (6) at (-1.4,0) {$6$};
        \node (7) at (-0.7,0.7) {$7$};
        
        \draw[->] (1) -- (2);
        \draw[->] (2) -- (3);
        \draw[->] (3) -- (4);
        \draw[->] (4) -- (1);
        \draw[->] (1) -- (5);
        \draw[->] (5) -- (6);
        \draw[->] (6) -- (7);
        \draw[->] (7) -- (1);
    \end{tikzpicture}\;\;\;\;
    \begin{tikzpicture}[scale=0.9, transform shape, baseline={(current bounding box.center)}]
        \tikzstyle{nct2}=[circle, minimum width=0.6cm, draw, inner sep=0pt, text centered, scale=0.9]
        \tikzstyle{nct22}=[circle, minimum width=0.6cm, draw, inner sep=0pt, text centered, scale=0.8]
        \tikzstyle{nct3}=[circle, minimum width=6pt, draw=white, inner sep=0pt, scale=0.9]
        
        \node[scale=0.8] (name) at (3.5,-2.3) {$\Gamma\left(\K \tilQ/R_{\tilQ}^2\right)$};
        
        \node[nct3] (13) at (0,0) {$\qthree{}[1][]$};
        \node[nct2] (24) at (0.7,0.7) {$\qthree{4}[1]$};
        \node[nct2] (22) at (0.7,-0.7) {$\qthree{7}[1]$};
        \node[nct22] (33) at (1.4,0) {$\begin{smallmatrix} 4 & & 7\\ & 1 &\end{smallmatrix} $};
        \node[nct3] (44) at (2.1,0.7) {$\qthree{}[7][]$};
        \node[nct3] (42) at (2.1,-0.7) {$\qthree{}[4][]$};
        \node[nct2] (55) at (2.8,1.4) {$\qthree{6}[7]$};
        \node[nct2] (51) at (2.8,-1.4) {$\qthree{3}[4]$};
        \node[nct2] (64) at (3.5,0.7) {$\qthree{}[6][]$};
        \node[nct2] (62) at (3.5,-0.7) {$\qthree{}[3][]$};
        \node[nct2] (75) at (4.2,1.4) {$\qthree{5}[6]$};
        \node[nct2] (71) at (4.2,-1.4) {$\qthree{2}[3]$};
        \node[nct3] (84) at (4.9,0.7) {$\qthree{}[5][]$};
        \node[nct3] (82) at (4.9,-0.7) {$\qthree{}[2][]$};
        \node[nct22] (93) at (5.6,0) {$\begin{smallmatrix}  & 1 & \\ 2 &  & 5\end{smallmatrix} $};
        \node[nct2] (104) at (6.3,0.7) {$\qthree{1}[2]$};
        \node[nct2] (102) at (6.3,-0.7) {$\qthree{1}[5]$};
        \node[nct3] (113) at (7,0) {$\qthree{}[1][]$};
    
        \draw[->] (13) -- (24);
        \draw[->] (13) -- (22);
        \draw[->] (24) -- (33);
        \draw[->] (22) -- (33);
        \draw[->] (33) -- (44);
        \draw[->] (33) -- (42);
        \draw[->] (44) -- (55);
        \draw[->] (55) -- (64);
        \draw[->] (64) -- (75);
        \draw[->] (75) -- (84);
        \draw[->] (84) -- (93);
        \draw[->] (42) -- (51);
        \draw[->] (51) -- (62);
        \draw[->] (62) -- (71);
        \draw[->] (71) -- (82);
        \draw[->] (82) -- (93);
        \draw[->] (93) -- (104);
        \draw[->] (93) -- (102);
        \draw[->] (104) -- (113);
        \draw[->] (102) -- (113);
        
        \draw[loosely dotted] (13.east) -- (33);
        \draw[loosely dotted] (24.east) -- (44);
        \draw[loosely dotted] (44.east) -- (64);
        \draw[loosely dotted] (64.east) -- (84);
        \draw[loosely dotted] (84.east) -- (104);
        \draw[loosely dotted] (22.east) -- (42);
        \draw[loosely dotted] (42.east) -- (62);
        \draw[loosely dotted] (62.east) -- (82);
        \draw[loosely dotted] (82.east) -- (102);
        \draw[loosely dotted] (93.east) -- (113);
        
        \draw[loosely dotted] (0,1.6) -- (0,-1.6);
        \draw[loosely dotted] (7,1.6) -- (7,-1.6);
    \end{tikzpicture}
    \caption{The Aus\-lan\-der--Rei\-ten quiver of the algebra $\La=\K \tilQ/R_{\tilQ}^2$ where $\tilQ$ is the quiver on the left and where the additive closure of the encircled modules is a $2$-cluster tilting subcategory.}
\label{fig:double gluing orbit four}
\end{figure}

\subsubsection{Self-gluing of quivers with arbitrary number of sinks and sources} Let $\La$ be a rep\-re\-sen\-ta\-tion-di\-rect\-ed algebra with $\sinks_{\La}=t$. Then the opposite algebra $\La^{\text{op}}$ has $\sources_{\La^{\text{op}}}=t$. Assume that we can perform a $t$-simultaneous gluing between $\La$ and $\La^{\text{op}}$ as described in Remark \ref{rem:k-simultaneous gluing} to obtain an algebra $\La'$. Then, if $\m\La$ admits an $n$-cluster tilting subcategory, the module category $\m\La'$ also admits an $n$-cluster tilting subcategory. We illustrate with an example.

\begin{example}\label{ex:k-simultaneous gluing}
Let $\La=\K Q/R_Q^2$ be as in Example \ref{ex:0 sources 3 sinks}. Then $\sources_{\La}=1$ and $\sinks_{\La}=4$ and $\m\La$ admits a $3$-cluster tilting subcategory. We can draw the quivers of $\La$ and $\La^{\text{op}}=\K Q^{\text{op}}/R^2_{Q^{\text{op}}}$ as
\[\begin{tikzpicture}[baseline={(current bounding box.center)}, scale=0.9, transform shape]
\node (1) at (1,1) {$\bullet$};
\node (2) at (1.7,1) {$\bullet$};
\node (3) at (2.4,1) {$\bullet$};
\node (4) at (3.1,0.5) {$\bullet$};
\node (4') at (3.1,1) {$\bullet$};
\node (5) at (5.2,0.5) {$\bullet$};
\node (5') at (3.8,1) {$\bullet$};
\node (6) at (7.3,0.5) {$\bullet_4$};
\node (6') at (4.5,1) {$\bullet$};
\node (7) at (5.2,1) {$\bullet$};
\node (7') at (4.5,2) {$\bullet$};
\node (8) at (5.9,1) {$\bullet$};
\node (8') at (5.9,2) {$\bullet$};
\node (9) at (6.6,1) {$\bullet$};
\node (9') at (7.3,2) {$\bullet_1$};
\node (10) at (7.3,1) {$\bullet_3$};
\node (10') at (5.9,1.5) {$\bullet$};
\node (11') at (6.6,1.5) {$\bullet$};
\node (12') at (7.3,1.5) {$\bullet_2$};

\node (A) at (4.5,0) {$Q$};

\draw[->] (1) -- (2);
\draw[->] (2) -- (3);
\draw[->] (3) -- (4);
\draw[->] (3) -- (4');
\draw[->] (4) -- (5);
\draw[->] (4') -- (5');
\draw[->] (5) -- (6);
\draw[->] (5') -- (6');
\draw[->] (6') -- (7);
\draw[->] (5') -- (7');
\draw[->] (7) -- (8);
\draw[->] (7') -- (8');
\draw[->] (8) -- (9);
\draw[->] (8') -- (9');
\draw[->] (9) -- (10);
\draw[->] (7) -- (10');
\draw[->] (10') -- (11');
\draw[->] (11') -- (12');
\end{tikzpicture}\text{ and }
\begin{tikzpicture}[baseline={(current bounding box.center)}, scale=0.9, transform shape]
\node (12') at (2.4,1) {$\bullet_{3'}$};
\node (11') at (3.1,1) {$\bullet$};
\node (10') at (3.8,1) {$\bullet$};
\node (10) at (2.4,1.5) {$\bullet_{2'}$};
\node (9) at (3.1,1.5) {$\bullet$};
\node (9') at (2.4,0.5) {$\bullet_{4'}$};
\node (8) at (3.8,1.5) {$\bullet$};
\node (8') at (3.8,0.5) {$\bullet$};
\node (7) at (4.5,1.5) {$\bullet$};
\node (7') at (5.2,0.5) {$\bullet$};
\node (6) at (5.2,1.5) {$\bullet$};
\node (6') at (2.4,2) {$\bullet_{1'}$};
\node (5) at (5.9,1.5) {$\bullet$};
\node (5') at (4.5,2) {$\bullet$};
\node (4) at (6.6,1.5) {$\bullet$};
\node (4') at (6.6,2) {$\bullet$};
\node (3) at (7.3,1.5) {$\bullet$};
\node (2) at (8,1.5) {$\bullet$};
\node (1) at (8.7,1.5) {$\bullet$.};

\draw[<-] (1) -- (2);
\draw[<-] (2) -- (3);
\draw[<-] (3) -- (4);
\draw[<-] (3) -- (4');
\draw[<-] (4) -- (5);
\draw[<-] (4') -- (5');
\draw[<-] (5) -- (6);
\draw[<-] (5') -- (6');
\draw[<-] (6) -- (7);
\draw[<-] (5) -- (7');
\draw[<-] (7) -- (8);
\draw[<-] (7') -- (8');
\draw[<-] (8) -- (9);
\draw[<-] (8') -- (9');
\draw[<-] (9) -- (10);
\draw[<-] (7) -- (10');
\draw[<-] (10') -- (11');
\draw[<-] (11') -- (12');

\node (A) at (4.5,0) {$Q^{\text{op}}$};
\end{tikzpicture}\]
By gluing simultaneously along the vertices $\bullet_i=\bullet_{i'}$ for $i\in\{1,2,3,4\}$ we obtain the quiver $Q'$:
\[\begin{tikzpicture}[baseline={(current bounding box.center)}, scale=0.9, transform shape]
\node (1) at (1,1) {$\bullet$};
\node (2) at (1.7,1) {$\bullet$};
\node (3) at (2.4,1) {$\bullet$};
\node (4) at (3.1,0.5) {$\bullet$};
\node (4') at (3.1,1) {$\bullet$};
\node (5) at (5.2,0.5) {$\bullet$};
\node (5') at (3.8,1) {$\bullet$};
\node (6) at (7.3,0.5) {$\bullet$};
\node (6') at (4.5,1) {$\bullet$};
\node (7) at (5.2,1) {$\bullet$};
\node (7') at (4.5,2) {$\bullet$};
\node (8) at (5.9,1) {$\bullet$};
\node (8') at (5.9,2) {$\bullet$};
\node (9) at (6.6,1) {$\bullet$};
\node (9') at (7.3,2) {$\bullet$};
\node (10) at (7.3,1) {$\bullet$};
\node (10') at (5.9,1.5) {$\bullet$};
\node (11') at (6.6,1.5) {$\bullet$};
\node (12') at (7.3,1.5) {$\bullet$};

\draw[->] (1) -- (2);
\draw[->] (2) -- (3);
\draw[->] (3) -- (4);
\draw[->] (3) -- (4');
\draw[->] (4) -- (5);
\draw[->] (4') -- (5');
\draw[->] (5) -- (6);
\draw[->] (5') -- (6');
\draw[->] (6') -- (7);
\draw[->] (5') -- (7');
\draw[->] (7) -- (8);
\draw[->] (7') -- (8');
\draw[->] (8) -- (9);
\draw[->] (8') -- (9');
\draw[->] (9) -- (10);
\draw[->] (7) -- (10');
\draw[->] (10') -- (11');
\draw[->] (11') -- (12');

\node (12'R) at (7.3,1) {$\bullet$};
\node (11'R) at (8,1) {$\bullet$};
\node (10'R) at (8.7,1) {$\bullet$};
\node (10R) at (7.3,1.5) {$\bullet$};
\node (9R) at (8,1.5) {$\bullet$};
\node (9'R) at (7.3,0.5) {$\bullet$};
\node (8R) at (8.7,1.5) {$\bullet$};
\node (8'R) at (8.7,0.5) {$\bullet$};
\node (7R) at (9.4,1.5) {$\bullet$};
\node (7'R) at (10.1,0.5) {$\bullet$};
\node (6R) at (10.1,1.5) {$\bullet$};
\node (6'R) at (7.3,2) {$\bullet$};
\node (5R) at (10.8,1.5) {$\bullet$};
\node (5'R) at (9.4,2) {$\bullet$};
\node (4R) at (11.5,1.5) {$\bullet$};
\node (4'R) at (11.5,2) {$\bullet$};
\node (3R) at (12.2,1.5) {$\bullet$};
\node (2R) at (12.9,1.5) {$\bullet$};
\node (1R) at (13.6,1.5) {$\bullet$.};

\draw[<-] (1R) -- (2R);
\draw[<-] (2R) -- (3R);
\draw[<-] (3R) -- (4R);
\draw[<-] (3R) -- (4'R);
\draw[<-] (4R) -- (5R);
\draw[<-] (4'R) -- (5'R);
\draw[<-] (5R) -- (6R);
\draw[<-] (5'R) -- (6'R);
\draw[<-] (6R) -- (7R);
\draw[<-] (5R) -- (7'R);
\draw[<-] (7R) -- (8R);
\draw[<-] (7'R) -- (8'R);
\draw[<-] (8R) -- (9R);
\draw[<-] (8'R) -- (9'R);
\draw[<-] (9R) -- (10R);
\draw[<-] (7R) -- (10'R);
\draw[<-] (10'R) -- (11'R);
\draw[<-] (11'R) -- (12'R);
\end{tikzpicture}
\]
and one can check that the algebra $\La'=\K Q'/R_{Q'}^2$ is rep\-re\-sen\-ta\-tion-di\-rect\-ed and so its module category admits a $3$-cluster tilting subcategory $\cM'$ by Remark \ref{rem:k-simultaneous gluing}.

Next let $P$ be the indecomposable projective $\La'$-module corresponding to the unique source of $\La'$ and let $W\geq P$ be a maximal left abutment. Moreover let $I$ be the indecomposable injective $\La'$-module corresponding to the unique sink of $\La'$ and let $J\geq I$ be a maximal right abutment. Then $\La'$ is $3$-self-gluable by Corollary \ref{cor:La n-ct implies La infinity n-ct}. Hence by gluing along the unique source and the unique sink of $Q'$, we obtain the quiver $\tilQ'$:
\[\begin{tikzpicture}[baseline={(current bounding box.center)}, scale=0.9, transform shape]
\node (2) at (1.7,1) {$\bullet$};
\node (3) at (2.4,1) {$\bullet$};
\node (4) at (3.1,0.5) {$\bullet$};
\node (4') at (3.1,1) {$\bullet$};
\node (5) at (5.2,0.5) {$\bullet$};
\node (5') at (3.8,1) {$\bullet$};
\node (6) at (7.3,0.5) {$\bullet$};
\node (6') at (4.5,1) {$\bullet$};
\node (7) at (5.2,1) {$\bullet$};
\node (7') at (4.5,2) {$\bullet$};
\node (8) at (5.9,1) {$\bullet$};
\node (8') at (5.9,2) {$\bullet$};
\node (9) at (6.6,1) {$\bullet$};
\node (9') at (7.3,2) {$\bullet$};
\node (10) at (7.3,1) {$\bullet$};
\node (10') at (5.9,1.5) {$\bullet$};
\node (11') at (6.6,1.5) {$\bullet$};
\node (12') at (7.3,1.5) {$\bullet$};

\draw[->] (2) -- (3);
\draw[->] (3) -- (4);
\draw[->] (3) -- (4');
\draw[->] (4) -- (5);
\draw[->] (4') -- (5');
\draw[->] (5) -- (6);
\draw[->] (5') -- (6');
\draw[->] (6') -- (7);
\draw[->] (5') -- (7');
\draw[->] (7) -- (8);
\draw[->] (7') -- (8');
\draw[->] (8) -- (9);
\draw[->] (8') -- (9');
\draw[->] (9) -- (10);
\draw[->] (7) -- (10');
\draw[->] (10') -- (11');
\draw[->] (11') -- (12');

\node (12'R) at (7.3,1) {$\bullet$};
\node (11'R) at (8,1) {$\bullet$};
\node (10'R) at (8.7,1) {$\bullet$};
\node (10R) at (7.3,1.5) {$\bullet$};
\node (9R) at (8,1.5) {$\bullet$};
\node (9'R) at (7.3,0.5) {$\bullet$};
\node (8R) at (8.7,1.5) {$\bullet$};
\node (8'R) at (8.7,0.5) {$\bullet$};
\node (7R) at (9.4,1.5) {$\bullet$};
\node (7'R) at (10.1,0.5) {$\bullet$};
\node (6R) at (10.1,1.5) {$\bullet$};
\node (6'R) at (7.3,2) {$\bullet$};
\node (5R) at (10.8,1.5) {$\bullet$};
\node (5'R) at (9.4,2) {$\bullet$};
\node (4R) at (11.5,1.5) {$\bullet$};
\node (4'R) at (11.5,2) {$\bullet$};
\node (3R) at (12.2,1.5) {$\bullet$};
\node (2R) at (12.9,1.5) {$\bullet$};
\node (1R) at (13.6,1.5) {$\bullet$.};

\draw[<-] (1R) -- (2R);
\draw[<-] (2R) -- (3R);
\draw[<-] (3R) -- (4R);
\draw[<-] (3R) -- (4'R);
\draw[<-] (4R) -- (5R);
\draw[<-] (4'R) -- (5'R);
\draw[<-] (5R) -- (6R);
\draw[<-] (5'R) -- (6'R);
\draw[<-] (6R) -- (7R);
\draw[<-] (5R) -- (7'R);
\draw[<-] (7R) -- (8R);
\draw[<-] (7'R) -- (8'R);
\draw[<-] (8R) -- (9R);
\draw[<-] (8'R) -- (9'R);
\draw[<-] (9R) -- (10R);
\draw[<-] (7R) -- (10'R);
\draw[<-] (10'R) -- (11'R);
\draw[<-] (11'R) -- (12'R);

\draw[->] (1R) -- (12.2,2.5) -- (3.8,2.5) -- (2);
\end{tikzpicture}
\]
and the algebra $\tilLa'=\K \tilQ'/R_{\tilQ'}^2$ is not longer rep\-re\-sen\-ta\-tion-di\-rect\-ed, but its module category still admits a $3$-cluster tilting subcategory $\tilde{M}$ by Corollary \ref{cor:La n-ct implies La infinity n-ct}.
\end{example}

\subsection{Examples from algebras with \texorpdfstring{$n$}{n}-fractured subcategories} We continue by obtaining examples of algebras whose module categories admit $n$-cluster tilting subcategories, starting from algebras whose module categories admit $n$-fracture subcategories. For this section, let $n\geq 2$ and let $\La=\K Q/\cR$ be a rep\-re\-sen\-ta\-tion-di\-rect\-ed bound quiver algebra with a fracturing $(T^L,T^R)$ and a $\left(T^L,T^R,n\right)$-fractured subcategory $\cM$. 

\subsubsection{Exactly one fracture}\label{subsubsec:exactly one fracture} In this section a special class of tilting $\K\overrightarrow{A}_h$-modules is important. Recall that the Aus\-lan\-der--Rei\-ten quiver of $\Gamma(\K\overrightarrow{A}_h)$ is of the form $\PD$ where $P=P(1)$. Since $\D(\K\overrightarrow{A}_h)=\K\overrightarrow{A}^{\text{op}}_h$, there exists a unique isomorphism of Aus\-lan\-der--Rei\-ten quivers $f:\Gamma(\K\overrightarrow{A}_h)\overset{\sim}{\longto}\Gamma( \D(\K\overrightarrow{A}_h))$. Moreover, using the $\K$-dual functor $\D=\Hom_{\K}(-,\K)$, we also have a bijection $\D:\Gamma( \D(\K\overrightarrow{A}_h))_0 \to \Gamma(\K \overrightarrow{A}_h)_0$ given by $\D([N])=[\D(N)]$. For every $[M]\in \Gamma(\K \overrightarrow{A}_h)$ we define $S([M])\coloneqq \D\circ f([M])$. By unraveling the definitions of the functors involved, it follows that $S([M])$ is the vertex in $\Gamma(\K\overrightarrow{A}_h)$ such that $[M]$ and $[S(M)]$ are symmetric along the perpendicular bisector of the bottom line of $\PD$.

\begin{definition}
Let $M=\bigoplus_{i=1}^k M_i$ be a $\K\overrightarrow{A}_h$-module where all $M_i$ are indecomposable and $M_i\not\isom M_j$ for $i\neq j$. We say that $M$ is \emph{mirrored}\index[definitions]{mirrored $\K\overrightarrow{A}_h$-module} if $\{[M_i]\}_{i=1}^k=\{S([M_i])\}_{i=1}^k$.
\end{definition}

\begin{example}\label{ex:mirrored tilting module}
Let $h=5$. Then the Aus\-lan\-der--Rei\-ten quiver $\Gamma(\K \overrightarrow{A}_5)$ is
\[\begin{tikzpicture}[scale=0.9, transform shape, baseline={(current bounding box.center)}]

\node (5) at (0,0) {$\qthree{}[5][]$};
\node (4) at (1.4,0) {$\qthree{}[4][]$};
\node (3) at (2.8,0) {$\qthree{}[3][]$};
\node (2) at (4.2,0) {$\qthree{}[2][]$};
\node (1) at (5.6,0) {$\qthree{}[1][]$\nospacepunct{,}};

\draw[loosely dotted] (5.east) -- (4);
\draw[loosely dotted] (4.east) -- (3);
\draw[loosely dotted] (3.east) -- (2);
\draw[loosely dotted] (2.east) -- (1);

\node (45) at (0.7,0.7) {$\qtwo{4}[5]$};
\node (34) at (2.1,0.7) {$\qtwo{3}[4]$};
\node (23) at (3.5,0.7) {$\qtwo{2}[3]$};
\node (12) at (4.9,0.7) {$\qtwo{1}[2]$};

\draw[->] (5) to (45);
\draw[->] (4) to (34);
\draw[->] (3) to (23);
\draw[->] (2) to (12);
\draw[->] (12) to (1);
\draw[->] (23) to (2);
\draw[->] (34) to (3);
\draw[->] (45) to (4);

\draw[loosely dotted] (45.east) -- (34);
\draw[loosely dotted] (34.east) -- (23);
\draw[loosely dotted] (23.east) -- (12);

\node (345) at (1.4,1.4) {$\qthree{3}[4][5]$};
\node (234) at (2.8,1.4) {$\qthree{2}[3][4]$};
\node (123) at (4.2,1.4) {$\qthree{1}[2][3]$};

\draw[->] (45) to (345);
\draw[->] (34) to (234);
\draw[->] (23) to (123);
\draw[->] (123) to (12);
\draw[->] (234) to (23);
\draw[->] (345) to (34);

\draw[loosely dotted] (345.east) -- (234);
\draw[loosely dotted] (234.east) -- (123);

\node (2345) at (2.1,2.1) {$\qfour{2}[3][4][5]$};
\node (1234) at (3.5,2.1) {$\qfour{1}[2][3][4]$};

\draw[->] (345) to (2345);
\draw[->] (234) to (1234);
\draw[->] (1234) to (123);
\draw[->] (2345) to (234);

\draw[loosely dotted] (2345.east) -- (1234);

\node (12345) at (2.8,2.8) {$\qfive{1}[2][3][4][5]$};

\draw[->] (2345) to (12345);
\draw[->] (12345) to (1234);
\end{tikzpicture}\]
from which we see for example that $S\left(\qtwo{4}[5]\right)=\qtwo{1}[2]$ and $S\left(\qthree{2}[3][4]\right)=\qthree{2}[3][4]$. It follows that if 
\[T_1={\scriptstyle 5}\oplus \qtwo{4}[5] \oplus \qfive{1}[2][3][4][5] \oplus \qtwo{1}[2]\oplus {\scriptstyle 1}\]
then $T_1$ is mirrored. On the other hand, if 
\[T_2 = {\scriptstyle 3} \oplus \qtwo{3}[4] \oplus \qthree{2}[3][4] \oplus \qfour{1}[2][3][4] \oplus \qfive{1}[2][3][4][5]\]
then $T_2$ is not mirrored. Indeed, the vertices of $\Gamma(\K \overrightarrow{A}_h)$ corresponding to the indecomposable summands of $T_2$ are
\[\begin{tikzpicture}[scale=0.9, transform shape, baseline={(current bounding box.center)}]

\node (5) at (0,0) {$\qthree{}[5][]$};
\node (4) at (1.4,0) {$\qthree{}[4][]$};
\node (3) at (2.8,0) {\enumber{$\qthree{}[3][]$}};
\node (2) at (4.2,0) {$\qthree{}[2][]$};
\node (1) at (5.6,0) {$\qthree{}[1][]$};

\draw[loosely dotted] (5.east) -- (4);
\draw[loosely dotted] (4.east) -- (3);
\draw[loosely dotted] (3.east) -- (2);
\draw[loosely dotted] (2.east) -- (1);

\node (45) at (0.7,0.7) {$\qtwo{4}[5]$};
\node (34) at (2.1,0.7) {\enumber{$\qtwo{3}[4]$}};
\node (23) at (3.5,0.7) {$\qtwo{2}[3]$};
\node (12) at (4.9,0.7) {$\qtwo{1}[2]$};

\draw[->] (5) to (45);
\draw[->] (4) to (34);
\draw[->] (3) to (23);
\draw[->] (2) to (12);
\draw[->] (12) to (1);
\draw[->] (23) to (2);
\draw[->] (34) to (3);
\draw[->] (45) to (4);

\draw[loosely dotted] (45.east) -- (34);
\draw[loosely dotted] (34.east) -- (23);
\draw[loosely dotted] (23.east) -- (12);

\node (345) at (1.4,1.4) {$\qthree{3}[4][5]$};
\node (234) at (2.8,1.4) {\enumber{$\qthree{2}[3][4]$}};
\node (123) at (4.2,1.4) {$\qthree{1}[2][3]$};

\draw[->] (45) to (345);
\draw[->] (34) to (234);
\draw[->] (23) to (123);
\draw[->] (123) to (12);
\draw[->] (234) to (23);
\draw[->] (345) to (34);

\draw[loosely dotted] (345.east) -- (234);
\draw[loosely dotted] (234.east) -- (123);

\node (2345) at (2.1,2.1) {$\qfour{2}[3][4][5]$};
\node (1234) at (3.5,2.1) {\enumber{$\qfour{1}[2][3][4]$}};

\draw[->] (345) to (2345);
\draw[->] (234) to (1234);
\draw[->] (1234) to (123);
\draw[->] (2345) to (234);

\draw[loosely dotted] (2345.east) -- (1234);

\node (12345) at (2.8,2.8) {\enumber{$\qfive{1}[2][3][4][5]$}};

\draw[->] (2345) to (12345);
\draw[->] (12345) to (1234);
\end{tikzpicture}\]
while the vertices symmetrical to the perpendicular bisector of the base of $\Gamma(\K\overrightarrow{A}_5)$ are
\[\begin{tikzpicture}[scale=0.9, transform shape, baseline={(current bounding box.center)}]

\node (5) at (0,0) {$\qthree{}[5][]$};
\node (4) at (1.4,0) {$\qthree{}[4][]$};
\node (3) at (2.8,0) {\enumber{$\qthree{}[3][]$}};
\node (2) at (4.2,0) {$\qthree{}[2][]$};
\node (1) at (5.6,0) {$\qthree{}[1][]$};

\draw[loosely dotted] (5.east) -- (4);
\draw[loosely dotted] (4.east) -- (3);
\draw[loosely dotted] (3.east) -- (2);
\draw[loosely dotted] (2.east) -- (1);

\node (45) at (0.7,0.7) {$\qtwo{4}[5]$};
\node (34) at (2.1,0.7) {$\qtwo{3}[4]$};
\node (23) at (3.5,0.7) {\enumber{$\qtwo{2}[3]$}};
\node (12) at (4.9,0.7) {$\qtwo{1}[2]$};

\draw[->] (5) to (45);
\draw[->] (4) to (34);
\draw[->] (3) to (23);
\draw[->] (2) to (12);
\draw[->] (12) to (1);
\draw[->] (23) to (2);
\draw[->] (34) to (3);
\draw[->] (45) to (4);

\draw[loosely dotted] (45.east) -- (34);
\draw[loosely dotted] (34.east) -- (23);
\draw[loosely dotted] (23.east) -- (12);

\node (345) at (1.4,1.4) {$\qthree{3}[4][5]$};
\node (234) at (2.8,1.4) {\enumber{$\qthree{2}[3][4]$}};
\node (123) at (4.2,1.4) {$\qthree{1}[2][3]$};

\draw[->] (45) to (345);
\draw[->] (34) to (234);
\draw[->] (23) to (123);
\draw[->] (123) to (12);
\draw[->] (234) to (23);
\draw[->] (345) to (34);

\draw[loosely dotted] (345.east) -- (234);
\draw[loosely dotted] (234.east) -- (123);

\node (2345) at (2.1,2.1) {\enumber{$\qfour{2}[3][4][5]$}};
\node (1234) at (3.5,2.1) {$\qfour{1}[2][3][4]$};

\draw[->] (345) to (2345);
\draw[->] (234) to (1234);
\draw[->] (1234) to (123);
\draw[->] (2345) to (234);

\draw[loosely dotted] (2345.east) -- (1234);

\node (12345) at (2.8,2.8) {\enumber{$\qfive{1}[2][3][4][5]$}};

\draw[->] (2345) to (12345);
\draw[->] (12345) to (1234);
\end{tikzpicture}\]
\end{example}

\begin{remark}\label{rem:mirrored tilting}
If $T$ is a tilting $\K\overrightarrow{A}_h$-module then it is easy to see that a necessary condition for $T$ to be mirrored is that $\coker(P(l)\hookrightarrow P(m))$ is a direct summand of $T$ if and only if $\ker(P(h+1-m)\twoheadrightarrow P(h+1-l))$ is a direct summand of $T$, from which it follows that $h$ must be odd. Moreover, if $h$ is odd, it is easy to construct a mirrored tilting $\K\overrightarrow{A}_h$-module. 
\end{remark}

Assume now that the left fracturing $T^L=\hspace{-0.5em}\bigoplus\limits_{W \in \cP_{\text{ind}}^{\text{mab}}}\hspace{-0.5em}T^{(W)}$ of $\La$ has exactly one nonprojective summand $T^{(W)}$, while $T^R=D(\La)$; the dual case where $T^R$ has exactly one noninjective summand $T^{(J)}$ and $T^L=\La$ is similar. Let $P\leq W$ be a left abutment such that $\underline{T}^{(W)}\in \mathcal{F}_P$ and assume that $\height(P)=h$. Let $T$ be the corresponding tilting $\K \overrightarrow{A}_{h}$-module. Moreover, since $\La$ is rep\-re\-sen\-ta\-tion-di\-rect\-ed, there exists a simple injective $\La$-module $I$. 

Consider now the opposite algebra $\La^{\text{op}}$. By the above properties of $\La$, it follows that we have a fracturing $\left(D(T^L),D(T^R)\right)$ of $\La^{\text{op}}$ and a $\left(D(T^L),D(T^R),n\right)$-fractured subcategory $\D(\cM)$. Moreover we have that the right fracturing $\D(T^R)=\hspace{-0.5em}\bigoplus\limits_{W \in \cP_{\text{ind}}^{\text{mab}}}\hspace{-0.5em}T^{(D(W))}$ has exactly one noninjective summand $T^{(D(W))}$ and that the right abutment $\D(P)$ satisfies $\overline{T}^{(D(W))}\in\mathcal{G}_{D(P)}$ and $\height(D(P))=h$. Moreover, we have that $\D(I)$ is a simple projective $\La$-module.

In general it is not true that the modules $\pi_{P\ast}\left(T^{(P)}\right)$ and $\pi_{D(P)\ast}\left(T^{(D(P))}\right)$ are isomorphic. However, if $T$ is mirrored it easily follows by the definitions involved that indeed $\pi_{P\ast}\left(T^{(P)}\right)$ and $\pi_{D(P)\ast}\left(T^{(D(P))}\right)$ are isomorphic. In this case we can perform two different gluings. Performing the gluing $\La^{\text{op}}\glue[P][D(P)]\La$ we obtain by Theorem \ref{thrm:fractsubcat} an algebra $\La$ whose module category admits an $n$-cluster tilting subcategory. For an example of this kind we refer \cite[Example 4.21]{VAS2}. On the other hand, performing the gluing $\La\glue[D(I)][I]\La^{\text{op}}$ we have the following Corollary.

\begin{corollary}\label{cor:one fracture self glue}
The algebra $\La'=\La\glue[D(I)][I]\La^{\text{op}}$ is $n$-self-gluable. In particular $\m\tilLa'$ admits an $n$-cluster tilting subcategory.
\end{corollary}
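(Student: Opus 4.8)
The plan is to verify the two conditions of Definition~\ref{def:self-gluable} for the algebra $\La' = \La \glue[D(I)][I] \La^{\text{op}}$, namely that $\La'$ admits a fractured pair $(W',J')$ together with a compatible pair $(P',I')$, and then invoke Corollary~\ref{cor:La infinity n-ct}. First I would record the structure of $\La'$ coming from Theorem~\ref{thrm:fractsubcat}: since $D(I)$ is a simple projective $\La$-module and $I$ is a simple injective $\La$-module, their heights are both $1$ and the compatibility condition~(\ref{eq:compatibility}) is trivially satisfied, so $\La'$ is a representation-directed algebra with fracturing $(T^L_{\La'},T^R_{\La'}) = (T^L, D(T^R)) \glue[D(I)][I] (D(T^L),D(T^R)^{\text{op}}$-data$)$ and $\left(T^L_{\La'},T^R_{\La'},n\right)$-fractured subcategory $\cM_{\La'} = \add\{\prescript{\La'}{\La}{\pi}_\ast(\cM), \prescript{\La'}{\La^{\text{op}}}{\pi}_\ast(\D(\cM))\}$.

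Next I would identify which maximal abutments of $\La'$ carry non-projective or non-injective fractures. By Remark~\ref{rem:gluing remarks}(c), the maximal left abutments of $\La'$ are the images under $\prescript{\La'}{\La}{\pi}_\ast$ of the maximal left abutments of $\La$ together with the images under $\prescript{\La'}{\La^{\text{op}}}{\pi}_\ast$ of those maximal left abutments of $\La^{\text{op}}$ not supported on the glued vertices, and dually for right abutments. Since the gluing is done along the simple modules $D(I)$ and $I$, which are of height $1$, all the abutments of $\La$ and $\La^{\text{op}}$ survive. Now in $\La$ the only maximal left abutment with non-projective fracture is $W$ (with $\underline{T}^{(W)} \in \cF_P$), and $T^R = D(\La)$ so $\La$ has no maximal right abutment with non-injective fracture; dually, $\La^{\text{op}}$ has $\D(W)$ as its unique maximal right abutment with non-injective fracture (with $\overline{T}^{(\D(W))}\in \cG_{\D(P)}$) and no maximal left abutment with non-injective fracture. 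Under the gluing, $\prescript{\La'}{\La}{\pi}_\ast(W) =: W'$ becomes the unique maximal left abutment of $\La'$ with non-projective fracture and $\prescript{\La'}{\La^{\text{op}}}{\pi}_\ast(\D(W)) =: J'$ becomes the unique maximal right abutment of $\La'$ with non-injective fracture; here I need to check that the gluing at $D(I)$ (a projective of $\La$) does not interfere with $W$, which holds because $D(I)$ is independent from $P$ (a simple injective cannot be a submodule of modules in $\cF_P$ in a way that creates a conflict — more precisely $W$ remains maximal by Remark~\ref{rem:gluing remarks}(c)). This establishes that $(W',J')$ is a fractured pair in the sense of Definition~\ref{def:self-gluable}(a).

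For the compatible pair, I would set $P' \coloneqq \prescript{\La'}{\La}{\pi}_\ast(P)$ and $I' \coloneqq \prescript{\La'}{\La^{\text{op}}}{\pi}_\ast(\D(P))$. Then $P' \leq W'$ and $I' \leq J'$ follow from Remark~\ref{rem:gluing remarks}(c); the conditions $\underline{T}_{\La'}^{(W')} \in \cF_{P'}$ and $\overline{T}_{\La'}^{(J')} \in \cG_{I'}$ follow from the corresponding conditions over $\La$ and $\La^{\text{op}}$ together with the fact that $\prescript{\La'}{\La}{\pi}_\ast$ and $\prescript{\La'}{\La^{\text{op}}}{\pi}_\ast$ send foundations to foundations (Proposition~\ref{prop:AR glued}). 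The heights agree, $\height(P') = \height(P) = h = \height(\D(P)) = \height(I')$, and the remaining condition $\pi_{P'}^\ast(T_{\La'}^{(P')}) \isom \pi_{I'}^!(T_{\La'}^{(I')})$ amounts, after unwinding the definitions, to comparing the two $\K\overrightarrow{A}_h$-modules obtained by restricting $T^{(P)}$ (from $\La$) and $T^{(\D(P))}$ (from $\La^{\text{op}}$) to the respective linearly oriented subquiver — and this is exactly where the hypothesis that $T$ is mirrored enters: the mirroring guarantees $\pi_{P\ast}(T^{(P)}) \isom \pi_{\D(P)\ast}(T^{(\D(P))})$, which translates into the required isomorphism of fractures. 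Hence $(P',I')$ is a compatible pair for $(W',J')$, so $\La'$ is $n$-self-gluable, and Corollary~\ref{cor:La infinity n-ct} then yields that $\m\tilLa'$ admits an $n$-cluster tilting subcategory. The main obstacle I anticipate is the bookkeeping in the previous paragraph, i.e.\ rigorously tracking, through the two successive restriction-of-scalars functors and the amalgamation of Auslander--Reiten quivers, that no unintended collisions occur between the glued simple modules and the abutments $W$, $\D(W)$, $P$, $\D(P)$; this is routine given Remarks~\ref{rem:gluing remarks}(b)--(e) and Proposition~\ref{prop:AR glued} but requires care because $\La$ and $\La^{\text{op}}$ play asymmetric roles on the two sides of the gluing.
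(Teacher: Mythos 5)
Your proposal is correct and takes the paper's own route, filling in what the paper dismisses as ``easily verified by construction'': the fractured pair comes from $W$ (on the $\La$ side) and $\D(W)$ (on the $\La^{\text{op}}$ side), the compatible pair from $P$ and $\D(P)$, and the mirroring hypothesis on $T$ is exactly what supplies condition (v) of Definition~\ref{def:self-gluable}(b), after which Corollary~\ref{cor:La infinity n-ct} applies. A couple of labelling slips — $D(I)$ is a simple projective $\La^{\text{op}}$-module, not a $\La$-module, and the ``independence'' heuristic compares abutments of two different algebras so is not quite apt (the correct point is simply that left abutments of $B=\La$ always survive the gluing by Remark~\ref{rem:gluing remarks}(c)) — are harmless, and your citations of Remark~\ref{rem:gluing remarks}(c), Proposition~\ref{prop:AR glued}, and Corollary~\ref{cor:La infinity n-ct} carry the argument correctly.
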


\begin{proof}
One can easily verify that $\La'$ is $n$-self-gluable by construction. The result follows by Corollary \ref{cor:La infinity n-ct}. 
\end{proof}

In the particular case where $\La$ is an acyclic Nakayama algebra, we can formulate the previous results in the following Corollary.

\begin{corollary}\label{cor:one fracture Nakayama}
Let $\La$ be an acyclic Nakayama algebra with Kupisch series $(d_1,\dots,d_l)$ and let $\left(b_1,\dots,b_{l}\right)$ be the Kupisch series of $\La^{\text{op}}$. Let $(T^L,T^R)$ be a fracturing of $\La$ and $\cM\subseteq \m\La$ be a $\left(T^L,T^R,n\right)$-fractured subcategory for some $n\geq 2$. In particular, we have that $T^L=T^{(W)}$ for some maximal left abutment $W$ and $T^R=T^{(J)}$ for some maximal right abutment $J$. Let $P\leq W$ be such that $\underline{T}^{(W)}\in\cF_P$ and $\pi_{P\ast}\left(T^{(P)}\right)$ is a mirrored tilting $\K\overrightarrow{A}_{\height(P)}$-module. Moreover, let $1\leq p \leq l$ be such that $P\isom P(p)$. If $T^R$ is injective, then 
\begin{enumerate}[label=(\alph*)]
    \item the module category of the acyclic Nakayama algebra with Kupisch series \[\left(d_1,\dots,d_{p-1},b_1,\dots,b_l\right)\]
    admits an $n$-cluster tilting subcategory, and 
    \item the module category of the cyclic Nakayama algebra with Kupisch series \[\left(b_1,\dots,b_{l-1},d_1,\dots,d_{p-1}\right)\] 
    admits an $n$-cluster tilting subcategory.
\end{enumerate} 
\end{corollary}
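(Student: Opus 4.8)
The plan is to derive Corollary~\ref{cor:one fracture Nakayama} as a direct application of Corollary~\ref{cor:one fracture self glue} together with an explicit combinatorial computation of the Kupisch series of the resulting quotient algebras. First I would set up the situation: by hypothesis $\La=\K\overrightarrow{A}_l/\cR$ is an acyclic Nakayama algebra with Kupisch series $(d_1,\dots,d_l)$, and $\La^{\text{op}}$ is again an acyclic Nakayama algebra with Kupisch series $(b_1,\dots,b_l)$ where $b_i$ records the dimension of the indecomposable projective $\La^{\text{op}}$-module at vertex $i$ in the natural labelling of $\overrightarrow{A}_l^{\text{op}}\cong\overrightarrow{A}_l$. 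Since $\La$ is rep\-re\-sen\-ta\-tion-di\-rect\-ed and $T^R=T^{(J)}$ is injective while $T^L=T^{(W)}$ has its only nonprojective summand captured by $P\leq W$ with $\pi_{P\ast}(T^{(P)})$ a mirrored tilting $\K\overrightarrow{A}_{\height(P)}$-module, the discussion in Section~\ref{subsubsec:exactly one fracture} applies verbatim and yields that $\La'=\La\glue[D(I)][I]\La^{\text{op}}$ is $n$-self-gluable, where $I$ is a chosen simple injective $\La$-module and $D(I)$ the corresponding simple projective $\La^{\text{op}}$-module. By Corollary~\ref{cor:one fracture self glue}, $\m\tilLa'$ then admits an $n$-cluster tilting subcategory.

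Next I would identify $\La'$ and $\tilLa'$ concretely as bound quiver algebras. Gluing $\La$ and $\La^{\text{op}}$ along the simple abutments $D(I)$ and $I$ amounts, by Definition~\ref{def:gluing quivers}, to identifying a single vertex; since the glued abutments have height $1$, the quiver $Q_{\La'}$ is simply $\overrightarrow{A}_l$ concatenated with $\overrightarrow{A}_l^{\text{op}}\cong\overrightarrow{A}_l$ at that vertex, i.e. a linearly oriented $\overrightarrow{A}_{2l-1}$, and $\cR_{\La'}$ is generated by the relations of both pieces plus the length-two path through the glued vertex. A direct bookkeeping of projective dimensions---using Remark~\ref{rem:gluing remarks}(b) that the functors $\prescript{\La'}{\La}{\pi}_{\ast}$ and $\prescript{\La'}{\La^{\text{op}}}{\pi}_{\ast}$ extend representations by zero, hence preserve the uniserial projectives away from the glued vertex---shows that the Kupisch series of $\La'$ is the concatenation $(d_1,\dots,d_{p-1},b_1,\dots,b_l)$, once one checks that at the glued vertex the new projective has dimension $b_1$ and that the Kupisch inequalities $d_{i-1}-1\le d_i$ are still satisfied at the junction (this uses that $I\cong I(p')$ for the appropriate sink $p'$, so that $P\cong P(p)$ with $p$ as in the statement). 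This gives part~(a).

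For part~(b) I would then invoke the self-gluing/orbit construction of Section~\ref{sec:gluings and orbit categories} applied to $\La'$: the algebra $\tilLa'=\K\tilQ'/\tilR'$ is obtained from $\La'$ by the identification of vertices and arrows described around picture~(\ref{picture:self-gluing quiver}), identifying the support of the simple projective of $\La'$ with the support of the simple injective of $\La'$. Since both these abutments are simple (height $1$), this collapses the two ``free'' ends of $\overrightarrow{A}_{2l-1}$ to one vertex, producing a cyclic quiver $\tilde{A}_{2l-2}$, and $\tilR'$ is generated by $\cR_{\La'}$ together with one further length-two path. A Kupisch-series computation analogous to the one in Corollary~\ref{cor:Nakayama self-glue at simple}---where passing from an acyclic to a cyclic Nakayama algebra by self-gluing at simples amounts to dropping the trailing $1$ and cyclically permuting---then gives that $\tilLa'$ has Kupisch series $(b_1,\dots,b_{l-1},d_1,\dots,d_{p-1})$ up to cyclic permutation, and $\m\tilLa'$ admits an $n$-cluster tilting subcategory by Corollary~\ref{cor:one fracture self glue}.

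The main obstacle I anticipate is not the homological input---that is entirely handled by Corollary~\ref{cor:one fracture self glue}, itself resting on Corollary~\ref{cor:La infinity n-ct} and Theorem~\ref{thrm:fractsubcat}---but rather the careful combinatorial verification that the glued and self-glued quivers really are the linear $\overrightarrow{A}_{2l-1}$ and cyclic $\tilde{A}_{2l-2}$ with the stated Kupisch series, and in particular that the Kupisch inequalities hold at the glued vertices so that the resulting algebras are genuinely (basic, connected) Nakayama algebras of the claimed type. This requires tracking exactly which vertex $P(p)$ of $\La$ is glued to the simple projective $D(I)$ of $\La^{\text{op}}$ and translating the mirrored-tilting hypothesis into the statement that $\pi_{P\ast}(T^{(P)})$ glues compatibly---but all of this is routine once the dictionary between abutments, footings, and Kupisch entries is made explicit, which I would do by a short induction mirroring the proof of Corollary~\ref{cor:Nakayama self-glue at simple}.
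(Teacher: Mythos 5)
Your proposal has a genuine structural error: you conflate the two different gluings that the paper uses for the two different parts, and as a result both your combinatorics and your logical chain break.

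For part~(a), the paper does \emph{not} use the simple--vertex gluing $\La'=\La\glue[D(I)][I]\La^{\text{op}}$. It uses the \emph{other} gluing $\La^{\text{op}}\glue[P][D(P)]\La$, along the height-$h$ abutments $P$ of $\La$ and $D(P)$ of $\La^{\text{op}}$. This identifies $h=\height(P)$ vertices (not one), producing an acyclic Nakayama algebra on $(p-1)+l$ vertices, and a routine application of Remark~\ref{rem:gluing remarks}(b) gives exactly the Kupisch series $(d_1,\dots,d_{p-1},b_1,\dots,b_l)$. Moreover, under this gluing the nonprojective left fracture $T^{(W)}$ of $\La$ (which by hypothesis lives in $\cF_P$) is absorbed into the $\La^{\text{op}}$ side, so the glued fracturing becomes $(\La',\D(\La'))$, and Theorem~\ref{thrm:fractsubcat} together with Remark~\ref{rem:generalization of n-ct} gives an \emph{actual} $n$-cluster tilting subcategory. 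Your version is wrong on two counts: you assert that the simple--vertex gluing has quiver $\overrightarrow{A}_{2l-1}$ (so $2l-1$ vertices) while simultaneously claiming its Kupisch series is $(d_1,\dots,d_{p-1},b_1,\dots,b_l)$ with only $(p-1)+l=2l-h$ entries --- these cannot both hold unless $h=1$ --- and, independently of the combinatorics, Corollary~\ref{cor:one fracture self glue} only delivers an $n$-cluster tilting subcategory of $\m\tilLa'$, not of $\m\La'$; the module category $\m\La'$ inherits a genuinely nonprojective left fracture from $T^{(W)}$, so it only admits an $n$-fractured subcategory. So part~(a) cannot be read off from $\La'$ at all.

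For part~(b), you correctly start from $\La'=\La\glue[D(I)][I]\La^{\text{op}}$, whose Kupisch series is $(b_1,\dots,b_{l-1},d_1,\dots,d_l)$ of length $2l-1$. But the self-gluing that Corollary~\ref{cor:one fracture self glue} produces is \emph{not} along the simple projective and simple injective of $\La'$: the fractured pair $(W',J')$ of $\La'$ is the pair of maximal abutments carrying the nonprojective/noninjective fractures inherited from $T^{(W)}$ of $\La$ and from $\D(T^{(W)})$ of $\La^{\text{op}}$, and the compatible pair $(P',I')$ has height $h$. The self-gluing therefore identifies $h$ vertex pairs, giving a cyclic quiver on $2l-1-h$ vertices (not your $\tilde{A}_{2l-2}$), and dropping the last $h$ entries of the Kupisch series yields exactly $(b_1,\dots,b_{l-1},d_1,\dots,d_{p-1})$ since $p-1=l-h$. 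In short: part~(a) and part~(b) come from two genuinely different gluing constructions --- one glues along $P$ and $D(P)$ and invokes Theorem~\ref{thrm:fractsubcat}, the other glues at simples and then self-glues along the height-$h$ compatible pair --- and your proof collapses them into one, producing the wrong quiver sizes and an unjustified claim that $\m\La'$ has an $n$-cluster tilting subcategory.
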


\begin{proof}
In the first case, the resulting Nakayama algebra is the gluing $\La^{\text{op}}\glue[P][D(P)]\La$ and the result follows by Theorem \ref{thrm:fractsubcat}. In the second case, the resulting Nakayama algebra is the self-gluing of the gluing of $\La$ with $\La^{\text{op}}$ along the unique simple injective $\La$-module and the result follows by by Corollary \ref{cor:La n-ct implies La infinity n-ct}.
\end{proof}

In the rest of this article, when drawing the Aus\-lan\-der--Rei\-ten quiver of a Nakayama algebra we will only draw the vertices; the arrows are implied by the relative positions of the vertices. Moreover, in a Kupisch series we will denote by $h^{(k)}$ a sequence $h,h,\dots,h$ where $h$ appears $k$ consecutive times.

\begin{example}
Let $\La$ be the acyclic Nakayama algebra with Kupisch series $\left(3^{(7)},5^{(6)},5,4,3,2,1\right)$. Then the Aus\-lan\-der--Rei\-ten quiver of $\La$ is
\[\begin{tikzpicture}[scale=1.3, transform shape]
        \foreach \x in {0,0.2,...,3.4}
            \mct{\x}{0};
        \foreach \x in {0.1,0.3,...,3.3}
            \mct{\x}{0.2};
        \foreach \x in {0.2,0.4,...,3.2}
            \mct{\x}{0.4};
        \foreach \x in {0.3,0.5,...,1.7}
            \mct{\x}{0.6};
        \foreach \x in {0.4,0.6,...,1.8}
            \nct{\x}{0.8};
        \nct{0}{0};
        \nct{0.8}{0};        
        \nct{0.1}{0.2};
        \nct{0.7}{0.2};
        \nct{1.7}{0.6};
        \foreach \x in {1.8,2,...,3.2}
            \nct{\x}{0.4};
        \nct{3.3}{0.2};
        \nct{3.4}{0};
        \nct{2.2}{0}:
        \nct{2.3}{0.2};
        \node (A) at (3,0.85) {};
    \end{tikzpicture},\]
where the additive closure of the indecomposable modules corresponding to the bold vertices is a $4$-fractured subcategory. The acyclic Nakayama algebra $\La^{\text{op}}$ has Kupisch series $\left(5^{(7)},4,3^{(8)},2,1\right)$. Gluing $\La$ with $\La^{\text{op}}$ at the unique left maximal abutment of $\La$ we obtain the Nakayama algebra $\La_1$ with Kupisch series $\left(3^{(7)},5^{(6)},5^{(7)},4,3^{(8)},2,1\right)$. The Aus\-lan\-der--Rei\-ten quiver of $\La_1$ is
\[\begin{tikzpicture}[scale=1.3, transform shape]
        \foreach \x in {-2.6,-2.4,...,3.4}
            \mct{\x}{0};
        \foreach \x in {-2.5,-2.3,...,3.3}
            \mct{\x}{0.2};
        \foreach \x in {-2.4,-2.2,...,3.2}
            \mct{\x}{0.4};
        \foreach \x in {-0.9,-0.7,...,1.7}
            \mct{\x}{0.6};
        \foreach \x in {-0.8,-0.6,...,1.6}
            \nct{\x}{0.8};
        \nct{0.7}{0.2};
        \nct{0.8}{0};        
        \nct{1.7}{0.6};
        \foreach \x in {1.8,2,...,3.2}
            \nct{\x}{0.4};
        \nct{3.3}{0.2};
        \nct{3.4}{0};
        \nct{2.2}{0}:
        \nct{2.3}{0.2};
        
        \nct{0.1}{0.2};
        \nct{0}{0};
        \nct{-0.9}{0.6}; 
        \foreach \x in {-1,-1.2,...,-2.4}
            \nct{\x}{0.4};
        \nct{-2.5}{0.2};
        \nct{-2.6}{0};
        \nct{-1.4}{0};
        \nct{-1.5}{0.2};
        \node (A) at (3,0.85) {};
    \end{tikzpicture},\]
where the additive closure of the indecomposable modules corresponding to the bold vertices is a $4$-cluster tilting subcategory. Gluing $\La$ with $\La^{\text{op}}$ at the unique simple injective $\La$-module we obtain the Nakayama algebra $\La_2$ with Kupisch series $\left(5^{(7)},4,3^{(8)},2,3^{(7)},5^{(6)},5,4,3,2,1\right)$. The Aus\-lan\-der--Rei\-ten quiver of $\La_2$ is
\[\begin{tikzpicture}[scale=1.3, transform shape]
        \foreach \x in {0,0.2,...,3.4}
            \mct{\x}{0};
        \foreach \x in {0.1,0.3,...,3.3}
            \mct{\x}{0.2};
        \foreach \x in {0.2,0.4,...,3.2}
            \mct{\x}{0.4};
        \foreach \x in {0.3,0.5,...,1.7}
            \mct{\x}{0.6};
        \foreach \x in {0.4,0.6,...,1.8}
            \nct{\x}{0.8};
        \nct{0}{0};
        \nct{0.8}{0};        
        \nct{0.1}{0.2};
        \nct{0.7}{0.2};
        \nct{1.7}{0.6};
        \foreach \x in {1.8,2,...,3.2}
            \nct{\x}{0.4};
        \nct{3.3}{0.2};
        \nct{3.4}{0};
        \nct{2.2}{0}:
        \nct{2.3}{0.2};
        
        \foreach \x in {3.4,3.6,...,7}
            \mct{\x}{0};
        \foreach \x in {3.5,3.7,...,6.7}
            \mct{\x}{0.2};
        \foreach \x in {3.6,3.8,...,6.6}
            \mct{\x}{0.4};
        \foreach \x in {5.1,5.3,...,6.5}
            \mct{\x}{0.6};
        \foreach \x in {5.2,5.4,...,6.4}
            \nct{\x}{0.8};
        \nct{3.5}{0.2};        
        \foreach \x in {3.6,3.8,...,5}
            \nct{\x}{0.4};
        \nct{5.1}{0.6};
        \nct{6.8}{0};
        \nct{6.7}{0.2};
        \nct{4.6}{0};
        \nct{4.5}{0.2};
        \nct{6.1}{0.2};
        \nct{6}{0};
    \end{tikzpicture},\]
    where the additive closure of the indecomposable modules corresponding to the bold vertices is a $4$-fractured subcategory. In particular, the algebra $\La_2$ is $4$-self-gluable. By self-gluing $\La_2$ we obtain the cyclic Nakayama algebra $\tilLa_2$ with Kupisch series $\left(5^{(7)},4,3^{(8)},2,3^{(7)},5^{(6)}\right)$. The Aus\-lan\-der--Rei\-ten quiver of $\tilLa_2$ is 
    \[\begin{tikzpicture}[scale=1.3, transform shape]
        \foreach \x in {0.8,1,...,3.4}
            \mct{\x}{0};
        \foreach \x in {0.7,0.9,...,3.3}
            \mct{\x}{0.2};
        \foreach \x in {0.6,0.8,...,3.2}
            \mct{\x}{0.4};
        \foreach \x in {0.5,0.7,...,1.7}
            \mct{\x}{0.6};
        \foreach \x in {0.4,0.6,...,1.8}
            \nct{\x}{0.8};
        \nct{0.8}{0};        
        \nct{0.7}{0.2};
        \nct{1.7}{0.6};
        \foreach \x in {1.8,2,...,3.2}
            \nct{\x}{0.4};
        \nct{3.3}{0.2};
        \nct{3.4}{0};
        \nct{2.2}{0}:
        \nct{2.3}{0.2};
        
        \foreach \x in {3.4,3.6,...,7}
            \mct{\x}{0};
        \foreach \x in {3.5,3.7,...,6.7}
            \mct{\x}{0.2};
        \foreach \x in {3.6,3.8,...,6.6}
            \mct{\x}{0.4};
        \foreach \x in {5.1,5.3,...,6.5}
            \mct{\x}{0.6};
        \foreach \x in {5.2,5.4,...,6.4}
            \nct{\x}{0.8};
        \nct{3.5}{0.2};        
        \foreach \x in {3.6,3.8,...,5}
            \nct{\x}{0.4};
        \nct{5.1}{0.6};
        \nct{6.8}{0};
        \nct{6.7}{0.2};
        \nct{4.6}{0};
        \nct{4.5}{0.2};
        \nct{6.1}{0.2};
        \nct{6}{0};
        
        \draw[dotted] (0.95,-0.3) -- (0.25,1.1);
        \draw[dotted] (6.95,-0.3) -- (6.25,1.1);
        \node (A) at (6.9,0) {\nospacepunct{,}};
    \end{tikzpicture}\]
    where the additive closure of the indecomposable modules corresponding to the bold vertices is a $4$-cluster tilting subcategory.
\end{example}

\subsubsection{Exactly two fractures}\label{subsubsec:exactly two fractures} Assume now that the left fracturing $T^L$ has exactly one nonprojective summand $T^{(W)}$ while the right fracturing $T^R$ has exactly one noninjective summand $T^{(J)}$ and moreover the conditions of Definition \ref{def:self-gluable} are satisfied. In this case the algebra $\La$ is $n$-self gluable and so $\m\tilLa$ admits an $n$-cluster tilting subcategory by Corollary \ref{cor:La infinity n-ct}.

For Nakayama algebras we have the following corollary.

\begin{corollary}\label{cor:n-self gluable Nakayama}
Let $\La$ be an acyclic Nakayama algebra with Kupisch series $(d_1,\dots,d_k)$. Assume that $\m\La$ admits a fracturing $(T^L, T^R)$ and a $(T^L,T^R,n)$-fractured subcategory $\cM$. If $\La$ is $n$-self gluable, then there exist a right abutment $I$ and a left abutment $P$ of $\La$, both of the same height $h$, and such that $\underline{T}^L\in\cF_P$ and $\overline{T}^R\in\cG_I$ and moreover the module category of the cyclic Nakayama algebra with Kupisch series $(d_1,\dots,d_{k-h})$ admits an $n$-cluster tilting subcategory.
\end{corollary}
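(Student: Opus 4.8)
The plan is to unwind the hypothesis of $n$-self-gluability, feed the resulting abutments into the self-gluing construction of Section \ref{sec:gluings and orbit categories}, and finally identify the combinatorial type of the output algebra $\tilLa$. First, by Definition \ref{def:self-gluable}(c) the assumption that $\La$ is $n$-self-gluable provides a fractured pair $(W,J)$ together with a compatible pair $(P,I)$ for $(W,J)$. Conditions (ii) and (iv) in Definition \ref{def:self-gluable}(a) say that every maximal left abutment of $\La$ other than $W$ has a projective fracture, and every maximal right abutment other than $J$ an injective fracture, so $\underline{T}^L=\underline{T}^{(W)}$ and $\overline{T}^R=\overline{T}^{(J)}$; conditions (ii), (iv), (v) in Definition \ref{def:self-gluable}(b) then give $\underline{T}^L\in\cF_P$, $\overline{T}^R\in\cG_I$ and $\height(P)=\height(I)=:h$. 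This is the first assertion of the corollary.

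For the second assertion, Proposition \ref{prop:self-gluing system} applies verbatim to this data with $G=\overrightarrow{A}_{\infty}^{\infty}$: it makes $\La^{\infty}_{\infty}$ a gluing system and $(\cM_z)_{z\in\ZZ}$ a complete $n$-fractured system of it, so by Corollary \ref{cor:La infinity n-ct} the category $\m\tilLa$ admits an $n$-cluster tilting subcategory $\tilM$, where $\tilLa=\K\tilQ/\tilR$ is the self-gluing algebra with $\tilQ$ and $\tilR$ as in \eqref{picture:self-gluing quiver}. If the support intervals happen to overlap, so that $Q_{\La}$ has the degenerate linearly oriented form, one uses instead the variant of the construction indicated in the discussion preceding Lemma \ref{lem:admissible Z-action on C}; nothing in what follows changes. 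It thus remains to compute the Kupisch series of $\tilLa$, and this last identification is where the real work, and the main obstacle, lies.

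Since $\La$ is acyclic Nakayama we have $Q_{\La}=\overrightarrow{A}_k$, whose only source is the vertex $1$ and only sink the vertex $k$; hence the support of a left abutment of height $h$ is forced to be the interval $\{k-h+1,\dots,k\}$ and that of a right abutment of height $h$ the interval $\{1,\dots,h\}$. Thus $P$ and $I$ are uniquely determined, and combining the defining non-relation condition of a height-$h$ abutment with the Kupisch inequalities $d_{v-1}\le d_v+1$ and the universal bound $d_v\le k-v+1$ forces $d_{k-h+j}=h-j+1$ for $1\le j\le h$ as well as $d_1\ge h$. From \eqref{picture:self-gluing quiver} the quiver $\tilQ$ is obtained from $Q'=\overrightarrow{A}_{k-2h}$ (on the vertices $\{h+1,\dots,k-h\}$) by adjoining a path through $h$ new vertices $p_1\to\cdots\to p_h$ that closes up into a single oriented cycle, so $\tilQ\isom\tilde{A}_{k-h}$; and $\tilR$ differs from the relations of $\La$ only by the generators running from $Q'$ through all of $p_1,\dots,p_h$ and back into $Q'$. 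Using this I would read off the radical length of each indecomposable projective $\tilLa$-module: for a vertex $v\in\{h+1,\dots,k-h\}$ the uniserial projective path of $\La$ starting at $v$ has length $d_v\le k-v+1$, hence reaches at most $p_h$ without completing a full wrap, so the new generators of $\tilR$ do not shorten it and $\dim\tilde P(v)=d_v$; for $p_1,\dots,p_h$ a short computation (again using $d_{k-h+j}=h-j+1$ and $d_1\ge h$) yields the values $d_1,\dots,d_h$ in the appropriate cyclic order. Relabelling $\tilQ_0$ as $\{1,\dots,k-h\}$ accordingly, these radical lengths assemble into the vector $(d_1,\dots,d_{k-h})$, which is a legitimate cyclic Kupisch series because the wrap-around inequality $d_{k-h}-1\le d_1$ holds ($d_{k-h}\le d_{k-h+1}+1=h+1\le d_1+1$). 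Hence $\tilLa$ is exactly the cyclic Nakayama algebra with Kupisch series $(d_1,\dots,d_{k-h})$, whose module category carries the $n$-cluster tilting subcategory $\tilM$. The $h=1$ instance of this computation is Corollary \ref{cor:Nakayama self-glue at simple}, which can serve as a template; the delicate point throughout is keeping precise track of which vertex of $\tilQ$ inherits which radical length and verifying that the extra generators of $\tilR$ do no more than predicted.
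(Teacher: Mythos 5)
Your proposal is correct and follows the same path as the paper's proof, which is essentially a two-line citation: the existence of $P$ and $I$ is read off from Definition \ref{def:self-gluable} together with the observation that an acyclic Nakayama algebra has a unique maximal left abutment and a unique maximal right abutment (so $T^L=T^{(W)}$ and $T^R=T^{(J)}$ automatically, with no other maximal abutments to worry about), and the identification of the Kupisch series is invoked via Corollary \ref{cor:La infinity n-ct}, with the actual calculation left as a ``direct computation'' exactly as in the $h=1$ case treated in Corollary \ref{cor:Nakayama self-glue at simple}. What you add, and get right, is the explicit bookkeeping that the paper elides: that the support constraints force $P=P(k-h+1)$, $I=I(h)$, $d_{k-h+j}=h-j+1$ for $1\le j\le h$ and $d_1\ge h$, that the new generators of $\tilR$ kill nothing extra because the universal bound $d_v\le k-v+1$ keeps every projective path short of a full wrap, and that the wrap-around Kupisch inequality $d_{k-h}-1\le d_1$ holds. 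One small imprecision worth flagging: for a Nakayama algebra $Q_{\La}$ is always $\overrightarrow{A}_k$, so the relevant dichotomy is not ``degenerate vs.\ non-degenerate $Q$'' but rather $\supp(P)\cap\supp(I)=\varnothing$ (the case $2h\le k$) versus not; you correctly invoke the variant construction for the overlapping case, but the phrasing slightly conflates the two conditions.
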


\begin{proof}
The existence of $P$ and $I$ follows directly from Definition \ref{def:self-gluable} since an acyclic Nakayama algebra has a unique maximal left abutment and a unique maximal right abutment. The fact that the module category of the cyclic Nakayama algebra with Kupisch series $(d_1,\dots,d_{k-h})$ admits an $n$-cluster tilting subcategory follows by Corollary \ref{cor:La infinity n-ct}.
\end{proof}

Using acyclic Nakayama algebras with homogeneous relations, we can systematically construct a class of $n$-self gluable acyclic Nakayama algebras. However, applying Corollary \ref{cor:n-self gluable Nakayama} to them, we obtain selfinjective Nakayama algebras whose module categories admit $n$-cluster tilting subcategories. Since selfinjective Nakayama algebras whose module categories admit $n$-cluster tilting subcategories are completely classified in \cite{DI} we do not include this special case. We give a different example instead.

\begin{example}\label{ex:two fractures Nakayama}
Let $\La$ be the acyclic Nakayama algebra with Kupisch series \[\left(5^{(7)},4,3^{(10)},5^{(6)},5,4,3,2,1\right).\] 
Then the Aus\-lan\-der--Rei\-ten quiver of $\La$ is
\[\begin{tikzpicture}[scale=1.3, transform shape]
        \foreach \x in {0,0.2,...,5.6}
            \mct{\x}{0};
        \foreach \x in {0.1,0.3,...,5.5}
            \mct{\x}{0.2};
        \foreach \x in {0.2,0.4,...,5.4}
            \mct{\x}{0.4};
        \foreach \x in {0.3,0.5,...,1.7}
            \mct{\x}{0.6};
        \foreach \x in {3.9,4.1,...,5.5}
            \mct{\x}{0.6};
        \foreach \x in {0.4,0.6,...,1.8}
            \nct{\x}{0.8};
        \foreach \x in {4,4.2,...,5.2}
            \nct{\x}{0.8};
        \nct{0}{0};
        \nct{0.8}{0};        
        \nct{0.1}{0.2};
        \nct{0.7}{0.2};
        \nct{1.7}{0.6};
        \foreach \x in {1.8,2,...,3.8}
            \nct{\x}{0.4};
        \nct{3.3}{0.2};
        \nct{3.4}{0};
        \nct{2.2}{0}:
        \nct{2.3}{0.2};
        \nct{3.9}{0.6};
        \nct{5.6}{0};
        \nct{5.5}{0.2};
        \nct{4.8}{0};
        \nct{4.9}{0.2};
        \node (A) at (3,0.85) {};
    \end{tikzpicture},\]
where the additive closure of the indecomposable modules corresponding to the bold vertices is a $4$-fractured subcategory. Gluing along the unique maximal left and right fracture, we get the cyclic Nakayama algebra $\tilLa$ with Kupisch series $\left(5^{(7)},4,3^{(10)},5^{(6)}\right)$. The Aus\-lan\-der--Rei\-ten quiver of $\tilLa$ is
\[\begin{tikzpicture}[scale=1.3, transform shape]
        \foreach \x in {0.8,1,...,5.6}
            \mct{\x}{0};
        \foreach \x in {0.7,0.9,...,5.5}
            \mct{\x}{0.2};
        \foreach \x in {0.6,0.8,...,5.4}
            \mct{\x}{0.4};
        \foreach \x in {0.5,0.7,...,1.7}
            \mct{\x}{0.6};
        \foreach \x in {3.9,4.1,...,5.5}
            \mct{\x}{0.6};
        \foreach \x in {0.4,0.6,...,1.8}
            \nct{\x}{0.8};
        \foreach \x in {4,4.2,...,5.2}
            \nct{\x}{0.8};
        \nct{0.8}{0};        
        \nct{0.7}{0.2};
        \nct{1.7}{0.6};
        \foreach \x in {1.8,2,...,3.8}
            \nct{\x}{0.4};
        \nct{3.3}{0.2};
        \nct{3.4}{0};
        \nct{2.2}{0}:
        \nct{2.3}{0.2};
        \nct{3.9}{0.6};
        \nct{5.6}{0};
        \nct{5.5}{0.2};
        \nct{4.8}{0};
        \nct{4.9}{0.2};
        
        \draw[dotted] (0.95,-0.3) -- (0.25,1.1);
        \draw[dotted] (5.75,-0.3) -- (5.05,1.1);
        \node (A) at (5.7,0) {\nospacepunct{,}};
    \end{tikzpicture},\]
    where the additive closure of the indecomposable modules corresponding to the bold vertices is a $4$-cluster tilting subcategory.
\end{example}

We end this section with an example that exhibits many interesting properties.

\begin{example}\label{ex:interesting self-gluing}
Let $\La$ be the algebra given by the quiver with relations
\[\begin{tikzpicture}[scale=0.9, transform shape]
\node (1) at (0,0.5) {$1$};
\node (2) at (1,0.5) {$2$};
\node (3) at (2,0.5) {$3$};
\node (4) at (3,0.5) {$4$};
\node (5) at (4,0.5) {$5$};
\node (6) at (5,0) {$6$};
\node (2') at (4,-0.5) {$2'$};
\node (1') at (3,-0.5) {$1'$};
\node (7) at (6,0) {$7$};
\node (8) at (7,0) {$8$\nospacepunct{.}};

\draw[->] (1) to (2);
\draw[->] (2) to (3);
\draw[->] (3) to (4);
\draw[->] (4) to (5);
\draw[->] (5) to (6);
\draw[->] (1') to (2');
\draw[->] (2') to (6);
\draw[->] (6) to (7);
\draw[->] (7) to (8);

\draw[dotted] (0.5,0.5) to [out=60,in=120] (2.5,0.5);
\draw[dotted] (1.5,0.5) to [out=60,in=120] (3.5,0.5);
\draw[dotted] (3.5,0.5) to [out=-60,in=190] (4.5,0.25);
\draw[dotted] (3.5,-0.5) to [out=60,in=170] (4.5,-0.25);
\draw[dotted] (4.5,0.25) to [out=160,in=120] (5.5,0);
\draw[dotted] (4.5,-0.25) to [out=190,in=-120] (5.5,0);
\end{tikzpicture}\]
The Aus\-lan\-der--Rei\-ten quiver $\Gamma(\La)$ of $\La$ is
\[\begin{tikzpicture}[scale=0.9, transform shape, baseline={(current bounding box.center)}]
    \tikzstyle{nct2}=[circle, minimum width=0.6cm, draw, inner sep=0pt, text centered, scale=0.9]
    \tikzstyle{nct22}=[circle, minimum width=0.6cm, draw, inner sep=0pt, text centered, scale=0.8]
    \tikzstyle{nct33}=[circle, minimum width=0.6cm, draw=white, inner sep=0pt, text centered, scale=0.8]
    \tikzstyle{nct3}=[circle, minimum width=6pt, draw=white, inner sep=0pt, scale=0.9]
    
    \node[nct3] (02) at (0,0) {$\qthree{}[8][]$};
    \node[nct3] (13) at (0.7,0.7) {$\qthree{7}[8][]$};
    \node[nct3] (24) at (1.4,1.4) {$\qthree{6}[7][8]$};
    \node[nct3] (22) at (1.4,0) {$\qthree{}[7][]$};
    \node[nct3] (33) at (2.1,0.7) {$\qthree{6}[7][]$};
    \node[nct3] (42) at (2.8,0) {$\qthree{}[6][]$};
    \node[nct3] (53) at (3.5,0.7) {$\qthree{2'}[6][]$};
    \node[nct3] (51) at (3.5,-0.7) {$\qthree{5}[6][]$};
    \node[nct3] (62) at (4.2,0) {$\begin{smallmatrix} & 2'  5 &  \\ & 6 &\end{smallmatrix} $};
    \node[nct3] (73) at (4.9,0.7) {$\qthree{}[5][]$};
    \node[nct3] (71) at (4.9,-0.7) {$\qthree{}[2'][]$};
    \node[nct3] (84) at (5.6,1.4) {$\qthree{4}[5][]$};
    \node[nct3] (80) at (5.6,-1.4) {$\qthree{1'}[2'][]$};
    \node[nct3] (95) at (6.3,2.1) {$\qthree{3}[4][5]$};
    \node[nct3] (93) at (6.3,0.7) {$\qthree{}[4][]$};
    \node[nct3] (91) at (6.3,-0.7) {$\qthree{}[1'][]$};
    \node[nct3] (104) at (7,1.4) {$\qthree{3}[4][]$};
    \node[nct3] (115) at (7.7,2.1) {$\qthree{2}[3][4]$};
    \node[nct3] (113) at (7.7,0.7) {$\qthree{}[3][]$};
    \node[nct3] (124) at (8.4,1.4) {$\qthree{2}[3][]$};
    \node[nct3] (135) at (9.1,2.1) {$\qthree{1}[2][3]$};
    \node[nct3] (133) at (9.1,0.7) {$\qthree{}[2][]$};
    \node[nct3] (144) at (9.8,1.4) {$\qthree{1}[2][]$};
    \node[nct3] (153) at (10.5,0.7) {$\qthree{}[1][]$\nospacepunct{,}};
        
    \draw[->] (02) -- (13);
    \draw[->] (13) -- (24);
    \draw[->] (13) -- (22);
    \draw[->] (24) -- (33);
    \draw[->] (22) -- (33);
    \draw[->] (33) -- (42);
    \draw[->] (42) -- (53);
    \draw[->] (42) -- (51);
    \draw[->] (53) -- (62);
    \draw[->] (51) -- (62);
    \draw[->] (62) -- (73);
    \draw[->] (62) -- (71);
    \draw[->] (73) -- (84);
    \draw[->] (71) -- (80);
    \draw[->] (84) -- (95);
    \draw[->] (84) -- (93);
    \draw[->] (80) -- (91);
    \draw[->] (95) -- (104);
    \draw[->] (93) -- (104);
    \draw[->] (104) -- (115);
    \draw[->] (104) -- (113);
    \draw[->] (115) -- (124);
    \draw[->] (113) -- (124);
    \draw[->] (124) -- (135);
    \draw[->] (124) -- (133);
    \draw[->] (135) -- (144);
    \draw[->] (133) -- (144);
    \draw[->] (144) -- (153);
    
    \draw[loosely dotted] (02) -- (22);
    \draw[loosely dotted] (13) -- (33);
    \draw[loosely dotted] (22) -- (42);
    \draw[loosely dotted] (42) -- (62);
    \draw[loosely dotted] (53) -- (73);
    \draw[loosely dotted] (51) -- (71);
    \draw[loosely dotted] (73) -- (93);
    \draw[loosely dotted] (71) -- (91);
    \draw[loosely dotted] (84) -- (104);
    \draw[loosely dotted] (93) -- (113);
    \draw[loosely dotted] (104) -- (124);
    \draw[loosely dotted] (113) -- (133);
    \draw[loosely dotted] (124) -- (144);
    \draw[loosely dotted] (133) -- (153);
\end{tikzpicture}\]
and we see that the unique maximal left abutment is $\qthree{6}[7][8]$ while the two maximal right abutments are $\qthree{1}[2][3]$ and $\qthree{1'}[2'][]$. We set 
\[ W\coloneqq\qthree{6}[7][8], \;\; T^L=T^{\left(W\right)} \coloneqq W \oplus \qthree{6}[7][] \oplus \qthree{}[7][],\;\; J\coloneqq \qthree{1}[2][3],\;\; T^{\left(J\right)} \coloneqq J \oplus \qthree{1}[2][] \oplus \qthree{}[2][],\;\; T^{\left(\qthree{1'}[2'][]\right)}\coloneqq\qthree{1'}[2'][]\oplus \qthree{}[1'][],\;\; T^R=T^{\left(J\right)}\oplus T^{\left(\qthree{1'}[2'][]\right)}.\]
Then $(T^L,T^R)$ is a fracturing of $\La$. By closing $\cP^L$ under the action of $\tau^-_2$ as in Proposition \ref{prop:n-fractured is self-orthogonal}(a), we obtain the $2$-fractured subcategory $\cM\subseteq \m\La$ which is given as the additive closure of the following encircled indecomposable modules: 
\[\begin{tikzpicture}[scale=0.9, transform shape, baseline={(current bounding box.center)}]
    \tikzstyle{nct2}=[circle, minimum width=0.6cm, draw, inner sep=0pt, text centered, scale=0.9]
    \tikzstyle{nct22}=[circle, minimum width=0.6cm, draw, inner sep=0pt, text centered, scale=0.8]
    \tikzstyle{nct33}=[circle, minimum width=0.6cm, draw=white, inner sep=0pt, text centered, scale=0.8]
    \tikzstyle{nct3}=[circle, minimum width=6pt, draw=white, inner sep=0pt, scale=0.9]
    
    \node[nct3] (02) at (0,0) {$\qthree{}[8][]$};
    \node[nct3] (13) at (0.7,0.7) {$\qthree{7}[8][]$};
    \node[nct2] (24) at (1.4,1.4) {$\qthree{6}[7][8]$};
    \node[nct2] (22) at (1.4,0) {$\qthree{}[7][]$};
    \node[nct2] (33) at (2.1,0.7) {$\qthree{6}[7][]$};
    \node[nct3] (42) at (2.8,0) {$\qthree{}[6][]$};
    \node[nct2] (53) at (3.5,0.7) {$\qthree{2'}[6][]$};
    \node[nct2] (51) at (3.5,-0.7) {$\qthree{5}[6][]$};
    \node[nct2] (62) at (4.2,0) {$\begin{smallmatrix} & 2'  5 &  \\ & 6 &\end{smallmatrix} $};
    \node[nct3] (73) at (4.9,0.7) {$\qthree{}[5][]$};
    \node[nct3] (71) at (4.9,-0.7) {$\qthree{}[2'][]$};
    \node[nct2] (84) at (5.6,1.4) {$\qthree{4}[5][]$};
    \node[nct2] (80) at (5.6,-1.4) {$\qthree{1'}[2'][]$};
    \node[nct2] (95) at (6.3,2.1) {$\qthree{3}[4][5]$};
    \node[nct2] (93) at (6.3,0.7) {$\qthree{}[4][]$};
    \node[nct2] (91) at (6.3,-0.7) {$\qthree{}[1'][]$};
    \node[nct3] (104) at (7,1.4) {$\qthree{3}[4][]$};
    \node[nct2] (115) at (7.7,2.1) {$\qthree{2}[3][4]$};
    \node[nct3] (113) at (7.7,0.7) {$\qthree{}[3][]$};
    \node[nct3] (124) at (8.4,1.4) {$\qthree{2}[3][]$};
    \node[nct2] (135) at (9.1,2.1) {$\qthree{1}[2][3]$};
    \node[nct2] (133) at (9.1,0.7) {$\qthree{}[2][]$};
    \node[nct2] (144) at (9.8,1.4) {$\qthree{1}[2][]$};
    \node[nct3] (153) at (10.5,0.7) {$\qthree{}[1][]$\nospacepunct{.}};
        
    \draw[->] (02) -- (13);
    \draw[->] (13) -- (24);
    \draw[->] (13) -- (22);
    \draw[->] (24) -- (33);
    \draw[->] (22) -- (33);
    \draw[->] (33) -- (42);
    \draw[->] (42) -- (53);
    \draw[->] (42) -- (51);
    \draw[->] (53) -- (62);
    \draw[->] (51) -- (62);
    \draw[->] (62) -- (73);
    \draw[->] (62) -- (71);
    \draw[->] (73) -- (84);
    \draw[->] (71) -- (80);
    \draw[->] (84) -- (95);
    \draw[->] (84) -- (93);
    \draw[->] (80) -- (91);
    \draw[->] (95) -- (104);
    \draw[->] (93) -- (104);
    \draw[->] (104) -- (115);
    \draw[->] (104) -- (113);
    \draw[->] (115) -- (124);
    \draw[->] (113) -- (124);
    \draw[->] (124) -- (135);
    \draw[->] (124) -- (133);
    \draw[->] (135) -- (144);
    \draw[->] (133) -- (144);
    \draw[->] (144) -- (153);
    
    \draw[loosely dotted] (02) -- (22);
    \draw[loosely dotted] (13) -- (33);
    \draw[loosely dotted] (22) -- (42);
    \draw[loosely dotted] (42) -- (62);
    \draw[loosely dotted] (53) -- (73);
    \draw[loosely dotted] (51) -- (71);
    \draw[loosely dotted] (73) -- (93);
    \draw[loosely dotted] (71) -- (91);
    \draw[loosely dotted] (84) -- (104);
    \draw[loosely dotted] (93) -- (113);
    \draw[loosely dotted] (104) -- (124);
    \draw[loosely dotted] (113) -- (133);
    \draw[loosely dotted] (124) -- (144);
    \draw[loosely dotted] (133) -- (153);
\end{tikzpicture}\]
Notice in particular that $\qthree{6}[7][]\in\cI^R\cap\cP^L$, which is an extreme case. Notice also that $\om^{-2}(7) = 5\oplus 2$ which shows that conditions (a3) and (a4) in Proposition \ref{prop:n-fractured is self-orthogonal} do not need to hold for $i=n$.

Next, we have that $\La$ is $2$-self gluable with $\left(W,J\right)$ as a fractured pair and $(W,J)$ as a compatible pair too. It follows by Corollary \ref{cor:La infinity n-ct} that the module category of the algebra $\tilLa=\K \tilQ / \tilR$ where $\tilQ$ is the quiver 
\[\begin{tikzpicture}[scale=0.9, transform shape, baseline={(current bounding box.center)}]
\node (1) at (0,0) {$1$};
\node (2) at (1,0.5) {$2$};
\node (3) at (2,0.5) {$3$};
\node (4) at (2,-0.5) {$4$\nospacepunct{,}};
\node (5) at (1,-0.5) {$5$};
\node (2') at (-1,0) {$2'$};
\node (1') at (-2,0) {$1'$};

\draw[->] (1) -- node[above] {$\gamma$} (2);
\draw[->] (2) -- node[above] {$\delta$} (3);
\draw[->] (3) -- node[right] {$\epsilon$} (4);
\draw[->] (4) -- node[below] {$\zeta$} (5);
\draw[->] (5) -- node[below] {$\eta$} (1);
\draw[->] (1') -- node[above] {$\alpha$} (2');
\draw[->] (2') -- node[above] {$\beta$} (1);
\end{tikzpicture}\]
and $\tilR =\langle \alpha\beta, \beta\gamma, \gamma\delta\epsilon, \delta\epsilon\zeta, \zeta\eta, \eta\gamma\rangle$ admits a $2$-cluster tilting subcategory. Indeed, the Aus\-lan\-der--Rei\-ten quiver $\Gamma(\tilLa)$ of $\tilLa$ is
\[\begin{tikzpicture}[baseline={(current bounding box.center)}]
    \tikzstyle{nct2}=[circle, minimum width=0.6cm, draw, inner sep=0pt, text centered, scale=0.9]
    \tikzstyle{nct22}=[circle, minimum width=0.6cm, draw, inner sep=0pt, text centered, scale=0.8]
    \tikzstyle{nct33}=[circle, minimum width=0.6cm, draw=white, inner sep=0pt, text centered, scale=0.8]
    \tikzstyle{nct3}=[circle, minimum width=6pt, draw=white, inner sep=0pt, scale=0.9]
    
    \node[nct3] (42) at (2.8,0) {$\qthree{}[1][]$};
    \node[nct2] (53) at (3.5,0.7) {$\qthree{2'}[1][]$};
    \node[nct2] (51) at (3.5,-0.7) {$\qthree{5}[1][]$};
    \node[nct2] (62) at (4.2,0) {$\begin{smallmatrix} & 2'  5 &  \\ & 1 &\end{smallmatrix} $};
    \node[nct3] (73) at (4.9,0.7) {$\qthree{}[5][]$};
    \node[nct3] (71) at (4.9,-0.7) {$\qthree{}[2'][]$};
    \node[nct2] (84) at (5.6,1.4) {$\qthree{4}[5][]$};
    \node[nct2] (80) at (5.6,-1.4) {$\qthree{1'}[2'][]$};
    \node[nct2] (95) at (6.3,2.1) {$\qthree{3}[4][5]$};
    \node[nct2] (93) at (6.3,0.7) {$\qthree{}[4][]$};
    \node[nct2] (91) at (6.3,-0.7) {$\qthree{}[1'][]$};
    \node[nct3] (104) at (7,1.4) {$\qthree{3}[4][]$};
    \node[nct2] (115) at (7.7,2.1) {$\qthree{2}[3][4]$};
    \node[nct3] (113) at (7.7,0.7) {$\qthree{}[3][]$};
    \node[nct3] (124) at (8.4,1.4) {$\qthree{2}[3][]$};
    \node[nct2] (135) at (9.1,2.1) {$\qthree{1}[2][3]$};
    \node[nct2] (133) at (9.1,0.7) {$\qthree{}[2][]$};
    \node[nct2] (144) at (9.8,1.4) {$\qthree{1}[2][]$};
    \node[nct3] (153) at (10.5,0.7) {$\qthree{}[1][]$\nospacepunct{,}};
        
    \draw[->] (42) -- (53);
    \draw[->] (42) -- (51);
    \draw[->] (53) -- (62);
    \draw[->] (51) -- (62);
    \draw[->] (62) -- (73);
    \draw[->] (62) -- (71);
    \draw[->] (73) -- (84);
    \draw[->] (71) -- (80);
    \draw[->] (84) -- (95);
    \draw[->] (84) -- (93);
    \draw[->] (80) -- (91);
    \draw[->] (95) -- (104);
    \draw[->] (93) -- (104);
    \draw[->] (104) -- (115);
    \draw[->] (104) -- (113);
    \draw[->] (115) -- (124);
    \draw[->] (113) -- (124);
    \draw[->] (124) -- (135);
    \draw[->] (124) -- (133);
    \draw[->] (135) -- (144);
    \draw[->] (133) -- (144);
    \draw[->] (144) -- (153);
    
    \draw[loosely dotted] (42) -- (62);
    \draw[loosely dotted] (53) -- (73);
    \draw[loosely dotted] (51) -- (71);
    \draw[loosely dotted] (73) -- (93);
    \draw[loosely dotted] (71) -- (91);
    \draw[loosely dotted] (84) -- (104);
    \draw[loosely dotted] (93) -- (113);
    \draw[loosely dotted] (104) -- (124);
    \draw[loosely dotted] (113) -- (133);
    \draw[loosely dotted] (124) -- (144);
    \draw[loosely dotted] (133) -- (153);
    
    \draw[loosely dotted] (2.8,2.1) -- (2.8,-1.4);
    \draw[loosely dotted] (10.5,2.1) -- (10.5,-1.4);
\end{tikzpicture}\]
and the additive closure of the encircled modules is a $2$-cluster tilting subcategory $\tilde{M}\subseteq\m\tilLa$. Notice in particular that $\La$ has infinite global dimension but is not an Iwanaga--Gorenstein algebra since $\idim\left(\qthree{2'}[1][]\right)=\infty$.
\end{example}

\section*{Acknowledgements}

The author wishes to thank his advisor Martin Herschend for the constant support and many helpful suggestions during the preparation of this article.

\bibliography{algebra}
\bibliographystyle{halpha}

\printindex[definitions]
\printindex[symbols]

\end{document}